\date{\today}
\def\biblio{\bibliography{bibliography}\bibliographystyle{alpha}}
\newtheorem{thm}{Theorem}[section]
\newtheorem*{thm*}{Theorem}
\newtheorem{cor}[thm]{Corollary}
\newtheorem*{cor*}{Corollary}
\newtheorem{prop}[thm]{Proposition}
\newtheorem{lem}[thm]{Lemma}
\theoremstyle{definition}
\newtheorem{defn}[thm]{Definition}
\newtheorem{ex}[thm]{Example}
\theoremstyle{remark}
\newtheorem{rem}[thm]{Remark}
\DeclareMathOperator{\map}{map}
\DeclareMathOperator{\Ho}{Ho}
\DeclareMathOperator{\Aut}{Aut}
\DeclareMathOperator{\Alg}{Alg}
\DeclareMathOperator{\CAlg}{CAlg}
\DeclareMathOperator{\Hom}{Hom}
\DeclareMathOperator{\Ext}{Ext}
\DeclareMathOperator{\colim}{colim}
\DeclareMathOperator{\End}{End}
\DeclareMathOperator{\Id}{Id}
\DeclareMathOperator{\Tot}{Tot}
\DeclareMathOperator{\Ind}{Ind}
\DeclareMathOperator{\Mod}{\mathrm{Mod}}
\DeclareMathOperator{\TwArr}{TwArr}
\DeclareMathOperator{\Pic}{Pic}
\DeclareMathOperator{\pic}{\mathfrak{pic}}
\DeclareMathOperator{\Sp}{Sp}
\DeclareMathOperator{\Loc}{Loc}
\DeclareMathOperator{\Comod}{Comod}
\DeclareMathOperator{\Cat}{Cat_{\infty}}
\DeclareMathOperator{\Catomega}{Cat_{\infty}^{\omega}}
\DeclareMathOperator{\Fun}{Fun}
\DeclareMathOperator{\Pre}{Pre}
\DeclareMathOperator{\Map}{Map}
\DeclareMathOperator{\Top}{Top}
\DeclareMathOperator{\Thick}{Thick}
\DeclareMathOperator{\op}{op}
\DeclareMathOperator{\alg}{alg}
\DeclareMathOperator{\Ch}{Ch}
\DeclareMathOperator{\comod}{Comod}
\DeclareMathOperator{\Franke}{Fr}
\newcommand{\cA}{\mathcal{A}}
\newcommand{\Fr}{\Franke_{n}}
\newcommand{\Frnp}{\Franke_{n,p}}
\newcommand{\noloc}{\;\mathord{:}\,}
\newcommand{\N}{\mathbb{N}}
\newcommand{\Z}{\mathbb{Z}}
\newcommand{\F}{\mathbb{F}}
\newcommand{\Q}{\mathbb{Q}}
\newcommand{\G}{\mathbb{G}}
\newcommand{\E}{E_{n,p}}
\newcommand{\bA}{\mathbb{A}}
\newcommand{\bE}{\mathbb{E}}
\newcommand{\pS}{S^{0}_{p}}
\newcommand{\hS}{\hat{S}}
\newcommand{\roundbrackets}[1]{(\!(#1)\!)}
\newcommand{\cC}{\mathcal{C}}
\newcommand{\cD}{\mathcal{D}}
\newcommand{\cF}{\mathcal{F}}
\newcommand{\cL}{\mathcal{L}}
\newcommand{\cO}{\mathcal{O}}
\newcommand{\cP}{\mathcal{P}}
\newcommand{\cS}{\mathcal{S}}
\newcommand{\lra}[1]{\overset{#1}{\longrightarrow}}
\newcommand{\doubleflat}{\flat\kern-1.0pt\flat}
\newcommand{\Prod}[1]{{\prod}_{#1}}
\newcommand{\Oplus}[1]{\underset{#1}{\bigoplus}}
\newcommand{\Coprod}[1]{\underset{#1}{\coprod}}
\newcommand{\Colim}[1]{\underset{#1}{\colim}}
\let\c@equation\c@thm
\numberwithin{equation}{section}
\Crefname{figure}{Figure}{Figures}
\DeclareMathOperator{\Cell}{Cell}
\DeclareMathOperator{\DimCell}{DimCell}
\DeclareMathOperator{\Set}{Set}
\DeclareMathOperator{\sSet}{sSet}
\DeclareMathOperator{\Sh}{Sh}
\newcommand{\cE}{\mathcal{E}}
\newcommand{\Mfg}{{\mathcal{M}_{\mathrm{fg}}}}
\newcommand{\Mfgn}{\mathcal{M}_{\mathrm{fg}}^{\leq n}}
\renewcommand{\mod}{\text{mod}}
\newcommand{\Fins}{N(\mathcal{F}\mathrm{in}_*)}
\newcommand{\gr}{\mathrm{graded}}
\newcommand{\Op}{\mathrm{Op}_{\infty}}
\newcommand{\POp}{\mathrm{POp}_{\infty}}
\newcommand{\sCat}{\mathrm{Cat}_{\infty}^{\otimes}}
\newcommand{\LM}{\mathrm{M}^{\mathrm{loc}}}
\newcommand{\MS}{\sSet^{+}}
\begin{document}
\title[Chromatic homotopy is asymptotically algebraic]{Chromatic homotopy theory is\\ asymptotically algebraic}
\author{Tobias Barthel}
\address{Max Planck Institute for Mathematics\\ Bonn, Germany}
\email{tbarthel@mpim-bonn.mpg.de}
\author{Tomer M. Schlank}
\address{The Hebrew University of Jerusalem \\ Jerusalem, Israel}
\email{tomer.schlank@mail.huji.ac.il}
\author{Nathaniel Stapleton}
\address{University of Kentucky\\ Lexington, Kentucky, USA}
\email{nat.j.stapleton@uky.edu}

\subjclass[2010]{55N22, 55P42 (primary); 03C20 (secondary)}

\begin{abstract}
Inspired by the Ax--Kochen isomorphism theorem, we develop a notion of categorical ultraproducts to capture the generic behavior of an infinite collection of mathematical objects. We employ this theory to give an asymptotic solution to the approximation problem in chromatic homotopy theory. More precisely, we show that the ultraproduct of the $E(n,p)$-local categories over any non-principal ultrafilter on the set of prime numbers is equivalent to the ultraproduct of certain algebraic categories introduced by Franke. This shows that chromatic homotopy theory at a fixed height is asymptotically algebraic. %\vspace{-20em}
\end{abstract}

\maketitle

\begin{figure}[h!]
  \includegraphics[scale=.40]{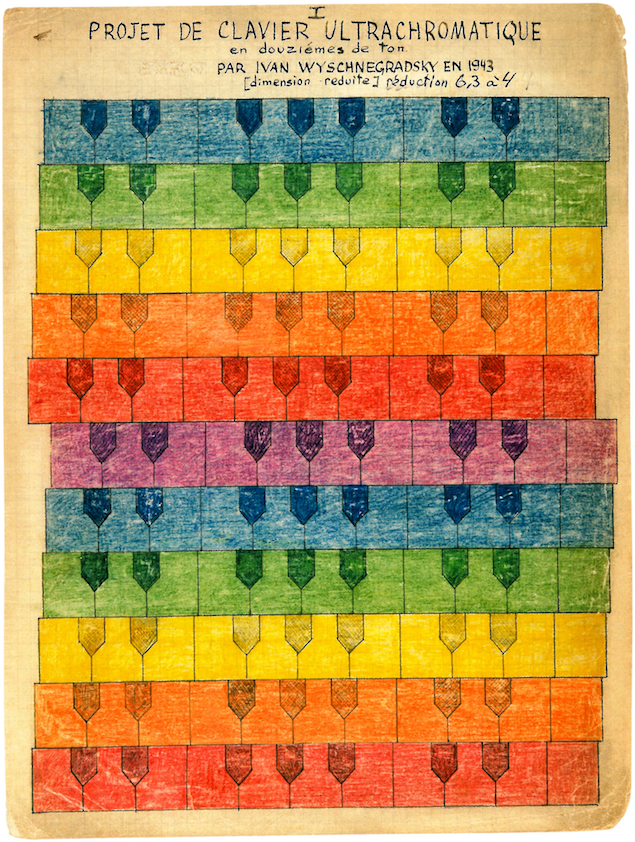}
\end{figure}

\setcounter{tocdepth}{1}
{\hypersetup{linkcolor=black}\tableofcontents}

\def\biblio{}

\section{Introduction}

\paragraph{\bf{Motivation and background}}
The guiding problem in stable homotopy theory is the computation of the stable homotopy groups of spheres $\pi_*S^0$. The first attempts at this calculation, for example via a Postnikov filtration, are of limited use and only provide coarse information about the large scale structure of $\pi_*S^0$. Motivated by patterns seen in the Adams spectral sequence, chromatic homotopy theory yields a more efficient filtration of $\pi_*S^0$ through localizations $L_{n,p}S^0$ of the sphere spectrum at the chromatic primes $(n,p)$. These localizations fit into the chromatic tower
\[
\xymatrix{\ldots \ar[r] & L_{n,p}S^0 \ar[r] & \ldots \ar[r] & L_{1,p}S^0 \ar[r] & L_{0,p}S^0 \simeq S_{\Q_p}^0}
\]
and the chromatic convergence theorem of Hopkins and Ravenel implies that the resulting filtration on $\pi_*S^0$ is exhaustive. In fact, this tower arises from an ascending filtration of the ($p$-local) stable homotopy category $\Sp$,
\[
\Sp_{\Q} \simeq \Sp_{0,p} \subset \Sp_{1,p} \subset \ldots \subset \Sp_{n,p} = L_{n,p}\Sp \subset \ldots \subset \Sp,
\]
with filtration quotients equivalent to the category of $K(n,p)$-local spectra, where $K(n,p)$ is Morava $K$-theory. This filtration is exhaustive when restricted to finite spectra, but not in general. The chromatic approach thus divides the computation of $\pi_*S^0$ into two main problems:
 \begin{enumerate}
  \item The study of the categories $\Sp_{n,p}$ and the calculation of the local homotopy groups $\pi_*L_{{n,p}}S^0$ for each $n \ge 0$ and every prime $p$.
  \item The question of how to assemble these local pieces to reconstruct $\pi_*S^0$.
 \end{enumerate}
 
The goal of this paper and its sequel is to show that, asymptotically in the prime $p$, both problems are controlled entirely by algebraic geometry. More generally, our main result provides a solution to the longstanding open question of finding a good algebraic approximation to $\Sp_{n,p}$ for $n< \infty$. Serre's work addresses this problem when $n=0$ and shows that this case is governed entirely by the theory of rational vector spaces. However, for every $n>0$ and any prime $p$, these categories do not admit an algebraic model. 

It is nevertheless possible to algebraically model partial information about $\Sp_{n,p}$. For instance, Bousfield gave a purely algebraic classification of all homotopy types in this category when $n=1$. As $n$ goes to infinity, the complexity of $\Sp_{n,p}$ increases rapidly and all known algebraic approximations grow coarser. In the extremal case $n=\infty$, the ``Mahowald uncertainty principle" states that even the homotopy types of objects cannot be modeled algebraically.  

To describe the algebraic approximations we will use, recall that Quillen's work on complex cobordism reveals a close connection between stable homotopy theory and the moduli stack of formal groups $\Mfg$: the cohomology of the tensor powers of the canonical line bundle on the moduli stack forms the $E_2$-page of the Adams--Novikov spectral sequence converging to $\pi_*S^0$. The $p$-local moduli stack $(\Mfg)_p$ admits an increasing filtration by the open substacks $(\Mfg)_{n,p}$ consisting of formal groups of height $\leq n$ at the prime $p$ that mirrors the chromatic filtration as observed by Morava. Our approximation to $\Sp_{n,p}$ will be a category $\Frnp$ of twisted complexes of quasi-coherent sheaves on $(\Mfg)_{n,p}$ introduced and first studied systematically by Franke. 

We can now state a first version of our main result.
\begin{thm*}
For any $n \geq 0$, there is a symmetric monoidal equivalence
\[
\lim_{p \to \infty} \Sp_{n,p} \simeq \lim_{p \to \infty} \Frnp.
\]
\end{thm*}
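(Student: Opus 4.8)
The plan is to reduce the statement to a comparison of compact objects, to build Franke's comparison functor one prime at a time, and then to use the categorical ultraproduct to promote the merely \emph{asymptotic} agreement of these functors into an honest symmetric monoidal equivalence. Concretely: for a fixed $n$ and every prime $p$ large relative to $n$ (say $2(p-1) > n^2+n$), both $\Sp_{n,p}$ and $\Frnp$ are compactly generated presentable stable symmetric monoidal $\infty$-categories with compact unit, and their full subcategories of compact objects $\Sp_{n,p}^{\omega}$ and $\Frnp^{\omega}$ are small, idempotent complete, and symmetric monoidal. By the general properties of categorical ultraproducts established in this paper, the assignment $\mathcal{C} \mapsto \Ind(\mathcal{C}^{\omega})$ carries the ultraproduct $\prod_{\mathcal{F}}$ of the large categories to the $\Ind$-completion of the ultraproduct of the small ones, compatibly with symmetric monoidal structures. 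Since the finitely many primes violating the sparseness bound lie in no non-principal ultrafilter $\mathcal{F}$, it suffices to produce a symmetric monoidal equivalence $\prod_{\mathcal{F}} \Frnp^{\omega} \simeq \prod_{\mathcal{F}} \Sp_{n,p}^{\omega}$ and then $\Ind$-complete.

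Franke's work provides, for each such $p$, a symmetric monoidal exact functor $\Phi_p \colon \Frnp \to \Sp_{n,p}$ whose restriction $\Phi_p^{\omega}$ to compact objects realizes, on the level of the associated spectral sequences, the canonical isomorphism between $\Ext$ in $\Qcoh((\Mfg)_{n,p}) \simeq \Comod_{E(n,p)_{*}E(n,p)}$ and the $E_2$-page of the $E(n,p)$-based Adams--Novikov spectral sequence. The sparseness bound $2(p-1) > n^2+n$ — which weighs the cohomological dimension $n^2+n$ of the height $\leq n$ locus against the $2(p-1)$-periodicity of $E(n,p)_{*}$ — forces both spectral sequences to collapse at $E_2$, since a nonzero differential $d_r$ would require $r \equiv 1 \pmod{2(p-1)}$ together with $r \leq n^2+n+1$. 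What is \emph{not} available at any single prime is that $\Phi_p$ is an equivalence; this is a symmetric monoidal refinement of Franke's theorem, whose original proof is known to be incomplete, and the entire point is that passage to the ultraproduct renders the gap immaterial. I would assemble the $\Phi_p^{\omega}$ into a single symmetric monoidal functor $\Phi \colon \prod_{\mathcal{F}} \Frnp^{\omega} \to \prod_{\mathcal{F}} \Sp_{n,p}^{\omega}$.

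To see that $\Phi$ is an equivalence I would argue in the style of {\L}o{\'s}'s principle. Objects of the source are represented by sequences $(X_p)$ of compact objects of uniformly bounded complexity, and for two such sequences the homotopy groups of the mapping spectrum in the ultraproduct are the ultraproducts of the mapping-spectrum homotopy groups at the individual primes; these carry a filtration, namely the ultraproduct of the Adams--Novikov filtrations, of length at most $n^2+n$ \emph{uniformly in $p$}. Each $\Phi_p$ respects this filtration and, by the collapse above, is an isomorphism on the associated graded in each fixed total degree for $\mathcal{F}$-almost all $p$. Since the categorical ultraproduct is exact, applying it termwise and inducting up the uniformly bounded filtration with the five lemma shows that $\Phi$ is an isomorphism on all homotopy groups of all mapping spectra, hence fully faithful. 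For essential surjectivity one uses that every compact object of $\Sp_{n,p}$ is a retract of $L_{n,p}F$ for a finite $p$-local spectrum $F$ of bounded cell structure, and that $\Phi_p$ hits each such object up to equivalence for $\mathcal{F}$-almost all $p$; together with full faithfulness and idempotent completeness this yields the equivalence, and $\Ind$-completing and transporting the symmetric monoidal structure — preserved at every stage — completes the argument.

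The main obstacle is the fixed-prime input in functorial, monoidal, filtration-compatible form: producing $\Phi_p$ (or an adequate substitute) as a genuine symmetric monoidal $\infty$-functor that is compatible with the Adams--Novikov and algebraic filtrations, induces the expected isomorphism on $E_2$-pages, and — crucial for assembling the $\Phi_p$ into $\Phi$ — has complexity growth uniform in $p$. I would expect either to repair the gaps in Franke's comparison or, more robustly, to reconstruct the algebraic model deformation-theoretically, for instance realizing $\Frnp$ as the totalization $\Tot_{\Delta}\, \Mod_{E(n,p)_{*}^{\otimes \bullet + 1}}$ of the module $\infty$-categories over the graded $E_{\infty}$-rings with vanishing higher structure and comparing this term by term with the genuine $E(n,p)$-Adams tower, all the while tracking precisely where the bound $2(p-1) > n^2+n$ enters.
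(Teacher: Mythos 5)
Your plan is organized around the existence, at each large prime $p$, of a symmetric monoidal exact comparison functor $\Phi_p\colon \Frnp \to \Sp_{n,p}$ inducing the expected isomorphism on the $E_2$-pages of the algebraic and Adams--Novikov spectral sequences, with the single-prime gap being only that $\Phi_p$ is not known to be an equivalence. This misidentifies where the difficulty lies. The problem, which you eventually acknowledge in your final paragraph, is that no such $\Phi_p$ is known to exist at any prime at all, even as a plain exact functor, let alone a symmetric monoidal one compatible with the filtrations and of uniformly bounded complexity. Franke's attempt to construct a comparison functor is precisely what Patchkoria showed to be incomplete, and the paper is explicitly designed to \emph{avoid} ever constructing a prime-by-prime comparison. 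Your Łoś-style argument for full faithfulness and essential surjectivity is reasonable \emph{conditional on} having $\Phi_p$, but the conditional has no antecedent, so the argument never gets off the ground.

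The paper's actual route never leaves the fixed-prime world with a functor $\Frnp \to \Sp_{n,p}$ in hand. Instead it establishes descent separately on each side, $\Sp_{n,p} \simeq \Tot \Mod_{\E^{\otimes\bullet+1}}$ and $\Frnp \simeq \Tot \Mod_{(\E^{\otimes\bullet+1})_\star}$, then shows these totalizations commute with the $\Pic$-generated protoproduct (using the prime-independent bound $n^2+n$ on the cohomological dimension and a Mitchell--Thomason argument), and finally reduces everything to a single formality statement about cosimplicial $\bE_\infty$-rings: $\Prod_{\cF}\E^{\otimes\bullet+1} \simeq \Prod_{\cF}(\E^{\otimes\bullet+1})_\star$. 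That formality is proved by an arithmetic fracture argument together with a weight decomposition coming from the $C_{p-1}$-action on $\E$ over $\hS$, exploiting that the non-trivial $k$-invariants of $\E^{\otimes_{\hS}k}$ live only in degrees divisible by $2(p-1)$ and hence vanish in the ultraproduct. The comparison of the two categories is then obtained purely at the level of ultraproducts, by Morita theory applied to the formal cosimplicial ring, with no input that would amount to Franke's functor. Your closing sentence, where you suggest realizing $\Frnp$ as $\Tot_\Delta\Mod_{E(n,p)_*^{\otimes\bullet+1}}$ and comparing term by term with the Adams tower, is in fact the right idea and essentially what the paper does; your main proof sketch, however, relies on an object that does not exist.

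A secondary issue: the limit $\lim_{p\to\infty}$ in the theorem is the $\Pic$-generated protoproduct $\Prod^{\Pic}_{\cF}$, not the compactly generated ultraproduct $\Prod^{\omega}_{\cF} = \Ind\Prod_{\cF}(-)^{\omega}$ that your reduction step produces. The distinction matters because compact objects of $\Prod^{\omega}_{\cF}\Sp_{n,p}$ need not be representable by sequences of uniformly bounded cell complexity, which breaks both your "uniform complexity" hypothesis and the interchange of totalization with the ultraproduct. The paper's protoproduct construction exists precisely to enforce that bound.
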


The limit notation is justified as we are capturing the asymptotic behavior of these categories, however it does not stand for the categorical limit (or the topological limit). Indeed, there are no natural functors between the categories as $p$ varies. Instead, to produce a limiting object out of the collections of categories $\Sp_{n,p}$ and $\Frnp$ as $p \to \infty$, we construct a categorical analogue of the model-theoretic notion of ultraproducts. A key feature of the ultraproduct construction is that it captures the generic behavior of a collection of objects. Thus the theorem above allows one to study questions about the generic behavior of chromatic homotopy theory by purely algebraic means. \\

\paragraph{{\bf In more detail.}} It has long been understood that chromatic homotopy theory at a fixed height $n$ simplifies as the prime tends towards infinity. This simplification manifests itself as sparseness in various spectral sequences leading to certain topological constructions (the existence of Smith--Toda complexes, Picard groups, homotopy groups of finite complexes) being completely controlled by algebra. However the size of the prime needed for these constructions to be purely algebraic depends on the construction. 

In the early 1990s, Franke \cite{Franke_exotic} introduced categories $\Frnp$ of quasi-periodic chain complexes of comodules whose homotopy theory was intended to converge to $\Sp_{n,p}$ in the limit $p \to \infty$. However, as observed by Patchkoria \cite{patchkoria}, his work remains incomplete due to the difficulty of directly comparing the algebraic categories to the topological categories. In contrast, our approach to circumvent this problem is based on and inspired by concepts from mathematical logic. 

Our use of ultraproducts resembles the use in the celebrated Ax--Kochen isomorphism theorem (\cite{AxKoc1,AxKoc2,AxKoc3}).  Their theorem produces an astounding bridge between local fields of characteristic $0$ and local fields of characteristic $p$, which is a non-canonical isomorphism between ultraproducts
\[
\Prod{\cF} \Q_p \cong \Prod{\cF} \F_p\roundbrackets{x},
\]
for every non-principal ultrafilter $\cF$ on the prime numbers. Thus, {\L}os's theorem implies that a first order statement in the theory of fields holds for $\F_p\roundbrackets{x}$ for almost all $p$ if and only if it holds for $\Q_p$ for almost all $p$. For example, Lang proved the existence of non-trivial zeros for all degree $d$ homogeneous polynomials in at least $d^2+1$ variables over $\F_p\roundbrackets{x}$. Since the existence of such a zero is a first order statement, for any prime $p$ different from a finite number of exceptional primes, any degree $d$ homogeneous polynomial in at least $d^2+1$ variables with coefficients in $\Q_p$ has a non-trivial zero. 

More generally, let $\Prod{\cF} M_i$ be the ultraproduct of a collection of models $(M_i)_{i\in I}$ for some first order theory with respect to an ultrafilter $\cF$ on $I$. {\L}os's theorem states that a first order statement is true for $M_i$ for almost all $i \in I$ if and only if it is true for $\Prod{\cF} M_i$ for all non-principal $\cF$. Thus {\L}os's theorem demonstrates that ultraproducts at non-principal ultrafilters can be used to capture the asymptotic behavior of a collection of models. At the same time, the ultraproducts $\Prod{\cF} M_i$ for non-principal $\cF$ often exhibit interesting new features. For instance, in the case of the isomorphism above, the ultraproduct of characteristic $p$ fields $\Prod{\cF} \F_p\roundbrackets{x}$ is a characteristic $0$ field. 

We modify the definition of the ultraproduct to function in the homotopical world. Given a collection of objects $(c_p)_{p \in \cP}$ in an $\infty$-category $\cC$ indexed by a set $\cP$ and an ultrafilter $\cF$ on $\cP$, we define the ultraproduct to be
\[
\Prod{\cF} c_p = \Colim{U \in \cF} \Prod{i \in U} c_p,
\]
where the colimit is along reverse inclusions. Let $\Sp_{n,p}$ be the $E(n,p)$-local category, let $\Frnp$ be Franke's algebraic category, and let $\cF$ be a non-principal ultrafilter on $\cP$. The main result is a symmmetric monoidal equivalence of $\infty$-categories 
\[
\Prod{\cF} \Sp_{n,p} \simeq \Prod{\cF} \Frnp,
\]
where the ultraproduct is taken in a suitable subcategory of the $\infty$-category of symmetric monoidal $\infty$-categories.

The following consequence of {\L}os's theorem (see \cite[Theorem 4.1.9]{changkeisler}, for instance) is known as the transfer principle: given two collections of objects $(a_p)_{p \in \cP}$ and $(b_p)_{p \in \cP}$ indexed by a set $\cP$ such that the ultraproducts $\Prod{\cF}a_p$ and $\Prod{\cF}b_p$ are isomorphic for every non-principal ultrafilter on $\cP$, then a first order statement is true for all but finitely many elements in $(a_p)$ if and only if it is true for all but finitely many objects in $(b_p)$. To extract results from our equivalences, one would like an analogous transfer principle in the $\infty$-categorical setting. In lieu of such an $\infty$-categorical transfer principle, we provide arguments that establish the transfer principle for specific problems that we are interested in. \\

\paragraph{\bf{Applications}} Our main theorem has a variety of applications. In order to display the utility of the result, we establish the first of these applications in this paper. We indicate further applications below, but leave the details to the sequels.

Recall that a $p$-local Smith--Toda complex of type $k+1$ is a spectrum $V_p(k)$ such that $BP_*(V_p(k)) \cong BP_*/(p,\ldots, v_k)$; their existence and non-existence is a major open problem in stable homotopy theory \cite{nave}. Further, the existence of multiplicative structure on these Smith--Toda complexes is subtle: The mod $p$ Moore spectrum $V_p(0) = S^0/p$ admits the structure of an $\bA_n$-algebra if and only if $n<p$, which is intimately related to Schwede's proof of the rigidity of the stable homotopy category \cite{schwede}. More generally, Nave also showed that, for $p \geq 7$, if $V_p(\frac{p-3}{2})$ exists, then it does not admit the structure of an $\bA_2$-algebra, where $\bA_2$ is the $2$nd operad in Stasheff's hierarchy of associative operads.

A consequence of our equivalence is that locally any given obstruction to the existence of Smith--Toda complexes or multiplicative structure on them vanishes for almost all primes. More precisely, let $\E$ be a height $n$ Morava $E$-theory. For any $0 \leq k < n$ and $p$ large enough (depending on $n$ and $k$), we show that there exists an $E_{n,p}$-local spectrum $V_{n,p}(k)$ such that $(E_{n,p})_*(V_{n,p}(k)) \cong (E_{n,p})_*/(p, \ldots, v_{k})$. Further, since the algebraic analogues of the local Smith--Toda complexes in $\Frnp$ are $\bE_{\infty}$-algebras, part of this multiplicative structure can be transferred through our equivalence. Let $\bA_m$ be the $m$th associative operad.

\begin{cor*}
For any $m > 0$ and $n > k \geq 0$, there exists a prime $p_0$ such that, for all primes $p>p_0$, there exists an $E_{n,p}$-local $\bA_m$-algebra spectrum $V_{n,p}(k)$ such that $(E_{n,p})_*(V_{n,p}(k)) \cong (E_{n,p})_*/(p, \ldots, v_{k})$.
\end{cor*}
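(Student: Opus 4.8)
The plan is to transport multiplicative structure through the main equivalence and then descend from the ultraproduct to almost all primes. Fix $m>0$ and $n>k\geq 0$, and let $S\subseteq\cP$ denote the set of primes $p$ for which there exists an $E_{n,p}$-local $\bA_m$-algebra spectrum whose $E_{n,p}$-homology is isomorphic to $(E_{n,p})_*/(p,\ldots,v_k)$. This set is defined independently of any ultrafilter, and a subset of $\cP$ is cofinite exactly when it belongs to every non-principal ultrafilter; so it suffices to show that $S\in\cF$ for an arbitrary non-principal ultrafilter $\cF$ on $\cP$, after which one may take $p_0=\max(\cP\setminus S)$.

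So I would fix such an $\cF$ and first work on the algebraic side. For each $p$ the algebraic local Smith--Toda complex $\widetilde V_{n,p}(k)\in\Frnp$ (built from a Koszul complex on the regular sequence $(p,v_1,\ldots,v_k)$) has homology $(E_{n,p})_*/(p,\ldots,v_k)$ and, as recalled above, carries a natural $\bE_\infty$-algebra structure and hence an $\bA_m$-algebra structure. Since algebra objects in a product are the product of algebra objects, and since $\bA_m$ is a compact $\infty$-operad so that the formation of $\bA_m$-algebras commutes with the filtered colimit along reverse inclusions defining the ultraproduct, one gets
\[
\Alg_{\bA_m}\!\Big(\Prod{\cF}\Frnp\Big)\;\simeq\;\Prod{\cF}\Alg_{\bA_m}(\Frnp),
\]
and the compatible family $(\widetilde V_{n,p}(k))_p$ determines an $\bA_m$-algebra $\Prod{\cF}\widetilde V_{n,p}(k)$ in $\Prod{\cF}\Frnp$. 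Next I would apply the symmetric monoidal equivalence $\Prod{\cF}\Frnp\simeq\Prod{\cF}\Sp_{n,p}$ of the main theorem; since it preserves $\bA_m$-algebras this produces an $\bA_m$-algebra $A$ in $\Prod{\cF}\Sp_{n,p}$, and by the compatibility of this equivalence with the relevant homology functors --- the same compatibility underlying the non-multiplicative construction of the $V_{n,p}(k)$ recalled above --- the underlying object of $A$ has $E_{n,p}$-homology $\Prod{\cF}(E_{n,p})_*/(p,\ldots,v_k)$.

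Finally I would descend. Using compactness of $\bA_m$ once more, $\Alg_{\bA_m}(\Prod{\cF}\Sp_{n,p})\simeq\Prod{\cF}\Alg_{\bA_m}(\Sp_{n,p})$, so $A$ is represented by a family $(A_p)_{p\in U}$ of $E_{n,p}$-local $\bA_m$-algebra spectra for some $U\in\cF$. The underlying spectra $(A_p)$ have $E_{n,p}$-homology agreeing, in the ultraproduct, with $\Prod{\cF}(E_{n,p})_*/(p,\ldots,v_k)$; by {\L}o{\'s}'s theorem for ultraproducts of $\infty$-categories one may then shrink $U$ within $\cF$ so that $(E_{n,p})_*(A_p)\cong(E_{n,p})_*/(p,\ldots,v_k)$ for every $p\in U$. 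Setting $V_{n,p}(k):=A_p$ then shows $U\subseteq S$, hence $S\in\cF$, which finishes the argument.

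The step I expect to be the main obstacle is the compatibility used in the last two paragraphs: verifying that $\bA_m$-algebra structures are detected at a finite stage of the ultraproduct colimit --- this is precisely where the finiteness of $m$, i.e.\ compactness of the operad $\bA_m$, is essential, and the reason one cannot conclude the analogous statement with $\bE_\infty$ in place of $\bA_m$ --- together with checking that the main equivalence genuinely intertwines the algebraic homology functor on $\Frnp$ with $E_{n,p}$-homology on $\Sp_{n,p}$, so that the algebra $A$ really refines the family of topological local Smith--Toda complexes rather than some other family with the same image in the ultraproduct.
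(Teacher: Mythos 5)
Your overall strategy matches the paper's: build the algebraic analogues as $\bE_\infty$-algebras in $\Frnp$, transport an $\bA_m$-structure through the main equivalence, and use compactness of $\bA_m$ together with the Frechet-filter argument to descend to almost all primes. But there is a genuine gap in the middle steps. You invoke a symmetric monoidal equivalence ``$\Prod{\cF}\Frnp\simeq\Prod{\cF}\Sp_{n,p}$'' and a comparison ``$\Alg_{\bA_m}(\Prod{\cF}\Frnp)\simeq\Prod{\cF}\Alg_{\bA_m}(\Frnp)$,'' but the main theorem of the paper is an equivalence $\Prod{\cF}^{\Pic}\Frnp\simeq\Prod{\cF}^{\Pic}\Sp_{n,p}$ of \emph{Pic-generated protoproducts}, not of the plain ultraproduct $\Prod{\cF}$ in $\Cat$ (which is not even presentable — see \cref{ex:ultraproductswithoutlimits}). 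To feed your $\bA_m$-algebra into that equivalence you must first verify that the family $(\widetilde V_{n,p}(k))_p$ defines an object of $(\Prod{\cF}^{\Pic}\Frnp)^{\omega}$ rather than merely of $\Prod{\cF}\Frnp^{\omega} \simeq (\Prod{\cF}^{\omega}\Frnp)^{\omega}$. This is exactly where the paper uses a uniform bound on the cell count: the Koszul complex on $k+1$ generators has $2^{k+1}$ cells (each an invertible object), \emph{independently of $p$}, so it sits in a fixed filtration stage of the $\mathrm{PicCell}$-filtration and hence in $(\Prod{\cF}^{\Pic}\Frnp)^{\omega}$. Without this observation the main theorem does not apply, and your argument breaks at the transport step.

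Two further, more minor, points of hygiene. First, rather than asserting that $\Alg_{\bA_m}(-)$ commutes with the filtered colimit and infinite product defining the ultraproduct (which requires an argument beyond mere compactness of $\bA_m$ as an object of $\Op$), the paper sidesteps this by producing a single map of $\infty$-operads $\cO \to \Prod{\cF}\Sp_{n,p}^{\omega}$ and factoring it through a finite stage $\Prod{p\in U}\Sp_{n,p}^{\omega}$ using that the forgetful functor $\sCat\to\Op$ preserves ultraproducts (\cref{cor:operadicultraproducts}) and that $\cO$ is compact in $\Op$; this is cleaner and avoids the question you hope to invoke. Second, there is no ``{\L}o{\'s}'s theorem for ultraproducts of $\infty$-categories'' available in the paper (the authors explicitly note they lack such a transfer principle); the step of shrinking $U$ so that $(E_{n,p})_*(A_p)$ is the correct module is justified by the commutativity of the diagram in \cref{prop:goodformality} together with an elementary 1-categorical argument about ultraproducts of finitely generated modules, not by any $\infty$-categorical {\L}o{\'s} principle.
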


The existence of local analogues of the Smith--Toda complexes is a problem that is amenable to classical methods, but the multiplicative structure guaranteed in the corollary above appears to be more difficult to obtain. Besides the equivalence of the main theorem, the key ingredient in the proof of the corollary is the fact that $\bA_m$ is compact as an $\infty$-operad. In fact, the corollary applies to local generalized Moore spectra and any compact $\infty$-operad.

Information also flows through our equivalence from stable homotopy theory to algebraic geometry. Hovey and Sadofsky~\cite{hoveysadofskyinv} have shown that the Picard group of $\Sp_{n,p}$ is isomorphic to $\Z$ for $2p-2 > n^2+n$. Since the equivalence is symmetric monoidal, this implies that the Picard group of $\Frnp$ is isomorphic to $\Z$ for large enough primes generalizing a result of Barnes and Roitzheim~\cite{barnesroitzheim_monoidality} for $n=1$.

Hopkins' chromatic splitting conjecture~\cite{hoveycsc} describes how the sphere spectrum $S^0$ can be assembled from its local pieces $L_{n,p}S^0$. More precisely, the conjecture stipulates that the bottom map in the chromatic pullback square
\[
\xymatrix{L_{n,p}S^0 \ar[r] \ar[d] & L_{K(n,p)}S^0 \ar[d] \\ L_{n-1,p}S^0  \ar[r] & L_{n-1,p}L_{K(n,p)}S^0}
\]
is split and proposes a precise decomposition of the cofiber. The known cases of this conjecture give another example of the asymptotic behavior of chromatic homotopy theory. The chromatic splitting conjecture is known for $n =1$ and all $p$ and also $n=2$ and $p \geq 3$ \cite{ghmr,ghm}. At $n=2$ and $p=2$ the conjecture is false \cite{beaudry,bgh_csc}, which suggests that, at each height, the conjecture may only hold for all but a finite set of primes. However, the current approaches appear to be infeasible at higher heights. Using a $K(n)$-local refinement of the equivalence of the main theorem, we reduce the chromatic splitting conjecture for large enough primes to a purely algebro-geometric question, thereby offering a novel attack on the problem. \\ 

\paragraph{\bf{Outline of the results and proof}} Let $I$ be a set, let $(\cC_i)_{i \in I}$ be a collection of compactly generated $\infty$-categories, and consider an ultrafilter $\cF$ on $I$. We define the ultraproduct to be
\[
\Prod{\cF}^{\omega} \cC_i = \Colim{U \in \cF} \Prod{i \in U} \cC_i,
\]
where the colimit is along reverse inclusions and taken in the $\infty$-category of compactly generated $\infty$-categories. Note that there is a canonical functor
\[
\Prod{I} \cC_i \to \Prod{\cF}^{\omega} \cC_i
\]
which is surjective on compact objects. 

The ultraproduct shares many properties with and can be understood in terms of the input compactly generated $\infty$-categories. For instance, if $c$ and $d$ are compact objects in the ultraproduct and $(c_i)_{i \in I}$ and $(d_i)_{i \in I}$ are preimages of $c$ and $d$ in $\Prod{I}\cC_i$, then
\[
\Map_{\Prod{\cF}^{\omega}\cC_i}(c,d) \simeq \Prod{\cF}\Map_{\cC_i}(c_i,d_i),
\]
where the ultraproduct on the right is taken in the $\infty$-category of spaces. Also, if the categories $\cC_i$ are stable and equipped with a symmetric monoidal structure, then so is the ultraproduct.

For any compactly generated symmetric monoidal $\infty$-category $\cC$, we may implement a familiar version of Whitehead's theorem by localizing with respect to maps $f \colon c \to c'$ such that $[u,f] \colon [u,c] \xrightarrow{\cong} [u,c']$ is an isomorphism for all invertible objects $u$ in $\cC$. If the compactly generated $\infty$-categories $\cC_i$ are symmetric monoidal, we obtain the ``$\Pic$-generated protoproduct"
\[
\Prod{\cF}^{\Pic} \cC_i
\]
by localizing the ultraproduct $\Prod{\cF}^{\omega} \cC_i$ with respect to these equivalences. Informally speaking, this process enforces a suitable finiteness condition on the ultraproduct.

To state the main theorem, we must describe the algebraic approximation to $\Sp_{n,p}$ that we are going to use. In \cite{Franke_exotic}, Franke introduces a category of twisted complexes of $(\E)_0\E$-comodules. Consider the category with objects chain complexes of $(\E)_0\E$-comodules equipped with a chosen isomorphism
\[
X \lra{\cong} (X \otimes_{\pi_0 \E} (\pi_2 \E))[2]
\]
between the complex and the double suspension of the complex tensored with the invertible comodule $\pi_2 \E$ and morphisms maps of complexes compatible with the chosen isomorphism. We establish several key features (at large enough primes) of a symmetric monoidal model structure on this category defined by Hovey \cite{hoveyhcomodules} and Barnes--Roitzheim \cite{barnesroitzheim_monoidality} in which weak equivalences are quasi-isomorphisms of the underlying complexes of comodules. Let $\Frnp$ be the compactly generated symmetric monoidal $\infty$-category associated to this symmetric monoidal model category.

Let $I = \cP$, the set of prime numbers, and let $\cF$ be a non-principal ultrafilter on $\cP$. The main theorem can be stated as follows:
\begin{thm*}
For any $n \geq 0$, there is a symmetric monoidal equivalence of $\Q$-linear stable $\infty$-categories
\[
\Prod{\cF}^{\Pic} \Sp_{n,p} \simeq \Prod{\cF}^{\Pic} \Frnp.
\]
\end{thm*}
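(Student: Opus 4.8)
The plan is to build the equivalence by matching algebraic structure on both sides rather than by directly comparing the topological and algebraic categories prime-by-prime (which, following Patchkoria's observation, is the obstruction that blocks Franke's original strategy). The key is that both $\Prod{\cF}^{\Pic}\Sp_{n,p}$ and $\Prod{\cF}^{\Pic}\Frnp$ should be recognized, after passing to the ultraproduct, as the category of modules over an ``ultra'' version of a common algebraic object. First I would identify, on the topological side, the algebraic data controlling $\Sp_{n,p}$: the $E_{n,p}$-based Adams spectral sequence, whose input is the category of $(E_{n,p})_*E_{n,p}$-comodules together with the canonical periodicity isomorphism encoded by $\pi_2 E_{n,p}$. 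The point of taking ultraproducts is that the Bousfield--Kan style obstruction groups (derived $\Ext$ in comodules in the relevant degrees) become concentrated in a single ``line'' as $p\to\infty$, because the sparseness phenomenon mentioned in the introduction — the vanishing range $2p-2 > n^2+n$ and its relatives — kicks in for all but finitely many $p$, hence holds in the ultraproduct. So the first block of work is to prove: for $p\gg 0$, the $\Pic$-localized $E_{n,p}$-local category is equivalent to a category built purely from $(E_{n,p})_*E_{n,p}$-comodules with the twist, i.e. to a model for $\Frnp$ — but only ``up to bounded error'' whose bound grows with $p$ and therefore dies in the ultraproduct.

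Concretely I would proceed in these steps. (1) Set up the ultraproduct $\Prod{\cF}^{\omega}$ and its $\Pic$-localization carefully, establishing the mapping-space formula $\Map_{\Prod{\cF}^{\Pic}\cC_i}(c,d)\simeq \Prod{\cF}\Map_{\cC_i}(c_i,d_i)$ on compact/dualizable objects and the fact that symmetric monoidal structures pass through; verify $\Q$-linearity of the target (the ultraproduct of the $p$-adic-flavored mapping spectra becomes rational because, as in Ax--Kochen, the ultraproduct of characteristic-$p$-ish objects is characteristic $0$). (2) On the algebraic side, show that $\Prod{\cF}^{\Pic}\Frnp$ is equivalent to the derived category of twisted comodules over the ultraproduct Hopf algebroid $\Prod{\cF}(E_{n,p})_*E_{n,p}$ — here one uses that Franke's model category is, at large $p$, well-behaved (the key features ``at large enough primes'' alluded to in the excerpt: the model structure exists, is symmetric monoidal, compactly generated, and the $\Pic$-localization collapses the twist consistently). (3) On the topological side, run the $E_{n,p}$-Adams tower. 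Using the sparseness of the $E_2$-page for $p$ large relative to $n$, show that the tower degenerates into something controlled by the same twisted-comodule data up to an error term (the length of a nontrivial differential pattern, or the size of a relevant $\Ext$ group) bounded by a function $N(n,p)$ with $N(n,p)\to\infty$; conclude that in $\Prod{\cF}^{\Pic}$ the two descriptions agree. (4) Assemble (2) and (3) into a symmetric monoidal equivalence, checking compatibility with tensor products (which is why one localizes at $\Pic$: to make the unit compact-ish and the monoidal structure rigid enough).

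The main obstacle is step (3): making the informal ``asymptotic collapse of the Adams spectral sequence'' into a statement that survives the ultraproduct. One has to produce, for each large $p$, an \emph{honest} functor (not just an abstract comparison of $\Ext$ groups) from a truncated/filtered version of $\Sp_{n,p}$ to a truncated version of $\Frnp$, with an explicit control on how far it fails to be an equivalence, and then argue that $\Colim{U\in\cF}\Prod{i\in U}(\text{error of size } N(n,i))$ vanishes because the error is eventually trivial on every compact object. Equivalently, one needs a form of the transfer principle at the level of ``being an equivalence'' — the paper's introduction explicitly flags that there is no general $\infty$-categorical {\L}o{\'s} theorem, so this must be done by hand for the Adams-tower comparison. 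A secondary difficulty is getting the \emph{symmetric monoidal} refinement rather than a bare equivalence: one must track the multiplicativity of the comparison functor through the $\Pic$-localization, which is where the compactness of the relevant operads (used already for the Smith--Toda corollary) and the rigidity of dualizable objects in the ultraproduct do the work.
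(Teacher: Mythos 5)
Your proposal correctly identifies the high-level phenomenon (sparseness $\to$ asymptotic algebraization) and correctly sets up the infrastructure in steps (1) and (2), but step (3) --- the heart of the argument --- has a genuine gap that you yourself flag and do not close. You propose to construct an ``honest functor'' from a truncated/filtered $\Sp_{n,p}$ to a truncated $\Frnp$, bound its failure to be an equivalence by a prime-dependent error $N(n,p)$, and conclude by observing that this error dies in the ultraproduct. This is essentially Franke's original program (build a comparison functor from Adams-tower data and control it by sparseness), and it is precisely the strategy that Patchkoria showed to be incomplete: there is no known way to promote the $E_2$-page comparison of Ext groups to a functor of $\infty$-categories with quantifiable error, nor is ``being an equivalence up to bounded error'' a well-posed notion that interacts cleanly with the ultraproduct. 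Your observation that there is no $\infty$-categorical \L o\'s theorem is exactly why this step does not go through as stated.

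The paper sidesteps this by never directly comparing $\Sp_{n,p}$ to $\Frnp$ at a finite prime. Instead it runs \emph{descent} on both sides: Mathew's descent theorem gives $\Sp_{n,p} \simeq \Tot\,\Mod_{\E^{\otimes\bullet+1}}$, and a parallel algebraic descent gives $\Frnp \simeq \Tot\,\Mod_{(\E^{\otimes\bullet+1})_{\star}}$. The uniform cohomological bound on $\Mfgn(p)$ (\cref{prop:cohomdim}) is used not to estimate an error term but to show that these totalizations commute with the $\Pic$-generated protoproduct (\cref{mitchell}, \cref{prop:ultraenlocalcomparisonfunctor}). Morita theory then reduces the problem to comparing the two cosimplicial $\bE_\infty$-rings $\Prod{\cF}\E^{\otimes\bullet+1}$ and $\Prod{\cF}(\E^{\otimes\bullet+1})_{\star}$, and the genuinely new ingredient is the \emph{formality} theorem (\cref{thm:sec4thm}): the ultraproduct of the Amitsur complex is formal. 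This is proved via an arithmetic fracture square reducing to the rational and $p$-complete cases, with the $p$-complete case handled by a functorial $C_{p-1}$-weight decomposition of $\E^{\otimes_{\hS}\bullet+1}$ --- an idea completely absent from your proposal and not obviously reachable from the Adams-tower viewpoint. So the sparseness you invoke does enter, but as sparseness of $k$-invariants exploited through group actions and weights, not as degeneration of a spectral sequence comparison.
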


The proof of this theorem passes through a descent result on each side. Mathew \cite{mathew_galois}, building on Lurie's homotopical descent theory and classical work of Hopkins and Ravenel \cite{ravbook2}, produces an equivalence
\[
\Sp_{n,p} \simeq \lim \Mod_{E_{n,p}^{\otimes \bullet+1}},
\]
where the limit is taken over the cosimplicial diagram of $\infty$-categories induced by the Amitsur complex $\E^{\otimes \bullet+1}$ of $S^0 \to \E$. 

We prove a similar result on the algebraic side. Let $\mathrm{GrAb}$ be the category of graded abelian groups and let $H \colon \mathrm{GrAb} \to \Sp$ be the Eilenberg--MacLane functor. For a spectrum $X$, we write $X_\star$ for $H\pi_*X$. Since $H$ is lax symmetric monoidal, applying $(-)_{\star}$ to a cosimplicial $\bE_{\infty}$-ring spectrum produces a cosimplicial $\bE_{\infty}$-ring spectrum. We produce an equivalence
\[
\Frnp \simeq \lim \Mod_{(\E^{\otimes \bullet+1})_{\star}}.
\]

Using these equivalences as well as the generic uniform bound on the cohomological dimension of the Morava stabilizer group, we study the analogous descent questions at a non-principal ultrafilter $\cF$. We produce equivalences
\[
\Prod{\cF}^{\Pic} \Sp_{n,p} \simeq \Loc \Pic \lim \Prod{\cF}^{\Pic} \Mod_{\E^{\otimes \bullet+1}}
\]
and
\[
\Prod{\cF}^{\Pic} \Frnp \simeq \Loc \Pic \lim \Prod{\cF}^{\Pic} \Mod_{(\E^{\otimes \bullet+1})_{\star}},
\]
where the right hand side is the localizing subcategory (closure under all colimits) on the invertible objects in the limit.

Thus it is crucial to understand the cosimplicial compactly generated $\infty$-categories
\[
\Prod{\cF}^{\Pic}\Mod_{E_{n,p}^{\otimes \bullet+1}} \text{ and } \Prod{\cF}^{\Pic} \Mod_{(\E^{\otimes \bullet+1})_{\star}}.
\]

Using Morita theory, we show that
\[
\lim \Prod{\cF}^{\Pic}\Mod_{E_{n,p}^{\otimes \bullet+1}} \simeq \lim \Mod_{\Prod{\cF}E_{n,p}^{\otimes \bullet + 1}} 
\]
and
\[
\lim \Prod{\cF}^{\Pic} \Mod_{(\E^{\otimes \bullet+1})_{\star}} \simeq \lim \Mod_{\Prod{\cF} (E_{n,p}^{\otimes \bullet + 1})_{\star}},
\]
where $\Prod{\cF} E_{n,p}^{\otimes k}$ and $\Prod{\cF} (E_{n,p}^{\otimes \bullet + 1})_{\star}$ are the ultraproducts in the $\infty$-category of $\bE_{\infty}$-ring spectra. It suffices to gain a good understanding of the cosimplicial $\bE_{\infty}$-ring spectra
\[
\Prod{\cF} E_{n,p}^{\otimes \bullet+1} \text{ and } \Prod{\cF} (E_{n,p}^{\otimes \bullet + 1})_{\star}.
\]
This is the purpose of the following theorem:

\begin{thm*} \label{intro:formality}
There is an equivalence of cosimplicial $\bE_{\infty}$-ring spectra
\[
\Prod{\cF} E_{n,p}^{\otimes \bullet+1} \simeq \Prod{\cF} (E_{n,p}^{\otimes \bullet + 1})_{\star}.
\]
\end{thm*}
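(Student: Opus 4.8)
The plan is to prove that, after applying $\Prod{\cF}$, the cosimplicial $\bE_\infty$-ring $E_{n,p}^{\otimes\bullet+1}$ becomes \emph{formal}, i.e.\ equivalent — as a cosimplicial $\bE_\infty$-ring — to the cosimplicial Eilenberg--MacLane object on its homotopy groups. Since $\pi_*$ and $H$ both commute with products of spectra and with filtered colimits, one has $\Prod{\cF}(E_{n,p}^{\otimes\bullet+1})_\star\simeq\bigl(\Prod{\cF}E_{n,p}^{\otimes\bullet+1}\bigr)_\star$, so this formality statement is precisely the assertion of the theorem.

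To get at formality I would work through the Postnikov tower. The truncation functors $\tau_{\leq m}$, applied to $\bE_\infty$-rings and levelwise to cosimplicial ones, commute with products and with filtered colimits, hence with $\Prod{\cF}$; moreover every spectrum is the limit of its Postnikov tower, so the same holds levelwise for the cosimplicial objects in play and for their ultraproducts. It therefore suffices to prove a \emph{finite-range} formality statement: for every $m$ there is a bound $p_0(m,n)$ such that, for all primes $p>p_0(m,n)$, the truncated cosimplicial $\bE_\infty$-ring $\tau_{\leq m}E_{n,p}^{\otimes\bullet+1}$ is formal. Granting this, the set of such $p$ is cofinite, hence lies in $\cF$, so inside the ultraproduct one may replace $\tau_{\leq m}E_{n,p}^{\otimes\bullet+1}$ by its formalization; organizing these replacements compatibly in $m$ via the obstruction tower and passing to the limit over $m$ then yields $\Prod{\cF}E_{n,p}^{\otimes\bullet+1}\simeq\Prod{\cF}(E_{n,p}^{\otimes\bullet+1})_\star$.

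The core is thus the finite-range formality statement for a single large prime, which I would attack with $\bE_\infty$-obstruction theory. First I would reduce the cosimplicial question to a question about the single $\bE_\infty$-ring $E_{n,p}$: by Devinatz--Hopkins Galois descent the cosimplicial $\bE_\infty$-ring $E_{n,p}^{\otimes\bullet+1}$ is (the relevant localization of) the cobar/Amitsur complex of the pro-Galois extension $L_{K(n,p)}S^0\to E_{n,p}$, and the cosimplicial structure maps of a cobar complex are ``algebraic'', so formality of the cosimplicial object through degree $m$ follows from formality of $E_{n,p}$ through degree $m$. For the latter, the obstructions to $E_{n,p}$ being formal through degree $m$ live in topological Andr\'e--Quillen cohomology groups of $E_{n,p}$ in a range of degrees bounded in terms of $m$; because $\pi_0 E_{n,p}$ is a power series ring over $\Z_p$ — in particular formally smooth, so its \emph{algebraic} Andr\'e--Quillen cohomology vanishes in positive degrees — and $\pi_* E_{n,p}$ is concentrated in even degrees, these topological Andr\'e--Quillen groups reduce to the purely topological correction terms, which are built from the mod $p$ dual Steenrod algebra and are therefore supported in internal degrees $0$ and $\geq 2p-2$. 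A degree count then forces all obstructions to vanish once $2p-2$ exceeds a quantity depending only on $m$, providing the bound $p_0(m,n)$.

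The main obstacle is this last paragraph: setting up the obstruction theory so that the vanishing range is uniform in the cosimplicial level and so that the formality equivalences can be chosen compatibly across truncation levels, and carrying out the Galois-descent reduction with care — in particular transporting between the $K(n,p)$-local category, where the Galois description lives, and the $E(n,p)$-local category $\Sp_{n,p}$ of the statement. A variant that sidesteps the Galois reduction would analyze the obstruction groups of the cosimplicial $\bE_\infty$-ring directly; there a second uniform input controls the ``descent direction'' of the computation, namely the generic uniform bound asserting that for $p>n+1$ the extended Morava stabilizer group $\G_{n,p}$ has cohomological dimension $n^2$, independent of $p$, so that the stabilizer-group contribution to the obstruction groups is bounded while the $\bE_\infty$ contribution is pushed into high internal degrees by the dual Steenrod algebra as above. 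Either way, the heart of the argument is the interaction of these two uniform-in-$p$ vanishing phenomena with the fact that an ultraproduct at a non-principal ultrafilter only sees cofinitely many primes.
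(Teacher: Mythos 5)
Your overall strategy---sparseness of $k$-invariants as $p \to \infty$, detected one Postnikov stage at a time and assembled at the ultrafilter---is in the same spirit as the paper's, but your technical route is genuinely different and contains a gap at its crucial step. The paper does not do Postnikov truncation or $\bE_\infty$-obstruction theory for the $p$-complete part at all; instead it uses the arithmetic fracture square to split into a rational piece (handled by Andr\'e--Quillen obstruction theory, roughly as in your last paragraph) and a $p$-complete piece $\E^{\otimes_{\hS}\bullet+1}$, and then produces for the latter a \emph{functorial} weight decomposition coming from the $C_{p-1} \subset \Z_p^\times \subset \G_n$ action, packaged in the operadic $A^{\otimes|\rhd}$-formalism of \S 4.5--4.6 precisely so that the decomposition respects the full cosimplicial and $\bE_\infty$ coherence at once. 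After passing to the ultraproduct each weight piece becomes concentrated in a single degree, and formality falls out from a general statement about $\Z^{\otimes|\rhd}$-algebras (\cref{p:ratio-formal}).

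The gap in your proposal is the sentence asserting that ``the cosimplicial structure maps of a cobar complex are `algebraic', so formality of the cosimplicial object through degree $m$ follows from formality of $E_{n,p}$ through degree $m$.'' This is exactly the coherence problem the paper is at pains to avoid; as the authors note in the outline, formality is needed ``not only at every cosimplicial degree but also all of the coherence data in the diagram as well as the algebra structure,'' and this is precisely where Franke's original argument was found to be incomplete (Patchkoria). Knowing that each cosimplicial level $E_{n,p}^{\otimes k+1}$ (or its $m$-truncation) is individually formal, plus that the face and degeneracy maps look algebraic after $\pi_*$, does not by itself produce an equivalence of \emph{cosimplicial} $\bE_\infty$-rings: the homotopies relating the formalizing equivalences at different levels, and their higher coherences, are unconstrained, and your sketch offers no mechanism to control them. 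Your closing remark about ``organizing these replacements compatibly in $m$ via the obstruction tower'' is where the real work would have to live, but as written it is circular---the tower you would need is the very tower of coherent formalizations you are trying to construct.

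There is also a secondary issue: the Devinatz--Hopkins pro-Galois description applies to the Amitsur complex over the $K(n)$-local sphere, whereas the complex in the theorem is $E_{n,p}^{\otimes\bullet+1}$ over $S^0$ (and the paper carefully replaces $S^0$ by $\hS$, not by $L_{K(n)}S^0$, exactly so that the $C_{p-1}$-action has pure weight in each cosimplicial degree). Finally, the uniform bound on $\mathrm{cd}(\G_{n,p})$ that you invoke in your ``variant'' paragraph is not used in the formality argument at all; in the paper it enters only in the descent step (\S 5), to justify that $\Tot$ commutes with the ultraproduct via \cref{mitchell}. Conflating the two uniform inputs suggests the structure of the argument you are envisioning does not quite match what is actually needed.
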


In other words, the cosimplicial $\bE_{\infty}$-ring spectrum $\Prod{\cF} E_{n,p}^{\otimes \bullet+1}$ is formal. Several ingredients go into the proof of this theorem. The arithmetic fracture square is used to reduce the result to the rational case and the case where the tensor product is relative to the $p$-complete sphere spectrum $\hS$. The proof in the rational case is an application of obstruction theory. The proof in the case relative to $\hS$ is more difficult. We develop a functorial theory of weights for spectra equipped with a naive $C_{p-1}$-action and apply it to the cosimplicial $\bE_{\infty}$-ring spectrum $E_{n,p}^{\otimes_{\hS} \bullet+1}$. This gives a weight decomposition of cosimplicial spectra of the form
\[
E_{n,p}^{\otimes_{\hS} \bullet+1} \simeq \Oplus{\chi \in \hom(C_{p-1},\Z_{p}^{\times})} \big ( E_{n,p}^{\otimes_{\hS} \bullet+1} \big )_{\chi},
\]
indexed by characters of $C_{p-1}$. This decomposition reflects the fact that the non-trivial $k$-invariants of $E_{n,p}$ grow sparser as $p$ increases. Applying the ultraproduct over a non-principal ultrafilter, we find that the cosimplicial spectrum is formal.\\

\paragraph{\bf{Acknowledgements}} We would like to thank Mark Behrens, David Gepner, Paul Goerss, Rune Haugseng, Lars Hesselholt, Mike Hopkins, Irakli Patchkoria, and the Homotopy Theory chat room for useful discussions and would all like to thank the MPIM for its hospitality. We are grateful to the referees for many helpful suggestions and corrections. The first author was supported by the DNRF92 and the European Union’s Horizon 2020 research and innovation programme under the Marie Sklodowska-Curie grant agreement No.~751794. The second author is supported by the Alon fellowship and ISF 1588/18. The third author was supported by SFB 1085 \emph{Higher Invariants} funded by the DFG and NSF grant No.~ DMS-1906236. \\

\paragraph{\bf{Conventions}}
Throughout this paper we will employ the following conventions:
\begin{itemize}
	\item We write $\Map$ for mapping spaces in $\infty$-categories and $\Hom$ for mapping spectra in stable $\infty$-categories.
	\item The $\infty$-category of commutative monoids in a symmetric monoidal $\infty$-category $\cC$ will be denoted by $\CAlg(\cC)$ and we refer to its objects as commutative algebras in $\cC$. For $\cC = \Sp$ equipped with its natural symmetric monoidal structure, we usually say $\mathbb{E}_{\infty}$-ring spectrum or $\mathbb{E}_{\infty}$-ring instead of commutative algebra.
	\item A symmetric monoidal presentable $\infty$-category $\cC = (\cC,\otimes)$ is called presentably symmetric monoidal if the monoidal structure $\otimes$ preserves colimits separately in each variable.
	\item By symmetric monoidal compactly generated $\infty$-category we always mean a compactly generated $\infty$-category equipped with a symmetric monoidal structure $\otimes$ such that $\otimes$ commutes with all colimits and restricts to a symmetric monoidal structure on the full subcategory $\cC^{\omega}$ of compact objects in $\cC$. In particular, the unit object is assumed to compact.
	\item If $\cC$ is a presentably symmetric monoidal stable $\infty$-category and $A$ is a commutative algebra in $\cC$, then $\Mod_A(\cC)$ denotes the stable $\infty$-category of modules over $A$ in $\cC$. In the case $\cC = \Sp$, we will write $\Mod_A$ instead of $\Mod_A(\cC)$ for simplicity. Similarly, we write $\CAlg_A(\cC)$ for the $\infty$-category of commutative $A$-algebras in $\cC$ and omit the $\infty$-category $\cC$ when it is clear from context and in particular whenever $\cC = \Sp$.
	\item  If $\cC$ is a presentable stable $\infty$-category and $\cS$ is a collection of objects in $\cC$, then we will write $\Loc_{\cC}(\cS)$ for the smallest localizing (stable) subcategory of $\cC$ containing $\cS$.
	\item The totalization of a cosimplicial diagram of compactly generated $\infty$-categories $\cE^{\bullet}$, denoted $\Tot(\cE^{\bullet})$, will always refer to the limit taken in the $\infty$-category of compactly generated $\infty$-categories.
	\item Let $G$ be a finite group. The category of naive $G$-spectra is by definition the functor category $\Fun(BG,\Sp)$, where the classifying space $BG$ of $G$ is considered as an $\infty$-groupoid. 
\end{itemize}

\section{Recollections}

\subsection{Ultrafilters}\label{ssec:ultrafilters}

In this subsection we explain the basics of ultrafilters and ultraproducts. Our goal is to give the background necessary for the paper and a brief introduction for the working homotopy theorist. More details may be found in many textbooks, e.g., \cite{bellslomson} or \cite{changkeisler}; we will primarily follow \cite{schoutensbook}.

The basic definition is the following:
\begin{defn}
An ultrafilter $\cF$ on a set $I$ is a nonempty collection of subsets of $I$ satisfying:
	\begin{enumerate}
		\item The empty set is not an element of $\cF$.
		\item If $A \subseteq B \subseteq I$ and $A \in \cF$, then $B \in \cF$.
		\item If $A,B \in \cF$, then $A \cap B \in \cF$.
		\item If $A \subseteq I$, then either $A \in \cF$ or $I \setminus A \in \cF$.
	\end{enumerate}
\end{defn}
A filter is a subset of the power set of $I$ satisfying all but the last axiom. A filter may be completed to an ultrafilter in many ways, assuming the axiom of choice.  

\begin{lem} \label{lem:completetoultrafilter}
If $\cF$ is a filter on $I$, then there exists an ultrafilter $\overline{\cF}$ on $I$ containing $\cF$. 
\end{lem}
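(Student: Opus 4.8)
The plan is to prove Lemma~\ref{lem:completetoultrafilter} by a standard application of Zorn's lemma to the poset of filters on $I$ containing $\cF$, ordered by inclusion. First I would observe that this poset is nonempty, since $\cF$ itself lies in it. Next I would verify that every chain $\{\cF_j\}_{j \in J}$ of filters containing $\cF$ has an upper bound: I claim $\cF_\infty := \bigcup_{j \in J} \cF_j$ is again a filter. The emptyset is not in $\cF_\infty$ since it is in no $\cF_j$; upward closure is immediate since if $A \in \cF_\infty$ then $A \in \cF_j$ for some $j$, hence any superset of $A$ in $I$ lies in $\cF_j \subseteq \cF_\infty$; and closure under finite intersections uses that the $\cF_j$ form a chain, so given $A, B \in \cF_\infty$ with $A \in \cF_{j_1}$ and $B \in \cF_{j_2}$, one of $\cF_{j_1} \subseteq \cF_{j_2}$ or $\cF_{j_2} \subseteq \cF_{j_1}$ holds, putting both $A$ and $B$ in a common $\cF_j$, whence $A \cap B \in \cF_j \subseteq \cF_\infty$. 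Clearly $\cF \subseteq \cF_\infty$, so $\cF_\infty$ is an upper bound in the poset. By Zorn's lemma there is a maximal element $\overline{\cF}$.

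It remains to check that a maximal filter $\overline{\cF}$ containing $\cF$ is in fact an ultrafilter, i.e.\ that it satisfies axiom (4): for every $A \subseteq I$, either $A \in \overline{\cF}$ or $I \setminus A \in \overline{\cF}$. Suppose neither holds. I would first note that it cannot be that both $A$ and $I \setminus A$ meet every element of $\overline{\cF}$ only in a way that\ldots\ more precisely, I claim that at least one of the two collections
\[
\overline{\cF}_A := \{\, B \subseteq I : B \supseteq A \cap C \text{ for some } C \in \overline{\cF} \,\}, \qquad
\overline{\cF}_{A^c} := \{\, B \subseteq I : B \supseteq (I \setminus A) \cap C \text{ for some } C \in \overline{\cF} \,\}
\]
is a filter. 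Indeed, each is upward closed and avoids $\emptyset$ precisely when $A$ (resp.\ $I \setminus A$) has nonempty intersection with every $C \in \overline{\cF}$, and it is closed under finite intersections because $\overline{\cF}$ is. If $A \cap C = \emptyset$ for some $C \in \overline{\cF}$, then $C \subseteq I \setminus A$, so $I \setminus A \in \overline{\cF}$ by upward closure, contradicting our assumption; similarly if $(I \setminus A) \cap C = \emptyset$ for some $C \in \overline{\cF}$ then $A \in \overline{\cF}$, again a contradiction. Hence both $\overline{\cF}_A$ and $\overline{\cF}_{A^c}$ are filters, each visibly containing $\overline{\cF}$ (take $B = C$). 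But $A \in \overline{\cF}_A$ while $A \notin \overline{\cF}$, so $\overline{\cF}_A$ strictly contains $\overline{\cF}$, contradicting maximality. Therefore axiom (4) holds and $\overline{\cF}$ is an ultrafilter containing $\cF$.

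The only genuine subtlety — and the step I would flag as the main point rather than the main obstacle — is the verification that $\overline{\cF}_A$ is closed under finite intersections and avoids the empty set; everything else is bookkeeping. This whole argument, of course, invokes the axiom of choice through Zorn's lemma, as the statement anticipates. No deeper input is needed.
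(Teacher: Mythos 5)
Your proof is correct and follows exactly the same strategy as the paper's: Zorn's lemma applied to the poset of filters containing $\cF$, using that the union of a chain of filters is a filter and that a maximal filter must be an ultrafilter. The paper leaves these two steps as one-line assertions, and your write-up simply supplies the details.
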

\begin{proof}
This is an application of Zorn's lemma. The union of a chain of filters is a filter and a maximal filter is an ultrafilter.
\end{proof}

The following lemma is useful:

\begin{lem}\label{lem:finitepartition}
Suppose $I$ is a set and $\cF$ is an ultrafilter on $I$. If $I_1 \sqcup I_2 \sqcup \ldots \sqcup I_n = I$ is a finite partition of $I$, then there exists exactly one $1\le i \le n$ such that $I_i \in \cF$. 
\end{lem}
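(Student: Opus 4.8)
The plan is to prove existence and uniqueness separately, in each case by a short argument by contradiction that uses only the four axioms defining an ultrafilter, the crucial point being that $\emptyset \notin \cF$.

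\textbf{Uniqueness.} Suppose, for contradiction, that $I_i \in \cF$ and $I_j \in \cF$ for two distinct indices $i \ne j$. Since $I_1 \sqcup I_2 \sqcup \cdots \sqcup I_n = I$ is a partition, the blocks are pairwise disjoint, so $I_i \cap I_j = \emptyset$. But axiom (3) forces $I_i \cap I_j \in \cF$, which contradicts axiom (1). Hence at most one block lies in $\cF$.

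\textbf{Existence.} Suppose, for contradiction, that $I_i \notin \cF$ for every $1 \le i \le n$. By axiom (4), $I \setminus I_i \in \cF$ for each $i$. Iterating axiom (3) (a finite intersection of members of $\cF$ is again a member of $\cF$) yields $\bigcap_{i=1}^{n} (I \setminus I_i) \in \cF$. However, by De Morgan's law together with the partition hypothesis $\bigcup_{i=1}^{n} I_i = I$, we get $\bigcap_{i=1}^{n}(I \setminus I_i) = I \setminus \bigcup_{i=1}^{n} I_i = \emptyset$, again contradicting axiom (1). Hence at least one block lies in $\cF$.

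Combining the two parts shows there is exactly one index $i$ with $I_i \in \cF$. There is essentially no obstacle in this argument; the only minor point to be careful about is that the iteration of axiom (3) in the existence step is a genuinely finite process, which is guaranteed since the partition has finitely many parts. Using the single De Morgan identity $\bigcap_{i}(I \setminus I_i) = I \setminus \bigcup_i I_i$ bundles this into one application of finite-intersection closure and avoids an explicit induction on $n$.
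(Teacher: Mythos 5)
Your proof is correct and follows essentially the same approach as the paper's: the paper also derives a contradiction from $\bigcap_{i=1}^n(I\setminus I_i)=\varnothing\in\cF$ for existence and from $I_i\cap I_j=\varnothing\in\cF$ for uniqueness, merely stated more tersely.
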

\begin{proof}
If $I_i \notin \cF$ for all $i$, then $\varnothing = \bigcap_{i=1}^n (I \setminus I_i) \in \cF$, a contradiction. If there exist $i \ne j$ such that $I_i,I_j \in \cF$, then $\varnothing = I_i \cap I_j \in \cF$. The claim follows. 
\end{proof}

\begin{ex}
Given an element $x \in I$, the set of subsets of $I$ containing $x$ is an ultrafilter denoted $\cF_x$. The ultrafilters of this form are called principal ultrafilters. Because of this, the ultrafilters on a set may be considered as generalized elements of the set. 
\end{ex}

\begin{lem} \label{finiteprincipal}
An ultrafilter $\cF$ that contains a finite set is principal.
\end{lem}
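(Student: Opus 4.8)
The plan is to extract a single point from the finite member of $\cF$ using Lemma~\ref{lem:finitepartition}, and then to verify that $\cF$ is exactly the principal ultrafilter at that point. Suppose $\cF$ is an ultrafilter on $I$ and $A \in \cF$ with $A$ finite, say $A = \{x_1,\dots,x_n\}$. Consider the finite partition
\[
I = \{x_1\} \sqcup \{x_2\} \sqcup \cdots \sqcup \{x_n\} \sqcup (I \setminus A).
\]
By Lemma~\ref{lem:finitepartition}, exactly one block of this partition belongs to $\cF$.

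Next I would rule out the block $I \setminus A$: since $A \in \cF$ and $A \cap (I \setminus A) = \varnothing$, axiom (3) would give $\varnothing \in \cF$, contradicting axiom (1). Hence the distinguished block is one of the singletons, say $\{x_i\} \in \cF$ for some $1 \le i \le n$.

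Finally, I would check that $\cF$ coincides with the principal ultrafilter $\cF_{x_i}$ of the example above. If $B \in \cF$, then $B \cap \{x_i\} \in \cF$ by axiom (3), so $B \cap \{x_i\} \neq \varnothing$ by axiom (1), i.e. $x_i \in B$; thus every element of $\cF$ contains $x_i$. Conversely, if $x_i \in B$, then $\{x_i\} \subseteq B$ and axiom (2) gives $B \in \cF$. Therefore $\cF = \cF_{x_i}$, so $\cF$ is principal.

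There is no serious obstacle here: the argument is a direct application of Lemma~\ref{lem:finitepartition} together with the ultrafilter axioms. The only point requiring a little care is that Lemma~\ref{lem:finitepartition} applies to partitions of the whole set $I$, so one must remember to adjoin the block $I \setminus A$ to the partition into singletons of $A$, and then argue that this extra block cannot be the one lying in $\cF$.
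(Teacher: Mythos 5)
Your proof is correct and takes essentially the same approach as the paper: partition $I$ into the singletons of $A$ together with $I\setminus A$, rule out $I\setminus A$ using $A\in\cF$, and invoke \Cref{lem:finitepartition} to land on a singleton. The only difference is that you spell out the final verification that $\cF=\cF_{x_i}$, which the paper leaves implicit once $\{x_i\}\in\cF$ is established.
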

\begin{proof}
We may partition $I$ into the points of the finite set and the complement of the finite set. Since $\cF$ contains a finite set, it does not contain the complement, so one of those points must be in $\cF$ by the previous lemma. 
\end{proof}

It is reasonably easy to construct non-principal filters. For instance, the collection of cofinite subsets of an infinite set $I$ is a filter, known as the Frechet filter $\cF_{\infty}$, but it is not an ultrafilter. By \cite{B}, the existence of a non-principal ultrafilter is independent of $\mathrm{ZF}$ so it is impossible to explicitly describe non-principal ultrafilters.

\begin{lem}
An ultrafilter $\cF$ is non-principal if and only if it contains $\cF_{\infty}$.
\end{lem}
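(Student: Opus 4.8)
The plan is to prove both implications directly from the ultrafilter axioms, leaning on \Cref{finiteprincipal}. Throughout, note that $\cF_{\infty}$ only makes sense as a filter when $I$ is infinite, which we assume (for finite $I$ every ultrafilter is principal and there is nothing to prove).

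First I would handle the forward direction. Suppose $\cF$ is non-principal and let $A \subseteq I$ be an arbitrary cofinite set, so that $I \setminus A$ is finite. If $A \notin \cF$, then axiom~(4) forces $I \setminus A \in \cF$; but then $\cF$ contains a finite set, so \Cref{finiteprincipal} makes $\cF$ principal, contradicting our hypothesis. Hence $A \in \cF$ for every cofinite $A$, i.e.\ $\cF_{\infty} \subseteq \cF$.

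For the converse, suppose $\cF_{\infty} \subseteq \cF$ and assume for contradiction that $\cF$ is principal, say $\cF = \cF_x$ for some $x \in I$. Then $\{x\} \in \cF$. On the other hand, $I \setminus \{x\}$ is cofinite (here is where infinitude of $I$ is used, so that the complement is nonempty — though even for a singleton complement the argument goes through), hence $I \setminus \{x\} \in \cF_{\infty} \subseteq \cF$. Applying axiom~(3) to these two members of $\cF$ gives $\varnothing = \{x\} \cap (I \setminus \{x\}) \in \cF$, contradicting axiom~(1). Therefore $\cF$ is non-principal.

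There is no real obstacle here: the argument is a short diagram chase through the four axioms, and the only point requiring a moment's care is the implicit standing assumption that $I$ is infinite so that $\cF_{\infty}$ is genuinely a filter.
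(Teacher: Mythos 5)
Your proof is correct and takes essentially the same approach as the paper's, which simply cites \Cref{finiteprincipal} as "immediate"; you have just spelled out the short argument in both directions, and your use of \Cref{finiteprincipal} together with axioms (1), (3), and (4) matches exactly what the paper's terse proof is gesturing at.
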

\begin{proof}
This follows immediately from \Cref{finiteprincipal}.
\end{proof}

\begin{lem}\label{lem:ultrafiltersupply} 
If $A \subseteq I$ is infinite, then there exists a non-principal ultrafilter $\cF$ on $I$ such that $A \in \cF$. 
\end{lem}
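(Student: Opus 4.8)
The plan is to hand-build a filter that simultaneously forces $A$ to be ``large'' and forces non-principality, and then invoke \Cref{lem:completetoultrafilter} to complete it to an ultrafilter. Concretely, I would take the Frechet filter $\cF_{\infty}$ of cofinite subsets of $I$, adjoin the single set $A$, and let $\cF_0$ be the filter on $I$ generated by $\{A\} \cup \cF_{\infty}$, i.e.\ the collection of all subsets of $I$ containing some finite intersection of members of $\{A\}\cup\cF_\infty$. By construction $\cF_0$ is upward closed and closed under finite intersections, so the only thing that genuinely needs checking is that $\cF_0$ is \emph{proper}, i.e.\ that $\varnothing \notin \cF_0$.

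For that, observe that every finite intersection of members of $\{A\}\cup\cF_\infty$ is, using that $\cF_\infty$ is closed under finite intersections and $A\cap A = A$, of the form $I\setminus F$ or $A\cap(I\setminus F) = A\setminus F$ for some finite $F\subseteq I$. Since $A$ is infinite, $A\setminus F$ is nonempty, and $I\setminus F$ is nonempty as well; hence no such intersection is empty, and $\cF_0$ is a proper filter containing both $A$ and $\cF_\infty$. Applying \Cref{lem:completetoultrafilter} yields an ultrafilter $\cF$ with $\cF_0\subseteq\cF$; then $A\in\cF$ because $A\in\cF_0$, and $\cF_\infty\subseteq\cF_0\subseteq\cF$, so by the lemma characterizing non-principal ultrafilters (an ultrafilter is non-principal iff it contains $\cF_\infty$) the ultrafilter $\cF$ is non-principal, as desired.

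I do not expect a real obstacle: the one delicate point is exactly the properness check, which is where the hypothesis that $A$ is infinite is used. If one preferred to avoid invoking $\cF_\infty$, an equally short route is to generate a filter from the base $\{A\setminus F : F\subseteq A\text{ finite}\}$ (all of whose members are infinite, hence nonempty), extend it to an ultrafilter $\cF$, and then rule out $\cF = \cF_x$ directly: if $x\in A$ then $\{x\}$ and $A\setminus\{x\}$ both lie in $\cF$, while if $x\notin A$ then $\{x\}$ and $A$ both lie in $\cF$; in either case $\varnothing\in\cF$, a contradiction.
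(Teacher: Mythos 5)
Your proof is correct and is essentially the paper's argument: the filter you generate from $\{A\}\cup\cF_\infty$ coincides with the paper's ``collection of subsets of $I$ that contain all but finitely many elements of $A$,'' properness follows from $A$ being infinite, and completion via \Cref{lem:completetoultrafilter} plus the characterization of non-principal ultrafilters finishes the job. The alternative sketch at the end is a minor variant of the same idea.
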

\begin{proof}
Consider the collection of subsets of $I$ that contain all but a finite number of elements in $A$. This is a filter that contains $\cF_{\infty}$ and by \Cref{lem:completetoultrafilter} it can be completed to an ultrafilter.
\end{proof}

\subsection{Set-theoretic ultraproducts}

For the rest of this section, $I$ will denote some indexing set, for example the set of prime numbers $\cP = \{2,3,5,7,\ldots\}$.

\begin{defn}
Let $(A_i)_{i \in I}$ be a collection of nonempty sets and let $\cF$ be an ultrafilter on $I$. The ultraproduct of the sets $(A_i)_{i \in I}$ over the ultrafilter $\cF$ is the quotient of the product $\Prod{i \in I} A_i$ defined by the relation
\[
(a_i)_{i \in I} \sim (b_i)_{i \in I} \text{ if and only if } \{i \in I| a_i = b_i\} \in \cF.
\]
We will denote the quotient $(\Prod{i \in I} A_i)_{/\sim}$ by $\Prod{\cF} A_i$. 
\end{defn}

By definition, there is a quotient map from the product to the ultraproduct
\[
\Prod{i \in I} A_i \rightarrow \Prod{\cF}A_i.
\]
We will denote the image of $(a_i) \in \Prod{i \in I}A_i$ in the ultraproduct by $[a_i]$. If the $A_i$'s are all the same set $A$, then we will refer to an element $[a_i] \in \Prod{\cF}A$ as constant if it is the image of $(a)_{i \in I}$ for some $a \in A$.

The ultraproduct of a collection of sets of bounded finite cardinality is particularly simple.

\begin{ex}\label{ex:finite}
Let $X$ be a finite set and let $\cF$ be an ultrafilter on $I$. There is an isomorphism
\[
\Prod{\cF} X \cong X,
\]
where the ultraproduct is taken over the constant collection $A_i = X$: Let $(x_i)_{i \in I} \in \Prod{i \in I}X$. We may may produce a finite partition of $I$ indexed by the elements of $X$ by setting
\[
I_{x} = \{i \in I \colon x_i = x\}.
\]
By \cref{lem:finitepartition}, only one of these sets can be in $\cF$, thus $[x_i]$ is constant in $\Prod{\cF}X$.
\end{ex}

Ultraproducts preserve many algebraic structures, for instance the structure of being an abelian group, commutative ring, field, and so on. These are all special cases of a result due to {\L}o{\'s}, which is often referred to as the fundamental theorem of ultraproducts. 

\begin{thm}[{\L}o{\'s}]\label{thm:los} (\cite[Theorem 4.1.9]{changkeisler})
Let $\cL$ be a language and let $\cF$ be an ultrafilter on a set $I$. Suppose $(X_i)_{i \in I}$ is  a collection of $\cL$-structures with ultraproduct $X=\Prod{\cF}X_i$. Let
\[
(x_{i,1})_{i \in I}, \ldots, (x_{i,n})_{i \in I} \in \Prod{i \in I}X_i
\]
be $n$ elements in the product. Then for any formula $\phi$ in $n$ unbounded variables, $\phi([x_{i,1}], \ldots, [x_{i,n}])$ holds in $X$ if and only if 
\[
\{i\in I\colon \phi(x_{i,1}, \ldots, x_{i,n}) \text{ holds in } X_i)\} \in \cF.
\]
\end{thm}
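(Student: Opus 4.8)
The final statement is {\L}o\'s's theorem (\Cref{thm:los}), so I would prove it by the standard model-theoretic induction on the structure of the formula $\phi$.

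\textbf{Setup.} First I would fix the convention that elements of $X = \Prod{\cF} X_i$ are equivalence classes $[x_i]$, and set up the interpretation of terms: for an $\cL$-term $t(v_1,\dots,v_n)$ and elements $[x_{i,1}],\dots,[x_{i,n}]$, one checks by induction on the term that $t^X([x_{i,1}],\dots,[x_{i,n}]) = [t^{X_i}(x_{i,1},\dots,x_{i,n})]$; this is immediate from the definition of the algebraic operations on the ultraproduct (they are computed componentwise on representatives, which is well-defined precisely because $\cF$ is closed under finite intersection). The base case of the main induction is then atomic formulas. For $\phi$ of the form $t_1 = t_2$, the claim $t_1^X = t_2^X$ at the chosen elements says exactly $[t_1^{X_i}] = [t_2^{X_i}]$, which by definition of the equivalence relation means $\{i : t_1^{X_i}(x_{i,\bullet}) = t_2^{X_i}(x_{i,\bullet})\} \in \cF$ --- and that is the desired condition. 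For a relation symbol $R$, one needs the relation $R^X$ on the ultraproduct to be defined so that $R^X([x_{i,\bullet}])$ holds iff $\{i : R^{X_i}(x_{i,\bullet})\} \in \cF$; this is well-defined (independent of representatives) again because $\cF$ is a filter, so the base case holds by fiat of the definition.

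\textbf{Inductive step.} Now I would handle the logical connectives and quantifier. The conjunction case uses that $\cF$ is closed under intersection and upward closure: if $\{i : \phi_1 \text{ holds}\} \in \cF$ and $\{i : \phi_2 \text{ holds}\} \in \cF$, their intersection is the set where $\phi_1 \wedge \phi_2$ holds and lies in $\cF$; conversely that intersection being in $\cF$ forces both factors in by upward closure. The negation case is where the defining property of an \emph{ultra}filter (axiom (4): for every $A$, either $A \in \cF$ or $I \setminus A \in \cF$) enters: $\neg\phi$ holds in $X$ iff $\phi$ does not, iff (by the inductive hypothesis) $\{i : \phi \text{ holds}\} \notin \cF$, iff (by axiom (4) together with axiom (1)) $\{i : \neg\phi \text{ holds}\} = I \setminus \{i : \phi \text{ holds}\} \in \cF$. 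Disjunction and implication then follow formally from $\neg$ and $\wedge$. For the existential quantifier $\exists v\, \psi(v, \bar v)$: if $\exists v\,\psi$ holds in $X$, pick a witness $[y_i] \in X$, apply the inductive hypothesis to $\psi$ to get $\{i : \psi(y_i, x_{i,\bullet}) \text{ holds}\} \in \cF$, and observe this set is contained in $\{i : \exists v\, \psi(v, x_{i,\bullet}) \text{ holds}\}$, which is therefore in $\cF$ by upward closure. Conversely, if $\{i : X_i \models \exists v\, \psi(v, x_{i,\bullet})\} =: U \in \cF$, use the axiom of choice to select for each $i \in U$ a witness $y_i \in X_i$ (and $y_i$ arbitrary for $i \notin U$, using that the $X_i$ are nonempty); then $[y_i] \in X$ and the inductive hypothesis gives $X \models \psi([y_i], [x_{i,\bullet}])$ since $U$ is in $\cF$, hence $X \models \exists v\, \psi$. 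The universal quantifier is dual, via $\forall = \neg \exists \neg$.

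\textbf{Main obstacle.} There is no deep obstacle here --- this is a textbook induction --- but the two points that require genuine care are: (a) making sure, before the induction even starts, that all the function and relation symbols are \emph{well-defined} on the ultraproduct, i.e.\ that componentwise definitions descend to the quotient, which is exactly where filter-closure under finite intersection is used; and (b) the use of the axiom of choice in the backward direction of the existential step to simultaneously choose witnesses across all indices in $U$. The only place the \emph{ultra}filter hypothesis (as opposed to merely a filter) is essential is the negation step. I would present the argument compactly, spelling out the atomic case, the negation case, and the existential case in full, and leaving $\wedge, \vee, \rightarrow, \forall$ to the reader as formal consequences.
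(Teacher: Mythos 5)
Your proof is correct and is the standard model-theoretic argument by induction on the structure of $\phi$, handling atomic formulas, the Boolean connectives (with the ultrafilter dichotomy entering only at negation), and the existential quantifier (with the axiom of choice used to select witnesses). Note, however, that the paper does not actually supply a proof of this theorem: it is stated as a classical result (cf.\ the references to Bell--Slomson, Chang--Keisler, and Schoutens in Section 2.1) and used as a black box, so there is no internal argument to compare against. Your write-up is exactly what one would insert if a proof were desired, and it correctly flags both subtleties that matter here --- well-definedness of the componentwise interpretation on the quotient (which needs only that $\cF$ is a filter) and the reliance on choice in the backward direction of the existential step (which also needs the $X_i$ to be nonempty, as the paper's definition of ultraproduct assumes).
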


Informally speaking, the content of this theorem can be summarized by saying that a first order statement holds for the ultraproduct if and only if it holds on a set in the ultrafilter.  

\begin{ex}\label{ex:pseudofinite}
Let $A_p = \F_p$ be the finite field of order $p$. Given an ultrafilter $\cF$ on the set of primes $\cP$, we may form the ultraproduct 
\[
\F_{\cF} = \Prod{\cF} \F_p. 
\]
By {\L}o{\'s}'s theorem, $\F_{\cF}$ is a field which behaves much like finite fields. For instance, the absolute Galois groups of $\F_{\cF}$ is $\hat{\Z}$. The fields obtained in this way are known as pseudo-finite fields \cite{pseudofinite}.

If $\cF = \cF_p$ is principal then the ultraproduct is just $\F_p$. Otherwise the ultraproduct is a characteristic $0$ field. The reason for this is because multiplication by $n$ on $\F_p$ is an isomorphism for all but finitely many $p$. Since $\cF$ is non-principal this means that it is an isomorphism on a set in the ultrafilter. This implies that multiplication by $n$ induces an automorphism of the ultraproduct. 

The properties of these fields depend on the ultrafilter chosen. For example, by {\L}o{\'s}'s theorem $-1$ is a square in $\F_{\cF}$ if and only if $\cF$ contains the set of primes that are congruent to $1$ mod $4$.
\end{ex}

\begin{ex}\label{ex:fadics}
Consider the ultraproduct $\Z_{\cF} = \Prod{\cF}\Z_p$ of the $p$-adic integers with respect to a non-principal ultrafilter $\cF$ on $\cP$. By an argument similar to the one used in \Cref{ex:pseudofinite}, $\Z_{\cF}$ is a commutative algebra over $\Q$.
\end{ex}

\begin{ex}
We may let $A_i = \N$ and take the ultraproduct 
\[
\N^{\cF} = \Prod{\cF} \N. 
\]
For $\cF$ non-principal this is a semiring of cardinality $2^{\aleph_0}$. If the sequence $(n_i) \in \Prod{i \in I}\N$ is bounded on a set in the ultrafilter then $[n_i]$ is constant.  
\end{ex}

\begin{ex}\label{ex:zf}
We will let $\Z^{\cF}$ be the ultraproduct $\Prod{\cF}\Z$. The canonical maps $\Z \rightarrow \Z/n$, induce a surjection
\[
\Z^{\cF} \twoheadrightarrow \hat{\Z}.
\]
The kernel of this map is an uncountable rational vector space. 
\end{ex}

As we will show in the next example, the ultraproduct does not necessarily send polynomial rings to polynomial rings. 

\begin{ex}
Let $A_p = \F_p[x]$ and let $\cF$ be an ultrafilter on $\cP$. Consider the ultraproduct
\[
\Prod{\cF} (\F_p[x]).
\]
If $\cF$ is principal, then this is the polynomial ring in one variable over $\F_p$. It is generated as a module over $\F_p$ by the monomials $x^n$ where $n \in \N = \N^{\cF}$. For $\cF$ a non-principal ultrafilter, the resulting ring is very large and more difficult to describe (with generators and relations). For instance, it contains the equivalence class of
\[
\left ( \sum_{i=0}^{p}x^i\right )_{p \in \cP}
\]
in which the degree and number of monomials involved in each term both grow to infinity.
\end{ex}

This example represents a weakness of ultraproducts. They do not preserve gradings and send unbounded phenomena (such as sequence of polynomials with unbounded degree) to rather exotic objects. There is a solution to this problem, known as the protoproduct \cite[Chapter 9]{schoutensbook}, whose categorical analogue plays an important role throughout this paper. The protoproduct takes in a collection of filtered objects and produces a subset of the ultraproduct. The next examples display the behavior of the protoproduct for two filtrations on polynomial rings.

\begin{ex} \label{ex:protopoly}
We will use the notation of the previous example. Consider the collection $(\F_p[x], \F_p[x]^{\leq k})_{p \in \cP}$ of polynomial rings equipped with the degree filtration, so $\F_p[x]^{\leq k}$ is the subset of $\F_p[x]$ of polynomials of degree $\leq k$. The protoproduct is defined as a quotient of the ``bounded product" 
\[
\Prod{\cP}^{\flat} (\F_p[x], \F_p[x]^{\leq k}) = \Colim{k} \Prod{\cP} \F_p[x]^{\leq k}
\]
by the same equivalence relation as the ultraproduct. Since colimits and quotients commute, this quotient is the same as the colimit 
\[
\Prod{\cF}^{\flat} (\F_p[x], \F_p[x]^{\leq k}) = \Colim{k} \Prod{\cF} \F_p[x]^{\leq k} \cong (\Colim{k} \Prod{\cP} \F_p[x]^{\leq k})_{/\sim}.
\] 
The protoproduct along the degree filtration sends polynomial rings to polynomial rings:
\[
\Prod{\cF}^{\flat} (\F_p[x], \F_p[x]^{\leq k}) \cong \F_{\cF}[x].
\] 
\end{ex}

\begin{ex}
There is another natural filtration that we may put on $\F_p[x]$. Let $\F_p[x]^{\leq k\text{-mon}}$ be the subset of polynomials built out of less than or equal to $k$ monomials. For $\cF$ a non-principal ultrafilter, the protoproduct
\[
\Prod{\cF}^{\flat}(\F_p[x],\F_p[x]^{\leq k\text{-mon}})
\]
is the monoid-algebra over $\F_{\cF}$ on $\N^{\cF}$. It has an $\F_{\cF}$-basis given by $x^{[n_p]}$ where $[n_p] \in \N^{\cF}$. Thus this is an ``ultra" polynomial ring. It is not graded by the natural numbers but by $\N^{\cF}$.
\end{ex}

\begin{rem}
Let $I$ be a set viewed as a discrete space, and denote by $\beta I$ the set of ultrafilters on $I$, with a natural map $I \to \beta I$ given by sending an element $x \in I$ to the principal ultrafilter $\cF_x$ on $I$. For $A \subseteq I$, write $\hat{A}$ for the family of ultrafilters on $I$ containing $A$. The sets $\hat{A}$ for all $A \subseteq I$ form a basis of open subsets for the topology on $\beta I$, the Stone topology. This construction makes $\beta I$ into a compact Hausdorff space, and $I \to \beta I$ can be identified with the  Stone--\u{C}ech compactification of $I$. 

In these terms, the ultraproduct admits a geometric interpretation in the following sense~\cite{schoutensbook}: Consider a category $\cC$ closed under products and filtered colimits and let $(c_i)_{i \in I} \in \cC^I$ be a collection of objects in $\cC$ indexed by the set $I$. A sheaf on the discrete space $I$ with values in a category $\cC$ is given by a functor 
\[
\Prod{-}c_i\colon (A \subseteq I) \mapsto \Prod{i \in A}c_i.
\]
For a given ultrafilter $\cF$ on $I$, the two inclusions $\iota\colon I \to \beta I$ and $\{\cF\} \to \beta I$ induce geometric morphisms $(\iota_*,\iota^*)$ and $(\cF_*,\cF^*)$ between the corresponding categories of sheaves. The composite 
\[
\xymatrix{\Sh(I) \ar[r]^-{\iota_*} & \Sh(\beta I) \ar[r]^-{\cF^*} & \cC}
\]
can then be identified with the ultraproduct functor $\Prod{\cF}$. In other words, the ultraproduct $\Prod{\cF}c_i$ is equivalent to the stalk at $\cF$ of the sheaf $\iota_*\cE$, where $\cE \in \Sh(I)$ corresponds to the collection $(c_i)_{i \in I}$.
\end{rem}

\section{Ultraproducts}\label{sec:ultraproducts}

\subsection{Ultraproducts in $\infty$-categories}

In this section, we define the ultraproduct of a collection of objects in an $\infty$-category that admits filtered colimits and products. In particular, we study the special case of the $\infty$-category $\Cat$ of $\infty$-categories, which gives rise to the ultraproduct of $\infty$-categories. An independent account of some of the results in this section can be found in \cite[E.3.3.4]{sag}.

Given a collection of nonempty sets $X_i$ and an ultrafilter $\cF$ on $I$, there is a canonical isomorphism
\[
\Prod{\cF} X_i = \big ( \Prod{I} X_i \big )/\sim \lra{\cong} \Colim{U \in \cF} \Prod{i \in U} X_i
\]
induced by the projections, where the colimit is taken along reverse inclusions. This motivates the following definition:

\begin{defn}
Let $\cC$ be an $\infty$-category that admits products and filtered colimits and let $(c_i)_{i \in I}$ be a collection of objects in $\cC$. For an ultrafilter $\cF$ on $I$ we define the ultraproduct of $(c_i)_{i \in I}$ to be the object
\[
\Prod{\cF} c_i = \Colim{U \in \cF} \Prod{i \in U} c_i,
\]
where the colimit is taken along reverse inclusions.
\end{defn}

\begin{rem}
Let $U \in \cF$. Then we obtain an ultrafilter on the set $U$, $\cF_U$, by intersecting the elements of $\cF$ with $U$. Let $\cC$ be an $\infty$-category with products and filtered colimits and let $(c_i)_{i \in I}$ be a collection of objects in $\cC$. There is a canonical equivalence
\[
\Prod{\cF} c_i \simeq \Prod{\cF_U} c_i.
\]
Thus for any set $U \notin \cF$, we may ``throw out" the objects supported on $U$. When $\cF$ is non-principal, \cref{finiteprincipal} implies that we may throw out any finite number of objects in the ultraproduct. 
\end{rem}

\begin{ex}
If $\cF = \cF_j$ is a principal ultrafilter for some $j \in I$, then
\[
\Prod{\cF_j} c_i \simeq c_j.
\]
\end{ex}

\begin{rem} \label{rem:finitelims}
If $\cC$ is compactly generated then filtered colimits in $\cC$ are left exact. This follows from the fact that $\cC = \Ind(\cC^{\omega})$ and the $\Ind$-category consists of finite limit preserving presheaves on $\cC^{\omega}$ \cite[5.3.5.4]{htt}. Thus the statement reduces to the corresponding fact for the $\infty$-category of spaces, which follows from \cite[5.3.3.3]{htt}. Therefore, when $\cC$ is compactly generated, ultraproducts in $\cC$ commute with finite limits.
\end{rem}

By the definition of ultraproduct, there is a canonical map 
\[
[-]_{\cF}\colon \Prod{I}c_i \longrightarrow \Prod{\cF}c_i.
\]
When $\cF$ is clear from context, we will abbreviate this to $[-]$.

We now consider the case that $\cC$ is $\Cat$, which is bicomplete \cite[Sections 3.3.3, 3.3.4]{htt} and compactly generated \cite[Section 5.5]{htt}. Given a collection of $\infty$-categories $(\cC_i)_{i \in I}$ and objects $c_i \in \cC_i$, we will write $(c_i)_{i \in I}$ for the corresponding object in the product $\Prod{I}\cC_i$ and $[c_i]_{\cF}$ for the object $[(c_i)_{i \in I}]_{\cF}$ in the ultraproduct $\Prod{\cF} \cC_i$. When the indexing set $I$ is clear we will denote these objects by $(c_i)$ and $[c_i]$. If $c \in \Prod{\cF} \cC_i$, we will say that $c$ is represented by $(c_i)\in\Prod{I}\cC_i$ if $[c_i] \simeq c$.

Let $\Top$ be the $\infty$-category of $\infty$-groupoids. We will refer to the objects of $\Top$ as spaces. The inclusion functor from spaces to $\infty$-categories
\[
\Top \to  \Cat
\]
has both a right adjoint, which is the core functor $\cC \mapsto \cC^{\simeq}$, and a left adjoint, which is the groupoidification functor $\cC \mapsto \cC[\cC^{-1}]$. The notation $\cC[\cC^{-1}]$ is justified by considering the groupoidification as ``inverting all morphisms  in $\cC$''. 
One can also invert only some of the morphisms: Given a subcategory $W \subset \cC$, we can define $\cC[W^{-1}]$ to be the pushout of the diagram
\[
\xymatrix{
W \ar[r]\ar[d] & \cC \\
W[W^{-1}].
}
\]
It is easy to see that for every $\infty$-category $\cD$ we get that $\Fun(\cC[W^{-1}],\cD) = \cD^{\cC[W^{-1}]}$ is the full subcategory of $\Fun(\cC,\cD)$ consisting of functors that send a morphism in $W$ to an equivalence in $\cD$. 

\begin{lem}\label{l:local}
Let $\cC$ and $\cD$ be $\infty$-categories and let $W \subset \cC$. There is an equivalence of $\infty$-categories
\[
(\cC \times \cD)[(W \times \cD^{\simeq})^{-1}] \lra{\simeq} \cC[W^{-1}] \times \cD.
\]
\end{lem}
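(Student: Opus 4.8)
\emph{Proof plan.} The plan is to deduce the statement from the universal property of localization recalled above, using that $\Cat$ is cartesian closed, so that $\Fun(\cD,-)$ computes internal mapping objects and $-\times\cD$ preserves colimits. First I would produce the comparison functor: the localization functor $\ell\colon\cC\to\cC[W^{-1}]$ inverts $W$, hence $\ell\times\mathrm{id}_\cD\colon\cC\times\cD\to\cC[W^{-1}]\times\cD$ sends a morphism $(f,\phi)$ with $f\in W$ and $\phi$ an equivalence of $\cD$ to an equivalence, i.e.\ it inverts $W\times\cD^{\simeq}$. By the universal property of the localization it factors, essentially uniquely, through a functor
\[
\Psi\colon (\cC\times\cD)[(W\times\cD^{\simeq})^{-1}]\longrightarrow\cC[W^{-1}]\times\cD,
\]
and I claim $\Psi$ is an equivalence.

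The key point is an elementary observation about which morphisms have to be inverted: for any functor $\hat F\colon\cC\times\cD\to\cE$, the functor $\hat F$ inverts every morphism of $W\times\cD^{\simeq}$ if and only if $\hat F(f,\mathrm{id}_d)$ is an equivalence for every $f\in W$ and every object $d$ of $\cD$. One direction is immediate since each $(f,\mathrm{id}_d)$ lies in $W\times\cD^{\simeq}$. Conversely, a morphism of $W\times\cD^{\simeq}$ is a pair $(f,\phi)$ with $f\colon c\to c'$ in $W$ and $\phi\colon d\to d'$ an equivalence of $\cD$, and it factors as $(c,d)\xrightarrow{(f,\mathrm{id}_d)}(c',d)\xrightarrow{(\mathrm{id}_{c'},\phi)}(c',d')$; here $\hat F$ inverts the first map by hypothesis and the second because $\hat F(c',-)$ is a functor and so preserves the equivalence $\phi$ (and $\mathrm{id}_{c'}\in W$, since a subcategory contains the identity of each of its objects and $c'$ is an object of $W$).

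With this in hand I would compute mapping $\infty$-categories and apply the Yoneda lemma. For any $\infty$-category $\cE$, the universal property recalled above identifies $\Fun\bigl((\cC\times\cD)[(W\times\cD^{\simeq})^{-1}],\cE\bigr)$ with the full subcategory of $\Fun(\cC\times\cD,\cE)$ on functors inverting $W\times\cD^{\simeq}$. On the other hand, cartesian closedness gives $\Fun(\cC[W^{-1}]\times\cD,\cE)\simeq\Fun\bigl(\cC[W^{-1}],\Fun(\cD,\cE)\bigr)$, which by the universal property of $\cC[W^{-1}]$ is the full subcategory of $\Fun\bigl(\cC,\Fun(\cD,\cE)\bigr)\simeq\Fun(\cC\times\cD,\cE)$ on those $\hat F$ for which $\hat F(f,-)$ is an equivalence in $\Fun(\cD,\cE)$ for each $f\in W$, i.e.\ (since a natural transformation is an equivalence precisely when it is a pointwise equivalence) for which $\hat F(f,\mathrm{id}_d)$ is an equivalence for all $f\in W$ and all $d$. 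By the observation above, these two full subcategories of $\Fun(\cC\times\cD,\cE)$ coincide, and unwinding the definitions both identifications send a functor to its composite with $\ell\times\mathrm{id}_\cD$, respectively with the localization functor $\cC\times\cD\to(\cC\times\cD)[(W\times\cD^{\simeq})^{-1}]$; hence precomposition with $\Psi$ is an equivalence for every $\cE$, so $\Psi$ is an equivalence.

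The step I expect to require the most care is the last one: checking that the two computations of the mapping $\infty$-categories are identified \emph{compatibly with $\Psi$}, rather than merely abstractly equivalent, so that one is entitled to conclude that $\Psi$ itself — and not just some functor between the two objects — is an equivalence. Everything else is formal manipulation of universal properties.
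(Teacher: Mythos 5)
Your proof is correct but takes a genuinely different route from the paper's. The paper first treats the special case $W=\cC$: it shows the square
\[
\xymatrix{W\times\cD^{\simeq}\ar[r]\ar[d]&W\times\cD\ar[d]\\W[W^{-1}]\times\cD^{\simeq}\ar[r]&W[W^{-1}]\times\cD}
\]
is a pushout by checking that the induced square of functor categories into an arbitrary $\mathcal{T}$ is a pullback, which reduces to full faithfulness of $\mathcal{T}^{W[W^{-1}]}\hookrightarrow\mathcal{T}^W$; the general case is then obtained by pasting pushout squares, using that $-\times\cD$ preserves colimits. You instead attack general $W$ directly by identifying, for every test $\cE$, the two full subcategories of $\Fun(\cC\times\cD,\cE)$ representing the two sides, with the pivotal observation that inverting $W\times\cD^{\simeq}$ is the same as inverting the slices $(f,\mathrm{id}_d)$, since the remaining factor $(\mathrm{id}_{c'},\phi)$ is an honest equivalence of $\cC\times\cD$ and hence preserved by any functor. (Your parenthetical ``$\mathrm{id}_{c'}\in W$'' is not needed for this — you only use that $(\mathrm{id}_{c'},\phi)$ is an equivalence in $\cC\times\cD$, not that it lies in the marked subcategory.) Both arguments are applications of the universal property of localization; yours avoids the pasting step at the cost of the factorization observation. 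Your closing worry about compatibility of the identifications with $\Psi$ is well-placed but resolves cleanly: the functor $\Psi$ is produced from the universal property of $(\cC\times\cD)[(W\times\cD^{\simeq})^{-1}]$ applied to $\ell\times\mathrm{id}_\cD$, so precomposition with the localization $\cC\times\cD\to(\cC\times\cD)[(W\times\cD^{\simeq})^{-1}]$ factors as precomposition with $\Psi$ followed by precomposition with $\ell\times\mathrm{id}_\cD$, and both of your identifications of mapping $\infty$-categories are given by exactly these precompositions; hence they are intertwined by $\Psi$ and the Yoneda argument applies.
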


\begin{proof}
We shall first prove this for $W= \cC$. In this case we are required to prove the following diagram is a pushout diagram  
\[
\xymatrix{
W\times \cD^{\simeq} \ar[r] \ar[d]&  W \times \cD \ar[d]\\
W[W^{-1}] \times \cD^{\simeq}\ar[r] & W[W^{-1}] \times \cD.
}
\]
It is enough to show that for every $\infty$-category $\mathcal{T}$ the following diagram is a pullback diagram:
\[
\xymatrix{
\Fun(\cD,\mathcal{T}^{W[W^{-1}]})\ar[r]\ar[d] &  \Fun(\cD,\mathcal{T}^W) \ar[d]\\
\Fun( \cD^{\simeq} , \mathcal{T}^{W[W^{-1}]} )\ar[r]&  \Fun(\cD^{\simeq} ,\mathcal{T}^W).
}
\]
This follows from the fact that $\mathcal{T}^{W[W^{-1}]} \to \mathcal{T}^W$ is a fully faithful functor.
 
Now for a general $W\subset \cC$ consider the diagram 
\[
\xymatrix{
W\times \cD^{\simeq} \ar[r]\ar[d]&  W \times \cD \ar[d]\ar[r] & \cC \times \cD \ar[d]\\
W[W^{-1}] \times \cD^{\simeq}\ar[r] & W[W^{-1}] \times \cD\ar[r] & \cC[W^{-1}] \times \cD. 
}
\]
  
The right square is a pushout square since $- \times \cD$ preserves colimits and, since the left square is a pushout square, the outer square is also a pushout square.
\end{proof}

\begin{cor}
Let $\cC$ and $\cD$ be $\infty$-categories and let $W \subset \cC$ and $Z \subset \cD$ be subcategories that contain the core. There is an equivalence of $\infty$-categories
\[
(\cC \times \cD)[(W \times Z)^{-1}] \lra{\simeq} \cC[W^{-1}] \times \cD[Z^{-1}].
\]
\end{cor}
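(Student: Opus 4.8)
The plan is to derive the corollary from \Cref{l:local} by applying it twice. The key observation is that, as a subcategory of $\cC \times \cD$, the product $W \times V$ is generated by the two subcategories $W \times \cD^{\simeq}$ and $\cC^{\simeq} \times V$: both are contained in $W \times V$ because $\cC^{\simeq} \subseteq W$ and $\cD^{\simeq} \subseteq V$, and conversely every morphism $(f,g)$ with $f \in W$ and $g \in V$ factors as $(\mathrm{id},g)$ followed by $(f,\mathrm{id})$, so $W \times V$ is contained in the subcategory generated by the two pieces. Since a localization of an $\infty$-category at a collection of morphisms depends only on the subcategory it generates, it follows that $(\cC \times \cD)[(W \times V)^{-1}]$ agrees with the localization of $\cC \times \cD$ at the union $(W \times \cD^{\simeq}) \cup (\cC^{\simeq} \times V)$.

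I would then perform this localization in two stages. First, localizing $\cC \times \cD$ at $W \times \cD^{\simeq}$ produces $\cC[W^{-1}] \times \cD$ by \Cref{l:local}, via the functor $\ell \times \mathrm{id}$ with $\ell \colon \cC \to \cC[W^{-1}]$ the localization. By the universal property of localization (recalled in the excerpt as the statement that $\Fun(\cC[W^{-1}],-)$ is the full subcategory of $\Fun(\cC,-)$ on functors inverting $W$), localizing this further at the image of $\cC^{\simeq} \times V$ computes the localization of $\cC \times \cD$ at the union above, hence $(\cC\times\cD)[(W\times V)^{-1}]$. The image of $\cC^{\simeq} \times V$ under $\ell \times \mathrm{id}$ consists of the morphisms $(\ell(e), g)$ with $e$ an equivalence of $\cC$ and $g \in V$; since any functor automatically inverts the equivalences $(\ell(e),\mathrm{id})$, inverting this class amounts to inverting the morphisms $(\mathrm{id}_{\ell(c)}, g)$, and — using that $\ell$ is essentially surjective, so every object of $\cC[W^{-1}]$ is equivalent to some $\ell(c)$ — this is the same as inverting all of $\cC[W^{-1}]^{\simeq} \times V$. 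A second application of \Cref{l:local}, to the product $\cC[W^{-1}] \times \cD$ with the subcategory $V \subseteq \cD$ (after exchanging the two factors), then identifies $\big(\cC[W^{-1}] \times \cD\big)[(\cC[W^{-1}]^{\simeq} \times V)^{-1}]$ with $\cC[W^{-1}] \times \cD[V^{-1}]$, and composing the two stages gives the desired equivalence.

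The step I expect to require the most care is the middle one: matching the residual class of morphisms to be inverted in $\cC[W^{-1}] \times \cD$ with the subcategory $\cC[W^{-1}]^{\simeq} \times V$ appearing in \Cref{l:local}. The one genuinely non-formal input there is that the localization functor $\ell$ is essentially surjective; this holds for every localization and is transparent in the model of marked simplicial sets, where $\cC[W^{-1}]$ is computed by a functor that is the identity on objects. One should also take a moment to verify the compatibility ``localizing at $S$ and then at the image of $S'$ is the localization at $S \cup S'$'', which is immediate from the universal property but worth stating. A more self-contained alternative, which avoids invoking \Cref{l:local} and instead mirrors its proof, is to argue directly that $\ell \times m \colon \cC \times \cD \to \cC[W^{-1}] \times \cD[V^{-1}]$ (with $m$ the localization of $\cD$) induces, for every $\mathcal{T}$, a fully faithful functor $\Fun(\cC[W^{-1}] \times \cD[V^{-1}], \mathcal{T}) \to \Fun(\cC \times \cD, \mathcal{T})$ whose essential image is exactly the functors inverting $W \times V$; this again uses that $\ell$ is essentially surjective together with the fact that $\Fun(\cD[V^{-1}],\mathcal{T})$ is a full subcategory of $\Fun(\cD,\mathcal{T})$.
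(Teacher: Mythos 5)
Your argument is correct and matches the paper's proof, which likewise derives the corollary by applying \cref{l:local} twice together with the universal property of localization; you have simply spelled out the two-stage decomposition (first inverting $W \times \cD^{\simeq}$, then the residual class coming from $\cC^{\simeq}\times V$) in more detail than the paper does.
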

\begin{proof}
This follows from the universal property and applying \cref{l:local} twice.
\end{proof}

We will say that an $\infty$-category is contractible if its underlying simplicial set (or the $\infty$-groupoid $\cC[\cC^{-1}]$) is contractible. For instance, if $\cC$ has an initial or terminal object, then it is contractible. 

\begin{prop} \label{prop:ultrainvert}
Let $(\cC_i)_{i \in I}$ be a collection of  $\infty$-categories such that $\Prod{I}\cC_i$ is contractible. There is an equivalence
\[
\Prod{\cF} \cC_i \simeq (\Prod{I} \cC_i)[W_{\cF}^{-1}],
\]
where $W_{\cF}$ is the subcategory supported on the morphisms that are an equivalence on a set in the ultrafilter.
\end{prop}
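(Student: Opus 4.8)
\emph{Proof proposal.} The plan is to compute the localization $(\Prod{I}\cC_i)[W_\cF^{-1}]$ by inverting morphisms ``one set $U\in\cF$ at a time'' and then passing to the filtered colimit over $\cF$, which is precisely the colimit defining the ultraproduct. For $U\in\cF$ set $\cD_U=\Prod{i\in U}\cC_i$ and $\cE_U=\Prod{i\notin U}\cC_i$, so that $\Prod{I}\cC_i\simeq\cE_U\times\cD_U$, and let $W_U\subseteq\Prod{I}\cC_i$ be the subcategory supported on the morphisms $(f_i)_{i\in I}$ such that $f_i$ is an equivalence for every $i\in U$; under the decomposition above, $W_U$ is the subcategory $\cE_U\times\cD_U^{\simeq}$. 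A morphism $(f_i)_{i\in I}$ lies in $W_\cF$ precisely when it lies in $W_A$ for $A=\{i\in I:f_i\text{ an equivalence}\}$, and then $A\in\cF$; hence $W_\cF=\bigcup_{U\in\cF}W_U$, and since $\cF$ is closed under finite intersections and $W_U\cup W_{U'}\subseteq W_{U\cap U'}$, this is a filtered union, indexed by $\cF$ ordered by reverse inclusion.

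First I would handle a single $U\in\cF$. Because $\Prod{I}\cC_i$ is contractible and groupoidification $\Cat\to\Top$ preserves finite products (a standard fact, which reduces to the corresponding statement for geometric realization of simplicial sets), both factors $\cD_U$ and $\cE_U$ are contractible; in particular $\cE_U[\cE_U^{-1}]\simeq\ast$. Applying \Cref{l:local} with ``$\cC$'' $=\cE_U$, ``$W$'' $=\cE_U$, and ``$\cD$'' $=\cD_U$ gives
\[
\Big(\Prod{I}\cC_i\Big)[W_U^{-1}]\simeq(\cE_U\times\cD_U)[(\cE_U\times\cD_U^{\simeq})^{-1}]\simeq\cE_U[\cE_U^{-1}]\times\cD_U\simeq\cD_U=\Prod{i\in U}\cC_i.
\]
Unwinding the proof of \Cref{l:local}, this equivalence is compatible with the localization out of $\Prod{I}\cC_i$ and so identifies it with the projection $\Prod{I}\cC_i\to\Prod{i\in U}\cC_i$; likewise, for $U\supseteq U'$ in $\cF$ (so $W_U\subseteq W_{U'}$) the transition functor $(\Prod{I}\cC_i)[W_U^{-1}]\to(\Prod{I}\cC_i)[W_{U'}^{-1}]$ is carried to the projection $\Prod{i\in U}\cC_i\to\Prod{i\in U'}\cC_i$.

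Finally I would assemble the pieces. Localization is compatible with the filtered union $W_\cF=\bigcup_U W_U$: by the universal property of $[-^{-1}]$ recalled before \Cref{l:local}, for every $\infty$-category $\mathcal{T}$ the category $\Fun\big((\Prod{I}\cC_i)[W_\cF^{-1}],\mathcal{T}\big)$ is the full subcategory of $\Fun(\Prod{I}\cC_i,\mathcal{T})$ on the functors inverting $W_\cF$, which is the intersection over $U\in\cF$ of the full subcategories of functors inverting $W_U$, i.e. $\Lim{U\in\cF}\Fun\big((\Prod{I}\cC_i)[W_U^{-1}],\mathcal{T}\big)\simeq\Fun\big(\Colim{U\in\cF}(\Prod{I}\cC_i)[W_U^{-1}],\mathcal{T}\big)$; by the Yoneda lemma, $(\Prod{I}\cC_i)[W_\cF^{-1}]\simeq\Colim{U\in\cF}(\Prod{I}\cC_i)[W_U^{-1}]$. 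Substituting the identification of the preceding paragraph, and using its compatibility with the transition maps, yields
\[
\Big(\Prod{I}\cC_i\Big)[W_\cF^{-1}]\simeq\Colim{U\in\cF}\Prod{i\in U}\cC_i=\Prod{\cF}\cC_i,
\]
the colimit being along reverse inclusions, as required. The routine ingredients are the finite-product-preservation of groupoidification and the elementary combinatorics of $W_\cF$; the point that needs genuine care — and which I expect to be the main obstacle — is this last interchange of localization with the filtered colimit over $\cF$, together with the bookkeeping that makes the equivalences $(\Prod{I}\cC_i)[W_U^{-1}]\simeq\Prod{i\in U}\cC_i$ natural in $U$, since it is this step that produces the colimit defining $\Prod{\cF}\cC_i$.
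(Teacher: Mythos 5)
Your proposal is correct and follows essentially the same route as the paper: decompose $W_\cF$ as the filtered colimit (union) of the $W_U$, identify $(\Prod{I}\cC_i)[W_U^{-1}]$ via \cref{l:local} using the contractibility of the factor being inverted, and then commute localization with the filtered colimit over $\cF$. The two points where your write-up deviates slightly from the paper are cosmetic. First, for contractibility of $\cE_U = \Prod_{i\notin U}\cC_i$, you invoke that groupoidification preserves finite products and then argue in $\Top$; the paper instead observes that $\Prod_I\cC_i$ contractible forces each $\cC_i$ to be nonempty, so a choice of objects exhibits $\cE_U$ as a retract of $\Prod_I\cC_i$ inside $\Cat$, whence contractibility — a slightly more elementary route that avoids the product-preservation fact. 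Second, for commuting localization with the colimit, you argue directly from the universal property of $[-^{-1}]$ via $\Fun(-,\mathcal{T})$; the paper instead notes $W\mapsto W[W^{-1}]$ is a left adjoint, takes the filtered colimit of the defining pushout squares, and uses that colimits commute with colimits. This has the advantage of making the naturality in $U$ — which you correctly flag as the point requiring care — automatic, since one takes the colimit of the pushout diagrams \emph{before} applying \cref{l:local} termwise. Your concern about the bookkeeping of naturality is well-placed; if you want to avoid it, the paper's order of operations (colimit first, then \cref{l:local}) is the cleaner arrangement.
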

\begin{proof}
Let 
\[
W_{U} = \Prod{i \in U} \cC_{i}^{\simeq} \times \Prod{i \notin U}\cC_i.
\]
Note that 
\[
W_{\cF} = \Colim{U \in \cF} W_U.
\]
For every $U \in \cF$, there is a pushout diagram
\[
\xymatrix{W_{U} \ar[r] \ar[d] & \Prod{I} \cC_i \ar[d] \\ W_{U}[W_{U}^{-1}] \ar[r] & (\Prod{I} \cC_i)[W_{U}^{-1}].}
\]
Since $W \mapsto W[W^{-1}]$ is a left adjoint, taking the colimit over $U \in \cF$ gives an equivalence
\begin{align*}
(\Prod{I} \cC_i)[W_{\cF}^{-1}] &\simeq \Colim{U\in \cF} \big ( \Prod{ I} \cC_i \big [W_{U}^{-1}]\big ] \big ).
\end{align*}
By \cref{l:local} we get 
\[
(\Prod{I} \cC_i)[W_{\cF}^{-1}]  \simeq \Colim{U\in \cF} \big ( \Prod{i \in U} \cC_i \times \Prod{i \notin U}\cC_i[(\Prod{i \notin U}\cC_i)^{-1}] \big ). 
\]
Now since $\Prod{I}\cC_i$ is contractible none of the $\cC_i$ are empty so we get that  $\Prod{i \notin U}\cC_i$ is  a retract of $\Prod{I}\cC_i$ and thus 
$\Prod{i \notin U}\cC_i$  is contractible. 
We get that 
\begin{align*}
(\Prod{I} \cC_i)[W_{\cF}^{-1}]  
&\simeq \Colim{U\in \cF} \big ( \Prod{i \in U} \cC_i \times * \big ) \\
&\simeq \Prod{\cF}\cC_i. \qedhere
\end{align*}
\end{proof}

\begin{rem} Note that the $\infty$-category $\Prod{I}\cC_i$ is contractible if it has a terminal (respectively initial) object. This happens if each $\cC_i$ has a terminal (respectively initial) object.
\end{rem}

Given a model category in which filtered colimits of weak equivalences are weak equivalences, homotopy filtered colimits can be computed $1$-categorically. Since products of weak equivalences between fibrant objects are always weak equivalences, this implies that ultraproducts of fibrant objects can be computed $1$-categorically. Model categories with the property that filtered colimits of weak equivalences are weak equivalences include the category of simplicial sets with the Quillen or Joyal model structure and the category of chain complexes of modules over a ring with the standard model structure.

\begin{lem} \label{lem:setwise}
Given quasicategories $(S_{i,\bullet})_{i \in I}$, let $\Prod{\cF} S_{i,\bullet}$ be the ultraproduct taken in the $1$-category of simplicial sets. This simplicial set is a quasicategory modeling the ultraproduct of the $\infty$-categories corresponding to the collection $(S_{i,\bullet})_{i \in I}$. Note that we have an isomorphism of sets
\[
(\Prod{\cF} S_{i,\bullet})_n \cong \Prod{\cF} S_{i,n}.
\]
\end{lem}

\begin{lem}\label{cor:distributivity}
Filtered colimits distribute over infinite products in the $\infty$-category $\Top$. That is, let $I$ be a set and for each $i \in I$, let $J_i$ be a filtered category and 
\[
F_i \colon J_i \lra{} \Top
\]
be a functor. Let 
\[
F \colon \Prod{I}J_i \lra{\Prod{I}F_i} \Top^{I} \lra{\prod} \Top
\]
be the composite. Then there is a canonical equivalence
\[
\Colim{\Prod{I}J_i}\, F \simeq \Prod{I}\Colim{J_i}\, F_i.
\]
\end{lem}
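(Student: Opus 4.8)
The plan is to reduce the statement to an elementary $1$-categorical computation with simplicial sets. Two facts make this possible. First, a product $\Prod{I}J_i$ of filtered categories indexed by an arbitrary set $I$ is again filtered: it is nonempty, and a cocone on a finite subdiagram is built by choosing cocones coordinatewise, using the axiom of choice and the filteredness of each $J_i$. Second, as recalled just above, filtered colimits of weak equivalences in $\sSet$ with the Quillen model structure are weak equivalences, so an \emph{ordinary} filtered colimit in $\sSet$ already computes the homotopy colimit; moreover an arbitrary product of Kan complexes is a Kan complex and computes the product in $\Top$. Consequently both sides of the asserted equivalence can be computed by strict point-set models, and the equivalence reduces to a distributivity law for filtered colimits and arbitrary products of \emph{sets}.

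\textbf{The comparison map and its computation.} For each $i$ the projection $p_i\colon \Prod{I}J_i\to J_i$ is cofinal: for $j\in J_i$ the comma category $j/p_i$ is the product of the coslice $(J_i)^{j/}$ with $\Prod{i'\ne i}J_{i'}$, hence filtered and so weakly contractible. Thus $\Colim{\Prod{I}J_i}(F_i\circ p_i)\simeq\Colim{J_i}F_i$, and composing the colimit of the natural transformation $F\Rightarrow F_i\circ p_i$ given by projection onto the $i$th factor with this equivalence, and assembling over $i\in I$, produces the canonical map
\[
\Colim{\Prod{I}J_i}F\longrightarrow\Prod{I}\Colim{J_i}F_i .
\]
To show it is an equivalence, rectify each $F_i$ to a projectively fibrant diagram $\widetilde F_i\colon J_i\to\sSet$ (pointwise a Kan complex). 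Since horns are finite simplicial sets and $J_i$ is filtered, $C_i:=\Colim{J_i}\widetilde F_i$ is a Kan complex modelling $\Colim{J_i}F_i$, and $\Prod{I}C_i$ models $\Prod{I}\Colim{J_i}F_i$. Likewise the diagram $\widetilde F\colon\Prod{I}J_i\to\sSet$, $\phi\mapsto\Prod{I}\widetilde F_i(\phi_i)$, is pointwise a Kan complex and models $F$ — because the product functor $\sSet^I\to\sSet$ is right Quillen and hence models $\Top^I\to\Top$ on fibrant objects — and since $\Prod{I}J_i$ is filtered, $\Colim{\Prod{I}J_i}\widetilde F$ models $\Colim{\Prod{I}J_i}F$. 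It therefore suffices to produce an isomorphism $\Colim{\Prod{I}J_i}\widetilde F\cong\Prod{I}C_i$ compatible with the projections onto the factors $C_i$; as colimits and products in $\sSet$ are computed degreewise, this reduces to the set-level statement below.

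\textbf{The set-level distributivity law (the crux).} For functors $G_i\colon J_i\to\Set$ out of filtered categories, the canonical map
\[
\Colim{\Prod{I}J_i}\bigl(\Prod{I}G_i(\phi_i)\bigr)\longrightarrow\Prod{I}\Colim{J_i}G_i
\]
is a bijection. For surjectivity, a family $(x_i)_{i\in I}$ in the target has each $x_i$ represented by some $a_i\in G_i(j_i)$; then $(a_i)_{i\in I}\in\Prod{I}G_i(j_i)$ maps to $(x_i)_{i\in I}$. For injectivity, if $(a_i)\in\Prod{I}G_i(j_i)$ and $(b_i)\in\Prod{I}G_i(j_i')$ have the same image, then for each $i$ filteredness of $J_i$ provides an object $j_i''$ and morphisms $j_i\to j_i''\leftarrow j_i'$ equalizing $a_i$ and $b_i$; assembling coordinatewise gives $(j_i'')_{i\in I}$ together with morphisms in $\Prod{I}J_i$ witnessing that the two classes coincide, using here that $\Prod{I}J_i$ is filtered so that its colimit of sets has the familiar description. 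I expect this set-level law to be the main obstacle: it fails when the colimit is taken over a proper subcategory of $\Prod{I}J_i$ — for instance a diagonal copy of a single filtered category, where surjectivity already breaks — so it is genuinely important that one colimits over the \emph{full} product, and the argument really does use that $\Prod{I}J_i$ is filtered. Everything else (rectification, Kan-ness of the strict models, and matching the strict isomorphism with the canonical map) is routine, the only point to watch being that the infinite product must be formed on fibrant objects so that it is homotopy-correct.
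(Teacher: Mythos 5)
Your proof is correct and follows essentially the same strategy as the paper's: reduce to simplicial sets via the Quillen model structure, using that filtered colimits of weak equivalences are weak equivalences (so filtered colimits can be computed $1$-categorically on fibrant objects) and that arbitrary products of Kan complexes compute the $\infty$-categorical product, then reduce further to the set-theoretic distributivity law. The paper simply cites a reference for the set-level statement, whereas you prove it by hand; you also explicitly construct the comparison map via cofinality of the projections $p_i$, which the paper leaves implicit. Your set-level argument is correct and does isolate the genuinely important point, namely that one must colimit over the full product $\Prod{I}J_i$ (which is filtered, by a coordinatewise application of choice) and use the standard description of filtered colimits of sets in both directions. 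The extra detail you supply — rectifying to projectively fibrant diagrams, noting that filtered colimits of Kan complexes are Kan because horns are finite, and observing that the product functor $\sSet^I\to\sSet$ is right Quillen — fills in precisely the model-categorical bookkeeping the paper compresses into one sentence, so this is a more self-contained rendering of the same proof rather than a different one.
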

\begin{proof}
This is true in $\Set$ by \cite{alr}. Therefore, $1$-categorically, it is true in $\sSet$ and since the Quillen model structure on $\sSet$ satisfies the conditions of the paragraph above, it is true homotopically since we may apply fibrant replacement.
\end{proof}

\begin{lem}\label{lem:opultracat}
Let $(\cC_i)_{i \in I}$ be a collection of $\infty$-categories and let $\cF$ be an ultrafilter on $I$. There is an equivalence of $\infty$-categories
\[
(\Prod{\cF}\cC_i)^{\op} \simeq \Prod{\cF}(\cC_i^{\op}).
\]
\end{lem}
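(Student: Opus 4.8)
The plan is to reduce the statement to the single structural fact that the opposite-category construction $(-)^{\op}\colon \Cat \to \Cat$ is a self-equivalence of $\Cat$. Since the ultraproduct is assembled from products and a filtered colimit, $\Prod{\cF}\cC_i = \Colim{U \in \cF}\Prod{i \in U}\cC_i$ with the colimit taken along reverse inclusions, it is enough to know that $(-)^{\op}$ commutes with both operations in a way that is coherent over the diagram, and this is automatic once we regard $(-)^{\op}$ as an honest functor rather than an object-by-object assignment.

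First I would recall that the assignment $\cC \mapsto \cC^{\op}$, $F \mapsto F^{\op}$ underlies an equivalence $(-)^{\op}\colon \Cat \xrightarrow{\simeq} \Cat$; in fact it is an involution, $((-)^{\op})^{\op} \simeq \Id_{\Cat}$, induced by the order-reversing automorphism of $\Delta$ at the level of simplicial sets. Being an equivalence, $(-)^{\op}$ preserves all limits and colimits that exist in $\Cat$. In particular, for each $U \subseteq I$ it provides a natural equivalence $(\Prod{i \in U}\cC_i)^{\op} \simeq \Prod{i \in U}(\cC_i^{\op})$, and it sends the $\cF$-indexed filtered diagram $U \mapsto \Prod{i \in U}\cC_i$ to the filtered diagram $U \mapsto \Prod{i \in U}(\cC_i^{\op})$, compatibly with the formation of the colimit. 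Stringing these together gives
\begin{align*}
\Big(\Prod{\cF}\cC_i\Big)^{\op} &= \Big(\Colim{U \in \cF}\Prod{i \in U}\cC_i\Big)^{\op} \simeq \Colim{U \in \cF}\Big(\Prod{i \in U}\cC_i\Big)^{\op} \\
&\simeq \Colim{U \in \cF}\Prod{i \in U}(\cC_i^{\op}) = \Prod{\cF}(\cC_i^{\op}),
\end{align*}
where the first equivalence uses preservation of the filtered colimit and the second uses preservation of products.

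I do not expect a serious obstacle here: the argument is formal, and the only point worth emphasizing is that we obtain a \emph{natural} equivalence, not just one for each fixed collection, which is precisely what the functoriality of $(-)^{\op}$ supplies. If one wished to avoid citing that $(-)^{\op}$ is an equivalence of $\Cat$, one could instead verify the claim model-categorically using \cref{lem:setwise}, computing both sides levelwise on quasicategories and applying the order-reversal on each simplicial degree; but the direct route above is cleaner and avoids cofibrancy/fibrancy bookkeeping.
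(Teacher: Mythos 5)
Your argument is correct and is almost certainly the one the authors have in mind; the paper in fact states \cref{lem:opultracat} without giving any proof, presumably because they regard exactly this observation as immediate. The key point you isolate — that $(-)^{\op}\colon\Cat\to\Cat$ is an autoequivalence and hence preserves both the products and the filtered colimit from which the ultraproduct is assembled — is the whole content, and your remark that the alternative levelwise verification via \cref{lem:setwise} is available but less clean is a reasonable aside.
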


\subsection{Ultraproducts of $\infty$-categories}
We study categorical properties of ultraproducts of $\infty$-categories. We begin with a key proposition computing $\Hom$-spaces in ultraproducts of $\infty$-categories. The proof of this proposition depends on a technical lemma due to Rozenblyum.

\begin{prop} \label{lem:rozenblyum}
Let $(\cC_i)_{i \in I}$ be a collection of $\infty$-categories. For two objects $c,d \in \Prod{\cF}\cC_i$ represented by $(c_i)$ and $(d_i)$, there is a natural equivalence
\[
\Map_{\Prod{\cF}\cC_i}(c,d) \simeq \Prod{\cF}\Map(c_i,d_i).
\]
\end{prop}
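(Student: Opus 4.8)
The plan is to unwind both sides along their defining filtered colimits and reduce the proposition to a single interchange statement. Recall that $\Prod{\cF}\cC_i=\Colim{U\in\cF}\Prod{i\in U}\cC_i$, the colimit being indexed by the poset $(\cF,\supseteq)$ with transition maps the projections $\Prod{i\in U}\cC_i\to\Prod{i\in U'}\cC_i$ for $U'\subseteq U$. Since $U_1\cap U_2\in\cF$ this poset is filtered, and since $U\subseteq I$ for every $U\in\cF$ it has $I$ as an initial object. In particular $c\simeq[c_i]$ and $d\simeq[d_i]$ are the images of $(c_i),(d_i)\in\Prod{i\in I}\cC_i$ under the colimit cocone, and at a general stage $U\in\cF$ they are represented by $(c_i)_{i\in U},(d_i)_{i\in U}\in\Prod{i\in U}\cC_i$.

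The key step — and the only non-formal one — is the identification
\[
\Map_{\Prod{\cF}\cC_i}(c,d)\;\simeq\;\Colim{U\in\cF}\Map_{\Prod{i\in U}\cC_i}\big((c_i)_{i\in U},(d_i)_{i\in U}\big),
\]
that is, that forming the mapping space out of objects coming from a fixed stage commutes with the filtered colimit of $\infty$-categories. This is the content of the lemma of Rozenblyum referred to above. Its proof exploits that $\Cat$ is compactly generated, so that filtered colimits in it are left exact (\cref{rem:finitelims}), together with the compactness of $\Delta^0$ and $\Delta^1$ in $\Cat$: the mapping space $\Map_\cD(c,d)$ has a presentation as the finite limit $\Fun(\Delta^1,\cD)^{\simeq}\times_{\cD^{\simeq}\times\cD^{\simeq}}\{(c,d)\}$, and here $\cD^{\simeq}=\Map_{\Cat}(\Delta^0,\cD)$ and $\Fun(\Delta^1,\cD)^{\simeq}=\Map_{\Cat}(\Delta^1,\cD)$ commute with filtered colimits in $\cD$ by compactness, while the finite limit commutes with filtered colimits by left exactness. (Alternatively, one argues $1$-categorically: since the Joyal model structure has the property that filtered colimits preserve weak equivalences — as recorded before \cref{lem:setwise} — the $\infty$-category $\Prod{\cF}\cC_i$ is the ordinary filtered colimit of the quasicategories $\Prod{i\in U}\cC_i$, and the identification then follows from the facts that $\Hom_{\sSet}(L,-)$ commutes with filtered colimits for finite $L$ and that finite limits of sets commute with filtered colimits, applied to a simplicial model of the mapping complex.)

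It remains to identify the terms of the colimit. Since $\Prod{i\in U}\cC_i$ is a product in $\Cat$, forming mapping spaces commutes with it:
\[
\Map_{\Prod{i\in U}\cC_i}\big((c_i)_{i\in U},(d_i)_{i\in U}\big)\;\simeq\;\Prod{i\in U}\Map_{\cC_i}(c_i,d_i),
\]
because the cotensor $\Fun(\Delta^1,-)$ commutes with products, hence so does the pullback $\Fun(\Delta^1,\cD)\times_{\cD\times\cD}\{(x,y)\}$ presenting $\Map_\cD(x,y)$. This equivalence is natural in $U$, so it identifies the two diagrams on $(\cF,\supseteq)$, and hence their colimits. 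Combining with the previous step,
\[
\Map_{\Prod{\cF}\cC_i}(c,d)\;\simeq\;\Colim{U\in\cF}\Prod{i\in U}\Map_{\cC_i}(c_i,d_i)\;=\;\Prod{\cF}\Map(c_i,d_i),
\]
where the last equality is the definition of the ultraproduct of the spaces $\Map_{\cC_i}(c_i,d_i)$ (the colimit may be formed in $\Cat$, since $\Top\hookrightarrow\Cat$ preserves colimits, being a left adjoint). Each step is natural in $(c_i)$ and $(d_i)$, which gives the asserted naturality. The main obstacle is the commutation of the mapping space with the filtered colimit in the key step; everything else is bookkeeping with products and colimits.
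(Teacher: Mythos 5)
Your proposal takes essentially the same route as the paper: the paper deduces the proposition from Lemma \ref{lem:rozenblyumcolimits} (Rozenblyum's lemma identifying mapping spaces in a filtered colimit of $\infty$-categories with a colimit of mapping spaces) together with the analogous statement for products, and your proof is exactly this, re-deriving the relevant special case of Rozenblyum's lemma by presenting $\Map_\cD(c,d)$ as a finite limit built from the compact objects $\Delta^0,\Delta^1$ and invoking left exactness of filtered colimits. The paper's proof of Lemma \ref{lem:rozenblyumcolimits} packages the same finite-limit presentation via Rezk's complete Segal space functor, so the difference is only in phrasing, not in substance.
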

\begin{proof}
For this proof we will work with quasicategories. Given a quasicategory $\cC$, we will write $\TwArr(\cC)$ for the twisted arrow category of \cite[5.2.1.1]{ha}. By \cite[5.2.1.3]{ha}, there is a right fibration
\[
\TwArr(\cC) \to \cC \times \cC^{\op},
\]
where the fiber over $(c,d)$ is the mapping space $\Map(c,d)$.

Now given a collection of quasicategories $(\cC_i)_{i \in I}$, there is a commutative square of quasicategories
\[
\xymatrix{\Prod{\cF}\TwArr(\cC_i) \ar[r]^-{\cong} \ar[d] & \TwArr(\Prod{\cF}\cC_i) \ar[d] \\ \Prod{\cF}(\cC_i \times \cC_{i}^{\op})  \ar[r]^-{\cong} & (\Prod{\cF}\cC_i \times \Prod{\cF}\cC_{i}^{\op}),}
\]
where the vertical arrows are right fibrations. The top horizontal arrow is an isomorphism by \cref{lem:setwise} and the fact that $n$-simplices in $\TwArr(\Prod{\cF}\cC_i)$ are $2n+1$-simplices in $\Prod{\cF}\cC_i$. Given $c,d \in \Prod{\cF}\cC_i$ represented by $(c_i)$ and $(d_i)$, there is a pullback square
\[
\xymatrix{\Map_{\Prod{\cF}\cC_i}(c,d) \ar[r] \ar[d] & \TwArr(\Prod{\cF}\cC_i)  \ar[d] \\ 
\ast \ar[r]^-{(c,d)} & \Prod{\cF}\cC_i \times \Prod{\cF}\cC_{i}^{\op}.}
\]
Further, since ultraproducts commute with pullbacks in simplicial sets, there is a pullback square 
\[
\xymatrix{\Prod{\cF}\Map(c_i,d_i) \ar[r] \ar[d] & \Prod{\cF}\TwArr(\cC_i) \ar[d] \\ \ast \ar[r]^-{[(c_i,d_i)]} & \Prod{\cF}(\cC_i \times \cC_{i}^{\op}).}
\]
It follows that there is an isomorphism of Kan complexes
\[
\Map_{\Prod{\cF}\cC_i}(c,d) \cong \Prod{\cF}\Map(c_i,d_i).
\]
\end{proof}

\cref{lem:rozenblyum} has the following consequences:

\begin{cor}\label{cor:homotopymaps}
For $c,d \in \Prod{\cF}\cC_i$ represented by $(c_i)$ and $(d_i)$, there is an isomorphism
\[
[c,d] \cong \Prod{\cF}[c_i, d_i],
\]
where the ultraproduct is computed in the category of sets.
\end{cor}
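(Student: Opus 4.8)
The plan is to deduce this directly from \cref{lem:rozenblyum} by applying the connected components functor $\pi_0\colon\Top\to\Set$ and checking that $\pi_0$ commutes with the formation of ultraproducts of spaces. By definition $[c,d]=\pi_0\Map_{\Prod{\cF}\cC_i}(c,d)$, so the equivalence $\Map_{\Prod{\cF}\cC_i}(c,d)\simeq\Prod{\cF}\Map(c_i,d_i)$ supplied by \cref{lem:rozenblyum} at once yields a natural bijection
\[
[c,d]\cong\pi_0\Big(\Prod{\cF}\Map(c_i,d_i)\Big),
\]
and the remaining content is to identify the right-hand side with $\Prod{\cF}\pi_0\Map(c_i,d_i)=\Prod{\cF}[c_i,d_i]$.

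First I would unwind the definition, writing $\Prod{\cF}\Map(c_i,d_i)\simeq\Colim{U\in\cF}\Prod{i\in U}\Map(c_i,d_i)$ with the filtered colimit along reverse inclusions taken in $\Top$. Then I would invoke two elementary facts: that $\pi_0$ preserves all colimits, since it is left adjoint to the inclusion $\Set\hookrightarrow\Top$ of discrete spaces, and in particular commutes with the filtered colimit above; and that $\pi_0$ preserves arbitrary products of spaces, $\pi_0(\prod_i X_i)\cong\prod_i\pi_0(X_i)$. The second fact is a direct check once the $X_i$ are represented by Kan complexes: a $1$-simplex of $\prod_i X_i$ is a tuple of $1$-simplices, so two $0$-simplices of the product lie in the same component precisely when they do coordinatewise, while surjectivity onto $\prod_i\pi_0(X_i)$ is immediate by choosing representatives. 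Combining the two facts,
\[
\pi_0\Big(\Colim{U\in\cF}\Prod{i\in U}\Map(c_i,d_i)\Big)\cong\Colim{U\in\cF}\Prod{i\in U}\pi_0\Map(c_i,d_i)=\Prod{\cF}[c_i,d_i],
\]
the last step being the description of the set-theoretic ultraproduct as a filtered colimit of products recalled at the start of \cref{sec:ultraproducts}. The composite bijection is manifestly compatible with the canonical maps $[-]_{\cF}$, which gives the stated form of the isomorphism.

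I do not expect a genuine obstacle here. The one point that wants a moment's care is the claim that $\pi_0$ commutes with \emph{infinite} products, for which one should pass to fibrant (Kan) models so that the homotopy product is computed as the strict product and the elementary path-component argument applies verbatim. As an alternative route to this step one could instead combine \cref{lem:setwise} with {\L}o{\'s}'s theorem: the ultraproduct of the mapping spaces is computed set-wise, and $\pi_0$ of a Kan complex is the quotient of its set of $0$-simplices by the first-order relation ``there exists a connecting $1$-simplex'', which therefore commutes with set-theoretic ultraproducts.
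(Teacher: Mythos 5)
Your proposal is correct and follows the same route as the paper: apply $\pi_0$ to the equivalence of \cref{lem:rozenblyum} and use that $\pi_0$ commutes with the formation of ultraproducts. The paper's one-line proof only records that $\pi_0$ commutes with filtered colimits (citing compactness of $S^0$) and leaves the preservation of products implicit; you supply that step explicitly, and your {\L}o{\'s}-theoretic alternative via \cref{lem:setwise} is also valid since in a Kan complex the path-component relation is indeed witnessed by a single $1$-simplex and is hence first order.
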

\begin{proof}
This is due to the fact that $\pi_0$ commutes with filtered colimits as  $S^0$ is compact.
\end{proof}

\begin{cor} \label{fullyfaithful}
The ultraproduct of fully faithful functors between $\infty$-categories is fully faithful.
\end{cor}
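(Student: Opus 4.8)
The plan is to deduce this directly from \cref{lem:rozenblyum}. Recall that a functor $F \colon \cC \to \cD$ is fully faithful precisely when, for every pair of objects $x, y \in \cC$, the induced map $\Map_{\cC}(x,y) \to \Map_{\cD}(Fx, Fy)$ is an equivalence of spaces. So suppose we are given a collection of fully faithful functors $F_i \colon \cC_i \to \cD_i$, and let $F = \Prod{\cF} F_i \colon \Prod{\cF}\cC_i \to \Prod{\cF}\cD_i$ denote the induced functor on ultraproducts (obtained by functoriality of the ultraproduct construction, i.e.\ applying $\Colim{U \in \cF}\Prod{i \in U}(-)$ to the collection $(F_i)$).

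The key step is the following compatibility: given objects $c, d \in \Prod{\cF}\cC_i$ represented by $(c_i)$ and $(d_i)$, the images $F(c)$ and $F(d)$ are represented by $(F_i(c_i))$ and $(F_i(d_i))$, and the map $F$ induces on mapping spaces the ultraproduct of the maps $\Map_{\cC_i}(c_i, d_i) \to \Map_{\cD_i}(F_i c_i, F_i d_i)$. Concretely, one checks that the square
\[
\xymatrix{
\Map_{\Prod{\cF}\cC_i}(c,d) \ar[r] \ar[d]_-{\simeq} & \Map_{\Prod{\cF}\cD_i}(Fc, Fd) \ar[d]^-{\simeq} \\
\Prod{\cF}\Map_{\cC_i}(c_i, d_i) \ar[r] & \Prod{\cF}\Map_{\cD_i}(F_i c_i, F_i d_i)
}
\]
commutes, where the vertical equivalences are those of \cref{lem:rozenblyum} and the bottom arrow is $\Prod{\cF}$ applied to the collection of maps on mapping spaces; this naturality is what the word ``natural'' in the statement of \cref{lem:rozenblyum} provides. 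Since each $F_i$ is fully faithful, each map $\Map_{\cC_i}(c_i,d_i) \to \Map_{\cD_i}(F_i c_i, F_i d_i)$ is an equivalence, so the bottom arrow is an ultraproduct of equivalences, hence an equivalence (the ultraproduct functor, being a filtered colimit of products, preserves equivalences of spaces). Therefore the top arrow is an equivalence as well.

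Finally, to conclude full faithfulness of $F$ we need every pair of objects of $\Prod{\cF}\cC_i$ to arise as $(c_i)$, $(d_i)$ for some representing collections; but the canonical map $\Prod{I}\cC_i \to \Prod{\cF}\cC_i$ is surjective on objects, so every object is represented. Hence $F$ induces equivalences on all mapping spaces and is fully faithful. The only mild subtlety — and the one point worth being careful about — is the commutativity of the square above, i.e.\ identifying the map that $F$ induces on $\Map$-spaces with the ultraproduct of the individual maps on $\Map$-spaces; this is precisely the binaturality in \cref{lem:rozenblyumcolimits} carried through the proof of \cref{lem:rozenblyum}, so no new work is needed.
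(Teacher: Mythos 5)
Your argument is correct and is exactly the route the paper intends: \cref{fullyfaithful} is stated without proof as an immediate consequence of the natural equivalence of mapping spaces in \cref{lem:rozenblyum}, and your write-up simply spells out the naturality square and the surjectivity on objects that make that deduction go through.
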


There is also a stable version of \Cref{lem:rozenblyum}. Let $\Sp$ be the $\infty$-category of spectra.

\begin{cor}\label{lem:stablerozenblyum}
With notation as in \Cref{lem:rozenblyum}, if the $\infty$-categories $\cC_i$ are stable, then so is the ultraproduct $\Prod{\cF}\cC_i$ and there is an equivalence of mapping spectra
\[
\Hom_{\Prod{\cF}\cC_i}(c,d) \simeq \Prod{\cF}\Hom(c_i,d_i),
\]
where the ultraproduct on the right side is computed in $\Sp$.
\end{cor}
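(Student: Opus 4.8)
The claim is essentially a formality given \cref{lem:rozenblyum}, and the plan is to first establish stability of the ultraproduct and then assemble the mapping spectrum levelwise. For stability I would reduce to two standard facts about stable $\infty$-categories: first, an arbitrary product $\Prod{i \in U}\cC_i$ of stable $\infty$-categories is stable, with zero object, finite (co)limits, suspension, and loops all computed componentwise; second, the subcategory of $\Cat$ on the stable $\infty$-categories and exact functors is closed under filtered colimits and the colimit inclusions are exact (this is standard; see, e.g., \cite{ha}). Since $\Prod{\cF}\cC_i = \Colim{U \in \cF}\Prod{i \in U}\cC_i$ and the transition maps are projections of products, which preserve all limits and colimits and are in particular exact, it follows that $\Prod{\cF}\cC_i$ is stable and that each canonical functor $\Prod{i \in U}\cC_i \to \Prod{\cF}\cC_i$ is exact. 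Because $\Sigma$ and $\Omega$ are built from finite (co)limits, this exactness gives $\Sigma^k d \simeq [\Sigma^k d_i]$ and $\Omega^k d \simeq [\Omega^k d_i]$ for every representative $d = [d_i]$ and every $k \ge 0$.

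With stability in hand, recall that for a stable $\infty$-category $\cD$ the mapping spectrum $\Hom_{\cD}(c,d)$ is the $\Omega$-spectrum whose $k$-th space is $\Map_{\cD}(c, \Sigma^k d)$, with structure maps the canonical equivalences $\Map_{\cD}(c, \Sigma^k d) \simeq \Omega\,\Map_{\cD}(c, \Sigma^{k+1} d)$. Taking $\cD = \Prod{\cF}\cC_i$, using $\Sigma^k d \simeq [\Sigma^k d_i]$, and applying \cref{lem:rozenblyum}, the $k$-th space of $\Hom_{\Prod{\cF}\cC_i}(c,d)$ is
\[
\Map_{\Prod{\cF}\cC_i}(c,\Sigma^k d)\;\simeq\;\Prod{\cF}\Map_{\cC_i}(c_i,\Sigma^k d_i)\;\simeq\;\Prod{\cF}\,\Omega^{\infty}\Sigma^k\,\Hom_{\cC_i}(c_i,d_i),
\]
the last step by stability of each $\cC_i$. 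On the other hand, $\Prod{\cF}\Hom_{\cC_i}(c_i,d_i)$ is by construction a filtered colimit of products in $\Sp$; since $\Omega^{\infty}$ commutes with products (being a right adjoint) and with filtered colimits ($\Sp$ being compactly generated), and $\Sigma^k$ is an autoequivalence of $\Sp$, the functor $\Omega^{\infty}\Sigma^k$ commutes with forming this ultraproduct, so the $k$-th space of $\Prod{\cF}\Hom_{\cC_i}(c_i,d_i)$ is also $\Prod{\cF}\,\Omega^{\infty}\Sigma^k\,\Hom_{\cC_i}(c_i,d_i)$. All the equivalences used are natural in $k$ and compatible with the $\Omega$-spectrum structure maps on the two sides, so they assemble to an equivalence of spectra $\Hom_{\Prod{\cF}\cC_i}(c,d) \simeq \Prod{\cF}\Hom_{\cC_i}(c_i,d_i)$, natural in $(c,d)$.

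The only step that is not a formal manipulation is the first: that a filtered colimit of stable $\infty$-categories along exact functors is stable with exact colimit inclusions, or equivalently that suspension and loops in the ultraproduct are computed on representatives. Rather than cite this, one can argue directly: $[0_i]$ is a zero object of $\Prod{\cF}\cC_i$ since its mapping spaces are contractible by \cref{lem:rozenblyum}; any finite diagram in $\Prod{\cF}\cC_i$ factors through some $\Prod{i \in U}\cC_i$, as finite $\infty$-categories are compact objects of $\Cat$, and there finite limits and colimits exist and are computed componentwise; and the identity ``every pushout square is a pullback square, and conversely'' can be checked on such representatives, using that $\Cat$ is compactly generated exactly as in \cref{rem:finitelims}. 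This yields stability, and the same representative-wise description identifies $\Sigma^k d$ with $[\Sigma^k d_i]$, supplying the input needed above.
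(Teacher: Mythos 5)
Your argument is correct and matches the paper's, which proves stability by citing exactly the two facts you invoke (products of stable $\infty$-categories are stable, Theorem 1.1.4.3 of \cite{ha}; filtered colimits of stable $\infty$-categories along exact functors are stable, Theorem 1.1.4.6 of \cite{ha}) and leaves the mapping-spectrum identification implicit. You usefully make the latter explicit, assembling the $\Omega$-spectrum levelwise from \cref{lem:rozenblyum} and the identity $\Sigma^k[d_i]\simeq[\Sigma^k d_i]$, which is the intended argument.
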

\begin{proof}
Products of stable $\infty$-categories are stable by \cite[Theorem 1.1.4.3]{ha} and filtered colimits of stable $\infty$-categories are stable by \cite[Theorem 1.1.4.6]{ha}.
\end{proof}

\begin{lem} \label{prop:ultracolims}
Let $(\cC_i)_{i \in I}$ be a collection of $\infty$-categories. The ultraproduct $\Prod{\cF} \cC_i$ enjoys the following properties:
\begin{enumerate}
\item For a finite simplicial set $K$, there is a canonical equivalence of $\infty$-categories
\[
\Fun(K, \Prod{\cF} \cC_i) \simeq \Prod{\cF} \Fun(K, \cC_i).
\]
\item Let $K$ be a finite simplicial set, assume we are given for all $i \in I $, $\rho_i\colon K \to \cC_i$. There is a canonical equivalence
\[
 (\Prod{\cF}  \cC_i )_{/ \Prod{\cF} \rho_i} \simeq \Prod{\cF} \cC_{i/ \rho_i},
\] 
where $\Prod{\cF} \rho_i\colon K \rightarrow \Prod{\cF}  \cC_i$.
\item If each $\cC_i$ has finite (co)limits then $\Prod{\cF} \cC_i$ has finite (co)limits.
\item For any $U \in \cF$, the canonical map $\Prod{U} \cC_i \rightarrow \Prod{\cF} \cC_i$ preserves all finite (co)limits that exist.
\item If the functors $f_i\colon\cC_i \to \cD_i$ preserve finite (co)limits then the ultraproduct
\[
\Prod{\cF} f_i\colon \Prod{\cF} \cC_i \longrightarrow \Prod{\cF} \cD_i
\]
preserves all finite (co)limits that exist.
\item Let $f_i \colon \cC_i \rightarrow \cD_i$ be a collection of (co)Cartesian fibrations. Then $\Prod{\cF} f_i$ is a (co)Cartesian fibration.
\end{enumerate}
\end{lem}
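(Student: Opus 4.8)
The plan is to derive all six items from the identification of mapping spaces in ultraproducts (\Cref{lem:rozenblyum}), the fact that ultraproducts in the compactly generated $\infty$-category $\Cat$ commute with finite limits (\Cref{rem:finitelims}), and item (1) itself, which I prove first. For (1): $\Fun(K,-)$ preserves products by the universal property, and for a finite simplicial set $K$ it preserves filtered colimits because, computed $1$-categorically in the Joyal model structure, the $n$-simplices of $(\colim_j X_j)^K$ are $\Hom_{\sSet}(\Delta^n \times K,\colim_j X_j)$ with $\Delta^n \times K$ again a finite --- hence compact --- simplicial set, and a $1$-categorical filtered colimit of quasi-categories is a quasi-category; applying $\Fun(K,-)$ to $\Prod{\cF}\cC_i = \Colim{U \in \cF}\Prod{i \in U}\cC_i$ then gives the equivalence.

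For (2), I would replace the slice by the equivalent ``fat slice'' $\cE^{/\rho} := \{\rho\} \times_{\Fun(K,\cE)} \Fun(K^{\triangleleft},\cE)$ (\cite[4.2.1.5]{htt}), take $\cE = \Prod{\cF}\cC_i$ and $\rho = \Prod{\cF}\rho_i$, observe that $\{\Prod{\cF}\rho_i\}$ is the image of $\Prod{\cF}\{\rho_i\} \simeq \ast$, rewrite $\Fun(K,\cE)$ and $\Fun(K^{\triangleleft},\cE)$ as ultraproducts via (1) (both $K$ and $K^{\triangleleft}$ are finite), and pull the ultraproduct out of the defining pullback using \Cref{rem:finitelims}:
\[
(\Prod{\cF}\cC_i)_{/\Prod{\cF}\rho_i} \simeq \Prod{\cF}\bigl(\{\rho_i\} \times_{\Fun(K,\cC_i)} \Fun(K^{\triangleleft},\cC_i)\bigr) \simeq \Prod{\cF}(\cC_i)_{/\rho_i};
\]
combined with \Cref{lem:opultracat} this also yields the coslice version $(\Prod{\cF}\cC_i)_{\rho/} \simeq \Prod{\cF}(\cC_i)_{\rho/}$. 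The engine behind (3)--(5) is then the observation that, if $K$ is finite and, for all $i$ in some $U \in \cF$, the diagram $p_i \colon K \to \cC_i$ admits a colimit with colimit cocone $\bar p_i \colon K^{\triangleright} \to \cC_i$, then $[\bar p_i]$ is a colimit cocone for $[p_i]$ in $\Prod{\cF}\cC_i$: indeed $\bar p_i$ is an initial object of the coslice $(\cC_i)_{p_i/}$, so under $(\Prod{\cF}\cC_i)_{[p_i]/} \simeq \Prod{\cF}(\cC_i)_{p_i/}$ the object $[\bar p_i]$ satisfies $\Map([\bar p_i],[d_i]) \simeq \Prod{\cF}\ast \simeq \ast$ for every $[d_i]$ by \Cref{lem:rozenblyum}, hence is initial. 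This runs in one direction only, so colimits in $\Prod{\cF}\cC_i$ cannot be detected coordinatewise.

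Granting this: for (3), a finite diagram in $\Prod{\cF}\cC_i$ is of the form $[q_i]$ by (1) and each $q_i$ has a colimit since $\cC_i$ does, so $[q_i]$ has one; finite limits follow by applying this to $\cC_i^{\op}$ via \Cref{lem:opultracat}. For (4), colimits in $\Prod{i \in U}\cC_i$ are computed coordinatewise, so a colimit of $p \colon K \to \Prod{i \in U}\cC_i$ with $K$ finite produces colimits of all $p_i$, and the observation (applied to $\cF_U$ on $U$) shows the canonical functor carries $\colim p$ to a colimit of the image of $p$; dualize for limits. For (5) --- reading ``$f_i$ preserves finite colimits'' as ``$\cC_i$ admits finite colimits and $f_i$ preserves them'', a hypothesis that cannot be dropped since an ultraproduct of $\infty$-categories with no initial object may acquire one --- the colimit cocone of a finite diagram $q = [q_i]$ is $[\bar q_i]$ with $\bar q_i$ the colimit cocone of $q_i$, and $f_i\bar q_i$ is again a colimit cocone, so by the observation $\Prod{\cF}f_i([\bar q_i]) = [f_i\bar q_i]$ is a colimit cocone for $\Prod{\cF}f_i \circ q$; dualize for limits.

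For (6) I would check that $\Prod{\cF}f_i$ is an inner fibration and admits (co)Cartesian lifts. Inner fibrancy is the right-lifting property against the inner horn inclusions, which are maps of finite simplicial sets, and such a property is inherited both by products and by filtered colimits, so $\Prod{\cF}f_i = \Colim{U}\Prod{i \in U}f_i$ is an inner fibration. For lifts: a $1$-simplex $\bar g$ of $\Prod{\cF}\cD_i$ and an object over its source are represented by families $(\bar g_i)$ and $(c_{0,i})$; choosing $f_i$-coCartesian lifts $g_i$ of $\bar g_i$ starting at $c_{0,i}$, the $1$-simplex $[g_i]$ lifts $\bar g$, and it is $\Prod{\cF}f_i$-coCartesian because the square of mapping spaces characterizing coCartesianness (\cite[2.4.4.3]{htt}) is, by \Cref{lem:rozenblyum}, the ultraproduct of the corresponding homotopy-pullback squares for the $g_i$, and ultraproducts preserve homotopy pullbacks by \Cref{rem:finitelims}; the Cartesian case follows via \Cref{lem:opultracat}. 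I expect the main obstacle to be the bookkeeping in (2)--(5): one must systematically recast the existence and preservation of finite (co)limits as statements about initial objects of (co)slices so that (1), the opposite of (2), and \Cref{lem:rozenblyum} apply, while keeping in mind that this translation is genuinely one-directional.
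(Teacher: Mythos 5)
Your proof follows the same route as the paper: item (1) via compactness of $K$, item (2) via the pullback square defining the slice together with the fact that ultraproducts in $\Cat$ commute with finite limits, items (3)--(5) reduced to existence and preservation of initial objects of (co)slices using (2), \Cref{lem:opultracat}, and the mapping-space identification of \Cref{lem:rozenblyum}, and item (6) checked via finite lifting problems and the mapping-space pullback criterion for (co)Cartesian edges. Your explicit caveat in (5) --- that ``$f_i$ preserves finite colimits'' must be read as ``$\cC_i$ admits finite colimits and $f_i$ preserves them'' --- is correct and is exactly the hypothesis the paper's proof uses implicitly, since its argument only sees initial objects of the form $[\emptyset_i]$ with each $\emptyset_i$ initial.
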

\begin{proof}
Part (1) follows from the compactness of $K$ in $\sSet$. 
Recall that $\Cat$ is compactly generated so filtered colimits commute with pullbacks. As a composition of products and filtered colimits, ultraproducts commute with pullbacks. Now Part (2) follows from the pullback square:
\[
\xymatrix{
\cC_{/\rho} \ar[r]\ar[d] & \Fun(K^{\lhd},\cC) \ar[d] \\
\ast   \ar[r]_-{\rho}  & \Fun(K ,\cC). 
}
\]
For Parts (3), (4), and (5), in view of (2) and \Cref{lem:opultracat}, it is enough to show that $\Prod{\cF} \cC_i$ has an initial object, the map from the product to the ultraproduct preserves the initial object, and that the initial object is preserved by $\Prod{\cF} f_i$.
For each $i \in I$, let $\emptyset_i \in \cC_i$ be a choice of initial object. We will show that $[\emptyset_i]$ is initial in 
$\Prod{\cF}\cC_i$.
Indeed if $t_i \in \cC_i$, \cref{lem:rozenblyum} gives equivalences
\[
\Map_{\Prod{\cF}\cC_i} ([\emptyset_i]  , t) \simeq \Prod{\cF} \Map(\emptyset_i,t) \simeq \Prod{\cF}* \simeq *.
\]
To finish off Parts (4) and (5) note that the initial object is sent to the initial object under both maps.

The proof of Part (6) is similar to the proof of the previous parts and uses Part (1) and the fact that ultraproducts respect pullbacks.
\end{proof}

\begin{cor} \label{lem:ultraadjunction}
Given a collection of adjunctions $(f_i\colon \cC_i \leftrightarrows \cD_i\noloc g_i)_{i \in I}$ and an ultrafilter $\cF$ on $I$, there is an induced adjunction
\[
\xymatrix{\Prod{\cF}f_i\colon \Prod{\cF}\cC_i \ar@<0.5ex>[r] & \Prod{\cF}\cD_i\noloc\Prod{\cF}g_i \ar@<0.5ex>[l]}
\]
such that the following diagram commutes:
\[
\xymatrix{\Prod{I}\cC_i \ar@<0.5ex>[r]^{\Prod{}f_i} \ar[d]_{[-]} & \Prod{I}\cD_i \ar@<0.5ex>[l]^{\Prod{}g_i} \ar[d]^{[-]} \\ \Prod{\cF}\cC_i \ar@<0.5ex>[r]^{\Prod{\cF}f_i} & \Prod{\cF}\cD_i. \ar@<0.5ex>[l]^{\Prod{\cF}g_i}}
\]
\end{cor}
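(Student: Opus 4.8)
The plan is to encode each adjunction as a \emph{correspondence} --- a functor to $\Delta^1$ that is simultaneously a Cartesian and a coCartesian fibration --- and to take the ultraproduct of these correspondences, using \Cref{prop:ultracolims}(6) to see that the result is again such a correspondence. Concretely, for each $i\in I$ choose a functor $p_i\colon\cM_i\to\Delta^1$ that is both a Cartesian and a coCartesian fibration, together with equivalences $\cM_i\times_{\Delta^1}\{0\}\simeq\cC_i$ and $\cM_i\times_{\Delta^1}\{1\}\simeq\cD_i$ under which the coCartesian transport over $0\to1$ is $f_i$ and the Cartesian transport over $1\to0$ is $g_i$; such a $p_i$ exists and is essentially unique by \cite[\S 5.2.2]{htt}. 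In the same way, the product adjunction $\Prod{I}f_i\dashv\Prod{I}g_i$ is classified by the correspondence $\widetilde{\cM}\to\Delta^1$ obtained by pulling $\Prod{I}p_i\colon\Prod{I}\cM_i\to\Prod{I}\Delta^1$ back along the diagonal $\delta\colon\Delta^1\hookrightarrow\Prod{I}\Delta^1$.

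Next I would form $\Prod{\cF}p_i\colon\Prod{\cF}\cM_i\to\Prod{\cF}\Delta^1$. By \Cref{lem:setwise} together with \Cref{ex:finite} the target is $\Delta^1$ again: ultraproducts of quasicategories are computed levelwise and each level of $\Delta^1$ is finite, so its constant ultraproduct is itself. By \Cref{prop:ultracolims}(6), $\Prod{\cF}p_i$ is a coCartesian fibration; applying the same statement to the coCartesian fibrations $p_i^{\op}\colon\cM_i^{\op}\to\Delta^1$ and invoking \Cref{lem:opultracat} shows that it is also a Cartesian fibration. Since $\Cat$ is compactly generated, ultraproducts commute with the pullbacks computing fibers (\Cref{rem:finitelims}), so $(\Prod{\cF}\cM_i)\times_{\Delta^1}\{0\}\simeq\Prod{\cF}\cC_i$ and $(\Prod{\cF}\cM_i)\times_{\Delta^1}\{1\}\simeq\Prod{\cF}\cD_i$. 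Hence $\Prod{\cF}p_i$ is a correspondence, and as such it exhibits an adjunction $L\colon\Prod{\cF}\cC_i\leftrightarrows\Prod{\cF}\cD_i\noloc R$.

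It remains to identify $L\simeq\Prod{\cF}f_i$ and $R\simeq\Prod{\cF}g_i$ compatibly with the maps $[-]$, and this is where the real work sits. The projections $\Prod{I}\cM_i\to\Prod{i\in U}\cM_i$ are compatible with the analogous projections for $\Delta^1$, and since the composite $\Delta^1\xrightarrow{\delta}\Prod{I}\Delta^1\to\Prod{\cF}\Delta^1\simeq\Delta^1$ is the identity, they assemble into a map of correspondences $\widetilde{\cM}\to\Prod{\cF}\cM_i$ over $\Delta^1$ whose restriction to the fiber over $0$ (resp. $1$) is the canonical map $[-]\colon\Prod{I}\cC_i\to\Prod{\cF}\cC_i$ (resp. for $\cD_i$). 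The proof of \Cref{prop:ultracolims}(6) exhibits the (co)Cartesian edges of $\Prod{\cF}p_i$ as precisely those represented by families of (co)Cartesian edges of the $p_i$, so this map preserves (co)Cartesian edges and therefore intertwines the transport functors, yielding $L\circ[-]\simeq[-]\circ\Prod{I}f_i$ and $R\circ[-]\simeq[-]\circ\Prod{I}g_i$. Running the argument with the index set $U$ in place of $I$ (permissible since discarding indices outside a member of $\cF$ does not change the ultraproduct) and passing to the colimit over $U\in\cF$ identifies $L$ with $\Prod{\cF}f_i$ and $R$ with $\Prod{\cF}g_i$ compatibly with the colimit structure maps, which at the same time produces the commuting square in the statement. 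The main obstacle is thus confined to this last step: extracting from \Cref{prop:ultracolims}(6) an explicit enough description of the (co)Cartesian edges of an ultraproduct fibration and checking that it is stable under $[-]$. An alternative that avoids correspondences is to take $\Prod{\cF}\eta_i$ as candidate unit --- viewed as a natural transformation $\mathrm{id}\Rightarrow\Prod{\cF}g_i\circ\Prod{\cF}f_i$ via \Cref{prop:ultracolims}(1) --- and to verify the triangle identity by applying \Cref{lem:rozenblyum} pointwise, but this merely relocates the same functoriality bookkeeping into the naturality of \Cref{lem:rozenblyum}.
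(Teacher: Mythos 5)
Your proof is correct and takes the same approach as the paper: encode each adjunction as a bi-Cartesian fibration over $\Delta^1$, observe $\Prod{\cF}\Delta^1\simeq\Delta^1$ via \cref{ex:finite} and \cref{lem:setwise}, and conclude from Part (6) of \cref{prop:ultracolims}. The paper's proof stops there; the extra work you do to identify the induced adjoints with $\Prod{\cF}f_i$, $\Prod{\cF}g_i$ and to verify the commuting square is the bookkeeping the paper leaves implicit, and your handling of it (via preservation of (co)Cartesian edges under $[-]$, or the alternative unit-and-triangle-identities route) is sound.
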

\begin{proof}
By \cite[Definition 5.2.2.1]{htt}, an adjunction between $\infty$-categories is a Cartesian and coCartesian fibration over $\Delta^1$. Since $\Prod{\cF} \Delta^1 \simeq \Delta^1$ by \cref{ex:finite} and \cref{lem:setwise}, the result follows from Part (6) of \cref{prop:ultracolims}.
\end{proof}

\begin{lem}\label{lem:ultrafiltered}
Suppose $(J_i)$ is a collection of filtered categories, then $\Prod{\cF}J_i$ is also filtered. 
\end{lem}
\begin{proof}
It is enough to show that for any finite simplicial set $K$ the map 
\[
\Fun_0(K^{\rhd},\Prod{\cF}J_i) \to \Fun_0(K,\Prod{\cF}J_i),
\]
given by restriction, is surjective. The subscript $0$ is meant to denote that we are taking the $0$-simplices of the simplicial set of functors. The claim now follows from the proof of Part (1) of \cref{prop:ultracolims}, which establishes an isomorphism of simplicial sets
\[
\Fun(K,\Prod{\cF}J_i) \cong \Prod{\cF}\Fun(K, J_i)
\]
for any finite simplicial set $K$ and the fact that the ultraproduct of surjective maps is surjective.
\end{proof}

In contrast to \Cref{prop:ultracolims}, the ultraproduct in $\Cat$ does not behave well with respect to infinite (co)limits. It does not send presentable $\infty$-categories to presentable $\infty$-categories. We work the example of the ultraproduct of the category of sets at a non-prinicipal ultrafilter in order to clarify these issues.

\begin{ex}\label{ex:ultraproductswithoutlimits}
Consider $\Prod{\cF} \Set$, the ultraproduct of the category of sets over a non-principal ultrafilter $\cF$. We will produce an infinite tower in this category with no limit.

Let $\hat{\N}=[\N]_{\cF}$. This object has the property that
\[
\Map_{\Prod{\cF}\Set}(*, \hat{\N}) \cong \Prod{\cF}\N
\]
as sets. Note that $\Prod{\cF}\N$ is linearly ordered by {\L}o{\'s}'s theorem. However, unlike $\N$, it has the property that an element may have an infinite number of elements less than it. In fact, every element of $\N \subset \Prod{\cF}\N$ is less than every element of $(\Prod{\cF}\N) \smallsetminus \N$. The successor function applied to each coordinate $\Prod{I}\N \lra{s} \Prod{I}\N$ induces a map $\hat{\N} \lra{s} \hat{\N}$. Consider the diagram
\[
\ldots \lra{s} \hat{\N} \lra{s} \hat{\N}.
\]
Assume that a limit exists and call it $X$, then $X$ has the property that
\[
\Map_{\Prod{\cF}\Set}(*,X) \cong \lim \Map_{\Prod{\cF}\Set}(*,\hat{\N}) \cong \lim \Prod{\cF}\N.
\]
There is an isomorphism
\[
\lim \Prod{\cF}\N \cong \big ( \Prod{\cF}\N \big ) \smallsetminus \N.
\]
To see this note that it is clear that $\N$ is not in the limit. Every other element can have $1$ subtracted from it to get another element in $\big ( \Prod{\cF}\N \big ) \smallsetminus \N$. That is, the map is an isomorphism on the subset $\big ( \Prod{\cF}\N \big ) \smallsetminus \N$.

Finally, the limit (assuming it exists) $X$ must be nonempty and thus must be the image of a sequence $(X_i) \in \Prod{I} \Set$ in which the sets $X_i$ can be taken to be nonempty. We have a canonical map 
\[
g\colon X \lra{} \hat{\N}
\]
to the first $\hat{\N}$ in the sequence. Since
\[
\Map_{\Prod{\cF}\Set}(X, \hat{\N}) \cong \Prod{\cF} \Map_{\Set}(X_i, \N),
\]
the map $g$ can be represented by a collection of maps $g_i\colon X_i \rightarrow \N$. Let $n_i$ be the smallest natural number in the image of $g_i$. The image of
\[
\Map_{\Prod{\cF}\Set}(*,X) \lra{} \big ( \Prod{\cF}\N \big ) \smallsetminus \N
\]
cannot hit an element smaller than $[n_i]$ in $\Prod{\cF}\N$ as the following commutes
\[
\xymatrix{\Prod{I} \Map_{\Set}(*,X_i) \ar[r] \ar[d] & \Prod{I} \N \ar[d] \\ \Map_{\Prod{\cF}\Set}(*,X) \ar[r] & \Prod{\cF}\N.}
\]
Thus, the map
\[
\Map_{\Prod{\cF}\Set}(*,X) \lra{} \big ( \Prod{\cF}\N \big ) \smallsetminus \N
\]
is not an isomorphism and $X$ cannot be the limit.
\end{ex}

\begin{rem}
In light of this example, the reader might wonder if the ultraproduct internal to the $\infty$-category $\mathrm{Pr}^L$ of presentable $\infty$-categories and colimit preserving functors is better behaved than $\Prod{\cF}$. This is not the case: Suppose $\cF$ is a non-principal ultrafilter on a set $I$ and $(\cC_i)_{i \in I}$ is a collection of presentable $\infty$-categories. We claim that $\Prod{\cF}^{\mathrm{pr}}\cC_i$ is contractible, where $\Prod{\cF}^{\mathrm{pr}}$ denotes the ultraproduct internal to $\mathrm{Pr}^L$. Denote by $0_{\mathrm{pr}} = \Delta^{0}$ the initial presentable $\infty$-category and set
\[
\cC_{i,U} = 
	\begin{cases} 
	\cC_i &  i \in U \\
	0_{\mathrm{pr}} & i \not \in U.
	\end{cases} 
\]
By \cite[Remark 4.3.11]{ambidexterity}, products and coproducts are canonically equivalent in $\mathrm{Pr}^L$. 
So we get 
\[
\Prod{\cF}^{\mathrm{pr}}\cC_i \simeq  \colim_{U\in \cF} {\coprod}_{i \in U} \cC_i \simeq   \colim_{U\in \cF} {\coprod}_{i \in I} \cC_{i,U} \simeq  {\coprod}_{i \in I} \colim_{U\in \cF} \cC_{i,U}.
\]
Now since $\cF$ is a non-principal, for any $i \in I$, there exists $U \in \cF$ with $i \notin U$, so we get 
\[
\Prod{\cF}^{\mathrm{pr}}\cC_i   \cong   {\coprod}_{i \in I} 0_{\mathrm{pr}}   \cong  0_{\mathrm{pr}}.
\]
\end{rem}

\begin{lem}\label{lem:lsymmmon}
Let $(\cC_i)_{i \in I}$ and $(\cD_i)_{i \in I}$ be collections of symmetric monoidal $\infty$-categories, let $(f_i \colon \cC_i \rightarrow \cD_i)_{i \in I}$ be a collection of symmetric monoidal functors, and let $\cF$ be an ultrafilter on $I$. The ultraproducts
\[
\Prod{\cF}\cC_i \text{ and } \Prod{\cF}\cD_i
\]
are symmetric monoidal and the induced functor
\[
\Prod{\cF}f_i \colon \Prod{\cF}\cC_i \rightarrow \Prod{\cF}\cD_i
\]
is symmetric monoidal. Also, the canonical map
\[
[-]_{\cF} \colon \Prod{I}\cC_i \longrightarrow \Prod{\cF}\cC_i
\]
is symmetric monoidal.
\end{lem}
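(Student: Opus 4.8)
The plan is to identify the ultraproduct functor $\Prod{\cF}\colon \Cat^{I} \to \Cat$ with the underlying functor of a functor on commutative algebra objects, so that all three assertions follow simultaneously. Recall that symmetric monoidal $\infty$-categories are precisely the commutative algebra objects in $\Cat$ equipped with its Cartesian symmetric monoidal structure, symmetric monoidal functors corresponding to maps of commutative algebras; that is, $\sCat \simeq \CAlg(\Cat^{\times})$ (see \cite[\S 2.4.2]{ha}). Since forming commutative algebras commutes with products of $\infty$-categories and with passage to functor categories carrying pointwise Cartesian structures, this also gives $\CAlg\big((\Cat^{\times})^{I}\big) \simeq {\sCat}^{I}$ and, for $\cF$ ordered by reverse inclusion, $\CAlg\big(\Fun(\cF^{\op},\Cat)^{\times}\big) \simeq \Fun(\cF^{\op},\sCat)$.

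First I would factor the ultraproduct as $\Prod{\cF} = \Colim{\cF^{\op}} \circ G$, where $G\colon \Cat^{I} \to \Fun(\cF^{\op},\Cat)$ sends $(\cC_i)$ to the functor $U \mapsto \prod_{i \in U}\cC_i$ with transition maps the projections. For each $U \in \cF$, both the restriction $\Cat^{I} \to \Cat^{U}$ and the product $\prod_{i \in U}\colon \Cat^{U} \to \Cat$ preserve finite products --- the former tautologically, the latter because it is right adjoint to the diagonal --- and a finite-product-preserving functor between Cartesian monoidal $\infty$-categories is canonically symmetric monoidal. Since finite products in $\Fun(\cF^{\op},\Cat)$ are computed pointwise, $G$ preserves finite products and is therefore symmetric monoidal for the Cartesian structures; hence it lifts to $\CAlg(G)\colon {\sCat}^{I} \to \Fun(\cF^{\op},\sCat)$.

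Next I would post-compose with the colimit over $\cF^{\op}$. Because $\Cat$ is presentable and Cartesian closed, the Cartesian product preserves small colimits separately in each variable, so by \cite[Corollary 3.2.3.2]{ha} the $\infty$-category $\sCat = \CAlg(\Cat^{\times})$ admits all small colimits and the forgetful functor $\sCat \to \Cat$ preserves sifted --- in particular filtered --- colimits. As $\cF^{\op}$ is filtered, the functor $\Colim{\cF^{\op}}\colon \Fun(\cF^{\op},\Cat) \to \Cat$ lifts to a functor $\Fun(\cF^{\op},\sCat) \to \sCat$ compatibly with the forgetful functors. Composing with $\CAlg(G)$ exhibits $\Prod{\cF}\colon \Cat^{I} \to \Cat$ as the underlying functor of a functor ${\sCat}^{I} \to \sCat$. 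Unwinding this yields the lemma: on the object $(\cC_i)$ it equips $\Prod{\cF}\cC_i$ with a symmetric monoidal structure; on a morphism --- a collection of symmetric monoidal functors $(f_i)$ --- it shows $\Prod{\cF}f_i$ is symmetric monoidal; and, the two lifts being compatible with the forgetful functors, the colimit cocone consists of symmetric monoidal functors, so in particular its component at the initial object $I \in \cF^{\op}$, which is exactly $[-]_{\cF}\colon \Prod{I}\cC_i \to \Prod{\cF}\cC_i$, is symmetric monoidal. (Here the symmetric monoidal structure on $\Prod{I}\cC_i$ is the pointwise one, namely the value at $I$ of the diagram $\CAlg(G)\big((\cC_i)\big)$.)

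The step requiring the most care is the input from \cite{ha}, namely that $\sCat$ has filtered colimits created by the forgetful functor to $\Cat$, together with the bookkeeping that ``product of fibers'' and the pointwise monoidal structure agree throughout; everything else is formal. A more hands-on alternative works directly with coCartesian fibrations over $\Fins$: take the ultraproduct of the classifying fibrations $\cC_i^{\otimes} \to \Fins$, which is again coCartesian by \cref{prop:ultracolims}, and pull it back along the canonical functor $\Fins \to \Prod{\cF}\Fins$. Since ultraproducts commute with the finite limits computing fibers and the Segal maps (\cref{ex:finite} and \cref{rem:finitelims}), the pullback is a symmetric monoidal $\infty$-category whose underlying $\infty$-category is $\Prod{\cF}\cC_i$, and the same base-change argument handles $\Prod{\cF}f_i$ and the map $[-]_{\cF}$.
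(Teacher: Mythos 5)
Your proposal is correct and takes essentially the same route as the paper, which simply cites that the forgetful functor $\CAlg(\Cat) \to \Cat$ preserves products (\cite[Proposition 3.2.2.1]{ha}) and filtered colimits (\cite[Corollary 3.2.3.2]{ha}), so the ultraproduct --- a filtered colimit of products --- lifts to $\CAlg(\Cat)$. You spell out the same two facts in more detail via the factorization $\Prod{\cF} = \Colim{\cF^{\op}} \circ G$, and the closing sketch via coCartesian fibrations over $\Fins$ is a legitimate alternative but unnecessary once the $\CAlg$-argument is in place.
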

\begin{proof}
The $\infty$-category of symmetric monoidal $\infty$-categories and symmetric monoidal functors is given as the category $\CAlg(\Cat)$ of commutative algebra objects in $\Cat$. The forgetful functor $\CAlg(\Cat) \to \Cat$ preserves products and filtered colimits by \cite[Proposition 3.2.2.1]{ha} and \cite[Corollary 3.2.3.2]{ha}. 
\end{proof}

\subsection{Compactly generated ultraproducts}\label{ssec:omegaultraproduct}
The problems with the ultraproduct of $\infty$-categories exposed in \cref{ex:ultraproductswithoutlimits} are due to the fact that the ultraproduct is being taken in $\Cat$. In this subsection we study the ultraproduct in $\Catomega$, the $\infty$-category of compactly generated $\infty$-categories and functors which preserve colimits and compact objects. We will call the ultraproduct in $\Catomega$ the \emph{compactly generated ultraproduct} and denote it by $\Prod{\cF}^{\omega}(-)$. 

Associating to a compactly generated $\infty$-category its subcategory of compact objects induces a functor
\[
\xymatrix{(-)^{\omega}\colon\Catomega \ar[r] & \Cat}
\]
that preserves limits and filtered colimits (see \cite[Lemma A.4]{heuts_goodwillie}). In fact, this functor induces an equivalence between $\Catomega$ and the $\infty$-category of small idempotent complete $\infty$-categories with finite colimits and finite colimit preserving functors \cite[Proposition A.1]{heuts_goodwillie}. The inverse equivalence is given by $\Ind(-)$. Note that the forgetful functor $\Catomega \to \Cat$ does not preserve filtered colimits or infinite products, so the compactly generated ultraproduct is not the ultraproduct in $\Cat$. Moreover, recall that we always assume the symmetric monoidal structure $\otimes$ on a compactly generated $\infty$-category $\cC$ to preserve compact objects, i.e., if $X, Y \in \cC^{\omega}$, then $X \otimes Y \in \cC^{\omega}$. 

\begin{prop}
Let $(\cC_i)_{i \in I}$ be a collection of compactly generated $\infty$-categories and let $\cF$ be an ultrafilter on $I$. There is an equivalence 
\[
\Prod{\cF}^{\omega} \cC_i \simeq \Ind \Prod{\cF} \cC_{i}^{\omega}
\]
between the compactly generated ultraproduct and the $\Ind$-category of the ultraproduct of the categories of compact objects.  
\end{prop}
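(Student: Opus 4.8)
The plan is to reduce the statement to the two facts about $(-)^{\omega} \colon \Catomega \to \Cat$ recorded just before the proposition: it preserves limits and filtered colimits, and it is an equivalence onto the $\infty$-category $\Cat^{\mathrm{rex}}_{\mathrm{idem}}$ of small idempotent-complete $\infty$-categories with finite colimits (with inverse $\Ind(-)$). Since the compactly generated ultraproduct $\Prod{\cF}^{\omega}\cC_i = \Colim{U \in \cF}\Prod{i \in U}\cC_i$ is built in $\Catomega$ out of products and a filtered colimit, the first step is to transport it across this equivalence: apply $(-)^{\omega}$ and use that it commutes with both operations to get
\[
\big(\Prod{\cF}^{\omega}\cC_i\big)^{\omega} \simeq \Colim{U \in \cF}\Prod{i \in U}\cC_i^{\omega},
\]
where the colimit and products on the right are now computed in $\Cat$ (more precisely in $\Cat^{\mathrm{rex}}_{\mathrm{idem}}$, but these agree with the ambient ones since $(-)^{\omega}$ reflects as well as preserves them, being an equivalence). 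The right-hand side is by definition $\Prod{\cF}\cC_i^{\omega}$, the ultraproduct of the $\infty$-categories $\cC_i^{\omega}$ taken in $\Cat$.

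The second step is to observe that this identifies $\big(\Prod{\cF}^{\omega}\cC_i\big)^{\omega}$ with $\Prod{\cF}\cC_i^{\omega}$, and then to apply the inverse equivalence $\Ind(-)$. Since $(-)^{\omega}$ is an equivalence, $\cC \simeq \Ind(\cC^{\omega})$ naturally for every $\cC \in \Catomega$; applying this to $\cC = \Prod{\cF}^{\omega}\cC_i$ gives
\[
\Prod{\cF}^{\omega}\cC_i \simeq \Ind\big(\big(\Prod{\cF}^{\omega}\cC_i\big)^{\omega}\big) \simeq \Ind\Prod{\cF}\cC_i^{\omega},
\]
which is exactly the claim. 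I would present the argument essentially as this two-line chain of equivalences, citing \cite[Lemma A.3, Proposition A.1]{heuts_goodwillie} (as already invoked in the surrounding text) for the properties of $(-)^{\omega}$.

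The one genuine point requiring care — and the step I expect to be the main obstacle — is making sure that the products $\Prod{i \in U}$ appearing in the formula for $\Prod{\cF}^{\omega}$ are the products in $\Catomega$, and that $(-)^{\omega}$ really does send them to the products in $\Cat$ of the $\cC_i^{\omega}$. A product of compactly generated $\infty$-categories over an infinite set is again compactly generated and its compact objects are \emph{not} simply the product of the compact objects (an arbitrary tuple of compact objects need not be compact in the infinite product); rather, the compact objects of $\Prod{i \in U}\cC_i$ in the sense relevant to $\Catomega$ form the idempotent completion of $\bigoplus_{i \in U}\cC_i^{\omega}$, or more to the point, $(-)^{\omega}$ preserves products because it is computed as the right adjoint side of the $\Ind/(-)^{\omega}$ equivalence and limits in $\Catomega$ are created by this equivalence — so the product in $\Catomega$ is \emph{defined} so that $(-)^{\omega}$ carries it to the $\Cat$-product. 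Thus the resolution is precisely the statement that $(-)^{\omega}$ preserves limits, which is exactly \cite[Lemma A.3]{heuts_goodwillie}; one should simply flag that the products in the defining colimit of $\Prod{\cF}^{\omega}$ are taken in $\Catomega$, not in $\Cat$ (these genuinely differ, as remarked in the paragraph preceding the proposition), and that this is what makes the computation go through. No further input is needed: combining preservation of limits with preservation of filtered colimits handles the whole right-hand side, and the equivalence with $\Ind(-)$ closes the argument.
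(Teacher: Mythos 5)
Your argument is correct and follows essentially the same route as the paper's: both compute $(\Prod{\cF}^{\omega}\cC_i)^{\omega} \simeq \Prod{\cF}\cC_i^{\omega}$ using that $(-)^{\omega}$ preserves limits and filtered colimits (citing \cite[Lemma A.3, Proposition A.1]{heuts_goodwillie}) and then apply $\Ind(-)$. Your closing paragraph flagging the distinction between products in $\Catomega$ and in $\Cat$ is a sensible point of care but is exactly what the cited lemma handles, as you note.
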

\begin{proof}
The $\infty$-category $\Prod{\cF}^{\omega} \cC_i$ is compactly generated by definition. By \cite[Proposition A.1]{heuts_goodwillie}, it suffices to determine its subcategory of compact objects. Since $(-)^{\omega}$ preserves filtered colimits and products \cite[Lemma A.4]{heuts_goodwillie}, we see that
\[
(\Prod{\cF}^{\omega} \cC_i)^{\omega} \simeq \Prod{\cF}\cC_i^{\omega}
\]
and the claim follows. 
\end{proof}

For an $\infty$-category $\cC$, let 
\[
\Pre(\cC) = \Fun(\cC^{op},\Top)
\]
be the $\infty$-category of presheaves on $\cC$. Consider the natural inclusion $\iota\colon\Prod{\cF}\cC_{i}^{\omega} \to \Prod{\cF}\cC_i$. The Yoneda embedding
\[
\Prod{\cF}\cC_i \lra{y} \Pre(\Prod{\cF}\cC_i)
\]
may be restricted along $\iota$ to give a map
\[
y_{\iota} \colon \Prod{\cF}\cC_{i} \lra{} \Pre(\Prod{\cF}\cC_{i}^{\omega}).
\]
Since $\iota$ preserves finite colimits by Part (5) of \cref{prop:ultracolims}, $y_{\iota}$ factors through $\Ind(\Prod{\cF}C_{i}^{\omega}) \simeq \Prod{\cF}^{\omega}\cC_{i}$ to give map
\[
m \colon \Prod{\cF}\cC_{i} \lra{} \Prod{\cF}^{\omega}\cC_{i}.
\]
We may use this to build a ``localization map" from the product to the compactly generated ultraproduct. We will show that the composite
\[
\Prod{I}\cC_i \lra{[-]} \Prod{\cF}\cC_i \lra{m} \Prod{\cF}^{\omega}\cC_{i}.
\]
is well-behaved.

\begin{lem}\label{lem:catsinheritproperties}
Let $\cC_i \in \Catomega$ be a collection of compactly generated $\infty$-categories indexed by some set $I$. If 
	\begin{enumerate}
		\item $\cC_i$ is stable for all $i\in I$ or
		\item $\cC_i$ is symmetric monoidal for all $i\in I$,
	\end{enumerate}
then so is $\Prod{\cF}^{\omega}\cC_i$ for any ultrafilter $\cF$ on $I$. 
\end{lem}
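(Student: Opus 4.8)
The plan is to exploit the identification $\Prod{\cF}^{\omega}\cC_i \simeq \Ind \Prod{\cF}\cC_i^{\omega}$ from the previous proposition, which reduces everything to a statement about the small $\infty$-category $\Prod{\cF}\cC_i^{\omega}$ of compact objects and the way structure is inherited under $\Ind(-)$. Concretely, for the stable case I would first observe that if each $\cC_i$ is stable and compactly generated, then each $\cC_i^{\omega}$ is a small stable (hence idempotent-complete, finitely (co)complete) $\infty$-category; by Part (3) of \Cref{prop:ultracolims} the ultraproduct $\Prod{\cF}\cC_i^{\omega}$ again has finite (co)limits, and I would check stability by verifying that a square is a pushout iff it is a pullback using \Cref{lem:rozenblyum} (mapping spaces in the ultraproduct are ultraproducts of mapping spaces) together with \Cref{rem:finitelims}/Part (4)–(5) of \Cref{prop:ultracolims} that finite (co)limits are computed ``levelwise'' and detected on a set in $\cF$. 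Alternatively, and more cheaply, I would cite \Cref{lem:stablerozenblyum}, which already records that the ultraproduct of stable $\infty$-categories in $\Cat$ is stable; combined with the fact that $\Ind$ of a small stable $\infty$-category is stable and compactly generated, this gives (1).

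For the symmetric monoidal case (2), I would argue as follows. By \Cref{lem:lsymmmon}, the ultraproduct in $\Cat$ of symmetric monoidal $\infty$-categories is symmetric monoidal, and the same lemma applied to the restricted collection $(\cC_i^{\omega})_{i\in I}$ — these are symmetric monoidal since we assume $\otimes$ on each $\cC_i$ preserves compact objects — shows that $\Prod{\cF}\cC_i^{\omega}$ is a symmetric monoidal small $\infty$-category, and by Parts (3) and (5) of \Cref{prop:ultracolims} its tensor product preserves finite colimits separately in each variable (since each $\otimes_i$ does, on $\cC_i^{\omega}$). Now $\Ind(-)$ carries a small symmetric monoidal $\infty$-category whose tensor product preserves finite colimits in each variable to a presentably symmetric monoidal compactly generated $\infty$-category, via Day convolution / the universal property of $\Ind$ (see \cite[Section 4.8.1]{ha}); moreover this symmetric monoidal structure restricts back to the one on the compact objects, so the requirements in our conventions for a ``symmetric monoidal compactly generated $\infty$-category'' — $\otimes$ commutes with all colimits, restricts to $\cC^{\omega}$, and the unit is compact — are all met. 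Applying this to $\Prod{\cF}\cC_i^{\omega}$ yields the symmetric monoidal structure on $\Prod{\cF}^{\omega}\cC_i \simeq \Ind \Prod{\cF}\cC_i^{\omega}$.

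The main obstacle, such as it is, is bookkeeping rather than a genuine difficulty: one must make sure that the two functorialities line up — that the symmetric monoidal (resp. stable) structure produced by applying $\Ind(-)$ to the ultraproduct of the compact parts genuinely agrees, as structure on $\Prod{\cF}^{\omega}\cC_i$, with anything inherited more directly, and in particular that the unit object $[\mathbf{1}_i]$ is compact (which is immediate, since it lies in $\Prod{\cF}\cC_i^{\omega} \simeq (\Prod{\cF}^{\omega}\cC_i)^{\omega}$) and that the tensor product preserves all colimits (which follows because on a compactly generated $\infty$-category a colimit-preserving-in-each-variable functor is determined by its restriction to compact objects). I expect the cleanest writeup to simply invoke \Cref{lem:lsymmmon} and \Cref{lem:stablerozenblyum} for the ambient $\Cat$-level statements about $\Prod{\cF}\cC_i^{\omega}$, and then the universal property of $\Ind$ to transport these through to $\Catomega$, so that no new computation is required beyond what is already in the excerpt.
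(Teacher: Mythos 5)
Your proposal is correct and takes essentially the same route as the paper: citing \Cref{lem:stablerozenblyum} together with the fact that $\Ind$ of a small stable $\infty$-category is stable for Part (1), and \Cref{lem:lsymmmon} together with \cite[Theorem 4.8.1.13]{ha} (the $\Ind$/Day-convolution result) for Part (2). The extra bookkeeping you spell out is consistent with, and slightly more explicit than, the paper's two-line proof.
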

\begin{proof}
For Part (1), \cref{lem:stablerozenblyum} implies that ultraproducts of stable $\infty$-categories are stable and it follows from \cite[Theorem 1.1.3.6]{ha} that $\cC$ stable implies that $\Ind(\cC)$ is stable.  Part (2) follows from \cref{lem:lsymmmon} and \cite[Theorem 4.8.1.13]{ha}.
\end{proof}

Before we can state the next proposition, we recall some basic facts about lax symmetric monoidal functors. Let $\cC$ and $\cD$ be symmetric monoidal $\infty$-categories. In the notation of \cite{ha}, a map
\[
\xymatrix{\cC^{\otimes} \ar[rr] \ar[dr] & & \cD^{\otimes} \ar[dl] \\ & \mathrm{N}(\mathrm{Fin}_*) &} 
\]
is lax symmetric monoidal if it sends inert maps to inert maps. Let   $\Fun^{\mathrm{lax}}(\cC^{\otimes}, \cD^{\otimes}) \subset \Fun_{\mathrm{N}(\mathrm{Fin}_*)}(\cC^{\otimes}, \cD^{\otimes}) $ be the full subcategory consisting of lax symmetric monoidal functors. Let $\mathrm{Fin}^{\mathrm{inj}}_*$ denote the subcategory of $\mathrm{Fin}_*$ spanned by all objects together with those
morphisms $f \colon \langle m \rangle \to \langle n \rangle$ such that $|f^{-1}(i)| \leq 1$ for $1 \leq i \leq n$. We say that $F \in \Fun^{\mathrm{lax}}(\cC^{\otimes}, \cD^{\otimes})$ is unital if $F$ sends coCartesian edges over $\mathrm{Fin}^{\mathrm{inj}}_*$ to coCartesian edges.

The following lemma is an easy application of \cite[Lemma 2.16]{AFT}:
\begin{lem} \label{lem:msymmmon1}
Let $\mathcal{B}$ be a small symmetric monoidal $\infty$-category that admits finite colimits   and such that the symmetric monoidal structure for $\mathcal{B}$  distributes over finite colimits. Let $\mathcal{M}$ be a (locally small) symmetric monoidal $\infty$-category that admits finite colimits. Let $i \colon \mathcal{B} \to \mathcal{M}$ be fully faithful symmetric monoidal left exact functor. Then there exists a unital lax symmetric monoidal functor $i_{\natural}^{\otimes}\colon \mathcal{M}\to \Ind( \mathcal{B})$ 
such that the underlying functor from $\mathcal{M} $ to $\Ind( \mathcal{B})$ is the restriction of the Yoneda embedding.
 \end{lem}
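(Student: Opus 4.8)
The plan is to reduce \Cref{lem:msymmmon1} to the cited result \cite[Lemma 2.16]{AFT}, which (in that paper's language) produces, for a fully faithful symmetric monoidal functor out of a symmetric monoidal category with ``enough'' structure, an extension of the Yoneda embedding to a lax symmetric monoidal functor into the presheaf category. The work here is to (a) observe that the Yoneda embedding $\mathcal{B} \hookrightarrow \Pre(\mathcal{B})$ is symmetric monoidal for the Day convolution structure precisely because the monoidal structure on $\mathcal{B}$ distributes over finite colimits (this is where that hypothesis is used), (b) identify $\Ind(\mathcal{B})$ as the full subcategory of $\Pre(\mathcal{B})$ on the filtered-colimit-preserving presheaves and check this subcategory is closed under Day convolution, so that the lax monoidal structure restricts, and (c) record that the resulting functor is unital, which amounts to checking it preserves the monoidal unit up to equivalence — true since $i$ is symmetric monoidal and the Yoneda embedding sends the unit of $\mathcal{B}$ to the unit of $\Pre(\mathcal{B})$.

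Concretely, first I would spell out the input required by \cite[Lemma 2.16]{AFT}: a fully faithful symmetric monoidal left-exact functor $i \colon \mathcal{B} \to \mathcal{M}$ between symmetric monoidal $\infty$-categories with finite colimits, with the monoidal structure on the source distributing over finite colimits. The cited lemma then yields a unital lax symmetric monoidal functor $\mathcal{M} \to \Pre(\mathcal{B})$ extending the composite of $i$ with an appropriate (co)restricted Yoneda embedding; because $i$ is fully faithful and left exact, the restricted Yoneda functor $\mathcal{M} \to \Pre(\mathcal{B})$, $m \mapsto \Map_{\mathcal{M}}(i(-), m)$, lands in the $\Ind$-category $\Ind(\mathcal{B}) \subseteq \Pre(\mathcal{B})$ — indeed $i(b)$ is a compact object of $\mathcal{M}$ relative to $\mathcal{B}$ in the sense that $\Map_{\mathcal{M}}(i(b), -)$ commutes with the filtered colimits computed via the presheaf presentation, which is exactly the statement that the presheaf $\Map_{\mathcal{M}}(i(-), m)$ is a filtered colimit of representables. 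The last point to verify is that the Day convolution symmetric monoidal structure on $\Pre(\mathcal{B})$ restricts to $\Ind(\mathcal{B})$: this holds because $\Ind(\mathcal{B})$ is generated under filtered colimits by $\mathcal{B}$, the representables are closed under Day convolution (since $\mathcal{B}$ is monoidal), and Day convolution preserves colimits in each variable. Hence the lax structure on $\mathcal{M} \to \Pre(\mathcal{B})$ corestricts to a lax structure on $i_{\natural}^{\otimes}\colon \mathcal{M} \to \Ind(\mathcal{B})$, still unital, whose underlying functor is the (co)restricted Yoneda embedding $m \mapsto \Map_{\mathcal{M}}(i(-),m)$.

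The main obstacle I anticipate is bookkeeping rather than conceptual: matching the precise hypotheses and conclusion of \cite[Lemma 2.16]{AFT} — which is phrased in a somewhat different setting (operadic, possibly with a different finiteness condition) — to the statement needed here, and in particular making sure that ``unital'' in the present sense (preserving coCartesian edges over $\mathrm{Fin}^{\mathrm{inj}}_*$) is exactly what the cited lemma delivers or an immediate consequence of it. A secondary technical point is confirming that $\Ind(\mathcal{B})$, rather than some larger or smaller presheaf subcategory, is the correct target; this follows from left-exactness of $i$ together with full faithfulness, but it is worth stating carefully since the rest of the paper uses $i_{\natural}^{\otimes}$ with target $\Ind(\mathcal{B})$ specifically (e.g. to manufacture lax monoidal functors into compactly generated ultraproducts). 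Once these matchups are in place, the proof is essentially a citation plus the two closure observations about Day convolution.
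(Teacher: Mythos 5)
Your overall strategy matches the paper's: both reduce the statement to \cite[Lemma 2.16]{AFT}. The paper, however, applies that lemma directly with target $\mathcal{V} = \Ind(\mathcal{B})$ rather than first landing in $\Pre(\mathcal{B})$ and corestricting, and then simply verifies the four hypotheses of the cited lemma: conditions (1) and (3) follow from \cite[Theorem 4.8.1.13]{ha} (this is where the hypothesis that $\otimes$ distributes over finite colimits is actually used, to make $\Ind(\mathcal{B})$ presentably symmetric monoidal, not --- as you write --- to make the Yoneda embedding monoidal, which it always is for Day convolution); condition (4), the unitality, follows from $\mathcal{B}_{/1_{\mathcal{B}}} \to \mathcal{B}_{/1_{\mathcal{M}}}$ being an equivalence and hence final; and condition (2) asks that each $\mathcal{B}_M$ be sifted.

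The genuine gap in your write-up is at condition (2), which is also exactly what controls the claim that the restricted Yoneda lands in $\Ind(\mathcal{B})$. Your argument --- that $i(b)$ is ``compact relative to $\mathcal{B}$'' and that $\Map_{\mathcal{M}}(i(b), -)$ commuting with filtered colimits implies $\Map_{\mathcal{M}}(i(-), m)$ is a filtered colimit of representables --- is not a valid inference; compactness of the objects $i(b)$ does not by itself say anything about the shape of the presheaf $\Map_{\mathcal{M}}(i(-), m)$. What you actually need, and what the paper proves, is that for each $M \in \mathcal{M}$ the comma category $\mathcal{B}_M = \mathcal{B} \times_{\mathcal{M}} \mathcal{M}_{/M}$ is filtered: any finite diagram $\rho\colon K \to \mathcal{B}_M$ extends to a cocone $K^{\rhd} \to \mathcal{B}_M$ because $\mathcal{B}$ admits finite colimits and $i$ preserves them. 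Filteredness of $\mathcal{B}_M$ simultaneously gives condition (2) of \cite[Lemma 2.16]{AFT} (it asks only for siftedness, but filteredness is what you get) and makes $\Map_{\mathcal{M}}(i(-),M)$ a filtered colimit of representables, hence an object of $\Ind(\mathcal{B})$. You gesture at left-exactness and full faithfulness being the key, which is correct, but you never turn them into this argument. Relatedly, your identification of $\Ind(\mathcal{B})$ as ``the full subcategory of $\Pre(\mathcal{B})$ on the filtered-colimit-preserving presheaves'' is not the right characterization; what is meant is colimits of filtered diagrams of representables, or equivalently (since $\mathcal{B}$ has finite colimits) the left-exact presheaves.
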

\begin{proof}
We are going to apply \cite[Lemma 2.16]{AFT} to the case $\mathcal{V} = \Ind(\mathcal{B})$. First note that \cite[Lemma 2.16]{AFT} gives a unital lax symmetric monoidal functor if Conditions (1)-(4) of the lemma are satisfied. Conditions (1) and (3) follow from \cite[Theorem 4.8.1.13]{ha}. 
Since $i$ is symmetric monoidal, we have $i(1_{\mathcal{B}}) \cong 1_\mathcal{M}$. Since $i$ is fully faithful, the functor 
\[
\mathcal{B}_{/1_{\mathcal{B}}}\to \mathcal{B}_{/1_{\mathcal{M}}}
\]
is an equivalence and thus final. This gives condition (4).
To prove condition (2), we shall show that, for $M \in \mathcal{M}$, $\mathcal{B}_{M}$ is filtered (and in particular sifted).
Indeed, let $\rho \colon K \to  \mathcal{B}_{M}$ be a finite diagram. Since $\mathcal{B}$ admits finite colimits and  $i$ preserves colimits, $\rho$ can be extended to a colimit diagram $\rho \colon K^{\rhd} \to  \mathcal{B}_{M}$.
\end{proof}

\begin{cor}\label{prop:msymmmon}
Let $(\cC_i)_{i \in I}$ be a collection of symmetric monoidal compactly generated $\infty$-categories and let $\cF$ be an ultrafilter on the set $I$. The map
\[
m\colon\Prod{\cF} \cC_i \longrightarrow \Prod{\cF}^{\omega}\cC_i 
\]
is a unital symmetric monoidal functor.
\end{cor}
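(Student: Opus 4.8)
The plan is to recognize $m$ as an instance of the functor produced by \Cref{lem:msymmmon1}, applied to the canonical inclusion $\iota\colon \Prod{\cF}\cC_i^{\omega} \to \Prod{\cF}\cC_i$ with $\mathcal{B} = \Prod{\cF}\cC_i^{\omega}$ and $\mathcal{M} = \Prod{\cF}\cC_i$. Once the hypotheses are verified, \Cref{lem:msymmmon1} yields a unital lax symmetric monoidal functor $\iota_{\natural}^{\otimes}\colon \Prod{\cF}\cC_i \to \Ind(\Prod{\cF}\cC_i^{\omega})$ whose underlying functor is the restriction of the Yoneda embedding of $\Prod{\cF}\cC_i$ along $\iota$; this is exactly the functor $m$ under the equivalence $\Ind(\Prod{\cF}\cC_i^{\omega}) \simeq \Prod{\cF}^{\omega}\cC_i$ established above, so $m$ inherits the unital lax symmetric monoidal structure. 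The substance of the proof is therefore the verification of the hypotheses of \Cref{lem:msymmmon1}.

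First I would treat $\mathcal{B} = \Prod{\cF}\cC_i^{\omega}$. It is essentially small, being a filtered colimit over $\cF$ of products over $I$ of small categories. Each $\cC_i^{\omega}$ is symmetric monoidal by our standing convention, so \Cref{lem:lsymmmon} makes $\mathcal{B}$ symmetric monoidal and, applied to the symmetric monoidal inclusions $\cC_i^{\omega}\hookrightarrow\cC_i$, makes $\iota$ symmetric monoidal. Each $\cC_i^{\omega}$ has finite colimits, so $\mathcal{B}$ does too by Part (3) of \Cref{prop:ultracolims}. For distributivity of $\otimes$ over finite colimits: since ultraproducts in $\Cat$ commute with finite limits (\Cref{rem:finitelims}), the tautological map $\Prod{\cF}(\cC_i^{\omega}\times\cC_i^{\omega}) \to (\Prod{\cF}\cC_i^{\omega})\times(\Prod{\cF}\cC_i^{\omega})$ is an equivalence, under which the tensor functor of $\mathcal{B}$ is identified with $\Prod{\cF}(\otimes_i)$. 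Fixing $[X_i]\in\mathcal{B}$, the functor $[X_i]\otimes(-)$ is then $\Prod{\cF}(X_i\otimes(-))$, and each $X_i\otimes(-)$ preserves finite colimits because $\otimes$ on $\cC_i$ distributes over all colimits and restricts to $\cC_i^{\omega}$; so Part (5) of \Cref{prop:ultracolims} shows $[X_i]\otimes(-)$ preserves the (existing, by Part (3)) finite colimits of $\mathcal{B}$.

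Next I would treat $\mathcal{M} = \Prod{\cF}\cC_i$: it is symmetric monoidal by \Cref{lem:lsymmmon}, has finite colimits by Part (3) of \Cref{prop:ultracolims}, and is locally small because, by \Cref{lem:rozenblyum}, its mapping spaces are ultraproducts of the (small) mapping spaces of the $\cC_i$. Finally $\iota$ is fully faithful by \Cref{fullyfaithful} and preserves finite colimits by Part (5) of \Cref{prop:ultracolims}, applied to the finite-colimit-preserving inclusions $\cC_i^{\omega}\hookrightarrow\cC_i$ --- this is the exactness property of $\iota$ actually used in the proof of \Cref{lem:msymmmon1}, and the one already invoked to define $m$. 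These are the inputs \Cref{lem:msymmmon1} requires, and the argument then concludes as in the first paragraph.

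The one step I expect to demand genuine care is the distributivity clause for $\mathcal{B}$, since it forces one to interchange the two operations that make up the ultraproduct --- the infinite product and the filtered colimit --- using left-exactness of ultraproducts in $\Cat$ (\Cref{rem:finitelims}) to slide the tensor product past both. Everything else is a direct citation of \Cref{lem:lsymmmon}, \Cref{prop:ultracolims}, and \Cref{fullyfaithful}, so I do not anticipate further obstacles.
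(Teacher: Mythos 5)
Your proposal is correct and follows exactly the paper's route: the paper's own proof is a one-line citation of \Cref{lem:msymmmon1} applied to $\iota\colon\Prod{\cF}\cC_i^{\omega}\hookrightarrow\Prod{\cF}\cC_i$, and you have filled in the verification of the lemma's hypotheses (smallness of $\mathcal{B}$, symmetric monoidality via \Cref{lem:lsymmmon}, finite colimits and their preservation via \Cref{prop:ultracolims}, full faithfulness via \Cref{fullyfaithful}, and distributivity via the interchange of products and filtered colimits in $\Cat$) rather than leaving them implicit. You also correctly note that although \Cref{lem:msymmmon1} is stated with a "left exact" hypothesis, its proof actually uses that $\iota$ preserves finite colimits, which is what you verify.
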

\begin{proof}
Set $i \colon \mathcal{B} \to \mathcal{M}$ to be the inclusion
\[
\Prod{\cF} \cC_i^{\omega}  \to \Prod{\cF} \cC_i 
\]
in \cref{lem:msymmmon1} so that $m = i_{\natural}^{\otimes}$. This implies that $m$ is unital and lax symmetric monoidal.

Finally, to prove that $m$ is symmetric monoidal, we must verify condition (5) in \cite[Lemma 2.16]{AFT}. We must show that if $x_1$ and $x_2$ are objects in $\Prod{\cF} \cC_i$, then the functor
\[
\bigotimes \colon (\Prod{\cF} \cC_i^{\omega})_{/x_1} \times (\Prod{\cF} \cC_i^{\omega})_{/x_2} \to (\Prod{\cF} \cC_i^{\omega})_{/(x_1 \otimes x_2)}
\]
is cofinal. By Theorem 4.1.3.1 in \cite{htt}, it is enough to show that for every $x_0 \in (\Prod{\cF} \cC_i^{\omega})_{/(x_1 \otimes x_2)}$ the groupoidification of
\[
\cD_{x_0,x_1,x_2} = \left ((\Prod{\cF} \cC_i^{\omega})_{/x_1} \times (\Prod{\cF} \cC_i^{\omega})_{/x_2} \right) \times_{(\Prod{\cF} \cC_i^{\omega})_{/(x_1 \otimes x_2)}}   (\Prod{\cF} \cC_i^{\omega})_{x_0//(x_1 \otimes x_2)}
\]
is weakly contractible. For this, it is enough to show that $\cD_{x_0,x_1,x_2}$ is filtered. Since the property of being filtered is preserved by ultraproducts by \cref{lem:ultrafiltered}, \cref{prop:ultracolims} (2) and the fact that filtered colimits commute with pullbacks imply that we are reduced to the case where $\cF$ is principal. 

Put differently, let $\cC$ be a compactly generated symmetric monoidal $\infty$-category. Given a pair of objects $x_1,x_2 \in \cC$ and an object $x_0\in (\cC^{\omega})_{/(x_1 \otimes x_2)}$, we must show that the $\infty$-category 
\[
\cD_{x_0,x_1,x_2} =  \left ( (\cC^{\omega})_{/x_1} \times  (\cC^{\omega})_{/x_2} \right) \times_{(\cC^{\omega})_{/(x_1 \otimes x_2)}}    (\cC^{\omega})_{x_0//(x_1 \otimes x_2)}
\]
is filtered. We will do this by expressing $\cD_{x_0,x_1,x_2}$ as a filtered colimit of filtered $\infty$-categories.

To do this, we will first show that the functor
\[
F \colon \cC \to (\Cat)_{/\cC^{\omega}}
\]
\[
x \mapsto \cC^{\omega}_{/x} 
\]
commutes with filtered colimits. Indeed, given a filtered diagram $\rho \colon J \to \cC$, the canonical map
\[
\colim_{J} (F \circ \rho) \to F(\colim_{J} \rho)
\]
is a map between two right fibrations over $\cC^{\omega}$ and thus can be checked to be an equivalence fiberwise. Thus it suffices to check that this map is an equivalence at the fiber over $c \in \cC^{\omega}$. Since filtered colimits commute with pullbacks in $\Cat$, the map corresponding to the fiber over $c \in \cC^{\omega}$ is 
\[
\colim_{j \in J} \Map_{\cC} (c, x_j) \to \Map_{\cC} (c, \colim_{j \in J} x_j), 
\]
which is an equivalence by the compactness of $c$.
Further, since the forgetful functor 
\[
(\Cat)_{/\cC^{\omega}} \to \Cat
\]
commutes with filtered colimits, the functor sending 
\[
x \mapsto \cC^{\omega}_{/x}
\]
commutes with filtered colimits as a functor to $\Cat$.
 
Given $x_0$, the $\infty$-category $\cD_{x_0,x_1,x_2}$ is functorial in $x_1$ and $x_2$ and thus can be considered as a functor
\[
\cD_{x_0, -,-} \colon \cC \times \cC \to \Cat.
\]

Further, since filtered colimits commute with pullbacks in $\Cat$ and with tensor products in $\cC$, the functor $\cD_{x_0,-,-}$ commutes with filtered colimits. Since a filtered colimit of filtered $\infty$-categories is a filtered $\infty$-category, we can reduce to the case that $x_1$ and $x_2$ are in the essential image of the embedding $\cC^{\omega} \to \cC$. In this case the $\infty$-category $\cD_{x_0,x_1,x_2}$ has a terminal object 
\[
\left((x_1 \xrightarrow{\Id_{x_1 }} x_1  , x_2 \xrightarrow{\Id_{ x_2}}  x_2),x_0 \to x_1 \otimes x_2 \xrightarrow{\Id_{x_1 \otimes x_2}} x_1 \otimes x_2 \right),
\]
which concludes the proof.
\end{proof}

In certain cases, collections of adjunctions between compactly generated $\infty$-categories give rise to adjunctions of compactly generated ultraproducts.

\begin{lem} \label{lem:compactadjunction}
If $f_i\colon \cC_i \leftrightarrows \cD_i \noloc g_i$ is a collection of adjunctions between compactly generated $\infty$-categories indexed by $I$ such that the left adjoints preserve compact objects, then there exists an induced adjunction
\[
\xymatrix{\Prod{\cF}^{\omega}f_i\colon \Prod{\cF}^{\omega}\cC_i \ar@<0.5ex>[r] & \Prod{\cF}^{\omega}\cD_i\noloc g_{\cF} \ar@<0.5ex>[l]}
\]
for any ultrafilter $\cF$ on $I$ such that the following diagram commutes:
\[
\xymatrix{\Prod{\cF}\cC_i \ar@<0.5ex>[r]^{\Prod{\cF}f_i} \ar[d]_{m} & \Prod{\cF}\cD_i \ar@<0.5ex>[l]^{\Prod{\cF}g_i} \ar[d]^{m} \\ \Prod{\cF}^{\omega}\cC_i \ar@<0.5ex>[r]^{\Prod{\cF}^{\omega}f_i} & \Prod{\cF}^{\omega}\cD_i. \ar@<0.5ex>[l]^{g_{\cF}}}
\]
\end{lem}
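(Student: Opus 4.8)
The plan is to obtain $\Prod{\cF}^{\omega}f_i$ as the ultraproduct of the $f_i$ inside $\Catomega$, produce $g_{\cF}$ from the adjoint functor theorem, and then verify the two squares by a mapping-space computation. First note that each $f_i$ is a morphism in $\Catomega$: being a left adjoint it preserves all colimits, and it preserves compact objects by hypothesis. In particular it restricts to $f_i^{\omega}\colon\cC_i^{\omega}\to\cD_i^{\omega}$, and this restriction preserves finite colimits since finite colimits in $\cC_i^{\omega}$ and $\cD_i^{\omega}$ are computed in $\cC_i$ and $\cD_i$. Form the ultraproduct $\Prod{\cF}^{\omega}f_i$ of the $f_i$ in $\Catomega$; exactly as in the proof of the identification $\Prod{\cF}^{\omega}\cC_i\simeq\Ind\Prod{\cF}\cC_i^{\omega}$, the functor $(-)^{\omega}$ preserves ultraproducts, so $\Prod{\cF}^{\omega}f_i\simeq\Ind(\Prod{\cF}f_i^{\omega})$, where $\Prod{\cF}f_i^{\omega}\colon\Prod{\cF}\cC_i^{\omega}\to\Prod{\cF}\cD_i^{\omega}$ is the ultraproduct in $\Cat$ and preserves finite colimits by part (5) of \cref{prop:ultracolims}. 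Hence $\Prod{\cF}^{\omega}f_i$ is a colimit-preserving, compact-object-preserving functor between compactly generated — in particular presentable — $\infty$-categories, so the adjoint functor theorem \cite[Corollary 5.5.2.9]{htt} supplies a right adjoint $g_{\cF}$, yielding the asserted adjunction.

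To check commutativity, write $m_{\cC}$ and $m_{\cD}$ for the functors $m$ associated to $(\cC_i)$ and $(\cD_i)$; recall $m_{\cC}$ is the Yoneda embedding restricted along $\iota_{\cC}\colon\Prod{\cF}\cC_i^{\omega}\hookrightarrow\Prod{\cF}\cC_i$, which lands in $\Ind\Prod{\cF}\cC_i^{\omega}=\Prod{\cF}^{\omega}\cC_i$ because $\iota_{\cC}$ preserves finite colimits, and likewise for $m_{\cD}$. We must produce equivalences $\Prod{\cF}^{\omega}f_i\circ m_{\cC}\simeq m_{\cD}\circ\Prod{\cF}f_i$ and $g_{\cF}\circ m_{\cD}\simeq m_{\cC}\circ\Prod{\cF}g_i$. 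Both are comparisons of functors valued in $\Prod{\cF}^{\omega}(-)=\Ind\Prod{\cF}(-)^{\omega}$, so it suffices to compare the associated presheaves, i.e.\ to evaluate at compact objects. For the right-hand square and $d\in\Prod{\cF}\cD_i$, I would evaluate $g_{\cF}(m_{\cD}(d))$ at a compact $[x_i]\in\Prod{\cF}\cC_i^{\omega}$ and apply in turn the adjunction $\Prod{\cF}^{\omega}f_i\dashv g_{\cF}$, the identification of $\Prod{\cF}^{\omega}f_i$ with $\Prod{\cF}f_i^{\omega}$ on compact objects, the compatibility $\iota_{\cD}\circ\Prod{\cF}f_i^{\omega}\simeq\Prod{\cF}f_i\circ\iota_{\cC}$ (which comes from the fact that each $f_i$ preserves compacts), the definition of $m_{\cD}$, and finally the product-level adjunction $\Prod{\cF}f_i\dashv\Prod{\cF}g_i$ of \cref{lem:ultraadjunction}; this collapses the space to $\Map_{\Prod{\cF}\cC_i}(\iota_{\cC}([x_i]),\Prod{\cF}g_i(d))$, which is precisely the value of $m_{\cC}(\Prod{\cF}g_i(d))$ at $[x_i]$. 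Naturality in $[x_i]$ and $d$ then promotes this to the desired equivalence of functors.

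The left-hand square is the main obstacle, since $\Prod{\cF}^{\omega}f_i=\Ind(\Prod{\cF}f_i^{\omega})$ is a left adjoint and so cannot be fed an outer adjunction. Instead I would present $m_{\cC}(c)$, for $c=[c_i]$, as its canonical filtered colimit of representables indexed by $(\Prod{\cF}\cC_i^{\omega})_{/c}$, which by part (2) of \cref{prop:ultracolims} is equivalent to $\Prod{\cF}\big((\cC_i^{\omega})_{/c_i}\big)$; applying $\Ind(\Prod{\cF}f_i^{\omega})$ and evaluating at a compact $[y_i]\in\Prod{\cF}\cD_i^{\omega}$ rewrites the value as $\Colim{[x_i\to c_i]}\Prod{\cF}\Map_{\cD_i}(y_i,f_i(x_i))$ by \cref{lem:rozenblyum}, while $m_{\cD}(\Prod{\cF}f_i(c))$ evaluated at $[y_i]$ equals $\Prod{\cF}\Map_{\cD_i}(y_i,f_i(c_i))$, again by \cref{lem:rozenblyum}. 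Since each $\cC_i$ is compactly generated, $c_i\simeq\Colim{(\cC_i^{\omega})_{/c_i}}(-)$, and this colimit is preserved by $f_i$; since $y_i$ is compact this gives $\Map_{\cD_i}(y_i,f_i(c_i))\simeq\Colim{(\cC_i^{\omega})_{/c_i}}\Map_{\cD_i}(y_i,f_i(-))$. The two expressions are then matched by the distributivity of filtered colimits over infinite products in $\Top$ (\cref{cor:distributivity}), which yields $\Prod{\cF}\Colim{J_i}G_i\simeq\Colim{\Prod{\cF}J_i}\Prod{\cF}G_i$ after a short cofinality argument. Carrying out this interchange of the ultraproduct with the filtered colimit carefully — rather than the formal construction of the functor or the appeal to the adjoint functor theorem — is where the genuine work lies.
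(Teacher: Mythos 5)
Your proof follows the paper's proof almost step for step in its main ingredients: $\Prod{\cF}^{\omega}f_i$ arises as $\Ind(\Prod{\cF}f_i^{\omega})$ using that $(-)^{\omega}$ commutes with ultraproducts, the right adjoint $g_{\cF}$ comes from the fact that $\Prod{\cF}^{\omega}f_i$ is colimit-preserving, and the right-hand square is checked by the same four-line mapping-space chain at compact objects (the paper just writes out the equivalences; your verbal description lists the identical moves). The one genuine divergence is the left-hand square. The paper disposes of it with the single sentence that it ``follows from the naturality of $m$,'' whereas you correctly identify that this is not formal: since $\Prod{\cF}\cC_i$ is not equivalent to $\Ind(\Prod{\cF}\cC_i^{\omega})$, the usual argument that a colimit- and compact-preserving functor between compactly generated categories is recovered as the $\Ind$-extension of its restriction to compacts is not available here. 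Your replacement argument — write $m_{\cC}(c)$ as the tautological filtered colimit of representables indexed by $\Prod{\cF}(\cC_i^{\omega})_{/c_i}$ via Part~(2) of \cref{prop:ultracolims}, push along the colimit-preserving $\Ind(\Prod{\cF}f_i^{\omega})$, evaluate at a compact $[y_i]$, and match against $\Prod{\cF}\Map_{\cD_i}(y_i,f_i(c_i))$ — is sound and makes visible the content that the paper's one-liner hides. The step that still deserves to be spelled out is exactly the one you flag: \cref{cor:distributivity} is stated for infinite products, not ultraproducts, so to get $\Colim{\Prod{\cF}J_i}\Prod{\cF}G_i \simeq \Prod{\cF}\Colim{J_i}G_i$ one must first use that a colimit over $\Prod{\cF}J_i = \Colim{U\in\cF}\Prod{U}J_i$ is the iterated colimit, then apply the product-level distributivity in each $U$, and finally collapse the resulting double filtered colimit over pairs $V\subseteq U$ in $\cF$ by a diagonal cofinality argument. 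This works, but it is precisely the hidden work behind the paper's ``naturality of $m$,'' so your instinct that this is ``where the genuine work lies'' is right.
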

\begin{proof}
By assumption and \cref{prop:ultracolims}
\[
\Prod{\cF}f_{i}^{\omega} \colon \Prod{\cF}\cC_{i}^{\omega} \lra{} \Prod{\cF}\cD_{i}^{\omega}
\]
preserves finite colimits, so $\Prod{\cF}^{\omega}f_i$ preserves all colimits. The existence of the right adjoint $g_{\cF}$ follows. 

Let $[T_i] \in \Prod{\cF}\cC_{i}^{\omega}$ and $[d_i] \in \Prod{\cF}\cD_i$. There are equivalences
\begin{align*}
\Map_{\Prod{\cF}^{\omega} \cC_i}([T_i], g_{\cF}(m([d_i]))) &\simeq \Map_{\Prod{\cF}^{\omega}\cD_i}((\Prod{\cF} f_i)([T_i]), m([d_i])) \\ 
&\simeq \Map_{\Prod{\cF}D_i}([f_i(T_i)],[d_i]) \\
&\simeq \Map_{\Prod{\cF}\cC_i}([T_i],[g_i(d_i)]) \\
&\simeq \Map_{\Prod{\cF}^{\omega}\cC_i}([T_i],m[g_i(d_i)]).
\end{align*}
The commutativity of the other square follows from the naturality of $m$.
\end{proof}

\begin{lem} \label{lem:symmetriccompactfunctors}
Let $(\cC_i)_{i \in I}$ and $(\cD_i)_{i \in I}$ be collections of symmetric monoidal compactly generated $\infty$-categories and let $(f_i)_{i \in I}$ be a collection of symmetric monoidal functors that preserve colimits and compact objects. The compactly generated ultraproduct of the collection $(f_i)_{i \in I}$
\[
\Prod{\cF}^{\omega} f_i \colon \Prod{\cF}^{\omega} \cC_i \rightarrow \Prod{\cF}^{\omega} \cD_i
\]
is a symmetric monoidal functor that preserves colimits and compact objects.
\end{lem}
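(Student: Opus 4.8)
The plan is to reduce the assertion to a statement about the functors on compact objects and then transport it through $\Ind(-)$. Recall that $(-)^{\omega}\colon\Catomega\to\Cat$ is an equivalence onto the $\infty$-category of small idempotent complete $\infty$-categories with finite colimits, with inverse $\Ind(-)$, and that the preceding proposition gives $\Prod{\cF}^{\omega}\cC_i\simeq\Ind\Prod{\cF}\cC_i^{\omega}$ and likewise for the $\cD_i$. Since $f_i$ preserves compact objects it restricts to a functor $f_i^{\omega}\colon\cC_i^{\omega}\to\cD_i^{\omega}$, and under the equivalence above $\Prod{\cF}^{\omega}f_i$ is identified with $\Ind$ of $(\Prod{\cF}^{\omega}f_i)^{\omega}\simeq\Prod{\cF}f_i^{\omega}$. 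Because the symmetric monoidal structures on $\cC_i$ and $\cD_i$ restrict to $\cC_i^{\omega}$ and $\cD_i^{\omega}$ (our standing convention) and $f_i$ is symmetric monoidal, each $f_i^{\omega}$ is a symmetric monoidal functor preserving finite colimits.

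First I would record the corresponding statement on compact objects. By \cref{lem:lsymmmon} applied to the collections $(\cC_i^{\omega})$ and $(\cD_i^{\omega})$, the ultraproducts $\Prod{\cF}\cC_i^{\omega}$ and $\Prod{\cF}\cD_i^{\omega}$ are symmetric monoidal and $\Prod{\cF}f_i^{\omega}$ is symmetric monoidal; by \cref{prop:ultracolims}(3) and (5) these categories admit finite colimits, the tensor products preserve them separately in each variable, and $\Prod{\cF}f_i^{\omega}$ preserves finite colimits. Thus on compact objects we have a finite colimit preserving symmetric monoidal functor between small symmetric monoidal finitely cocomplete $\infty$-categories whose tensor products distribute over finite colimits.

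Then I would apply $\Ind(-)$. By construction (see the proof of \cref{lem:catsinheritproperties}), the symmetric monoidal structure on $\Prod{\cF}^{\omega}\cC_i$ is precisely the one produced by \cite[Theorem 4.8.1.13]{ha} from $\Prod{\cF}\cC_i^{\omega}$, and similarly for $\cD$. Invoking the functoriality of this symmetric monoidal $\Ind$-completion, the induced functor $\Ind(\Prod{\cF}f_i^{\omega})\simeq\Prod{\cF}^{\omega}f_i$ is symmetric monoidal for these structures. That it preserves all small colimits follows from the universal property of $\Ind(-)$, as it is the colimit preserving extension of a finite colimit preserving functor \cite[Proposition 5.5.1.9]{htt}; that it preserves compact objects follows because $\Ind(g)$ restricts on compact objects to $g$ composed with the Yoneda embedding and hence lands in the compact objects of the target.

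The step I expect to be the crux is the functoriality of the symmetric monoidal $\Ind$-construction: that a finite colimit preserving symmetric monoidal functor between small symmetric monoidal $\infty$-categories with finite colimits (with tensor preserving finite colimits) induces a symmetric monoidal colimit preserving functor between their $\Ind$-completions equipped with the symmetric monoidal structures of \cite[Theorem 4.8.1.13]{ha}. Absent a packaged functorial statement one argues directly: writing $F=\Ind(\Prod{\cF}f_i^{\omega})$, the lax structure maps $F(X)\otimes F(Y)\to F(X\otimes Y)$ are equivalences whenever $X$ and $Y$ are compact, since there they recover the symmetric monoidality of $\Prod{\cF}f_i^{\omega}$; expressing arbitrary objects of the $\Ind$-completions as filtered colimits of compact objects and using that $F$ and the two tensor products all preserve filtered colimits separately in each variable, one concludes that these structure maps are equivalences in general, so $F$ is strong symmetric monoidal.
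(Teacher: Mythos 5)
Your proof is correct and follows essentially the same route as the paper: restrict $\Prod{\cF}^{\omega}f_i$ to compact objects, invoke \cref{lem:lsymmmon} for symmetric monoidality and \cref{prop:ultracolims} for finite colimit preservation, then apply $\Ind(-)$ via \cite[Corollary 4.8.1.13]{ha}. The paper's proof is terser and simply cites the $\Ind$-result for the functoriality you discuss as the ``crux,'' but the underlying argument is the same.
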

\begin{proof}
Restricting $\Prod{\cF}^{\omega}f_i$ to compact objects gives the functor
\[
\Prod{\cF}f_{i}^{\omega} \colon \Prod{\cF} \cC_{i}^{\omega} \rightarrow \Prod{\cF}\cD_{i}^{\omega},
\]
which is symmetric monoidal by \cref{lem:lsymmmon} and preserves finite colimits by Part (5) of \cref{prop:ultracolims}. Applying $\Ind(-)$ to this symmetric monoidal functor yields $\Prod{\cF}^{\omega}f_i$, which is symmetric monoidal by \cite[Corollary 4.8.1.13]{ha}.
\end{proof}

Note that for $c \in \Prod{\cF}^{\omega}\cC_i$ a compact object, we have an equivalence $c \simeq [c_i]$ for some $(c_i) \in \Prod{I}\cC_{i}^{\omega}$.

\begin{lem}\label{lem:htpyultracompact}
Let $c, d \in \Prod{\cF}^{\omega}\cC_i$ be compact objects such that $c \simeq [c_i]$ and $d \simeq [d_i]$ with $c_i, d_i \in \cC_{i}^{\omega}$. Then there is an equivalence 
\[
\Map(c,d) \simeq \Prod{\cF}\Map(c_i, d_i),
\]
where the ultraproduct on the right is computed in the $\infty$-category of spaces. The same result holds for mapping spectra in case the categories are stable.
\end{lem}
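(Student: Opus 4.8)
The plan is to reduce the statement to \cref{lem:rozenblyum} applied to the collection of compact subcategories $(\cC_i^{\omega})_{i\in I}$, using the identification $\Prod{\cF}^{\omega}\cC_i\simeq\Ind\Prod{\cF}\cC_i^{\omega}$ from the proposition above together with the standard fact that the Yoneda embedding of an idempotent-complete $\infty$-category with finite colimits into its $\Ind$-completion is fully faithful. First I would record that the compact objects of $\Prod{\cF}^{\omega}\cC_i$ are precisely the objects of $\Prod{\cF}\cC_i^{\omega}$, embedded in $\Ind\Prod{\cF}\cC_i^{\omega}$ via that Yoneda embedding; this is the content of the equivalence $(\Prod{\cF}^{\omega}\cC_i)^{\omega}\simeq\Prod{\cF}\cC_i^{\omega}$ established there. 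Consequently, for compact $c\simeq[c_i]$ and $d\simeq[d_i]$ with $(c_i),(d_i)\in\Prod{I}\cC_i^{\omega}$, both $c$ and $d$ lie in the essential image of the fully faithful inclusion $\Prod{\cF}\cC_i^{\omega}\hookrightarrow\Prod{\cF}^{\omega}\cC_i$, so
\[
\Map_{\Prod{\cF}^{\omega}\cC_i}(c,d)\simeq\Map_{\Prod{\cF}\cC_i^{\omega}}([c_i],[d_i]).
\]

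Next I would apply \cref{lem:rozenblyum} to the collection of $\infty$-categories $(\cC_i^{\omega})_{i\in I}$ and the objects represented by $(c_i)$ and $(d_i)$, which gives a natural equivalence
\[
\Map_{\Prod{\cF}\cC_i^{\omega}}([c_i],[d_i])\simeq\Prod{\cF}\Map_{\cC_i^{\omega}}(c_i,d_i).
\]
Finally, since each inclusion $\cC_i^{\omega}\hookrightarrow\cC_i$ is fully faithful, $\Map_{\cC_i^{\omega}}(c_i,d_i)\simeq\Map_{\cC_i}(c_i,d_i)$ for every $i$; concatenating the three displayed equivalences yields $\Map(c,d)\simeq\Prod{\cF}\Map(c_i,d_i)$ with the right-hand ultraproduct formed in $\Top$, and naturality in $c$ and $d$ is inherited from \cref{lem:rozenblyum}.

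For the stable case one runs the identical chain with mapping spectra in place of mapping spaces: when the $\cC_i$ are stable, the inclusion $\cC_i^{\omega}\hookrightarrow\cC_i$ is exact and fully faithful and hence preserves mapping spectra, the inclusion $\Prod{\cF}\cC_i^{\omega}\hookrightarrow\Prod{\cF}^{\omega}\cC_i$ is likewise exact and fully faithful (using \cref{lem:catsinheritproperties}), and the ultraproduct of mapping spectra is handled by \cref{lem:stablerozenblyum} in place of \cref{lem:rozenblyum}. Alternatively, the stable statement is formal from the unstable one, since the mapping spectrum between $c$ and $d$ is assembled from the mapping spaces $\Map(\Sigma^{k}c,d)$ and both $\Sigma$ and $\Prod{\cF}$ commute with the relevant constructions.

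I do not expect a genuine obstacle in this argument; it is essentially an assembly of facts already in hand. The only point requiring care is the bookkeeping of the several ``represented by'' relations at play — from a tuple $(c_i)\in\Prod{I}\cC_i^{\omega}$ to its class in $\Prod{\cF}\cC_i^{\omega}$, and then to its image in $\Prod{\cF}^{\omega}\cC_i$ under the functor $m$ — and checking that all of these agree with the representative appearing in the statement of the lemma. This compatibility is immediate from the construction of $m$ and the remark immediately preceding the lemma, so no serious difficulty arises.
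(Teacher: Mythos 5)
Your proof is correct and follows essentially the same route as the paper's: pass to the subcategory of compact objects, identify it with $\Prod{\cF}\cC_i^{\omega}$, and apply \cref{lem:rozenblyum} (resp.\ \cref{lem:stablerozenblyum}). You have simply unpacked the reduction step more explicitly than the paper does.
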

\begin{proof}
Because $c$ and $d$ are compact, we may compute the mapping space in the $\infty$-category $(\Prod{\cF}^{\omega}\cC_i)^{\omega} \simeq \Prod{\cF}\cC_i^{\omega}$. The result then follows from \cref{lem:rozenblyum} and \Cref{lem:stablerozenblyum}.
\end{proof}

\begin{lem}\label{fullyfaithful2}
The compactly generated ultraproduct of fully faithful functors between compactly generated $\infty$-categories is fully faithful.
\end{lem}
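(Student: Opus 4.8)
The plan is to reduce to \cref{fullyfaithful} — which already establishes that ultraproducts in $\Cat$ of fully faithful functors are fully faithful — by passing to compact objects. Write $f_i \colon \cC_i \to \cD_i$ for the given collection; these are morphisms in $\Catomega$, hence preserve colimits and compact objects, so each restricts to a functor $f_i^{\omega} \colon \cC_i^{\omega} \to \cD_i^{\omega}$ on compact objects. First I would observe that $f_i^{\omega}$ is fully faithful: it is the restriction of the fully faithful functor $f_i$ to the full subcategory $\cC_i^{\omega} \subseteq \cC_i$, with values in the full subcategory $\cD_i^{\omega} \subseteq \cD_i$, and full faithfulness of a functor is inherited by its restriction to a full subcategory of the source. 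Then \cref{fullyfaithful} shows that the ultraproduct $\Prod{\cF}f_i^{\omega} \colon \Prod{\cF}\cC_i^{\omega} \to \Prod{\cF}\cD_i^{\omega}$ is fully faithful.

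To promote this to a statement about the compactly generated ultraproduct, recall from \cref{ssec:omegaultraproduct} that $(-)^{\omega}$ is an equivalence from $\Catomega$ onto the $\infty$-category of small idempotent-complete $\infty$-categories with finite colimits (and finite-colimit-preserving functors), with inverse $\Ind(-)$, and that $(-)^{\omega}$ preserves products and filtered colimits. Hence $(\Prod{\cF}^{\omega}f_i)^{\omega} \simeq \Prod{\cF}f_i^{\omega}$, and therefore $\Prod{\cF}^{\omega}f_i \simeq \Ind(\Prod{\cF}f_i^{\omega})$. It now suffices to know that $\Ind(-)$ carries fully faithful functors to fully faithful functors. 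This is standard: for a fully faithful $g \colon \cA \to \mathcal{B}$, the functor $\Ind(g)$ preserves filtered colimits, its restriction along $\cA \hookrightarrow \Ind(\cA)$ agrees with $g$ followed by the fully faithful Yoneda inclusion $\mathcal{B} \hookrightarrow \Ind(\mathcal{B})$, and every object of $\Ind(\cA)$ is a filtered colimit of objects of $\cA$, each compact in $\Ind(\cA)$; expressing mapping spaces in $\Ind(\cA)$ and $\Ind(\mathcal{B})$ as a limit over the source diagram of a filtered colimit over the target diagram of mapping spaces in $\cA$ resp.\ $\mathcal{B}$ then yields the claim. Applying this with $g = \Prod{\cF}f_i^{\omega}$ finishes the proof.

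I do not expect a genuine obstacle here; the only point needing a little care is the identification $\Prod{\cF}^{\omega}f_i \simeq \Ind(\Prod{\cF}f_i^{\omega})$, which is exactly the kind of bookkeeping already carried out in the proof of \cref{lem:symmetriccompactfunctors}. A fully self-contained alternative avoids $\Ind(-)$ altogether: a colimit-preserving functor between compactly generated $\infty$-categories that preserves compact objects is fully faithful as soon as it is so on compact objects, and for compact objects $[c_i], [d_i]$ of $\Prod{\cF}^{\omega}\cC_i$ the map on mapping spaces induced by $\Prod{\cF}^{\omega}f_i$ is, by \cref{lem:htpyultracompact}, the ultraproduct of the equivalences $\Map_{\cC_i}(c_i,d_i) \xrightarrow{\simeq} \Map_{\cD_i}(f_i c_i, f_i d_i)$, hence itself an equivalence (ultraproducts, being filtered colimits of products, preserve equivalences).
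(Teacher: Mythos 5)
Your proof is correct and takes essentially the same route as the paper: apply \cref{fullyfaithful} to the induced functors $f_i^\omega$ on compact objects, then use that $\Ind$ preserves fully faithful functors (the paper simply cites \cite[Proposition 5.3.5.11(1)]{htt} for the latter, where you re-derive it, and you also sketch a self-contained alternative via \cref{lem:htpyultracompact}). The extra observation that each $f_i^\omega$ is fully faithful — because morphisms in $\Catomega$ preserve compact objects, so $f_i^\omega$ is a restriction of $f_i$ between full subcategories — is a small point the paper leaves implicit and you are right to make explicit.
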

\begin{proof}
The result follows from \cref{fullyfaithful} and the fact that $\Ind$ preserves fully faithful functors by \cite[Proposition 5.3.5.11(1)]{htt}.
\end{proof}

\subsection{Protoproducts of compactly generated $\infty$-categories}

In this subsection we construct a variation of the compactly generated ultraproduct that takes into account filtrations on compactly generated $\infty$-categories.

\begin{defn}\label{def:compactfiltration}
A compact filtration $(\cC,F_*)$ of a compactly generated $\infty$-category $\cC$ is a sequence of fully faithful subcategories (closed under equivalences and retracts)
\[
\xymatrix{F_0\cC \ar[r] & F_1\cC \ar[r] & F_2\cC \ar[r] & \ldots \ar[r] & \cC^{\omega}}
\]
over $\cC$ such that 
\begin{enumerate}
\item the initial object $\emptyset$ is in $F_0\cC$
\item $\colim F_k\cC \simeq \cC^{\omega}$
\item there exists a function $\alpha \colon \N \to \N$ such that, for a diagram of the form $c \leftarrow e \rightarrow d$ in $F_k\cC$, the pushout 
\[
c\Coprod{e}d \in \cC^{\omega}
\]
lies in $F_{\alpha(k)}\cC$.
\end{enumerate}
\end{defn}

If $(\cC, F_*)$ is a compact filtration, $\cC$ is stable, and for $c \in F_k\cC$
\[
\Sigma^{-1}c \in F_{\alpha(k)}\cC,
\]
then we call the compact filtration a stable compact filtration. If $(\cC, F_*)$ is a compact filtration, $\cC$ is symmetric monoidal, the tensor unit of $\cC$ is contained in $F_0\cC$, and for $c,d \in F_k \cC$
\[
c \otimes d \in F_{\alpha(k)}\cC,
\]
then we call the compact filtration a symmetric monoidal compact filtration.

\begin{rem} \label{rem:lur}
By \cite[Proposition 4.4.2.2]{htt}, if a function $\alpha$ exists for the pushout then this implies the existence of such a function for any finite diagram category.
\end{rem}

\begin{defn}
A collection of $\infty$-categories equipped with (stable) (symmetric monoidal) compact filtrations is a (stable) (symmetric monoidal) filtered collection if there is a single function $\alpha$ that satisfies the conditions of the definition for each of the compact filtrations.
\end{defn}

\begin{defn}
Let $\cF$ be an ultrafilter on $I$. We define the protoproduct of a filtered collection of compactly generated $\infty$-categories $(\cC_i, F_{i,*})_{i \in I}$ to be
\[
\xymatrix{\Prod{\cF}^{\flat} (\cC_i, F_{i,*}) = \Ind\colim_k\Prod{\cF}F_{i,k}\cC_i.}
\]
\end{defn}

\begin{lem}
There is an equivalence of $\infty$-categories
\[
(\Prod{\cF}^{\flat} (\cC_i, F_{i,*}))^{\omega} \simeq \colim_k\Prod{\cF}F_{i,k}\cC_i.
\]
\end{lem}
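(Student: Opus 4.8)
The plan is to reduce everything to the dictionary of \cite[Proposition A.1]{heuts_goodwillie}, which identifies $\Catomega$ with the $\infty$-category of small idempotent-complete $\infty$-categories admitting finite colimits, the two directions being $\cC \mapsto \cC^{\omega}$ and $\cA \mapsto \Ind(\cA)$. Since by definition $\Prod{\cF}^{\flat}(\cC_i,F_{i,*}) = \Ind(\cA)$ with $\cA := \colim_k \Prod{\cF}F_{i,k}\cC_i$, it suffices to check that $\cA$ is small, admits finite colimits, and is idempotent complete; the asserted equivalence $(\Prod{\cF}^{\flat}(\cC_i,F_{i,*}))^{\omega} \simeq \cA$ is then exactly the (co)unit of that equivalence. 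Smallness is immediate, as $\cA$ is a filtered colimit, indexed by the set $\cF$, of the categories $\Prod{\cF}F_{i,k}\cC_i$, each essentially small because $F_{i,k}\cC_i$ is a subcategory of the essentially small category $\cC_i^{\omega}$. The remaining two properties I would extract from one structural observation, to be established first: the canonical functor $\cA \to \mathcal{B} := \Prod{\cF}\cC_i^{\omega}$ is fully faithful. Indeed, for each $k$ the ultraproduct of the fully faithful inclusions $F_{i,k}\cC_i \hookrightarrow \cC_i^{\omega}$ is fully faithful by \cref{fullyfaithful}, and since the transition functors $\Prod{\cF}F_{i,k}\cC_i \to \Prod{\cF}F_{i,k+1}\cC_i$ are themselves fully faithful, passing to the filtered colimit leaves mapping spaces unchanged; thus $\cA$ is the full subcategory of $\mathcal{B}$ on the objects $[c_i]$ with $(c_i) \in \Prod{I}F_{i,k}\cC_i$ for some $k$. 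I would also record that $\mathcal{B}$ admits finite colimits by \cref{prop:ultracolims}(3) and is idempotent complete: it agrees with $(\Prod{\cF}^{\omega}\cC_i)^{\omega}$, the compact objects of a compactly generated---hence presentable, hence idempotent complete---$\infty$-category.

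Next I would verify that $\cA$ admits finite colimits. Given a diagram $p \colon K \to \cA$ with $K$ a finite simplicial set, compactness of $K$ and full faithfulness of the transition functors let me factor $p$ through some $\Prod{\cF}F_{i,k}\cC_i$; discarding a set of indices not in $\cF$ and padding the rest with the constant diagram at the initial object (which lies in $F_{i,0}\cC_i \subseteq F_{i,k}\cC_i$), I can lift $p$ to a levelwise family $(p_i \colon K \to F_{i,k}\cC_i)_{i \in I}$. Forming the colimit of $(p_i)$ in $\Prod{I}\cC_i^{\omega}$, which is computed componentwise as $(\colim_K p_i)_i$, I would invoke condition (3) of \cref{def:compactfiltration} together with \cref{rem:lur} and---crucially---the uniformity of $\alpha$ across $i$ (i.e. that the $(\cC_i,F_{i,*})$ form a filtered collection): this produces a single function $\beta_K$, depending only on $K$, with $\colim_K p_i \in F_{i,\beta_K(k)}\cC_i$ for all $i$. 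Since $I \in \cF$, \cref{prop:ultracolims}(4) shows that $\Prod{I}\cC_i^{\omega} \to \mathcal{B}$ preserves this colimit, so the colimit of $p$, computed in $\mathcal{B}$, equals $[(\colim_K p_i)_i]$ and therefore lies in $\Prod{\cF}F_{i,\beta_K(k)}\cC_i \subseteq \cA$. Because $\cA$ sits fully faithfully in $\mathcal{B}$ and this colimit already lies in $\cA$, the mapping-space criterion shows it is also a colimit in $\cA$.

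For idempotent completeness it is enough to show that $\cA$, as a full subcategory of $\mathcal{B}$, is closed under retracts: a full subcategory of an idempotent-complete $\infty$-category closed under retracts is again idempotent complete (split the idempotent in $\mathcal{B}$; its value on the cone point is a retract of an object of $\cA$, hence lies in $\cA$, and the resulting splitting diagram, having all its objects in $\cA$, factors through the full subcategory $\cA$, where it remains a colimit). To see the closure: suppose $r = [r_i] \in \mathcal{B}$ is a retract of $c = [c_i] \in \cA$ with $c_i \in F_{i,k}\cC_i$. By \cref{lem:htpyultracompact} (applying $\pi_0$) the retraction data $r \to c \to r$ is represented by a family $(r_i \to c_i \to r_i)$, and the identity $\mathrm{id}_r \simeq (c \to r) \circ (r \to c)$, being detected on homotopy categories, holds for a set $U \in \cF$ of indices. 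For $i \in U$ the object $r_i$ is thus a retract of $c_i \in F_{i,k}\cC_i$, so $r_i \in F_{i,k}\cC_i$ because $F_{i,k}\cC_i$ is closed under retracts; redefining $r_i$ to be the initial object for $i \notin U$ does not change $[r_i]$, so $r \in \Prod{\cF}F_{i,k}\cC_i \subseteq \cA$. I expect the genuine content---and the main obstacle---to lie in the finite-colimit step, specifically the claim that the filtration degree of a finite colimit of objects from $F_{i,k}\cC_i$ is bounded uniformly in $i$ by a function of $k$ and the shape $K$ alone; this is exactly what condition (3) and the single-$\alpha$ hypothesis are engineered to provide, and everything else is bookkeeping around the embedding $\cA \hookrightarrow \mathcal{B}$.
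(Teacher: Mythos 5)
Your proposal is correct and takes essentially the same route as the paper, which simply asserts that $\colim_k \Prod{\cF}F_{i,k}\cC_i$ is small, idempotent complete, and admits finite colimits and then invokes the $\Ind/(-)^{\omega}$ dictionary of \cite[Proposition A.1]{heuts_goodwillie}. You supply the verification of those three properties (full faithfulness of $\cA \hookrightarrow \Prod{\cF}\cC_i^{\omega}$, compactness of $K$ plus the uniform $\alpha$ from the filtered-collection hypothesis for finite colimits, and closure under retracts via \cref{cor:homotopymaps} for idempotent completeness) that the paper leaves to the reader.
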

\begin{proof}
This follows from the fact that
\[
\colim_k\Prod{\cF}F_{i,k}\cC_i
\]
is idempotent complete and has finite colimits by \cref{rem:lur}.
\end{proof}

\begin{ex}\label{ex:trivialfiltration}
The compactly generated ultraproduct is the special case of the protoproduct for which $F_{i,j}\cC_i = \cC_{i}^{\omega}$ for all $j$.
\end{ex}

\begin{lem} \label{lem:fullyfaithfultocompact}
Let $(\cC_i,F_{i,*})$ be a filtered collection of compactly generated $\infty$-categories. Then there is a fully faithful functor
\[
\xymatrix{\iota\colon\Prod{\cF}^{\flat}(\cC_i,F_{i,*}) \ar@{^{(}->}[r] & \Prod{\cF}^{\omega} \cC_i}.
\]
\end{lem}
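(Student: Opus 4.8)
The plan is to realize $\iota$ as the $\Ind$-completion of a fully faithful functor between subcategories of compact objects. The preceding results identify $(\Prod{\cF}^{\flat}(\cC_i,F_{i,*}))^{\omega} \simeq \colim_k\Prod{\cF}F_{i,k}\cC_i$ and $\Prod{\cF}^{\omega}\cC_i \simeq \Ind\Prod{\cF}\cC_i^{\omega}$, the latter having $\Prod{\cF}\cC_i^{\omega}$ as its subcategory of compact objects; moreover $\Prod{\cF}^{\flat}(\cC_i,F_{i,*}) = \Ind\colim_k\Prod{\cF}F_{i,k}\cC_i$ by definition. It therefore suffices to construct a fully faithful functor
\[
G\colon \colim_k\Prod{\cF}F_{i,k}\cC_i \longrightarrow \Prod{\cF}\cC_i^{\omega}
\]
and set $\iota = \Ind(G)$, since full faithfulness is preserved by $\Ind(-)$ by \cite[Proposition 5.3.5.11(1)]{htt}.

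To build $G$, first I would note that for each $k$ the inclusion $F_{i,k}\cC_i \hookrightarrow \cC_i^{\omega}$ is fully faithful, so by \Cref{fullyfaithful} the ultraproduct $\Prod{\cF}F_{i,k}\cC_i \to \Prod{\cF}\cC_i^{\omega}$ is fully faithful as well. These functors are compatible with the transition functors $F_{i,k}\cC_i \to F_{i,k+1}\cC_i$ (which are again fully faithful, being maps of fully faithful subcategories of $\cC_i^{\omega}$), so they assemble into a cocone under the sequential diagram $k \mapsto \Prod{\cF}F_{i,k}\cC_i$ with vertex $\Prod{\cF}\cC_i^{\omega}$, and the universal property of the colimit then produces $G$.

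The crux is checking that $G$ is fully faithful. Given objects $c,d$ of $\colim_k\Prod{\cF}F_{i,k}\cC_i$, filteredness of $\N$ lets us assume that $c$ and $d$ are the images of objects $c_k,d_k \in \Prod{\cF}F_{i,k}\cC_i$ for a common $k$. By \Cref{lem:rozenblyumcolimits}, the mapping space in the colimit is $\colim_{k'\geq k}\Map_{\Prod{\cF}F_{i,k'}\cC_i}(c_{k'},d_{k'})$; since each transition functor $\Prod{\cF}F_{i,k'}\cC_i \to \Prod{\cF}F_{i,k'+1}\cC_i$ is fully faithful (again \Cref{fullyfaithful}), this filtered colimit is constant with value $\Map_{\Prod{\cF}F_{i,k}\cC_i}(c_k,d_k)$. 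On the other hand, \Cref{fullyfaithful} applied to $F_{i,k}\cC_i \hookrightarrow \cC_i^{\omega}$ gives $\Map_{\Prod{\cF}\cC_i^{\omega}}(Gc,Gd)\simeq \Map_{\Prod{\cF}F_{i,k}\cC_i}(c_k,d_k)$, and tracing through the identifications shows that the comparison map induced by $G$ on mapping spaces is an equivalence; hence $G$, and therefore $\iota$, is fully faithful.

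The main obstacle here is essentially bookkeeping rather than a substantive difficulty: because $\colim_k\Prod{\cF}F_{i,k}\cC_i$ is formed in $\Cat$ and not inside a fixed ambient category, one cannot argue by "intersecting subcategories", and the explicit mapping-space computation via \Cref{lem:rozenblyumcolimits} is what replaces that intuition; everything else reduces to repeated application of \Cref{fullyfaithful}. I would also remark that $G$, and hence $\iota$, preserves compact objects, so that $\iota$ is in fact a morphism in $\Catomega$ — although this refinement is not needed for the statement of the lemma.
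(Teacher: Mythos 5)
Your proof is correct and follows essentially the same route as the paper's, which is a one-line argument: on compact objects the statement follows from \cref{fullyfaithful}, and $\Ind$ preserves fully faithful functors by \cite[Proposition 5.3.5.11(1)]{htt}. What you add is a careful justification of the step the paper elides — namely, why the functor $\colim_k\Prod{\cF}F_{i,k}\cC_i \to \Prod{\cF}\cC_i^{\omega}$ on compact objects is itself fully faithful, given that \cref{fullyfaithful} only directly handles a single ultraproduct of fully faithful functors and not a sequential colimit thereof. Your verification via \cref{lem:rozenblyumcolimits} (lifting to a common finite stage, then observing the mapping-space colimit is eventually constant because the transition maps are fully faithful by \cref{fullyfaithful} applied to $F_{i,k}\cC_i \hookrightarrow F_{i,k+1}\cC_i$) is the right way to make this precise; an alternative phrasing would be that each $\Prod{\cF}F_{i,k}\cC_i$ is identified with a full subcategory of $\Prod{\cF}\cC_i^{\omega}$ compatibly with the inclusions, and a sequential colimit of full subcategory inclusions into a fixed target is again a full subcategory inclusion.
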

\begin{proof}
For the categories of compact objects this follows from \cref{fullyfaithful} and $\Ind$ preserves fully faithful functors by \cite[Proposition 5.3.5.11(1)]{htt}.
\end{proof}

\begin{lem}\label{lem:protoproductcompgen}
The protoproduct of a (stable) (symmetric monoidal) filtered collection of compactly generated $\infty$-categories $(\cC_i, F_{i,*})_{i \in I}$ is compactly generated (and stable) (and symmetric monoidal).
\end{lem}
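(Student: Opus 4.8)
The plan is to reduce all three assertions to statements about the small $\infty$-category
$\cD := \colim_k \Prod{\cF} F_{i,k}\cC_i$, which by the preceding lemma is idempotent complete, admits finite colimits, and is canonically identified with $(\Prod{\cF}^{\flat}(\cC_i,F_{i,*}))^{\omega}$. Since by definition $\Prod{\cF}^{\flat}(\cC_i,F_{i,*}) = \Ind(\cD)$, the $\infty$-category $\Prod{\cF}^{\flat}(\cC_i,F_{i,*})$ is compactly generated by \cite[Proposition A.1]{heuts_goodwillie}; this settles the first assertion. The additional structures will be obtained by realizing $\cD$ as a full subcategory of the compact objects of $\Prod{\cF}^{\omega}\cC_i$: indeed \cref{lem:fullyfaithfultocompact} gives a fully faithful functor $\iota\colon \Prod{\cF}^{\flat}(\cC_i,F_{i,*}) \hookrightarrow \Prod{\cF}^{\omega}\cC_i$, which on compact objects exhibits $\cD$ as a full subcategory of $(\Prod{\cF}^{\omega}\cC_i)^{\omega}\simeq \Prod{\cF}\cC_i^{\omega}$, and moreover (by condition (3) of \cref{def:compactfiltration} together with \cref{rem:lur}) finite colimits in $\cD$ are computed in $\Prod{\cF}\cC_i^{\omega}$. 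So the whole strategy is: check that $\cD$ is a stable (resp.\ symmetric monoidal) subcategory of the stable (resp.\ symmetric monoidal) $\infty$-category $\Prod{\cF}^{\omega}\cC_i$ furnished by \cref{lem:catsinheritproperties}, and then apply $\Ind(-)$.

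\textbf{The stable case.}
Here I would observe that $\cD$ contains the zero object: $\emptyset_i \in F_{i,0}\cC_i$, and an initial object of a stable $\infty$-category is a zero object, so $[\emptyset_i] \in \Prod{\cF} F_{i,0}\cC_i \subseteq \cD$. Since $\cD$ is already closed under finite colimits inside $\Prod{\cF}^{\omega}\cC_i$, the only remaining point is that $\cD$ is closed under the desuspension $\Omega = \Sigma^{-1}$. Given an object of $\cD$ lying in $\Prod{\cF}F_{i,k}\cC_i$, represented by $(c_i)$ with $c_i \in F_{i,k}\cC_i$, Parts (3) and (4) of \cref{prop:ultracolims} show that $\Omega[c_i] \simeq [\Sigma^{-1}c_i]$, and the stable compact filtration hypothesis gives $\Sigma^{-1}c_i \in F_{i,\alpha(k)}\cC_i$ for the uniform $\alpha$ of the filtered collection; hence $\Omega[c_i] \in \Prod{\cF}F_{i,\alpha(k)}\cC_i \subseteq \cD$. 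A full subcategory of a stable $\infty$-category that contains the zero object and is closed under finite colimits and desuspension is closed under finite limits as well (pullbacks are desuspensions of pushouts, finite products are finite coproducts), hence is a stable subcategory; so $\cD$ is stable, and $\Ind(\cD) = \Prod{\cF}^{\flat}(\cC_i,F_{i,*})$ is stable by \cite[Theorem 1.1.3.6]{ha}.

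\textbf{The symmetric monoidal case.}
The argument is parallel. By \cref{lem:lsymmmon} the tensor product on $\Prod{\cF}\cC_i^{\omega}$ is computed levelwise and, since each $\cC_i^{\omega}$ has tensor product distributing over finite colimits and $[-]_{\cF}$ preserves finite colimits (\cref{prop:ultracolims}), the same distributivity holds in $\Prod{\cF}\cC_i^{\omega}$. The subcategory $\cD$ contains the tensor unit (the unit of $\cC_i$ lies in $F_{i,0}\cC_i$) and is closed under $\otimes$: for objects of $\Prod{\cF}F_{i,k}\cC_i$ represented by $(c_i),(d_i)$ one has $[c_i]\otimes[d_i]\simeq[c_i\otimes d_i]$ with $c_i\otimes d_i \in F_{i,\alpha(k)}\cC_i$ by the symmetric monoidal compact filtration hypothesis. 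Thus $\cD$ is a symmetric monoidal full subcategory of $\Prod{\cF}\cC_i^{\omega}$ on which $\otimes$ distributes over finite colimits, so $\Ind(\cD) = \Prod{\cF}^{\flat}(\cC_i,F_{i,*})$ carries a symmetric monoidal structure whose tensor product preserves all colimits and restricts to $\cD$ on compact objects, by \cite[Theorem 4.8.1.13]{ha}; this is exactly the structure of a symmetric monoidal compactly generated $\infty$-category in the sense of the conventions. (When the collection is both stable and symmetric monoidal the two arguments are applied simultaneously.)

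\textbf{Main obstacle.}
All of the above is essentially bookkeeping once the identifications of the first paragraph are in place; the one point that genuinely uses the strength of the hypotheses — and the place I expect to have to be careful — is that a \emph{single} function $\alpha$ must govern all the closure steps at once, uniformly in $i\in I$ and uniformly for the pushout, $\Sigma^{-1}$, and $\otimes$ conditions. This is precisely why the notion of a (stable)(symmetric monoidal) \emph{filtered collection} is defined with one global $\alpha$, and why the colimit over $k$ in the definition of $\Prod{\cF}^{\flat}$ harmlessly absorbs the reindexing $k\mapsto \alpha(k)$; without uniformity in $i$ one could not pass the shift through the ultraproduct $\Prod{\cF}(-)$.
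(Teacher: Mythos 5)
Your proof is correct and follows essentially the same route as the paper: identify the compact objects as $\cD = \colim_k\Prod{\cF}F_{i,k}\cC_i$, realize it via $\iota$ as a full subcategory of $\Prod{\cF}\cC_i^{\omega}$ closed under finite colimits, check closure under $\Sigma^{-1}$ (resp.\ $\otimes$ and the unit) using the uniform $\alpha$, and apply $\Ind$. The only cosmetic difference is that the paper verifies stability by noting $\Sigma^{-1}$ restricts to an equivalence of $\cD$ and citing \cite[Corollary 1.4.2.27(2)]{ha}, and checks the symmetric monoidal closure directly on the large category via \cite[Proposition 2.2.1.1]{ha}, rather than phrasing everything at the level of $\cD$ and then taking $\Ind$; the content is the same.
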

\begin{proof}
The $\infty$-category
\[
\colim_k\Prod{\cF}F_{i,k}\cC_i
\]
is a subcategory of $\Prod{\cF}\cC_{i}^{\omega}$ that is closed under finite colimits and retracts by \cref{rem:lur}. 

By \cref{lem:fullyfaithfultocompact}, the natural inclusions induce the fully faithful functor
\[
\xymatrix{\Prod{\cF}^{\flat}(\cC_i, F_{i,*}) \ar[r]^-{\iota} & \Prod{\cF}^{\omega}\cC_i.}
\]
Since the target is symmetric monoidal by \Cref{lem:catsinheritproperties}, \cite[Proposition 2.2.1.1]{ha} implies that it suffices to show that $\Prod{\cF}^{\flat}(\cC_i, F_{i,*})$ is closed under the symmetric monoidal structure on $\Prod{\cF}^{\omega}\cC_i$. But this follows from the assumptions on the filtrations $F_{i,*}$.

From the characterization of stable $\infty$-categories in \cite[Corollary 1.4.2.27(2)]{ha}, stability follows from the fact that desuspension maps
\[
\colim_k\Prod{\cF}F_{i,k}\cC_i
\]
to itself and thus desuspension is an equivalence
\[
\Sigma^{-1}\colon \colim_k\Prod{\cF} F_{i,k} \cC_i \lra{\simeq} \colim_k\Prod{\cF} F_{i,k} \cC_i.
\]
\end{proof}

\begin{defn}
Let $(\cC_i, F_{i,*})_{i \in I}$ and $(\cD_i, G_{i,*})_{i \in I}$ be filtered collections of compactly generated $\infty$-categories. A collection of colimit preserving functors
\[
(f_i\colon \cC_i \rightarrow \cD_i)_{i \in I}
\]
is called a \emph{collection of filtration preserving functors} if there exists a single function $\beta\colon \N \rightarrow \N$ such that
\[
f_i(F_{i,k}\cC_i) \subseteq G_{i,\beta(k)}\cD_i
\]
for all $i$ and all $k$.
\end{defn}

A collection of filtration preserving functors $(f_i\colon \cC_i \to \cD_i)$ induces a functor
\[
\Prod{\cF}^{\flat}f_i \colon \Prod{\cF}^{\flat}(\cC_i,F_{i,*}) \to \Prod{\cF}^{\flat}(\cD_i,G_{i,*}).
\]

\begin{lem}
A collection of fully faithful filtration preserving functors $(f_i\colon \cC_i \hookrightarrow \cD_i)$ induces a fully faithful functor
\[
\Prod{\cF}^{\flat}f_i \colon \Prod{\cF}^{\flat}(\cC_i,F_{i,*}) \hookrightarrow \Prod{\cF}^{\flat}(\cD_i,G_{i,*}).
\]
\end{lem}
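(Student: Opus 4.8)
Unwinding the definitions, the functor $\Prod{\cF}^{\flat}f_i$ is obtained by applying $\Ind(-)$ to the induced functor on compact objects
\[
g\colon \colim_k\Prod{\cF}F_{i,k}\cC_i \longrightarrow \colim_k\Prod{\cF}G_{i,k}\cD_i,
\]
which on the $k$-th stage is $\Prod{\cF}$ applied to $f_i\colon F_{i,k}\cC_i\to G_{i,\beta(k)}\cD_i$ (after replacing $\beta$ by $k\mapsto\max_{j\le k}\beta(j)$ we may assume $\beta$ is monotone, so that these assemble over the colimit). Since $\Ind(-)$ preserves fully faithful functors by \cite[Proposition 5.3.5.11(1)]{htt}, it suffices to show that $g$ is fully faithful, and the plan is to exhibit $g$ as a restriction of a fully faithful functor between the ultraproducts of the full categories of compact objects.

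First I would record the relevant full subcategory inclusions. Restricting each $f_i$ to compact objects yields a fully faithful functor $f_i^{\omega}\colon\cC_i^{\omega}\to\cD_i^{\omega}$ (a fully faithful functor restricts to a fully faithful functor between full subcategories), so \cref{fullyfaithful} gives that $\Prod{\cF}f_i^{\omega}\colon\Prod{\cF}\cC_i^{\omega}\to\Prod{\cF}\cD_i^{\omega}$ is fully faithful. Next, by \cref{lem:fullyfaithfultocompact} the natural functors $\colim_k\Prod{\cF}F_{i,k}\cC_i\to\Prod{\cF}\cC_i^{\omega}$ and $\colim_k\Prod{\cF}G_{i,k}\cD_i\to\Prod{\cF}\cD_i^{\omega}$ are fully faithful, so I may regard their sources as full subcategories of $\Prod{\cF}\cC_i^{\omega}$ and $\Prod{\cF}\cD_i^{\omega}$. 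The filtration-preserving hypothesis $f_i(F_{i,k}\cC_i)\subseteq G_{i,\beta(k)}\cD_i$ then shows that $\Prod{\cF}f_i^{\omega}$ carries the full subcategory $\colim_k\Prod{\cF}F_{i,k}\cC_i$ into $\colim_k\Prod{\cF}G_{i,k}\cD_i$, and a diagram chase (using monotonicity of $\beta$ and the naturality of the comparison maps) identifies the resulting functor with $g$.

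Finally I would invoke the elementary fact that if $v\colon X\to Y$ is a fully faithful functor of $\infty$-categories and $A\subseteq X$, $B\subseteq Y$ are full subcategories with $v(A)\subseteq B$, then $v$ restricts to a fully faithful functor $A\to B$: for $a,a'\in A$ the map on mapping spaces induced by $v|_A$ is, under the identifications $\Map_A(a,a')\simeq\Map_X(a,a')$ and $\Map_B(va,va')\simeq\Map_Y(va,va')$, the equivalence $\Map_X(a,a')\xrightarrow{\ \simeq\ }\Map_Y(va,va')$ coming from $v$. Applying this with $X=\Prod{\cF}\cC_i^{\omega}$, $Y=\Prod{\cF}\cD_i^{\omega}$, $v=\Prod{\cF}f_i^{\omega}$, $A=\colim_k\Prod{\cF}F_{i,k}\cC_i$ and $B=\colim_k\Prod{\cF}G_{i,k}\cD_i$ shows $g$ is fully faithful, hence so is $\Prod{\cF}^{\flat}f_i=\Ind(g)$.

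The argument has no deep obstacle; the one point that must be handled with care, rather than asserted, is that $\Prod{\cF}(-)$ does \emph{not} commute with the colimit $\colim_k F_{i,k}\cC_i\simeq\cC_i^{\omega}$, so one cannot simply say that $\Prod{\cF}^{\flat}f_i$ ``is'' a restriction of $\Prod{\cF}^{\omega}f_i$. The substantive work is precisely the identification of $\colim_k\Prod{\cF}F_{i,k}\cC_i$ with a full subcategory of $\Prod{\cF}\cC_i^{\omega}$ via \cref{fullyfaithful} and \cref{lem:fullyfaithfultocompact}, together with the verification that the square relating $g$ to $\Prod{\cF}f_i^{\omega}$ commutes.
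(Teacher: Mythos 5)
Your proposal is correct and follows essentially the same route as the paper: embed both protoproducts into the compactly generated ultraproducts via \cref{lem:fullyfaithfultocompact}, observe that the ultraproduct of the $f_i$ is fully faithful (the paper invokes \cref{fullyfaithful2} directly at the Ind level, while you work with \cref{fullyfaithful} on compact objects and apply $\Ind$ once at the end, which amounts to the same thing since \cref{fullyfaithful2} is itself proven by that reduction), and then conclude from the resulting commutative square that the induced map between the protoproducts is fully faithful. Your final paragraph is just spelling out the "three sides fully faithful implies the fourth" step that the paper leaves implicit, and your caveat about $\Prod_\cF$ not commuting with the filtration colimit is exactly why the paper routes through $\Prod_\cF^{\omega}$ rather than trying to identify the protoproduct as a literal ultraproduct.
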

\begin{proof}
Combining \cref{fullyfaithful2} and \cref{lem:fullyfaithfultocompact}, we have a commutative diagram
\[
\xymatrix{\Prod{\cF}^{\flat}(\cD_i,G_{i,*}) \ar@{^{(}->}[r] & \Prod{\cF}^{\omega}\cD_i \\ \Prod{\cF}^{\flat}(\cC_i,F_{i,*}) \ar[u] \ar@{^{(}->}[r] & \Prod{\cF}^{\omega}\cC_i \ar@{^{(}->}[u]}
\]
in which the horizontal arrows and right vertical arrow are fully faithful. This implies that the left arrow is fully faithful.
\end{proof}

\begin{prop}\label{lem:protoadjunction}
Let $(\cC_i, F_{i,*}\cC_i)_{i \in I}$ and $(\cD_i, G_{i,*}\cD_i)_{i \in I}$ be filtered collections of compactly generated $\infty$-categories and assume that we have a collection of filtration preserving functors
\[
(f_i\colon\cC_i \lra{} \cD_i)_{i \in I}.
\] 
Then the protoproduct of these functors
\[
\Prod{\cF}^{\flat}f_i\colon \Prod{\cF}^{\flat}(\cC_i, F_{i,*}) \longrightarrow \Prod{\cF}^{\flat}(\cD_i, G_{i,*})
\]
preserves colimits and has a right adjoint $g_{G_{i,*}}^{F_{i,*}}$ that preserves filtered colimits.  
\end{prop}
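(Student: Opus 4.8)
The plan is to exploit that the protoproduct is built by $\Ind$-completion, so that the assertion reduces to a statement about the induced functor on compact objects. Unwinding the definition of the protoproduct and of $\Prod{\cF}^{\flat}f_i$, together with the earlier identification of the compact objects of a protoproduct, the functor $\Prod{\cF}^{\flat}f_i$ is $\Ind(-)$ applied to the functor
\[
\Phi\colon \colim_k\Prod{\cF}F_{i,k}\cC_i \longrightarrow \colim_k\Prod{\cF}G_{i,k}\cD_i
\]
that the collection $(f_i)$ induces on compact objects. Granting that $\Phi$ preserves finite colimits, the functor $\Prod{\cF}^{\flat}f_i=\Ind(\Phi)$ preserves all small colimits by the universal property of the $\Ind$-construction \cite[Proposition 5.3.5.10]{htt}; hence it admits a right adjoint $g_{G_{i,*}}^{F_{i,*}}$ by the adjoint functor theorem \cite[Corollary 5.5.2.9]{htt}. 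Moreover $\Ind(\Phi)$ carries compact objects to compact objects, since its restriction to compacts is $\Phi$, which lands in the compact objects of the target; so by \cite[Proposition 5.5.7.2]{htt} the right adjoint $g_{G_{i,*}}^{F_{i,*}}$ preserves filtered colimits. It therefore remains only to show that $\Phi$ preserves finite colimits.

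For this I would work inside the ambient ultraproducts $\Prod{\cF}\cC_i^{\omega}$ and $\Prod{\cF}\cD_i^{\omega}$. The hypothesis $f_i(F_{i,k}\cC_i)\subseteq G_{i,\beta(k)}\cD_i\subseteq\cD_i^{\omega}$ together with $\cC_i^{\omega}=\colim_k F_{i,k}\cC_i$ shows that $f_i$ restricts to a functor $f_i^{\omega}\colon\cC_i^{\omega}\to\cD_i^{\omega}$, and $f_i^{\omega}$ preserves finite colimits because $f_i$ preserves all colimits and finite colimits of compact objects are compact. By Part (5) of \Cref{prop:ultracolims}, the ultraproduct $\Prod{\cF}f_i^{\omega}\colon\Prod{\cF}\cC_i^{\omega}\to\Prod{\cF}\cD_i^{\omega}$ preserves all finite colimits that exist, and by Part (3) of the same lemma its source admits all finite colimits. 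By \Cref{fullyfaithful} each $\Prod{\cF}F_{i,k}\cC_i$ is a full subcategory of $\Prod{\cF}\cC_i^{\omega}$, and since filtered colimits of fully faithful functors remain fully faithful, so is $\colim_k\Prod{\cF}F_{i,k}\cC_i$; as recorded in the proof of \Cref{lem:protoproductcompgen}, this full subcategory is closed under finite colimits in $\Prod{\cF}\cC_i^{\omega}$ (using \Cref{rem:lur}), and likewise on the $\cD$-side. Since $\Phi$ is precisely the restriction of $\Prod{\cF}f_i^{\omega}$ to this full subcategory, with image landing in the corresponding full subcategory of $\Prod{\cF}\cD_i^{\omega}$, it preserves finite colimits.

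The main obstacle is this middle step, and the subtlety there is that the intermediate ultraproduct categories $\Prod{\cF}F_{i,k}\cC_i$ need not themselves admit finite colimits, so one cannot argue stage by stage; the resolution is to embed everything into $\Prod{\cF}\cC_i^{\omega}$, where finite colimits exist and restrict to the relevant full subcategory. Two bookkeeping points also need care: first, that the induced functor $\Phi$ agrees on the nose with the restriction of $\Prod{\cF}f_i^{\omega}$, which is a compatibility between the colimit over $k$, the ultraproduct, and the fact that $f_i$ is a genuine functor $\cC_i\to\cD_i$ restricting to $f_i^{(k)}\colon F_{i,k}\cC_i\to G_{i,\beta(k)}\cD_i$ at each stage; and second, that replacing the target filtration index $k$ by $\beta(k)$ is harmless, since it only reindexes a cofinal portion of the target filtration and hence does not change $\colim_k\Prod{\cF}G_{i,k}\cD_i$.
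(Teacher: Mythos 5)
Your proof is correct, and it takes a route that is organized genuinely differently from the paper's. The paper's proof works concretely with a span diagram $K$: by compactness of $K$ it factors through $\Prod{U} F_{i,k}\cC_i$ for some $U \in \cF$ and some $k$, the axioms of a filtered collection then place the pushout in $\Prod{U} F_{i,\alpha(k)}\cC_i$, and one pushes this cocone forward by $\Prod{U}f_i$. Your argument instead embeds everything into the ambient ultraproducts $\Prod{\cF}\cC_i^{\omega}$ and $\Prod{\cF}\cD_i^{\omega}$, invokes Part (5) of \Cref{prop:ultracolims} to get preservation of finite colimits there, and then restricts to the full subcategories $\colim_k\Prod{\cF}F_{i,k}\cC_i$ and $\colim_k\Prod{\cF}G_{i,k}\cD_i$, relying on the closure claim recorded in the proof of \Cref{lem:protoproductcompgen}. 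What your route buys is economy at the high level: you quote the closure statement as a black box rather than re-deriving it in the course of the proof, which is essentially what the paper does when it re-performs the compactness/lifting argument. What the paper's route buys is self-containedness and explicitness: it is transparent exactly where the functions $\alpha$ and $\beta$ enter, and no appeal is made to a closure statement that is itself only asserted (with a pointer to \Cref{rem:lur}) rather than proved in detail. In either case the underlying mechanism is the same — compact diagrams factor through a finite filtration stage, and the filtered-collection axioms bound where the colimit lands — so the two proofs are cognate at the technical core even though they are packaged differently. Your identification of the two bookkeeping points (that $\Phi$ is literally the restriction of $\Prod{\cF}f_i^{\omega}$, and that replacing $k$ by $\beta(k)$ only reindexes into a subdiagram of the target colimit) is the right thing to flag, and both are indeed harmless.
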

\begin{proof}
It suffices to prove that the functor
\[
(\Prod{\cF}^{\flat}f_i)^{\omega}\colon (\Prod{\cF}^{\flat}(\cC_i,F_{i,*}))^{\omega} = \colim_k\Prod{\cF}F_{i,k}\cC_i \rightarrow \colim_k \Prod{\cF}G_{i,k}\cD_i
\]
preserves finite colimits. Since it clearly preserves the initial object, it is enough to show that it preserves pushouts. Let $K$ be the span diagram. Since $K$ is compact, there exists a $k$ and $U \in \cF$ such that there are factorizations
\[
\xymatrix{& \Prod{U} F_{i,k}\cC_i \ar[d] \\ & \Prod{\cF} F_{i,k}\cC_i \ar[d] \\ K \ar[r] \ar@{-->}[ur] \ar@{-->}[uur] & \colim_k \Prod{\cF} F_{i,k}\cC_i.}
\]
By the definition of a filtered collection, there is a factorization
\[
\xymatrix{K^{\rhd} \ar@{-->}[r] & \Prod{U} F_{i,\alpha(k)}\cC_i \ar[r] & \Prod{U}\cC_{i}^{\omega} \\ K \ar[r] \ar[u] & \Prod{U} F_{i,k}\cC_i \ar[u] &}
\]
and the map $K^{\rhd} \to \Prod{U}\cC_{i}^{\omega}$ is the colimit. The canonical map
\[
\Prod{U} F_{i,\alpha(k)}\cC_i \to \colim_k \Prod{\cF} F_{i,k}\cC_i
\]
preserves the finite colimits that exist by \Cref{prop:ultracolims}. Also, the map
\[
\Prod{U}f_i \colon \Prod{U} F_{i,\alpha(k)}\cC_i \to \Prod{U} G_{i,\beta (\alpha(k))}\cD_i
\]
preserves colimits that exist by assumption. Thus the composite
\[
K^{\rhd} \to \Prod{U} F_{i,\alpha(k)}\cC_i \to \Prod{U} G_{i,\beta(\alpha(k))}\cD_i \to \colim_k \Prod{\cF} G_{i,k}\cD_i
\]
is the pushout diagram and $\Prod{\cF}^{\flat}f_i$ sends pushouts to pushouts. By \cite[Proposition 5.5.7.2]{htt}, since $\Prod{\cF}^{\flat}f_i$ sends compact objects to compact objects, the right adjoint $g_{G_{i,*}}^{F_{i,*}}$ preserves filtered colimits.
\end{proof}

\begin{rem}
When the compact filtrations are clear from context, we will just write $g$ for the right adjoint to $\Prod{\cF}^{\flat}f_i$.
\end{rem}

\begin{cor}
With notation as in \Cref{lem:protoadjunction}, if the categories are stable then $g_{G_{i,*}}^{F_{i,*}}$ preserves all colimits. 
\end{cor}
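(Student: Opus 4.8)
The strategy is to bootstrap from \Cref{lem:protoadjunction}: that result already exhibits $g := g_{G_{i,*}}^{F_{i,*}}$ as a right adjoint which preserves filtered colimits, and the stability hypothesis supplies the one remaining ingredient, namely preservation of finite colimits. First I would note that, under the stability hypothesis, \Cref{lem:protoproductcompgen} shows that both protoproducts $\Prod{\cF}^{\flat}(\cC_i, F_{i,*})$ and $\Prod{\cF}^{\flat}(\cD_i, G_{i,*})$ are stable and compactly generated, hence presentable; in particular they admit all small colimits. Thus $g$ is a right adjoint between stable presentable $\infty$-categories.

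Being a right adjoint, $g$ preserves all limits, and in particular all finite limits, so it is a left exact functor between stable $\infty$-categories. A left exact functor between stable $\infty$-categories is automatically exact, and an exact functor preserves finite colimits: in a stable $\infty$-category the zero object is both initial and terminal, and a square is a pushout square if and only if it is a pullback square, so preservation of finite limits forces preservation of finite colimits (see \cite[\S1.1]{ha}). Hence $g$ preserves finite colimits.

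Finally I would combine this with the conclusion of \Cref{lem:protoadjunction} that $g$ preserves filtered colimits. In a cocomplete $\infty$-category every small colimit is built out of finite colimits and filtered colimits --- infinite coproducts are the filtered colimits of the coproducts over their finite subsets, and all small colimits are generated by coproducts and pushouts (\cite[\S4.4]{htt}) --- so any functor that preserves both of these classes preserves all small colimits. Therefore $g$ preserves all colimits. The argument is essentially formal once \Cref{lem:protoadjunction} is in hand; the only point that needs a little care is the last one, namely making precise the sense in which finite and filtered colimits generate all colimits and verifying that $g$ respects the resulting presentation of an arbitrary colimit. No input beyond \Cref{lem:protoadjunction} and the standard structure theory of stable $\infty$-categories is required.
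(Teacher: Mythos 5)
Your argument is correct and is essentially the intended one: since $g$ is a right adjoint between stable presentable $\infty$-categories it is left exact, hence exact, hence preserves finite colimits, and combining with the filtered-colimit preservation from \Cref{lem:protoadjunction} yields preservation of all colimits. The paper leaves this corollary without proof, and your reasoning fills in exactly the standard bootstrap the authors have in mind.
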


\begin{ex}
Let $(\cC_i, F_{i,*})_{i \in I}$ be a filtered collection of compactly generated $\infty$-categories and let $(\cC_i)_{i \in I}$ be the same categories with the trivial filtration of \cref{ex:trivialfiltration}. In this case the identity maps are a collection of filtration preserving functors $(\cC_i, F_{i,*}) \rightarrow \cC_i$ inducing
\[
\xymatrix{\iota\colon\Prod{\cF}^{\flat}(\cC_i,F_{i,*}) \ar@{^{(}->}[r] & \Prod{\cF}^{\omega} \cC_i.}
\]
We will always refer to the right adjoint to this map as $n^{F_{i,*}}$ or just $n$ when the filtration is clear from context.
\end{ex}

\begin{cor} \label{cor:squares}
With the notation of \cref{lem:protoadjunction}, both the solid square and the dashed square commute
\[
\xymatrix{\Prod{\cF}^{\flat}(\cC_i,F_{i,*}) \ar@<0.5ex>[d]^{\Prod{\cF}^{\flat}f_i} \ar@<0.5ex>[r]^-{\iota} &  \Prod{\cF}^{\omega}\cC_i \ar@<0.5ex>@{-->}[l]^-{n} \ar@<0.5ex>[d]^{\Prod{\cF}^{\omega}f_i} \\
\Prod{\cF}^{\flat}(\cD_i,G_{i,*}) \ar@<0.5ex>@{-->}[u]^{g_{G_{i,*}}^{F_{i,*}}} \ar@<0.5ex>[r]^-{\iota} & \Prod{\cF}^{\omega}\cD_i \ar@<0.5ex>@{-->}[u]^{g} \ar@<0.5ex>@{-->}[l]^-{n} }
\]
and $g_{G_{i,*}}^{F_{i,*}}$ preserves filtered colimits. 
\end{cor}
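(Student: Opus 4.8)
The plan is to obtain both squares from the adjunction formalism once the solid (left-adjoint) square is known to commute; the dashed square will then follow by passing to right adjoints, and the assertion about filtered colimits is literally part of \cref{lem:protoadjunction}.

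\textbf{The solid square.} I would first note that all four of its functors preserve colimits: $\iota$ and $\Prod{\cF}^{\flat}f_i$ do by \cref{lem:protoadjunction} — here $\iota$ is $\Prod{\cF}^{\flat}$ applied to the identity functors with trivial target filtration, as in the example just above — while $\Prod{\cF}^{\omega}f_i$ does because it is $\Ind$ of $\Prod{\cF}f_i^{\omega}$, which preserves finite colimits by Part (5) of \cref{prop:ultracolims}. It therefore suffices to compare the two composites after restricting to compact objects, that is, along $(\Prod{\cF}^{\flat}(\cC_i,F_{i,*}))^{\omega}\simeq\colim_k\Prod{\cF}F_{i,k}\cC_i$. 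On compact objects $\iota$ is the canonical inclusion into $\Prod{\cF}\cC_i^{\omega}$, the functor $\Prod{\cF}^{\flat}f_i$ is $\colim_k$ of the maps $\Prod{\cF}(f_i|_{F_{i,k}\cC_i})\colon\Prod{\cF}F_{i,k}\cC_i\to\Prod{\cF}G_{i,\beta(k)}\cD_i$, and $\Prod{\cF}^{\omega}f_i$ is $\Prod{\cF}f_i^{\omega}$. Both composites $\colim_k\Prod{\cF}F_{i,k}\cC_i\to\Prod{\cF}\cD_i^{\omega}$ are then, one value of $k$ at a time, simply $\Prod{\cF}$ applied to the restriction of $f_i\colon\cC_i^{\omega}\to\cD_i^{\omega}$ along $F_{i,k}\cC_i\hookrightarrow\cC_i^{\omega}$ — using that $f_i$ is filtration preserving, so that this restriction factors through $G_{i,\beta(k)}\cD_i$ — and hence coincide by functoriality of $\Prod{\cF}(-)$ and of the colimit over $k$. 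Applying $\Ind(-)$ gives $\iota\circ\Prod{\cF}^{\flat}f_i\simeq\Prod{\cF}^{\omega}f_i\circ\iota$.

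\textbf{The dashed square.} Next I would record that the solid arrows are left adjoints with the indicated right adjoints: $\iota$ has right adjoint $n$ by \cref{lem:protoadjunction} (the identity-functor instance again), $\Prod{\cF}^{\flat}f_i$ has right adjoint $g_{G_{i,*}}^{F_{i,*}}$ by \cref{lem:protoadjunction}, and $\Prod{\cF}^{\omega}f_i$ has a right adjoint $g$ by \cref{lem:compactadjunction}, since $f_i$ preserves colimits — hence is a left adjoint by the adjoint functor theorem — and carries $\cC_i^{\omega}=\colim_k F_{i,k}\cC_i$ into $\cD_i^{\omega}$. Taking right adjoints of the equivalence $\Prod{\cF}^{\omega}f_i\circ\iota\simeq\iota\circ\Prod{\cF}^{\flat}f_i$ established above and invoking uniqueness of adjoints yields $n\circ g\simeq g_{G_{i,*}}^{F_{i,*}}\circ n$, which is exactly the commutativity of the dashed square. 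Finally $g_{G_{i,*}}^{F_{i,*}}$ preserves filtered colimits by \cref{lem:protoadjunction}, so nothing more is needed.

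\textbf{Where the work lies.} Neither step is deep; the only spot that needs care is the identification in the first step. One must unwind precisely how $\Prod{\cF}^{\flat}f_i$ is assembled from the restrictions $f_i|_{F_{i,k}\cC_i}$, how $\iota$ behaves on the subcategory of compact objects, and how these organize into one colimit over $k$, so that ``both composites are $\Prod{\cF}$ of the same restriction of $f_i$'' is literally true rather than just morally so. Once that bookkeeping is in place the dashed square is a purely formal consequence and the filtered-colimit claim is a citation.
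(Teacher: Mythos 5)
Your proposal is correct and follows essentially the same line as the paper's (very terse) proof: the paper simply says that each arrow is a special case of \cref{lem:protoadjunction} (via \cref{ex:trivialfiltration} for the horizontals), the solid square commutes by naturality, and the dashed square follows by passing to right adjoints. You have supplied the details — checking commutativity on compacts and then applying $\Ind(-)$, and using uniqueness of adjoints — that the paper leaves to the reader, and your minor substitution of \cref{lem:compactadjunction} for the trivial-filtration instance of \cref{lem:protoadjunction} for the existence of $g$ is immaterial.
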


\begin{proof}
Each of these adjunctions is a special case of \cref{lem:protoadjunction}. The horizontal adjunctions are a special case by \Cref{ex:trivialfiltration}. The solid diagram commutes by naturality and the commutativity of the dashed diagram follows.
\end{proof}

\begin{lem}\label{lem:nsymmmon}
Let $(\cC_i, F_{i,*})_{i \in I}$ be a filtered symmetric monoidal collection of compactly generated $\infty$-categories. The protoproduct
\[
\Prod{\cF}^{\flat}(\cC_i, F_{i,*})
\]
is a full symmetric monoidal subcategory of $\Prod{\cF}^{\omega} \cC_i$ with unital lax symmetric monoidal right adjoint $n$.
\end{lem}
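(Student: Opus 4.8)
The plan is to handle the two assertions separately. The statement that $\Prod{\cF}^{\flat}(\cC_i,F_{i,*})$ is a full symmetric monoidal subcategory of $\Prod{\cF}^{\omega}\cC_i$ was essentially already proved in the course of \Cref{lem:protoproductcompgen}: the functor $\iota$ of \Cref{lem:fullyfaithfultocompact} is fully faithful, its essential image is closed under the tensor product of $\Prod{\cF}^{\omega}\cC_i$ by the hypotheses on the filtrations, and \cite[Proposition 2.2.1.1]{ha} promotes this to a symmetric monoidal full subcategory. The right adjoint $n$ of $\iota$, together with the fact that it preserves filtered colimits, is \Cref{cor:squares} applied to the trivial filtration of \Cref{ex:trivialfiltration}. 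What remains is to refine $n$ to a \emph{unital lax symmetric monoidal} functor, and this is where the real work lies.

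For that I would mimic the proof of \Cref{prop:msymmmon} and invoke \Cref{lem:msymmmon1}, now with $\mathcal{B} = \colim_k\Prod{\cF}F_{i,k}\cC_i$, the (small) category of compact objects of $\Prod{\cF}^{\flat}(\cC_i,F_{i,*})$, with $\mathcal{M} = \Prod{\cF}^{\omega}\cC_i$, and with $i\colon\mathcal{B}\to\mathcal{M}$ the restriction of $\iota$ to compact objects, which factors through the inclusion of the compact objects $\Prod{\cF}\cC_i^{\omega}$ of $\mathcal{M}$. The hypotheses are checked as follows: $\mathcal{B}$ is symmetric monoidal by \Cref{lem:protoproductcompgen}, has finite colimits by \Cref{rem:lur}, and its tensor product distributes over finite colimits because the tensor product of each $\cC_i^{\omega}$ does and ultraproducts of finite-colimit-preserving functors preserve finite colimits by Part~(5) of \Cref{prop:ultracolims}; $\mathcal{M}$ is symmetric monoidal and cocomplete; and $i$ is fully faithful (being the restriction of the fully faithful $\iota$ of \Cref{lem:fullyfaithfultocompact}), symmetric monoidal by \Cref{lem:protoproductcompgen}, and preserves finite colimits (the inclusion $\mathcal{B}\hookrightarrow\Prod{\cF}\cC_i^{\omega}$ does, by the argument used for the identity functors in the proof of \Cref{lem:protoadjunction}, and $\Prod{\cF}\cC_i^{\omega}\hookrightarrow\mathcal{M}$ is the Yoneda embedding into an Ind-category). \Cref{lem:msymmmon1} then yields a unital lax symmetric monoidal functor $i_{\natural}^{\otimes}\colon\Prod{\cF}^{\omega}\cC_i\to\Ind(\mathcal{B}) = \Prod{\cF}^{\flat}(\cC_i,F_{i,*})$ whose underlying functor is the restriction of the Yoneda embedding of $\mathcal{M}$ along $i$, i.e.\ $X\mapsto\big(b\mapsto\Map_{\Prod{\cF}^{\omega}\cC_i}(i(b),X)\big)$.

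The last step is to identify $i_{\natural}^{\otimes}$ with $n$. Since $\iota$ preserves colimits and restricts on compact objects to $i$, it is the Ind-extension of $i$, so its right adjoint is given on compact objects $b\in\mathcal{B}$ by $n(X)(b)\simeq\Map_{\Prod{\cF}^{\omega}\cC_i}(\iota(b),X)\simeq\Map_{\Prod{\cF}^{\omega}\cC_i}(i(b),X)$ --- precisely the underlying functor of $i_{\natural}^{\otimes}$. Hence the two functors agree and $n$ acquires the unital lax symmetric monoidal structure of $i_{\natural}^{\otimes}$. I expect the main obstacle to be exactly this identification: one must make sure the functor manufactured by \Cref{lem:msymmmon1} is genuinely the adjoint $n$ rather than merely some lax monoidal functor with the same underlying values (this is what the computation of $n$ via the Ind-extension of $i$ pins down), and one must carry out the slightly fiddly verification that $i$ meets the hypotheses of \Cref{lem:msymmmon1} in the stated generality --- in particular the left-exactness clause, which in the proof of \Cref{lem:msymmmon1} is only used through preservation of finite colimits. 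Everything else is bookkeeping with the lemmas already in place.
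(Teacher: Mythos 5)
Your proof is correct, but it takes a genuinely different route from the paper's for the part that carries real content, namely the unital lax symmetric monoidal structure on $n$. The paper's own argument is a one-liner: since $\iota$ is a symmetric monoidal functor (established exactly as you do, via \cite[Proposition 2.2.1.1]{ha} and closure of the protoproduct under $\otimes$), its right adjoint $n$ automatically inherits a lax symmetric monoidal structure from \cite[Corollary 7.3.2.7]{ha}, and unitality follows because the unit of each $\cC_i$ lies in $F_{i,0}\cC_i$, so $\iota$ preserves units and the unit map $\mathbf{1}\to n\iota\mathbf{1}\simeq n\mathbf{1}$ is an equivalence by full faithfulness. You instead re-run the AFT machinery of \Cref{lem:msymmmon1} with $\mathcal{B}$ the compact objects of the protoproduct, manufacture a unital lax functor $i_{\natural}^{\otimes}$ whose underlying functor is the restricted Yoneda embedding, and then identify that underlying functor with $n$ by computing the right adjoint of $\Ind(i)$. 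This is valid and self-contained, and you are right that the identification with $n$ is the point (since the statement only asserts that $n$ \emph{admits} such a structure, matching underlying functors is enough). The trade-off is that your route requires verifying the finite-colimit/distributivity hypotheses of \Cref{lem:msymmmon1}, which the paper sidesteps entirely with the general adjunction fact; you also correctly flag that the ``left exact'' clause of \Cref{lem:msymmmon1} is really used as preservation of finite colimits. The paper's argument is shorter and delegates all the work to Lurie's corollary; yours is longer but more explicit and parallels \Cref{prop:msymmmon}.
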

\begin{proof}
By \cref{lem:fullyfaithfultocompact}, the natural inclusions induce the fully faithful functor
\[
\xymatrix{\Prod{\cF}^{\flat}(\cC_i, F_{i,*}) \ar[r]^-{\iota} & \Prod{\cF}^{\omega}\cC_i}.
\]
Since the target is symmetric monoidal by \Cref{lem:catsinheritproperties}, it suffices to show that $\Prod{\cF}^{\flat}(\cC_i, F_{i,*})$ is closed under the symmetric monoidal structure on $\Prod{\cF}^{\omega}\cC_i$, see \cite[Proposition 2.2.1.1]{ha}. This follows from the assumptions on the filtrations $F_{i,*}$.

Since $\iota\colon \Prod{\cF}^{\flat}(\cC_i,F_{i,*}) \to \Prod{\cF}^{\omega}\cC_i$ is symmetric monoidal, the right adjoint $n$ inherits a natural structure of a lax symmetric monoidal functor by \cite[Corollary 7.3.2.7]{ha}. Finally, $n$ preserves units because the tensor unit of $\cC_i$ is contained in $F_{i,0}\cC_i$ for all $i \in I$. 
\end{proof}

\begin{cor} \label{cor:symmonfun}
Let $(\cC_i, F_{i,*})_{i \in I}$ and $(\cD_i, G_{i,*})_{i \in I}$ be  filtered symmetric monoidal collections of compactly generated $\infty$-categories and let $(f_i)_{i \in I}$ be a filtration preserving collection of symmetric monoidal functors. The protoproduct
\[
\xymatrix{\Prod{\cF}^{\flat}(\cC_i, F_{i,*}) \ar[r]^{\Prod{\cF}^{\flat}f_i} & \Prod{\cF}^{\flat}(\cD_i, G_{i,*})}
\]
is a symmetric monoidal functor with unital lax symmetric monoidal right adjoint $g$.
\end{cor}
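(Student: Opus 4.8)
The plan is to deduce the statement from \cref{lem:nsymmmon}, \cref{cor:squares}, and \cref{lem:protoadjunction}, together with the fact that the right adjoint of a symmetric monoidal functor is canonically lax symmetric monoidal. First I would record a preliminary observation: a filtration preserving collection of functors automatically preserves compact objects, since $\cC_i^{\omega}=\colim_k F_{i,k}\cC_i$ and $\cD_i^{\omega}=\colim_k G_{i,k}\cD_i$ by \cref{def:compactfiltration}, each $f_i$ preserves filtered colimits, and the containments $f_i(F_{i,k}\cC_i)\subseteq G_{i,\beta(k)}\cD_i$ assemble to $f_i(\cC_i^{\omega})\subseteq\cD_i^{\omega}$. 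In particular $\Prod{\cF}^{\omega}f_i$ is defined and is a symmetric monoidal functor preserving colimits and compact objects by \cref{lem:symmetriccompactfunctors}.

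For the symmetric monoidality of $\Prod{\cF}^{\flat}f_i$ I would use the solid square of \cref{cor:squares}, namely the commuting identification $\iota_{\cD}\circ\Prod{\cF}^{\flat}f_i\simeq\Prod{\cF}^{\omega}f_i\circ\iota_{\cC}$, where $\iota_{\cC}\colon\Prod{\cF}^{\flat}(\cC_i,F_{i,*})\hookrightarrow\Prod{\cF}^{\omega}\cC_i$ and $\iota_{\cD}$ are the fully faithful symmetric monoidal inclusions supplied by \cref{lem:nsymmmon}. The right-hand composite is symmetric monoidal, hence so is $\iota_{\cD}\circ\Prod{\cF}^{\flat}f_i$; since $\iota_{\cD}$ is a fully faithful symmetric monoidal functor whose essential image is closed under the tensor product and contains the unit, the corresponding inclusion of $\infty$-operads over $\mathrm{N}(\mathrm{Fin}_*)$ both preserves and reflects coCartesian edges, and therefore the factorization $\Prod{\cF}^{\flat}f_i$ through $\iota_{\cD}$ is itself symmetric monoidal. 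Alternatively one can argue on compact objects: $\Prod{\cF}^{\flat}f_i$ is $\Ind$ applied to the restriction of $\Prod{\cF}f_i^{\omega}$ to the symmetric monoidal subcategory $\colim_k\Prod{\cF}F_{i,k}\cC_i$ of $\Prod{\cF}\cC_i^{\omega}$, which lands in $\colim_k\Prod{\cF}G_{i,k}\cD_i$ by filtration preservation and is symmetric monoidal by \cref{lem:lsymmmon}, and $\Ind$ preserves symmetric monoidality by \cite[Corollary 4.8.1.13]{ha}.

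Finally, \cref{lem:protoadjunction} already gives that $\Prod{\cF}^{\flat}f_i$ preserves colimits and produces its right adjoint $g=g_{G_{i,*}}^{F_{i,*}}$; since $\Prod{\cF}^{\flat}f_i$ is symmetric monoidal, \cite[Corollary 7.3.2.7]{ha} endows $g$ with a canonical lax symmetric monoidal structure, with unitality checked exactly as in \cref{lem:nsymmmon} from the fact that $\Prod{\cF}^{\flat}f_i$ carries the tensor unit $[\mathbf{1}_{\cC_i}]$ to $[\mathbf{1}_{\cD_i}]$. The one step I expect to require genuine care is the assertion that symmetric monoidality of a functor landing in a full symmetric monoidal subcategory can be detected after composing with the (fully faithful, symmetric monoidal) inclusion; this is an instance of \cite[Proposition 2.2.1.1]{ha}, and everything else is a formal consequence of the results already in hand.
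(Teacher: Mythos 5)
Your proof is correct and takes essentially the same route as the paper: it uses the commuting square of \cref{cor:squares}, the symmetric monoidality of the top arrow from \cref{lem:symmetriccompactfunctors}, and the fully faithful symmetric monoidal vertical inclusions from \cref{lem:nsymmmon} to deduce symmetric monoidality of $\Prod{\cF}^{\flat}f_i$, then obtains the unital lax symmetric monoidal structure on $g$ exactly as in \cref{lem:nsymmmon}. The one worthwhile addition is your preliminary observation that a filtration preserving collection automatically preserves compact objects, which the paper tacitly assumes when invoking \cref{lem:symmetriccompactfunctors} (though note this inclusion follows simply from exhaustiveness of the filtrations, not from $f_i$ preserving filtered colimits).
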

\begin{proof}
By \cref{cor:squares}, we have a commutative diagram
\[
\xymatrix{\Prod{\cF}^{\omega} \cC_i \ar[r]^{\Prod{\cF}^{\omega}f_i} & \Prod{\cF}^{\omega} \cD_i \\ \Prod{\cF}^{\flat}(\cC_i, F_{i,*}) \ar[r]_{\Prod{\cF}^{\flat}f_i} \ar[u]^{\iota} & \Prod{\cF}^{\flat}(\cD_i, G_{i,*}). \ar[u]_{\iota}}
\]
By \cref{lem:symmetriccompactfunctors}, the top arrow is symmetric monoidal. Since the diagram commutes and the vertical arrows are fully faithful and symmetric monoidal by \cref{lem:nsymmmon}, the bottom arrow must be symmetric monoidal.

The right adjoint $g$ is unital lax symmetric monoidal by the proof of the same property for the right adjoint $n$ in \cref{lem:nsymmmon}.
\end{proof}

\begin{cor}\label{cor:compositesymmmon}
Let $(\cC_i, F_{i,*})$ be a filtered symmetric monoidal collection of compactly generated $\infty$-categories. Then the composite
\[
\Prod{I} \cC_i \lra{[-]_{\cF}} \Prod{\cF}\cC_i \lra{m} \Prod{\cF}^{\omega}\cC_i \lra{n} \Prod{\cF}^{\flat}(\cC_i,F_{i,*})
\]
is lax symmetric monoidal and preserves the unit. In particular, the composite sends commutative monoids to commutative monoids.
\end{cor}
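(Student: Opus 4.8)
The plan is to observe that the composite in question is built out of three functors, each of which has already been identified as a (unital) lax symmetric monoidal functor, and then to invoke the fact that such functors are closed under composition. I would carry this out in three steps corresponding to the three arrows.

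\emph{Step 1.} By \cref{lem:lsymmmon}, since each $\cC_i$ is symmetric monoidal, the product $\Prod{I}\cC_i$ and the ultraproduct $\Prod{\cF}\cC_i$ are symmetric monoidal and the canonical map $[-]_{\cF}\colon \Prod{I}\cC_i \to \Prod{\cF}\cC_i$ is symmetric monoidal; in particular it is unital lax symmetric monoidal. \emph{Step 2.} By \cref{prop:msymmmon} (which is the instance of \cref{lem:msymmmon1} applied to the fully faithful, symmetric monoidal, left exact inclusion $\Prod{\cF}\cC_i^{\omega}\hookrightarrow\Prod{\cF}\cC_i$ of the compact objects), the functor $m\colon \Prod{\cF}\cC_i \to \Prod{\cF}^{\omega}\cC_i$ is unital lax symmetric monoidal. \emph{Step 3.} Since $(\cC_i, F_{i,*})_{i\in I}$ is a filtered \emph{symmetric monoidal} collection, the hypotheses of \cref{lem:nsymmmon} are met, so the protoproduct $\Prod{\cF}^{\flat}(\cC_i,F_{i,*})$ is a full symmetric monoidal subcategory of $\Prod{\cF}^{\omega}\cC_i$ and the right adjoint $n\colon \Prod{\cF}^{\omega}\cC_i \to \Prod{\cF}^{\flat}(\cC_i,F_{i,*})$ of the fully faithful symmetric monoidal inclusion $\iota$ is unital lax symmetric monoidal; here the fact that the tensor unit of $\cC_i$ lies in $F_{i,0}\cC_i$ is what gives unitality.

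It then remains to note that the composite of (unital) lax symmetric monoidal functors is again (unital) lax symmetric monoidal. Working with the $\infty$-operadic description recalled just before \cref{lem:msymmmon1}: a lax symmetric monoidal functor is a map over $N(\mathrm{Fin}_*)$ sending inert edges to inert edges, and a unital one additionally sends coCartesian edges over $\mathrm{Fin}^{\mathrm{inj}}_*$ to coCartesian edges; both conditions are manifestly stable under composition of composable such maps. Unitality in particular implies that the composite preserves the tensor unit (the unit of $\Prod{I}\cC_i$ is sent, through each stage, to the unit of the next). This establishes the first assertion of the corollary. For the last sentence, a unital lax symmetric monoidal functor $\cC^{\otimes}\to\cD^{\otimes}$ induces a functor $\CAlg(\cC)\to\CAlg(\cD)$ on commutative algebra objects by \cite{ha}; applying this to the composite shows that it carries commutative monoids in $\Prod{I}\cC_i$ to commutative monoids in $\Prod{\cF}^{\flat}(\cC_i,F_{i,*})$.

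There is no genuine obstacle here: every input is already proved, and the argument is essentially a bookkeeping exercise. The only point requiring mild care is to ensure that the three cited results all use the same ($\infty$-operadic, in the sense of \cite{ha}) notion of "(unital) lax symmetric monoidal", so that the slogan "a composite of such functors is such" is literally a statement about composable maps of $\infty$-operads rather than something needing reproof.
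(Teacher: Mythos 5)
Your proof is correct and follows the paper's argument exactly: it cites the same three results (\cref{lem:lsymmmon}, \cref{prop:msymmmon}, and \cref{lem:nsymmmon}) for each functor in the composite being (unital) lax symmetric monoidal, then closes by noting these properties are stable under composition of maps of $\infty$-operads. The paper's proof is just the one-line citation; you have simply spelled out the bookkeeping.
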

\begin{proof}
This follows from \Cref{lem:lsymmmon}, \Cref{prop:msymmmon}, and \Cref{lem:nsymmmon}. 
\end{proof}

\subsection{Filtrations on module categories}

In this subsection and the next, we study three different filtrations on the $\infty$-category of modules over a ring spectrum that will play an important role later on. These three filtrations are stable compact filtrations in the sense of the previous subsection. 

\begin{itemize}
	\item The cell filtration, in which a compact module is in filtration $k$ if and only if it can be built out of at most $k$ cells. 
	\item The cell-dimension filtration, where both the number of cells and the dimension of the cells are bounded.
	\item The Pic-filtration, in which a compact module is in filtration $k$ if and only if it can be built out of at most $k$ invertible modules. 
\end{itemize}

Informally speaking, the difference between the first and third filtration comes from the fact that invertible objects can, in general, have many cells. This is the case, for example, in the $\infty$-category of $E(n)$-local spectra at small primes. 

We will begin with some recollections regarding module categories. Let $R$ be an $\bE_{1}$-ring spectrum and let $\Mod_R$ be the stable $\infty$-category of modules over $R$. A cell is an object of the form $\Sigma^nR$ for some $n \in \Z$. For any $n \in \N \cup \{\infty\}$, if $R$ is an $\bE_n$-ring spectrum, then $\Mod_R$ is $\bE_{n-1}$-monoidal. If $n>1$, then the corresponding monoidal structure will be denoted by $\otimes = \otimes_R$. Moreover, $\Mod_R$ is compactly generated by the $\otimes$-unit $R$. In fact, this property characterizes module categories of ring spectra by the following derived version of Morita theory, see \cite[Theorem 3.3.3]{schwedeshipleymorita} and \cite[Theorem 7.1.2.1]{ha}.

\begin{thm}[Schwede--Shipley]\label{thm:moritatheory}
If $\cC$ is a compactly generated stable $\infty$-category with compact generator $P$, then there is an equivalence 
\[
\xymatrix{\Hom_{\cC}(P,-)\colon \cC \ar[r]^-{\simeq} & \Mod_{\End_{P}}}
\]
with inverse given by $- \otimes_{\End_P}P$. Moreover, if $\cC$ is symmetric monoidal with unit $P$ and with the property that $\otimes$ commutes with colimits in each variable, then $P$ is an $\bE_{\infty}$-algebra and this is an equivalence of symmetric monoidal $\infty$-categories.
\end{thm}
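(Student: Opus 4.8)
The plan is to run the standard Morita argument in the stable presentable setting (this is exactly \cite[Theorem~3.3.3]{schwedeshipleymorita} and \cite[Theorem~7.1.2.1]{ha}, so in the paper we simply cite those), and I sketch the structure here. First I would set $A = \End_P = \Hom_{\cC}(P,P)$, an $\bE_{1}$-ring under composition, and invoke the endomorphism-object formalism of \cite{ha} to realize $P$ as an $A$-module; since $\cC$ is presentable this produces an adjunction with left adjoint $L = (-)\otimes_A P\colon \Mod_A \to \cC$ and right adjoint $G = \Hom_{\cC}(P,-)\colon \cC \to \Mod_A$, and $G$ preserves all colimits because $P$ is compact.

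Next I would prove this adjunction is an equivalence using that $P$ generates $\cC$. On the free module $A$, the unit is the evident equivalence $A \xrightarrow{\simeq} G(L(A)) = \Hom_{\cC}(P, A\otimes_A P) = \Hom_{\cC}(P,P) = A$; since both $L$ and $G$ preserve colimits and $A$ generates $\Mod_A$ under colimits, the unit is an equivalence on all of $\Mod_A$, so $L$ is fully faithful. For the counit $\epsilon_X\colon L G(X) \to X$, I would use that $G$ is conservative (the meaning of ``$P$ generates the stable $\infty$-category $\cC$''): applying $G$ to $\epsilon_X$ and combining the triangle identity with the fact that the unit is already an equivalence shows $G(\epsilon_X)$ is an equivalence, hence so is $\epsilon_X$. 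Therefore $G = \Hom_{\cC}(P,-)$ is an equivalence with inverse $L = (-)\otimes_A P$.

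For the symmetric monoidal refinement, suppose $\cC$ is presentably symmetric monoidal with $P = \mathbf{1}_{\cC}$. There is an essentially unique colimit-preserving symmetric monoidal functor $\Sp \to \cC$ carrying $\bS$ to $P$, so its right adjoint $G = \Hom_{\cC}(P,-)$ is lax symmetric monoidal; in particular $G$ sends the commutative algebra $P \in \cC$ to a commutative algebra in $\Sp$, which endows $\End_P$ with a canonical $\bE_{\infty}$-ring structure, and the equivalence $G\colon \cC \xrightarrow{\simeq} \Mod_{\End_P}$ of the previous step then becomes lax symmetric monoidal. To upgrade it to a symmetric monoidal equivalence I would check that the lax structure maps $G(X)\otimes_{\End_P}G(Y) \to G(X\otimes_{\cC}Y)$ (and the unit map) are equivalences: both sides preserve colimits separately in $X$ and in $Y$, and the comparison map is an equivalence at $X=Y=P$ (where both sides are $\End_P$), so since $P$ generates $\cC$ under colimits a two-variable colimit argument shows the map is an equivalence for all $X,Y$.

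The only genuinely $\infty$-categorical content — and the part I expect to be the main obstacle if one wanted to write this out in full rather than cite it — is the coherent construction of the $A$-module structure on $P$, of the adjunction $L \dashv G$, and of the lax symmetric monoidal structure on $G$, together with the compatibility of these data; all of this is supplied by the endomorphism-algebra, relative-tensor-product, and module-category machinery of \cite{ha}, so once those are in hand the argument above is formal.
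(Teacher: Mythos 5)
Your proposal is correct and follows exactly the approach the paper takes, which is to cite \cite[Theorem~3.3.3]{schwedeshipleymorita} and \cite[Theorem~7.1.2.1]{ha} and rely on the standard Morita/Barr--Beck argument that you sketch. One small imprecision: compactness of $P$ alone only gives that $\Hom_{\cC}(P,-)$ preserves filtered colimits; you also need stability (exactness) to get preservation of finite colimits and hence all colimits, but this is of course available and the conclusion stands.
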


We can now construct three filtrations on the $\infty$-category of modules over a ring spectrum that we will use to build three different types of protoproducts. An object in $\Mod_R$ is said to be built out of at most $1$ cell if it is equivalent to a suspension of $R$ or zero. Inductively, we say that an object in $\Mod_R$ is built out of at most $k$ cells if it is equivalent to the cofiber of a map between an object built out of at most $k_1$ cells and an object built out of at most $k_2$ cells, where $k_1+k_2 \leq k$.

\begin{defn}\label{defn:cellfiltation}
Let $\Cell_R$ be the filtration on $\Mod_R$ in which $\Cell_{R,k}\Mod_R$ consists of retracts of objects that can be built out of at most $k$ cells. For a collection of module categories $(\Mod_{R_i})_{i \in I}$ equipped with this filtration and an ultrafilter $\cF$ on $I$, we will denote the protoproduct by
\[
\Prod{\cF}^{\flat}\Mod_{R_i} = \Prod{\cF}^{\flat}(\Mod_{R_i}, \Cell_{R_i}).
\] 
We will simply refer to this as the protoproduct of the module categories.
\end{defn}

There is a finer filtration on $\Mod_R$ given by bounding both the number and the dimension of cells. Every compact $R$-module is a retract of a finite cell $R$-module. The dimension of a finite cell $R$-module is the maximum of the absolute value of the dimension of the top cell and the absolute value of the dimension of the bottom cell. The $k$th filtration step is given by $R$-modules that are retracts of finite cell $R$-modules for which the maximum of the dimension and number of cells is bounded by $k$.  

\begin{defn}\label{defn:celldimfiltration}
Let $\DimCell_R$ be the filtration on $\Mod_R$ described above given by the maximum of the number of cells and the dimension. For a collection of module categories $(\Mod_{R_i})_{i \in I}$ equipped with this filtration and an ultrafilter $\cF$ on $I$, we will denote the protoproduct by
\[
\Prod{\cF}^{\doubleflat}\Mod_{R_i} = \Prod{\cF}^{\flat}(\Mod_{R_i}, \DimCell_{R_i}).
\] 
We will call this the bounded protoproduct of the module categories.
\end{defn}
 
If $R$ is an $\bE_{\infty}$-ring spectrum, then the $\infty$-category $\Mod_{R}$ is symmetric monoidal and we can define the Picard groupoid $\Pic(R)\subseteq \Mod_R$ of $R$ to be the $\infty$-groupoid of invertible objects in $\Mod_R$. In fact, $\Pic$ is a functor from the $\infty$-category of symmetric monoidal $\infty$-categories and symmetric monoidal functors to $\Top$. By \cite[Proposition 2.2.3]{ms_picard}, the functor $\Pic$ preserves filtered colimits and limits, therefore we have the following lemma:
\begin{lem} \label{lem:picultra}
The functor $\Pic$ preserves ultraproducts.
\end{lem}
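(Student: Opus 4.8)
The plan is to observe that the ultraproduct is, by its very definition, built out of two operations that $\Pic$ is already known to commute with. For a collection $(\cC_i)_{i\in I}$ of symmetric monoidal $\infty$-categories and an ultrafilter $\cF$ on $I$ we have, by definition,
\[
\Prod{\cF}\cC_i = \Colim{U \in \cF} \Prod{i \in U}\cC_i,
\]
the colimit taken over $\cF$ ordered by reverse inclusion. So the first step is to split the claim into the assertions that $\Pic$ preserves each product $\Prod{i\in U}(-)$ and that $\Pic$ preserves the colimit $\Colim{U\in\cF}(-)$, and then to compose the two resulting equivalences.

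For the colimit I would first record that $(\cF,\supseteq)$ is a filtered poset: it is nonempty, and for $U,V\in\cF$ the intersection $U\cap V$ again lies in $\cF$ and refines both $U$ and $V$, hence is an upper bound in the reverse-inclusion order --- this is the only place the ultrafilter axioms are used. Thus the colimit defining the ultraproduct is genuinely filtered, and the part of \cite[Proposition 2.2.3]{ms_picard} asserting that $\Pic$ preserves filtered colimits applies. For the products, a set-indexed product is a limit over a discrete index category, so the part of the same proposition asserting that $\Pic$ preserves limits gives $\Pic\big(\Prod{i\in U}\cC_i\big)\simeq \Prod{i\in U}\Pic(\cC_i)$ for every $U \in \cF$. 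Stringing these together yields natural equivalences
\[
\Pic\big(\Prod{\cF}\cC_i\big)\simeq \Colim{U\in\cF}\Pic\big(\Prod{i\in U}\cC_i\big)\simeq \Colim{U\in\cF}\Prod{i\in U}\Pic(\cC_i) = \Prod{\cF}\Pic(\cC_i),
\]
with the ultraproduct on the right formed in $\Top$.

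There is no real obstacle here; the argument is formal once the definition of the ultraproduct is unwound. The one point that warrants a moment's care is bookkeeping about the ambient $\infty$-category in which the products and the filtered colimit are taken --- whether $\Cat$, $\Catomega$, or $\CAlg(\Cat)$ --- and checking that the cited preservation property for $\Pic$ is invoked for exactly those (co)limits; this is harmless in every case relevant to this paper, since the pertinent forgetful functors to $\Cat$ create the products and filtered colimits in question (compare the proof of \cref{lem:lsymmmon}).
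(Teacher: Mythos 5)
Your proof is correct and follows exactly the same route as the paper, which simply cites the preservation of limits and filtered colimits by $\Pic$ from \cite[Proposition 2.2.3]{ms_picard} and combines them via the definition of the ultraproduct as a filtered colimit of products. Your write-up is just a more explicit unpacking of that one-line justification.
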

Note that every suspension of $R$ is invertible. The invertible objects can be used to construct a third filtration on $\Mod_R$, which is coarser than $\Cell_R$.

\begin{defn}\label{defn:picfiltration}
The Pic-filtration $\mathrm{PicCell}_{R}$ on $\Mod_R$ is defined analogously to the cell filtration in \Cref{defn:cellfiltation}, but allowing arbitrary objects in $\Pic(R)$ as cells instead of suspensions of $R$. For a collection of module categories $(\Mod_{R_i})_{i \in I}$ equipped with this filtration and an ultrafilter $\cF$ on $I$, we will denote the protoproduct by
\[
\Prod{\cF}^{\Pic}\Mod_{R_i} = \Prod{\cF}^{\flat}(\Mod_{R_i},\mathrm{PicCell}_{R_i}). 
\] 
We will call this the $\Pic$-generated protoproduct of the module categories.
\end{defn}

\begin{rem}\label{rem:gencellfiltration}
More generally, we could construct a filtration in which the cells are taken from any submonoid of $\Pic(R)$ which is closed under suspensions. The filtrations $\Cell_R$ and $\mathrm{PicCell}_R$ are the minimal and maximal cases, respectively. 
\end{rem}

\begin{rem} \label{rem:picprotocats}
Assume that $(\cC_i)_{i \in I}$ is a collection of symmetric monoidal compactly generated $\infty$-categories with compact units. All three of the filtrations make sense in this more general setting. The obvious analogues of the cell filtration and cell-dimension filtration reduce to constructions in module categories as they only see the cellular objects. However, the Pic-filtration is interesting. Because the unit is compact, the invertible objects are compact. Thus we may define $\mathrm{PicCell}(\cC_i)$ to be the filtration with $k$th filtration step retracts of objects built out of at most $k$ invertible objects and define
\[
\Prod{\cF}^{\Pic}\cC_i = \Prod{\cF}^{\flat}(\cC_i,\mathrm{PicCell}(\cC_i)). 
\]
\end{rem}

\begin{lem}\label{lem:picprotogp}
Let $(\Mod_{R_i})_{i \in I}$ be a collection of module categories over $\mathbb{E}_\infty$-rings. Then
\[
\Pic(\Prod{\cF}^{\Pic}\Mod_{R_i}) \simeq \Prod{\cF} \Pic \Mod_{R_i}.
\]
\end{lem}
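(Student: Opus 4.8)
The plan is to identify both sides with the Picard space of a single symmetric monoidal $\infty$-category, namely the ultraproduct $\Prod{\cF}\Mod_{R_i}^{\omega}$ of the compact module categories, and to deduce the statement from \Cref{lem:picultra}. The guiding observation is that all invertible objects in sight already live at filtration stage $1$, uniformly in $i$.

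First I would record an elementary fact: in any symmetric monoidal compactly generated $\infty$-category $\cC$ every $\otimes$-invertible object $L$ is compact, since the unit $\mathbf 1$ is compact (our standing convention, which holds for $\Prod{\cF}^{\Pic}\Mod_{R_i}$ by \Cref{lem:protoproductcompgen}) and $\Map_{\cC}(L,-)\simeq \Map_{\cC}(\mathbf 1,L^{-1}\otimes -)$ therefore preserves filtered colimits. Hence $\Pic(\cC)\simeq\Pic(\cC^{\omega})$ for any such $\cC$, where $\cC^{\omega}$ carries the restricted symmetric monoidal structure; in particular $\Pic(\Mod_{R_i})\simeq\Pic(\Mod_{R_i}^{\omega})$ for each $i$, and $\Pic(\Prod{\cF}^{\Pic}\Mod_{R_i})\simeq\Pic\big((\Prod{\cF}^{\Pic}\Mod_{R_i})^{\omega}\big)$. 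Recalling that $(\Prod{\cF}^{\Pic}\Mod_{R_i})^{\omega}\simeq\colim_k\Prod{\cF}\mathrm{PicCell}_{R_i,k}\Mod_{R_i}$, which by \Cref{lem:nsymmmon} is a full symmetric monoidal subcategory of $\Prod{\cF}\Mod_{R_i}^{\omega}$, we obtain a fully faithful symmetric monoidal inclusion $j$ between the two. A fully faithful functor reflects equivalences and induces equivalences on mapping spaces, so $\Pic(j)$ is a fully faithful functor of $\infty$-groupoids, and it remains only to check essential surjectivity.

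For the target I would invoke \Cref{lem:picultra} together with \Cref{lem:lsymmmon} to get $\Pic(\Prod{\cF}\Mod_{R_i}^{\omega})\simeq\Prod{\cF}\Pic(\Mod_{R_i}^{\omega})\simeq\Prod{\cF}\Pic(\Mod_{R_i})$, so every point of the target is of the form $[L_i]$ for a family $(L_i)_{i\in I}$ of invertible compact $R_i$-modules. Essential surjectivity of $\Pic(j)$ is then immediate: each such $L_i$ is built from a single invertible cell, hence $L_i\in\mathrm{PicCell}_{R_i,1}\Mod_{R_i}$ for every $i$ (cf.\ \Cref{defn:picfiltration}), so $[L_i]\in\Prod{\cF}\mathrm{PicCell}_{R_i,1}\Mod_{R_i}\subseteq(\Prod{\cF}^{\Pic}\Mod_{R_i})^{\omega}$ and maps to the given point under $j$. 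Thus $\Pic(j)$ is a fully faithful, essentially surjective functor of $\infty$-groupoids, hence an equivalence, and composing it with the equivalences of the previous paragraph gives $\Pic(\Prod{\cF}^{\Pic}\Mod_{R_i})\simeq\Prod{\cF}\Pic\Mod_{R_i}$.

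There is no serious technical obstacle here; the one point to get right conceptually is that one must not attempt to take $\Pic$ of the individual filtration pieces $\Prod{\cF}\mathrm{PicCell}_{R_i,k}\Mod_{R_i}$ (these are not symmetric monoidal for fixed $k$), but should instead exploit that the invertible objects of the full ultraproduct $\Prod{\cF}\Mod_{R_i}^{\omega}$ all lie at filtration stage $1$, so that $\Pic$ cannot distinguish the $\Pic$-protoproduct from $\Prod{\cF}\Mod_{R_i}^{\omega}$. This is precisely the feature of the $\mathrm{PicCell}$-filtration — as opposed to the cell filtration $\Cell$, where an invertible module can require unboundedly many cells as $i$ varies — that makes the statement hold, and is why the analogue for $\Prod{\cF}^{\flat}\Mod_{R_i}$ fails.
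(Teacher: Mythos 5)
Your proposal is correct and follows essentially the same route as the paper: both proofs rest on the observations that invertible objects are automatically compact (since the unit is), that $\Pic$ preserves ultraproducts (\Cref{lem:picultra}), and that invertible $R_i$-modules already sit at the bottom of the $\mathrm{PicCell}$-filtration. You phrase it as showing that the fully faithful symmetric monoidal inclusion $(\Prod{\cF}^{\Pic}\Mod_{R_i})^{\omega}\hookrightarrow\Prod{\cF}\Mod_{R_i}^{\omega}$ induces an equivalence on $\Pic$, while the paper phrases it as a double inclusion of spaces, but the mathematical content is identical; the only cosmetic discrepancy is your use of filtration index $1$ where the paper writes $0$ for the step of $\mathrm{PicCell}_{R_i}$ containing all invertible modules.
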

\begin{proof}
Since invertible objects of $\Mod_{R_i}$ are compact, we have $\Pic \Mod_{R_i} = \Pic \Mod_{R_i}^{\omega}$. It follows from the definition that 
\[
\mathrm{PicCell}_{R_i,0}\Mod_{R_i} \supseteq \Pic \Mod_{R_i}. 
\]
Thus the right hand side is contained in the left hand side.

To prove the other inclusion, first note that the unit of $\Prod{\cF}^{\Pic}\Mod_{R_i}$ is compact, so all invertible objects in this $\infty$-category are compact as well. Therefore, it suffices to consider 
\[
\Pic(\colim_k \Prod{\cF} \mathrm{PicCell}_{R_i,k}\Mod_{R_i}),
\]
where $\mathrm{PicCell}_{R_i,k}\Mod_{R_i}$ is the $k$-th filtration step in the Pic-filtration on $\Mod_{R_i}$. However, this is a subspace of $\Pic\Prod{\cF}\Mod_{R_i}^{\omega}$, which in turn is the same as $\Prod{\cF}\Pic \Mod_{R_i}$, as $\Pic$ preserves ultraproducts by \cref{lem:picultra}. 
\end{proof}

\subsection{Protoproducts of module categories}

We analyze the protoproducts associated to the filtrations of the previous section. Let $(R_i)_{i \in I}$ be a collection of $\mathbb{E}_\infty$-ring spectra. The identity maps on the $\infty$-categories $(\Mod_{R_i})_{i \in I}$ induce filtration preserving symmetric monoidal functors of stable symmetric monoidal filtered collections
\[
(\Mod_{R_i},\DimCell_{R_i}) \lra{} (\Mod_{R_i},\Cell_{R_i}) \lra{} (\Mod_{R_i},\mathrm{PicCell}_{R_i})
\]
inducing, by \cref{cor:symmonfun}, symmetric monoidal functors
\[
\xymatrixcolsep{5pc}
\xymatrix{\Prod{\cF}^{\doubleflat}\Mod_{R_i} \ar[r]^-{\Prod{\cF}^{\flat}\Id_{\Mod_{R_i}}} & \Prod{\cF}^{\flat}\Mod_{R_i} \ar[r]^-{\Prod{\cF}^{\flat}\Id_{\Mod_{R_i}}} & \Prod{\cF}^{\Pic}\Mod_{R_i}.}
\]
By \cref{lem:protoadjunction}, these functors are left adjoints. By \cref{cor:squares}, we have a commutative diagram in which the left adjoints commute with the left adjoints and the right adjoints commute with the right adjoints
\begin{equation} \label{eqn:adjunctions}
\xymatrix{& & \Prod{\cF}^{\omega}\Mod_{R_i} \ar@<0.5ex>[lld]^-{n} \ar@<0.5ex>[d]^-{n} \ar@<0.5ex>[drr]^-{n}  & & \\ \Prod{\cF}^{\doubleflat}\Mod_{R_i} \ar@<0.5ex>[urr]^-{\iota} \ar@<-0.5ex>[rr]_{\Prod{\cF}^{\flat}\Id_{\Mod_{R_i}}} & & \Prod{\cF}^{\flat}\Mod_{R_i} \ar@<-0.5ex>[rr]_{\Prod{\cF}^{\flat}\Id_{\Mod_{R_i}}} \ar@<0.5ex>[u]^-{\iota} \ar@<-0.5ex>[ll] & & \Prod{\cF}^{\Pic}\Mod_{R_i}. \ar@<-0.5ex>[ll] \ar@<0.5ex>[ull]^-{\iota}}
\end{equation}
The right adjoints are unital lax symmetric monoidal functors. Since the functors $\iota$ are fully faithful functors with right adjoints, the $\infty$-categories on the bottom row are all localizing subcategories of $\Prod{\cF}^{\omega} \Mod_{R_i}$. We will explicitly describe the right adjoints as colocalizations. 

In $\Prod{\cF}^{\omega}\Mod_{R_i}$ there are three natural notions of ``homotopy groups". Let $R = [R_i] \in \Prod{\cF}^{\omega}\Mod_{R_i}$ be the unit. For $[n_i] \in \Z^{\cF} = \Prod{\cF}\Z$, let
\[
\Sigma^{[n_i]}R = [\Sigma^{n_i}R_i] \in \Prod{\cF}^{\omega}\Mod_{R_i}.
\]

\begin{defn} Let $f \colon M \to N$ be a map in $\Prod{\cF}^{\omega}\Mod_{R_i}$. Then
\begin{itemize}
\item we say that $f$ is a $\pi_{[*]}$-equivalence if $[\Sigma^{[n_i]}R, M] \longrightarrow [\Sigma^{[n_i]}R,N]$ is an isomorphism for all $[n_i] \in \Z^{\cF}$. 

\item we say $f$ is a $\pi_{*}$-equivalence if $[\Sigma^{n}R, M] \longrightarrow [\Sigma^{n}R,N]$ is an isomorphism for all $n \in \Z \subset \Z^{\cF}$.

\item we say that $f$ is a $\pi_{\Pic}$-equivalence if $[L,M] \longrightarrow [L,N]$ is an isomorphism for all objects $L \in \Pic(\Prod{\cF}^{\omega}\Mod_{R_i})$.
\end{itemize}
\end{defn}

\begin{prop}\label{prop:protoproductgenerators}
The protoproduct $\Prod{\cF}^{\flat}\Mod_{R_i}$  is generated by $(\Sigma^{[n_i]}R)_{[n_i] \in \Prod{\cF}\Z}$ and is the colocalization of $\Prod{\cF}^{\omega}\Mod_{R_i}$ with respect to the $\pi_{[*]}$-equivalences. The bounded protoproduct $\Prod{\cF}^{\doubleflat}\Mod_{R_i}$ is generated by $(\Sigma^{n}R)_{n \in \Z}$ and is the colocalization of $\Prod{\cF}^{\omega}\Mod_{R_i}$ with respect to the $\pi_*$-equivalences. The $\Pic$-generated protoproduct $\Prod{\cF}^{\Pic}\Mod_{R_i}$ is generated by $L \in \Pic(\Prod{\cF}^{\omega}\Mod_{R_i})$ and is the colocalization of $\Prod{\cF}^{\omega}\Mod_{R_i}$ with respect to the $\pi_{\Pic}$-equivalences.
\end{prop}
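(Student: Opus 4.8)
The plan is to treat all three cases uniformly: for each of the filtrations $\DimCell$, $\Cell$, $\mathrm{PicCell}$, the associated protoproduct sits as a fully faithful localizing subcategory of $\Prod{\cF}^{\omega}\Mod_{R_i}$ via the functor $\iota$ of \Cref{cor:squares}, and the content of the proposition is to identify (a) the generating set of compact objects and (b) the class of maps inverted by the colocalization $\iota \circ n$. I would organize the argument around the description of the compact objects $(\Prod{\cF}^{\flat}(-))^{\omega} \simeq \colim_k \Prod{\cF}F_{i,k}\Mod_{R_i}$ established just after the definition of the protoproduct.

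First I would pin down the generators. For the bounded protoproduct, $\DimCell_{R_i,k}\Mod_{R_i}$ contains the suspensions $\Sigma^n R_i$ for $|n|\le k$, and every object of $\DimCell_{R_i,k}\Mod_{R_i}$ is (a retract of) a finite colimit of such; passing to the ultraproduct and then to $\colim_k$, the objects $\Sigma^n R = [\Sigma^n R_i]$ for $n\in\Z$ generate $\Prod{\cF}^{\doubleflat}\Mod_{R_i}$ under finite colimits and retracts, hence generate the compactly generated category under all colimits. For the cell protoproduct, $\Cell_{R_i,k}\Mod_{R_i}$ allows arbitrarily large dimensions, so after taking the ultraproduct one gets the objects $\Sigma^{[n_i]}R$ for all $[n_i]\in\Z^{\cF}$ — here one uses \Cref{lem:htpyultracompact} to see that $[\Sigma^{n_i}R_i]$ is indeed a compact object and that these exhaust the filtration-$k$ part in the limit. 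For the $\Pic$-generated protoproduct, $\mathrm{PicCell}_{R_i,0}\Mod_{R_i}\supseteq\Pic\Mod_{R_i}$ and by \Cref{lem:picprotogp} (resp. its proof) the invertible objects of $\Prod{\cF}^{\Pic}\Mod_{R_i}$ are exactly $\Prod{\cF}\Pic\Mod_{R_i}$; these generate by construction.

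Next I would identify the colocalizations. The key general principle is that if $\iota\colon\cA\hookrightarrow\cB$ is a fully faithful inclusion of compactly generated stable $\infty$-categories with right adjoint $n$, and $\cA$ is generated by a set $\cG$ of compact objects of $\cB$, then $\iota n\colon\cB\to\cB$ is the colocalization at the class of maps $f$ with $[g,f]$ an isomorphism for every $g\in\cG$: indeed $f$ is such a map iff $\iota n(f)$ is an equivalence (test against the compact generators $\cG$ of $\cA$ and use the adjunction $[g,nX]\cong[\iota g,X]=[g,X]$), and $\iota n$ inverts exactly these maps since $\cA$ is generated by $\cG$. Applying this with $\cG = \{\Sigma^{[n_i]}R\}$, $\{\Sigma^n R\}$, $\Pic(\Prod{\cF}^{\omega}\Mod_{R_i})$ respectively gives precisely the $\pi_{[*]}$-, $\pi_*$-, and $\pi_{\Pic}$-equivalences of the preceding definition, using that each filtration step is closed under retracts so that the generated localizing subcategory and the colocalized subcategory coincide.

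The main obstacle I expect is the bookkeeping in the first step for the cell and bounded-cell cases: showing that the ultraproduct $\Prod{\cF}\Cell_{R_i,k}\Mod_{R_i}$ really is generated, under finite colimits and retracts, by the ultraproducts of cells $\Sigma^{[n_i]}R$ — one needs that a uniformly-$k$-bounded family of cell structures on $(M_i)$ assembles to a diagram over a finite index category that survives the ultraproduct (this is where \Cref{prop:ultracolims}(1)–(3) and \Cref{rem:lur} on the function $\alpha$ are essential), together with the observation from \Cref{ex:finite} that the $k$-indexed finite colimit shapes are "constant" along $\cF$. Once that structural statement is in place, the colocalization identification is formal.
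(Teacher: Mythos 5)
Your proposal is correct and follows essentially the same two-step strategy as the paper: first identify the compact generators by analyzing $\colim_k\Prod{\cF}F_{i,k}\Mod_{R_i}$, then observe that the colocalization description is a formal consequence of the generator statement. The only difference is presentational — the paper carries out the "exhaustion by cells" step via an explicit downward induction on the cell count $k$ (peeling off one cell $\Sigma^{[n_i]}R$ at a time via a cofiber sequence to land in $\Prod{\cF}\Cell_{R_i,k-1}\Mod_{R_i}$), which is a cleaner way to organize what you sketch as a uniformly-bounded finite diagram surviving the ultraproduct.
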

\begin{proof}
The first part of the claim in each of these sentences implies the second part of the claim, because generators detect equivalences. We will show the claim for the protoproduct, the argument for the bounded protoproduct $\Prod{\cF}^{\doubleflat}\Mod_{R_i}$ and $\Pic$-generated protoproduct $\Prod{\cF}^{\Pic}\Mod_{R_i}$ being similar. 

By \Cref{lem:protoproductcompgen}, $\Prod{\cF}^{\flat}\Mod_{R_i}$ is compactly generated by its subcategory of compact objects, which by construction is $\colim_k\Prod{\cF}\Cell_{R_i,k}\Mod_{R_i}$. This $\infty$-category contains $\Sigma^{[n_i]}R$ for all $[n_i] \in \Z^{\cF}$, so it suffices to show that the thick subcategory $\Thick((\Sigma^{[n_i]}R)_{[n_i] \in \Prod{\cF}\Z})$ generated by these objects coincides with $\colim\Prod{\cF}\Cell_{R_i,k}\Mod_{R_i}$. To that end, consider an arbitrary non-trivial object $X \in \colim\Prod{\cF}\Cell_{R_i,k}\Mod_{R_i}$, so that there exists $k\ge 1$ with $X \in \Prod{\cF}\Cell_{R_i,k}\Mod_{R_i}$. By construction of the filtrations $\Cell_{R_i}$ this means that $X$ is a retract of a complex $Y$ built from at most $k$ cells in each coordinate $i \in I$. Therefore, it is possible to find a cell $\Sigma^{[n_i]}R$ and a cofiber squence
\[
\xymatrix{\Sigma^{[n_i]}R \ar[r] & Y' \ar[r] & Y,}
\]
such that the compact object $Y'$ is contained in 
\[
\Prod{\cF}\Cell_{R_i,k-1}\Mod_{R_i} \subset \Thick((\Sigma^{[n_i]}R)_{[n_i] \in \Prod{\cF}\Z}). 
\]
An induction on the number $k$ of cells then finishes the proof. 
\end{proof}

There is another way to describe the $\Pic$-generated protoproduct that is conceptually useful. We say that a symmetric monoidal compactly generated stable $\infty$-category $\cC$ is a $\Pic$-compactly generated $\infty$-category if 
\[
\Loc_{\cC}(\Pic(\cC)) = \cC.
\]
That is, the smallest localizing (stable) subcategory of $\cC$ containing $\Pic(\cC)$ is all of $\cC$.
Recall that we assume that the unit is compact in a symmetric monoidal compactly generated $\infty$-category, thus $\Pic(\cC) \subset \cC^{\omega}$. Let $\mathrm{Cat}_{\infty}^{\Pic}$ be the full subcategory of $\mathrm{Cat}_{\infty}^{\otimes,\omega,st}$ (the $\infty$-category of stable symmetric monoidal compactly generated $\infty$-categories) spanned by the $\Pic$-compactly generated $\infty$-categories. Given $\infty$-categories $\cC$ and $\cD$ in $\mathrm{Cat}_{\infty}^{\otimes,\omega,st}$ such that $\Loc_{\cC}(\Pic(\cC)) = \cC$, the canonical map $\Loc_{\cD}(\Pic(\cD)) \to \cD$ induces an equivalence
\[
\Map_{\mathrm{Cat}_{\infty}^{\otimes,\omega,st}}(\cC,\Loc_{\cD}(\Pic(\cD))) \lra{\simeq} \Map_{\mathrm{Cat}_{\infty}^{\otimes,\omega,st}}(\cC,\cD).
\]
Thus the functor $\Loc \Pic$ exhibits $\mathrm{Cat}_{\infty}^{\Pic}$ as a colocalization of $\mathrm{Cat}_{\infty}^{\otimes,\omega,st}$.

\begin{cor} \label{thisguy}
Let $(\cC_i)_{i \in I}$ be a collection of objects in $\mathrm{Cat}_{\infty}^{\Pic}$. The $\Pic$-generated protoproduct of \cref{rem:picprotocats},
\[
\Prod{\cF}^{\Pic}\cC_i,
\]
is the ultraproduct in the $\infty$-category of $\Pic$-compactly generated $\infty$-categories.
\end{cor}
\begin{proof}
The embedding 
\[
U \colon \mathrm{Cat}_{\infty}^{\Pic} \hookrightarrow \mathrm{Cat}_{\infty}^{\otimes,\omega,st}
\]
admits a right adjoint given by 
\[
\cC \mapsto \Loc_{\cC}(\Pic(\cC)) \simeq \Ind \Thick_{\cC} \Pic(\cC), 
\]
which preserves filtered colimits.

Thus for a collection of $\Pic$-compactly generated $\infty$-categories $\cC_i$, we have an equivalence
\[
\Prod{\cF}^{\Pic} \cC_i \simeq \Loc(\Pic(\Prod{\cF}^{\omega}U(\cC_i))).
\]
\end{proof}

\begin{rem}
\cref{thisguy} applies to the following situation: Let $(R_i)_{i \in I}$ be a collection of $\bE_{\infty}$-ring spectra and let $\cF$ be an ultrafilter on $I$. It follows that the $\Pic$-generated protoproduct
\[
\Prod{\cF}^{\Pic}\Mod_{R_i}
\]
is the ultraproduct in the $\infty$-category of $\Pic$-compactly generated $\infty$-categories.
\end{rem}

\begin{cor} \label{cor:evenrings}
Let $(R_i)_{i \in I}$ be a collection of ring spectra which are periodic of the same period. Then the adjunction of Diagram \ref{eqn:adjunctions} induces an equivalence
\[
\xymatrix{\Prod{\cF}^{\flat}\Mod_{R_i} \ar[r]^-{\simeq} & \Prod{\cF}^{\doubleflat}\Mod_{R_i}}
\]
for any ultrafilter $\cF$ on $I$. 
\end{cor}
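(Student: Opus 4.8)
The plan is to show that the left adjoint $L := \Prod{\cF}^{\flat}\Id_{\Mod_{R_i}}\colon \Prod{\cF}^{\doubleflat}\Mod_{R_i} \to \Prod{\cF}^{\flat}\Mod_{R_i}$ occurring in Diagram~\eqref{eqn:adjunctions} is an equivalence; its right adjoint is then the desired inverse equivalence $\Prod{\cF}^{\flat}\Mod_{R_i} \lra{\simeq} \Prod{\cF}^{\doubleflat}\Mod_{R_i}$. Since $L$ preserves colimits and compact objects, it is an equivalence as soon as its restriction $L^{\omega}$ to compact objects is one. First I would record that, by the commutativity of Diagram~\eqref{eqn:adjunctions}, the triangle $\iota \simeq \iota \circ L$ through the fully faithful embeddings into $\Prod{\cF}^{\omega}\Mod_{R_i}$ commutes; two-out-of-three for fully faithful functors then makes $L^{\omega}$ automatically fully faithful. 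Viewing $(\Prod{\cF}^{\doubleflat}\Mod_{R_i})^{\omega}$ and $(\Prod{\cF}^{\flat}\Mod_{R_i})^{\omega}$ as full (thick) subcategories of $\Prod{\cF}^{\omega}\Mod_{R_i}$ via these embeddings, this reduces the whole statement to the inclusion of subcategories $(\Prod{\cF}^{\flat}\Mod_{R_i})^{\omega} \subseteq (\Prod{\cF}^{\doubleflat}\Mod_{R_i})^{\omega}$, the reverse inclusion being automatic from Diagram~\eqref{eqn:adjunctions}.

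For the remaining inclusion I would combine periodicity with \cref{prop:protoproductgenerators}. Let $d$ be the common period, so $\Sigma^{d}R_i \simeq R_i$ for all $i$. By \cref{prop:protoproductgenerators}, $(\Prod{\cF}^{\flat}\Mod_{R_i})^{\omega}$ is the thick subcategory of $\Prod{\cF}^{\omega}\Mod_{R_i}$ generated by the objects $\Sigma^{[n_i]}R$ for $[n_i] \in \Z^{\cF}$, while $(\Prod{\cF}^{\doubleflat}\Mod_{R_i})^{\omega}$ is the thick subcategory generated by the integral suspensions $\Sigma^{n}R$, $n \in \Z$. So it suffices to show that each generator $\Sigma^{[n_i]}R$ already lies in $(\Prod{\cF}^{\doubleflat}\Mod_{R_i})^{\omega}$. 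Given $[n_i] \in \Z^{\cF}$, I would reduce the entries modulo $d$: since $\Z/d$ is finite, \cref{ex:finite} gives $\Prod{\cF}(\Z/d) \cong \Z/d$, so the class of $(n_i \bmod d)$ is constant, equal to some $j \in \{0,\dots,d-1\}$. Hence $U := \{\, i \in I : n_i \equiv j \pmod d \,\} \in \cF$, and for $i \in U$ periodicity gives $\Sigma^{n_i}R_i \simeq \Sigma^{j}R_i$; as a coordinatewise equivalence on a set in $\cF$ induces an equivalence of the corresponding compactly generated ultraproducts, this yields $\Sigma^{[n_i]}R \simeq \Sigma^{j}R$. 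Since $\Sigma^{j}R$ is a (de)suspension of the unit it lies in $(\Prod{\cF}^{\doubleflat}\Mod_{R_i})^{\omega}$, and thickness of that subcategory then forces $\Sigma^{[n_i]}R$, and so all of $(\Prod{\cF}^{\flat}\Mod_{R_i})^{\omega}$, into it.

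The only place periodicity enters is the collapse $\Prod{\cF}(\Z/d) \cong \Z/d$ of \cref{ex:finite}; everything else is formal bookkeeping with the embeddings $\iota$ and thick subcategories. I expect the point needing the most care to be the assertion that a family of equivalences indexed by a member of $\cF$ assembles to an equivalence of compactly generated ultraproducts — this rests on the identification of mapping spaces in \cref{lem:htpyultracompact} (being an equivalence is detected on path components, and the class of equivalences is compatible with ultraproducts via \cref{cor:homotopymaps}) together with the freedom to pass from $\cF$ to $\cF_U$ for $U \in \cF$. An essentially equivalent packaging that sidesteps the thick-subcategory bookkeeping is to use \cref{prop:protoproductgenerators}'s description of the two protoproducts as the colocalizations of $\Prod{\cF}^{\omega}\Mod_{R_i}$ at the $\pi_{[*]}$-equivalences and the $\pi_{*}$-equivalences, and to observe that periodicity forces these two classes of maps to coincide, so that the two colocalizations and the canonical comparison $L$ between them agree.
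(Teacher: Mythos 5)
Your proposal is correct and follows essentially the same route as the paper: the heart of the argument is that, by periodicity and the collapse of $\Prod{\cF}(\Z/d)$ to $\Z/d$ (the paper cites \cref{lem:finitepartition}, you cite \cref{ex:finite}, which the paper derives from that lemma), every generator $\Sigma^{[n_i]}R$ of the protoproduct is equivalent to $\Sigma^j R$ for a single $j \in \{0,\dots,d-1\}$, and then \cref{prop:protoproductgenerators} closes the argument. The paper leaves the surrounding bookkeeping (fully faithfulness of $L^{\omega}$, the thick-subcategory reformulation) implicit, whereas you spell it out, but the essential content is identical.
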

\begin{proof}
As above, let $R = [R_i]$. Then this is a consequence of the fact that $\Sigma^{[n_i]}R \simeq \Sigma^k R$ for some $k$ between $0$ and the periodicity. This follows from the fact that $\Sigma^{n_i}R_i \simeq \Sigma^{k_i}R_i$ for $k_i$ between $0$ and the period. But now since the $k_i$'s are bounded, \cref{lem:finitepartition} implies that precisely one value can be supported on the ultrafilter.
\end{proof}

The next example highlights the difference between the protoproduct and the compactly generated ultraproduct. 

\begin{ex}
Suppose $\cF$ is a non-principal ultrafilter on $I = \N$. Let $R = R_i = H\Z$. We will construct two objects in $\Prod{\cF}^{\omega}\Mod_{R}$ and a map between them that is a $\pi_{[*]}$-equivalence but not an equivalence.

The first object is $Y = [\bigoplus_{l=0}^{i}\Sigma^lR]$, which is an object of $\Prod{\cF} \Mod_{R}^{\omega} \subset \Prod{\cF}^{\omega}\Mod_{R}$. 

The second object will be defined on a filtered diagram. Let $\mathfrak{A} = (A_i)_{i \in I}$ be a collection of subsets of $\N$ such that $A_i \subset \{1,\dots,i \}$ and $\max_i |A_i| < \infty$. We can define an order on all such collections by defining $\mathfrak{A} \leq \mathfrak{B}$ if 
$\forall i \in \mathbb{N}, \, A_i \subset B_i$. Denote the poset of all such collections by $\mathfrak{P}$. It follows from the definition that this is a directed poset.

For $\mathfrak{A} \in \mathfrak{P}$, define 
\[
X(\mathfrak{A})= [\bigoplus_{l \in A_i}\Sigma^lR]_{i \in I}.
\]
This construction extends to a functor
\[
X \colon \mathfrak{P} \to \Prod{\cF}\Mod^{\omega}_{R},
\]
and, since $\mathfrak{P}$ is directed, this gives us an object in $\Prod{\cF}^{\omega}\Mod_R$ by passing to the colimit along $X$.

There is a canonical map $f \colon X \rightarrow Y$ in $\Prod{\cF}^{\omega}\Mod_{R}$ induced by the canonical map of compact objects $X(\mathfrak{A}) \rightarrow Y$. The map $f$ is a $\pi_{[*]}$-equivalence, but not an equivalence. 

To see that it is a $\pi_{[*]}$-equivalence, we must show that
\[
[\Sigma^{[n_i]}R,X] \rightarrow [\Sigma^{[n_i]}R,Y]
\]
is an isomorphism for all $[n_i] \in \Z^{\cF}$. But
\[
[\Sigma^{[n_i]}R,X] \cong \Colim{\mathfrak{A} \in \mathfrak{P}}[\Sigma^{[n_i]}R,X(\mathfrak{A})]
\]
and this is isomorphic to $[\Sigma^{[n_i]}R,Y]$ because $\Sigma^{[n_i]}R$ has one cell.

To see that it is not an equivalence, note that $Y$ is compact, so an inverse $X \rightarrow Y$ to $f$ would need to factor through a finite stage. Thus $Y$ would be a retract of $X(\mathfrak{A})$ for some $\mathfrak{A} \in \mathfrak{P}$, but this is impossible as can be seen by applying $\pi_{[i]_{i \in \N}}$.
\end{ex}

Now we prove the main result of this subsection.

\begin{thm}\label{prop:protomodules}
Let $(R_i)_{i \in I}$ be a collection of $\bE_{1}$-ring spectra. Let $\cF$ be an ultrafilter on $I$. Then there is a canonical equivalence
\[
\Prod{\cF}^{\doubleflat} \Mod_{R_i} \simeq \Mod_{\Prod{\cF}R_i},
\]
where $\Prod{\cF}R_i$ denotes the ultraproduct of the spectra $(R_i)_{i \in I}$ in the $\infty$-category $\Sp$. In addition, if the ring spectra $R_i$ are $\bE_{\infty}$-rings, then the equivalence is an equivalence of symmetric monoidal $\infty$-categories.
\end{thm}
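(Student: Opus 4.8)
The plan is to identify a compact generator of $\cC := \Prod{\cF}^{\doubleflat}\Mod_{R_i}$ with endomorphism ring $\Prod{\cF}R_i$ and then apply Schwede--Shipley Morita theory (\cref{thm:moritatheory}). The candidate is $R := [R_i]$. First I would check $R$ is a compact generator: it is compact since $R_i$ is a single $0$-dimensional cell, so $R\in\Prod{\cF}\DimCell_{R_i,1}\Mod_{R_i}\subseteq\cC^{\omega}$; and by \cref{prop:protoproductgenerators} the thick subcategory of $\cC^{\omega}$ generated by $\{\Sigma^{n}R:n\in\Z\}$ is all of $\cC^{\omega}$, which --- thick subcategories being closed under shifts --- equals $\Thick(R)$, so $R$ generates $\cC^{\omega}$ and hence localizing-generates $\cC$. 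Since $\cC$ is compactly generated and stable by \cref{lem:protoproductcompgen} (the collection $(\Mod_{R_i},\DimCell_{R_i})$ is a stable filtered collection, the bound $\alpha$ for pushouts and desuspensions being uniform in $i$), \cref{thm:moritatheory} yields $\cC\simeq\Mod_{\End_{\cC}(R)}$.

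Next I would compute $\End_{\cC}(R)$. As $R$ is compact, $\End_{\cC}(R)=\Hom_{\cC}(R,R)$ may be computed in $\cC^{\omega}$, which embeds fully faithfully into $\Prod{\cF}^{\omega}\Mod_{R_i}$ by \cref{lem:fullyfaithfultocompact}; the mapping-spectrum form of \cref{lem:htpyultracompact}, together with $\Hom_{\Mod_{R_i}}(R_i,R_i)\simeq R_i$, then gives
\[
\End_{\cC}(R)\ \simeq\ \Prod{\cF}\Hom_{\Mod_{R_i}}(R_i,R_i)\ \simeq\ \Prod{\cF}R_i
\]
as spectra. To upgrade this to an equivalence of $\bE_1$-rings I would invoke functoriality of endomorphism $\bE_1$-algebras and their compatibility with the products and filtered colimits defining the ultraproduct: inside $\Prod{\cF}\Mod_{R_i}^{\omega}=\colim_{U\in\cF}\Prod{i\in U}\Mod_{R_i}^{\omega}$ the object $R$ is the image of $(R_i)_{i\in U}$, whose endomorphism $\bE_1$-ring in $\Prod{i\in U}\Mod_{R_i}^{\omega}$ is $\Prod{i\in U}R_i$ (using $\End_{\Mod_{R_i}}(R_i)\simeq R_i$); passing to the colimit over $U\in\cF$ in $\Alg(\Sp)$ identifies $\End_{\cC}(R)$ with $\Prod{\cF}R_i$ as an $\bE_1$-ring, whence $\Prod{\cF}^{\doubleflat}\Mod_{R_i}\simeq\Mod_{\Prod{\cF}R_i}$.

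For the symmetric monoidal statement, when the $R_i$ are $\bE_{\infty}$ the category $\cC$ is symmetric monoidal by \cref{lem:protoproductcompgen} ($\DimCell_{R_i}$ being, after a harmless reindexing that puts $R_i$ in filtration zero, a symmetric monoidal compact filtration), with $\otimes$-unit inherited from $\Prod{\cF}^{\omega}\Mod_{R_i}$, namely $R=[R_i]$. Since $R$ is then both the $\otimes$-unit and a compact generator, the ``moreover'' clause of \cref{thm:moritatheory} makes the Morita equivalence symmetric monoidal and exhibits $\End_{\cC}(R)$ as an $\bE_{\infty}$-ring; rerunning the computation above with endomorphism $\bE_{\infty}$-algebras of units (which are likewise compatible with products and filtered colimits of symmetric monoidal $\infty$-categories) identifies it with $\Prod{\cF}R_i$ in $\CAlg(\Sp)$, giving a symmetric monoidal equivalence.

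I expect the delicate point to be the last step of the second paragraph --- matching the \emph{ring} structure on $\End_{\cC}(R)$, not merely its underlying spectrum, with the ultraproduct structure on $\Prod{\cF}R_i$. One can try to sidestep it, in a way that also explains why the \emph{bounded} protoproduct is the right object: the compact objects of an infinite product $\Prod{i\in U}\Mod_{R_i}$ are exactly the families of uniformly bounded amplitude and cell number, i.e.\ $\colim_k\Prod{i\in U}\DimCell_{R_i,k}\Mod_{R_i}$, so (using $\Mod_{\Prod{U}R_i}\simeq\Prod{i\in U}\Mod_{R_i}$) one gets $\mathrm{Perf}(\Prod{i\in U}R_i)\simeq\colim_k\Prod{i\in U}\DimCell_{R_i,k}\Mod_{R_i}$ symmetric monoidally; since $\mathrm{Perf}(-)$ carries filtered colimits of $\bE_1$-rings along base change to filtered colimits of $\infty$-categories, taking $\colim_{U\in\cF}$ and exchanging the two filtered colimits produces $\mathrm{Perf}(\Prod{\cF}R_i)\simeq\colim_k\Prod{\cF}\DimCell_{R_i,k}\Mod_{R_i}=\cC^{\omega}$, and $\Ind(-)$ finishes the proof.
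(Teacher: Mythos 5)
Your main argument (paragraphs 1--3) takes essentially the same route as the paper: both reduce to Schwede--Shipley Morita theory applied to $\Prod_{\cF}^{\doubleflat}\Mod_{R_i}$, using \cref{prop:protoproductgenerators} to see it is monogenic with compact generator $R=[R_i]$, and then identify $\End(R)\simeq\Prod_{\cF}R_i$ via the mapping-spectrum formula \cref{lem:htpyultracompact}. The one structural difference lies in how the \emph{ring} structure on $\End(R)$ is matched with that of $\Prod_{\cF}R_i$, which you correctly flag as the delicate point. The paper handles it by first constructing a comparison functor $\Phi\colon\Prod_{\cF}^{\doubleflat}\Mod_{R_i}\to\Mod_{\Prod_{\cF}R_i}$ as the universal extension of the lax monoidal ultraproduct functor $\Prod_{\cF}\colon\Prod_{\cF}\Mod_{R_i}^{\omega}\to\Sp$, and then upgrading $\Phi$ from lax to strong symmetric monoidal by a thick-subcategory argument; once $\Phi$ is in hand it automatically induces a map of ($\bE_{\infty}$-)rings $\End(\textbf{1})\to\Prod_{\cF}R_i$, so only the underlying spectrum needs checking. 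Your route --- proving the spectrum-level equivalence first and then invoking compatibility of endomorphism algebras with products and filtered colimits --- is workable but leaves that compatibility as an unproved (if standard) step, which the paper avoids.

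Your paragraph-4 alternative --- identifying $\Perf(\Prod_{i\in U}R_i)$ with $\colim_k\Prod_{i\in U}\DimCell_{R_i,k}\Mod_{R_i}$, using that $\Perf(-)$ carries filtered colimits of $\bE_1$-rings to filtered colimits, and exchanging the two filtered colimits --- is a genuinely different and clean argument not in the paper. It sidesteps the ring-structure subtlety entirely and, as you note, makes it conceptually transparent why $\doubleflat$ is the right protoproduct. One small imprecision there: $\Mod_{\Prod_{i\in U}R_i}$ is \emph{not} equivalent to $\Prod_{i\in U}\Mod_{R_i}$ for infinite $U$, though your claim at the level of perfect/bounded objects is correct and is the only thing you use. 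Finally, your observation that $\DimCell_{R,0}$ does not contain the unit $R$ (so $\DimCell$ as literally defined is not a symmetric monoidal compact filtration per \cref{def:compactfiltration}, and a harmless reindexing is needed) is a real technicality that the paper glosses over.
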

\begin{proof}
In order to identify the symmetric monoidal structures in the case that the ring spectra are $\bE_{\infty}$, we will first construct a symmetric monoidal comparison functor $\Phi\colon \Prod{\cF}^{\doubleflat}\Mod_{R_i} \to \Mod_{\Prod{\cF}R_i}$. To this end, note that forming ultraproducts induces a functor
\[
\xymatrix{\Prod{\cF}\colon \Prod{\cF} \Mod_{R_i}^{\omega} \ar[r] & \Sp,}
\]
which factors through $\Mod_{\Prod{\cF}R_i}$ because $\Prod{\cF}$ is lax symmetric monoidal. The universal extension of this functor to $\Prod{\cF}^{\omega}\Mod_{R_i}$ is unital lax symmetric monoidal, so upon restriction to $\Prod{\cF}^{\doubleflat}\Mod_{R_i}$ we obtain a unital lax symmetric monoidal functor
\[
\xymatrix{\Phi\colon \Prod{\cF}^{\doubleflat}\Mod_{R_i} \to \Mod_{\Prod{\cF}R_i}}
\]
between symmetric monoidal $\infty$-categories.  

We now claim that $\Phi$ is in fact symmetric monoidal. Indeed, fix an object $M \in \Prod{\cF}^{\doubleflat}\Mod_{R_i}$ and consider the full subcategory $\cC_M$ of $\Prod{\cF}^{\doubleflat}\Mod_{R_i}$ of all objects $N$ such that $\Phi(M\otimes N) \simeq \Phi(M) \otimes \Phi(N)$ via the given lax structure on $\Phi$. Since $\Phi$ is unital, the unit $\textbf{1}$ of  $\Prod{\cF}^{\doubleflat}\Mod_{R_i}$ belongs to $\cC_M$. Moreover, $\cC_M$ is closed under colimits, so $\cC_M = \Prod{\cF}^{\doubleflat}\Mod_{R_i}$ as $\Prod{\cF}^{\doubleflat}\Mod_{R_i}$ is compactly generated by $\textbf{1}$ by \Cref{prop:protoproductgenerators}. In other words, as $M$ runs through the objects of $\Prod{\cF}^{\doubleflat}\Mod_{R_i}$, we see that $\Phi$ is symmetric monoidal.

Invoking Morita theory (\cref{thm:moritatheory}), it therefore remains to identify the spectrum $\End(\textbf{1})$. Since the canonical functor
\[
\xymatrix{\Prod{\cF}^{\doubleflat} \Mod_{R_i} \ar[r] &  \Prod{\cF}^{\omega} \Mod_{R_i}}
\]
is symmetric monoidal and fully faithful by \Cref{lem:nsymmmon}, it sends $\textbf{1}$ to the unit $[R_i]$ in $\Prod{\cF}^{\omega} \Mod_{R_i}$, and we get an equivalence
\[
\End(\textbf{1}) \simeq \End_{\Prod{\cF}^{\omega} \Mod_{R_i}}([R_i]).
\]
This latter spectrum can be identified as 
\[
\End_{\Prod{\cF}^{\omega} \Mod_{R_i}}([R_i]) \simeq \End_{\Prod{\cF} \Mod_{R_i}^{\omega}}([R_i]) \simeq \Prod{\cF}\End_{\Mod_{R_i}}(R_i) \simeq \Prod{\cF} R_i
\]
by \Cref{lem:htpyultracompact} since $[R_i]$ is compact.
\end{proof}

\begin{rem}\label{rem:ultramodules}
Given a collection of $\bE_1$-ring spectra $R_i$ and $R_i$-module homomorphisms $f_i\colon M_i \to N_i$, then their ultraproduct 
\[
\xymatrix{\Prod{\cF}f_i\colon \Prod{\cF}M_i \ar[r] & \Prod{\cF}N_i}
\]
is canonically  an $\Prod{\cF}R_i$-module homomorphism. To see this, it suffices to observe that the ultraproduct is composed of an infinite product and a filtered colimit in $\Sp$, for which the claim is easily verified.
\end{rem}

We end this subsection with three examples and an application to chromatic homotopy theory.

\begin{ex}
Let $\cF$ be an ultrafilter on the set $I$. Consider the bounded protoproduct 
\[
\Prod{\cF}^{\doubleflat}\Mod_{H\Z}.
\]
If $\cF$ is principal on the set $\{i\}$, then this bounded protoproduct is equivalent to $\Mod_{H\Z}$. Now assume that $\cF$ is non-principal. In this case, \Cref{prop:protoproductgenerators} provides an equivalence
\[
\Prod{\cF}^{\doubleflat}\Mod_{H\Z} \simeq \Mod_{H\Z^{\cF}},
\]
where $\Z^{\cF}$ is the ring discussed in \Cref{ex:zf}. 
\end{ex}

\begin{ex}
Let $\mathrm{GrAb}$ be the $1$-category of graded abelian groups and let $\cF$ be an ultrafilter on the set of primes $\cP$. Since the Eilenberg--MacLane functor $H\colon \mathrm{GrAb} \to \Sp$ preserves products and filtered colimits, there is a natural equivalence
\[
\Prod{\cF}(HM_p) \simeq H(\Prod{\cF}M_p)
\]
for any collection of abelian groups $(M_p)_{p\in \cP}$. In particular, let $R_p = H\F_p[x]$ with $x$ of degree $2$. Because of the grading, the ultraproduct of the $R_p$'s recovers the Eilenberg--MacLane spectrum of a graded version of the protoproduct of \Cref{ex:protopoly}.
\end{ex}

\begin{ex}\label{cor:bprotoplocalspectra}
Let $\cF$ be a non-principal ultrafilter on $\cP$ and let $\Sp_{(p)}$ be the $\infty$-category of $p$-local spectra. Then there is a natural equivalence
\[
\Prod{\cF}^{\doubleflat}\Sp_{(p)} \simeq \Mod_{H\Z_{(\cF)}},
\]
where $\Z_{(\cF)}$ is the $\Q$-algebra $\Prod{\cF} \Z_{(p)}$ (similar to \Cref{ex:fadics}). It follows from \Cref{prop:protoproductgenerators} that 
\[
\Prod{\cF}^{\doubleflat}\Sp_{(p)} \simeq \Mod_{\Prod{\cF}S_{(p)}^0},
\]
and it remains to give a more explicit description of $\Prod{\cF}S_{(p)}^0$. Since the homotopy groups of $S_{(p)}^0$ are zero in degrees $[1,2p-4]$, we obtain an equivalence
\[
\Prod{\cF}S_{(p)}^0 \simeq \Prod{\cF}H\Z_{(p)} \simeq H\Z_{(\cF)}. 
\]
\end{ex}

Finally, we apply several of the ideas of this subsection to the case that we fix a prime $p$ and consider the protoproduct of the categories of modules of $K(n)$ at an ultrafilter on the natural numbers.

\begin{thm}\label{thm:heightinfinity}
Let $\cF$ be a non-principal ultrafilter on the natural numbers $\N$, let $PK(n)$ be $2$-periodic Morava $K$-theory of height $n$ at the prime $p$, and let $PH\F_p$ be $2$-periodic singular cohomology with coefficients in $\F_p$. There is a natural equivalence
\[
\xymatrix{\Prod{\cF}^{\flat}\Mod_{PK(n)} \ar[r]^-{\simeq} & \Mod_{PH\F_p}}
\]
of compactly generated stable $\infty$-categories. 
\end{thm}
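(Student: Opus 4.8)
The plan is to reduce, via the protoproduct machinery of the previous subsections, to an equivalence of $\bE_1$-ring spectra $\Prod{\cF}PK(n)\simeq PH\F_p$, and then to prove the latter by a Postnikov-tower argument exploiting the fact that the obstruction to formality of Morava $K$-theory is pushed off to infinity as the height grows. Since every $PK(n)$ is $2$-periodic, \Cref{cor:evenrings} identifies $\Prod{\cF}^{\flat}\Mod_{PK(n)}$ with the bounded protoproduct $\Prod{\cF}^{\doubleflat}\Mod_{PK(n)}$, and \Cref{prop:protomodules} identifies the latter with $\Mod_{\Prod{\cF}PK(n)}$, the module category over the ultraproduct of the $\bE_1$-ring spectra $PK(n)$ formed in $\Sp$ (fix an $\bE_1$-structure on each $PK(n)$; since we only assert an equivalence of stable $\infty$-categories, the monoidal structure is irrelevant). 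So it is enough to produce an equivalence $\Prod{\cF}PK(n)\simeq PH\F_p$ of $\bE_1$-rings; passing to module categories then gives the theorem, with naturality in $p$ visible from the construction.

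First I would compute $\pi_*\Prod{\cF}PK(n)$. An ultraproduct in $\Sp$ is a filtered colimit of products, and both operations commute with homotopy groups, while $\Prod{\cF}$ of a constant finite abelian group is that group (\Cref{ex:finite}, \Cref{thm:los}); since $\pi_*PK(n)\cong\F_p[u^{\pm1}]$ with $|u|=2$ independently of $n$, this yields $\pi_*\Prod{\cF}PK(n)\cong\F_p[u^{\pm1}]\cong\pi_*PH\F_p$. The same argument, together with the observation that $\Mod_{H\F_p}\simeq D(\F_p)$ and that the graded field $\F_p[u^{\pm1}]$ (and its bounded truncations) support no nontrivial higher products, shows that any $H\F_p$-algebra with such homotopy is formal; in particular $\Prod{\cF}PH\F_p\simeq PH\F_p$, via the $H\F_p$-algebra structure coming from the diagonal. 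This formality principle will be used repeatedly.

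It remains to see that $\Prod{\cF}PK(n)$ is itself formal. The truncations $\tau_{\leq m}$ and the connective cover $\tau_{\geq 0}$ commute with products and with filtered colimits, hence with ultraproducts (also for $\bE_1$-rings, as these truncations are computed on underlying spectra). The homotopy-theoretic input I would isolate is: for each $m\geq 0$ there is a cofinite $S_m\subseteq\N$ — hence $S_m\in\cF$, as $\cF$ is non-principal — with $\tau_{\leq m}\tau_{\geq 0}PK(n)\simeq\tau_{\leq m}\tau_{\geq 0}PH\F_p$ as $\bE_1$-rings for all $n\in S_m$, compatibly as $m$ grows. This should follow because, as a spectrum, $PK(n)$ splits as $\bigvee_{i=0}^{p^{n}-2}\Sigma^{2i}K(n)$, while the first nontrivial $k$-invariant of $K(n)$ is the Milnor primitive $Q_n$ in degree $2p^{n}-1$; thus below degree $2p^{n}-2$ the spectrum $PK(n)$ agrees with $PH\F_p$, the relevant truncation is formal by the principle above, and one checks via obstruction theory that the $\bE_1$-structures agree in this range too. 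Granting this, discarding the coordinates outside $S_m$ gives
\[
\tau_{\leq m}\tau_{\geq 0}\Prod{\cF}PK(n)\;\simeq\;\Prod{\cF}\tau_{\leq m}\tau_{\geq 0}PK(n)\;\simeq\;\Prod{\cF}\tau_{\leq m}\tau_{\geq 0}PH\F_p\;\simeq\;\tau_{\leq m}\tau_{\geq 0}PH\F_p
\]
as $\bE_1$-rings, compatibly in $m$; taking the limit over the (convergent) Postnikov tower of the connective covers yields $\tau_{\geq 0}\Prod{\cF}PK(n)\simeq\tau_{\geq 0}PH\F_p$. Finally, $u\in\pi_2$ is invertible in $\Prod{\cF}PK(n)$, so the canonical map $\tau_{\geq 0}\Prod{\cF}PK(n)\to\Prod{\cF}PK(n)$ factors through the localization inverting $u$; the resulting map $(\tau_{\geq 0}PH\F_p)[u^{-1}]\to\Prod{\cF}PK(n)$ is an isomorphism on homotopy, hence an equivalence, and since $(\tau_{\geq 0}PH\F_p)[u^{-1}]\simeq PH\F_p$ this proves $\Prod{\cF}PK(n)\simeq PH\F_p$, completing the argument.

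The hard part will be the homotopy-theoretic input in the last step: identifying precisely the range in which $PK(n)$ and $PH\F_p$ agree \emph{as $\bE_1$-ring spectra} (not just as spectra, nor merely on homotopy rings), and verifying that the truncated equivalences can be chosen compatibly as $m\to\infty$. Everything else — the reductions of the first paragraph and the homotopy computation and formality observations of the second — is a formal consequence of the protoproduct formalism together with standard properties of ultraproducts of spectra.
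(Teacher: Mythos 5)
Your first paragraph reproduces the paper's reduction exactly: \Cref{cor:evenrings} plus \Cref{prop:protomodules} lets one replace $\Prod{\cF}^{\flat}\Mod_{PK(n)}$ by $\Mod_{\Prod{\cF}PK(n)}$, and the task becomes identifying the $\bE_1$-ring $\Prod{\cF}PK(n)$ with $PH\F_p$. Your computation $\pi_*\Prod{\cF}PK(n)\cong\F_p[u^{\pm1}]$ is also correct. The divergence is in how you obtain the ring equivalence, and there your route has a genuine gap that the paper's argument is engineered to avoid.

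The paper's key observation, which you miss, is that the periodic theory $PK(n)$ receives a canonical $\bE_1$-ring map from the $2(p^n-1)$-periodic theory $K(n)$, and the period of $K(n)$ tends to $\infty$ with $n$. Thus $\pi_*\Prod{\cF}K(n)$ is concentrated in degree $0$ (an object of period $\geq m$ for all $n$ in a cofinite, hence $\cF$-large, set has no homotopy in degrees $0<|i|<m$), so $\Prod{\cF}K(n)\simeq H\F_p$ as $\bE_1$-rings. This immediately endows $\Prod{\cF}PK(n)$ with an $H\F_p$-algebra structure. At that point the ``formality principle'' you stated --- that an $H\F_p$-algebra with homotopy ring $\F_p[u^{\pm1}]$ is $PH\F_p$, by the equivalence $\Mod_{H\F_p}\simeq\mathcal{D}(\F_p)$ and vanishing of the relevant Hochschild cohomology --- applies directly and the proof is finished. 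You state this exact formality principle and even use it to identify $\Prod{\cF}PH\F_p$, but you never notice that it also applies to $\Prod{\cF}PK(n)$, because you do not produce the $H\F_p$-algebra structure. (The paper also invokes the Hopkins--Smith classification of field objects, though with the $H\F_p$-algebra structure in hand that step is a bit of extra colour rather than the load-bearing beam.)

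Having missed this, you fall back on a Postnikov-tower argument, and this is where the gap sits. You need, for each $m$, an $\bE_1$-equivalence $\tau_{\leq m}\tau_{\geq 0}PK(n)\simeq\tau_{\leq m}\tau_{\geq 0}PH\F_p$ for cofinitely many $n$, \emph{compatibly in $m$}. Two things are unsubstantiated. First, the truncations $\tau_{\leq m}\tau_{\geq 0}PK(n)$ are not $H\F_p$-algebras (they map \emph{to} $H\F_p=\tau_{\leq 0}$, not from it), so your formality principle does not apply to them; you would need a genuine $\bE_1$-obstruction calculation comparing Postnikov towers of $\bE_1$-rings, and the relevant obstruction groups are not the Hochschild groups you already cited. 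Second, the coherence ``compatibly as $m$ grows'' must be arranged before taking the ultraproduct, since the category of choices shrinks as $m$ increases; without this, the limit over $m$ does not assemble. You flag both issues as ``the hard part,'' which is an honest assessment, but it means the argument as written does not close. The repair is not to push harder on obstruction theory but to observe, as the paper does, that $\Prod{\cF}K(n)\simeq H\F_p$ hands you the $H\F_p$-algebra structure for free, after which your own formality principle finishes the proof in one line.
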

\begin{proof}
Because all theories involved are 2-periodic, \cref{cor:evenrings} and \cref{prop:protomodules} give an equivalence
\[
\Prod{\cF}^{\flat}\Mod_{PK(n)} \simeq \Mod_{\Prod{\cF}PK(n)}.
\]
We will produce an equivalence of ring spectra
\[
\Prod{\cF}PK(n) \simeq PH\F_p.
\]
Note that there is a map of $\bE_1$-algebras
\[
K(n) \rightarrow PK(n)
\]
inducing a map of $\bE_1$-algebras
\[
\Prod{\cF}K(n) \rightarrow \Prod{\cF}PK(n).
\]
As the periodicity of $\pi_*K(n)$ strictly increases as the height $n$ increases, we see that
\[
\pi_*(\Prod{\cF}K(n))
\]
is concentrated in degree $0$ and $\pi_0(\Prod{\cF}K(n)) \cong \F_p$, thus
\[
\Prod{\cF}K(n) \simeq H\F_p.
\]
Recall that a ring spectrum $R \in \Sp$ is called a field object if $\pi_*R$ is a graded field in the algebraic sense. As a result of the nilpotence theorem, Hopkins and Smith \cite{nilp2} have classified the minimal field objects in $\Sp$: They are the Eilenberg--MacLane spectra $H\F_p$ as well as the Morava $K$-theories $K(n)$ for all $n$ and $p$. 

Since ultraproducts commute with homotopy groups, it follows that $\Prod{\cF}PK(n) \in \Sp$ is a field object. Since it is an $H\F_p$-algebra and $\pi_{2i} \Prod{\cF}PK(n) \cong \F_p$, there is an equivalence of ring spectra
\[
\Prod{\cF}PK(n) \simeq PH\F_p.
\]
\end{proof}

\begin{rem}
We would like to point out one curious consequence of this result: In the course of the proof we produced equivalences of ring spectra of the form $\Prod{\cF}PK(n) \simeq PH\F_p$ and $\Prod{\cF}K(n) \simeq H\F_p$. However, $PK(n)$ and $K(n)$ are not $\bE_2$ for any $n>0$ or $p$. We might therefore interpret this as saying that $PK(n)$ and $K(n)$ become more commutative as $n \to \infty$, without moving through the usual hierarchy of $\bE_m$-operads.
\end{rem}

\section{Formality} \label{sec:formality}

Let $\mathrm{GrAb}$ be the symmetric monoidal category of $\Z$-graded abelian groups with grading preserving maps. The functor
\[
\pi_* \colon \Sp \rightarrow \mathrm{GrAb}
\]
admits a right inverse
\[
H \colon \mathrm{GrAb} \to \Sp,
\]
which commutes with products, called the generalized Eilenberg--MacLane spectrum functor. Note that $H$ is lax symmetric monoidal. As a lax functor $H$ induces a functor from $\CAlg(\mathrm{GrAb})$ to $\CAlg(\Sp)$ which we will still denote by $H$. For any $\infty$-category $D$ an object in $\CAlg(\Sp)^{D}$ is called formal if it is in the image of 
\[
H^D\colon \CAlg(\mathrm{GrAb})^D \to \CAlg(\Sp)^D.
\]

Let $\E$ be a Morava $E$-theory at height $n$ and the prime $p$. Let $\E^{\otimes \bullet+1}$ be the Amitsur complex of the unit map $S^0 \rightarrow \E$, defined carefully in \cref{ss:reduce_formal}. The goal of this section is to prove that the cosimplicial spectrum
\[
\Prod{\cF} \big ( \E^{\otimes \bullet+1} \big ) \in  \CAlg(\Sp)^\Delta.
\]
is formal. This is the first step in showing that the distinction between spectral and algebraic data disappears at a non-principal ultrafilter. 

To explain the idea of the proof first consider the formality of  
\[
E_{n,\cF}= \Prod{\cF} \big ( \E \big ).
\]
The $\bE_{\infty}$-ring $\E$ admits an action of $C_{p-1}$ such that the induced action on $\pi_{2j} \E$ is of weight $j$. Decomposing $\E$ according to weights, we deduce that any non-trivial $k$-invariant of $\E$ can only appear in degrees divisible by $2(p-1)$. As the prime $p$ goes to infinity  the non-trivial $k$-invariants appear sparser and sparser and in the limit they do not appear at all. 

To apply this idea to
$\Prod{\cF}  \E^{\otimes \bullet+1}, $
two issues need to be addressed:
\begin{enumerate}
\item For $k \geq 2$, the $C_{p-1}$ action on $\E^{\otimes k}$  is not of a single weight for every homotopy group.
\item The formality needed in the cosimplicial case needs to include not only the formality at every cosimplicial degree but also all of the coherence data in the diagram as well as the algebra structure.
\end{enumerate}

It turns out that the first issue disappears after replacing the Amitsur complex of the unit map $S^0 \rightarrow \E$ by that of the map
$\hS \to \E$, where $\hS$ is the $p$-complete sphere. 

To aid the reader we provide a brief outline of this section. Sections \ref{ss:H} to \ref{ss:ratio_formal} give a reduction to the Amitsur complex of $\hS \to \E$. In \cref{ss:H} we establish basic general properties of the functor $H$. We use the arithmetic fracture square in \cref{ss:reduce_formal} to reduce the formality of $\Prod{\cF}  \E^{\otimes \bullet+1} $ to the formality of a diagram built out of the rationalization of $\E$ and $\Prod{\cF}  \E^{\otimes_{\hS} \bullet+1} $. The required results regarding $\Q \otimes \E$ are proved in \cref{ss:ratio_formal} using Andre--Quillen obstruction theory. We are now left with the need to prove the formality of $\Prod{\cF}  \E^{\otimes_{\hS} \bullet+1}$ while taking issue (2) into account. 

In \cref{ss:cp_1} we prove that the action of $C_{p-1}$ on  $\Prod{\cF}  \E^{\otimes_{\hS} \bullet+1} $ has the desired properties. In \cref{ss:monoid} we introduce operadic tools that allow us to explore the interplay between symmetric monoidal structures and weight decompositions. We employ these tools in \cref{ss:weight} to produce the  weight decomposition of $\Prod{\cF}  \E^{\otimes_{\hS} \bullet+1} $. 
Finally, in \cref{ss:formal_infty} we collect all of the results at each finite prime to obtain the formality at the non-principal ultrafilters.

\subsection{Properties of $H$}\label{ss:H}

In this subsection we establish basic properties of the functor $H \colon \mathrm{GrAb} \to \Sp$ that will help us deduce the formality of the cosimplicial spectrum
\[
\Prod{\cF} \big ( \E^{\otimes \bullet+1} \big ) \in  \CAlg(\Sp)^\Delta.
\]

The following proposition follows from the analogous fact for $\pi_*$.

\begin{prop} \label{prop:Hcommutes}
The functor $H$ commutes with products and filtered colimits and therefore with ultraproducts.
\end{prop}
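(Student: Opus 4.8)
The plan is to deduce all three assertions from the conservativity and exactness of the homotopy‑group functor, using that $H$ is a section of $\pi_*$. First I would record the relevant standard facts about $\pi_*\colon \Sp \to \mathrm{GrAb}$: it is conservative (a map of spectra inducing an isomorphism on all homotopy groups is an equivalence, since the $\Sigma^n\mathbb{S}$ form a set of compact generators of $\Sp$); it preserves arbitrary products, because $\pi_n(-) \simeq [\Sigma^n\mathbb{S},-]$ and mapping spectra out of a fixed object preserve limits; and it preserves filtered colimits, because each $\Sigma^n\mathbb{S}$ is compact. On the target, $\mathrm{GrAb}$ is the functor category from the discrete category $\Z$ to $\mathrm{Ab}$, so both products and filtered colimits in $\mathrm{GrAb}$ are computed degreewise. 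Finally, by construction $\pi_*\circ H \simeq \mathrm{id}_{\mathrm{GrAb}}$.

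Next I would run the following comparison‑map argument uniformly. For a family $(M^{(i)})_{i\in I}$ in $\mathrm{GrAb}$, the projections assemble into a canonical map $c\colon H\big(\Prod{i}M^{(i)}\big)\to \Prod{i}H(M^{(i)})$ in $\Sp$. Applying $\pi_*$ and using the facts above, $\pi_*(c)$ is identified with the canonical isomorphism $\Prod{i}M^{(i)} \to \Prod{i}M^{(i)}$; by conservativity of $\pi_*$, $c$ is an equivalence, so $H$ preserves products. Likewise, for a filtered diagram $M\colon J\to \mathrm{GrAb}$, the canonical map $\Colim{J}H(M_j)\to H\big(\Colim{J}M_j\big)$ becomes, after applying $\pi_*$, the canonical isomorphism $\Colim{J}M_j \to \Colim{J}M_j$, so it is an equivalence and $H$ preserves filtered colimits. (The only thing to check carefully here is the purely formal identification of $\pi_*$ applied to these canonical maps with the corresponding canonical maps in $\mathrm{GrAb}$, which is naturality bookkeeping.)

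For the final clause, recall that for an ultrafilter $\cF$ on $I$ the indexing poset $(\cF,\supseteq)$ is filtered, being closed under finite intersections, and that by definition the ultraproduct is the filtered colimit of products $\Prod{\cF}X_i = \Colim{U\in\cF}\Prod{i\in U}X_i$. Combining the two commutations just established gives
\[
\Prod{\cF} H(M_i) = \Colim{U\in\cF}\Prod{i\in U}H(M_i) \simeq \Colim{U\in\cF} H\big(\Prod{i\in U}M_i\big) \simeq H\big(\Colim{U\in\cF}\Prod{i\in U}M_i\big) = H\big(\Prod{\cF}M_i\big),
\]
naturally in the input. There is no substantial obstacle; the proof is entirely formal once the conservativity/exactness properties of $\pi_*$ are in hand. (If one prefers an even more concrete route, one can instead use the model $H(M_*)\simeq \Prod{n}\Sigma^n HM_n$ and note that over the discrete set $\Z$ this product agrees with the coproduct since only one term contributes in each degree, making the commutations transparent; I would nonetheless present the $\pi_*$‑conservativity argument as the primary one, as it is shortest and most robust.)
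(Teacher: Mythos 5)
Your argument is correct. The paper actually states this proposition without giving a proof, treating it as a standard fact — indeed, $H$ is already introduced at the start of Section 4 as a functor that ``commutes with products.'' Your conservativity argument is a clean way to justify it: $\pi_*$ is conservative, preserves products (since $\pi_n \simeq [\Sigma^n \mathbb{S},-]$) and filtered colimits (compactness of the spheres), products and filtered colimits in $\mathrm{GrAb}$ are degreewise, and $\pi_*H \simeq \mathrm{id}$ reduces everything to naturality bookkeeping on the comparison maps. The alternative route you sketch at the end, via the explicit model $H(M_*) \simeq \prod_n \Sigma^n H M_n \simeq \bigoplus_n \Sigma^n H M_n$ (where the product and coproduct agree because at most one summand contributes to each homotopy degree, so both preserve products and filtered colimits), is presumably what the authors had in mind when asserting the statement without proof; either approach is adequate and they are both entirely formal.
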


\begin{defn}\label{defn:star}
Let $X$ be a spectrum. Define
\[
X_{\star} = H\pi_*(X).
\]
\end{defn}

Since $H$ and $\pi_*$ are lax symmetric monoidal functors, if $R$ is an $\bE_n$-ring spectrum, then $R_{\star}$ is an $\bE_n$-ring spectrum ($n = \infty$ is allowed). In fact, since $\mathrm{GrAb}$ is a $1$-category, if $n \geq 2$, then $R_{\star}$ automatically has the structure of an $\bE_{\infty}$-ring spectrum.

These ring spectra behave in an algebraic way, as can be seen in the following theorem of Schwede and Shipley (\cite{schwedeshipleymorita}, \cite[Section 7.1.2]{ha}). For a ring spectrum $R$, let $C(R)$ be the differential graded algebra in which the chain groups are precisely the homotopy groups of $R$ and the differentials are all the zero map.

\begin{thm} (Schwede--Shipley)
Let $\mathcal{D}(\mathrm{Ab})$ be the $\infty$-category of chain complexes of abelian groups and let $R$ be an $\bE_{\infty}$-ring spectrum. There is a symmetric monoidal equivalence of stable $\infty$-categories
\[
\Mod_{R_\star} \simeq \Mod_{C(R)}(\mathcal{D}(\mathrm{Ab})).
\]
\end{thm}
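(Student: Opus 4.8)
The plan is to deduce the statement from Shipley's symmetric monoidal rectification of $H\Z$-algebras, together with the observation that $R_\star$ is \emph{formal} as an $\bE_\infty$-algebra under $H\Z$. First I would note that since $H\colon\mathrm{GrAb}\to\Sp$ is lax symmetric monoidal and carries the initial commutative ring $\Z$ (concentrated in degree $0$) to $H\Z$, it refines to a lax symmetric monoidal functor $\widetilde H\colon\mathrm{GrAb}\to\Mod_{H\Z}$; applied to the commutative algebra $\pi_*R\in\CAlg(\mathrm{GrAb})$ this exhibits $R_\star=H\pi_*R$ canonically as an object of $\CAlg(\Mod_{H\Z})$. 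By the standard base change for iterated module categories (\cite[Section 3.4, Section 4.8]{ha}), there is a symmetric monoidal equivalence
\[
\Mod_{R_\star}(\Sp)\;\simeq\;\Mod_{R_\star}(\Mod_{H\Z}),
\]
so it suffices to analyze the right-hand side.

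Next I would invoke the theorem of Schwede and Shipley (\cite{schwedeshipleymorita}; see also \cite[Section 7.1.2]{ha}) providing a symmetric monoidal equivalence of stable $\infty$-categories $\Phi\colon\Mod_{H\Z}\xrightarrow{\ \simeq\ }\mathcal{D}(\mathrm{Ab})$. Since $\Phi$ is exact and sends the unit $H\Z$ to $\Z$, it induces natural isomorphisms $H_n(\Phi M)\cong\pi_n M$ and sends $\Sigma^n H\Z$ to $\Z[n]$. Being a symmetric monoidal equivalence, $\Phi$ carries $\bE_\infty$-algebras to $\bE_\infty$-algebras and, for each $A\in\CAlg(\Mod_{H\Z})$, induces a symmetric monoidal equivalence $\Mod_A(\Mod_{H\Z})\simeq\Mod_{\Phi(A)}(\mathcal{D}(\mathrm{Ab}))$. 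Thus the theorem reduces to the identification $\Phi(R_\star)\simeq C(R)$ of $\bE_\infty$-algebras in $\mathcal{D}(\mathrm{Ab})$.

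For this last point I would compare the composite lax symmetric monoidal functor $\Phi\circ\widetilde H\colon\mathrm{GrAb}\to\mathcal{D}(\mathrm{Ab})$ with the inclusion $\iota$ of graded abelian groups as chain complexes with zero differential. On underlying objects both functors send a graded group $(M_n)_n$ to $\bigoplus_n M_n[n]$ — for $\Phi\circ\widetilde H$ because $\widetilde H(M)=\bigvee_n \Sigma^n H(M_n)$ and $\Phi$ preserves homology and shifts — and on lax structure maps both are the canonical comparison from the derived to the underived tensor product (one way to pin this down: both functors are lax symmetric monoidal, preserve filtered colimits and finite direct sums, restrict to the evident symmetric monoidal functor $\mathrm{Ab}\to\mathcal{D}(\mathrm{Ab})$ on the groups concentrated in degree $0$, and are compatible with the grading shift, which determines them since every graded group is a direct sum of groups concentrated in a single degree). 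Since lax symmetric monoidal functors preserve commutative algebras, $\Phi(R_\star)=(\Phi\circ\widetilde H)(\pi_*R)\simeq\iota(\pi_*R)$, and $\iota(\pi_*R)$ is exactly the formal differential graded algebra $C(R)=(\pi_*R,0)$ regarded as an $\bE_\infty$-algebra in $\mathcal{D}(\mathrm{Ab})$. Stringing the three equivalences together yields the desired symmetric monoidal equivalence $\Mod_{R_\star}\simeq\Mod_{C(R)}(\mathcal{D}(\mathrm{Ab}))$.

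The main obstacle is precisely the identification $\Phi(R_\star)\simeq C(R)$ \emph{as algebras}: that the underlying spectrum of $R_\star$ is formal is immediate, since it is a generalized Eilenberg--MacLane spectrum and all of its $k$-invariants vanish, but promoting this to formality of the full $\bE_\infty$-structure requires genuinely transporting the multiplication through Shipley's equivalence, which is why it is cleanest to argue functorially through $\widetilde H$ rather than object by object. The symmetric monoidal rectification $\Mod_{H\Z}\simeq\mathcal{D}(\mathrm{Ab})$ itself is the one substantial external input, and is quoted rather than reproved.
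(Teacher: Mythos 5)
The paper does not prove this theorem at all --- it is stated with the attribution ``(Schwede--Shipley)'' and citations to \cite{schwedeshipleymorita} and \cite[Section 7.1.2]{ha}, and no argument appears in the text. There is therefore no internal proof to compare against, only the implicit claim that the result follows from those sources.

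Your route --- refining $H$ to a lax symmetric monoidal $\widetilde{H}\colon \mathrm{GrAb} \to \Mod_{H\Z}$, base changing $\Mod_{R_\star}(\Sp) \simeq \Mod_{R_\star}(\Mod_{H\Z})$, invoking the symmetric monoidal rectification $\Phi\colon \Mod_{H\Z} \simeq \mathcal{D}(\mathrm{Ab})$, and reducing to $\Phi(R_\star)\simeq C(R)$ --- is precisely what those citations supply, and the skeleton is sound. The one place that is under-justified is the final identification of $\Phi\circ\widetilde{H}$ with $\iota$ \emph{as lax symmetric monoidal functors}. Your list of shared properties (colimit preservation, agreement on $\mathrm{Ab}$, ``compatible with the grading shift'') pins down the underlying functor and suggests the binary lax structure maps coincide, but a lax symmetric monoidal functor out of $\mathrm{GrAb}$ carries a full tower of coherence data, and ``compatible with shift, which determines them'' does not by itself account for the Koszul-sign bookkeeping built into the symmetry of $\mathrm{GrAb}$ (which is exactly what makes $H$ lax symmetric monoidal in the first place). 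This is the genuine content of the formality statement, as you yourself flag. A cleaner way to finish is to note that $\mathrm{GrAb}$, as a symmetric monoidal $\mathrm{Ab}$-linear category, is generated under colimits by $\mathrm{Ab}$ together with the single $\otimes$-invertible object $\Z[1]$ whose self-symmetry is $-1$; a colimit-preserving $\mathrm{Ab}$-linear lax symmetric monoidal functor out of it is then determined by its restriction to $\mathrm{Ab}$ and the image of $\Z[1]$ together with the induced symmetry, and one checks that $\Phi\circ\widetilde{H}$ and $\iota$ agree on that data. With that step tightened, the argument is correct and is, as far as I can tell, the intended content of the cited references.
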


Thus we may think about $R_{\star}$ as a differential graded algebra.

\begin{prop}
Let $D$ be a small category and let 
\[
\xymatrix{P \ar[r] \ar[d] & B \ar[d] \\ A \ar[r] & C}
\]
be a pullback diagram in $\CAlg(\mathrm{GrAb}^{D})$. If for every object $d \in D$, the map $B(d) \oplus A(d) \rightarrow C(d)$ is surjective, then
\[
H^D(P) \rightarrow H^D(B) \times_{H^D(C)} H^D(A)
\]
is an equivalence in $\CAlg(\Sp^{D})$ and thus the pullback $H^D(B) \times_{H^D(C)} H^D(A)$ is formal.
\end{prop}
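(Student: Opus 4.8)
The plan is to reduce the statement to a corresponding statement about the functor $H$ itself applied to a homotopy pullback of $\bE_\infty$-rings, and then to verify that $H$ sends the relevant algebraic pullback to a homotopy pullback. First I would observe that the forgetful functor $\CAlg(\Sp^D) \to \Sp^D$ and the evaluation functors $\mathrm{ev}_d \colon \Sp^D \to \Sp$ jointly detect equivalences and preserve limits; hence it suffices to show, for each $d \in D$, that the square of spectra
\[
\xymatrix{H(P(d)) \ar[r] \ar[d] & H(B(d)) \ar[d] \\ H(A(d)) \ar[r] & H(C(d))}
\]
is a homotopy pullback. Thus the problem is purely one about the Eilenberg--MacLane functor $H \colon \mathrm{GrAb} \to \Sp$ on a single pullback square of graded abelian groups $P(d) = A(d) \times_{C(d)} B(d)$ in which $A(d) \oplus B(d) \to C(d)$ is surjective.

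Next I would recall that $H$ is exact as a functor between (the stable $\infty$-category associated to the derived category of) $\mathrm{GrAb}$ and $\Sp$ in the appropriate sense: a short exact sequence of graded abelian groups gives a (co)fiber sequence of Eilenberg--MacLane spectra. Concretely, from the surjectivity hypothesis we get a short exact sequence of graded abelian groups
\[
0 \to P(d) \to A(d) \oplus B(d) \to C(d) \to 0,
\]
where the second map is $(a,b) \mapsto f(a) - g(b)$ for $f \colon A(d) \to C(d)$, $g \colon B(d) \to C(d)$. Applying $H$ and using that $H$ sends this short exact sequence to a fiber sequence of spectra and that $H$ commutes with finite products (a special case of \cref{prop:Hcommutes}), we obtain a fiber sequence
\[
H(P(d)) \to H(A(d)) \oplus H(B(d)) \to H(C(d)).
\]
This fiber sequence exhibits $H(P(d))$ as the homotopy pullback $H(A(d)) \times_{H(C(d))} H(B(d))$ by the standard identification of a homotopy pullback of a cospan $X \to Z \leftarrow Y$ with the fiber of $X \oplus Y \to Z$. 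Reassembling over all $d$ and remembering, via the lax symmetric monoidal structure on $H$, that all maps in sight are maps of $\bE_\infty$-rings, we conclude that $H^D(P) \to H^D(B) \times_{H^D(C)} H^D(A)$ is an equivalence in $\CAlg(\Sp^D)$, and in particular the target is formal, being in the image of $H^D$.

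The main obstacle I anticipate is the bookkeeping required to see that the comparison map $H^D(P) \to H^D(B) \times_{H^D(C)} H^D(A)$ is the \emph{canonical} one and that it is built entirely from lax-monoidal data, so that the equivalence is genuinely one of commutative algebras in $\Sp^D$ rather than merely of underlying spectra; since $H$ is only lax symmetric monoidal, one must check that the natural map $H(X) \oplus H(Y) \to H(X \oplus Y)$ is an equivalence (true, as $H$ preserves finite products, hence finite coproducts of $\bE_\infty$-rings? — no: here $\oplus$ is the product in $\mathrm{GrAb}$, and $H$ preserving products is exactly \cref{prop:Hcommutes}) and that this identification is compatible with the algebra structures. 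Once that compatibility is in hand, the homotopy-pullback verification is immediate from exactness of $H$ on short exact sequences, which itself follows from $H$ commuting with the relevant (co)limits and the computation of homotopy groups of Eilenberg--MacLane spectra.
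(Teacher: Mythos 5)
Your argument is correct and is essentially the paper's proof, repackaged: the paper computes directly with the Mayer--Vietoris long exact sequence of the homotopy pullback and observes that surjectivity kills the connecting maps, while you reach the same homotopy-group computation by first identifying $P(d)$ as the kernel in the short exact sequence $0 \to P(d) \to A(d)\oplus B(d) \to C(d) \to 0$ and invoking that $H$ sends short exact sequences to fiber sequences (a fact whose verification is exactly the collapse of that same long exact sequence). The pointwise reduction and the remark about lax monoidality and $\CAlg$ are harmless bookkeeping that the paper leaves implicit.
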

\begin{proof}
Consider the long exact sequence of homotopy groups for the pullback $H^D(B) \times_{H^D(C)} H^D(A)$. The surjectivity of the map
\[
B(d) \oplus A(d) \rightarrow C(d)
\]
implies that the homotopy groups of the pullback are exactly the pullback of the homotopy groups. Thus the canonical map $H^D(P) \rightarrow H^D(B) \times_{H^D(C)} H^D(A)$ is an equivalence.

\end{proof}

\begin{cor}\label{c:pullback_formal}
Let $D$ be a small category and let
\[
\xymatrix{ P\ar[d]\ar[r] & B \ar[d] \\ A \ar[r] & C}
\]
be a pullback diagram in $\CAlg(\Sp)^{D}$.
Assume that $(A \to C)$ and $(B \to C)$ are formal as objects in $\CAlg(\Sp)^{D \times \Delta^1}$ and that for every $d\in D$ and $i\in \Z$ the map 
\[
\pi_i(A(d)) \oplus \pi_i(B(d)) \to \pi_i(C(d))
\]
is surjective, 
then $P$ is formal as an object in $\CAlg(\Sp)^{D}$.
\end{cor}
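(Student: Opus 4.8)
The plan is to exhibit the whole pullback diagram $A\to C\leftarrow B$, not merely its two legs, as $H^D$ of an algebraic diagram, and then to invoke the preceding Proposition. First, I would unwind the formality hypotheses. Writing $f\colon A\to C$ and $g\colon B\to C$ for the two legs, the assumption gives morphisms $\bar f,\bar g$ in $\CAlg(\mathrm{GrAb})^D$ together with equivalences $H^{D\times\Delta^1}(\bar f)\simeq(A\xrightarrow{f}C)$ and $H^{D\times\Delta^1}(\bar g)\simeq(B\xrightarrow{g}C)$ in $\CAlg(\Sp)^{D\times\Delta^1}$. Applying $\pi_*$ and using $\pi_*\circ H=\mathrm{id}$ identifies the endpoints of $\bar f,\bar g$ canonically with $\pi_* A$, $\pi_* C$, $\pi_* B$ and gives $\bar f\simeq\pi_* f$, $\bar g\simeq\pi_* g$. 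Composing each witness with $H^{D\times\Delta^1}$ of a suitable algebraic automorphism, I may moreover assume that $\pi_*$ of each witness is the identity; restricting the two adjusted witnesses to $D\times\{1\}$ then yields two equivalences $C\xrightarrow{\simeq}H^D\pi_* C$, both inducing the identity on $\pi_*$, whose composite is an automorphism $\theta$ of $H^D\pi_* C$ in $\CAlg(\Sp)^D$ with $\pi_*\theta=\mathrm{id}$.

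Next, I would assemble the two legs into a single formal cospan. The diagram shape $\Lambda$ of a pullback square is the pushout $\Delta^1\sqcup_{\Delta^0}\Delta^1$ obtained by identifying the two terminal objects, so $\CAlg(\Sp)^{D\times\Lambda}\simeq\CAlg(\Sp)^{D\times\Delta^1}\times_{\CAlg(\Sp)^D}\CAlg(\Sp)^{D\times\Delta^1}$ with respect to the two evaluations at the cone point, and similarly for $\CAlg(\mathrm{GrAb})$, compatibly with $H$. Transporting $A\xrightarrow{f}C\xleftarrow{g}B$ along the witness for $f$, and along the witness for $g$ corrected by $\theta$ on the shared vertex, exhibits the cospan as $\big(H^D\pi_* A\xrightarrow{H^D\bar f}H^D\pi_* C\xleftarrow{\theta\circ H^D\bar g}H^D\pi_* B\big)$. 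Since a morphism $H^D\mathbb M\to H^D\mathbb N$ in $\CAlg(\Sp)^D$ is determined up to homotopy by its effect on $\pi_*$, the equality $\pi_*\theta=\mathrm{id}$ forces $\theta\simeq\mathrm{id}$, and the cospan becomes $H^{D\times\Lambda}\big(\pi_* A\xrightarrow{\pi_* f}\pi_* C\xleftarrow{\pi_* g}\pi_* B\big)$; that is, the whole pullback diagram is formal.

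Finally, I would take $\lim_\Lambda$. The left-hand side is $P$. The surjectivity hypothesis says precisely that $\pi_i(A(d))\oplus\pi_i(B(d))\to\pi_i(C(d))$ is surjective for every $d\in D$ and every $i\in\Z$, which is exactly the hypothesis of the preceding Proposition applied to the algebraic cospan $\pi_* A\xrightarrow{\pi_* f}\pi_* C\xleftarrow{\pi_* g}\pi_* B$; that Proposition then identifies $\lim_\Lambda H^{D\times\Lambda}(\pi_* A\to\pi_* C\leftarrow\pi_* B)$ with $H^D(\pi_* A\times_{\pi_* C}\pi_* B)$. Combining, $P\simeq H^D(\pi_* A\times_{\pi_* C}\pi_* B)$ is formal.

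The step I expect to be the main obstacle is trivializing $\theta$: the claim that $H^D$ is faithful on homotopy classes of morphisms, equivalently that the pullback \emph{diagram} — not just each of its two edges — lifts along $H^D$. I would establish this either from the basic properties of $H$ set up earlier, or directly by an obstruction-theoretic argument up the Postnikov tower, with the relevant obstruction groups vanishing in the cases at hand. The remainder is diagram bookkeeping together with a direct application of the preceding Proposition.
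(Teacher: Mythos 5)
Your proof correctly isolates the real issue: the two formality hypotheses produce two a priori different equivalences $C\simeq H^D\pi_*C$, and to assemble the legs into a single formal cospan you must trivialize the discrepancy $\theta$. The step you flag as the ``main obstacle'' is, however, genuinely problematic: the claim that a morphism $H^D\mathbb M\to H^D\mathbb N$ in $\CAlg(\Sp)^D$ is determined up to homotopy by its effect on $\pi_*$ is false in general. For a graded ring $R$, the space of $\bE_\infty$-self-equivalences of $HR$ inducing $\mathrm{id}_R$ on homotopy groups need not be connected; under the Schwede--Shipley identification $\Mod_{H\Z}\simeq D(\Z)$ these are homotopy self-equivalences of the formal CDGA $(R,0)$ acting trivially on cohomology, governed by derived derivation/Andr\'e--Quillen groups which are typically nonzero for graded rings spread over many degrees. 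So $\pi_*\theta=\mathrm{id}$ does not force $\theta\simeq\mathrm{id}$, and this cannot be salvaged by ``basic properties of $H$''; it would require vanishing of specific obstruction groups for the rings at hand.

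The intended reading of the corollary, as a direct consequence of the preceding proposition, is that the cospan $A\to C\leftarrow B$ admits a \emph{single} formal presentation, i.e.\ one can choose the two formality witnesses so that they restrict to the same equivalence on $C$ (equivalently, the cospan is formal as an object of $\CAlg(\Sp)^{D\times\Lambda}$). With that, one transports along the common witness, lands in the image of $H^{D\times\Lambda}$, and the proposition applied to the algebraic pullback $\pi_*A\times_{\pi_*C}\pi_*B$ (using the surjectivity hypothesis) gives $P\simeq H^D(\pi_*A\times_{\pi_*C}\pi_*B)$. Your last paragraph of bookkeeping is then exactly right. But as written your argument does not close the gap you identified, and you should either strengthen the hypothesis to formality of the cospan, or, in the setting where this corollary is applied, verify directly that the two constructed witnesses for $\Prod{\cF}\Q\otimes\E^{\otimes_{\hS}\bullet+1}$ agree (e.g.\ by a uniqueness-of-formality argument using that the relevant rational Andr\'e--Quillen groups vanish).
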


\begin{lem}\label{l:formal_q}
Let $D$ be a small category and let $R \in \CAlg(\Sp)^D$ be formal. Let 
\[
\big ( R \to \Q \otimes R \big ) \in \CAlg(\Sp)^{D\times \Delta^1}
\] 
be the canonical map to the rationalization. Then $R \to \Q \otimes R$ is formal as an object in $\CAlg(\Sp)^{D\times \Delta^1} $.
 
\end{lem}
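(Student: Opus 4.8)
The plan is to realize the whole arrow $R\to\Q\otimes R$ by a graded-algebraic arrow. Since $R$ is formal, fix $M\in\CAlg(\mathrm{GrAb})^{D}$ with $R\simeq H^{D}(M)$, and let $(M\to\Q M)\in\CAlg(\mathrm{GrAb})^{D\times\Delta^{1}}$ be the objectwise rationalization, i.e. the localization of $M$ inverting every prime. Because $\CAlg(\Sp)^{D\times\Delta^{1}}\simeq\Fun(D,\Fun(\Delta^{1},\CAlg(\Sp)))$ and $H^{D\times\Delta^{1}}$ is $(-)^{D}$ applied to $H^{\Delta^{1}}$, it suffices to produce a natural equivalence of functors $\CAlg(\mathrm{GrAb})\to\Fun(\Delta^{1},\CAlg(\Sp))$ between $M\mapsto\big(HM\to H(\Q M)\big)$ (the image under $H^{\Delta^{1}}$ of the localization arrow) and $M\mapsto\big(HM\to\Q\otimes HM\big)$ (the rationalization arrow). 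Postcomposing such an equivalence with $(-)^{D}$ and evaluating at our $M$ identifies $H^{D\times\Delta^{1}}(M\to\Q M)$ with $(R\to\Q\otimes R)$, which is exactly the asserted formality.

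First I would treat one denominator at a time. For $n\ge 1$ let $HM[1/n]=HM\otimes_{\bS}\bS[1/n]$ be the localization of the $\bE_{\infty}$-ring $HM$ at the image of $n$ in $\pi_{0}$; its homotopy is $(\pi_{*}HM)[1/n]\cong M[1/n]$. Since $n$ is already invertible in $\pi_{0}H(M[1/n])=(M[1/n])_{0}$, the universal property of this localization provides a canonical map of $HM$-algebras $HM[1/n]\to H(M[1/n])$, and on homotopy groups it is the identity of $M[1/n]$, hence an equivalence. This is natural in $M$ and compatible with the transition maps $M[1/n]\to M[1/m]$ for $n\mid m$, so it upgrades to a natural equivalence of $(\N,\mid)$-indexed diagrams of arrows $\big(HM\to HM[1/n]\big)\simeq\big(HM\to H(M[1/n])\big)$ valued in $\Fun(\Delta^{1},\CAlg(\Sp))$.

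Next I would pass to the filtered colimit over $(\N,\mid)$. Algebraically $\Q M=\colim_{n}M[1/n]$ in $\CAlg(\mathrm{GrAb})$, and $H$ preserves filtered colimits of $\bE_{\infty}$-rings --- this follows from \Cref{prop:Hcommutes} since the forgetful functors to the underlying $\infty$-categories preserve and detect filtered colimits --- so $H(\Q M)\simeq\colim_{n}H(M[1/n])$. Topologically $\colim_{n}HM[1/n]\simeq HM\otimes_{\bS}\colim_{n}\bS[1/n]\simeq HM\otimes_{\bS}\bS_{\Q}=\Q\otimes HM$, using that $\otimes$ commutes with filtered colimits and that $\colim_{n}\bS[1/n]\simeq\bS_{\Q}$. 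Taking the colimit of the previous step's natural equivalence (computed pointwise in $\CAlg(\Sp)$) yields the required natural equivalence $\big(HM\to H(\Q M)\big)\simeq\big(HM\to\Q\otimes HM\big)$; chasing the colimit inclusions confirms that the left-hand arrow is $H$ applied to $M\to\Q M$ and the right-hand arrow is the canonical rationalization map.

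The substantive points are minor. One must run the colimit step at the level of functors into $\Fun(\Delta^{1},\CAlg(\Sp))$, so that the resulting comparison is a genuine natural equivalence and the harmless postcomposition with $(-)^{D}$ is legitimate; and one must verify that $HM[1/n]\to H(M[1/n])$ is an equivalence, which is immediate from the homotopy computation. This last fact is the real content, and it replaces any appeal to obstruction theory: formality is preserved under inverting integers simply because the localization is a smashing $\bE_{\infty}$-localization. Alternatively one could argue that under $\Mod_{H\Z}\simeq\cD(\mathrm{Ab})$ the functor $H$ becomes the inclusion of graded abelian groups as complexes with zero differential, which is lax symmetric monoidal and commutes with the flat base change $(-)\otimes_{\Z}\Q$, but the argument above avoids pinning down that model.
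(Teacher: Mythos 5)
Your proof is correct, but it takes a genuinely different route from the paper's. The paper's argument produces the comparison in one step: it uses the lax symmetric monoidal structure of $H^{D}$ to build the commutative square
\[
\xymatrix{
H^D\pi_{*}^{D}(R) \ar[r] \ar[d]& H^D\pi_{*}^{D}(R) \otimes H^D\Q  \ar[d] \\
H^D\pi_{*}^{D}(R) \ar[r] & H^D(\pi_{*}^{D}(R) \otimes \Q),
}
\]
and then invokes flatness of $\Q$ over $\pi_*(S^0)$ to conclude that the right-hand lax structure map is an equivalence, so the top arrow inherits formality from the bottom one. You instead filter the rationalization through inversion of a single integer $n$, use the universal property of the $\bE_{\infty}$-localization $HM\to HM[1/n]$ together with the fact that $n$ is already invertible in $\pi_0 H(M[1/n])$ to produce and identify the comparison map $HM[1/n]\to H(M[1/n])$, and then pass to the filtered colimit over $(\N,\mid)$, quoting \Cref{prop:Hcommutes} for the commutation of $H$ with filtered colimits. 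The two approaches trade off: the paper's is shorter and works at the level of a single lax structure map, but relies on the reader accepting that flatness controls that map; yours is slightly longer but turns the flatness input into an explicit computation of homotopy groups at each finite stage and makes the smashing-localization mechanism visible, at the cost of an extra colimit step that must be carried out functorially in $\Fun(\Delta^1,\CAlg(\Sp))$ before applying $(-)^{D}$, as you correctly note.
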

\begin{proof}

By assumption 
$R \simeq H^{D}\pi_{*}^{D}(R)$.
We also have an object 
\[
(\pi_{*}^{D}(R) \to \pi_{*}^{D}(R) \otimes \Q) \in \CAlg(\mathrm{GrAb})^{D\times \Delta^1}.  
\]
By the laxness of $H^{D}$ there is a commutative square
\[
\xymatrix{
H^D\pi_{*}^{D}(R) \ar[r] \ar[d]& H^D\pi_{*}^{D}(R) \otimes H^D\Q  \ar[d] \\
H^D\pi_{*}^{D}(R) \ar[r] & H^D(\pi_{*}^{D}(R) \otimes \Q),  
}
\]
where the right vertical map is an equivalence by the flatness of $\Q$ over $\pi_{*}(S^0)$ and $H^D\Q$ is just $H^D$ applied to the constant diagram. Thus the fact that the bottom horizontal map is formal implies that the top horizontal map is formal.
\end{proof}

Let $\Sp_{\Q}[0,0]$ be the full symmetric monoidal subcategory of $\Sp_{\Q}$ on objects whose nontrivial homotopy groups are concentrated in degree $0$. Also, let $\mathrm{GrAb}_{\Q}$ be the category of graded $\Q$-vector spaces and let ${\Q}-\Mod$ be the symmetric monoidal category of rational vector spaces viewed as a full subcategory of $\mathrm{GrAb}$ on degree $0$ objects.

\begin{lem}\label{l:rational-heart}
The restriction of the lax symmetric monoidal functor 
\[
H^D \colon \mathrm{GrAb}^D \rightarrow \Sp^D
\]
to
\[
H^D_{\Q} \colon \mathrm{GrAb}_{\Q}^D \rightarrow \Sp_\Q^D
\]
is symmetric monoidal. Restricting further to ${\Q}-\Mod^D$ induces a symmetric monoidal equivalence
\[
H^D|_{{\Q}-\Mod^{D}} \colon {\Q}-\Mod^{D} \lra{\simeq} \Sp_{\Q}[0,0]^{D}.
\]
\end{lem}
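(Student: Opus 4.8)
The plan is to reduce the statement to the case $D=\ast$ and then verify two elementary facts about $H$. Since $D$ enters only as an index category, the symmetric monoidal structures on $\mathrm{GrAb}^{D}$ and $\Sp^{D}$ are computed objectwise and $H^{D}$ is $H$ applied objectwise; hence $H^{D}_{\Q}$ is symmetric monoidal as soon as $H_{\Q}\colon\mathrm{GrAb}_{\Q}\to\Sp_{\Q}$ is, and $H^{D}|_{{\Q}-\Mod^{D}}$ is an equivalence as soon as $H|_{{\Q}-\Mod}\colon{\Q}-\Mod\to\Sp_{\Q}[0,0]$ is. So it suffices to treat $D=\ast$.

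First I would show that $H_{\Q}$ is symmetric monoidal. On $\Q$-vector spaces the tensor product inherited from $\mathrm{GrAb}$ is $\otimes_{\Q}$, with unit $\Q$; write $S_{\Q}:=L_{\Q}\bS$ for the unit of $\Sp_{\Q}$, using that rationalization is a smashing localization. There are two things to check: that the lax unit map is an equivalence and that the lax structure maps $HM\otimes HN\to H(M\otimes_{\Q}N)$ are equivalences for $M,N\in\mathrm{GrAb}_{\Q}$. The lax unit map is the rationalization of $\bS\to H\Z$, namely $S_{\Q}=L_{\Q}\bS\to L_{\Q}H\Z=H\Q$; this is the Hurewicz isomorphism on $\pi_{0}$ with source and target trivial in all other degrees, hence an equivalence. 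For the structure maps, note first that $H$ carries arbitrary coproducts of graded abelian groups to coproducts, since a coproduct is the filtered colimit of its finite subcoproducts and $H$ preserves filtered colimits and finite coproducts (the latter being biproducts, hence finite products) by \Cref{prop:Hcommutes}. Fixing $M$, both $HM\otimes(-)$ and $H(M\otimes_{\Q}-)$ then preserve coproducts, and $H$ is compatible with suspensions, so since every object of $\mathrm{GrAb}_{\Q}$ is a coproduct of shifts of $\Q$ it is enough to check the structure map at $M=N=\Q$, where it is the multiplication $S_{\Q}\otimes S_{\Q}\to S_{\Q}$ of the idempotent algebra $S_{\Q}$ and hence an equivalence. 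Running the same reduction in the other variable (or using symmetry) completes this step.

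Next I would identify the essential image with $\Sp_{\Q}[0,0]$. A rational spectrum whose homotopy is concentrated in degree $0$ is equivalent to $H$ of its $\pi_{0}$, and $\pi_{0}$ of such a spectrum is a $\Q$-vector space, so $H$ and $\pi_{0}$ restrict to mutually inverse equivalences between ${\Q}-\Mod$ and $\Sp_{\Q}[0,0]$. On ${\Q}-\Mod$ the inherited tensor product is the ordinary $\otimes_{\Q}$ with unit $\Q$, and for $V,W\in{\Q}-\Mod$ the Künneth spectral sequence over $\pi_{*}S_{\Q}=\Q$ yields $\pi_{*}(HV\otimes HW)=\Tor^{\Q}_{*}(V,W)$, which is $V\otimes_{\Q}W$ in degree $0$ and vanishes otherwise; thus $\Sp_{\Q}[0,0]$ is a full symmetric monoidal subcategory of $\Sp_{\Q}$ with unit $S_{\Q}$. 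The symmetric monoidal equivalence now follows by restricting the previous step, and re-introducing $D$ upgrades everything to the $D$-diagram categories as explained above.

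The only point I expect to require genuine care is the claim that the lax structure maps of $H$ become equivalences after passing to $\Q$-coefficients; the argument above reduces this to the single input $S_{\Q}\otimes S_{\Q}\simeq S_{\Q}$ together with the compatibility of $H$ with coproducts and suspensions supplied by \Cref{prop:Hcommutes}. Everything else — the unit map, the identification of the heart, and the transport through $D$ — is formal.
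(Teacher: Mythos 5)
Your proof is correct and does reach the same conclusion, but the route you take for the first assertion is genuinely different from the paper's. The paper disposes of the claim that $H_\Q$ is symmetric monoidal in one line, by running the K\"unneth spectral sequence over $\pi_*S_\Q = \Q$ and invoking flatness of $\Q$-modules, so that $\pi_*(HM \otimes HN) \cong M\otimes_\Q N$ and the lax structure maps become $\pi_*$-isomorphisms; the second statement is simply declared classical. You instead verify the unit map $S_\Q \to H\Q$ directly, show that $H$ takes coproducts to coproducts (finite biproducts plus filtered colimits via \Cref{prop:Hcommutes}), and then reduce the structure map to the idempotence of $S_\Q$. What your argument buys is that it avoids explicitly invoking K\"unneth for the first part and makes visible the role of \Cref{prop:Hcommutes}; what the paper's approach buys is that the K\"unneth calculation handles all pairs $(M,N)$ at once with no reduction step, and in fact you end up needing K\"unneth anyway for the second statement, so the paper's route is arguably more economical.

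One small imprecision worth flagging: your reduction lands you at pairs $(\Q[m],\Q[n])$, not literally at $(\Q,\Q)$, and passing from the one to the other requires knowing that the lax structure map commutes with the shift isomorphisms $H(N[1]) \simeq \Sigma HN$ in the appropriate natural sense. This is true (and can be seen, for instance, because at $(\Q[m],\Q[n])$ the structure map has source and target both equivalent to $\Sigma^{m+n}H\Q$, and $\pi_*$ of the lax structure map is split by the external product and hence surjective, so an isomorphism there), but as written the invocation of "compatibility with suspensions" glosses over exactly the coherence you'd need. It is not a serious gap — the K\"unneth computation you already carry out in the second half of your argument closes it immediately — but it is the one step where the reader would pause.
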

\begin{proof}
Both statements reduce to the case $D = *$.
The first statement follows from the fact that every $\Q$-module is flat by using the K{\"u}nneth spectral sequence. The equivalence  
\[
{\Q}-\Mod \lra{\simeq} \Sp_{\Q}[0,0]
\]
is classical.
\end{proof}

\subsection{The reduction to the rational and $p$-complete cases}\label{ss:reduce_formal}

Let $\hS$ be the $p$-complete sphere spectrum and fix a non-principal ultrafilter $\cF$ on $I = \cP$. In this subsection we use the arithmetic fracture square to reduce the verification of the formality of the cosimplicial spectrum 
\[
\Prod{\cF}\E^{\otimes \bullet+1}
\]
to the formality of the cosimplicial spectrum 
\[
\Prod{\cF} \E^{\otimes_{\hS} {\bullet +1}} 
\]
and the formality of the canonical map of cosimplicial spectra
\[
\Prod{\cF}\Q \otimes \E^{\otimes {\bullet +1}} \to  \Prod{\cF}\Q \otimes \E^{\otimes_{\hS} {\bullet +1}}.
\]

\begin{rem}
The ultraproducts in this section are being taken in a variety of different categories. But all of these categories are either diagram categories or categories of commutative algebra objects in diagram categories. Since ultraproducts commute with the forgetful functor and restriction of diagrams, there should be no ambiguity regarding what is meant by the symbol $\Prod{\cF}$.
\end{rem}

Given a spectrum $M$ and a map of spectra $f \colon A \to B$, we say that $f$ is an $M$-equivalence if $f \otimes M$ is an equivalence of spectra. Note that if $f$ and $g$ are $M$-equivalences, then $f \otimes g$ is an $M$-equivalence.

\begin{lem} \label{completetensor0}
Let $M$ be a spectrum and let $f \colon R\to S$ be a map of $\mathbb{E}_1$-spectra which is an $M$-equivalence. Let $A$ and $B$ be right and left $S$-modules. The map 
\[
A \otimes_R B  \to A \otimes_{S} B  
\]
is an $M$-equivalence.
\end{lem}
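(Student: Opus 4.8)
The plan is to compute both relative tensor products via the two-sided bar construction and to reduce the assertion to a levelwise statement about iterated smash products, where the hypothesis on $f$ can be applied one tensor factor at a time. Throughout, ``$M$-equivalence'' means a map $g$ for which $\mathrm{id}_M \otimes g$ is an equivalence, and all unadorned $\otimes$ are smash products over the sphere spectrum $\bS$.

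Concretely, recall (e.g.\ from \cite{ha}) that $A \otimes_R B$ is the geometric realization of the simplicial spectrum $\mathrm{Bar}_\bullet(A,R,B)$ with $n$-simplices $A \otimes R^{\otimes n} \otimes B$, and likewise $A \otimes_S B \simeq |\mathrm{Bar}_\bullet(A,S,B)|$ with $A$, $B$ regarded as $S$-modules directly. Since $f \colon R \to S$ is a map of $\mathbb{E}_1$-rings along which the $R$-module structures on $A$ and $B$ are restricted, applying $f$ to each of the $R$-factors defines a map of simplicial spectra $\mathrm{Bar}_\bullet(A,R,B) \to \mathrm{Bar}_\bullet(A,S,B)$ whose realization is the canonical comparison map $A \otimes_R B \to A \otimes_S B$ of the lemma. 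Because $\Sp$ is presentably symmetric monoidal, the functor $M \otimes (-)$ preserves all colimits, hence geometric realizations; so it suffices to show that the map of simplicial spectra obtained by applying $M \otimes (-)$ is a levelwise equivalence, i.e.\ that
\[
M \otimes A \otimes R^{\otimes n} \otimes B \longrightarrow M \otimes A \otimes S^{\otimes n} \otimes B
\]
is an equivalence for every $n \ge 0$.

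For $n = 0$ this is the identity on $M \otimes A \otimes B$. For $n \ge 1$ I would factor the map as a composite of $n$ maps, each replacing a single $R$-factor by $S$ and leaving the remaining factors untouched. Using the symmetry of the smash product to move $M$ adjacent to the factor being changed, each such map is, up to the evident permutation equivalences, of the form $(\mathrm{id}_M \otimes f) \otimes \mathrm{id}_Z$ for a suitable spectrum $Z$; since $f$ is an $M$-equivalence, $\mathrm{id}_M \otimes f \colon M \otimes R \to M \otimes S$ is an equivalence, and smashing with $Z$ preserves equivalences. Composing the $n$ resulting equivalences gives the levelwise equivalence, and passing to realizations shows that $A \otimes_R B \to A \otimes_S B$ becomes an equivalence after smashing with $M$.

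The argument is essentially formal, so there is no serious obstacle; the only point requiring a little care is the bookkeeping in the last step — keeping track of which $R$-factor is being changed and exhibiting each intermediate map as $\mathrm{id}_M \otimes f$ smashed with an identity — together with the (standard) identification of the canonical map $A \otimes_R B \to A \otimes_S B$ with the realization of the map of bar constructions. An alternative route, which avoids the explicit levelwise analysis, is to write $A \otimes_R B \simeq A \otimes_S (S \otimes_R B)$ so that the comparison map becomes $A \otimes_S \mu$ for the action map $\mu \colon S \otimes_R B \to B$, then to observe that $-\otimes_R B$ and $A \otimes_S (-)$ preserve $M$-equivalences (again by the bar construction plus symmetry) and that $\mathrm{cofib}(\mu) \simeq \Sigma\big(\mathrm{cofib}(f) \otimes_R B\big)$ is $M$-acyclic because $\mathrm{cofib}(f)$ is. Either way, the crux is the same elementary observation that smashing an $M$-equivalence into any slot of an iterated smash product leaves it an $M$-equivalence.
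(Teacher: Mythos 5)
Your proof is correct and follows essentially the same route as the paper: both reduce to the bar construction via the fact that $M \otimes (-)$ preserves geometric realizations, then check levelwise that replacing $R^{\otimes n}$ by $S^{\otimes n}$ is an $M$-equivalence by changing one tensor factor at a time. The paper packages this last step as a standalone induction that $f^{\otimes k}$ is an $M$-equivalence, while you inline it into the levelwise comparison, but the content is identical.
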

\begin{proof}
Recalling the definition of the relative tensor product as a geometric realization (bar construction) and using the fact that $\otimes$ commutes with colimits in each variable, it is enough to show that for every $k \geq 0$ the map
\[
A \otimes R^{\otimes k} \otimes B \to A \otimes {S}^{\otimes k} \otimes B 
\]
is an $M$-equivalence. This follows from the fact that the tensor product of $M$-equivalences is an $M$-equivalence.
\end{proof}

\begin{cor}\label{completetensor}
For every $k \geq 1$, the map
\[
\E^{\otimes k} \to \E^{\otimes_{\hat{S}}k }
\]
is an equivalence after $p$-completion 
\end{cor}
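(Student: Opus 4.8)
The plan is to prove the assertion by induction on $k$, with the inductive step being a single application of \cref{completetensor0} using the mod $p$ Moore spectrum as the test spectrum $M$. Recall that a map of spectra becomes an equivalence after $p$-completion exactly when it is an $S^0/p$-equivalence (an equivalence after smashing with $S^0/p$), since $p$-completion is the Bousfield localization at $S^0/p$; I will phrase the whole argument in terms of $S^0/p$-equivalences. The key preliminary observation is that the unit map $S^0 \to \hS$ is an $S^0/p$-equivalence: the Moore spectrum $S^0/p$ is a finite spectrum whose homotopy groups are finite $p$-groups in each degree, hence it is $p$-complete, so the canonical map $S^0/p = S^0 \otimes S^0/p \to \hS \otimes S^0/p$ is an equivalence. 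Note also that $\E$ is $p$-complete, so its unit map factors through $\hS$; consequently $\E$, and therefore each iterated relative tensor power $\E^{\otimes_{\hS} j}$ for $j \geq 1$, is an $\hS$-algebra, in particular a left and right $\hS$-module.

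For $k = 1$ the comparison map is the identity. For $k \geq 2$, I would factor the canonical comparison map (with $\otimes = \otimes_{S^0}$) as
\[
\E^{\otimes k} = \E \otimes \E^{\otimes (k-1)} \xrightarrow{\ \Id_\E \otimes g_{k-1}\ } \E \otimes \E^{\otimes_{\hS}(k-1)} \xrightarrow{\ h\ } \E \otimes_{\hS} \E^{\otimes_{\hS}(k-1)} = \E^{\otimes_{\hS} k},
\]
where $g_{k-1}\colon \E^{\otimes(k-1)} \to \E^{\otimes_{\hS}(k-1)}$ is the comparison map at stage $k-1$ and $h$ is the base-change map from $\otimes_{S^0}$ to $\otimes_{\hS}$. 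By the inductive hypothesis $g_{k-1}$ is an $S^0/p$-equivalence, and smashing with $\E$ preserves $S^0/p$-equivalences because $(\E \otimes g_{k-1}) \otimes S^0/p \simeq \E \otimes (g_{k-1} \otimes S^0/p)$; hence $\Id_\E \otimes g_{k-1}$ is an $S^0/p$-equivalence. The map $h$ is an $S^0/p$-equivalence by \cref{completetensor0} applied with $R = S^0$, $S = \hS$, $f$ the unit map (an $S^0/p$-equivalence by the observation above), $M = S^0/p$, $A = \E$, and $B = \E^{\otimes_{\hS}(k-1)}$. Composing, the comparison map for $k$ is an $S^0/p$-equivalence, i.e.\ an equivalence after $p$-completion.

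I do not expect a serious obstacle here. The two points requiring a little care are: (a) checking that the displayed composite is genuinely the canonical comparison map $\E^{\otimes k} \to \E^{\otimes_{\hS} k}$, which follows from functoriality of the relative tensor product in the base ring together with the description of the iterated tensor power as a coproduct of $\bE_\infty$-rings (under $S^0$, resp.\ under $\hS$); and (b) recording the standard identification of "$p$-completion equivalence" with "$S^0/p$-equivalence", which is what allows \cref{completetensor0} to be invoked in the form needed.
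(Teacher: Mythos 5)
Your proof is correct and takes essentially the same approach as the paper's: it hinges on the same key input, namely that $S^0 \to \hS$ is an $M(p)$-equivalence and hence \cref{completetensor0} applies. The paper states the corollary in two sentences and leaves the inductive bookkeeping implicit; you spell out the induction on $k$ and the factorization through $\E \otimes \E^{\otimes_{\hS}(k-1)}$, which is precisely the detail the paper elides.
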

\begin{proof}
Recall that $p$-completion is localization with respect to the Moore spectrum $M(p)$. The map $S^0 \to \hat{S}$ is an $M(p)$-equivalence so the result follows from the previous lemma.
\end{proof}

Let $\cC$ be a presentably symmetric monoidal $\infty$-category and let $A \rightarrow B$ be a map in $\CAlg(\cC)$. In this situation, $B$ may be considered as an object in $\CAlg_{A}(\cC)$. Evaluating at the object $[0] \in \Delta$
gives a functor 
\[
\CAlg_{A}(\cC)^{\Delta} \to \CAlg_{A}(\cC)
\]
This functor admits a left adjoint 
\[
L \colon \CAlg_{A}(\cC) \to \CAlg_{A}(\cC)^{\Delta}
\]
such that $L(B)([n]) \simeq B^{\otimes_A( n+1)}$. We call $L(B)$ the Amitsur complex of $B$ over $A$, the cosimplicial object
\[
\xymatrix{B \ar@<0.5ex>[r] \ar@<-0.5ex>[r] & B \otimes_A B \ar[l] \ar@<-1ex>[r] \ar[r]  \ar@<1ex>[r] & B \otimes_A B \otimes_A B \ar@<-0.5ex>[l] \ar@<0.5ex>[l]  \ar@<-2ex>[r] \ar@<-1ex>[r] \ar[r] \ar@<1ex>[r] \ar@<2ex>[r] & \cdots. \ar@<-1.5ex>[l] \ar@<-0.5ex>[l] \ar@<0.5ex>[l] \ar@<1.5ex>[l]}
\]
By abuse of notation we give the same name to the image of $L(B)$ under the forgetful functor
\[
\CAlg_{A}(\cC)^{\Delta} \to \CAlg(\cC)^{\Delta}.
\] 
We denote this image by $B^{\otimes_A \bullet+1}$.

\begin{lem}\label{p:fracture}
There is a pullback square in $\CAlg(\Sp)^{\Delta}$
\[\xymatrix{
\E^{\otimes {\bullet +1}} \ar[r] \ar[d] & \E^{\otimes_{\hS} {\bullet +1}} \ar[d] \\ \Q \otimes \E^{\otimes {\bullet +1}} \ar[r] & \Q \otimes \E^{\otimes_{\hS} {\bullet +1}}.
}\]
\end{lem}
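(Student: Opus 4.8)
The plan is to verify that the given square is a pullback by checking it objectwise in each cosimplicial degree, since limits in $\CAlg(\Sp)^{\Delta}$ are computed pointwise. So fix $k \geq 0$ and consider the square
\[
\xymatrix{
\E^{\otimes k+1} \ar[r] \ar[d] & \E^{\otimes_{\hS} k+1} \ar[d] \\ \Q \otimes \E^{\otimes k+1} \ar[r] & \Q \otimes \E^{\otimes_{\hS} k+1}.
}
\]
First I would recall the standard arithmetic fracture square for a spectrum $X$: the square with corners $X$, $X_p^{\wedge}$, $\Q \otimes X$, and $\Q \otimes X_p^{\wedge}$ is a pullback, provided $X$ is suitably bounded below or, more robustly, provided one uses the $p$-completion $X_p^{\wedge}$ together with the rationalization. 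For $X = \E^{\otimes k+1}$, which is $p$-local (indeed $E$-local) and connective-ish enough for the fracture square to apply, the square with corners $\E^{\otimes k+1}$, $(\E^{\otimes k+1})_p^{\wedge}$, $\Q \otimes \E^{\otimes k+1}$, $\Q \otimes (\E^{\otimes k+1})_p^{\wedge}$ is a pullback of $\bE_{\infty}$-rings. (Since $\E$ is already $p$-local, there is no need for the arithmetic square over all primes; only the prime $p$ contributes.) The point is then to identify the right-hand column of this fracture square with the right-hand column of the square we want.

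The key input is \Cref{completetensor}: for each $k \geq 1$, the map $\E^{\otimes k+1} \to \E^{\otimes_{\hS} k+1}$ becomes an equivalence after $p$-completion, i.e.\ $(\E^{\otimes k+1})_p^{\wedge} \simeq (\E^{\otimes_{\hS} k+1})_p^{\wedge} \simeq \E^{\otimes_{\hS} k+1}$, where the last equivalence holds because $\E^{\otimes_{\hS} k+1}$ is already $p$-complete (each tensor factor $\E$ is $p$-complete and $p$-completeness is preserved under the relative smash product over $\hS$, $\hS$ itself being $p$-complete). Hence the corner $(\E^{\otimes k+1})_p^{\wedge}$ of the fracture square is identified with $\E^{\otimes_{\hS} k+1}$, and correspondingly $\Q \otimes (\E^{\otimes k+1})_p^{\wedge} \simeq \Q \otimes \E^{\otimes_{\hS} k+1}$. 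These identifications are compatible with the maps from $\E^{\otimes k+1}$ and its rationalization, so the fracture pullback square is equivalent to the square displayed above. Finally, one must check that all of this is natural in $[k] \in \Delta^{\op}$: the fracture square is functorial in the input spectrum, $p$-completion and rationalization are functors on $\CAlg(\Sp)$, and the equivalences of \Cref{completetensor} are natural, so assembling over the cosimplicial object $\E^{\otimes \bullet+1}$ yields the asserted pullback square in $\CAlg(\Sp)^{\Delta}$.

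The main obstacle I anticipate is the hypothesis needed to invoke the arithmetic fracture square: the naive version $X \simeq X_p^{\wedge} \times_{\Q \otimes X_p^{\wedge}} \Q \otimes X$ requires a boundedness condition (e.g.\ $X$ bounded below, or more precisely that $X$ is $p$-local so that only the single prime $p$ is relevant and $X_p^{\wedge}$ agrees with the profinite completion). Here one should use that $\E$ has homotopy concentrated in even degrees and is flat over a regular complete local ring, so $\E^{\otimes k+1}$ is a filtered colimit of even $p$-local ring spectra with well-behaved homotopy; in particular $\E^{\otimes k+1}$ is $E$-local hence $p$-local, and its $p$-completion is the Bousfield localization at the mod-$p$ Moore spectrum, which is exactly the setting of \Cref{completetensor}. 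One then notes that for a $p$-local spectrum the fracture square over $\Z$ reduces to the two-term square at $p$, which is the standard Sullivan arithmetic square and requires no further hypotheses. The rest is bookkeeping with naturality, which is routine given the functoriality already established.
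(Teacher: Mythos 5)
Your overall strategy (objectwise reduction, arithmetic fracture square, \Cref{completetensor}) is close to the paper's, but there is a genuine gap in the key step. You claim that $\E^{\otimes_{\hS} k+1}$ is already $p$-complete, so that \Cref{completetensor} lets you identify $(\E^{\otimes k+1})_p^{\wedge} \simeq \E^{\otimes_{\hS} k+1}$ on the nose, turning the fracture square for $\E^{\otimes k+1}$ directly into the square in the statement. This is where the argument breaks. Your justification --- that ``$p$-completeness is preserved under the relative smash product over $\hS$'' --- is false. The relative tensor product is computed by a bar construction whose terms contain $\hS^{\otimes m}$, and $\hS \otimes \hS$ is itself not $p$-complete (already $\pi_0(\hS \otimes \hS)$ receives a map from $\Z_p \otimes_{\Z} \Z_p$, which is not $p$-complete); geometric realization is a colimit and colimits do not preserve $p$-completeness. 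More concretely, $\pi_*(\E \otimes_{\hS} \E)$ involves $\pi_*\E \otimes_{\pi_*\hS} \pi_*\E$, and tensor products of $p$-complete rings such as $\Z_p[[x]] \otimes_{\Z_p} \Z_p[[y]]$ are not $p$-complete. If $\E^{\otimes_{\hS} k+1}$ were $p$-complete, the lemma would be a near-tautology, which is not the tenor of the paper: \Cref{completetensor} only gives an equivalence \emph{after} $p$-completing both sides.

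The fix the paper uses avoids ever identifying $\E^{\otimes_{\hS} k+1}$ with a $p$-completion: enlarge the square to a $2 \times 3$ diagram by adjoining a third column $\big(\E^{\otimes k+1}\big)_p^{\wedge} \to \Q \otimes \big(\E^{\otimes k+1}\big)_p^{\wedge}$. The outer $2 \times 2$ square is the arithmetic fracture square for $\E^{\otimes k+1}$, hence a pullback. The \emph{right-hand} square is the arithmetic fracture square for $\E^{\otimes_{\hS} k+1}$, since by \Cref{completetensor} one has $\big(\E^{\otimes_{\hS} k+1}\big)_p^{\wedge} \simeq \big(\E^{\otimes k+1}\big)_p^{\wedge}$ --- note this only uses the $p$-completions agree, not that $\E^{\otimes_{\hS} k+1}$ is $p$-complete. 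Both the outer and right squares being pullbacks, the pasting lemma gives that the left square is a pullback, which is the claim. Your worry about boundedness hypotheses for the fracture square is, by contrast, not a real issue: for a $p$-local spectrum the Sullivan arithmetic square at the prime $p$ is a pullback unconditionally.
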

\begin{proof}
Consider the diagram of cosimplicial spectra 
\[\xymatrix{
\E^{\otimes {\bullet +1}} \ar[r] \ar[d] & \E^{\otimes_{\hS} {\bullet +1}} \ar[d]  \ar[r]& \left ( \E^{\otimes {\bullet +1}}\right)_p^{\wedge} \ar[d]\\ 
\Q \otimes \E^{\otimes {\bullet +1}} \ar[r] & \Q \otimes \E^{\otimes_{\hS} {\bullet +1}} \ar[r]& \Q \otimes \left( \E^{\otimes {\bullet +1}} \right)_p^{\wedge}.
}\]
At cosimplicial degree $k$, the outer square is the arithmetic fracture square associated to $E^{\otimes k +1}$, thus it is a pullback square. We show that the right-hand square is a pullback square. By \cref{completetensor} at cosimplicial degree $k$ it is the fracture square attached to $E^{\otimes_{\hat{S}} k +1}$. The claim now follows from the pasting lemma.
\end{proof}

\begin{cor}\label{cor:Squere}
There is a pullback square in $\CAlg(\Sp)^{\Delta}$
\[\xymatrix{
\Prod{\cF}\E^{\otimes {\bullet +1}} \ar[r] \ar[d] & \Prod{\cF}\E^{\otimes_{\hS} {\bullet +1}} \ar[d] \\ \Prod{\cF}\Q \otimes \E^{\otimes {\bullet +1}} \ar[r] & \Prod{\cF}\Q \otimes \E^{\otimes_{\hS} {\bullet +1}}.
}\]
\end{cor}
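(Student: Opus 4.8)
The plan is to deduce this immediately from \Cref{p:fracture} together with the fact that ultraproducts commute with finite limits. The one point requiring care is that the ambient category $\CAlg(\Sp)^{\Delta}$ is not compactly generated, so \Cref{rem:finitelims} does not apply to it directly; instead one transports both the computation of $\Prod{\cF}$ and the detection of pullbacks down to underlying spectra.

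First I would record the relevant bookkeeping. Limits in $\CAlg(\Sp)^{\Delta}$ are computed and detected by the family of functors given by the evaluation functors $\mathrm{ev}_{[k]}\colon \CAlg(\Sp)^{\Delta} \to \CAlg(\Sp)$ followed by the forgetful functor $\CAlg(\Sp)\to\Sp$, since evaluation detects limits in functor categories and the forgetful functor out of commutative algebras is conservative and preserves limits; hence a square in $\CAlg(\Sp)^{\Delta}$ is a pullback square if and only if its image under each such composite is a pullback square of spectra. On the other hand, as observed in the remark at the beginning of this subsection, ultraproducts commute with the forgetful functor from commutative algebras and with restriction along diagram shapes: the forgetful functor $\CAlg(\Sp)\to\Sp$ preserves products and filtered colimits by \cite[Proposition 3.2.2.1, Corollary 3.2.3.2]{ha}, and evaluation functors preserve all limits and colimits. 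Therefore, applying $\mathrm{ev}_{[k]}$ and then the forgetful functor to the square $\Prod{\cF}(-)$ in the statement produces, for each $k \ge 0$, the square of spectra obtained by applying $\Prod{\cF}$ to the underlying square of \Cref{p:fracture} at cosimplicial degree $k$, namely
\[\xymatrix{
\Prod{\cF}\E^{\otimes k+1} \ar[r] \ar[d] & \Prod{\cF}\E^{\otimes_{\hS} k+1} \ar[d] \\ \Prod{\cF}\Q \otimes \E^{\otimes k+1} \ar[r] & \Prod{\cF}\Q \otimes \E^{\otimes_{\hS} k+1}.
}\]

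Next I would invoke \Cref{p:fracture}: at cosimplicial degree $k$ the underlying square of spectra there is a pullback square, being the arithmetic fracture square of $\E^{\otimes k+1}$. Since $\Sp$ is compactly generated, \Cref{rem:finitelims} gives that $\Prod{\cF}$ preserves finite limits in $\Sp$, in particular pullbacks, so the displayed square above is a pullback square of spectra for every $k$. By the reduction of the previous paragraph, this is exactly the condition needed for $\Prod{\cF}$ applied to the square of \Cref{p:fracture} to be a pullback square in $\CAlg(\Sp)^{\Delta}$, which is the claim.

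As indicated, there is no genuinely hard step: the only thing to get right is the interplay between the category in which $\Prod{\cF}$ is formed, the category in which it is computed, and the category in which pullbacks are detected — all of which is handled by passing to underlying spectra and using that $\Prod{\cF}$ is an infinite product followed by a filtered colimit in $\Sp$ (cf.\ \Cref{rem:ultramodules}).
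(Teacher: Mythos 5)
Your proof is correct and takes the same approach as the paper: the paper's proof is the one-line observation that the corollary follows from \cref{p:fracture} together with the fact that ultraproducts commute with finite limits in $\Sp$. You have simply (and correctly) filled in the implicit bookkeeping that pullbacks in $\CAlg(\Sp)^{\Delta}$ are detected degreewise on underlying spectra, where \cref{rem:finitelims} applies.
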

\begin{proof}
This follows from \cref{p:fracture} plus the fact that ultraproducts commute with finite limits in $\Sp$.
\end{proof}

The next two theorems follow from the main results of \cref{ss:ratio_formal} and \cref{ss:formal_infty}.

\begin{thm}\label{l:formalh}
The canonical diagram in $\CAlg(\Sp)^{\Delta \times \Delta^1}$
\[
\Prod{\cF}\Q \otimes \E^{\otimes {\bullet +1}} \to  \Prod{\cF}\Q \otimes \E^{\otimes_{\hS} {\bullet +1}} \]
is formal.
\end{thm}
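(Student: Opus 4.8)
The plan is to prove the statement by first establishing the corresponding rational formality at each finite prime and then transporting it across the ultrafilter; these are precisely the jobs carried out in \cref{ss:ratio_formal} and \cref{ss:formal_infty}, and the theorem is their combination.

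First I would fix a prime $p$ and treat the finite-prime diagram $\Q \otimes \E^{\otimes\bullet+1} \to \Q\otimes\E^{\otimes_{\hS}\bullet+1}$ in $\CAlg(\Sp)^{\Delta\times\Delta^1}$. Rationalizing the ground rings gives $\Q\otimes S^0 \simeq H\Q$ and $\Q\otimes\hS \simeq H\Q_p$ (the higher homotopy of $\hS$ is torsion), and, because rationalization is smashing, base change of Amitsur complexes identifies the diagram with
\[
(\Q\otimes\E)^{\otimes_{H\Q}\bullet+1} \longrightarrow (\Q\otimes\E)^{\otimes_{H\Q_p}\bullet+1},
\]
the map being the one induced by $H\Q\to H\Q_p$. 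Writing $\Lambda := \pi_*(\Q\otimes\E)$, a graded $\Q$-algebra concentrated in even degrees, the key input from \cref{ss:ratio_formal} — obtained by Andre--Quillen (equivalently topological Andre--Quillen) obstruction theory — is that $\Q\otimes\E$ is formal as an $\bE_\infty$-ring, $\Q\otimes\E \simeq H\Lambda$, compatibly under base change along $H\Q\to H\Q_p$; the relevant obstruction and indeterminacy groups vanish because they are forced to lie in groups of odd total parity, which are zero since $\Lambda$, and hence its (topological) cotangent complex, is concentrated in even degrees. Granting this, I would invoke the first part of \Cref{l:rational-heart}: the functor $H\colon\mathrm{GrAb}_\Q\to\Sp_\Q$ is symmetric monoidal and colimit-preserving (every $\Q$-module is flat), so it carries the purely algebraic diagram $\Lambda^{\otimes_\Q\bullet+1}\to\Lambda^{\otimes_{\Q_p}\bullet+1}$ in $\CAlg(\mathrm{GrAb})^{\Delta\times\Delta^1}$ onto the topological one, exhibiting the latter as formal.

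To pass to the ultrafilter, I would use that $H$, and hence $H^{\Delta\times\Delta^1}$, commutes with ultraproducts by \Cref{prop:Hcommutes}, together with the fact that an ultraproduct of commutative-algebra objects in $\mathrm{GrAb}^{\Delta\times\Delta^1}$ is again one (the ultraproducts here being filtered colimits of products, which commute with the forgetful and restriction functors, so that there is no ambiguity in $\Prod{\cF}$). Writing the finite-prime diagram of the previous paragraph as $H^{\Delta\times\Delta^1}(M_p)$ with $M_p\in\CAlg(\mathrm{GrAb})^{\Delta\times\Delta^1}$, this gives
\[
\Prod{\cF}\Q\otimes\E^{\otimes\bullet+1} \longrightarrow \Prod{\cF}\Q\otimes\E^{\otimes_{\hS}\bullet+1} \ \simeq \ H^{\Delta\times\Delta^1}\big(\Prod{\cF}M_p\big),
\]
which is formal by definition. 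This is the argument of \cref{ss:formal_infty}.

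The hard part is not the pointwise formality at each cosimplicial level, which is immediate once $\Q\otimes\E$ is known to be formal, but producing the formality of the entire cosimplicial object together with its $\bE_\infty$-structure and the $\Delta^1$-direction coherently, in one stroke. The device I would rely on is the interplay between the universal property of the Amitsur complex $L(-)$ (a left adjoint, hence determined by its value in cosimplicial degree zero) and the monoidality of $H$ on $\mathrm{GrAb}_\Q$: once $\Q\otimes\E$ is formal over $H\Q$ and compatibly over $H\Q\to H\Q_p$, applying the symmetric monoidal functor $H$ automatically upgrades this to formality of the whole diagram of $\bE_\infty$-rings, with no further coherence to verify. A subsidiary technical point, internal to \cref{ss:ratio_formal}, is handling the completion present in $\Lambda$ (the power-series directions in $\pi_0$) when checking the vanishing of the Andre--Quillen obstruction groups.
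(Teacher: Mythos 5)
Your overall strategy is the same as the paper's: prove formality of $\Q\otimes\E^{\otimes\bullet+1} \to \Q\otimes\E^{\otimes_{\hS}\bullet+1}$ at each individual prime (\cref{cor:formalp}), then transport through the ultraproduct using that $H$ commutes with ultraproducts (\cref{prop:Hcommutes}). The reduction via smashing localization to the Amitsur complexes of $E_{\Q}$ over $H\Q$ and $H\Q_p$, and the final step using \cref{l:rational-heart} to propagate formality of $E_\Q$ through the Amitsur construction by flatness/monoidality, also match what the paper does.

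However, the way you establish the crucial input — formality of $E_\Q = \Q\otimes\E$ as an $\bE_\infty$-ring over $H\Q_p$ — is not how the paper does it, and the justification you give is incorrect. The paper's main route is \cref{l:Lur}, which is \emph{not} obstruction theory: it appeals to Lurie's explicit construction of the unoriented Lubin--Tate ring $R^{un}_{G_0}$ (concentrated in degree $0$ after rationalization) and the canonical map $R^{un}_{G_0} \to R^{or}_{G_0} \simeq \E$, which yields the required $\bE_\infty$-map $H\pi_0(E_\Q) \to E_\Q$; this is then fed into \cref{l:k-form}, which is a hands-on periodicity argument. The paper \emph{does} sketch an obstruction-theoretic alternative in the remark following \cref{l:Lur}, but it is not the parity argument you describe: the obstruction groups are $\Ext^{d+\epsilon}_{\pi_0(B)}(\mathbb{L}_{\pi_0(B)/\Q_p},\pi_d(B))$, and since $\mathbb{L}_{\pi_0(B)/\Q_p}$ is concentrated in (homological) degree $0$ while $\pi_0(B)$ is a regular ring of positive Krull dimension, these $\Ext$ groups are ordinary $\Ext$ groups over a non-field and carry no automatic parity vanishing. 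The actual vanishing comes from the gap furnished by the $C_{p-1}$-action (\cref{ss:cp_1}), which forces $\pi_d(B)=0$ unless $2(p-1)\mid d$, combined with the global-dimension bound $\mathrm{gl.dim}(\pi_0(B)) \le n-1$: nonzero contributions would need both $d \ge 2(p-1)$ and $d+\epsilon \le n-1$, impossible once $p > (n+1)/2$. Saying ``$\Lambda$ is concentrated in even degrees, so the obstructions lie in odd total parity and vanish'' is wrong — it would prove formality of any $2$-periodic rational $\bE_\infty$-ring regardless of $\pi_0$, and no such statement holds.
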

\begin{proof}
We prove in \cref{cor:formalp} that this is true at every finite prime $p$. Therefore, we are done as \cref{prop:Hcommutes} implies that the functor $H$ commutes with ultraproducts.
\end{proof}

\begin{thm}\label{l:formalv1}
The cosimplicial $\bE_{\infty}$-ring
\[
\Prod{\cF} \E^{\otimes_{\hS} {\bullet +1}} 
\]
is formal.
\end{thm}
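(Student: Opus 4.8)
\emph{Proof strategy.} The plan is to exhibit $\Prod{\cF}\E^{\otimes_{\hS}\bullet+1}$ as the generalized Eilenberg--MacLane object $H^{\Delta}$ on its own homotopy; equivalently, to show that all of the obstruction classes (the Postnikov $k$-invariants, organized multiplicatively and cosimplicially in $\CAlg(\Sp)^{\Delta}$) measuring the failure of formality of $\E^{\otimes_{\hS}\bullet+1}$ become trivial after applying $\Prod{\cF}$. Since $H$ commutes with ultraproducts by \cref{prop:Hcommutes} and ultraproducts commute with finite limits in $\Sp$ by \cref{rem:finitelims}, this reduces to a uniform statement over the primes: for each fixed piece of bookkeeping data --- a truncation stage $m$, a pair of internal degrees, a cosimplicial level --- the relevant obstruction group vanishes for a cofinite set of primes, whereupon \cref{thm:los} forces the corresponding obstruction in the ultraproduct to vanish.

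The input producing this uniform vanishing is the weight decomposition. First I would use \cref{ss:cp_1} to equip $\E$ with its natural $C_{p-1}$-action --- through the Teichm\"uller lifts $C_{p-1}\subset\Z_p^{\times}$ acting on the Lubin--Tate ring, under which $\pi_{2j}\E$ has weight $j\bmod(p-1)$ --- and to record that $\hS$ carries the trivial action, so that $\E^{\otimes_{\hS}\bullet+1}$ is a cosimplicial $\bE_{\infty}$-ring in naive $C_{p-1}$-spectra. Using the operadic formalism of \cref{ss:monoid} and the weight-splitting of \cref{ss:weight} --- legitimate because $|C_{p-1}|=p-1$ is a unit in $\pi_0\E$ --- one refines $\E^{\otimes_{\hS}\bullet+1}$ to a cosimplicial commutative algebra in $\hom(C_{p-1},\Z_p^{\times})$-graded spectra, with a decomposition
\[
\E^{\otimes_{\hS}\bullet+1}\;\simeq\;\bigoplus_{\chi}\bigl(\E^{\otimes_{\hS}\bullet+1}\bigr)_{\chi}
\]
refining the multiplication and every cosimplicial structure map. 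Using the $2$-periodicity of $\E$ --- suspension shifts weight by one --- each summand $(\E^{\otimes_{\hS}k+1})_{\chi}$ is a finite wedge of even suspensions, by multiples of $2(p-1)$, of the weight-$0$ algebra, whose homotopy is controlled by $\pi_{*}\hS$; the conclusion is that, within a fixed weight, the nonzero homotopy groups of $\E^{\otimes_{\hS}k+1}$ occupy a bounded family of residue classes modulo $2(p-1)$, so any equivariant cohomology operation linking two of them has degree at least $2(p-1)$ minus a constant independent of $p$. Hence the obstruction to formality at stage $m$ lies in a group that vanishes once $2(p-1)$ exceeds $m$ plus that constant, which happens for $\cF$-almost-all $p$ for each fixed $m$; applying $\Prod{\cF}$ and \cref{thm:los} kills every obstruction, and the resulting formal graded cosimplicial $\bE_{\infty}$-ring is equivalent to $\Prod{\cF}\E^{\otimes_{\hS}\bullet+1}$ with its weight grading. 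Forgetting the grading gives the asserted formality.

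The hard part --- and the reason \cref{ss:monoid} through \cref{ss:formal_infty} are needed rather than a short Postnikov argument --- is the coherence, namely issue~(2) of the section outline. One must make the weight decomposition compatible with the $\bE_{\infty}$-structure \emph{and} with the cosimplicial structure \emph{at the same time}, which forces one to work with commutative algebras internal to graded spectra rather than merely splitting each spectrum levelwise; and one must get around the non-boundedness of $\E$ --- it is $2$-periodic, hence has no finite Postnikov tower --- for which the weight grading is exactly the device that lets the obstruction theory be run in a bounded-below fashion, one weight at a time. A further subtlety is that the ``$k$-invariants recede to infinity'' heuristic has to be made into a genuine application of {\L}o{\'s}'s theorem: formality must be phrased as a sequence of assertions about finite truncations, each depending on only finitely much data at each prime and each governed by an obstruction group that the weight-sparsity estimate forces to vanish for cofinitely many $p$. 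The remaining, comparatively routine, inputs --- the pullback square \cref{cor:Squere} and the formality \cref{l:formalh} of the comparison map (established at the finite primes in \cref{cor:formalp}) --- then combine with this theorem to give the formality of $\Prod{\cF}\E^{\otimes\bullet+1}$.
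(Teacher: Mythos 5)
You've correctly identified the structural input --- the $C_{p-1}$-action of \cref{ss:cp_1}, the $\Alg_{A^{\otimes | \rhd}}$-formalism of \cref{ss:monoid}, the weight decomposition $Y_p^{\bullet}$ of the Amitsur complex (\cref{prop:Yp}), and, crucially, that after taking the ultraproduct each weight-$j$ piece is concentrated in the single degree $2j$ (\cref{lem:ultra-weight}). What's missing is the mechanism converting this concentration into formality, and this is where your proposal has a genuine gap.

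You finish by a Postnikov/obstruction-theory argument wrapped up with \cref{thm:los}. That is not how the paper argues, and the route does not go through as described. Formality of a cosimplicial $\bE_{\infty}$-ring in $\CAlg(\Sp)^{\Delta}$ is a higher-categorical statement involving infinitely many coherences; it is not a first-order sentence to which \cref{thm:los} applies. What does pass cleanly through the ultraproduct is the vanishing of the individual groups $\pi_l((\E^{\otimes_{\hS} k})_j)$ for $l \neq 2j$, each of which is zero for a cofinite set of primes (once $2(p-1)>|l-2j|$) --- and this needs no {\L}o{\'s}, only that $\pi_*$ commutes with ultraproducts. But to deduce formality from degree concentration one still needs an actual argument: you would have to set up a coherent obstruction theory for diagrams of $\bE_{\infty}$-rings, check its compatibility with ultraproducts, and contend with the fact that $\E$ is $2$-periodic and so has no bounded Postnikov tower. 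You flag these obstacles yourself ("the hard part is the coherence") but do not resolve them.

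The paper's actual mechanism is the degrading argument of \cref{p:ratio-formal} (built on \cref{prop:evenalg} and \cref{prop:operads}). Once $Y_{\cF}^{\bullet}$ lies in $\Alg_{\Z^{\otimes | \rhd}}(\Sp_{\Q}^{\Delta})$ with weight-$i$ piece concentrated in degree $2i$, one tensors with the inverse of the invertible graded object $L^{2} \in \Alg_{\Z^{\otimes}}(\mathrm{GrAb}_{\Q})$, $L^{2}(i)=\Q[2i]$, to shift every weight into degree zero. There $\Sp_{\Q}[0,0] \simeq {\Q}-\Mod$ via $H$ (\cref{l:rational-heart}), so the degraded object is tautologically in the image of $H^{D}$; tensoring back with $H^{D}(L^{2})$, and using that the free/forgetful adjunctions $L_{\Z} \dashv \theta_{\Z}$ commute with $H^{D}$, recovers formality of $Y_{\cF}^{\bullet}$ and hence of the restriction to the cone point $Y_{\cF}^{\bullet}(\ast) \simeq \Prod{\cF}\E^{\otimes_{\hS} \bullet+1}$. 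No truncations, no $k$-invariants, no {\L}o{\'s}. One small factual slip as well: $(\E^{\otimes_{\hS} k})_j$ is not a finite wedge of $2(p-1)$-fold shifts of the weight-$0$ algebra with homotopy controlled by $\pi_*\hS$; by $2$-periodicity it is a single suspension $\Sigma^{2j}(\E^{\otimes_{\hS} k})_0$, and its homotopy is $\pi_*(\E^{\otimes_{\hS} k})$ restricted to degrees $\equiv 2j \bmod 2(p-1)$, which is far larger than $\pi_*\hS$.
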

\begin{proof}
This is \cref{prop:cosimplicialformalobj}.
\end{proof}

\begin{lem}\label{l:formalv}
The canonical diagram in $\CAlg(\Sp)^{\Delta \times \Delta^1}$
\[
\Prod{\cF} \E^{\otimes_{\hS} {\bullet +1}} \rightarrow \Prod{\cF} \Q \otimes \E^{\otimes_{\hS} {\bullet +1}}
\]
is formal.
\end{lem}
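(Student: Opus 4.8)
The plan is to identify the canonical diagram with a \emph{localization} map and then transport formality across it, using that the generalized Eilenberg--MacLane functor $H$ commutes with filtered colimits. The key point is that, although $\Q\otimes(-)$ does \emph{not} commute with $\Prod{\cF}$ in general --- for instance $\Q\otimes\Prod{\cF}\hS\simeq H\Z_{\cF}$ whereas $\Prod{\cF}(\Q\otimes\hS)\simeq H\Q_{\cF}$ and $\Z_{\cF}\subsetneq\Q_{\cF}$ --- localization of a ring spectrum at a multiplicative subset of $\pi_0$ \emph{does} commute with $\Prod{\cF}$, and rationalization of an $\hS$-algebra is such a localization.

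First I would record that $\hS\otimes_{S^0}H\Q\simeq\hS[1/p]$ as $\hS$-algebras, where $\hS[1/p]$ is the localization of $\hS$ inverting $p\in\pi_0\hS=\Z_p$: both spectra have homotopy the ring $\Q_p$ concentrated in degree $0$, compatibly with the maps from $\hS$. Since each $\E^{\otimes_{\hS}k}$ is an $\bE_\infty$-algebra over $\hS$, for such an $X$ this gives $\Q\otimes X\simeq X\otimes_{\hS}\hS[1/p]\simeq X[1/p]$ and identifies the rationalization unit $X\to\Q\otimes X$ with the localization map $X\to X[1/p]$. Hence the canonical diagram of the lemma is $\Prod{\cF}\bigl(\E^{\otimes_{\hS}{\bullet+1}}\to\E^{\otimes_{\hS}{\bullet+1}}[1/p]\bigr)$, where at the index $p\in\cP$ one inverts that prime.

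Next I would check that $\Prod{\cF}$ turns this family of localizations into a single localization. Put $R:=\Prod{\cF}\E^{\otimes_{\hS}{\bullet+1}}\in\CAlg(\Sp)^{\Delta}$, an algebra over $\Prod{\cF}\hS\simeq H\Z_{\cF}$, and let $T:=\Prod{\cF}\{p^m\mid m\ge0\}$ be the corresponding multiplicative subset of $\pi_0 R$. Because $X[1/p]=\colim(X\xrightarrow{p}X\xrightarrow{p}\cdots)$, because homotopy groups commute with products and filtered colimits (hence with ultraproducts), and because localization of rings commutes with ultraproducts, the natural comparison map $R[T^{-1}]\to\Prod{\cF}\bigl(\E^{\otimes_{\hS}{\bullet+1}}[1/p]\bigr)$ --- which exists since $T$ acts invertibly on the target --- is a $\pi_*$-isomorphism, hence an equivalence in $\CAlg(\Sp)^{\Delta}$. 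Under this identification the canonical diagram becomes the localization map $R\to R[T^{-1}]$, viewed as an object of $\CAlg(\Sp)^{\Delta\times\Delta^1}$.

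Finally, by \cref{l:formalv1} the cosimplicial $\bE_\infty$-ring $R$ is formal, say $R\simeq H^{\Delta}(M)$ with $M=\pi_*^{\Delta}R\in\CAlg(\mathrm{GrAb})^{\Delta}$; then $T$ corresponds to a multiplicative set of graded abelian groups and the algebraic localization $M\to M[T^{-1}]$ lives in $\CAlg(\mathrm{GrAb})^{\Delta}$. Since $H$, and therefore $H^{\Delta\times\Delta^1}$, commutes with filtered colimits by \cref{prop:Hcommutes}, applying $H$ to the algebraic diagram $M\xrightarrow{p}M\xrightarrow{p}\cdots$ together with its colimit cocone $M[T^{-1}]$ produces the corresponding topological diagram; restricting to the two endpoints identifies $(R\to R[T^{-1}])$ with $H^{\Delta\times\Delta^1}(M\to M[T^{-1}])$, so the canonical diagram is formal. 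The only genuinely non-formal step is this reduction --- recognizing the rationalization as a localization internal to $\hS$-modules and checking that localization, unlike rationalization, passes through the ultraproduct; once it is in place, \cref{l:formalv1} together with the fact that $H$ preserves filtered colimits closes the argument immediately.
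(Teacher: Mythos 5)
Your proof is correct, and it in fact addresses a real subtlety that the paper's one-line proof (``This follows from \cref{l:formalv1} by \cref{l:formal_q}'') glosses over. Namely, \cref{l:formal_q} gives formality of the map to the \emph{rationalization} $\Q\otimes\Prod{\cF}\E^{\otimes_{\hS}\bullet+1}$, but --- as you observe --- this is \emph{not} the target of the lemma. Since $\pi_*\Prod{\cF}\E^{\otimes_{\hS}\bullet+1} = \Prod{\cF}\pi_*\E^{\otimes_{\hS}\bullet+1}$ is already a $\Q$-vector space (each integer $n$ is invertible in $\pi_*\E^{\otimes_{\hS}k}$ for all primes $p\nmid n$, hence on a set in $\cF$), the source is already rational, so $\Q\otimes\Prod{\cF}\E^{\otimes_{\hS}\bullet+1}\simeq\Prod{\cF}\E^{\otimes_{\hS}\bullet+1}$; whereas the actual target $\Prod{\cF}(\Q\otimes\E^{\otimes_{\hS}\bullet+1})$ has $\pi_0$ containing, e.g., $[1/p]_{\cF}\in\Prod{\cF}\Q_p\setminus\Prod{\cF}\Z_p$. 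Thus applying \cref{l:formal_q} verbatim with $R=\Prod{\cF}\E^{\otimes_{\hS}\bullet+1}$ only proves the formality of an identity map, which does not yield the statement.

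Your repair --- recognize the fiberwise rationalization as the localization of the $\hS$-algebra $\E^{\otimes_{\hS}k}$ at $p$, check that this localization passes through the ultraproduct (where one localizes at the larger multiplicative set $T=\Prod{\cF}\{p^m\}\subset\Z_{\cF}$, not merely at $[p]_{\cF}$), and then invoke formality of $R$ together with the fact that $H$ preserves filtered colimits (\cref{prop:Hcommutes}) so that $R[T^{-1}]\simeq H^{\Delta}(M[T^{-1}])$ --- is sound. The one place to be slightly more careful is the last step: $M[T^{-1}]$ is a filtered colimit over the poset $T$ (not a sequential colimit over powers of $p$, since $T$ is uncountable), and one should note that $H^{\Delta}(M[T^{-1}])$ and $(H^{\Delta}M)[T^{-1}]$ agree as cosimplicial $\bE_{\infty}$-rings (both are the initial object under $R$ on which $T$ is invertible; a $\pi_*$-check gives the equivalence). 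A strictly parallel repair, closer in spirit to the proof of \cref{l:formal_q}, is to write the target as $R\otimes_{H\Z_{\cF}}H\Q_{\cF}$ and use that $\Q_{\cF}=\Prod{\cF}\Q_p$ is flat over $\Z_{\cF}=\Prod{\cF}\Z_p$; this is the same computation packaged as base change along $\Z_{\cF}\to\Q_{\cF}$ rather than as a localization. Either way, the key point you identified --- that the ultraproduct of rationalizations is \emph{not} the rationalization of the ultraproduct, but \emph{is} a localization at a multiplicative set ultraproduced from $\{p^m\}$ --- is exactly the observation needed to make the paper's argument rigorous.
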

\begin{proof}
This follows from \cref{l:formalv1} by \cref{l:formal_q}.
\end{proof}

\begin{lem}\label{l:tor}
For every  $k\geq 1$, $\pi_*(\E^{\otimes k})$ is torsion-free.
\end{lem}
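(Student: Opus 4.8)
The plan is to use that $\E = E_{n,p}$ is Landweber exact over $MU$, so that the Landweber exact functor theorem supplies a natural isomorphism $\E_*(Y) \cong \E_* \otimes_{MU_*} MU_*(Y)$ for every spectrum $Y$ and, equivalently, asserts that $\E_* \otimes_{MU_*}(-)$ is an exact functor on the category of $MU_*MU$-comodules. The key consequence I would record is that $\E_* \otimes_{MU_*}(-)$ sends a torsion-free comodule $M$ to a torsion-free abelian group: for any prime $\ell$ the sequence $0 \to M \xrightarrow{\ell} M \to M/\ell M \to 0$ is short exact (since $M$ is torsion-free) and consists of comodule maps, so applying the exact functor $\E_* \otimes_{MU_*}(-)$ shows that $\ell$ acts injectively on $\E_* \otimes_{MU_*} M$.

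I would then prove by induction on $m \ge 0$ that the comodule $MU_*(\E^{\otimes m})$ is torsion-free. The case $m = 0$ is $MU_*$. For the inductive step, the rearrangement $MU \otimes \E^{\otimes(m+1)} \simeq \E \otimes (MU \otimes \E^{\otimes m})$, the Landweber isomorphism, and the flatness of $MU_*MU$ over $MU_*$ give
\[
MU_*(\E^{\otimes(m+1)}) \cong \E_* \otimes_{MU_*} \big( MU_*MU \otimes_{MU_*} MU_*(\E^{\otimes m}) \big).
\]
Because $MU_*MU$ is a free $MU_*$-module, the parenthesized factor is, as an abelian group, a direct sum of copies of $MU_*(\E^{\otimes m})$, hence torsion-free by the inductive hypothesis; and it is a comodule, being $MU_*(MU \otimes \E^{\otimes m})$. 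The observation above then shows that $MU_*(\E^{\otimes(m+1)})$ is torsion-free.

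Finally, for every $k \ge 1$ we have $\pi_*(\E^{\otimes k}) = \E_*(\E^{\otimes(k-1)}) \cong \E_* \otimes_{MU_*} MU_*(\E^{\otimes(k-1)})$, and the second factor is a torsion-free comodule by the induction (for $k = 1$ this is just the torsion-freeness of $\pi_*\E \cong W(\F_{p^n})\powser{u_1,\dots,u_{n-1}}[u^{\pm 1}]$), so the observation of the first paragraph finishes the proof. The step that I expect to need the most care is that observation, since it uses Landweber exactness of $\E_*$ rather than flatness of $\E_*$ over $MU_*$, which fails. A shorter but more black-box route would be to quote the standard fact that $(\E_*, \E_*\E)$ is a flat Hopf algebroid, so that $\pi_*(\E^{\otimes k}) \cong \E_*\E^{\otimes_{\E_*}(k-1)}$ is flat over $\pi_*\E$ and hence, $\pi_*\E$ being torsion-free, torsion-free.
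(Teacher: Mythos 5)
Both of your arguments are correct, but the paper's proof is essentially the one you sketch as an afterthought in the last two sentences: it observes that $\pi_*(\E^{\otimes k})$ is flat over $\pi_*\E$ (the standard flat Hopf algebroid fact, i.e.\ $\pi_*(\E^{\otimes k}) \cong E_*E^{\otimes_{E_*}(k-1)}$ with $E_*E$ flat over $E_*$), that $\pi_*\E$ is flat over $\Z_p$, and that composites of flat maps are flat, hence torsion-free. Your primary argument takes a genuinely different route: rather than invoking flatness of $E_*E$ over $E_*$ as a black box, you carry out the induction directly from the Landweber exact functor theorem, inducting on $MU_*(\E^{\otimes m})$ and using exactness of $E_* \otimes_{MU_*}(-)$ on $MU_*MU$-comodules to propagate torsion-freeness through the sequence $0 \to M \xrightarrow{\ell} M \to M/\ell \to 0$. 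This is correct (multiplication by an integer $\ell$ is indeed an $MU_*MU$-comodule endomorphism, so $M/\ell$ is a comodule and the sequence is a short exact sequence of comodules), and it has the virtue of making visible exactly where Landweber exactness, as opposed to flatness of $E_*$ over $MU_*$, enters. On the other hand it is considerably longer, and since the flatness of $E_*E$ over $E_*$ is itself a standard consequence of the same Landweber exactness input, the detour does not reduce the logical dependencies — the paper's one-line argument is the more economical packaging of the same underlying fact.
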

\begin{proof}
It follows inductively from \cite[Proposition 2.16]{hoveystrickland} that $\pi_*(\E^{\otimes k})$ is flat over $\pi_*(\E)$. On the other hand, $\pi_*(\E)$ is flat as a $\Z_p$-module. Thus $\pi_*(\E^{\otimes k})$ is flat as a $\Z_p$-module and therefore torsion-free.
\end{proof}

\begin{lem}\label{l:surj_alg}
Let $f \colon A \to B$ be a map of abelian groups that induces a surjection on the quotient $A/nA \to B/nB$ for every $n \in \mathbb{N}$. Then the map 
\[
B \oplus \Q \otimes A \to \Q \otimes B 
\]
is surjective.
\end{lem}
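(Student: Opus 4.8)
The plan is to identify the map $B \oplus \Q\otimes A \to \Q\otimes B$ as the difference $(b,x)\mapsto 1\otimes b - (\Id_\Q\otimes f)(x)$ of the canonical map $B\to\Q\otimes B$ and the rationalization of $f$, and then to check surjectivity after reducing to a convenient set of elements. The reduction step is the only point requiring any care: every element of $\Q\otimes B$ is a finite sum $\sum_j \frac{p_j}{q_j}\otimes b_j$, and clearing denominators rewrites it as $\frac1m\otimes b$ for a single $m\in\N$ and a single $b\in B$. So it suffices to show that each element of the form $\frac1m\otimes b$ lies in the image.

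For this I would invoke the hypothesis with $n=m$: surjectivity of $A/mA\to B/mB$ yields $a\in A$ and $b'\in B$ with $f(a)=b+mb'$, whence in $\Q\otimes B$ one has
\[
\frac1m\otimes b \;=\; \frac1m\otimes\bigl(f(a)-mb'\bigr) \;=\; (\Id_\Q\otimes f)\Bigl(\tfrac1m\otimes a\Bigr) - 1\otimes b',
\]
exhibiting $\frac1m\otimes b$ as the image of $\bigl(-b',\,-\tfrac1m\otimes a\bigr)$ under the map above (the overall sign is immaterial, since the image is a subgroup). There is no genuine obstacle here; the argument is a short diagram chase, and the full-strength hypothesis "for every $n$" is used precisely because the denominator $m$ occurring in an arbitrary element of $\Q\otimes B$ can be any natural number.
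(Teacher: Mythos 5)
Your proof is correct and takes essentially the same approach as the paper: reduce to an element of the form $\frac{1}{m}\otimes b$, use surjectivity of $A/mA \to B/mB$ to write $b \equiv f(a) \pmod{mB}$, and split $\frac{1}{m}\otimes b$ into the image of $\frac{1}{m}\otimes a$ plus an integral term. The paper's proof simply starts from an element $\frac{1}{n}\otimes b$ without spelling out the clearing-of-denominators reduction, and uses the opposite sign convention, but these are inessential differences.
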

\begin{proof}
Let $\frac{1}{n} \otimes b \in \Q \otimes B$.
Since $ A/nA \to  B/nB$ is surjective, we have some $a\in A$, $b_0 \in B$ such that $b = f(a) +nb_0$.
It follows that $\frac{1}{n} \otimes b = \frac{1}{n} \otimes f(a) + 1\otimes b_0. $
\end{proof}

\begin{lem}\label{l:surj_sp}
Let $A \to B$ be a map of $p$-local spectra that is an equivalence after $p$-completion and assume that $\pi_i(A)$ is torsion-free for all $i \in \Z$. Then the map
\[
\pi_i(B \oplus \Q \otimes A) \to \pi_i(\Q \otimes B) .
\]
is surjective.
\end{lem}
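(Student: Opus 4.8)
The plan is to reduce the statement to the purely algebraic \cref{l:surj_alg}. Write $f\colon A\to B$ for the given map. By flatness of $\Q$ we have $\pi_i(\Q\otimes A)\cong\Q\otimes\pi_i(A)$ and $\pi_i(\Q\otimes B)\cong\Q\otimes\pi_i(B)$, so that $\pi_i(B\oplus\Q\otimes A)\cong\pi_i(B)\oplus\bigl(\Q\otimes\pi_i(A)\bigr)$ and the map in the statement is, on $\pi_i$ and up to sign, exactly the map
\[
\pi_i(B)\oplus\bigl(\Q\otimes\pi_i(A)\bigr)\longrightarrow\Q\otimes\pi_i(B)
\]
considered in \cref{l:surj_alg} for the homomorphism $\pi_i(f)$. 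Hence it suffices to show that $\pi_i(f)$ induces a surjection $\pi_i(A)/n\pi_i(A)\to\pi_i(B)/n\pi_i(B)$ for every $n\geq 1$.

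The first step I would carry out is to reduce to the case $n=p^k$. Writing $n=p^k m$ with $m$ coprime to $p$, the integer $m$ acts invertibly on the $\Z_{(p)}$-modules $\pi_i(A)$ and $\pi_i(B)$ --- this is where $p$-locality of $A$ and $B$ enters --- so $n\pi_i(A)=p^k\pi_i(A)$ and similarly for $B$; thus the mod-$n$ reduction of $\pi_i(f)$ coincides with its mod-$p^k$ reduction, and it is enough to treat prime powers.

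Next, let $C$ be the cofiber of $f$. Since the map $X\to X_p^{\wedge}$ is always an equivalence after smashing with the mod-$p$ Moore spectrum and $f$ is an equivalence after $p$-completion, we get $C_p^{\wedge}\simeq 0$ and hence $C/p\simeq 0$; equivalently, multiplication by $p$ is an equivalence on $C$, so $\pi_{\ast}(C)$ is (uniquely) $p$-divisible. I would then feed this into the long exact sequence
\[
\cdots\longrightarrow\pi_i(A)\xrightarrow{\ \pi_i(f)\ }\pi_i(B)\longrightarrow\pi_i(C)\xrightarrow{\ \partial\ }\pi_{i-1}(A)\longrightarrow\cdots
\]
and set $Q=\coker(\pi_i(f))$, so that $Q\cong\ker\bigl(\partial\colon\pi_i(C)\to\pi_{i-1}(A)\bigr)$. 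The subgroup $\im(\partial)\subseteq\pi_{i-1}(A)$ is torsion-free, by the torsion-freeness hypothesis on $\pi_{\ast}(A)$, and, being a quotient of $\pi_i(C)$, it is $p$-divisible. A short diagram chase in $0\to Q\to\pi_i(C)\xrightarrow{\partial}\im(\partial)\to 0$ then shows that $Q$ itself is $p$-divisible: given $q\in Q\subseteq\pi_i(C)$, unique $p$-divisibility of $\pi_i(C)$ produces $x$ with $px=q$, and since $p\partial(x)=\partial(q)=0$ with $\im(\partial)$ torsion-free we get $\partial(x)=0$, so $x\in\ker(\partial)=Q$. Therefore $Q=p^kQ$ for all $k$, i.e.\ $Q/p^k=0$, and by right-exactness of $-\otimes\Z/p^k$ this is precisely the surjectivity of $\pi_i(A)/p^k\to\pi_i(B)/p^k$. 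Combined with the reduction above, $\pi_i(f)$ is surjective modulo every $n$, and \cref{l:surj_alg} finishes the argument.

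I expect the only genuine obstacle to be the $p$-divisibility of the cokernel $Q$ in the last step: this is exactly the point at which torsion-freeness of $\pi_{\ast}(A)$ is indispensable (without it one only gets the analogous statement for $A/p^k$, not for $\pi_i(A)/p^k$). Everything else is routine bookkeeping about $p$-local and $p$-complete spectra.
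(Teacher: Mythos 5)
Your proof is correct, but it follows a genuinely different route than the paper's in the key step. Both arguments reduce, via \cref{l:surj_alg} and $p$-locality, to showing that $\pi_i(A)/p^k\to\pi_i(B)/p^k$ is surjective for all $k$. The paper then smashes the horizontal ``multiplication by $p^m$'' cofiber sequences with $A$ and $B$ and compares the resulting universal-coefficient short exact sequences; torsion-freeness of $\pi_*(A)$ collapses the $A$-row to the single term $\pi_i(A)/p^m\cong\pi_i(A\otimes M(p^m))$, and the equivalence $A\otimes M(p^m)\simeq B\otimes M(p^m)$ then gives the result (in fact an isomorphism on mod-$p^m$ homotopy, and as a byproduct torsion-freeness of $\pi_*(B)$). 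You instead pass to the cofiber $C$ of $f$, observe that $C/p\simeq 0$ forces $\pi_*(C)$ to be uniquely $p$-divisible, and then run a diagram chase on $0\to Q\to\pi_i(C)\xrightarrow{\partial}\im(\partial)\to 0$ to conclude that $Q=\coker(\pi_i(f))$ is $p$-divisible; here torsion-freeness of $\pi_{i-1}(A)$ is what lets you conclude $\partial(x)=0$ from $p\,\partial(x)=0$. Your version isolates the cokernel directly and produces exactly the surjectivity needed, while the paper's version produces slightly more information (an isomorphism mod $p^m$, plus torsion-freeness of $\pi_*(B)$) that the final application does not actually require. Both are elementary and of comparable length, and both hinge on torsion-freeness of $\pi_*(A)$ at the same logical point.
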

\begin{proof}
Let $S^0/p^m$ be the Moore spectrum.
The map $A \otimes S^0/p^m \to B \otimes S^0/p^m$ is an equivalence for $m=1$ by the definition of $p$-completion and for higher 
$m$ by induction using cofiber sequences.  
 Consider the following diagram: 
\[
\xymatrix{
A \ar[r]^{\times p^m}\ar[d]& A\ar[d] \ar[r] & A \otimes S^0/p^m \ar[d]^{\simeq} \\
B \ar[r]^{\times p^m} & B  \ar[r]& B \otimes S^0/p^m.
}
\]
In this diagram the two rows are cofiber sequences.
Taking  the corresponding map of long exact sequences in homotopy groups  and keeping in mind $\pi_i(A)$ is torsion-free, we get the following diagram in which both rows are exact:
\[
\xymatrix{
0\ar[r]& \pi_i(A)/p^m\pi_i(A)  \ar[r]^{\cong} \ar[d] & \pi_i(A \otimes S^0/p^m) \ar[r] \ar[d]^{\cong}& 0 \ar[d] & \empty   \\
0\ar[r]& \pi_i(B)/p^m\pi_i(B)  \ar[r]          & \pi_i(B \otimes S^0/p^m) \ar[r]    & \pi_i(B)[p^m]   \ar[r]& 0.   \\
}
\]
We thus conclude that $\pi_i(B)$ is torsion-free and that $ \pi_i(A)/p^m\pi_i(A) \to  \pi_i(B)/p^m\pi_i(B)$ is an isomorphism.
Since $A$ and $B$ are $p$-local, we get that $ \pi_i(A)/n\pi_i(A) \to  \pi_i(B)/n\pi_i(B)$ is an isomorphism for every $n \in \N$.
\cref{l:surj_alg} gives the conclusion.
\end{proof}

\begin{prop}\label{p:surj}
For each $k$, $i$, and $n$, the canonical map
\[
\pi_i( \Prod{\cF}\E^{\otimes_{\hS} k} \oplus   \Prod{\cF}\Q \otimes \E^{\otimes k} ) \to \pi_i(\Prod{\cF}\Q \otimes \E^{\otimes_{\hS} k } )
\]
is surjective.
\end{prop}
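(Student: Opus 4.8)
The plan is to establish the surjectivity one prime at a time via \cref{l:surj_sp}, and then pass to the ultraproduct using that homotopy groups, finite direct sums, and surjections of abelian groups are all compatible with ultraproducts. Throughout, $k \geq 1$, $i \in \Z$, and the height $n$ are fixed.

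First I would record that, at each prime $p$, both $\E^{\otimes k}$ and $\E^{\otimes_{\hS} k}$ are $p$-local: the Morava $E$-theory $\E$ is $p$-complete, hence $p$-local, and multiplication by a prime $\ell \neq p$ is then an equivalence on one of the smash factors and therefore on the whole (relative) tensor product. By \cref{completetensor}, the canonical map $\E^{\otimes k} \to \E^{\otimes_{\hS} k}$ is an equivalence after $p$-completion, and by \cref{l:tor} the homotopy groups of $\E^{\otimes k}$ are torsion-free. Thus \cref{l:surj_sp}, applied with $A = \E^{\otimes k}$ and $B = \E^{\otimes_{\hS} k}$, shows that
\[
\pi_i\big(\E^{\otimes_{\hS} k} \oplus \Q \otimes \E^{\otimes k}\big) \longrightarrow \pi_i\big(\Q \otimes \E^{\otimes_{\hS} k}\big)
\]
is surjective for every prime $p$.

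It then remains to take the ultraproduct. Since the sphere is compact, $\pi_i$ commutes with filtered colimits, and it manifestly commutes with products, so $\pi_i$ commutes with ultraproducts in $\Sp$; likewise ultraproducts commute with the finite biproduct $\oplus$. Consequently
\[
\pi_i\big(\Prod{\cF}\E^{\otimes_{\hS} k} \oplus \Prod{\cF}\Q \otimes \E^{\otimes k}\big) \cong \Prod{\cF}\,\pi_i\big(\E^{\otimes_{\hS} k} \oplus \Q \otimes \E^{\otimes k}\big)
\]
and $\pi_i\big(\Prod{\cF}\Q \otimes \E^{\otimes_{\hS} k}\big) \cong \Prod{\cF}\,\pi_i\big(\Q \otimes \E^{\otimes_{\hS} k}\big)$, compatibly with the canonical maps. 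Finally, an ultraproduct of surjections of abelian groups is again surjective --- by {\L}o{\'s}'s theorem (\cref{thm:los}), since surjectivity of a homomorphism is first order, or directly by choosing preimages coordinatewise. Combining this with the previous paragraph yields the claim.

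I do not expect a genuine obstacle here: all the substance is contained in \cref{l:surj_sp}, and what remains is the routine bookkeeping that $\pi_i$, finite direct sums, and surjectivity are unaffected by the ultraproduct. The only point calling for a short argument is the $p$-locality of $\E^{\otimes k}$ and $\E^{\otimes_{\hS} k}$, which is handled above.
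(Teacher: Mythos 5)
Your proof is correct and follows essentially the same route as the paper's: apply \cref{l:surj_sp} (with its hypotheses supplied by \cref{completetensor} and \cref{l:tor}) at each prime and then pass to the ultraproduct using that $\pi_i$, finite direct sums, and surjectivity commute with ultraproducts. The only difference is that you carefully spell out the $p$-locality hypothesis of \cref{l:surj_sp}, which the paper leaves implicit.
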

\begin{proof}
Since ultraproducts commute with homotopy groups and preserve surjections it is enough to show that for each $k$, $i$, $p$, and $n$, the canonical map
\[
\pi_i( \E^{\otimes_{\hS} k} \oplus  \Q \otimes \E^{\otimes k} ) \to \pi_i(\Q \otimes \E^{\otimes_{\hS} k } )
\]
is surjective. This follows from \cref{completetensor}, \cref{l:tor}, and \cref{l:surj_sp}.
\end{proof}

The main result of this section is the following theorem.

\begin{thm} \label{thm:sec4thm}
The object $\Prod{\cF}\E^{\otimes \bullet +1}$ in $\CAlg(\Sp)^{\Delta}$ is formal. That is, there is an equivalence of cosimplicial $\bE_{\infty}$-rings
\[
\Prod{\cF}\E^{\otimes \bullet +1} \simeq \Prod{\cF}(\E^{\otimes \bullet +1})_{\star}.
\] 
\end{thm}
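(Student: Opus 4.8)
The plan is to obtain the theorem as an assembly of the results already collected in this section, via the pullback-formality criterion of \cref{c:pullback_formal} applied with $D = \Delta$. By \cref{cor:Squere} we have a pullback square in $\CAlg(\Sp)^{\Delta}$ whose four corners are $P = \Prod{\cF}\E^{\otimes \bullet +1}$ (the object we want to show is formal), $B = \Prod{\cF}\E^{\otimes_{\hS}\bullet +1}$, $A = \Prod{\cF}\Q\otimes\E^{\otimes \bullet +1}$, and $C = \Prod{\cF}\Q\otimes\E^{\otimes_{\hS}\bullet +1}$, with $P$ the pullback of $A \to C \leftarrow B$. So it suffices to verify the hypotheses of \cref{c:pullback_formal}: that the two legs $(A \to C)$ and $(B \to C)$ are formal as objects of $\CAlg(\Sp)^{\Delta \times \Delta^1}$, and that $\pi_i(A([n])) \oplus \pi_i(B([n])) \to \pi_i(C([n]))$ is surjective for every $[n]\in\Delta$ and every $i \in \Z$.

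Each of these three facts is already available. Formality of the leg $A \to C$, namely of $\Prod{\cF}\Q\otimes\E^{\otimes \bullet +1} \to \Prod{\cF}\Q\otimes\E^{\otimes_{\hS}\bullet +1}$, is \cref{l:formalh}. Formality of the leg $B \to C$, namely of $\Prod{\cF}\E^{\otimes_{\hS}\bullet +1} \to \Prod{\cF}\Q\otimes\E^{\otimes_{\hS}\bullet +1}$, is \cref{l:formalv} (which in turn follows from \cref{l:formalv1} and \cref{l:formal_q}). The surjectivity hypothesis is exactly \cref{p:surj}. Therefore \cref{c:pullback_formal} applies and shows that $P = \Prod{\cF}\E^{\otimes \bullet +1}$ is formal in $\CAlg(\Sp)^{\Delta}$.

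To turn this into the stated explicit equivalence, recall that formality of $P$ means $P \simeq H^{\Delta}(X)$ for some cosimplicial object $X \in \CAlg(\mathrm{GrAb})^{\Delta}$. Applying $\pi_*^{\Delta}$ and using $\pi_* H \simeq \mathrm{id}$ identifies $X$ with $\pi_*^{\Delta}(P)$; since an ultraproduct is an infinite product followed by a filtered colimit and $\pi_*$ preserves both, $\pi_*^{\Delta}(P) \simeq \Prod{\cF}\pi_*(\E^{\otimes \bullet +1})$ in $\CAlg(\mathrm{GrAb})^{\Delta}$. Now applying $H^{\Delta}$ and using \cref{prop:Hcommutes} (so that $H$ commutes with ultraproducts) yields
\[
\Prod{\cF}\E^{\otimes \bullet +1} \simeq H^{\Delta}\Bigl(\Prod{\cF}\pi_*(\E^{\otimes \bullet +1})\Bigr) \simeq \Prod{\cF}H\pi_*(\E^{\otimes \bullet +1}) = \Prod{\cF}(\E^{\otimes \bullet +1})_{\star},
\]
which is the desired conclusion.

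The only work in the above is bookkeeping with the fracture square; the real content lies entirely in the inputs. I expect the main obstacle, were one to prove everything from scratch, to be \cref{l:formalv1}: the formality of $\Prod{\cF}\E^{\otimes_{\hS}\bullet +1}$ requires the functorial weight theory for naive $C_{p-1}$-actions, the resulting weight decomposition of the cosimplicial $\bE_\infty$-ring $\E^{\otimes_{\hS}\bullet+1}$, and the observation that the non-trivial $k$-invariants, which live only in degrees divisible by $2(p-1)$, are annihilated upon passing to a non-principal ultrafilter. The rational leg \cref{l:formalh}, handled by André--Quillen obstruction theory, is a secondary obstacle, and \cref{p:surj} reduces (via \cref{completetensor}, \cref{l:tor}, and \cref{l:surj_sp}) to an elementary torsion-freeness argument at each prime.
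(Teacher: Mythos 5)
Your proposal is correct and follows exactly the paper's own proof: apply \cref{c:pullback_formal} to the fracture square of \cref{cor:Squere}, checking its hypotheses via \cref{l:formalh}, \cref{l:formalv}, and \cref{p:surj}. The concluding paragraph spelling out how formality (lying in the image of $H^{\Delta}$) yields the stated explicit equivalence via $\pi_* H \simeq \mathrm{id}$ and the commutation of $\pi_*$ and $H$ with ultraproducts is a reasonable elaboration of a step the paper leaves implicit.
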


\begin{proof}
Apply \cref{c:pullback_formal} to the pullback diagram in \cref{cor:Squere}. The pullback diagram satisfies the condition of \cref{c:pullback_formal} by \cref{l:formalh}, \cref{l:formalv}, and \cref{p:surj}.
\end{proof}

\subsection{Rational formality}\label{ss:ratio_formal} 

The purpose of this subsection is to prove \cref{l:formalh}. The proof is an application of obstruction theory to commutative differential graded algebras over a characteristic $0$ field $k$.

\begin{lem}\label{l:Lur}
Let $E_{\Q} = \Q \otimes \E$ be the rationalisation of Morava $E$-theory. Then there exists an $\bE_{\infty}$-ring map $f_p \colon H\pi_0(E_{\Q}) \to E_{\Q}$ inducing an isomorphism on $\pi_0$.
\end{lem}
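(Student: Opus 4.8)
The plan is to construct $f_p$ by first realizing the identity of $R := \pi_0 E_{\Q}$ as an $\bE_{\infty}$-ring map $HR\to\tau_{\geq 0}E_{\Q}$ into the connective cover, and then composing with the canonical map $\tau_{\geq 0}E_{\Q}\to E_{\Q}$; since $\pi_0\tau_{\geq 0}E_{\Q} = \pi_0 E_{\Q} = R$, any such composite is automatically an isomorphism on $\pi_0$. Here $\pi_0\E = W(\F_{p^n})\powser{u_1,\ldots,u_{n-1}}$ is the Lubin--Tate ring, so $R = W(\F_{p^n})\powser{u_1,\ldots,u_{n-1}}[1/p]$ is a Noetherian regular local ring containing $\Q$, and $\pi_* E_{\Q}$ is $2$-periodic with $\pi_0 = R$. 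As $E_{\Q}$ is rational, the whole construction lives in $\CAlg_{H\Q}$, the $\infty$-category of $\bE_{\infty}$-algebras over $H\Q$; under the standard equivalence of this $\infty$-category with the one underlying commutative differential graded $\Q$-algebras it turns into the construction of a CDGA map $R\to A$ inducing an isomorphism on $H^0$, where $A$ is a CDGA model for $E_{\Q}$ — this is the form in which the obstruction theory is carried out.

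The key algebraic input is that $R$ is formally smooth over $\Q$; equivalently, the cotangent complex $L_{R/\Q}$ is equivalent to a projective $R$-module concentrated in homological degree $0$. This is where the special structure of Morava $E$-theory is used: by the Lubin--Tate theorem the ring $W(\F_{p^n})\powser{u_1,\ldots,u_{n-1}}$ is adically formally smooth of relative dimension $n-1$ over $W(\F_{p^n})$, which in turn is unramified over $\Z_p$, so after inverting $p$ and recalling that $\Q_p/\Q$ is a (formally smooth) separable extension, the transitivity sequence exhibits $L_{R/\Q}$ as a projective $R$-module in degree $0$, its $K$-relative part being free of rank $n-1$ on the $du_i$, where $K = W(\F_{p^n})[1/p]$. (Alternatively, $R$ is geometrically regular over the perfect field $\Q$, hence a filtered colimit of smooth $\Q$-algebras by Popescu desingularization, and one may run the obstruction-theoretic step over that filtered system instead.)

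Granting this, set $e = \tau_{\geq 0}E_{\Q}$, a connective $\bE_{\infty}$-$H\Q$-algebra with $\pi_0 e = R$, and climb the Postnikov tower $\cdots\to\tau_{\leq m+1}e\to\tau_{\leq m}e\to\cdots\to\tau_{\leq 0}e = HR$, lifting the identity $R = \pi_0 HR\to\pi_0 e = R$ one stage at a time: the obstruction to extending a lift over $\tau_{\leq m}e$ across $\tau_{\leq m+1}e$ lies in $\Ext^{m+2}_R(L_{R/\Q},\pi_{m+1}e)$ and the indeterminacy in $\Ext^{m+1}_R(L_{R/\Q},\pi_{m+1}e)$, all of which vanish because $L_{R/\Q}$ is projective in degree $0$; the tower converges since $e$ is connective, so the lifts assemble to an $\bE_{\infty}$-$H\Q$-algebra map $HR\to e$, and composition with $e\to E_{\Q}$ produces $f_p$. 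The step I expect to be the main obstacle is the cotangent-complex computation of the second paragraph — making precise that rationalization tames the cotangent complex of the completed Lubin--Tate ring; once that is in hand, what remains is routine rational obstruction theory, which is precisely why it is convenient to phrase it over a field of characteristic $0$.
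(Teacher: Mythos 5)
Your route is genuinely different from the paper's. The paper's actual proof does no obstruction theory at all: it identifies $f_p$ directly inside Lurie's construction of Lubin--Tate theory, as the rationalization of the canonical $\bE_{\infty}$-ring map $R^{un}_{G_0}\to R^{or}_{G_0}\simeq \E$ from \cite{LurE}, using that $R^{un}_{G_0}\to R^{or}_{G_0}$ is a $\pi_0$-isomorphism and that $\Q\otimes R^{un}_{G_0}$ is concentrated in degree~$0$. The paper's Remark immediately after the lemma, however, sketches precisely the obstruction-theoretic alternative you are proposing, and comparing it to yours makes the gap visible.

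The gap is the claim that $L_{R/\Q}$ is a \emph{projective} $R$-module in degree~$0$, which is what you invoke to kill every group $\Ext^{m+2}_R(L_{R/\Q},\pi_{m+1}e)$. Neither of your two justifications produces projectivity. The transitivity argument fails already at its first step: $R=\Q\otimes W(\F_{p^n})\powser{u_1,\ldots,u_{n-1}}$ is \emph{not} $K\powser{u_1,\ldots,u_{n-1}}$ for $K=W(\F_{p^n})[1/p]$, because rationalization does not commute with the infinite product defining a power series ring; and even for an honest power series ring the (discrete, non-topological) module $\Omega_{k\powser{u}/k}$ is not free on $du$, so $L_{R/K}$ is not the finite free module you describe. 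Your Popescu alternative shows only that $L_{R/\Q}$ is a filtered colimit of finite projective modules, hence \emph{flat} and concentrated in degree~$0$; flatness does not make the higher $\Ext$'s vanish, and $\Ext$ does not commute with filtered colimits in its contravariant variable. What regularity of $R$ actually yields (this is what the paper's remark cites Iyengar for) is concentration of $L_{R/\Q}$ in degree~$0$; the higher $\Ext$'s are then bounded only by the global dimension $n-1$ of $R$, which kills your obstruction groups $\Ext^{m+2}$ with $m\ge 1$ only when $n\le 3$. For general $n$ the paper's remark compensates by replacing $\tau_{\ge 0}E_{\Q}$ with $B=\tau_{\ge 0}\bigl((\E^{hC_{p-1}})_{\Q}\bigr)$, whose homotopy is concentrated in degrees divisible by $2(p-1)$: then the coefficient group $\pi_d(B)$ of any nontrivial obstruction forces $d\ge 2(p-1)>n-1$ once $p>\tfrac{n+1}{2}$, and the $\Ext$'s die for global-dimension reasons rather than projectivity. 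Since the lemma is only needed at large $p$, this restriction is harmless; your version, as written, asserts a projectivity it does not prove, and you yourself flagged the cotangent-complex step as the likely obstacle.
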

\begin{proof}
The existence of the map $f_p$ follows from the construction of $\E$ that appears in \cite{LurE}. Specifically, given a perfect field $k$ of characteristic $p$ and $G_0$ a height $n$ formal group law over $k$, let $\E$ be the Morava $E$-theory associated to $G_0$. In \cite[Theorem~3.0.11 and Remark~3.0.14]{LurE} Lurie constructs an $\bE_{\infty}$-ring $R^{un}_{G_0}$ with $\pi_0(R^{un}_{G_0})$ canonically identified with the Lubin--Tate ring $\pi_0(\E)$. Then, given the same data, in Section 6 of \cite{LurE} Lurie  constructs another $\bE_{\infty}$-ring $R^{or}_{G_0}$ together with an $\bE_{\infty}$-ring map 
\[
R^{un}_{G_0} \to R^{or}_{G_0},
\]
see in particular \cite[Construction 6.0.1 and Remark 6.0.2]{LurE}. We can now identify the required map $f_p$ as
\[
H\pi_0(E_{\Q}) \cong \Q \otimes R^{un}_{G_0} \to \Q \otimes R^{or}_{G_0} \cong E_{\Q}
\] 
due to  the following facts:
\begin{enumerate}
	\item The map $R^{un}_{G_0} \to R^{or}_{G_0}$ induces an isomorphism on $\pi_0$, by \cite[Theorem 6.0.3]{LurE}.
	\item The $\bE_{\infty}$-ring $R^{or}_{G_0}$ can be identified with $\E$, by \cite[Theorem 5.1.5 and Remark 6.4.8]{LurE}.
	\item The $\bE_{\infty}$-ring $\Q \otimes  R^{un}_{G_0}$ is concentrated in degree $0$, by \cite[Theorem 6.3.1]{LurE}.\qedhere
\end{enumerate}
\end{proof}

\begin{rem}
For a fixed height $n$, we only need the existence of the map $f_p$ for large enough $p$. Using this observation, it is possible to replace the argument of \Cref{l:Lur} by an obstruction theoretic one. Indeed, in \Cref{ss:cp_1} we construct a $C_{p-1}$ action on $\E$ such that the $\bE_{\infty}$-ring map
\[
\E^{hC_{p-1}} \to \E
\]
induces an isomorphism on homotopy groups in degrees dividing $2(p-1)$ and such that $\pi_i(\E^{hC_{p-1}})$ is zero in all other degrees, see \Cref{r:sparse}. Let $B$ be the connective cover of $(\E^{hC_{p-1}})_{\Q}$. Then it is enough to show that there is an 
$\bE_{\infty}$-ring map
\[
H\pi_0(B) \to B
\]
inducing an isomorphism on $\pi_0$. This problem is amenable to standard techniques in obstruction theory; for an $\infty$-categorical treatment, see \cite[Section 7.4.1]{ha}. Specifically, applying \cite[Remark 7.4.1.29]{ha} to $\Mod_{H\Q_p}$ instead of $\Sp$, using the $\bE_{\infty}$-operad, and setting $A = H\pi_0(B)$, we get that the obstructions to the existence ($\epsilon =1$) and uniqueness ($\epsilon =0$) of the required map lie in 
\[
O_d^{\epsilon} = \Ext^{d+\epsilon}_{\pi_0(B)}(\mathbb{L}_{\pi_0(B)/\Q_p}, \pi_d(B)).
\]
Since $\pi_0(B)$ is regular, $\mathbb{L}_{\pi_0(B)/\Q_p}$ is concentrated in degree zero (see, for example, \cite{iyengar} Proposition 5.9 and Theorem 9.5) and thus as $\pi_0(B)$ is regular of Krull dimension $n-1$ the groups $O_d^{\epsilon}$ are all zero if $p > \frac{n+1}{2}$ and $\epsilon \in \{0,1\}$. 
\end{rem}

\begin{lem}\label{l:k-form}
Let $R$ be a commutative algebra in $\Mod_{Hk}$ and assume that there exists a map $H\pi_0(R) \to R$ which  induces an isomorphism on $\pi_0$  and that $\pi_*(R) \cong \pi_0(R)[\beta^{\pm 1}]$ for some $\beta \in \pi_2(R)$. Then $R$ is formal as an $Hk$-algebra.
\end{lem}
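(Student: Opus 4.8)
The plan is to build $R$, up to equivalence, by inverting the periodicity class inside a free $\bE_{\infty}$-algebra; the point is that in characteristic $0$ the symmetric powers of an even-degree class carry no higher structure, so the free algebra is already formal.

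Write $A = \pi_0(R)$, a commutative $\Q$-algebra since $k$ has characteristic $0$. The given map $f\colon HA \to R$ makes $R$ an $\bE_{\infty}$-$HA$-algebra, and the periodicity class $\beta \in \pi_2(R)$ is the same datum as a map of $HA$-modules $\Sigma^2 HA \to R$. By the universal property of the free $\bE_{\infty}$-$HA$-algebra this extends to a map of $\bE_{\infty}$-$HA$-algebras $\phi\colon P \to R$, where $P$ denotes the free $\bE_{\infty}$-$HA$-algebra on $\Sigma^2 HA$; it carries the tautological class $x \in \pi_2(P)$ to $\beta$. First I would compute $\pi_*(P)$: in characteristic $0$ the $n$th symmetric power $(\Sigma^{2n}HA)_{h\Sigma_n}$ collapses to $\Sigma^{2n}HA$, because the $\Sigma_n$-action on an even-degree class is trivial and $H_*(B\Sigma_n;k)$ is concentrated in degree $0$; hence $\pi_*(P) \cong A[x]$ with $|x| = 2$ as graded rings. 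The canonical $\bE_{\infty}$-map $P \to H(A[x])$ determined by the split inclusion of the degree-$2$ summand $\Sigma^2 HA \hookrightarrow H(A[x])$ is then an isomorphism on homotopy, so $P \simeq H(A[x])$ is formal. Since $\beta$ is invertible in $\pi_*(R) \cong A[\beta^{\pm 1}]$, the map $\phi$ factors through the $\bE_{\infty}$-ring $P[x^{-1}]$ obtained by inverting $x \in \pi_2(P)$; localization is exact on homotopy, so $\pi_*(P[x^{-1}]) \cong A[x^{\pm 1}]$, and the same argument shows $P[x^{-1}] \simeq H(A[x^{\pm 1}])$ is formal. The resulting map $\phi'\colon P[x^{-1}] \to R$ induces on homotopy the isomorphism $A[x^{\pm 1}] \xrightarrow{\;\cong\;} A[\beta^{\pm 1}]$ sending $x \mapsto \beta$, hence is an equivalence of $\bE_{\infty}$-$Hk$-algebras. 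Composing, $R \simeq P[x^{-1}] \simeq H(A[\beta^{\pm 1}]) = H\pi_*(R)$, which is the claimed formality.

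The step carrying the real content is the computation $\pi_*(P) \cong A[x]$ together with the ensuing claim that $P \to H(A[x])$ is a homotopy isomorphism: this is exactly where characteristic $0$ enters, since rationally all Dyer--Lashof and divided-power contributions to the homotopy of the symmetric powers vanish, so the free $\bE_{\infty}$-algebra on an even class is itself formal. The remaining ingredients — that inverting $x$ commutes with $H$ up to homotopy isomorphism, and that a homotopy isomorphism of $\bE_{\infty}$-$Hk$-algebras is an equivalence — are routine, and where needed I would invoke the K{\"u}nneth spectral sequence and the flatness of localization, as in the proof of \Cref{l:rational-heart}. One could instead dispatch the lemma via André--Quillen obstruction theory after passing to connective covers, all obstruction groups being torsion and hence zero over $k$, but the free-algebra argument is cleaner and more self-contained.
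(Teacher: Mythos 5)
Your argument is correct, and while it shares the key rational ingredient with the paper's proof, the way you invert the periodicity class is genuinely different and, I think, cleaner. In the paper's proof the free $\bE_{\infty}$-$H\pi_0(R)$-algebra on a degree-$2$ class (your $P$) is built and also its degree-$(-2)$ counterpart; their tensor product is then a free algebra on two classes $\beta,\beta^{-1}$, and the localization is effected by a pushout along $H$ of the polynomial ring on the degree-$0$ class $\beta\otimes\beta^{-1}$, sending that generator to $1$; flatness of $\pi_0(R)[\beta]\otimes_{\pi_0(R)}\pi_0(R)[\beta^{-1}]$ over $\pi_0(R)[\beta\otimes\beta^{-1}]$ is what lets formality propagate through the pushout. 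You instead invert $x\in\pi_2(P)$ directly as an $\bE_\infty$-$HA$-algebra and observe that localization is exact on homotopy. These are two presentations of the same Laurent algebra, but yours avoids the two-sided construction and the pushout/flatness discussion entirely. Both arguments rest on the identical characteristic-$0$ fact — that the free $\bE_{\infty}$-algebra on a class of even degree over $H$ of a $\Q$-algebra has polynomial homotopy and is therefore formal — which the paper invokes tacitly (it is needed for the claim that $\gamma^{\pm}$ is an isomorphism on positive/negative homotopy groups) while you spell it out via the collapse of the symmetric powers $(\Sigma^{2n}HA)_{h\Sigma_n}\simeq\Sigma^{2n}HA$. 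Two small points worth making explicit if you write this up: $H(A[x])$ acquires its $\bE_\infty$-$HA$-algebra structure from the laxness of $H$ applied to the commutative $A$-algebra $A[x]$, which is what licenses the adjunction defining the map $P\to H(A[x])$; and the map $P[x^{-1}]\to H(A[x^{\pm 1}])$ is obtained by inverting $x$ in the composite $P\simeq H(A[x])\to H(A[x^{\pm1}])$, since $x$ is already invertible in the target.
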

\begin{proof}
We have map  $H\pi_0(R) \to R$ that is an isomorphism on $\pi_0$. Now $\beta^{\pm} \in \pi_{\pm 2}(R)$ 
corresponds to maps $Hk^{\pm 2} \to R$.  By the free-forgetful adjunction between $\Mod_{Hk}$ and $\Mod_{H\pi_0(R)}$, we get maps of $H\pi_0(R)$-modules
\[
\alpha^{\pm} \colon \Sigma^{\pm 2} H\pi_0(R) \to R.
\]
By the free-forgetful adjunction between algebras and modules, we get two $H\pi_0(R)$-algebra maps
\[
\gamma^{\pm} \colon H\pi_0(R)[\beta^{\pm}] \to R.
\]
The map $\gamma^{+}$ (resp. $\gamma^{-}$) is an isomorphism on positive (resp. negative) homotopy groups.

The following diagram is a pushout diagram (as can be seen by applying $\pi_*$):
\[
\xymatrix{
H\pi_0(R)[\beta \otimes \beta^{-1}]\ar[r] \ar[d]& H\pi_{0}(R) \ar[d] \\
H\pi_0(R)[\beta]  \otimes_{H\pi_0(R)} H\pi_0(R)[\beta^{-1}] \ar[r] & R,
}
\]
where $H\pi_0(R)[\beta \otimes \beta^{-1}]$ is $H$ applied to the polynomial algebra over $\pi_0(R)$ on a (formal) generator $\beta \otimes \beta^{-1}$ in degree zero. The left vertical map sends $\beta \otimes \beta^{-1}$ to the element with the same name. Since $H\pi_0(R)[\beta]  \otimes_{H\pi_0(R)} H\pi_0(R)[\beta^{-1}] $ is flat over  $H\pi_0(R)[\beta \otimes \beta^{-1}]$ and 
$H\pi_0(R)[\beta] $ and $H\pi_0(R)[\beta^{-1}] $ are formal, we deduce the formality of $R$.
\end{proof}

Let $E = \E$ and let $E_{\Q} = \Q \otimes \E$.

\begin{cor} \label{cor:formalcomposite}
The diagram 
\[
H\Q \to H\Q_p \to E_{\Q}
\]
is formal as an object in $\CAlg(\Sp)^{\Delta^2}$. It is the image of $\Q \to \Q_p \to \pi_*(E_\Q)$ under $H^{\Delta^2}$. 
\end{cor}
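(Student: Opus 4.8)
The plan is to deduce the statement from \Cref{l:Lur} and \Cref{l:k-form} by first realising $E_\Q$ as a commutative $H\Q_p$-algebra and then reassembling the two-step diagram out of its edges. \textbf{Step 1.} I would begin by recording that $E_\Q$ is canonically a commutative algebra in $\Mod_{H\Q_p}$. Since $\E$ is $p$-complete it is canonically an algebra over the $p$-complete sphere $\hS$; the rationalization $\Q \otimes \hS$ has homotopy concentrated in degree $0$ (the positive stable stems are finite), where it is the ring $\Q_p$, so \Cref{l:rational-heart} gives an equivalence of $\bE_\infty$-rings $\Q \otimes \hS \simeq H\Q_p$. Hence $E_\Q = \Q \otimes \E$ is a commutative $H\Q_p$-algebra and the canonical diagram $H\Q \to H\Q_p \to E_\Q$ is the rationalization of $S^0 \to \hS \to \E$; on homotopy groups it realises the algebraic diagram $\Q \to \Q_p \to \pi_*(E_\Q)$.

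\textbf{Step 2.} Next I would apply \Cref{l:k-form} with $k = \Q_p$ and $R = E_\Q$. Morava $E$-theory is even periodic, so $\pi_*(E_\Q) \cong \pi_0(E_\Q)[\beta^{\pm 1}]$ with $\beta \in \pi_2(E_\Q)$, and \Cref{l:Lur} provides an $\bE_\infty$-ring map $H\pi_0(E_\Q) \to E_\Q$ that is an isomorphism on $\pi_0$; one checks it is compatible with the $H\Q_p$-structures, e.g.\ because both its source and target arise as rationalizations of $\hS$-algebras. Thus \Cref{l:k-form} applies and shows $E_\Q$ is formal as an $H\Q_p$-algebra. Because the relative Eilenberg--MacLane functor $\CAlg(\mathrm{GrMod}_{\Q_p}) \to \CAlg(\Mod_{H\Q_p})$ has underlying spectrum computed by the absolute functor $H$ on underlying graded abelian groups, this is exactly the statement that the edge $H\Q_p \to E_\Q$, viewed in $\CAlg(\Sp)_{H\Q_p/} \simeq \CAlg(\Mod_{H\Q_p})$, is equivalent to $H^{\Delta^1}$ applied to $\Q_p \to \pi_*(E_\Q)$.

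\textbf{Step 3.} Finally I would reassemble. The spine inclusion $\Delta^{\{0,1\}} \amalg_{\Delta^{\{1\}}} \Delta^{\{1,2\}} \hookrightarrow \Delta^2$ induces, for every $\infty$-category $\cD$, an equivalence $\Fun(\Delta^2,\cD) \lra{\simeq} \Fun(\Delta^1,\cD) \times_{\Fun(\Delta^0,\cD)} \Fun(\Delta^1,\cD)$; applying it with $\cD = \CAlg(\Sp)$ and with $\cD = \CAlg(\mathrm{GrAb})$, and using that $H^{(-)}$ is computed pointwise and so commutes with restriction along the spine, it suffices to match restrictions. The restriction of $H^{\Delta^2}(\Q \to \Q_p \to \pi_*(E_\Q))$ to $\Delta^{\{0,1\}}$ is $H^{\Delta^1}(\Q \to \Q_p) = (H\Q \to H\Q_p)$, which is the first edge of the canonical diagram; its restriction to $\Delta^{\{1,2\}}$ is $H^{\Delta^1}(\Q_p \to \pi_*(E_\Q)) \simeq (H\Q_p \to E_\Q)$ by Step 2; and the two restrictions agree at the middle vertex, both giving $H\Q_p$. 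The spine equivalence then glues these to an equivalence $H^{\Delta^2}(\Q \to \Q_p \to \pi_*(E_\Q)) \simeq (H\Q \to H\Q_p \to E_\Q)$ in $\CAlg(\Sp)^{\Delta^2}$, which is the claim.

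I expect the main obstacle to be Steps 1--2: pinning down the canonical $H\Q_p$-algebra structure on $E_\Q$ through the $p$-complete sphere and the identification $\Q \otimes \hS \simeq H\Q_p$, and verifying that formality as an $H\Q_p$-algebra upgrades, via the compatibility of the relative and absolute Eilenberg--MacLane functors, to the asserted identification of the morphism $H\Q_p \to E_\Q$ with $H^{\Delta^1}$ of the corresponding algebraic morphism. The spine gluing in Step 3 is routine.
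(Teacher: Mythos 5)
Your proof takes essentially the same approach as the paper's: both reduce to formality of the edge $H\Q_p \to E_\Q$ via \Cref{l:Lur} and \Cref{l:k-form}, then observe that $H\Q \to H\Q_p$ is formal and glue along the spine of $\Delta^2$. You make explicit what the paper leaves implicit (the identification $\Q \otimes \hS \simeq H\Q_p$ giving the $H\Q_p$-algebra structure, and the spine decomposition), while both treatments are similarly terse on verifying that the map of \Cref{l:Lur} is a map of $H\Q_p$-algebras rather than merely of $\bE_\infty$-rings --- a point the paper gestures at with its remark that $\pi_0(E_\Q)$ is formally smooth over $\Q_p$, and which your suggestion (uniqueness of $\hS$-algebra structures on $p$-complete $\bE_\infty$-rings) resolves equally well.
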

\begin{proof}
Since $\pi_0(E_\Q)$ is a formally smooth $\Q_p$-algebra, \cref{l:k-form} and \cref{l:Lur} implies that the map $H\Q_p \to E_\Q $ is formal. The result now follows as the canonical map $H\Q \to H\Q_p$ is formal.
\end{proof}

Given maps of $\bE_{\infty}$-algebras $A \to B \to C$, taking the Amitsur complexes of $C$ over $A$ and over $B$ we get a map of cosimplicial $\bE_\infty$-rings 
\[
C^{ \otimes_A \bullet+1 } \to C^{\otimes_B \bullet+1 }.
\]
Thus the maps of $\bE_\infty$-rings 
\[
H\Q \to H\Q_p \to E_{\Q}
\]
gives rise to an object in the $\infty$-category of maps of cosimplicial $\bE_{\infty}$-rings
\[
(E_{\Q}^{\otimes_{H\Q} \bullet+1} \rightarrow E_{\Q}^{\otimes_{H\Q_p} \bullet+1}) \in \CAlg(\Sp)^{\Delta \times \Delta^1}.
\]
\begin{cor}\label{cor:formalp}
The object 
\[
(E_{\Q}^{\otimes_{H\Q} \bullet+1} \rightarrow E_{\Q}^{\otimes_{H\Q_p} \bullet+1}) \in \CAlg(\Sp)^{\Delta \times \Delta^1}
\]
is formal.
\end{cor}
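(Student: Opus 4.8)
The plan is to bootstrap from \cref{cor:formalcomposite}, which identifies the tower $H\Q \to H\Q_p \to E_{\Q}$ with $H^{\Delta^2}$ applied to $\Q \to \Q_p \to \pi_*(E_{\Q})$ in $\CAlg(\mathrm{GrAb})^{\Delta^2}$, and to transport this formality through the relative Amitsur complex construction. First I would form the purely algebraic object
\[
\big(\pi_*(E_{\Q})^{\otimes_{\Q}\bullet+1} \;\longrightarrow\; \pi_*(E_{\Q})^{\otimes_{\Q_p}\bullet+1}\big)\;\in\;\CAlg(\mathrm{GrAb})^{\Delta\times\Delta^1}
\]
obtained by taking the Amitsur complexes of $\pi_*(E_{\Q})$ over $\Q$ and over $\Q_p$ and the canonical base-change map between them, and then show that $H^{\Delta\times\Delta^1}$ carries it to the object named in the corollary, which by definition exhibits that object as formal.

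The key input is that $H$ intertwines the topological and algebraic relative Amitsur complexes in this situation. For this I would use that, since $\Q$ and $\Q_p$ are fields of characteristic zero, the proof of \cref{l:rational-heart} applies verbatim with $\Q$ replaced by $\Q_p$ and shows that $H$ restricts to \emph{strong} symmetric monoidal functors $H_{\Q}\colon\mathrm{GrAb}_{\Q}\to\Mod_{H\Q}$ and $H_{\Q_p}\colon\mathrm{GrAb}_{\Q_p}\to\Mod_{H\Q_p}$, compatibly with extension of scalars along $\Q\to\Q_p$ and refining the lax functor $H\colon\mathrm{GrAb}\to\Sp$ on underlying spectra. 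Now the relative Amitsur complex $E_{\Q}^{\otimes_{H\Q}\bullet+1}$ is the \emph{absolute} Amitsur cosimplicial object of $E_{\Q}$ in the symmetric monoidal $\infty$-category $\Mod_{H\Q}$ (whose unit is $H\Q$), built from finite iterated tensor products and the combinatorics of $\Delta$; likewise for $E_{\Q}^{\otimes_{H\Q_p}\bullet+1}$ in $\Mod_{H\Q_p}$, and the base-change map comes from the symmetric monoidal functor $\Mod_{H\Q}\to\Mod_{H\Q_p}$. A strong symmetric monoidal functor automatically commutes with the formation of commutative algebras, with the absolute Amitsur cosimplicial object, and with the change-of-unit transformation; applying this to $H_{\Q}$ and $H_{\Q_p}$, and invoking \cref{cor:formalcomposite} (together with \cref{l:Lur}) to recognize $E_{\Q}\simeq H_{\Q_p}(\pi_*E_{\Q})$ as an $\bE_\infty$-$H\Q_p$-algebra equipped with its structure map from $H\Q$, I obtain
\[
\big(E_{\Q}^{\otimes_{H\Q}\bullet+1}\to E_{\Q}^{\otimes_{H\Q_p}\bullet+1}\big)\;\simeq\;H^{\Delta\times\Delta^1}\big(\pi_*(E_{\Q})^{\otimes_{\Q}\bullet+1}\to\pi_*(E_{\Q})^{\otimes_{\Q_p}\bullet+1}\big)
\]
in $\CAlg(\Sp)^{\Delta\times\Delta^1}$, which is exactly the assertion of the corollary.

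The main obstacle will be making the intertwining rigorous, i.e.\ verifying that $H$ transports not merely each cosimplicial level but also all coface/codegeneracy maps, the $\bE_\infty$-structures, and the base-change map of the algebraic relative Amitsur complexes to their topological counterparts. The cleanest way to dispatch this is the framing above: over a field of characteristic zero the relative tensor products in sight are underived, every graded module is flat, and the Künneth spectral sequence over $H\Q$ (resp.\ $H\Q_p$) degenerates, so the lax structure maps of $H$ are equivalences on all the modules that occur (iterated tensor powers of $\pi_*(E_{\Q})$), giving the strong symmetric monoidal refinements $H_{\Q}, H_{\Q_p}$; the formal nonsense about symmetric monoidal functors commuting with absolute Amitsur complexes then finishes the argument. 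A minor bookkeeping point to check along the way is that $\Mod_{H\Q}\simeq\Sp_{\Q}$ identifies $E_{\Q}^{\otimes_{H\Q}(k+1)}$ with $\Q\otimes\E^{\otimes(k+1)}$, and that $\Q\otimes\hS\simeq H\Q_p$ identifies $E_{\Q}^{\otimes_{H\Q_p}(k+1)}$ with $\Q\otimes\E^{\otimes_{\hS}(k+1)}$, so that the equivalence produced is literally the one required downstream in \cref{l:formalh}.
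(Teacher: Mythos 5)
Your proof is correct and coincides in essence with the paper's: both deduce levelwise formality from \cref{cor:formalcomposite} and use flatness of $\pi_*E_\Q$ over $\Q$ and $\Q_p$ to conclude that $H$ transports the algebraic Amitsur complexes to the topological ones. Where the paper invokes the lax structure of $H^\Delta$ and then flatness to upgrade the resulting comparison square to an equivalence, you package the same flatness as strong symmetric monoidality of the restricted functors $H_\Q$ and $H_{\Q_p}$ (extending \cref{l:rational-heart} to $\Q_p$), which is a cleaner framing but rests on the same facts.
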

\begin{proof}
By \cref{cor:formalcomposite}, there are equivalences
\[
(H\pi_*(E_\Q))^{\otimes_{H\Q} k} \simeq E_{\Q}^{\otimes_{H\Q} k} \text{ and } (H\pi_*(E_\Q))^{\otimes_{H\Q_p} k} \simeq E_{\Q}^{\otimes_{H\Q_p} k}
\]
for all $k$.

Now the laxness of $H^{\Delta}$ gives us a natural map in $\CAlg(\Sp)^{\Delta \times \Delta^1}$ 
\[
\xymatrix{
H^{\Delta}\left( \pi_*(E_{\Q})^{\otimes_{\Q} \bullet+1} \right) \ar[r]\ar[d]& H^{\Delta}\left( \pi_*(E_{\Q})^{\otimes_{\Q_p} \bullet+1} \right)\ar[d] \\
E_{\Q}^{\otimes_{H\Q} \bullet+1} \ar[r] & E_{\Q}^{\otimes_{H\Q_p} \bullet+1}.
}
\]
The vertical arrows give an equivalence in $\CAlg(\Sp)^{\Delta \times \Delta^1}$ by the flatness of $\pi_*(E_\Q)$ over $\Q_p$ and $\Q$.
\end{proof}

\subsection{The $C_{p-1}$-action on $E$-theory}\label{ss:cp_1}
The Morava stabilizer group acts on $\E$ through $\bE_{\infty}$-ring maps. This gives rise to a $C_{p-1}$-action by restriction. In this subsection, we study the effect of this action on the coefficients of $\E$ and its tensor powers.

It follows from \cite{ghproblems}, that the stabilizer group $\mathbb{S}$ (which depends on $n$ and $p$) acts on $E = \E$ through $\bE_{\infty}$-ring maps. Recall that $\mathbb{S} = \Aut(\G)$, where $\G$ is a height $n$ formal group law over $k$, a perfect field of characteristic $p$. The natural action of $\Z$ on $\G$ by left multiplication extends to an action of the $p$-adic integers $\Z_p$. Thus the units in $\Z_p$ act by automorphisms. This implies that $\mathbb{S}$ contains a distinguished subgroup $C_{p-1} \subset \Z_{p}^{\times} \subset \mathbb{S}$. This inclusion is a distinguished element in $\hom(C_{p-1},\Z_{p}^{\times})$ that we use to fix the isomorphism
\[
\Z/(p-1) \cong \hom(C_{p-1},\Z_{p}^{\times})
\]
sending $1$ to this element.

\begin{prop} \label{splitit}
The action of $i \in C_{p-1} \subset \Z_{p}^{\times}$ on $E^* \cong E^0[u,u^{-1}]$ is given by sending
\[
u \mapsto iu.
\]
\end{prop}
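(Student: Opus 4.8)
The plan is to identify the $\mathbb{S}$-action on $\pi_* E$ with the tautological action on the Lubin--Tate moduli problem of deformations-with-coordinate, and then to observe that a central element $[i] \in \Z_{p}^{\times} \subseteq \mathbb{S}$ lifts canonically to every deformation, so that its only effect is to rescale the chosen coordinate by its leading coefficient, which is $i$.

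First I would recall the algebro-geometric description of $\pi_* E$ underlying the Goerss--Hopkins--Miller theorem: $\pi_0 E = E^0$ is the Lubin--Tate ring classifying deformations of $(k,\G)$ together with a $\star$-isomorphism, the universal such deformation $\G^{\mathrm{univ}}$ lives over $E^0$, and $\pi_* E = E^0[u^{\pm 1}]$ with $u$ in degree $2$; the invertible $E^0$-module $\pi_2 E$ is canonically the line attached to $\G^{\mathrm{univ}}$ (its invariant differentials, up to the usual duality convention), and $u$ is the generator of this line determined by a choice of coordinate $x$ on $\G^{\mathrm{univ}}$ lifting a fixed coordinate on $\G$. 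Under this dictionary, the action of $g \in \mathbb{S} = \Aut(\G/k)$ on $E$ furnished by \cite{ghproblems} is the one induced by functoriality from the action on the moduli problem, in which $g$ acts on the $\star$-isomorphism of a deformation by precomposition and transports coordinates correspondingly.

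Next I would specialize to $g = [i]$ for $i \in C_{p-1} \subseteq \Z_{p}^{\times}$. Since $\G$ is a one-dimensional formal group of finite height it carries a canonical $\Z_p$-module structure, and $[i]$ is multiplication by the $p$-adic unit $i$; every deformation $G/R$ is again such a formal group, so $[i]_G$ exists, reduces modulo the maximal ideal to $[i]_{\G}$, and is automatically compatible with any $\star$-isomorphism. Thus $[i]_G$ itself realizes a $\star$-isomorphism between any deformation and its $[i]$-twist, so $[i]$ acts trivially on $\Spf E^0$, hence trivially on $\pi_0 E$. All that is left is the effect on the coordinate: because any formal group law has the shape $F(x,y) = x + y + (\text{higher order})$, the endomorphism $[i]$ satisfies $[i](x) = i\,x + O(x^2)$, so the induced map on the line $\pi_2 E$ is multiplication by the leading coefficient $i$. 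Therefore $u \mapsto i u$, and more generally $u^{t} \mapsto i^{t} u^{t}$ on $\pi_{2t}E$.

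The main obstacle is entirely a matter of normalization: fixing the grading convention, deciding whether $\pi_2 E$ is the module of invariant differentials or its dual, and hence whether the leading-coefficient scaling produces $u \mapsto i u$ or $u \mapsto i^{-1}u$. This is exactly what is pinned down by the choice, made in the statement, of the isomorphism $\Z/(p-1) \cong \hom(C_{p-1},\Z_{p}^{\times})$ sending $1$ to the inclusion $C_{p-1} \hookrightarrow \Z_{p}^{\times}$; with that choice the answer is $u \mapsto i u$. The remaining steps — extracting the $\Z_p$-module structure on $\G$, checking that $[i]_G$ lifts and is $\star$-compatible, and identifying the induced action on the line $\pi_2 E$ with multiplication by the derivative at the origin of the lifted automorphism — are routine once the Goerss--Hopkins--Miller structure theorem is in hand.
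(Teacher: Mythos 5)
Your proof is correct, but it takes a genuinely different route from the paper's. The paper treats this as a known fact and simply cites Devinatz--Hopkins \cite[Prop.~3.3, Thm.~4.4]{dh_lt}, Rognes \cite[proof of 5.4.9]{rognes_galois}, and Henn \cite[Appendix]{henn_finiteresolutions}, without reproducing an argument. You instead give the underlying moduli-theoretic derivation that those references encode: identify $\pi_0 E$ with the Lubin--Tate ring and $\pi_2 E$ with the canonical line of the universal deformation; observe that because $\G$ (and every deformation over a $p$-complete base) carries a formal $\Z_p$-module structure, the element $[i]$ lifts canonically and $\Z_p$-linearly, so it implements its own $\star$-isomorphism and therefore fixes $\Spf E^0$ pointwise; and read off the effect on the line $\pi_2 E$ as multiplication by the leading coefficient $[i]'(0)=i$. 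This is exactly the mechanism behind the cited statements, and your care in isolating the one genuine subtlety — that whether the answer is $u \mapsto iu$ or $u \mapsto i^{-1}u$ is a normalization question about $\pi_2 E$ versus its dual and about how $C_{p-1} \hookrightarrow \Z_p^{\times}$ is pinned down in the paragraph preceding the proposition — is well placed. What the paper's approach buys is brevity and an authoritative reference for a statement whose literal form is convention-sensitive; what your approach buys is a self-contained explanation of why $[i]$ is trivial on $\pi_0$ but acts by weight one on $\pi_2$, which is precisely the structure exploited in \cref{r:sparse} and throughout \cref{ss:cp_1}.
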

\begin{proof}
This follows from \cite[Proposition 3.3, Theorem 4.4]{dh_lt} and is also described in \cite[proof of 5.4.9]{rognes_galois}. See also \cite[Appendix]{henn_finiteresolutions}.
\end{proof}

\begin{rem}\label{r:sparse}
Since $p-1$ is prime to $p$, the homotopy groups of the homotopy fixed points $E^{hC_{p-1}}$ may be computed by taking the fixed points for the action of $C_{p-1}$ on $\pi_* E$. There is an isomorphism
\[
\pi_*(E^{hC_{p-1}}) \cong E^0[u^{\pm(p-1)}].
\]
\end{rem}

Since $C_{p-1}$ acts on $E$ and the map $S^0 \rightarrow E$ is $C_{p-1}$-equivariant (with the trivial action of $C_{p-1}$ on $S^0$), there is an action of $C_{p-1}$ on the Amitsur complex of the map $S^0 \to E$. On each degree of the Amitsur complex $C_{p-1}$ acts diagonally. The action of $C_{p-1}$ on $\pi_0(E^{\otimes k})$ is not trivial for $k>1$, thus the formula of \cref{splitit} does not extend to $E^{\otimes k}$ when $k>1$. However, this can be corrected by working over the $p$-complete sphere spectrum $\hS = \pS$ (working with the Amitsur complex of $\hS \rightarrow E$). Note that the action of $C_{p-1}$ on $E^{\otimes k}$ described above induces an action on $E^{\otimes_{\hS} k}$. Our initial goal is to prove that the action of $C_{p-1}$ on $\pi_0(E^{\otimes_{\hS} k})$ is trivial. This will follow from the next two lemmas. 

\begin{lem} \label{pcomp}
Let $A$ be a spectrum such that $\pi_*(A)$ is torsion-free. There is a canonical isomorphism
\[
\pi_*(A^{\wedge}_{p}) \lra{\cong} (\pi_*(A))^{\wedge}_{p}.
\]
\end{lem}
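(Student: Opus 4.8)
The plan is to present $A^{\wedge}_p$ as an inverse limit of mod-$p^m$ reductions of $A$ and then read off the claim from the associated Milnor exact sequence. Since $p$-completion is localization with respect to the Moore spectrum $M(p)$, the $p$-completion of any spectrum $A$ is computed as the inverse limit
\[
A^{\wedge}_p \simeq \lim_m A/p^m,
\]
where $A/p^m$ denotes the cofiber of $p^m \colon A \to A$ and the transition maps are the canonical reductions $A/p^{m+1} \to A/p^m$ (this is standard; compare the description of $p$-completion used in the proof of \cref{completetensor}). Consequently there is a Milnor exact sequence
\[
0 \longrightarrow \lim\nolimits^{1}_m \pi_{*+1}(A/p^m) \longrightarrow \pi_*(A^{\wedge}_p) \longrightarrow \lim_m \pi_*(A/p^m) \longrightarrow 0,
\]
in which the right-hand map is the canonical one; it will suffice to analyze the two outer terms.

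Next I would analyze the tower $\{\pi_*(A/p^m)\}_m$ using the hypothesis. The cofiber sequence $A \xrightarrow{p^m} A \to A/p^m$ yields, naturally in $m$, a short exact sequence
\[
0 \longrightarrow \pi_*(A)/p^m\pi_*(A) \longrightarrow \pi_*(A/p^m) \longrightarrow \pi_{*-1}(A)[p^m] \longrightarrow 0,
\]
where $\pi_{*-1}(A)[p^m]$ denotes the subgroup of $p^m$-torsion elements. Since $\pi_*(A)$ is torsion-free, the third term vanishes, so the first map is an isomorphism $\pi_*(A)/p^m\pi_*(A) \xrightarrow{\cong} \pi_*(A/p^m)$ compatible with the reduction maps on both sides. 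The tower $\{\pi_*(A)/p^m\pi_*(A)\}_m$ has surjective transition maps, hence satisfies the Mittag--Leffler condition, so $\lim\nolimits^{1}_m \pi_{*+1}(A/p^m) = 0$ and $\lim_m \pi_*(A/p^m) = \lim_m \pi_*(A)/p^m\pi_*(A) = (\pi_*(A))^{\wedge}_p$, the $p$-adic completion of the graded abelian group $\pi_*(A)$. Feeding this into the Milnor sequence shows that the canonical map $\pi_*(A^{\wedge}_p) \to (\pi_*(A))^{\wedge}_p$ is an isomorphism, which is the assertion.

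The only delicate point is the first step: identifying the $M(p)$-localization $A^{\wedge}_p$ with the spectrum-level inverse limit $\lim_m A/p^m$, so that the Milnor sequence genuinely computes $\pi_*(A^{\wedge}_p)$; once this is in hand the rest is routine homological algebra. I would also note that torsion-freeness is precisely what makes the $\lim$ term come out as the honest $p$-adic completion of $\pi_*(A)$ --- without it the target of the canonical map would instead sit in a short exact sequence involving the Tate module of $\pi_{*-1}(A)$ --- and this is the point where the hypothesis is genuinely used.
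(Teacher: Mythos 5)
Your proof is correct and follows essentially the same route as the paper's: both identify $A^{\wedge}_p$ with the inverse limit of the mod-$p^m$ reductions of $A$ (cf.\ the paper's citation of Bousfield), and both use the cofiber sequence $A \xrightarrow{p^m} A \to A/p^m$ together with torsion-freeness of $\pi_*(A)$ to identify $\pi_*(A/p^m)$ with $\pi_*(A)/p^m$. The one place you go beyond the paper's write-up is in making the passage from $\pi_*$ of the limit to the limit of $\pi_*$ explicit: you invoke the Milnor $\lim^1$-sequence and check the Mittag--Leffler condition on the tower $\{\pi_*(A)/p^m\}$, whereas the paper silently asserts that computing $\pi_*(A/p^m)$ suffices. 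This extra step is a genuine improvement in rigor, though it does not change the substance of the argument.
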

\begin{proof}
The proof is similar to the proof of \cref{l:surj_sp}. By \cite{Bousfieldlocalization}, there is an equivalence
\[
A^{\wedge}_{p} \simeq \lim(A \otimes S^0/p^k),
\]
where $S^0/p^k$ is the mod $p$ Moore spectrum. It suffices to show that 
\[
\pi_*(A \otimes S^0/p^k) \cong (\pi_*A)/p^k.
\]
These homotopy groups sit in a short exact sequence
\[
\pi_n(A)/p^k \rightarrow \pi_n(A \otimes S^0/p^k) \rightarrow \pi_{n-1}(A)[p^k].
\]
Since $\pi_*(A)$ is torsion-free, we have an isomorphism
\[
\pi_n(A)/p^k \lra{\cong} \pi_n(A \otimes S^0/p^k).
\]
\end{proof}

\begin{lem} \label{ptriv}
The action of $C_{p-1}$ on $\pi_0((E^{\otimes k})^{\wedge}_{p})$ is trivial.
\end{lem}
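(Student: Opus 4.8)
\cref{ptriv} is trivial for $k=1$, since $C_{p-1}$ acts trivially on $\pi_0 E=E_0$ by \cref{splitit}, so assume $k\ge 2$. The plan is to reduce to a weight computation on $\pi_*(E^{\otimes k})$ before $p$-completion. As $\pi_*(E^{\otimes k})$ is torsion-free by \cref{l:tor}, \cref{pcomp} gives a $C_{p-1}$-equivariant isomorphism $\pi_0\big((E^{\otimes k})^{\wedge}_p\big)\cong\big(\pi_0(E^{\otimes k})\big)^{\wedge}_p=\lim_m\pi_0(E^{\otimes k})/p^m$, so it is enough to prove that $C_{p-1}$ acts trivially on $\pi_0(E^{\otimes k})\otimes\Z_p$; triviality then descends to each quotient $\pi_0(E^{\otimes k})/p^m$ and to the limit. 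Now $|C_{p-1}|=p-1$ is a unit in $\Z_p$ and $\Z_p$ contains the $(p-1)$-st roots of unity, so every $\Z_p[C_{p-1}]$-module $M$ splits canonically as $M=\bigoplus_{w\in\Z/(p-1)}M_w$, with $i\in C_{p-1}$ acting on $M_w$ by $i^w$ (using the identification of $\Z/(p-1)$ with $\hom(C_{p-1},\Z_p^{\times})$ fixed in \cref{ss:cp_1}); I call $w$ the \emph{weight}, and note that weights are additive under tensor products of modules over a commutative $\Z_p$-algebra.

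The strategy is then to show that $\pi_*(E^{\otimes k})\otimes\Z_p$ is generated, as a $\Z_p$-algebra, by homogeneous weight eigenvectors $e$ with $\mathrm{wt}(e)\equiv-\tfrac12\deg(e)\pmod{p-1}$; by additivity it follows that $\pi_0(E^{\otimes k})\otimes\Z_p$ is concentrated in weight $0$, i.e.\ the $C_{p-1}$-action on it is trivial. Since $E$ is Landweber exact there is an isomorphism $\pi_*(E^{\otimes k})\cong E_*\otimes_{BP_*}(BP_*BP)^{\otimes_{BP_*}(k-1)}\otimes_{BP_*}E_*$, so $\pi_*(E^{\otimes k})$ is generated over $\Z_p$ by the images of the generators of the two outer copies of $E_*=E_0[u^{\pm1}]$ together with the standard $p$-typical generators $t_i$ (of degree $2(p^i-1)$) of the $k-1$ inner copies of $BP_*BP$. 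Each smash factor $E\to E^{\otimes k}$ is $C_{p-1}$-equivariant, so by \cref{splitit} the generators of the outer copies are eigenvectors: $E_0$ has degree $0$ and weight $0$, while $u\in\pi_{-2}E$ has degree $-2$ and weight $1$, both satisfying the displayed relation. It remains to handle the $t_i$.

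For this I would track the diagonal $C_{p-1}$-action on the universal strict isomorphism ${\sum}^{F}_{i\ge0}t_i x^{p^i}$ (with $t_0=1$) carried by $BP_*BP$: the element $i\in C_{p-1}\subset\Z_p^{\times}$ changes the chosen coordinate on both formal group laws simultaneously by the automorphism $[i](x)=ix+O(x^2)$, replacing $f$ by the conjugate $[i]^{-1}\circ f\circ[i]$, and matching coefficients shows $\phi_i(t_i)\equiv i^{\,p^i-1}t_i$ modulo the ideal generated by $E_0$ and by the $t_j$ with $j<i$. Since $p^i\equiv1\pmod{p-1}$ we have $i^{\,p^i-1}=1$, so $\phi_i(t_i)-t_i$ lies in that ideal; as $\deg t_i=2(p^i-1)\equiv0\pmod{2(p-1)}$, an induction on $i$ lets us replace the $t_i$ one at a time by their weight components, the nonzero-weight parts being polynomials in the (already-replaced) lower $t_j$ and hence in the subalgebra generated by eigenvectors of the correct weight. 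Thus $\pi_*(E^{\otimes k})\otimes\Z_p$ is generated by eigenvectors with $\mathrm{wt}\equiv-\tfrac12\deg\pmod{p-1}$, and taking the degree-$0$ part finishes the argument.

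The crux, and the main place where care is needed, is this conjugation computation for the $t_i$ together with the bookkeeping of the decomposable corrections; conceptually it is transparent, since $C_{p-1}$ sits in the center $\Z_p^{\times}$ of the Morava stabilizer group and Teichm\"uller elements satisfy $i^p=i$, so the two coordinate rescalings occurring in the two smash factors of $E\otimes E$ cancel on homotopy classes of degree $0$. One could instead argue via the description of $\pi_0$ of the $K(n)$-localization of $E^{\otimes k}$ as continuous functions on a power of the stabilizer group and centrality of $i$, but passing from $\pi_0$ of the plain $p$-completion to that $K(n)$-local computation takes extra work, so the weight argument above seems cleaner.
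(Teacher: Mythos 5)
Your approach is genuinely different from the paper's, although both begin with the same reduction (via \cref{l:tor} and \cref{pcomp}) and both recognize the $C_{p-1}$-action on $\pi_0(E^{\otimes k})$ as conjugation of the universal chain of isomorphisms of formal groups by $[i]$-series. The paper proceeds conceptually: after restricting the moduli description to $p$-complete rings, the universal isomorphism corepresented by $\pi_0((E^{\otimes k})^\wedge_p)$ is automatically an isomorphism of formal $\Z_p$-modules (formal $\Z_p$-module structures being unique over $p$-complete rings), hence commutes with $[i]$, so the conjugation is the identity---this is exactly why the paper passes to the $p$-completion before running the argument, a step your proof does not need at that stage. You instead perform an explicit weight computation on the generators of $\pi_*(E^{\otimes k})$ via the $BP$-theoretic description, trading the uniqueness principle for concrete arithmetic: the congruence $p^j\equiv 1\pmod{p-1}$ forces $i^{\pm(p^j-1)}=1$. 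This buys a generator-by-generator proof of the weight grading on the uncompleted homotopy, at the cost of more bookkeeping in tracking the decomposable corrections coming from the higher coefficients of the $[i]$-series and of the formal sum. I believe the computation goes through, with two small fixes. First, ``modulo the ideal generated by $E_0$'' is imprecise, since $E_0$ is a subring and not an ideal; what you mean is modulo the subring generated by the images of $E_0$ under the $k$ structure maps together with the lower $t_l$, $l<j$. Second, the inductive step can be sharpened: once $\phi_i(t_j)-t_j$ is seen to lie in a subring where the weight--degree relation already holds, its degree $2(p^j-1)\equiv 0\pmod{2(p-1)}$ forces it to have weight $0$, and comparing weight-$w$ components in $\phi_i(t_j)-t_j=\sum_{w\ne 0}(i^w-1)(t_j)_w$ then forces $(t_j)_w=0$ for all $w\ne 0$, so each $t_j$ is already a weight-$0$ eigenvector and no replacement is needed.
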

\begin{proof}
Fix a coordinate on $\G_{E}$, the formal group associated to $E$, and let $s,t \colon \pi_0E \rightarrow \pi_0(E \otimes E)$ be the two canonical maps. Recall from \cite{coctalos} that $\pi_0(E^{\otimes 2})$ carries the universal isomorphism of formal group laws
\[
s^* \G_{E} \cong t^* \G_{E}.
\]
That is, $\pi_0(E \otimes E)$ corepresents the functor on commutative rings sending a ring $R$ to the set of isomorphisms between the two formal group laws over $R$ determined by the two induced maps from $\pi_0E$ to $R$.

Since $E$ is $p$-complete, $a \in \Z_p$ determines an endomorphism $[a]$ of $\G_E$ ($\G_E$ is a formal $\Z_p$-module). The action of $a \in C_{p-1} \subset \Z_p$ on $\pi_0(E \otimes E)$ is given by conjugating  the universal isomorphism by the pullback of $[a]$ along $s$ and $t$.

Since $\pi_*(E^{\otimes k})$ is flat over $\pi_* E$ and $\pi_* E$ is torsion-free, it follows that $\pi_*(E^{\otimes k})$ is torsion-free. \cref{pcomp} implies that
\[
\pi_0((E^{\otimes k})^{\wedge}_{p}) \cong \pi_0(E^{\otimes k})^{\wedge}_{p}.
\] 

Thus, when we restrict the functor determined by $\pi_0(E \otimes E)$ to $p$-complete rings, the resulting functor is corepresented by $\pi_0((E \otimes E)^{\wedge}_{p})$. Since $\pi_0((E \otimes E)^{\wedge}_{p})$ is $p$-complete, it carries the universal isomorphism of formal $\Z_p$-modules $s^*\G_E \cong t^*\G_E$. Since this is an isomorphism of formal $\Z_p$-modules, the conjugation action is trivial.

We generalize this to the $k$th tensor power $\pi_0(E^{\otimes k})$. We have a $C_{p-1}$-equivariant equivalence
\[
(E \otimes E)^{\otimes_E k-1} \simeq E^{\otimes k},
\]
which, by flatness, induces an isomorphism
\[
\pi_0(E \otimes E)^{\otimes_{\pi_0 E} k-1} \cong \pi_0(E^{\otimes k}).
\]
Thus $\pi_0(E^{\otimes k})$ carries the universal $(k-1)$-tuple of composable isomorphisms between the $k$-formal group laws determined by the $k$ canonical maps from $\pi_0E$ to $\pi_0(E^{\otimes k})$. Since $C_{p-1}$ acts trivially on $\pi_0E$, the action of $C_{p-1}$ on $\pi_0(E^{\otimes k})$ is given by conjugating the string of $k-1$ composable isomorphisms. It follows from the $k=2$ case that this action is trivial over the $p$-completion. \end{proof}

\begin{prop} \label{trivaction}
The action of $C_{p-1}$ on $\pi_0(E^{\otimes_{\hS} k})$ is trivial.
\end{prop}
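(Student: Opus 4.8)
The plan is to deduce the proposition from \Cref{ptriv} by transporting triviality of the $C_{p-1}$-action across $p$-completion, using the comparison of Amitsur complexes in \Cref{completetensor}. Since the $C_{p-1}$-action on $\E$ fixes the unit map $\hS\to\E$ (with $C_{p-1}$ acting trivially on $\hS$), the $\bE_\infty$-ring $\E^{\otimes_{\hS}k}$ inherits a naive $C_{p-1}$-action by ring maps, and the canonical comparison map $\E^{\otimes k}\to\E^{\otimes_{\hS}k}$ is $C_{p-1}$-equivariant. By \Cref{completetensor} this map is an equivalence after $p$-completion, so it induces a $C_{p-1}$-equivariant isomorphism $\pi_0((\E^{\otimes k})^{\wedge}_p)\cong\pi_0((\E^{\otimes_{\hS}k})^{\wedge}_p)$. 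The left-hand group has trivial $C_{p-1}$-action by \Cref{ptriv}, hence so does $\pi_0((\E^{\otimes_{\hS}k})^{\wedge}_p)$.

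Next I would descend to $\pi_0(\E^{\otimes_{\hS}k})$ itself. First, $\E^{\otimes_{\hS}k}$ is $p$-local (being an $\hS$-module), so by arithmetic fracture the fiber of $\E^{\otimes_{\hS}k}\to(\E^{\otimes_{\hS}k})^{\wedge}_p$ is a rational spectrum; on the other hand $\pi_*((\E^{\otimes_{\hS}k})^{\wedge}_p)\cong\pi_*((\E^{\otimes k})^{\wedge}_p)$ is torsion-free by \Cref{completetensor}, \Cref{l:tor} and \Cref{pcomp}. Chasing the long exact sequence, the torsion subgroup of each $\pi_n(\E^{\otimes_{\hS}k})$ maps to zero in the torsion-free group $\pi_n((\E^{\otimes_{\hS}k})^{\wedge}_p)$ and hence lands inside the image of the rational fiber, which is torsion-free; so $\pi_*(\E^{\otimes_{\hS}k})$ is torsion-free, and \Cref{pcomp} identifies $\pi_0((\E^{\otimes_{\hS}k})^{\wedge}_p)$ with the $p$-adic completion $R^{\wedge}_p$ of $R:=\pi_0(\E^{\otimes_{\hS}k})$. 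Now $R$ is a $\Z_p$-algebra, so $p-1=|C_{p-1}|$ is invertible in $R$ and the action splits $R=\bigoplus_{\chi\in\hom(C_{p-1},\Z_p^{\times})}R_\chi$ into character eigenspaces; completing and using the previous paragraph, for $\chi\neq 1$ the summand $(R_\chi)^{\wedge}_p$ of $R^{\wedge}_p$ vanishes, so $R_\chi=pR_\chi$. Given that $R$ is $p$-adically separated, so that $\bigcap_m p^mR=0$, this forces $R_\chi=\bigcap_m p^mR_\chi=0$, whence $R=R_1$ and $C_{p-1}$ acts trivially on $\pi_0(\E^{\otimes_{\hS}k})$.

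The genuine content beyond \Cref{completetensor} and \Cref{ptriv} is the $p$-adic separatedness of $\pi_0(\E^{\otimes_{\hS}k})$, equivalently that it has no nonzero infinitely $p$-divisible elements; I expect this to be the main obstacle. It should be read off from the explicit structure of $\pi_0(\E^{\otimes_{\hS}k})$ — which is of polynomial type over the complete regular local ring $\pi_0\E\cong W(k)\powser{u_1,\dots,u_{n-1}}$, closely related to the free $\pi_0\E$-algebra $\pi_0(\E^{\otimes k})$ — and everything else is the formal eigenspace argument together with the equivariance of the comparison map.
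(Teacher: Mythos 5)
Your argument diverges from the paper's at the point where you try to deduce triviality on $\pi_0(E^{\otimes_{\hS}k})$ purely from triviality on its $p$-completion. The paper instead keeps both legs of the arithmetic fracture square: since everything in sight is even periodic, the Mayer--Vietoris sequence gives an injection $\pi_0(E^{\otimes_{\hS}k}) \hookrightarrow \pi_0\big((\Q\otimes E)^{\otimes_{\Q\otimes\hS}k}\big) \oplus \pi_0(E^{\otimes k})^{\wedge}_p$, the right-hand summand is handled by \Cref{ptriv} exactly as you do, and the left-hand (rational) summand is handled directly: by flatness over $\Q\otimes\hS \simeq H\Q_p$ one has $\pi_*\big((\Q\otimes E)^{\otimes_{\Q\otimes\hS}k}\big) \cong (\pi_*(\Q\otimes E))^{\otimes_{\Q_p}k}$, and since $C_{p-1}$ acts on $\pi_*(\Q\otimes E)$ with weight equal to half the internal degree (\Cref{splitit}), the degree-zero part of the $k$-fold graded tensor product has total weight zero, i.e.\ trivial action. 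That degree-count replaces your whole second paragraph.

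The gap you flag at the end is genuine and, I think, not easy to close without essentially redoing the rational computation. Writing $R = \pi_0(E^{\otimes_{\hS}k})$, the fracture square identifies $\bigcap_m p^m R$ with the kernel of the rationalized $p$-completion map on $\pi_0$, i.e.\ with $\ker\big(\pi_0(\Q\otimes E^{\otimes_{\hS}k}) \to \Q\otimes\pi_0(E^{\otimes k})^{\wedge}_p\big)$. So asserting that $R$ is $p$-adically separated is precisely the claim that this map of rational $\pi_0$'s is injective; it is not a formal consequence of $\pi_*E$ being a complete local ring (for $k\geq 2$ the rings $\pi_0(E^{\otimes k})$ and $\pi_0(E^{\otimes_{\hS}k})$ are large non-Noetherian flat $\pi_0E$-algebras, and $p$-adic separatedness does not pass automatically to flat extensions). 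Verifying this injectivity would require analysing the rational leg of the fracture square anyway, at which point the paper's direct weight computation is strictly simpler and sidesteps the eigenspace/completion bookkeeping entirely. Also, a smaller soft spot: your torsion-freeness argument via the long exact sequence of the fiber $F\to E^{\otimes_{\hS}k}\to (E^{\otimes_{\hS}k})^{\wedge}_p$ needs the observation that $\pi_{\mathrm{odd}}$ of the $p$-completion vanishes to see that $\pi_{\mathrm{even}}F \to \pi_{\mathrm{even}}(E^{\otimes_{\hS}k})$ is injective; as written the claim that the torsion ``lands inside the image of the rational fiber'' does not by itself give torsion-freeness. The paper's injection into a direct sum of two torsion-free groups handles this cleanly in one step.
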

\begin{proof}
By \cref{completetensor},  the arithmetic square for $E^{\otimes_{\hS} k}$ takes the form
\[
\xymatrix{E^{\otimes_{\hS} k} \ar[r] \ar[d] & (E^{\otimes k})^{\wedge}_{p} \ar[d] \\ (\Q \otimes E)^{\otimes_{\Q \otimes \hS} k} \ar[r] & \Q \otimes (E^{\otimes k})^{\wedge}_{p}.}
\]
By even periodicity, we have an injection
\[
\pi_0(E^{\otimes_{\hS} k}) \hookrightarrow \pi_0((\Q \otimes E)^{\otimes_{\Q \otimes \hS} k}) \oplus \pi_0(E^{\otimes k})^{\wedge}_{p}.
\]
It suffices to show that the $C_{p-1}$-action on the codomain is trivial. The action on the right hand side is trivial by \cref{ptriv}. Since $\Q \otimes \hS \simeq H\Q_p$, $\Q \otimes E$ is flat over $\Q \otimes \hS$. Thus we have an isomorphism
\[
\pi_*((\Q \otimes E)^{\otimes_{\Q \otimes \hS} k}) \cong (\pi_*(\Q \otimes E))^{\otimes_{\Q_p} k}.
\]
Since $C_{p-1}$ acts on $\pi_*(\Q \otimes E)$ according to \cref{splitit}, direct computation shows that the action of $C_{p-1}$ on $\pi_0((\Q \otimes E)^{\otimes_{\Q \otimes \hS} k})$ is trivial for degree reasons.
\end{proof}

\begin{cor}\label{cor:action}
The action of $i \in C_{p-1}$ on $\pi_{2l}(E^{\otimes_{\hS} k})$ is given by multiplication by $i^l$.
\end{cor}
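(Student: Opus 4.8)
The plan is to reduce the statement to the case $l = 0$, which is exactly \cref{trivaction}, by exploiting the periodicity unit of $E$. First I would fix the periodicity class $u \in \pi_2 E$, which by \cref{splitit} satisfies $i \cdot u = iu$ for every $i \in C_{p-1}$, and consider the inclusion of the first tensor factor
\[
\iota \colon E \longrightarrow E^{\otimes_{\hS} k}, \qquad e \longmapsto e \otimes 1 \otimes \cdots \otimes 1 .
\]
This is a map of $\bE_{\infty}$-rings, and since the $C_{p-1}$-action on $E^{\otimes_{\hS} k}$ is defined diagonally (with $\hS$ carrying the trivial action), $\iota$ is $C_{p-1}$-equivariant. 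Setting $v = \iota_*(u) \in \pi_2(E^{\otimes_{\hS} k})$, we get that $v$ is invertible, being the image of the unit $u$ under a ring map, and that $i \cdot v = i v$ by equivariance together with \cref{splitit}.

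Next, invertibility of $v$ makes multiplication by $v^l$ an isomorphism $\pi_0(E^{\otimes_{\hS} k}) \xrightarrow{\cong} \pi_{2l}(E^{\otimes_{\hS} k})$ for every $l \in \Z$, so every class in $\pi_{2l}(E^{\otimes_{\hS} k})$ has the form $v^l x$ with $x \in \pi_0(E^{\otimes_{\hS} k})$. For such a class,
\[
i \cdot (v^l x) = (i \cdot v)^l (i \cdot x) = i^l v^l (i \cdot x) = i^l (v^l x),
\]
where the final equality is \cref{trivaction}. This is precisely the assertion.

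I do not anticipate a genuine obstacle: the only point requiring care is that $\iota$ intertwines the $C_{p-1}$-action on $E$ with the diagonal action on $E^{\otimes_{\hS} k}$, which is immediate from the way that action was constructed at the start of this subsection. Alternatively, one could bypass $\iota$ entirely and argue through the arithmetic square of \cref{trivaction}: the $C_{p-1}$-action on $\pi_{2l}$ of each of its three non-initial corners is multiplication by $i^l$ — on $(E^{\otimes k})^{\wedge}_p$ by the same $v^l$-argument applied to $E^{\otimes k}$, and on $(\Q \otimes E)^{\otimes_{\Q \otimes \hS} k}$ via the identification $\pi_*\big((\Q \otimes E)^{\otimes_{\Q \otimes \hS} k}\big) \cong \big(\pi_*(\Q \otimes E)\big)^{\otimes_{\Q_p} k}$ together with a degree count from \cref{splitit} — and $\pi_{2l}(E^{\otimes_{\hS} k})$ embeds $C_{p-1}$-equivariantly into their direct sum by even periodicity.
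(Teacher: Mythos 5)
Your proof is correct and is essentially the paper's argument. The paper packages the reduction to degree zero as a $C_{p-1}$-equivariant equivalence of naive $C_{p-1}$-equivariant $E$-modules $E^{\otimes_{\hS}k} \otimes_E \Sigma^{-2l}E \simeq \Sigma^{-2l}E^{\otimes_{\hS}k}$, but on homotopy groups this is precisely your multiplication-by-$v^l$ isomorphism $\pi_0(E^{\otimes_{\hS}k}) \otimes_{\pi_0 E} \pi_{2l}E \cong \pi_{2l}(E^{\otimes_{\hS}k})$, after which \cref{trivaction} and \cref{splitit} finish exactly as you say.
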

\begin{proof}
In the $\infty$-category of naive $C_{p-1}$-equivariant $E$-module spectra, there is a canonical equivalence
\[
E^{\otimes_{\hS} k} \otimes_{E} \Sigma^{-2l}E \lra{\simeq} \Sigma^{-2l}E^{\otimes_{\hS} k}.
\]
This gives rise to a $C_{p-1}$-equivariant isomorphism
\[
\pi_0(E^{\otimes_{\hS} k}) \otimes_{\pi_0E} \pi_{2l}E \lra{\cong} \pi_{2l}(E^{\otimes_{\hS} k}),
\]
where $C_{p-1}$ acts trivially on $\pi_0$ by \cref{trivaction} and by $i^l$ on $\pi_{2l}$ by \cref{splitit}.
\end{proof}

\begin{rem} \label{rem:gap}
There are isomorphisms 
\begin{equation*}
\pi_l((E^{\otimes_{\hS} k})^{hC_{p-1}}) \cong 
\begin{cases}
	\pi_l(E^{\otimes_{\hS} k}) & \text{if } 2(p-1)|l,\\
	0 & \text{otherwise}
\end{cases}
\end{equation*} 
and the map of $\bE_{\infty}$-ring spectra $(E^{\otimes_{\hS} k})^{hC_{p-1}} \rightarrow E^{\otimes_{\hS} k}$ is an isomorphism after applying $\pi_*$ when $* = 2(p-1)l$.
\end{rem}

\subsection{Symmetric monoidal categories from abelian groups}\label{ss:monoid}

The $C_{p-1}$-action constructed in the previous subsection shows that the nontrivial $k$-invariants of $E^{\otimes_{\hS} l}$ grow sparser as $p$ grows larger. This is the essential fact that we use to show formality. However, since we are interested in showing the formality of $E^{\otimes_{\hS} \bullet+1}$ as a cosimplicial $\bE_{\infty}$-ring, we need to analyze the $C_{p-1}$-action in a way that respects both the cosimplicial and $\bE_{\infty}$-ring structure simultaneously. The tools to do this are developed in the next three subsections. In this subsection, we set up the general machinery to encode both the weight decomposition and the $\bE_{\infty}$-ring structure on an object in a symmetric monoidal $\infty$-category.

Let $A$ be an abelian group. It may be considered as a symmetric monoidal $\infty$-category $A^{\otimes}$ whose underlying category is the discrete category which is $A$ as a set. This point of view is functorial, a homomorphism of abelian groups $A \to B$ gives rise to a symmetric monoidal functor of symmetric monoidal $\infty$-categories $A^{\otimes} \to B^{\otimes}$. Recall that the $\infty$-category of symmetric monoidal $\infty$-categories and lax symmetric monoidal functors is a full subcategory of the $\infty$-category of $\infty$-operads. Thus $A^{\otimes}$ may be viewed as an $\infty$-operad. For example if $0$ is the group with one element we have $0^{\otimes} = \mathbb{E}_{\infty}$.

Recall from \cite[Section 2.1]{ha} that an $\infty$-operad $\cO^{\otimes}$ may be thought of as a multicategory. It is possible to add an additional object $*$ to $\cO$, the underlying $\infty$-category of $\cO^{\otimes}$, which is ``multifinal". That is all multimapping spaces with target $*$ are contractible. We carefully define this contruction in the case $\cO^{\otimes} = A^{\otimes}$:

\begin{prop}
Let $A$ be an abelian group. There is a symmetric monoidal $\infty$-category $A^{\otimes | \rhd}$ under $\bE_{\infty}$ built out of $A^{\otimes}$ by adding a multifinal object. It is equipped with an inclusion
\[
A^{\otimes} \hookrightarrow A^{\otimes | \rhd}
\]
and the construction is natural in maps of abelian groups.
\end{prop}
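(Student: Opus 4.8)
The plan is to construct $A^{\otimes|\rhd}$ explicitly as the nerve of an elementary symmetric monoidal $1$-category. Since $A$ is discrete, $A^{\otimes}$ is the nerve of the symmetric monoidal $1$-category $\underline{A}$ with object set $A$, only identity morphisms, tensor product the group operation $+$, and unit $0$. First I would enlarge $\underline{A}$ to a $1$-category $\underline{A}^{\rhd}$ by freely adjoining a terminal object: add one new object $*$, a unique morphism $x \to *$ for every object $x$, and no morphism out of $*$ other than the identity. Then $\underline{A}$ is a full subcategory of $\underline{A}^{\rhd}$, the object $*$ is terminal, and $\underline{A}^{\rhd}$ is a preorder, meaning there is at most one morphism between any ordered pair of objects.

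Next I would extend the tensor product to $\underline{A}^{\rhd}$ by making $*$ absorbing: set $x \otimes * = * \otimes x = * \otimes * = *$ for all $x$, with the unique possible action on morphisms. The verifications are immediate. The assignment $\otimes\colon\underline{A}^{\rhd}\times\underline{A}^{\rhd}\to\underline{A}^{\rhd}$ is monotone for the evident partial order, hence a functor; $0$ remains a two-sided unit; and on objects the associativity, commutativity, and unit constraints hold on the nose (if all factors lie in $A$ this is associativity/commutativity of $+$, and otherwise both sides equal $*$), so these structure maps are identities. Because $\underline{A}^{\rhd}$ is a preorder, every coherence diagram commutes automatically, so $(\underline{A}^{\rhd},\otimes,0)$ is a strict symmetric monoidal $1$-category. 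I would then \emph{define} $A^{\otimes|\rhd}$ to be its nerve, regarded as a symmetric monoidal $\infty$-category; the strict symmetric monoidal, fully faithful inclusion $\underline{A}\hookrightarrow\underline{A}^{\rhd}$ induces the asserted symmetric monoidal inclusion $A^{\otimes}\hookrightarrow A^{\otimes|\rhd}$.

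It remains to supply the three decorations. For the structure under $\bE_{\infty}$: since $0^{\otimes}=\bE_{\infty}$, the unique homomorphism $0\to A$ yields a symmetric monoidal functor $\bE_{\infty}=0^{\otimes}\to A^{\otimes}\hookrightarrow A^{\otimes|\rhd}$, which simply names the unit. For multifinality of $*$: in a symmetric monoidal $\infty$-category one has $\mathrm{Mul}(x_1,\dots,x_n;y)\simeq\Map(x_1\otimes\dots\otimes x_n,y)$, so with $y=*$ this space is $\Map(x_1\otimes\dots\otimes x_n,*)\simeq *$ since $*$ is terminal; moreover tensoring the canonical map $a\to *$ with $b$ produces a map $a\otimes b\to *\otimes b$ out of an object of $A$, which forces $*\otimes b=*$, so the absorbing extension is the only symmetric monoidal structure on $\underline{A}^{\rhd}$ making $*$ multifinal — this justifies the notation. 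For naturality: a homomorphism $f\colon A\to B$ extends to a strict symmetric monoidal functor $\underline{A}^{\rhd}\to\underline{B}^{\rhd}$ by $f$ on $A$ and $*\mapsto *$ (it preserves $\otimes$ because $f$ is additive and $*$ is absorbing on both sides, and it preserves terminal objects); these extensions are strictly functorial in $f$ and compatible with the inclusions $A^{\otimes}\hookrightarrow A^{\otimes|\rhd}$, so the nerve construction delivers the claimed naturality.

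I do not expect any real obstacle: the construction is elementary, and the only point needing care is the bookkeeping in the second paragraph — functoriality of the extended $\otimes$ and persistence of the coherence data — both of which follow at once from the preorder property of $\underline{A}^{\rhd}$. As an alternative, one could bypass symmetric monoidal $1$-category language and build $A^{\otimes|\rhd}\to\mathrm{N}(\mathrm{Fin}_*)$ directly: the fibre over $\langle n\rangle$ is the $n$-fold power of $A\sqcup\{*\}$, and coCartesian pushforward along $f\colon\langle n\rangle\to\langle m\rangle$ sends $(x_1,\dots,x_n)$ to $(y_1,\dots,y_m)$ with $y_j=\sum_{i\in f^{-1}(j)}x_i$, the empty sum being $0$ and any sum with a $*$-summand being $*$; checking the Segal and coCartesian conditions is then equally routine.
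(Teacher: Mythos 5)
Your construction is correct and, in substance, the same as the paper's: the paper defines $A^{\otimes|\rhd}$ directly as a simplicial set over $\mathrm{N}(\mathrm{Fin}_*)$ whose objects over $\langle n\rangle$ are functions $\{1,\dots,n\}\to A\coprod\{*\}$ and whose morphisms over $f\colon\langle n\rangle\to\langle m\rangle$ impose the sum condition only on coordinates whose target value lies in $A$, which is precisely the operadic nerve of the preorder $\underline{A}^{\rhd}$ (with $*$ absorbing) that you build. Your alternative at the end reproduces the paper's coCartesian fibration on the nose.

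The one small difference is packaging: the paper skips the $1$-categorical intermediary and writes down the coCartesian fibration in one step, whereas you build $(\underline{A}^{\rhd},\otimes,0)$ as a strict symmetric monoidal preorder and then take its (operadic) nerve, using the preorder property to dispense with coherence checks. Both are valid and give the same object; your version makes the verification of the coCartesian/Segal conditions slightly more automatic because it factors through the standard nerve construction for symmetric monoidal $1$-categories, at the cost of an extra layer of notation. One minor remark: your uniqueness argument for the absorbing structure (tensoring $a\to *$ with $b$ to force $*\otimes b=*$) needs the observation that $*\otimes b$ cannot coincide with $a\otimes b$ for \emph{every} $a\in A$, which follows from invertibility of $b$; this is a side comment and not needed for the statement, but worth tightening if you keep it.
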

\begin{proof}
For an abelian groups $A$, the $\infty$-operad $A^{\otimes | \rhd}$ can be described very explicitly.
Indeed, given a pointed set $X \in \Set_*$, let $X^{\circ}$ be the set obtained by removing the special point (note that this is not functorial).
An object in $A^{\otimes | \rhd}$ is a finite pointed set $X \in \Fins$ together with a map $a \colon X^{\circ}  \to  A \coprod \{*\}\in  \Set$.
While a morphism in  $A^{\otimes | \rhd}$ from $a \colon X^{\circ} \to   A \coprod \{*\}$ to $b \colon Y^{\circ} \to  A \coprod \{*\}$ is a map  $f \colon X \to Y$ in $\Fins$ such that 
for all $y\in Y$ such that $b(y) \not \in \{*\}$ we have that $ a(f^{-1}(y)) \subset A$ and $\sum a(f^{-1}(y)) = b(y)$. This gives $A^{\otimes | \rhd}$ the structure of a symmetric monoidal $\infty$-category with respect to the obvious map to $\mathrm{Fin}_{*}$. 

The inclusion $A^{\otimes} \hookrightarrow A^{\otimes | \rhd}$ is induced by the map postcomposing $a \colon X^{\circ}  \to  A$ with the canonical inclusion $A \hookrightarrow A \coprod \{*\}$.
\end{proof}

Let $A^{\rhd}$ be the underlying $\infty$-category of $A^{\otimes | \rhd}$.

Now let $\mathcal{C}$ be a presentably symmetric monoidal $\infty$-category and let $A$ be an abelian group. Restriction along the inclusion $A^{\otimes} \hookrightarrow A^{\otimes | \rhd}$ produces a map 
\[
\theta_A \colon \Alg_{A^{\otimes | \rhd}}(\mathcal{C}) \to \Alg_{A^{\otimes}}(\mathcal{C}).
\]
\begin{lem} \label{lem:operads}
Let  $F \in \Alg_{A^{\otimes}}(\mathcal{C})$, let $F'\in  \Alg_{A^{\otimes | \rhd}}(\mathcal{C})$, and let $f \colon F \to \theta_A(F')$ be a map. Then $f$ exhibits $F'$ as a free $A^{\otimes | \rhd}$-algebra generated by $F$ if and only if 
\begin{enumerate}
\item For every $a \in A$ the map $f(a) \colon F(a) \to \theta_A(F')(a)$ is an equivalence in $\mathcal{C}$.
\item The underlying functor 
\[
\underline{F'} \colon A^{\rhd} \to \cC
\]
is a colimit diagram.
\end{enumerate}
\end{lem}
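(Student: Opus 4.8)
The plan is to recognize $F'$ as the operadic left Kan extension of $\theta_A(F')$ along the inclusion $\iota\colon A^{\otimes}\hookrightarrow A^{\otimes | \rhd}$ and to exploit the pointwise colimit formula for such Kan extensions. Since $\cC$ is presentably symmetric monoidal, $\theta_A=\iota^{*}$ admits a left adjoint $\iota_{!}$, and a pair $(F',f)$ with $f\colon F\to\theta_A(F')$ exhibits $F'$ as a free $A^{\otimes | \rhd}$-algebra generated by $F$ precisely when, for every color $X$ of $A^{\otimes | \rhd}$, the canonical comparison map
\[
\colim_{(\bar Y,\alpha)}\ \alpha_{!}\Big(\bigotimes_{i}F(Y_{i})\Big)\ \longrightarrow\ F'(X)
\]
is an equivalence; here $(\bar Y,\alpha)$ ranges over the $\infty$-category of pairs consisting of an object $\bar Y=(Y_{1},\dots,Y_{n})$ of $A^{\otimes}$ together with an active morphism $\alpha\colon\iota(\bar Y)\to X$ in $A^{\otimes | \rhd}$, the functor $\alpha_{!}\colon\cC^{n}\to\cC$ is the associated iterated tensor product, and the comparison map is assembled from $f$ and the multiplications of $F'$. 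This is the recognition criterion for operadic left Kan extensions from \cite[\S 3.1.3]{ha}, and it suffices to check it on the two types of colors of $A^{\otimes | \rhd}$. So first I would read off the relevant indexing $\infty$-categories from the explicit combinatorial model of $A^{\otimes | \rhd}$ established above.

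For a color $a\in A$, the model shows that an active morphism $\iota(\bar Y)\to a$ exists only when each entry of $\bar Y$ lies in $A$ and $\sum_{i}Y_{i}=a$, and is then unique; the indexing category is an ordinary category of additive decompositions of $a$ with terminal object $(a)\xrightarrow{\ \operatorname{id}\ }a$. Since a colimit over a category with a terminal object is the value of the diagram there, the comparison map at $a$ is nothing but $f(a)$, so the criterion holds at all colors of $A$ if and only if condition (1) holds. For the color $*$, since $*$ is multifinal every multimapping space $\operatorname{Mul}(\bar Y;*)$ is contractible, so the indexing category has all tuples $(Y_{1},\dots,Y_{n})$ of elements of $A$ as objects and additive partition maps of $A^{\otimes}$ as morphisms. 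The key point is that the full subcategory on the length-one tuples — which is just the discrete set $A$, because the only active endomorphism of $(a)$ in $A^{\otimes}$ is the identity — is cofinal: for any $(Y_{1},\dots,Y_{n})$ the relevant comma category reduces to the single object given by the unique morphism $(Y_{1},\dots,Y_{n})\to(\sum_{i}Y_{i})$ and is contractible. Hence the colimit at $*$ is $\colim_{a\in A}F(a)$, and its comparison map factors as $\colim_{a\in A}F(a)\to\colim_{a\in A}F'(a)\to F'(*)$, where the first map is $\colim_{a}f(a)$ and the second is the colimit cocone of $\underline{F'}\colon A^{\rhd}\to\cC$ restricted to $A$. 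Assuming (1) the first map is an equivalence, so the criterion at $*$ is equivalent to $\underline{F'}$ being a colimit diagram, i.e.\ condition (2); conversely, if $(F',f)$ is free, evaluating the criterion at each color yields (1) and (2) directly. Assembling the two cases proves the lemma.

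I expect the main obstacle to be the combinatorial bookkeeping: extracting the two indexing $\infty$-categories from the explicit description of $A^{\otimes | \rhd}$, verifying the two cofinality claims, and — most delicately — checking that the abstract comparison map of \cite[\S 3.1.3]{ha} restricts, at the color $*$, to exactly the colimit cocone of $\underline{F'}$ appearing in condition (2), and at the colors $a\in A$ to $f(a)$. The remaining ingredients (existence of $\iota_{!}$ from presentability, conservativity of the forgetful functor $\Alg_{A^{\otimes}}(\cC)\to\prod_{a\in A}\cC$, and the triviality of colimits indexed by a category with a terminal object) are routine.
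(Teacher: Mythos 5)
Your proposal is correct and follows essentially the same route as the paper: both reduce the freeness condition to the criterion of \cite[Definition 3.1.3.1, Propositions 3.1.1.15--16]{ha} and then verify it color by color via cofinality arguments, identifying the terminal object $(a)\xrightarrow{\mathrm{id}} a$ at each color $a\in A$ and showing that the length-one tuples form a cofinal subcategory of the active overcategory of $*$. The only cosmetic difference is that you prove cofinality at $*$ by exhibiting contractible comma categories directly, whereas the paper first splits $(A^{\otimes}_{\mathrm{act}})_{/*} \simeq \coprod_{a\in A}(A^{\otimes}_{\mathrm{act}})_{/a}$ and then appeals to the terminal objects; the two arguments are interchangeable.
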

\begin{proof}
We have attempted to use the notation of \cite[Definition 3.1.3.1]{ha}. In \cite[Definition 3.1.3.1]{ha}, for an object $x \in A^{\rhd}$, Lurie defines
\[
\left(A^{\otimes}_{\mathrm{act}}\right)_{/x} = A^{\otimes}\times_{A^{\otimes | \rhd}} \left(A^{\otimes | \rhd}_{\mathrm{act}}\right)_{/x}.
\]
Note that $\left(A^{\otimes}_{\mathrm{act}}\right)_{/a}$ is in fact the overcategory $\left(A^{\otimes}_{\mathrm{act}}\right)_{/a}$ for $a$ an object in $A \subset A^{\otimes}_{\mathrm{act}}$. We define 
\[
c_{a} \colon \Delta_0 \to \left(A^{\otimes}_{\mathrm{act}}\right)_{/a} 
\]
to be the functor that chooses the cone point. Also let
\[c_{*} \colon A \to \left(A^{\otimes}_{\mathrm{act}}\right)_{/*} 
\]
be the functor induced by the two obvious functors
\[
A \to A^{\otimes}
\]
and
\[
A \to A^{\rhd} \to   A^{\rhd}_{/*} \to \left(A^{\otimes | \rhd}_{\mathrm{act}}\right)_{/*}.
\]
Both $c_a$ and $c_*$ are cofinal. This is clear for $c_a$. The cofinality of $c_{*}$ is a straightforward computation since 
\[
\left(A^{\otimes}_{\mathrm{act}}\right)_{/*} \simeq A^{\otimes}_{\mathrm{act}} \simeq \Coprod{a \in A} \left(A^{\otimes}_{\mathrm{act}}\right)_{/a}. 
\]

By Definition 3.1.3.1 of \cite{ha}, $f$ exhibits $F'$ as a free $A^{\otimes | \rhd}$-algebra generated by $F$ if and only if for every object $x \in A^{\rhd}$, the map  \[
\bar{\alpha}_{x} \colon \big ( \left(A^{\otimes}_{\mathrm{act}}\right)_{/x} \big )^{\rhd} \to \mathcal{C}^{\otimes} 
\] is an operadic colimit diagram.
Since $\mathcal{C}$ is a presentably symmetric monoidal $\infty$-category, \cite[Proposition 3.1.1.15]{ha} and \cite[Proposition 3.1.1.16]{ha} prove that this is equivalent to the following condition: For every object $x \in A^{\rhd}$ the map  \[
\bar{\beta}_{x} \colon \big (\left(A^{\otimes}_{\mathrm{act}}\right)_{/x} \big )^{\rhd} \to \mathcal{C}
\] is a colimit diagram. Here $\bar{\beta}_{x}$ is a certain functor built out of $\bar{\alpha}_{x}$. 

By the cofinality of $c_{a}$ and $c_{*}$, $\bar{\beta}_{x}$ is a colimit diagram for every $x \in A^{\rhd}$ if and only if $\bar{\beta}_{a} \circ c_{a}^{\rhd}$ and $\bar{\beta}_{*} \circ c_{*}^{\rhd}$ are colimit diagrams. These two conditions are conditions (1) and (2) in the statement of the lemma.
\end{proof}

Now since  $\mathcal{C}$ is presentably symmetric monoidal, \cite[Corollary 3.1.3.5]{ha} implies that $\theta_A$ has a left adjoint (the free algebra functor) that we shall denote by 
\[
L_{A}\colon \Alg_{A^{\otimes}}(\mathcal{C}) \to \Alg_{A^{\otimes | \rhd}}(\mathcal{C}).
\]

\begin{prop} \label{prop:operads}
In the above situation we have:
\begin{enumerate}
\item The unit of the adjunction $ L_{A} \dashv \theta_A$, $\eta\colon\Id \Rightarrow \theta_A\circ L_A$, is an equivalence.
\item Let $F \in \Alg_{A^{\otimes}}(\mathcal{C})$ and let 
\[
\underline{L_A(F)} \colon A^{\rhd} \to \mathcal{C} 
\]
be the underlying functor of $\infty$-categories. Then $\underline{L_A(F)}$ is a colimit diagram. 
\item Let $F' \in  \Alg_{A^{\otimes | \rhd}}(\mathcal{C})$ be such that the underlying functor 
\[
\underline{F'} \colon A^{\rhd} \to \mathcal{C} 
\]
is a colimit diagram. Then the counit $\epsilon \colon L_A \circ \theta_A \Rightarrow \Id$ applied to $F'$ is an equivalence.
\end{enumerate}
\end{prop}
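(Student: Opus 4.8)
The plan is to deduce all three parts from \Cref{lem:operads}, which characterizes when a morphism $f\colon F\to\theta_A(F')$ exhibits $F'$ as the free $A^{\otimes|\rhd}$-algebra generated by $F$, together with the defining universal property of the adjunction $L_A\dashv\theta_A$.

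For parts (1) and (2): by construction of $L_A$ as a left adjoint, the unit $\eta_F\colon F\to\theta_A L_A(F)$ exhibits $L_A(F)$ as the free $A^{\otimes|\rhd}$-algebra on $F$, so I would simply feed $\eta_F$ into \Cref{lem:operads}. Its two conclusions read: the component $\eta_F(a)\colon F(a)\to\theta_A L_A(F)(a)$ is an equivalence in $\mathcal{C}$ for every $a\in A$, and the underlying functor $\underline{L_A(F)}\colon A^{\rhd}\to\mathcal{C}$ is a colimit diagram. The second assertion is exactly (2). For (1), the underlying $\infty$-category of $A^{\otimes}$ is the discrete category on the set $A$, so a morphism in $\Alg_{A^{\otimes}}(\mathcal{C})$ is an equivalence precisely when each component at $a\in A$ is; the first assertion then says $\eta_F$ is an equivalence.

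For part (3), the idea is to realize $F'$ itself as a free algebra and appeal to uniqueness of free objects. Given $F'$ with $\underline{F'}\colon A^{\rhd}\to\mathcal{C}$ a colimit diagram, I would apply \Cref{lem:operads} to the map $\Id_{\theta_A(F')}\colon\theta_A(F')\to\theta_A(F')$, in the role of $f\colon F\to\theta_A(F')$ with $F=\theta_A(F')$: condition (1) holds trivially and condition (2) holds by hypothesis, so $\Id_{\theta_A(F')}$ exhibits $F'$ as a free $A^{\otimes|\rhd}$-algebra on $\theta_A(F')$. Since $\eta_{\theta_A(F')}$ exhibits $L_A\theta_A(F')$ as a free $A^{\otimes|\rhd}$-algebra on the same object, there is a canonical equivalence $L_A\theta_A(F')\xrightarrow{\simeq}F'$, and I would identify it with the counit $\epsilon_{F'}$ using the triangle identity $\theta_A(\epsilon_{F'})\circ\eta_{\theta_A(F')}=\Id_{\theta_A(F')}$: this identity shows that, for every $G\in\Alg_{A^{\otimes|\rhd}}(\mathcal{C})$, the composite of $(-\circ\epsilon_{F'})$ with the adjunction equivalence (apply $\theta_A$, then restrict along $\eta_{\theta_A(F')}$) is nothing but $\theta_A$ on mapping spaces, which is an equivalence because $\Id_{\theta_A(F')}$ exhibits $F'$ as free. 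A two-out-of-three argument followed by the Yoneda lemma then yields that $\epsilon_{F'}$ is an equivalence.

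The applications of \Cref{lem:operads} and the discreteness observation are immediate. The one place that calls for genuine care is the final identification of the abstractly-defined counit $\epsilon_{F'}$ with the comparison equivalence between the two presentations of the free algebra on $\theta_A(F')$; this is purely formal once the correct triangle identity is invoked with the correct variance, but it is where an oversight could creep in.
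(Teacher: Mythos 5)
Your proposal follows the paper's proof step for step: parts (1) and (2) are obtained by feeding the unit $\eta_F$ into \Cref{lem:operads}, using that equivalences in $\Alg_{A^{\otimes}}(\mathcal{C})$ are detected componentwise over the discrete category $A$, and part (3) applies \Cref{lem:operads} to the identity of $\theta_A(F')$ to realize $F'$ as a free algebra. The only divergence is that the paper compresses the final identification of $\epsilon_{F'}$ with the comparison equivalence into a single "Thus," whereas you spell out the triangle-identity/Yoneda argument — a worthwhile clarification, as you note, but not a different approach.
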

\begin{proof}
For $F \in \Alg_{A^{\otimes}}(\mathcal{C})$, the map 
\[ \eta_F \colon F \to  \theta_A(L_A(F))
\]
exhibits $L_A(F)$ as the free $A^{\otimes | \rhd}$-algebra generated by $F$.
We thus get (1) and (2) from parts (1) and (2) of \cref{lem:operads} respectively.

For part (3), let $F' \in  \Alg_{A^{\otimes | \rhd}}(\mathcal{C})$ be such that the underlying functor 
\[
\underline{F'} \colon A^{\rhd} \to \mathcal{C} 
\]
is a colimit diagram. Consider the identity map $f \colon \theta_A(F') \to \theta_A(F')$, by \cref{lem:operads} $f$ exhibits $F'$ as the free $A^{\otimes | \rhd}$-algebra generated by $\theta_A(F')$. Thus the map $\epsilon_{F'} \colon L_A (\theta_A(F')) \to F'$ is an equivalence.
\end{proof}

We now turn to understanding symmetric monoidal functors with domain $(\Z/n)^{\otimes}$. This result will be used in the next two subsections. 

\begin{prop}\label{p:lax-homotopy}
Let $\cC$ be symmetric monoidal $1$-category and let $a$ be a tensor invertible object in $\cC$ such that $a^{\otimes n} \cong 1_{\cC}$. Assume that the symmetry map $\beta_{a,a} = \Id_{a\otimes a}$, then there is a symmetric monoidal functor
\[
(\Z/n)^{\otimes} \to \cC
\]
sending $1$ to $a$. 
\end{prop}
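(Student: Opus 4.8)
The plan is to construct the functor by hand after strictifying. First I would invoke Mac Lane's strictification theorem to replace $\cC$ by a symmetric monoidally equivalent strict monoidal category; a strong symmetric monoidal equivalence $G$ satisfies $\beta^{\cC'}_{Ga,Ga}\circ\mu = \mu\circ G(\beta^{\cC}_{a,a})$ and $G(a^{\otimes n})\cong (Ga)^{\otimes n}$, so the hypotheses $\beta_{a,a}=\Id$ and $a^{\otimes n}\cong 1_\cC$ transport to the image object. Hence I may assume the associator and unitors of $\cC$ are identities, while the symmetry $\beta$ is left arbitrary. I then fix once and for all an isomorphism $\phi\colon a^{\otimes n}\xrightarrow{\ \cong\ } 1_{\cC}$, and write $\bar m\in\{0,\dots,n-1\}$ for the residue of $m$ modulo $n$, with the convention $a^{\otimes 0}=1_{\cC}$.

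The second step is a key auxiliary fact: if $\beta_{a,a}=\Id_{a\otimes a}$, then $\beta_{a^{\otimes i},a^{\otimes j}}=\Id_{a^{\otimes(i+j)}}$ for all $i,j\ge 0$. For $i=0$ or $j=0$ this is the coherence of $\beta$ with the unit; in general it is a routine induction on $i+j$ using the hexagon axioms, which in the strict setting exhibit $\beta_{a^{\otimes i},a^{\otimes j}}$ as a composite of instances of $\beta_{a,a}$. Combined with naturality of $\beta$ applied to the morphism $\phi$ and again to the unit, this yields the slide identity
\[
\Id_{a^{\otimes i}}\otimes\phi\otimes\Id_{a^{\otimes m}} \;=\; \phi\otimes\Id_{a^{\otimes(i+m)}}\colon a^{\otimes(i+n+m)}\longrightarrow a^{\otimes(i+m)}
\]
for all $i,m\ge 0$: informally, $\phi$ may be slid freely past tensor powers of $a$. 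This is the one place the hypothesis $\beta_{a,a}=\Id$ genuinely enters.

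Now I define $F\colon(\Z/n)^{\otimes}\to\cC$ on representatives by $F(i)=a^{\otimes i}$, the identity on morphisms, and unit constraint $F(0)=1_\cC$; for the multiplication constraint $\mu_{i,j}\colon F(i)\otimes F(j)=a^{\otimes(i+j)}\to F(\overline{i+j})=a^{\otimes\,\overline{i+j}}$ I set $\mu_{i,j}=\Id$ if $i+j<n$ and $\mu_{i,j}=\phi\otimes\Id_{a^{\otimes(i+j-n)}}$ if $i+j\ge n$. It remains to verify the three axioms of a symmetric monoidal functor. Unitality is immediate since $\mu_{0,j}=\mu_{j,0}=\Id$. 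The symmetry axiom requires $\mu_{j,i}\circ\beta_{F(i),F(j)}=\mu_{i,j}$; here $\beta_{F(i),F(j)}=\Id$ by the auxiliary fact, and $\mu_{i,j}$ and $\mu_{j,i}$ are literally the same morphism $a^{\otimes(i+j)}\to a^{\otimes\,\overline{i+j}}$, as both either do nothing or apply $\phi$ to the leading $n$ factors. For the associativity axiom, the strictness of $\cC$ and of the source reduces it to $\mu_{\overline{i+j},k}\circ(\mu_{i,j}\otimes\Id)=\mu_{i,\overline{j+k}}\circ(\Id\otimes\mu_{j,k})$, and both composites apply $\phi$ to leading blocks of $n$ factors a total of $\lfloor(i+j+k)/n\rfloor$ times, a count independent of the bracketing; the slide identity then lets me move every occurrence of $\phi$ to the front, so the two composites coincide. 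This completes the construction, with $F(1)=a$ by definition.

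The main obstacle is purely bookkeeping: assembling the associativity coherence in the case where $i+j+k$ wraps around $n$ twice, which forces a comparison of $\phi\otimes\phi$-type composites applied to different blocks of tensor factors. This is exactly what the slide identity $\Id_{a^{\otimes i}}\otimes\phi=\phi\otimes\Id_{a^{\otimes i}}$ resolves, and it is the sole input from $\beta_{a,a}=\Id$; dropping that hypothesis makes the functor genuinely fail to exist, as in $\Z/2$-graded vector spaces with the Koszul sign and $a$ the odd line.
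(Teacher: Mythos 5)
Your proof is correct, but it takes a genuinely different route from the paper's. The paper's argument is stable-homotopical: it passes to the Picard spectrum $\pic(\cC)$, which is $1$-truncated because $\cC$ is a $1$-category, and then invokes the obstruction-theoretic lemma immediately preceding the proposition to produce a map $H\Z/n \to \pic(\cC)$ sending $1$ to the class of $a$. The hypothesis $\beta_{a,a}=\Id$ enters there via a cited result identifying the action of the Hopf element $\eta$ on $\pi_0$ of a Picard spectrum with the self-symmetry, so that the single obstruction (the $k$-invariant of a two-stage Postnikov tower) vanishes. Your construction instead works directly in $\cC$: you strictify the associator and unitors (correctly leaving the symmetry non-strict, as coherence forces), define the structure maps $\mu_{i,j}$ explicitly from a chosen $\phi\colon a^{\otimes n}\to 1_\cC$, and verify the coherence axioms by hand. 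The two genuine uses of the hypothesis are the auxiliary induction $\beta_{a^{\otimes i},a^{\otimes j}}=\Id$ (needed both for the symmetry axiom of $F$ and, via naturality of $\beta$ against $\phi$, for the slide identity $\Id_{a^{\otimes i}}\otimes\phi=\phi\otimes\Id_{a^{\otimes i}}$), which is what makes the associativity pentagon close in the double-wrap case. Your approach is elementary and self-contained; the paper's is shorter given the machinery already set up in its subsection and makes the conceptual content of $\beta_{a,a}=\Id$ transparent as the vanishing of an $\eta$-obstruction.
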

\begin{proof}
Without loss of generality, we may assume that $\cC$ is skeletal. Thus we may assume that $a^{\otimes n} = 1_{\cC}$. Let $m>0$ be the smallest positive integer such that $a^{\otimes m} = 1_{\cC}$. It follows that $m|n$. Consider the discrete subcategory of $\cC$ supported on $a^{\otimes i}$ for $i \geq 0$. By our assumption, the symmetry maps of powers of $a$ are identity maps. Thus, this discrete category is a symmetric monoidal subcategory of $\cC$ isomorphic to $(\Z/m)^{\otimes}$. We compose with the symmetric monoidal functor $(\Z/n)^{\otimes} \to (\Z/m)^{\otimes}$ to get the desired symmetric monoidal functor.
\end{proof}

\subsection{Functorial weight decompositions}\label{ss:weight}

In this subsection we will use the $C_{p-1}$-action described in \cref{ss:cp_1} to decompose $E^{\otimes_{\hS} k}$ as a finite sum of $(E^{\otimes_{\hS} k})^{hC_{p-1}}$-module spectra. We will develop the theory of weights in the context of naive $C_{p-1}$-equivariant modules over $\hS$ to produce a functorial decomposition. In the next subsection, this naturality will be used to decompose the entire Amitsur complex of the map $\hS \to E$. 

Let $\Mod_{\hS}$ be the $\infty$-category of modules over $\hS$ and let 
\[
(\Mod_{\hS})^{BC_{p-1}} = \Fun(BC_{p-1},\Mod_{\hS})
\] 
be the $\infty$-category of naive $C_{p-1}$-equivariant modules over $\hS$. Since $\Mod_{\hS}$ is a symmetric monoidal $\infty$-category, the functor category is as well. We will make use of the forgetful functor
\[
U \colon (\Mod_{\hS})^{BC_{p-1}} \rightarrow \Mod_{\hS}
\]
given by restriction along $Be \rightarrow BC_{p-1}$.

\begin{lem}
There is an action of $\Z_{p}^{\times}$ on $\hS$ inducing the obvious action on $\pi_* \hS$.
\end{lem}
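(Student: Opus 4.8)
The plan is to produce the $\Z_p^\times$-action on $\hS$ by transporting a group action that exists on the level of $\bE_\infty$-ring spectra and then checking what it does on homotopy. The key observation is that $\hS$ is the $p$-completion of the sphere, which means it already carries an action of the group of units $\Z_p^\times$ coming from the functoriality of the construction, but a cleaner route is to build the action directly on the $K(1)$-local or $p$-complete sphere via the Galois-theoretic picture — however, since we only need a \emph{naive} action (a functor $B\Z_p^\times \to \CAlg(\Sp)$, or rather its restriction to the relevant category), the simplest approach is the following.

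First I would recall that $\hS$ is an $\bE_\infty$-ring and that $\pi_* \hS$ has $\pi_0 \hS = \Z_p$ with all higher homotopy groups torsion. The point is that $\Z_p^\times$ acts on $\Z_p$ by multiplication, and this is realized by an action on $\hS$ itself. One concrete way: $\hS \simeq \lim_k S^0/p^k$, and $\Z_p^\times$ need not act naively on each finite stage, so instead I would use that the $p$-complete sphere sits as the homotopy fixed points (or the unit) in a setting where $\Z_p^\times$ acts — for instance, via the identification of $\hS$ with the $\bE_\infty$-ring governing the $\Z_p^\times$-Galois extension of Morava $E$-theory at height $0$, or more elementarily, via the action on $\E$ constructed in \cref{ss:cp_1} together with the unit map $\hS \to \E$. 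Actually the cleanest statement: since we have shown (implicitly, via the stabilizer group action) that $\Z_p^\times \subset \mathbb{S}$ acts on $\E$ by $\bE_\infty$-ring maps, and since $\hS$ is the connective-cover-free base over which everything is defined, one extracts the action on $\hS$ by noting that $\hS \to \E$ is equivariant for the trivial action on $\hS$; so this doesn't directly give a nontrivial action on $\hS$. Therefore the action on $\hS$ must be constructed independently.

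The correct construction: $\hS = (S^0)^\wedge_p$, and there is an action of $\Z_p^\times$ on the \emph{$p$-adic} K-theory spectrum $KU_p$ (the Adams operations $\psi^a$ for $a \in \Z_p^\times$), which are $\bE_\infty$-maps; the $K(1)$-local sphere is $KU_p^{hC}$ for $C$ the torsion-free part, and restricting gives an action. But for the bare statement "$\Z_p^\times$ acts on $\hS$ inducing the obvious action on $\pi_* \hS$", since $\pi_0 \hS = \Z_p$ and $\Z_p^\times$ acts on it by multiplication while higher homotopy is all torsion, I would: (i) observe that any $\bE_\infty$-ring map $\hS \to \hS$ lifting multiplication by $a$ on $\pi_0$ is automatically determined on $\pi_0$; (ii) produce such a family of maps functorially in $a$ by using that $\hS$ is the initial $\bE_\infty$-ring which is $p$-complete, so $\Aut_{\CAlg}(\hS)$ — wait, the initial object has trivial automorphisms. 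So the action on $\pi_*\hS$ by multiplication on $\pi_0 = \Z_p$ cannot come from $\bE_\infty$-ring maps. Hence the action must be a naive action through \emph{non-ring} self-maps, or the intended statement is about an action on a \emph{module}.

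Rereading: the lemma says "an action of $\Z_p^\times$ on $\hS$ inducing the obvious action on $\pi_*\hS$" — I now believe this refers to an action in $\Fun(B\Z_p^\times, \Sp)$ (naive $\Z_p^\times$-spectra), not $\bE_\infty$-rings, and the "obvious action on $\pi_*\hS$" must be \emph{trivial} on $\pi_0 = \Z_p$ (since $\Z_p^\times$ acts trivially on the initial ring) but the point is that this trivial-looking action, \emph{after} tensoring or after passing to $\E = \hS$-modules, becomes the weight action. So I would prove it by: taking the action of $C_{p-1} \subset \Z_p^\times$ (or all of $\Z_p^\times$) on $\E$ from \cref{ss:cp_1}, restricting the $\E$-module structure, and using that $\hS \to \E$ being the unit map is $\Z_p^\times$-equivariant with $\hS$ carrying the action obtained by \emph{pulling back} along this map — concretely, $\hS$ acquires the action as the homotopy fixed points $\E^{h\mathbb{S}}$ localized appropriately, or simply: the claim is that the trivial action works, i.e., the "action on $\hS$" is trivial and "the obvious action on $\pi_*\hS$" is likewise trivial, making the lemma a formality whose content is only the setup of notation needed to later say that $\E$, as an $\hS$-algebra, carries a compatible $\Z_p^\times$-action. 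The main obstacle is pinning down exactly which category the action lives in; once that is fixed — and I expect it to be $\CAlg(\Sp)^{B\Z_p^\times}$ with $\hS$ receiving the action functorially as the $p$-completion of $S^0$ with its (trivial, since $S^0$ is initial) canonical $\Z_p^\times$-action, so that the induced map on homotopy is the identity — the proof is immediate from functoriality of $p$-completion.
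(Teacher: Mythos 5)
Your proposal does not establish the lemma, and the source of the difficulty is that you have misidentified what the ``obvious action'' on $\pi_*\hS$ is. You are right that $\hS$ is the initial $p$-complete $\bE_\infty$-ring, so $\Aut_{\CAlg(\Sp)}(\hS)$ is contractible and the action cannot live in $\CAlg(\Sp)^{B\Z_p^\times}$. But from this correct observation you draw the wrong conclusion --- you decide the intended action must be trivial. It is not: the action that is needed (and the one the paper constructs) is a naive action in $\Sp$ (equivalently in $\Mod_{\hS}$, since $p$-complete spectra are the same as $\hS$-modules), and on $\pi_0\hS \cong \Z_p$ it is multiplication by units. This is exactly what makes \cref{lem:Natslemma} true, where $\pi_0(\hS(-j)) \cong \Z_p(-j)$ as a $C_{p-1}$-module; if the action on $\hS$ were trivial, every $\hS(j)$ would be the trivial $C_{p-1}$-spectrum, the twist $B \otimes_{\hS}\hS(-j)$ would do nothing, and the entire weight decomposition machinery of \cref{ss:weight} would collapse. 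Your final proposed proof (``the proof is immediate from functoriality of $p$-completion'' applied to $S^0$ with trivial action) manufactures only the trivial action and therefore proves nothing.

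The actual construction does not go through ring maps or through $\E$ at all. One uses the $H\F_p$-homology equivalence $B\Z \to B\Z_p$ to produce an identification of $p$-complete spectra
\[
\hS \simeq \Sigma^{-1}\bigl(\Sigma^{\infty}B\Z_p\bigr)^{\wedge}_p,
\]
using that both sides are $p$-complete, connective, and the map is an $H\F_p$-homology equivalence. The group $\Z_p^\times$ acts on $\Z_p$ by multiplication, hence on $B\Z_p$, hence on $\Sigma^\infty B\Z_p$ and its $p$-completion, and hence on $\hS$ after desuspending. On $\pi_0\hS \cong H_1(B\Z_p;\Z_p) \cong \Z_p$ this visibly induces multiplication by units. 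This is a construction in $\Sp$, not in $\CAlg(\Sp)$, so there is no tension with the initiality of $\hS$ among $p$-complete $\bE_\infty$-rings. The lemma is genuinely nontrivial content, not notation, and the route through Adams operations on $KU_p$ or through the stabilizer group acting on $\E$ that you floated would require substantially more input than this direct geometric construction.
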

\begin{proof}
The canonical map
\[
B\Z \rightarrow B\Z_p
\]
is an $H\F_p$-homology equivalence. This induces a map
\[
S^0 \simeq \Sigma^{-1}\Sigma^{\infty} B\Z \rightarrow \Sigma^{-1}(\Sigma^{\infty} B\Z_p)^{\wedge}_{p}
\]
that factors to give a map
\[
\hat{S} \rightarrow \Sigma^{-1}(\Sigma^{\infty} B\Z_p)^{\wedge}_{p}.
\]
This is a map between $p$-complete connective spectra and it is an $H\F_p$-homology equivalence, thus it is an equivalence. There is an obvious action of $\Z_{p}^{\times}$ on $B\Z_p$ and this induces the action on $\hat{S}$ that we desire.
\end{proof}

Let $j \in \Z/(p-1) = \hom(C_{p-1},\Z_{p}^{\times})$. Let $\hS(j)$ be the $p$-complete sphere equipped with the action of $C_{p-1}$ induced by $j$. We are viewing this as a naive $C_{p-1}$-equivariant module. Let $\hS(0) \simeq \hS$ be the $C_{p-1}$-spectrum determined by $j = 0$, the zero homomorphism. 

Now let $B$ be a naive $C_{p-1}$-equivariant module over $\hS$. We define
\[
B_{j} = (B \otimes_{\hS} \hS(-j))^{hC_{p-1}}.
\]
Note that the underlying non-equivariant modules of $B \otimes_{\hS} \hS(-j)$ and $B$ agree:
\[
U(B \otimes_{\hS} \hS(-j)) \simeq U(B). 
\]

\begin{lem} \label{lem:Natslemma}
Let $\Z_p(-j)$ be $\Z_p$ acted on by $C_{p-1}$ through $-j$ and let $B \in (\Mod_{\hS})^{BC_{p-1}}$. Then there is an isomorphism of $C_{p-1}$-modules
\[
\pi_*(B \otimes_{\hS} \hS(-j)) \cong \pi_*(B) \otimes_{\Z_p} \Z_p(-j)
\] 
and an isomorphism of abelian groups
\[
\pi_*(B_j) \cong (\pi_*(B) \otimes_{\Z_p} \Z_p(-j))^{C_{p-1}}.
\]
\end{lem}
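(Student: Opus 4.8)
The plan is to compute the homotopy groups of $B \otimes_{\hS} \hS(-j)$ by a Künneth argument and then identify the homotopy fixed points. For the first isomorphism, note that $U(B \otimes_{\hS} \hS(-j)) \simeq U(B)$ as ordinary $\hS$-modules, since $U(\hS(-j)) \simeq \hS$. Hence as graded abelian groups $\pi_*(B \otimes_{\hS} \hS(-j)) \cong \pi_*(B)$. The content is to track the $C_{p-1}$-action under this identification. I would do this by comparing the two sides as functors on the naive equivariant category: the equivalence $B \otimes_{\hS} \hS(-j) \to B \otimes_{\hS} \hS(-j)$ is manifestly $C_{p-1}$-equivariant, and on $\pi_*$ the action on the right-hand tensor factor is, by definition, the action of $C_{p-1}$ on $\Z_p$ through the character $-j$ (i.e.\ on $\pi_0\hS(-j) \cong \Z_p$). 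Since $\pi_*(\hS(-j))$ is concentrated in nonnegative degrees with $\pi_0 = \Z_p(-j)$ and $\Z_p$ is torsion-free, the smash product over $\hS$ restricted to the bottom of the filtration gives $\pi_*(B \otimes_{\hS} \hS(-j)) \cong \pi_*(B) \otimes_{\Z_p} \Z_p(-j)$ as $C_{p-1}$-modules; here the diagonal action on the right matches the induced action on the left, and the higher homotopy of $\hS$ contributes nothing to $\pi_*$ of the relative smash product because we are base-changing along $\hS \to \hS(-j)$ which is an equivalence of underlying ring spectra. (Concretely one can realize $\hS(-j)$ as $\hS$ with a twisted $C_{p-1}$-action, so $B \otimes_{\hS} \hS(-j)$ is just $B$ with the action twisted by the character $-j$, which immediately gives the stated $C_{p-1}$-module structure on homotopy.)

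For the second isomorphism, I would use that $p-1$ is invertible in $\Z_p$, so taking homotopy fixed points for a $C_{p-1}$-action on a $p$-complete (indeed $p$-local) spectrum is exact on homotopy groups: the homotopy fixed point spectral sequence collapses because $H^s(C_{p-1}; M) = 0$ for $s > 0$ whenever $M$ is a $\Z_{(p)}$-module. This is exactly the phenomenon already invoked in \Cref{r:sparse}. Applying this to $M_* = \pi_*(B \otimes_{\hS} \hS(-j))$ yields
\[
\pi_*(B_j) = \pi_*\big((B \otimes_{\hS} \hS(-j))^{hC_{p-1}}\big) \cong \big(\pi_*(B \otimes_{\hS} \hS(-j))\big)^{C_{p-1}} \cong \big(\pi_*(B) \otimes_{\Z_p} \Z_p(-j)\big)^{C_{p-1}},
\]
where the last step is the first isomorphism already proved.

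The main obstacle is making the Künneth/base-change identification of the $C_{p-1}$-action genuinely clean rather than hand-waved: one needs to know that the equivalence $U(B \otimes_{\hS}\hS(-j)) \simeq U(B)$ is compatible with the equivariant structures in the precise way claimed, i.e.\ that twisting the second factor by the character $-j$ is the same as twisting the whole module. The cleanest route is to observe that $\hS(-j)$ is an invertible object of $(\Mod_{\hS})^{BC_{p-1}}$ whose underlying object is the unit, so that $(-) \otimes_{\hS} \hS(-j)$ is an autoequivalence of $(\Mod_{\hS})^{BC_{p-1}}$ lying over the identity of $\Mod_{\hS}$; such an autoequivalence acts on an object $B$ by twisting its $BC_{p-1}$-structure by the corresponding element of $\Hom(C_{p-1}, \mathrm{Pic}(\hS)^{\times})$, which in $\pi_0$-degree is the character $-j: C_{p-1} \to \Z_p^\times = \pi_0(\mathrm{GL}_1\hS)$. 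Granting this, both displayed isomorphisms are formal, and the proof is short. Everything else — exactness of $(-)^{hC_{p-1}}$ on $\Z_{(p)}$-modules, torsion-freeness of $\Z_p$ — is standard and already used earlier in the section.
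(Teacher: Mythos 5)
Your proposal is correct and takes essentially the same route as the paper: construct the natural equivariant comparison map $\pi_*(B)\otimes_{\Z_p}\Z_p(-j) \to \pi_*(B\otimes_{\hS}\hS(-j))$ coming from lax monoidality, observe it is an isomorphism of underlying abelian groups since $U(\hS(-j))\simeq\hS$, and then compute homotopy fixed points as fixed points because $p-1$ is invertible in $\Z_p$. The K\"unneth/filtration digression in your middle paragraph is unnecessary (and slightly muddled about "higher homotopy of $\hS$ contributing nothing"), but your "cleanest route" paragraph about $\hS(-j)$ being an invertible object over the unit recovers exactly the paper's argument.
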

\begin{proof}
There is an isomorphism of $C_{p-1}$-modules
\[
\pi_0(\hS(-j)) \cong \Z_p(-j)
\]
inducing a map of $C_{p-1}$-modules
\[
\pi_*(B) \otimes_{\Z_p} \Z_p(-j) \to \pi_*(B \otimes_{\hS} \hS(-j)) 
\]
and this is an isomorphism of abelian groups and thus an isomorphism of $C_{p-1}$-modules. The second isomorphism follows from the fact that $p-1$ is coprime to $p$ so that the homotopy fixed points can be computed as the fixed points.
\end{proof}

\begin{ex} \label{cor:simplesplitting}
By \cref{cor:action} and \cref{lem:Natslemma}, there are isomorphisms 
\begin{equation*}
\pi_l((E^{\otimes_{\hS} k})_j) \cong 
\begin{cases}
	\pi_lE^{\otimes_{\hS} k} & \text{if } l \equiv 2j \text{ mod } 2(p-1),\\
	0 & \text{otherwise}.
\end{cases}
\end{equation*}
Thus, on the level of homotopy groups, we have an isomorphism
\[
\bigoplus_{j \in \Z/(p-1)} \pi_*((E^{\otimes_{\hS} k})_j) \lra{\cong} \pi_*(E^{\otimes_{\hS} k}).
\]
\end{ex}

We are going to promote this isomorphism to an equivalence of $(E^{\otimes_{\hS} k})_0$-modules
\[
\Oplus{j \in \Z/(p-1)} (E^{\otimes_{\hS} k})_{j} \lra{\simeq} E^{\otimes_{\hS} k}.
\]
In fact, we would like these equivalences to assemble into a decomposition of the entire Amitsur complex $E^{\otimes_{\hS} \bullet + 1}$. The next propositions deal with all of the coherence involved in doing this. 

Recall the construction of the $\infty$-operad $(\Z/(p-1))^{\otimes}$ from \cref{ss:monoid}.

\begin{prop} \label{goodoldW}
There is an object $W \in \Alg_{(\Z/(p-1))^{\otimes}}(\Mod_{\hS}^{BC_{p-1}})$ such that
\[
W(j) = \hS(-j).
\]
\end{prop}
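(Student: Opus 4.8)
The plan is to translate the statement into the existence of a single map of connective spectra into a Picard spectrum, and then to produce that map by obstruction theory, the decisive point being the coprimality of $p-1$ and $p$.

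First I would observe that each $\hS(-j)$ is $\otimes$-invertible in $\Mod_{\hS}^{BC_{p-1}}$ (its underlying $\hS$-module is $\hS$), that $\hS(-j)\otimes_{\hS}\hS(-j')\simeq\hS(-j-j')$, and that $\hS(0)\simeq\hS$ is the unit. Hence it suffices to construct a symmetric monoidal functor $(\Z/(p-1))^{\otimes}\to\Mod_{\hS}^{BC_{p-1}}$ with $W(1)\simeq\hS(-1)$: such a functor is in particular an algebra over the $\infty$-operad $(\Z/(p-1))^{\otimes}$, and automatically satisfies $W(j)\simeq W(1)^{\otimes j}\simeq\hS(-j)$. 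As in the proof of \cref{p:lax-homotopy}, since $(\Z/(p-1))^{\otimes}$ is a Picard $\infty$-groupoid whose associated connective spectrum is $H(\Z/(p-1))$, giving such a functor is the same as giving a map of spectra
\[
H(\Z/(p-1))\longrightarrow\pic\big(\Mod_{\hS}^{BC_{p-1}}\big)
\]
carrying $1\in\Z/(p-1)$ to the class of $\hS(-1)$. When $p=2$ the group $C_{p-1}$ is trivial and $W$ is the trivial algebra, so I assume $p$ odd from now on.

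Next I would analyze the target. Because $\Mod_{\hS}^{BC_{p-1}}=\Fun(BC_{p-1},\Mod_{\hS})$ carries the pointwise symmetric monoidal structure and $\pic$ preserves limits, one has $\pic(\Mod_{\hS}^{BC_{p-1}})\simeq F(BC_{p-1+},\pic(\Mod_{\hS}))$ with $C_{p-1}$ acting trivially on $\pic(\Mod_{\hS})$. Using $\pi_{0}\pic(\Mod_{\hS})=\Pic(\hS)=\Z$, $\pi_{1}\pic(\Mod_{\hS})=\Z_{p}^{\times}$ and $\pi_{t}\pic(\Mod_{\hS})=\pi_{t-1}\hS$ for $t\ge 2$, the homotopy fixed point spectral sequence identifies the classes $\{[\hS(-j)]\}_{j}$ with the summand $H^{1}(BC_{p-1};\Z_{p}^{\times})=\hom(C_{p-1},\Z_{p}^{\times})\cong\Z/(p-1)$ of $\pi_{0}$, and shows that $\pi_{k}\pic(\Mod_{\hS}^{BC_{p-1}})\cong\pi_{k-1}\hS$ for all $k\ge 2$, which is a finite $p$-group since $p$ is odd (here one also uses that $H^{\ge 1}(BC_{p-1};-)$ annihilates finite $p$-groups, as $\gcd(p-1,p)=1$). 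I would then build the map in two steps. Applying \cref{l:lax-homotopy} with $P=\tau_{\le 1}\pic(\Mod_{\hS}^{BC_{p-1}})$ and $a=[\hS(-1)]\in\pi_{0}P$ yields a map $H(\Z/(p-1))\to\tau_{\le 1}\pic(\Mod_{\hS}^{BC_{p-1}})$ inducing $1\mapsto[\hS(-1)]$ on $\pi_{0}$; its hypotheses hold because $(p-1)\cdot a=[\hS(-(p-1))]=[\hS(0)]=0$ and because $\eta\cdot a=0$, the latter since the symmetry $\beta_{\hS(-1),\hS(-1)}$ is carried to the (identity) symmetry of $\hS\otimes_{\hS}\hS\simeq\hS$, so one may invoke the fact relating $\eta\cdot[-]$ to the symmetry used in the proof of \cref{p:lax-homotopy}. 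One then lifts this map up the Postnikov tower $\pic(\Mod_{\hS}^{BC_{p-1}})=\lim_{k}\tau_{\le k}\pic(\Mod_{\hS}^{BC_{p-1}})$: the obstructions to lifting from $\tau_{\le k}$ to $\tau_{\le k+1}$, together with the corresponding indeterminacies, lie in groups $[H(\Z/(p-1)),\Sigma^{\bullet}HG]$ with $G=\pi_{k}\hS$ a finite $p$-group for $k\ge 1$, and these vanish because $H(\Z/(p-1))\wedge HG\simeq 0$ when $\gcd(p-1,p)=1$. Hence the lift exists and is essentially unique, producing the desired map and therewith $W$.

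The main obstacle is precisely that $\pic(\Mod_{\hS}^{BC_{p-1}})$ is not $1$-truncated, so that \cref{p:lax-homotopy} does not apply verbatim; the real work is the Postnikov-lifting argument, which goes through exactly because the mod-$p$ homotopy of $\hS$ is invisible to the coprime-order group $C_{p-1}$ --- an instance of the paper's ``sparseness at large primes'' theme. The one point requiring care is the identification $\pic(\Mod_{\hS}^{BC_{p-1}})\simeq\pic(\Mod_{\hS})^{hC_{p-1}}$ with trivial action, which uses only that $\Mod_{\hS}^{BC_{p-1}}$ is the functor category with pointwise tensor product and that $\pic$ commutes with limits.
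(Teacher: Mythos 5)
Your proposal is correct and follows essentially the same strategy as the paper's proof: reduce the statement to producing a map of spectra $H\Z/(p-1) \to \pic(\Mod_{\hS}^{BC_{p-1}})$ hitting the class of $\hS(-1)$, construct it on the $1$-truncation via \cref{l:lax-homotopy} after verifying the torsion and Hopf-map conditions, and then lift up the Postnikov tower using that the higher homotopy of $\pic(\Mod_{\hS}^{BC_{p-1}})$ consists of finite $p$-groups, so the obstructions vanish by coprimality of $p-1$ and $p$. The only cosmetic difference is that the paper routes the first step through a symmetric monoidal functor into the homotopy $1$-category $\Ho(\Mod_{\hS}^{BC_{p-1}})$ via \cref{p:lax-homotopy} and verifies the needed equivalences $\hS(-1)^{\otimes j}\simeq\hS(-j)$ and $\beta_{\hS(-1),\hS(-1)}=\Id$ by computing equivariant mapping spectra, whereas you assert these directly; this is an equivalent packaging of the same calculation.
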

\begin{proof}
We will construct a symmetric monoidal functor 
\[
W \colon (\Z/(p-1))^{\otimes} \to \Mod_{\hS}^{BC_{p-1}}
\]
such that $W(j) = \hS(-j)$. In particular, $W$ can be considered as a lax symmetric monoidal functor and thus an object in $\Alg_{(\Z/(p-1))^{\otimes}}(\Mod_{\hS}^{BC_{p-1}})$. This will be done in two steps. 

First we produce a symmetric monoidal functor
\[
(\Z/(p-1))^{\otimes} \to \Ho(\Mod_{\hS}^{BC_{p-1}}) 
\]
to the homotopy category with the correct property. We apply \cref{p:lax-homotopy}, so we need to check two conditions. 

First, we show that $\hS(-1)^{\otimes j} \cong \hS(-j)$ (note that $\hS(p-1) = \hS(0)$ by definition) in $\Ho(\Mod_{\hS}^{BC_{p-1}})$. We have an equivalence
\[
\Hom_{  \Mod_{\hS}^{BC_{p-1}}}(\hS(-1)^{\otimes j} , \hS(-j)) \simeq \Hom_{\Mod_{\hS}}(U(\hS(-1))^{\otimes j},U(\hS(-j)))^{C_{p-1}} \simeq \hS^{hC_{p-1}},
\]
where the action in the middle is the conjugation action. The first equivalence has been proven in the $\infty$-categorical setting in \cite[Proposition 2.3]{glasman}. The conjugation action on the homotopy groups of $\hS$ on the right hand side is trivial since, on the level of homotopy groups, the weights are the same. Since $p-1$ is prime to $p$, we get $\hS^{hC_{p-1}} = \hS$. Thus we have an equivariant map lifting the identity.

Second, we would like to prove that 
\[
\beta_{\hS(-1),\hS(-1)} = \Id_{\hS(-1)\otimes \hS(-1)} \in \Map_{\Ho(\Mod_{\hS}^{BC_{p-1}})}(\hS(-1)\otimes \hS(-1), \hS(-1)\otimes \hS(-1)).
\]
We have a symmetric monoidal forgetful functor
\[
U\colon \Ho(\Mod_{\hS}^{BC_{p-1}}) \to \Ho(\Mod_{\hS}).
\]
Similarly to previous paragraph, $U$ induces an isomorphism  
\[ 
\Map_{\Ho(\Mod_{\hS}^{BC_{p-1}})}(\hS(-1)\otimes \hS(-1), \hS(-1)\otimes \hS(-1)) \lra{\cong}  \Map_{\Ho(\Mod_{\hS})}(\hS \otimes \hS , \hS \otimes \hS)
\]
sending $\beta_{\hS(-1),\hS(-1)}$  to $\beta_{\hS,\hS}$, but $\beta_{\hS,\hS} = \Id_{\hS \otimes \hS}$
as $\hS$ is the unit. Now \cref{p:lax-homotopy} furnishes us with the functor. 

Finally, we show that the symmetric monoidal functor
\[
(\Z/(p-1))^{\otimes} \to \Ho(\Mod_{\hS}^{BC_{p-1}}) 
\]
lifts to a functor 
\[(\Z/(p-1))^{\otimes} \to \Mod_{\hS}^{BC_{p-1}}.
\]
By passing to Picard spectra, we are looking for a lift in the diagram:
\[
\xymatrix{
\empty & \pic(\Mod_{\hS}^{BC_{p-1}}) \ar[d]\\
H\Z/(p-1) \ar[r] \ar@{-->}[ru] & \pic(\Mod_{\hS}^{BC_{p-1}})[0,1], 
}
\]
where $\pic(\Mod_{\hS}^{BC_{p-1}})[0,1]$ is the first Postnikov truncation of $\pic(\Mod_{\hS}^{BC_{p-1}})$.
The obstruction to this lift is in the abelian group 
\[
\big [H\Z/(p-1) , \Sigma  \pic(\Mod_{\hS}^{BC_{p-1}})[2,\infty] \big ].
\]
But 
\[
\pi_i \big ( \Sigma \pic(\Mod_{\hS}^{BC_{p-1}})[2,\infty] \big ) = 0 \text{ for } i<3
\] 
and 
\[
\pi_i \big ( \Sigma \pic(\Mod_{\hS}^{BC_{p-1}})[2,\infty] \big ) = \pi_{i-2}(\hS) \text{ for } i \geq 3
\]
so all of the homotopy  groups of $\Sigma  \pic(\Mod_{\hS}^{BC_{p-1}})[2,\infty] $ are finite of order prime to $(p-1)$.
\end{proof}

Recall from \cite[Example 3.2.4.4]{ha} that if $\cC^{\otimes}$ is a symmetric monoidal $\infty$-category and $\cO^{\otimes}$ is an $\infty$-operad, then the $\infty$-category $\Alg_{\cO^{\otimes}}(\cC)$ inherits a symmetric monoidal structure given by pointwise tensor product.

\begin{prop}\label{p:weights}
Let $D$ be a small $\infty $-category and let 
\[
B^{\bullet} \in \CAlg\left(\left(\Mod_{\hS}^{BC_{p-1}}\right)^D\right) \simeq \CAlg(\Mod_{\hS}^{BC_{p-1}})^D. 
\]
There exists 
\[
X^{\bullet}  \in \Alg_{\Z/(p-1)^{\otimes | \rhd}}( \Mod_{\hS}^D),
\]
depending on $B^{\bullet}$, such that: 
\begin{enumerate}
\item For all $j \in \Z/(p-1)$ we have an equivalence  
\[
X^{\bullet}(j) \simeq (B^{\bullet})_{j} \in \Mod_{\hS}^D. 
\]
\item Let $X^{\bullet}(*) \in \CAlg( \Mod_{\hS}^D) $ be the restriction to the cone point. Then we have an equivalence 
\[
X^{\bullet}(*)  \simeq U(B^{\bullet}) \in \CAlg( \Mod_{\hS}^D).
\]
\end{enumerate}
\end{prop}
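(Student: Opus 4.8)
The plan is to obtain $X^{\bullet}$ as the free $A^{\otimes | \rhd}$-algebra (with $A=\Z/(p-1)$) on an $A^{\otimes}$-algebra whose $j$-th component is $(B^{\bullet})_{j}$, and then to identify its value at the cone point with $U(B^{\bullet})$ by combining \cref{prop:operads} with the semisimplicity of $\Z_{p}[C_{p-1}]$. Throughout write $\cC=\Mod_{\hS}^{BC_{p-1}}$.

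First I would construct the relevant $A^{\otimes}$-algebra. The weight algebra $W$ of the preceding proposition, pulled back along $D\to *$, gives $\underline{W}\in\Alg_{A^{\otimes}}(\cC^{D})$ with $\underline{W}(j)=\hS(-j)$. Tensoring with $B^{\bullet}$ is a lax symmetric monoidal endofunctor $-\otimes B^{\bullet}$ of $\cC^{D}$, the lax structure coming from the multiplication on $B^{\bullet}$, and $(-)^{hC_{p-1}}=\lim_{BC_{p-1}}\colon\cC^{D}\to(\Mod_{\hS})^{D}$ is lax symmetric monoidal as the right adjoint of the symmetric monoidal constant functor, cf.\ \cite[Corollary 7.3.2.7]{ha}. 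I would then set
\[
X_{0}^{\bullet}:=\bigl(\underline{W}\otimes B^{\bullet}\bigr)^{hC_{p-1}}\in\Alg_{A^{\otimes}}\bigl((\Mod_{\hS})^{D}\bigr),
\]
so that $X_{0}^{\bullet}(j)\simeq(B^{\bullet}\otimes_{\hS}\hS(-j))^{hC_{p-1}}=(B^{\bullet})_{j}$. Since $(\Mod_{\hS})^{D}$ is presentably symmetric monoidal, \cref{prop:operads} applies, and I would put $X^{\bullet}:=L_{A}(X_{0}^{\bullet})\in\Alg_{A^{\otimes | \rhd}}((\Mod_{\hS})^{D})$. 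Part (1) is then immediate: the unit $X_{0}^{\bullet}\to\theta_{A}(X^{\bullet})$ is an equivalence by \cref{prop:operads}(1), so $X^{\bullet}(j)\simeq(B^{\bullet})_{j}$.

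For part (2), \cref{prop:operads}(2) says the underlying functor $A^{\rhd}\to(\Mod_{\hS})^{D}$ of $X^{\bullet}$ is a colimit diagram, so $X^{\bullet}(*)\simeq\colim_{j\in A}X_{0}^{\bullet}(j)\simeq\bigoplus_{j}(B^{\bullet})_{j}$ as an object; the task is to recognize this as $U(B^{\bullet})$ compatibly with the commutative algebra structures. To that end I would use the ``forget the action'' transformation $(-)^{hC_{p-1}}\Rightarrow U$, which is lax symmetric monoidal, to obtain a map of $A^{\otimes}$-algebras $X_{0}^{\bullet}\to U(\underline{W}\otimes B^{\bullet})$. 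Since $U(\hS(-j))\simeq\hS$ compatibly with the multiplication of $W$, the target is the pullback of $U(B^{\bullet})\in\CAlg((\Mod_{\hS})^{D})$ along the collapse functor $A^{\otimes}\to\bE_{\infty}$ induced by $A\to 0$, i.e.\ it is $\theta_{A}$ of the analogous pullback $\overline{U}$ along $A^{\otimes | \rhd}\to\bE_{\infty}$. The adjunction $L_{A}\dashv\theta_{A}$ then produces a map $X^{\bullet}\to\overline{U}$ of $A^{\otimes | \rhd}$-algebras, and evaluating at the multifinal object gives a map of commutative algebras $X^{\bullet}(*)\to\overline{U}(*)\simeq U(B^{\bullet})$ whose restriction to the summand $X_{0}^{\bullet}(j)=(B^{\bullet})_{j}$ of $X^{\bullet}(*)$ is the forgetful map $(B^{\bullet})_{j}\to U(B^{\bullet})$. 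Finally I would check this map is an equivalence objectwise on homotopy groups: by \cref{lem:Natslemma}, $\pi_{*}((B^{\bullet}(d))_{j})$ is the $\chi^{j}$-isotypic summand of the $\Z_{p}[C_{p-1}]$-module $\pi_{*}(B^{\bullet}(d))$, and since $p-1$ is a unit in $\Z_{p}$ and $\Z_{p}$ contains the $(p-1)$st roots of unity, $\Z_{p}[C_{p-1}]\cong\prod_{j\in\Z/(p-1)}\Z_{p}$, so $\pi_{*}(B^{\bullet}(d))$ is the direct sum of its isotypic summands, with the forgetful maps realizing the inclusions.

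The hard part will be the symmetric-monoidal bookkeeping, particularly the comparison map of the last paragraph: the homotopy-theoretic content is cheap, but producing $X^{\bullet}(*)\to U(B^{\bullet})$ as a map of $\bE_{\infty}$-rings rather than merely of underlying spectra requires carefully threading the algebra structures through $L_{A}$ and the pullback along $A^{\otimes | \rhd}\to\bE_{\infty}$. Once that map is available, \cref{prop:operads} and the splitting of $\Z_{p}[C_{p-1}]$-modules finish the proof.
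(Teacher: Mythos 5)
Your proof is correct and rests on the same ingredients as the paper's: the weight algebra $W$, the pointwise tensor with $B^{\bullet}$, homotopy fixed points, the free functor $L_{A}$, and a homotopy-group check via the semisimplicity of $\Z_{p}[C_{p-1}]$. The genuine difference is the order of operations. The paper first forms $L^{\bullet} = L_{\Z/(p-1)}(W)$, then tensors with the pulled-back $B^{\bullet}$ and applies $(-)^{hC_{p-1}}$, so that $X^{\bullet}(*) = \bigl(B^{\bullet}\otimes L^{\bullet}(*)\bigr)^{hC_{p-1}}$ is immediately the induction-fixed-points computation $\bigl(B^{\bullet}\otimes\hS[C_{p-1}]\bigr)^{hC_{p-1}} \simeq U(B^{\bullet})$ compatibly with the algebra structure. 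You instead apply $L_{A}$ last, to the already-tensored-and-fixed $X_{0}^{\bullet}$, so that $X^{\bullet}(*)$ comes out only as $\bigoplus_{j}(B^{\bullet})_{j}$ with no \emph{a priori} algebra map to $U(B^{\bullet})$; your extra adjunction step producing $\hat{f}\colon X^{\bullet}\to\overline{U}$ is exactly what supplies it. The two constructions agree up to equivalence — $\theta_{A}$ is symmetric monoidal for the pointwise tensor and commutes with $(-)^{hC_{p-1}}$, so \cref{prop:operads}(3) exhibits both as free on the same $A^{\otimes}$-algebra $(W\otimes B^{\bullet})^{hC_{p-1}}$ — but the paper's ordering makes part (2) immediate while yours pays a small premium for it. One remark: your worry about the "symmetric-monoidal bookkeeping" for the comparison map is unfounded; the adjunction $L_{A}\dashv\theta_{A}$ lives in $\infty$-categories of $\cO$-algebras, so $\hat{f}$ is automatically a map of $A^{\otimes|\rhd}$-algebras and its evaluation at the cone point is automatically a map in $\CAlg(\Mod_{\hS}^{D})$, with no further threading required.
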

\begin{proof}
Recall that in \cref{goodoldW} we constructed an object
\[
W \in \Alg_{(\Z/(p-1))^{\otimes}}(\Mod_{\hS}^{BC_{p-1}}).
\]
In \cref{prop:operads}, for $A$ a finite abelian group, we studied a functor 
\[
L_{A}\colon \Alg_{A^{\otimes}}(\mathcal{C}) \to \Alg_{A^{\otimes | \rhd}}(\mathcal{C}).
\]
Thus we have a lax symmetric monoidal functor
\[
L_{\Z/(p-1)}(W) \colon \Z/(p-1)^{\otimes | \rhd} \rightarrow \Mod_{\hS}^{BC_{p-1}}.
\] 
Postcomposition with the restriction along the map $ D\to *$ gives a map 
\[
L^{\bullet} \colon \Z/(p-1)^{\otimes | \rhd} \rightarrow \left(\Mod_{\hS}^{BC_{p-1}}\right)^{D}.
\]
 
Now given 
\[
B^{\bullet} \in \CAlg \big (\big (\Mod_{\hS}^{BC_{p-1}} \big )^D \big ),
\] 
pulling back with respect to the (unique) map $\Z/(p-1)^{\otimes | \rhd}  \to \mathbb{E}_{\infty}$ 
produces an object in $\Alg_{\Z/(p-1)^{\otimes | \rhd}}\big ( \big (\Mod_{\hS}^{BC_{p-1}} \big )^D \big)$ that we will abusively also denote by $B^{\bullet}$. 
Now we may tensor $B^{\bullet}$ with $L^{\bullet}$ in $\Alg_{\Z/(p-1)^{\otimes | \rhd}}\big ( \big (\Mod_{\hS}^{BC_{p-1}} \big )^D \big)$. This provides us with an object 
\[
B^{\bullet} \otimes L^{\bullet} \in \Alg_{\Z/(p-1)^{\otimes | \rhd}}\big ( \big (\Mod_{\hS}^{BC_{p-1}} \big )^D \big).
\]
Note that the homotopy fixed point functor 
\[
(-)^{hC_{p-1}}\colon \Mod_{\hS}^{BC_{p-1}}  \to \Mod_{\hS}
\] is lax symmetric monoidal by \cite[Corollary 7.3.2.7]{ha} as it is right adjoint to the constant functor $\Mod_{\hS} \rightarrow \Mod_{\hS}^{BC_{p-1}}$.
We set 
\[
X^{\bullet} =  (B^{\bullet} \otimes L^{\bullet})^{hC_{p-1}} \in    \Alg_{\Z/(p-1)^{\otimes | \rhd}}( \Mod_{\hS}^D).
\]
All of the properties of $X^{\bullet}$ can be checked objectwise in $D$. Property $(1)$ follows from the definition and Property $(2)$ follows from \cref{prop:operads} and can be checked on homotopy groups.
\end{proof}

Our goal is to apply \cref{p:weights} to the Amitsur complex of $\hS \rightarrow E$. This allows us to apply the theory of weights to the entire cosimplicial $\bE_{\infty}$-ring spectrum 
\[
E^{\otimes_{\hS} \bullet+1}.
\] 
Let $D = \Delta$, let 
\[
B^{\bullet} = E^{\otimes_{\hS} \bullet + 1} \in  \CAlg \big ( \big (\Mod_{\hS}^{BC_{p-1}} \big )^\Delta \big ),
\]
and let 
\[
X_{p}^{\bullet} \in \Alg_{\Z/(p-1)^{\otimes | \rhd}}( \Mod_{\hS}^\Delta)
\]
be the object constructed in \cref{p:weights}. Note that this depends on the prime. A part of this dependence may be removed by considering the restriction along the canonical map
\[
\Z^{\otimes | \rhd} \lra{q} \Z/(p-1)^{\otimes | \rhd}.
\]
Let $Y_{p}^{\bullet} = X_{p}^{\bullet} \circ q$. The next result follows from \cref{p:weights} and  \cref{cor:simplesplitting}.

\begin{prop} \label{prop:Yp}
The object
\[
Y_{p}^{\bullet} \in \Alg_{\Z^{\otimes | \rhd}}( \Mod_{\hS}^\Delta)
\]  
satisfies the following properties:
\begin{enumerate}
\item For all $j ,l \in \Z $ and $[n] \in \Delta$ the map   
\[
\pi_l(\underline{Y_{p}}^{[n]}(j)) \to \pi_l(\underline{Y_{p}}^{[n]}(*))
\] 
is an isomorphism  on $\pi_l$ if $l = 2j \text{ mod }  2(p-1)$ and it is the inclusion of zero otherwise, where
\[
\underline{Y_{p}}^{\bullet} \colon \Z^{\rhd} \rightarrow \Mod_{\hS}^\Delta
\]
is the underlying functor of $\infty$-categories.
\item The restriction to the cone point   $Y_{p}^{\bullet}(*) \in \CAlg( \Mod_{\hS}^{\Delta}) $ is equivalent to the Amitsur complex. That is, we have an equivalence 
\[
Y_{p}^{\bullet}(*)  \simeq E^{\otimes_{\hS} \bullet +1} \in \CAlg( \Mod_{\hS}^\Delta).
\]
\end{enumerate}
\end{prop}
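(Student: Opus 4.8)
The plan is to derive both statements by unwinding the definition $Y_p^{\bullet}=X_p^{\bullet}\circ q$ and then quoting \cref{p:weights} and \cref{cor:simplesplitting}; there is no new computation to do. First I would spell out what the canonical map $q\colon\Z^{\otimes | \rhd}\to\Z/(p-1)^{\otimes | \rhd}$ does. Since the assignment $A\mapsto A^{\otimes | \rhd}$ is natural in homomorphisms of abelian groups and takes values in symmetric monoidal $\infty$-categories under $\bE_{\infty}$, the map $q$ is a morphism of $\infty$-operads under $\bE_{\infty}$: it sends an object $j\in\Z$ to its residue $\bar j\in\Z/(p-1)$, it sends the cone point to the cone point, and it carries the canonical morphism $j\to *$ in $\Z^{\rhd}$ to the canonical morphism $\bar j\to *$ in $\Z/(p-1)^{\rhd}$. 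It follows that $\underline{Y_p}^{\bullet}$ is $\underline{X_p}^{\bullet}$ precomposed with the induced functor $\Z^{\rhd}\to\Z/(p-1)^{\rhd}$, so that $\underline{Y_p}^{[n]}(j)\simeq\underline{X_p}^{[n]}(\bar j)$ and $\underline{Y_p}^{[n]}(*)\simeq\underline{X_p}^{[n]}(*)$ compatibly with the structure maps, and, because $q$ lies under $\bE_{\infty}$, restriction along $q$ commutes with the functor ``evaluate at the cone point'' into $\CAlg(\Mod_{\hS}^{\Delta})$.

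Granting this, part (2) is immediate: $Y_p^{\bullet}(*)\simeq X_p^{\bullet}(*)$ in $\CAlg(\Mod_{\hS}^{\Delta})$, and \cref{p:weights}(2), which produced $X_p^{\bullet}$ for $D=\Delta$ and $B^{\bullet}=E^{\otimes_{\hS}\bullet+1}\in\CAlg((\Mod_{\hS}^{BC_{p-1}})^{\Delta})$, identifies $X_p^{\bullet}(*)$ with $U(E^{\otimes_{\hS}\bullet+1})\simeq E^{\otimes_{\hS}\bullet+1}$ in $\CAlg(\Mod_{\hS}^{\Delta})$. For part (1), I would use \cref{p:weights}(1) to identify $\underline{X_p}^{[n]}(\bar j)$ with $(E^{\otimes_{\hS}(n+1)})_{\bar j}$ and $\underline{X_p}^{[n]}(*)$ with $E^{\otimes_{\hS}(n+1)}$, under which the map $\pi_l(\underline{Y_p}^{[n]}(j))\to\pi_l(\underline{Y_p}^{[n]}(*))$ induced by $j\to *$ becomes the canonical map $\pi_l((E^{\otimes_{\hS}(n+1)})_{\bar j})\to\pi_l(E^{\otimes_{\hS}(n+1)})$ induced by $\bar j\to *$. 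By \cref{cor:simplesplitting} this map is an isomorphism when $l\equiv 2\bar j\pmod{2(p-1)}$ and the inclusion of the zero subgroup otherwise. Finally, $j_1\equiv j_2\pmod{p-1}$ forces $2j_1\equiv 2j_2\pmod{2(p-1)}$, so the residue $2j\bmod 2(p-1)$ depends only on $\bar j$ and the condition ``$l\equiv 2\bar j\pmod{2(p-1)}$'' coincides with the condition ``$l=2j\bmod 2(p-1)$'' of the statement; this yields (1).

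I do not anticipate a real obstacle, since all of the substance has already been carried out in \cref{p:weights} and \cref{cor:simplesplitting}. The two points that deserve a little care are the claim that restriction along $q$ is compatible with evaluation at the cone point as a functor to $\CAlg$ --- this is precisely why it matters that $q$ is a morphism of $\infty$-operads \emph{under} $\bE_{\infty}$ rather than merely of $\infty$-operads --- and the elementary reindexing of the weight congruence under the reduction $j\mapsto\bar j$, which is the only place the arithmetic of $2(p-1)$ versus $p-1$ enters.
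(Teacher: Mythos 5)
Your proposal is correct and takes exactly the route the paper itself takes; the paper simply records that the result "follows from \cref{p:weights} and \cref{cor:simplesplitting}," and your write-up spells out the (routine) unwinding of $Y_p^\bullet = X_p^\bullet\circ q$ and the reindexing of the weight congruence mod $p-1$ versus $2(p-1)$ that justifies that citation.
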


\subsection{Formality of the ultraproduct}\label{ss:formal_infty}

The goal of this subsection is to prove \cref{l:formalv1}. 

The symmetric monoidal  structure on  $\mathrm{GrAb}_\Q$ induces a symmetric monoidal structure on 
 $\Alg_{\Z^{\otimes}}(\mathrm{GrAb}_\Q)$ with the unit $e$ the constant functor with value the unit. 

\begin{prop}
There is an invertible object $L^2 \in \Alg_{\Z^{\otimes}}(\mathrm{GrAb}_\Q)$ given by 
\[
L^2(i) \cong \Q[2i],
\]
where $\Q[2i]$ is the object with $\Q$ in degree $2i$ and $0$ everywhere else.
\end{prop}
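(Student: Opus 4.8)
The plan is to write $L^2$ down explicitly as a (strong, hence lax) symmetric monoidal functor $\Z^{\otimes} \to \mathrm{GrAb}_{\Q}$ and then to exhibit its tensor inverse by hand, rather than to argue abstractly.

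First I would define $L^2$ on objects by $L^2(i) = \Q[2i]$, with lax structure maps the canonical identifications $\Q[2i] \otimes_{\Q} \Q[2j] \xrightarrow{\ \cong\ } \Q[2(i+j)]$ and the unit map the identity $\Q[0] \xrightarrow{\ =\ } L^2(0)$. Since $\mathrm{GrAb}_{\Q}$ is the nerve of a symmetric monoidal $1$-category, $\Alg_{\Z^{\otimes}}(\mathrm{GrAb}_{\Q})$ is again a $1$-category and coincides with the ordinary category of lax symmetric monoidal functors, so no higher coherence data are required: it suffices to verify the finitely many diagrams expressing associativity and unitality of these structure maps — which commute because every arrow in sight is the canonical isomorphism of one-dimensional graded $\Q$-vector spaces, and can be made completely explicit using the concrete model of $\Z^{\otimes}$ from \cref{ss:monoid} — together with compatibility with the symmetry. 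The symmetry compatibility is the one point worth isolating: the braiding of $\mathrm{GrAb}_{\Q}$ relating $\Q[2i] \otimes \Q[2j]$ and $\Q[2j]\otimes\Q[2i]$ carries the Koszul sign $(-1)^{(2i)(2j)} = 1$, so the structure maps really are symmetric. This is exactly why the generator must be placed in internal degree $2$ rather than $1$: for a degree-one generator the analogous computation forces the multiplication map to vanish (in analogy with the sign hypothesis of \cref{p:lax-homotopy}), so no invertible object with underlying functor $i \mapsto \Q[i]$ can exist. In particular $L^2$ is a well-defined object of $\Alg_{\Z^{\otimes}}(\mathrm{GrAb}_{\Q})$.

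Next I would construct $L^{-2}$ in exactly the same way, with $L^{-2}(i) = \Q[-2i]$, and then produce an isomorphism $L^2 \otimes L^{-2} \simeq e$. Here I use that the symmetric monoidal structure on $\Alg_{\Z^{\otimes}}(\mathrm{GrAb}_{\Q})$ is the pointwise one (\cite[Example 3.2.4.4]{ha}), so that $(L^2 \otimes L^{-2})(i) = \Q[2i] \otimes_{\Q} \Q[-2i]$, which is canonically $\Q[0] = e(i)$; since only even internal degrees occur, all signs entering the comparison of the multiplications and units on the two sides are again trivial, so these isomorphisms assemble into a morphism $L^2 \otimes L^{-2} \simeq e$ in $\Alg_{\Z^{\otimes}}(\mathrm{GrAb}_{\Q})$, exhibiting $L^2$ as invertible with inverse $L^{-2}$. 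I do not expect any genuine obstacle in this argument; the only things to keep careful track of are the (entirely trivial) Koszul sign bookkeeping, which is precisely what dictates the choice of internal degree, and the fact that, the target being a $1$-category, the whole verification reduces to the handful of explicit commuting diagrams indicated above.
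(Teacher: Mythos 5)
Your proof is correct and takes a more elementary route than the paper's. The paper applies \cref{p:lax-homotopy} (read with $n=0$, so that $(\Z/n)^{\otimes}=\Z^{\otimes}$), and hence ultimately the Picard-spectrum obstruction theory of \cref{l:lax-homotopy}; the one verification it delegates to the reader is $\beta_{\Q[2],\Q[2]} = (-1)^{2\cdot 2}\Id = \Id$. You instead write down the lax structure maps and verify the coherence diagrams directly, correctly exploiting that $\Alg_{\Z^{\otimes}}(\mathrm{GrAb}_{\Q})$ is a $1$-category so no higher coherence data enter; your only non-formal check is the same Koszul sign $(-1)^{(2i)(2j)}=1$. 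What the paper's approach buys is uniformity: \cref{p:lax-homotopy} is reused in \cref{ss:weight} to build the weight object $W$ with values in $\Mod_{\hS}^{BC_{p-1}}$, which is genuinely $\infty$-categorical and where your direct method would not be available, whereas your construction depends essentially on the target being a $1$-category. For invertibility you build $L^{-2}$ and check $L^2 \otimes L^{-2} \simeq e$ by hand, while the paper instead uses the shorter observation that an object of $\Alg_{\Z^{\otimes}}(\cC)$ landing in invertible objects is automatically invertible for the pointwise tensor product; both are valid. Your side remark on degree $1$ is also accurate: for odd $i,j$ the symmetry constraint reads $\mu_{i,j}=-\mu_{i,j}$, forcing $\mu_{i,j}=0$ over $\Q$, so no such object exists with underlying functor $i\mapsto\Q[i]$.
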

\begin{proof}
To construct $L^{2}$, we apply \cref{p:lax-homotopy}. Since 
\[
\beta_{\Q[i],\Q[i]} = (-1)^{i^2}\Id_{\Q[i] \otimes \Q[i]},
\] 
it follows that 
\[
\beta_{\Q[2],\Q[2]} = \Id_{\Q[2] \otimes \Q[2]}.
\]
Since the image of $L^{2}$ consists of invertible objects, $L^2$ is invertible. 
\end{proof}
We denote the inverse to $L^2$ by $L^{-2}$ so that 
\[
L^{-2}(i) \cong  \Q[-2i].
\]

\begin{prop} \label{prop:evenalg}
Let $F \in \Alg_{\Z^{\otimes}}(\Sp_{\Q}^{D})$ be such that $\pi_*(F(i)(d))$ is concentrated in degree $2i$. Then $F$ is in the image of 
\[
H^D \colon \Alg_{\Z^{\otimes}}(\mathrm{GrAb}_{\Q}^{D}) \rightarrow \Alg_{\Z^{\otimes}}(\Sp_{\Q}^{D}). 
\]
\end{prop}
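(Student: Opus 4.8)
The plan is to reduce the statement to a twisting-down argument using the invertible object $L^{-2} \in \Alg_{\Z^{\otimes}}(\mathrm{GrAb}_{\Q})$, exactly in parallel with the way the earlier algebraic results trade a ``quasi-periodic'' structure for an honest one. First I would form, from the given $F \in \Alg_{\Z^{\otimes}}(\Sp_{\Q}^D)$, the object $F \otimes H^D(L^{-2}) \in \Alg_{\Z^{\otimes}}(\Sp_{\Q}^D)$, where I am abusing notation and writing $H^D(L^{-2})$ for the image of the constant-in-$D$ diagram $L^{-2}$ under $H^D$ (this is legitimate since $H^D$ is lax symmetric monoidal, hence preserves the $\Z^{\otimes}$-algebra structure, and $L^{-2}$ is invertible so its image is invertible). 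By the hypothesis on $F$ and the defining property $L^{-2}(i) \cong \Q[-2i]$, together with the flatness of rational vector spaces (so that the Künneth spectral sequence for $\otimes_{H\Q}$ collapses, as in \cref{l:rational-heart}), the homotopy groups of $(F \otimes H^D(L^{-2}))(i)(d)$ are concentrated in degree $0$ for every $i$ and every $d \in D$.

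The point of this twist is that $F \otimes H^D(L^{-2})$, viewed as a functor $\Z^{\otimes} \to \Sp_{\Q}^D$, now lands in the full symmetric monoidal subcategory $\Sp_{\Q}[0,0]^D$. By \cref{l:rational-heart}, the restriction $H^D|_{\Q\text{-}\Mod^D}\colon \Q\text{-}\Mod^D \xrightarrow{\simeq} \Sp_{\Q}[0,0]^D$ is a symmetric monoidal equivalence, so $F \otimes H^D(L^{-2})$ corresponds to an object of $\Alg_{\Z^{\otimes}}(\Q\text{-}\Mod^D)$, and after identifying $\Q\text{-}\Mod^D$ with the degree-$0$ part of $\mathrm{GrAb}_{\Q}^D$ I obtain an object $G \in \Alg_{\Z^{\otimes}}(\mathrm{GrAb}_{\Q}^D)$ with $H^D(G) \simeq F \otimes H^D(L^{-2})$ as $\Z^{\otimes}$-algebras in $\Sp_{\Q}^D$. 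Now I untwist on the algebraic side: set $\widetilde{G} = G \otimes L^2 \in \Alg_{\Z^{\otimes}}(\mathrm{GrAb}_{\Q}^D)$, where $L^2$ is again regarded as constant in $D$. Since $H^D$ is (lax) symmetric monoidal and $H^D(L^2)$ is invertible with $H^D(L^{-2})$ as inverse, I get
\[
H^D(\widetilde{G}) \simeq H^D(G) \otimes H^D(L^2) \simeq F \otimes H^D(L^{-2}) \otimes H^D(L^2) \simeq F
\]
in $\Alg_{\Z^{\otimes}}(\Sp_{\Q}^D)$, which exhibits $F$ in the image of $H^D$ and proves the proposition. (One should check that $H^D(G \otimes L^2) \simeq H^D(G) \otimes H^D(L^2)$, i.e.\ that $H^D$ is actually strong monoidal on the relevant rational objects — this is exactly the content of the first half of \cref{l:rational-heart}, applied levelwise, since every graded $\Q$-vector space is flat.)

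The main obstacle I anticipate is bookkeeping the symmetric monoidal coherences: one must be careful that the tensoring by $L^{\pm 2}$ is performed inside $\Alg_{\Z^{\otimes}}(-)$ using the pointwise symmetric monoidal structure on algebras over an $\infty$-operad (\cite[Example 3.2.4.4]{ha}), and that the equivalence $H^D(G) \simeq F \otimes H^D(L^{-2})$ really is an equivalence of $\Z^{\otimes}$-algebras and not merely of underlying $\Z^{\rhd}$- or $\Z$-diagrams. The invertibility of $L^2$ in $\Alg_{\Z^{\otimes}}(\mathrm{GrAb}_{\Q})$ (already established) and of its image under the lax monoidal $H^D$ is what makes the twist-untwist argument reversible; the only genuine computation is the collapse of the rational Künneth spectral sequence, which lets me conclude that the twisted object lies in $\Sp_{\Q}[0,0]^D$ and simultaneously that $H^D$ is strong monoidal on these inputs. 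Everything else is formal manipulation of the adjunction and the monoidal functor $H^D$.
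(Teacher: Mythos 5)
Your argument is correct and matches the paper's proof essentially line for line: twist $F$ by $H^D(L^{-2})$ to land in $\Sp_{\Q}[0,0]^D$, transport to $\mathrm{GrAb}_{\Q}^D$ via \cref{l:rational-heart}, then untwist by $L^2$ using the strong monoidality of $H^D$ on rational objects. The coherence concern you raise at the end is handled exactly as you suspect — by the pointwise symmetric monoidal structure on $\Alg_{\Z^{\otimes}}(-)$ and the first part of \cref{l:rational-heart} — so there is nothing missing.
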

\begin{proof}
By pulling back along $D \rightarrow \ast$, we will view $L^2$ and $L^{-2}$ as objects in $\Alg_{\Z^{\otimes}}(\mathrm{GrAb}_{\Q}^{D})$. Let 
\[
G = F \otimes H^D(L^{-2}).
\] 
All of the objects in the image of $G$ are concentrated in degree $0$. Thus $G$ lands in $\Alg_{\Z^{\otimes}} ( \Sp_{\Q}[0,0]^{D})$ and \cref{l:rational-heart} implies that $G$ is in the image $H^{D}$. Thus $G \simeq H^D\pi_{*}^{D}(G)$, where $\pi_{*}^{D}(G)$ is in $\Alg_{\Z^{\otimes}}(\mathrm{GrAb}_{\Q}^{D})$. 

We now use the first statement in \cref{l:rational-heart} to conclude that 
\begin{align*}
H^D(\pi_{*}^{D}(G) \otimes L^{2})  &\simeq H^D(\pi_{*}^{D}(G)) \otimes H^D(L^{2})\\ &\simeq G \otimes H^D(L^{2}) \\
&= F \otimes H^D(L^{-2}) \otimes   H^D(L^{2}) \\ 
&\simeq F \otimes H^D(L^{-2i} \otimes L^{2}) \\
&\simeq F \otimes H^D(e) \\
&\simeq F.
\end{align*}
\end{proof}

\begin{prop}\label{p:ratio-formal}
Let  $F \in \Alg_{\Z^{\otimes|\rhd}}(\Sp_{\Q}^{D})$ and assume that for all $i \in \Z$ and $d \in D$ the graded abelian group
\[
\pi_*(F(i)(d))
\]
is concentrated in degree $2i$
and that 
\[
\underline{F} \colon \Z^{\rhd} \to \Sp_{\Q}^{D}
\]
is a colimit diagram. Then $F(*) \in \CAlg(\Sp_{\Q}^D)$, the restriction to the terminal object,  is formal.
\end{prop}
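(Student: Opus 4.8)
The plan is to first upgrade the conclusion: I will show that the \emph{entire} algebra $F$ is formal, i.e.\ that it lies in the image of the functor $H^{D}\colon \Alg_{\Z^{\otimes|\rhd}}(\mathrm{GrAb}_{\Q}^{D}) \to \Alg_{\Z^{\otimes|\rhd}}(\Sp_{\Q}^{D})$ induced by the symmetric monoidal functor $H^{D}$ of \cref{l:rational-heart}, and only afterwards restrict along the cone point. Since $\underline{F}\colon \Z^{\rhd}\to \Sp_{\Q}^{D}$ is a colimit diagram by hypothesis, part (3) of \cref{prop:operads} gives that the counit $L_{\Z}(\theta_{\Z}F)\lra{\simeq} F$ is an equivalence. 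Writing $G = \theta_{\Z}F \in \Alg_{\Z^{\otimes}}(\Sp_{\Q}^{D})$, we have $G(i) = F(i)$ for $i\in \Z\subset \Z^{\rhd}$, so $\pi_{*}(G(i)(d))$ is concentrated in degree $2i$ for every $d\in D$; hence \cref{prop:evenalg} applies and produces $M \in \Alg_{\Z^{\otimes}}(\mathrm{GrAb}_{\Q}^{D})$ with $H^{D}(M)\simeq G$.

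Next I would run the free-algebra construction on the algebraic side and push it through $H^{D}$. Because $\mathrm{GrAb}_{\Q}$ (hence $\mathrm{GrAb}_{\Q}^{D}$, with the pointwise tensor product) is presentably symmetric monoidal, \cite[Corollary 3.1.3.5]{ha} furnishes a free functor $L_{\Z}^{\mathrm{alg}}\colon \Alg_{\Z^{\otimes}}(\mathrm{GrAb}_{\Q}^{D})\to \Alg_{\Z^{\otimes|\rhd}}(\mathrm{GrAb}_{\Q}^{D})$, and by part (1) of \cref{prop:operads} the unit $M\to \theta_{\Z}L_{\Z}^{\mathrm{alg}}(M)$ is an equivalence, while by part (2) the underlying functor $\underline{L_{\Z}^{\mathrm{alg}}(M)}\colon \Z^{\rhd}\to \mathrm{GrAb}_{\Q}^{D}$ is a colimit diagram. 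Applying $H^{D}$ and invoking \cref{lem:operads}, I would check that $H^{D}(L_{\Z}^{\mathrm{alg}}M)$ is the free $\Z^{\otimes|\rhd}$-algebra on $G = H^{D}(M)$: condition (1) of \cref{lem:operads} holds since $H^{D}$ carries the unit equivalence to an equivalence, and condition (2) holds since the colimit diagram $\underline{L_{\Z}^{\mathrm{alg}}(M)}$ has value at the cone point the coproduct $\coprod_{i\in\Z}M(i)$, which $H^{D}$ preserves (it preserves products and filtered colimits by \cref{prop:Hcommutes}, hence arbitrary coproducts). Therefore
\[
F \;\simeq\; L_{\Z}(G)\;\simeq\; H^{D}\bigl(L_{\Z}^{\mathrm{alg}}(M)\bigr)
\]
in $\Alg_{\Z^{\otimes|\rhd}}(\Sp_{\Q}^{D})$.

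Finally I would restrict to the multifinal object: the full suboperad of $\Z^{\otimes|\rhd}$ spanned by $*$ is $\bE_{\infty}$, so evaluation at the cone point is restriction along $\bE_{\infty}\hookrightarrow \Z^{\otimes|\rhd}$; in particular it lands in $\CAlg$ and commutes with the functor induced by $H^{D}$, since both are restriction-type functors. Hence $F(*)\simeq H^{D}\bigl(L_{\Z}^{\mathrm{alg}}(M)(*)\bigr)$ with $L_{\Z}^{\mathrm{alg}}(M)(*) \in \CAlg(\mathrm{GrAb}_{\Q}^{D})$, which exhibits the required formality of $F(*)$. I expect the only genuinely delicate step to be the identification $L_{\Z}(H^{D}M)\simeq H^{D}(L_{\Z}^{\mathrm{alg}}M)$, i.e.\ that $H^{D}$ intertwines the two free $\Z^{\otimes|\rhd}$-algebra functors: this rests on the explicit description of free $\Z^{\otimes|\rhd}$-algebras in \cref{lem:operads} as an extension by the coproduct over $\Z$, together with the fact that $H^{D}$, although not colimit preserving in general, does preserve coproducts.
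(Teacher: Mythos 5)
Your proposal is correct and follows essentially the same route as the paper: reduce to $\theta_\Z F$ via part (3) of \cref{prop:operads}, apply \cref{prop:evenalg} to realize $\theta_\Z F$ as $H^D(M)$, and then show $H^D$ intertwines the two free-algebra functors $L_\Z$ so that $F\simeq H^D(L_\Z^{\mathrm{alg}}M)$. The paper phrases the intertwining step as checking that the canonical natural transformation $\beta\colon L^{\Sp}_\Z\circ H^D_\Z\Rightarrow H^D_{\Z^\rhd}\circ L^{\mathrm{GrAb}}_\Z$ is an equivalence on underlying diagrams (using that both sides are coproduct diagrams and $H^D$ preserves coproducts), whereas you verify the universal property directly via \cref{lem:operads}; these are two wordings of the same argument.
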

\begin{proof}
Consider the following collection of $\infty$-categories and functors:
\[
\xymatrix{\Alg_{\Z^{\otimes}}(\mathrm{GrAb}_{\Q}^{D}) \ar[r]^{H^D_{\Z}} \ar[d]_{L^{\mathrm{GrAb}}_{\Z}} & \Alg_{\Z^{\otimes}}(\Sp_{\Q}^{D}) \ar[d]^{L^{\Sp}_{\Z}} \\ \Alg_{\Z^{\otimes | \rhd}}(\mathrm{GrAb}_{\Q}^{D}) \ar[r]_{H^D_{\Z^{\rhd}}} & \Alg_{\Z^{\otimes | \rhd}}(\Sp_{\Q}^{D}).}
\]
The horizontal arrows are induced by postcomposition with 
\[
H^D \colon \mathrm{GrAb}_{\Q}^{D} \rightarrow \Sp_{\Q}^{D}.
\]
The vertical arrows are the left adjoints to the natural restriction maps $\theta_{\Z}^{\mathrm{GrAb}} $ and $\theta_{\Z}^{\Sp}$. First we prove that this diagram commutes.  It is clear that $H_{\Z}^D \circ \theta_{\Z}^{\mathrm{GrAb}}  = \theta_{\Z}^{\Sp} \circ H_{\Z^{\rhd}}^D $ since the $\theta$'s are induced by precomposition.
Using the $L \dashv \theta$ adjunctions studied in \cref{prop:operads}, we get a natural transformation 
\[
\beta \colon L^{\Sp}_{\Z}\circ H_{\Z}^D   \Rightarrow H_{\Z^{\rhd}}^D \circ L^{\mathrm{GrAb}}_{\Z} .
\]
We need to show that $\beta$ is an equivalence, but this can checked on the underlying functors.
Now by parts (1) and (2)  of \cref{prop:operads} the underlying functor produced by either $L$ is a coproduct diagram and  $H^D$ respects coproducts.

Recall that $\theta_{\Z} F \in  \Alg_{\Z^{\otimes}}(\Sp_{\Q}^{D})$ is the restriction of $F$. Let 
\[
G = \pi_*(\theta_{\Z}(F)) \in  \Alg_{\Z^{\otimes}}(\mathrm{GrAb}_{\Q}^{D}).
\] 
\cref{prop:evenalg} implies that $\theta_{\Z}F \simeq H_{\Z}^D(G)$. 
Now by part (3) of \cref{prop:operads}, 
\[
F \simeq L^{\Sp}_{\Z}(\theta_{\Z}F),
\]
so we have equivalences 
\[
F \simeq L^{\Sp}_{\Z}(\theta_{\Z}F) \simeq  L^{\Sp}_{\Z}(H_{\Z}^D(G)) \simeq  H_{\Z^{\rhd}}^D (L^{\mathrm{GrAb}}_{\Z}(G)).
\]
Thus $F$ is in the image  of $H^{D}_{\Z^{\rhd}}$ and so $F(*)$ is in the image of $H^D$.
\end{proof}

\begin{lem}\label{lem:ultra-weight}
The homotopy groups of the spectrum
\[
\Prod{\cF}\big ((\E^{\otimes_{\hS} k})_j\big )
\]
are concentrated in degree $2j$ and the canonical map
\[
\Prod{\cF}\big ((\E^{\otimes_{\hS} k})_j\big ) \rightarrow \Prod{\cF} (\E^{\otimes_{\hS} k})
\]
is an isomorphism in degree $2j$. In particular, the canonical map
\[
\Oplus{j \in \Z}\Prod{\cF}\big ((\E^{\otimes_{\hS} k})_j\big ) \rightarrow \Prod{\cF} (\E^{\otimes_{\hS} k})
\]
is an equivalence of spectra.
\end{lem}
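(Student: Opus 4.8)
The plan is to check all three assertions on homotopy groups. Recall first that $\Prod{\cF}(-)$ on spectra is by definition a filtered colimit of products, so it commutes with $\pi_*$: products of spectra compute homotopy groups coordinatewise, and filtered colimits commute with $\pi_*$ since spheres are compact. In particular, for every $l$ and $j$ there is a natural isomorphism $\pi_l(\Prod{\cF}((\E^{\otimes_{\hS}k})_j)) \cong \Prod{\cF}\pi_l((\E^{\otimes_{\hS}k})_j)$, and by \cref{cor:simplesplitting} the group on the right is an ultraproduct of groups each of which is either $\pi_l(\E^{\otimes_{\hS}k})$, when $l \equiv 2j \pmod{2(p-1)}$, or $0$, otherwise.

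Next I would split into two cases. Suppose $l \neq 2j$. Then for every prime $p$ with $2(p-1) > |l - 2j|$ --- that is, for all but finitely many $p$ --- the integer $l - 2j$ is nonzero of absolute value smaller than $2(p-1)$, so $l \not\equiv 2j \pmod{2(p-1)}$ and hence $\pi_l((\E^{\otimes_{\hS}k})_j) = 0$. Since $\cF$ is non-principal it contains this cofinite set of primes, so the ultraproduct $\Prod{\cF}\pi_l((\E^{\otimes_{\hS}k})_j)$ is the ultraproduct of zero groups along a set in $\cF$ and therefore vanishes. Now suppose $l = 2j$. Then $l \equiv 2j \pmod{2(p-1)}$ at every prime, and \cref{cor:simplesplitting} says the canonical map $\pi_{2j}((\E^{\otimes_{\hS}k})_j) \to \pi_{2j}(\E^{\otimes_{\hS}k})$ is an isomorphism at each $p$. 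Taking the ultraproduct of these isomorphisms shows that the canonical map $\Prod{\cF}((\E^{\otimes_{\hS}k})_j) \to \Prod{\cF}(\E^{\otimes_{\hS}k})$ induces an isomorphism on $\pi_{2j}$. This establishes the first two assertions.

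For the last assertion I would compute $\pi_*$ of the map $\Oplus{j\in\Z}\Prod{\cF}((\E^{\otimes_{\hS}k})_j) \to \Prod{\cF}(\E^{\otimes_{\hS}k})$. Coproducts of spectra commute with $\pi_*$, and by the first assertion each summand $\Prod{\cF}((\E^{\otimes_{\hS}k})_j)$ has homotopy concentrated in degree $2j$, so the source has $\pi_l$ equal to $\pi_l(\Prod{\cF}((\E^{\otimes_{\hS}k})_{l/2}))$ for $l$ even and $0$ for $l$ odd. The target is likewise concentrated in even degrees: by even periodicity $\pi_*(\E^{\otimes_{\hS}k}) \cong \pi_0(\E^{\otimes_{\hS}k})\otimes_{\pi_0\E}\pi_*\E$ as in the proof of \cref{cor:action}, and $\pi_*\E$ is even, so the same holds after applying $\Prod{\cF}$. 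For $l = 2j$ even, the corresponding component of the map is precisely the isomorphism produced in the previous paragraph; for $l$ odd both sides vanish. Hence the map is a $\pi_*$-isomorphism, and therefore an equivalence of spectra.

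The argument is essentially bookkeeping. The only step requiring any care is the elementary observation that, for fixed $l \neq 2j$, the weight-$j$ summand has no homotopy in degree $l$ once $p$ is large, combined with the fact that a cofinite subset of $\cP$ lies in every non-principal ultrafilter; once that is in place, passing to the ultraproduct annihilates all the off-diagonal homotopy and the weight decomposition at each finite prime collapses to a single summand per even degree. I do not expect any genuine obstacle.
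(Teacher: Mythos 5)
Your proof is correct and takes essentially the same route as the paper's, which simply cites the commutation of $\pi_*$ with ultraproducts together with \cref{cor:simplesplitting}; you have just made the residue-class bookkeeping and the non-principality argument explicit, which is exactly what the paper leaves to the reader.
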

\begin{proof}
Both statement follows immediately from the fact that $\pi_*(-)$ commutes with ultraproducts and \cref{cor:simplesplitting}. 
\end{proof}

Recall the definition of 
\[
Y_{p}^{\bullet} \in \Alg_{\Z^{\otimes | \rhd}}(\Mod_{\hS}^\Delta)
\]
from \cref{prop:Yp}. In particular, $Y_{p}^{\bullet}(\ast)$ is the Amitsur complex of the map $\hS \rightarrow \E$. The goal of the section is to study the formality of the ultraproduct of these Amitsur complexes at a non-principal ultrafilter. 

Now we construct the ultraproduct. Let $\cF$ be a non-principal ultrafilter on the set of primes. Consider the product
\[
\Prod{p \in \cP} Y_{p}^{\bullet} \colon \Z^{\otimes | \rhd} \lra{} \big ( \Prod{p \in \cP}  \Mod_{\hS_p} \big )^{\Delta}.
\]
Postcomposing with the canonical map
\[
\Prod{p \in \cP}  \Mod_{\hS_p} \xrightarrow{n \circ m \circ [-]} \Prod{\cF}^{\doubleflat} \Mod_{\hS_p}
\]
produces a lax functor
\[
\Z^{\otimes | \rhd} \lra{} \big ( \Prod{\cF}^{\doubleflat} \Mod_{\hS_p} \big )^{\Delta}.
\]
But \cref{prop:protomodules} implies that 
\[
\Prod{\cF}^{\doubleflat} \Mod_{\hS_p} \simeq \Mod_{\Prod{\cF} \hS_p}
\]
and, since $\Prod{\cF} \hS_p \simeq H\Z_{\cF}$, we have a lax functor
\[
\Z^{\otimes | \rhd} \lra{} \Mod_{H\Z_{\cF}}^{\Delta}.
\]
Since $\Z_{\cF}$ is a $\Q$-algebra, we may forget to $\Mod_{H\Q} \simeq \Sp_{\Q}$. This gives us a lax functor
\[
Y_{\cF}^{\bullet} \colon \Z^{\otimes | \rhd} \lra{} \Sp_{\Q}^{\Delta}.
\]
This object has been constructed so that 
\[
Y_{\cF}^{\bullet}(\ast) \simeq \Prod{\cF} \E^{\otimes_{\hS} \bullet + 1}.
\]

\begin{thm}\label{prop:cosimplicialformalobj}
There is an equivalence of cosimplicial $\bE_{\infty}$-rings 
\[
\Prod{\cF}\E^{\otimes_{\hS} \bullet +1} \simeq \Prod{\cF}(\E^{\otimes_{\hS} \bullet +1})_{\star}.
\]
\end{thm}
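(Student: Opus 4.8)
The plan is to deduce this from the rational formality criterion \cref{p:ratio-formal}, applied to the $\Z^{\otimes | \rhd}$-algebra $Y_{\cF}^{\bullet} \in \Alg_{\Z^{\otimes | \rhd}}(\Sp_{\Q}^{\Delta})$ constructed immediately above, whose restriction to the cone point has already been identified as $Y_{\cF}^{\bullet}(\ast) \simeq \Prod{\cF}\E^{\otimes_{\hS}\bullet+1}$. Taking $D = \Delta$ and $F = Y_{\cF}^{\bullet}$, I need to verify the two hypotheses of \cref{p:ratio-formal}: that $\pi_*(Y_{\cF}^{\bullet}(i)([n]))$ is concentrated in degree $2i$ for every $i \in \Z$ and $[n] \in \Delta$, and that the underlying functor $\underline{Y_{\cF}^{\bullet}} \colon \Z^{\rhd} \to \Sp_{\Q}^{\Delta}$ is a colimit diagram. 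Granting both, \cref{p:ratio-formal} shows that $\Prod{\cF}\E^{\otimes_{\hS}\bullet+1} = Y_{\cF}^{\bullet}(\ast)$ is formal.

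Both hypotheses will come from \cref{lem:ultra-weight}. By construction $Y_{\cF}^{\bullet}(i)$ is the image of the collection $(Y_p^{\bullet}(i))_{p} = ((\E^{\otimes_{\hS}\bullet+1})_i)_{p}$ under the localization $\Prod{p \in \cP}\Mod_{\hS_p} \xrightarrow{n \circ m \circ [-]} \Prod{\cF}^{\doubleflat}\Mod_{\hS_p} \simeq \Mod_{\Prod{\cF}\hS_p}$, so its underlying cosimplicial spectrum is $\Prod{\cF}((\E^{\otimes_{\hS}\bullet+1})_i)$; by \cref{prop:Yp}(1) and the fact that $\pi_*$ commutes with ultraproducts — equivalently, directly by \cref{lem:ultra-weight} — this has homotopy concentrated in degree $2i$, giving the first hypothesis. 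For the second, colimits in $\Sp_{\Q}^{\Delta}$ are computed objectwise over $\Delta$, and over the discrete category $\Z$ a colimit cocone is precisely a coproduct decomposition, so the claim reduces to the assertion that for each $k$ the canonical map $\bigoplus_{i \in \Z}\Prod{\cF}((\E^{\otimes_{\hS}k})_i) \to \Prod{\cF}(\E^{\otimes_{\hS}k})$ is an equivalence, which is the final sentence of \cref{lem:ultra-weight}, together with the identification $Y_{\cF}^{\bullet}(\ast) \simeq \Prod{\cF}\E^{\otimes_{\hS}\bullet+1}$.

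It then remains to rewrite the formal object $H^{\Delta}\pi_*^{\Delta}(\Prod{\cF}\E^{\otimes_{\hS}\bullet+1})$ as $\Prod{\cF}(\E^{\otimes_{\hS}\bullet+1})_{\star}$. Since ultraproducts in the diagram category $\CAlg(\Sp)^{\Delta}$ are computed objectwise and commute with $\pi_*$ and with the forgetful functor to $\mathrm{GrAb}$, preserving ring structures by {\L}o{\'s}'s theorem, there is an equivalence $\pi_*^{\Delta}(\Prod{\cF}\E^{\otimes_{\hS}\bullet+1}) \simeq \Prod{\cF}\pi_*^{\Delta}(\E^{\otimes_{\hS}\bullet+1})$ in $\CAlg(\mathrm{GrAb})^{\Delta}$; applying $H^{\Delta}$ and invoking \cref{prop:Hcommutes} that $H$ commutes with ultraproducts yields $H^{\Delta}\pi_*^{\Delta}(\Prod{\cF}\E^{\otimes_{\hS}\bullet+1}) \simeq \Prod{\cF}H^{\Delta}\pi_*^{\Delta}(\E^{\otimes_{\hS}\bullet+1}) = \Prod{\cF}(\E^{\otimes_{\hS}\bullet+1})_{\star}$ by \cref{defn:star}. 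Chaining the equivalences proves the theorem.

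The main obstacle is not any single step but the bookkeeping needed to be confident that the abstractly produced functor $Y_{\cF}^{\bullet}$ genuinely takes the claimed values — at the objects of $\Z$ and at the cone point — \emph{as an object of} $\Alg_{\Z^{\otimes | \rhd}}(\Sp_{\Q}^{\Delta})$, not merely at the level of homotopy groups. In particular one must track the weight decomposition of \cref{ss:weight} through the localization $n \circ m \circ [-]$ and through the identifications $\Prod{\cF}^{\doubleflat}\Mod_{\hS_p} \simeq \Mod_{\Prod{\cF}\hS_p}$ (by \cref{prop:protomodules}) and $\Prod{\cF}\hS_p \simeq H\Z_{\cF}$ — the latter being what situates us in $\Sp_{\Q}$, since $\Z_{\cF}$ is a $\Q$-algebra. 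Once these coherences are in hand, the result is a formal assembly of \cref{lem:ultra-weight}, \cref{p:ratio-formal}, and \cref{prop:Hcommutes}.
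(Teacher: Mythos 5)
Your proof is correct and takes essentially the same approach as the paper: apply \cref{p:ratio-formal} to $Y_{\cF}^{\bullet}$, verifying the two hypotheses via \cref{prop:Yp} and \cref{lem:ultra-weight}. You have in fact been more explicit than the paper's terse proof, in particular by spelling out the final step that identifies the formal object $H^{\Delta}\pi_*^{\Delta}(\Prod{\cF}\E^{\otimes_{\hS}\bullet+1})$ with $\Prod{\cF}(\E^{\otimes_{\hS}\bullet+1})_{\star}$ using \cref{prop:Hcommutes}, and by flagging the bookkeeping issue about the values of $Y_{\cF}^{\bullet}$ which is indeed where the real content lies.
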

\begin{proof}
To prove that this cosimplicial $\bE_{\infty}$-ring $\Prod{\cF}\E^{\otimes_{\hS} \bullet +1} \simeq Y_{\cF}^{\bullet}(\ast)$ is formal, it suffices to show that $Y_{\cF}^{\bullet}$ satisfies the conditions of \cref{p:ratio-formal}. But this follows immediately from \cref{prop:Yp} and \cref{lem:ultra-weight}. 
\end{proof}

\section{Descent}

The goal of this section is to prove \Cref{thm:mainthmen}. The $\infty$-category $\Frnp$ appearing on the right hand side is constructed in \cref{ssec:algebraicmodel}. We write $\E$ for Morava $E$-theory at height $n$ and the prime $p$. 

\begin{thm}\label{thm:mainthmen}
For any non-principal ultrafilter $\cF$ on $\cP$, there is an equivalence of symmetric monoidal compactly generated rational stable $\infty$-categories
\[
\Prod{\cF}^{\Pic}\Sp_{n,p} \simeq \Prod{\cF}^{\Pic}\Frnp.
\]
\end{thm}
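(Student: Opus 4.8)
The plan is to deduce \Cref{thm:mainthmen} by reducing both sides to a common descent problem and then invoking the formality theorem \Cref{thm:sec4thm}. First I would recall the descent presentations that the introduction advertises. On the topological side, Mathew's result gives a symmetric monoidal equivalence $\Sp_{n,p} \simeq \lim \Mod_{\E^{\otimes \bullet+1}}$, and on the algebraic side one has the analogous $\Frnp \simeq \lim \Mod_{(\E^{\otimes \bullet+1})_\star}$ (this requires the identification of $\Frnp$ with a limit of module categories over the cosimplicial $\bE_\infty$-ring $(\E^{\otimes \bullet+1})_\star$, which is the content of \cref{ssec:algebraicmodel}). Applying the $\Pic$-generated protoproduct functor and commuting it past the cosimplicial limit — here one uses that $\Prod{\cF}^{\Pic}(-)$ is computed via $\Loc\Pic$ of the limit, as stated in the introduction — I would obtain
\[
\Prod{\cF}^{\Pic}\Sp_{n,p} \simeq \Loc\Pic\lim \Prod{\cF}^{\Pic}\Mod_{\E^{\otimes \bullet+1}}, \qquad \Prod{\cF}^{\Pic}\Frnp \simeq \Loc\Pic\lim \Prod{\cF}^{\Pic}\Mod_{(\E^{\otimes \bullet+1})_\star}.
\]

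The second step is to rewrite the cosimplicial module categories appearing inside these limits. Since the tensor units $\E^{\otimes k}$ and $(\E^{\otimes k})_\star$ are compact generators with invertible suspensions that are (even) periodic, \Cref{cor:evenrings} identifies the $\Pic$-generated protoproduct with the bounded protoproduct $\Prod{\cF}^{\doubleflat}$, and then \Cref{prop:protomodules} (Morita theory) gives
\[
\Prod{\cF}^{\Pic}\Mod_{\E^{\otimes \bullet+1}} \simeq \Mod_{\Prod{\cF}\E^{\otimes \bullet+1}}, \qquad \Prod{\cF}^{\Pic}\Mod_{(\E^{\otimes \bullet+1})_\star} \simeq \Mod_{\Prod{\cF}(\E^{\otimes \bullet+1})_\star},
\]
as cosimplicial diagrams of symmetric monoidal $\infty$-categories. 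Here one must be slightly careful that the equivalences of \Cref{prop:protomodules} are natural in the cosimplicial variable; this follows from \cref{rem:ultramodules} together with the naturality of the comparison functor $\Phi$ constructed in the proof of \Cref{prop:protomodules}, since both sides are functorial in maps of $\bE_\infty$-rings.

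The third step is the heart of the matter: I would invoke \Cref{thm:sec4thm}, which provides an equivalence of cosimplicial $\bE_\infty$-ring spectra $\Prod{\cF}\E^{\otimes \bullet+1} \simeq \Prod{\cF}(\E^{\otimes \bullet+1})_\star$. Applying $\Mod_{(-)}$ to this equivalence and taking the cosimplicial limit yields a symmetric monoidal equivalence $\lim\Mod_{\Prod{\cF}\E^{\otimes \bullet+1}} \simeq \lim\Mod_{\Prod{\cF}(\E^{\otimes \bullet+1})_\star}$, and applying $\Loc\Pic$ transports this to the desired equivalence $\Prod{\cF}^{\Pic}\Sp_{n,p} \simeq \Prod{\cF}^{\Pic}\Frnp$. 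The $\Q$-linearity is automatic from the fact that an infinite ultraproduct of $\E^{\otimes k}$ over a non-principal ultrafilter is rational (the higher homotopy of $S^0$ dies in the ultraproduct just as in \Cref{cor:bprotoplocalspectra}), so both sides are modules over $\Prod{\cF}S^0_{n,p}$ which is a $\Q$-algebra.

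The main obstacle, as the authors themselves emphasize, is not the descent bookkeeping above but rather three technical inputs that feed into it: (i) establishing that the protoproduct genuinely commutes with the cosimplicial limit — this is where the generic uniform bound on the cohomological dimension of the Morava stabilizer group is needed, to control the $\lim$ and ensure the localizing subcategory generated by invertibles behaves well; (ii) verifying that $\Frnp$ admits the claimed descent presentation $\lim\Mod_{(\E^{\otimes \bullet+1})_\star}$, which requires the results on Franke's model category in \cref{ssec:algebraicmodel}; and (iii) \Cref{thm:sec4thm} itself, whose proof occupies all of \cref{sec:formality}. Granting those, the argument I have sketched is essentially a matter of assembling the pieces along the Amitsur/descent tower.
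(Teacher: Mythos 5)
Your overall plan is the same as the paper's: establish the descent presentations on both sides (\cref{topequiv}, \cref{cor:frequiv}), identify the cosimplicial module categories via Morita theory, and then feed in the formality theorem \cref{thm:sec4thm} (in the form of \cref{cosimplicialcor}). The bookkeeping and the three inputs you flag at the end are exactly where the real work lies, and your outline follows the paper's route.

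There is, however, a concrete gap in your second step. You cite \cref{cor:evenrings} to identify the $\Pic$-generated protoproduct with the bounded protoproduct. But \cref{cor:evenrings} only gives $\Prod{\cF}^{\flat} \simeq \Prod{\cF}^{\doubleflat}$ (cell filtration versus cell-dimension filtration, via periodicity); it says nothing about $\Prod{\cF}^{\Pic}$. Even periodicity of $\E^{\otimes k}$ does not by itself tell you that the $\mathrm{PicCell}$ filtration coincides with the $\Cell$ filtration — for that one needs to know that the Picard group of $\Mod_{\E^{\otimes k}}$ consists only of suspensions of the unit. The paper proves this only for $k=1$, using the Baker--Richter computation that $\Pic(\Mod_{\E}) \cong \Z/2$ (\cref{lem:br}, and analogously on the algebraic side), where the hypotheses that $\pi_0$ is regular local Noetherian enter; it is not known (and not needed) that the analogous statement holds for $\E^{\otimes k}$ with $k \geq 2$. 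The paper routes around this by observing (\cref{lem:abstractcosimp}, exploited in \cref{picandcell} and \cref{maincor}) that the totalization of a degree-wise fully faithful map of cosimplicial $\infty$-categories which is an equivalence in degree $0$ is already an equivalence. So the reduction to the bounded protoproduct is legitimate at the level of $\Tot$ without knowing anything about higher Picard groups. Your write-up needs both the Baker--Richter input and this degree-$0$ reduction; \cref{cor:evenrings} alone does not do the job.
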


\subsection{Abstract descent}

We begin by recalling some facts regarding $\infty$-categorical descent theory. We then explore the relation between $\infty$-categorical descent and the $\Pic$-generated protoproduct. These operations do not commute in general. This subsection culminates in \cref{maincor}, which provides conditions under which the $\Pic$-generated protoproduct commutes with descent. This corollary will be applied on both the topological side and the algebraic side in order to prove \cref{thm:mainthmen}.

Suppose that $(\cC,\otimes,\mathbf{1})$ is a symmetric monoidal compactly generated stable $\infty$-category. If $A \in \cC$ is a commutative algebra object in $\cC$, we will write $A^{\otimes\bullet +1} \in \cC^{\Delta}$ for the Amitsur complex of $A$ defined in \cref{ss:reduce_formal}, i.e., the cosimplicial diagram
\[
\xymatrix{A \ar@<0.5ex>[r] \ar@<-0.5ex>[r] & A^{\otimes 2} \ar[l] \ar@<-1ex>[r] \ar[r]  \ar@<1ex>[r] & A^{\otimes 3} \ar@<-0.5ex>[l] \ar@<0.5ex>[l]   \ar@<-1ex>[r] \ar[r] \ar@<1ex>[r] \ar@<2ex>[r] & \cdots. \ar@<-1.5ex>[l] \ar@<-0.5ex>[l] \ar@<0.5ex>[l] }
\]
Similarly, $\Mod_{A^{\otimes\bullet +1}}(\cC)$ denotes the associated cosimplicial diagram of $\infty$-categories of modules over $A^{\otimes\bullet +1}$ in $\cC$:
\[
\xymatrix{\Mod_{A}(\cC) \ar@<0.5ex>[r] \ar@<-0.5ex>[r] & \Mod_{A^{\otimes 2}}(\cC) \ar[l] \ar@<-1ex>[r] \ar[r]  \ar@<1ex>[r] & \Mod_{A^{\otimes 3}}(\cC) \ar@<-0.5ex>[l] \ar@<0.5ex>[l]   \ar@<-1ex>[r] \ar[r] \ar@<1ex>[r] \ar@<2ex>[r] & \cdots. \ar@<-1.5ex>[l] \ar@<-0.5ex>[l] \ar@<0.5ex>[l] }
\]

In \cite[Proposition 3.22]{mathew_galois}, Mathew provides a condition under which $\cC$ can be recovered from this cosimplicial diagram: if the tower $\{\Tot^m(A^{\otimes\bullet +1})\}_{m\ge 0}$ of partial totalizations associated to the cosimplicial diagram $A^{\otimes\bullet +1}$ is pro-constant with limit $\mathbf{1}$, then the natural functor
\[
\xymatrix{\cC \ar[r]^-{\simeq} & \lim(\Mod_{A^{\otimes\bullet +1}}(\cC))}
\]
is an equivalence of symmetric monoidal $\infty$-categories. The limit here is calculated in $\Cat$. To prove a version of this result for protoproducts, we are inspired by Mathew's proof strategy.

Bousfield~\cite{bousfield_homolss} provides a convenient criterion for checking the assumption on the $\Tot$-tower, an $\infty$-categorical formulation of which can be found in \cite{mathew_galois}. To state it, we have to recall some auxiliary notation, first introduced in~\cite{hopkinsvanishinglines}. 

\begin{defn}
Let $\cC^{\N}$ be the $\infty$-category of towers of objects in $\cC$. A tower $\{Y_m\} \in \cC^{\N}$ is said to be strongly pro-constant if it satisfies the following condition:
\begin{itemize}
	\item[] Let $X = \lim_m Y_m$ and let $\{X\} \in \cC^{\N}$ denote the constant tower on $X$. Then the cofiber of the natural map $\{X\} \to \{Y_m\}$ is nilpotent, i.e., there exists an $r \ge 1$ such that any $r$-fold composite in this tower is $0$. 
\end{itemize}
The smallest integer $r\ge 1$ with this property will be called the nilpotence degree of $\{Y_m\}$.
\end{defn}

Note that, in particular, strongly pro-constant towers are pro-constant. 

\begin{defn}
A commutative algebra $A$ in $\cC$ has fast-degree $r$ if the $\Tot$-tower associated to the Amitsur complex is strongly pro-constant of nilpotence degree $r$. If there is no such natural number, then we will say the fast-degree of $A$ is $\infty$.
\end{defn}

\begin{defn}
If $A$ is a commutative algebra in $\cC$ such that $-\otimes A$ is conservative and the fast-degree of $A$ is less than $\infty$, then we will call $A$ descendable.
\end{defn}

We will make use of the following result, which follows from \cite[Corollary 4.4]{mathew_galois}. 
\begin{prop} [Mathew] \label{rem:desc}
If $A \in \cC$ is descendable, then, for any object $Y \in \cC$,
\[
Y \simeq \Tot(A^{\otimes \bullet +1} \otimes Y).
\]
\end{prop}

In general, if $X$ and $Y$ are objects in $\cC$ and $A$ is a commutative algebra in $\cC$, the cosimplicial spectrum
\[
\Hom(X, A^{\otimes \bullet +1} \otimes Y)
\]
gives rise to a spectral sequence computing $\Hom(X,Y)$. We call this spectral sequence the $A$-based Adams spectral sequence.

\begin{defn}
A commutative algebra $A$ in $\cC$ has vanishing-degree $r$ if for all $X, Y$ objects in $\cC$, the corresponding $A$-based Adams spectral sequence collapses at the $r$th page with a horizontal vanishing line of intercept $r$. If there is no such natural number, then we will say the vanishing-degree is $\infty$.
\end{defn}

In \cite[Proposition 3.12]{mathew_thick}, Mathew proves that the fast-degree is less than $\infty$ if and only if the vanishing-degree is less than $\infty$. A careful reading of his proof gives the following lemma:

\begin{lem} \label{bounds}
Let $A$ be a commutative algebra in $\cC$, let $v$ be its vanishing-degree and let $f$ be its fast-degree. Then
\[
v \leq f+1 \text{ and } f \leq 2v.
\]
\end{lem}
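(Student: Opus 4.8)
The plan is to extract the two inequalities directly from Mathew's proof of \cite[Proposition 3.12]{mathew_thick}, which shows the equivalence of finiteness of the fast-degree and the vanishing-degree, by keeping track of the quantitative bounds that are implicit in his argument. Recall the setup: $A$ admitting descent corresponds to the $\Tot$-tower of the Amitsur complex being pro-constant, and the two ``degrees'' measure this in dual ways --- the fast-degree $f$ via the nilpotence degree of the strongly pro-constant $\Tot$-tower, and the vanishing-degree $v$ via the page at which the $A$-Adams spectral sequence collapses with a horizontal vanishing line. The content of the lemma is that these two numerical invariants are comparable to within a factor of two.

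First I would establish $v \le f+1$. Suppose $A$ has fast-degree $f$, so that for every object $X$ the canonical map from the constant tower $\{X\}$ to the $\Tot$-tower $\{\Tot^m(A^{\otimes \bullet+1}\otimes X)\}$ has nilpotent cofiber with $r$-fold composites vanishing for $r = f$. The $A$-Adams spectral sequence computing $[X, A^{\otimes\bullet+1}\otimes Y]$ is, by construction, the homotopy spectral sequence of this tower (filtered by the $\Tot$-stages), so a nilpotence degree $f$ in the tower forces the associated graded to vanish above filtration $f$ beyond a bounded page; tracking the reindexing between ``$r$-fold composite is zero'' and ``$E_r$-page has a vanishing line of intercept $r$'' produces the shift by one, giving $v \le f+1$. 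This is the more straightforward direction since the tower already \emph{is} the object defining the spectral sequence.

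Next I would establish $f \le 2v$. Assume the $A$-Adams spectral sequence collapses at $E_v$ with a horizontal vanishing line of intercept $v$. One must convert this statement about the spectral sequence back into a statement about the nilpotence degree of the $\Tot$-tower. The standard argument (a convergence/comparison argument, e.g. the one underlying Bousfield's criterion \cite{bousfield_homolss} and its formulation in \cite{mathew_galois}) shows that a horizontal vanishing line of intercept $v$ on the $E_v$-page implies that the cofiber of $\{X\} \to \{\Tot^m\}$ has composites of length roughly twice $v$ vanishing --- the factor of two arising because one needs to both cross the vanishing line and account for the differentials' bidegrees when passing from $E_v$ to $E_\infty$ and then to the tower itself. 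Making the constant in Mathew's proof explicit yields $f \le 2v$.

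The main obstacle will be the bookkeeping in the second inequality: Mathew's proof of \cite[Proposition 3.12]{mathew_thick} is written to establish finiteness rather than a sharp constant, so one must carefully audit each step --- in particular the passage from a vanishing line on a finite page to nilpotence of the tower cofiber --- to confirm that the factor of two (rather than some larger multiple) genuinely suffices. I would phrase the argument as ``a careful reading of the proof of \cite[Proposition 3.12]{mathew_thick} gives the stated bounds,'' isolating the single place where the intercept of the vanishing line enters the estimate of the composite length. The rest is routine reindexing of spectral-sequence pages versus tower composites.
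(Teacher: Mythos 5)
Your proposal is correct and takes essentially the same approach as the paper: the paper's own justification is precisely the one-sentence remark that ``a careful reading of \cite[Proposition 3.12]{mathew_thick} gives the stated bounds,'' and your plan is to carry out exactly that careful reading, with the same two directions ($v\le f+1$ from the tower filtration, $f\le 2v$ from converting the vanishing line back into tower nilpotence) and the same source of the factor two.
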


\begin{lem} \label{degrees}
Let $F \colon \cC \rightarrow \cD$ be a symmetric monoidal exact functor between presentably symmetric monoidal stable $\infty$-categories. Let $A$ be a commutative algebra in $\cC$ of fast-degree $r$. Then $F(A)$ is a commutative algebra in $\cD$ of fast-degree $r$.
\end{lem}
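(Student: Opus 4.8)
The plan is to show that applying $F$ to the $\Tot$-tower of the Amitsur complex of $A$ produces the $\Tot$-tower of the Amitsur complex of $F(A)$, and then to observe that strong pro-constancy, together with its nilpotence degree, is preserved because $F$ is exact and hence additive. This reduces the lemma to bookkeeping with finite limits and null maps.

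First I would record the structural facts. Since $F$ is symmetric monoidal, $F(A)$ is a commutative algebra in $\cD$, and there are natural equivalences $F(A^{\otimes k})\simeq F(A)^{\otimes k}$ compatible with the coface and codegeneracy maps, so that $F(A^{\otimes\bullet+1})\simeq F(A)^{\otimes\bullet+1}$ as cosimplicial objects of $\cD$. Next, for each $m$ the partial totalization $\Tot^m$ is the limit over the full subcategory of $\Delta$ on $[0],\dots,[m]$, which is a finite category; since an exact functor between stable $\infty$-categories preserves finite limits, $F(\Tot^m(A^{\otimes\bullet+1}))\simeq \Tot^m(F(A)^{\otimes\bullet+1})$ naturally in $m$. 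Thus $F$ carries the $\Tot$-tower $\{Y_m:=\Tot^m(A^{\otimes\bullet+1})\}$ to the $\Tot$-tower of $F(A)^{\otimes\bullet+1}$.

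Now assume $A$ has fast-degree $r$, so $\{Y_m\}$ is strongly pro-constant of nilpotence degree $r$: writing $X=\lim_m Y_m$, every $r$-fold composite in the cofiber tower $\{C_m:=\cof(X\to Y_m)\}$ is null. Since $F$ is exact it preserves the (co)fiber sequences $X\to Y_m\to C_m$, and being additive it sends null maps to null maps, so every $r$-fold composite in $\{F(C_m)\}$ is null. I would then apply $\lim_m$ to the fiber sequences $F(X)\to F(Y_m)\to F(C_m)$; the vanishing of $r$-fold composites forces $\lim_m F(C_m)\simeq 0$, and $\lim_m F(X)\simeq F(X)$ for the constant tower, whence $\lim_m F(Y_m)\simeq F(X)$. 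Therefore the $\Tot$-tower of $F(A)^{\otimes\bullet+1}$ is strongly pro-constant with limit $F(X)$ and nilpotence degree at most $r$; in particular, if $A$ admits descent then so does $F(A)$.

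The one point requiring care — and the main obstacle to the exact equality asserted — is that this argument only yields nilpotence degree $\le r$: a priori $F$ could send a nonzero $(r-1)$-fold composite in $\{C_m\}$ to a null map and strictly lower the fast-degree. For the applications in this paper the upper bound $\le r$ (together with preservation of the limit) is what is used, and I would state the lemma accordingly; the exact equality follows under the mild extra hypothesis that $F$ reflects non-null maps — e.g. that $F$ is faithful on mapping spectra — since then an $(r-1)$-fold composite nonzero for $A$ stays nonzero for $F(A)$. That hypothesis holds for the functors to which this lemma is applied, so nothing beyond the routine argument above is needed.
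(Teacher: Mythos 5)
Your proof is correct and follows essentially the same approach as the paper: use exactness of $F$ to transport the $\Tot$-tower and its nilpotent cofiber tower, then identify $\lim_m F(Y_m)$ with $F(X)$ using the nilpotence of $\{F(C_m)\}$. Your observation that the argument only establishes fast-degree $\le r$ rather than equality is also accurate — the paper's own proof has the same gap — but since the descent arguments only need a uniform upper bound, this is a harmless imprecision in the statement.
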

\begin{proof}
First note that $F$ sends nilpotent towers to nilpotent towers. The functor $F$ commutes with tensor powers, cofiber sequences, $\Tot^{n}$, and sends zero-maps to zero-maps. If $A$ is a commutative algebra in $\cC$ of fast degree $r$, it thus follows that the cofiber of the natural map 
\[
\{F(\Tot(A^{\otimes \bullet+1}))\} \to \{F(\Tot^n(A^{\otimes \bullet+1}))\} \simeq \{\Tot^n(F(A^{\otimes \bullet+1}))\}
\]
is nilpotent. Therefore, in order to show that $F(A)$ has fast-degree $r$, it suffices to prove that
\[
F(\Tot(A^{\otimes \bullet+1})) \simeq \Tot(F(A)^{\otimes \bullet+1}).
\]
Applying $F$ to the cofiber sequence of towers
\[
\{\Tot(A^{\otimes \bullet+1})\} \rightarrow \{\Tot^n(A^{\otimes \bullet+1})\} \rightarrow \{C_i\}
\]
gives the cofiber sequence of towers
\[
\{F(\Tot(A^{\otimes \bullet+1}))\} \rightarrow \{ \Tot^n(F(A)^{\otimes \bullet+1}) \} \rightarrow \{F(C_i)\}.
\]
Since the tower $\{F(C_i)\}$ is nilpotent, taking the inverse limit of the tower gives the cofiber sequence
\[
F(\Tot(A^{\otimes \bullet+1})) \rightarrow \Tot(F(A)^{\otimes \bullet+1}) \rightarrow 0,
\]
which proves the claim.
\end{proof}

\begin{prop} \label{mitchell}
Let $\cF$ be an ultrafilter on $I$ and let $(Y_{i}^{\bullet})_{i \in I}$ be a collection of cosimplicial spectra. Denote the Bousfield--Kan spectral sequence associated to $Y_{i}^{\bullet}$ by $E_{r}^{s,t}(Y_{i}^{\bullet})$, where $s$ is the cohomological grading. Assume that, for each $k \in \Z$,  there exists an $r_k$ and $s_k$ such that for each $i$ in some set $V$ in the ultrafilter $\cF$, $E_{r_k}^{s,t}(Y_{i}^{\bullet})=0$ for all $t-s=k$ and $s>s_k$ (ie. for each $k \in \Z$, the filtration in the $k$th column is finite for some set in the ultrafilter). This implies that the natural map
\[
\Prod{\cF}\Tot(Y_{i}^{\bullet}) \lra{} \Tot(\Prod{\cF}Y_{i}^{\bullet})
\]
is an equivalence.
\end{prop}
\begin{proof}
By intersecting each set in the ultrafilter $\cF$ with $V$, we may assume that $V = I$. We may rewrite the natural map above as
\[
\Colim{U \in \cF} \lim_{\bullet \in \Delta} \Prod{i \in U} Y_{i}^{\bullet} \lra{} \lim_{\bullet \in \Delta} \Colim{U \in \cF} \Prod{i \in U} Y_{i}^{\bullet}.
\]
We will apply Proposition 3.3 of \cite{mitchellthomasondescent} to show that it is an equivalence by verifying that the map satisfies the conditions used in the proof. In the notation of \cite{mitchellthomasondescent}, we will let 
\[
X_U = \lim_{\bullet \in \Delta} \Prod{i \in U} Y_{i}^{\bullet}
\]
for $U \in \cF$, and
\[
X = \lim_{\bullet \in \Delta} \Colim{U \in \cF} \Prod{i \in U} Y_{i}^{\bullet}.
\]
The spectral sequence we will use is the Bousfield--Kan spectral sequence. 

The first condition required for the proof is that there are no nontrivial elements in $\pi_* \Colim{U} X_U$ with infinite filtration. To satisfy this condition, in view of Remark 3.6 in \cite{mitchellthomasondescent}, it is enough to have that for each $k \in \Z$ and $U \in \cF$, the filtration in the $k$th column is finite  at some fixed page for the Bousfield--Kan spectral sequence associated to $\Prod{i \in U} Y_{i}^{\bullet}$. This is our assumption, so this first condition is satisfied. 

To satisfy the second condition we must show that there is an isomorphism
\[
\Colim{U \in \cF} H_q \pi_s \Prod{i \in U} Y_{i}^{\bullet} \lra{\cong} H_q \pi_s \Colim{U \in \cF} \Prod{i \in U} Y_{i}^{\bullet},
\]  
for all $q$ and $s$, but this follows from the fact that homology commutes with filtered colimits and that the sphere is compact. Thus the natural map is an equivalence. 
\end{proof}

Throughout the remainder of this section, the totalization of a cosimplicial diagram of compactly generated $\infty$-categories $\cE^{\bullet}$, denoted $\Tot(\cE^{\bullet})$, will always refer to the limit taken in the $\infty$-category of compactly generated $\infty$-categories.

\begin{prop}\label{prop:ultraenlocalcomparisonfunctor}
Let $(\cC_i)_{i \in I}$ be a collection of symmetric monoidal compactly generated stable $\infty$-categories with compact unit and let $(A_i)_{i \in I}$ be a collection of descendable objects such that there exists $r>0$ such that for all but finitely many $i \in I$, $A_i$ has fast-degree less than or equal to $r$. Then the canonical symmetric monoidal functor
\[
\xymatrix{\Prod{\cF}^{\Pic}\cC_i \ar[r]^-F & \Tot(\Prod{\cF}^{\Pic}\Mod_{A_{i}^{\otimes\bullet +1}}(\cC_i))}
\]
is fully faithful, where the totalization takes place in the $\infty$-category of compactly generated $\infty$-categories.
\end{prop}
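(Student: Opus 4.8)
The plan is to check full faithfulness of $F$ on mapping spectra, reduce to the invertible generators of the source, and then exploit the uniform fast-degree bound to commute the relevant totalizations past the ultraproduct and past filtered colimits. First I would record the reductions. For each $[k]\in\Delta$ the base-change functor $-\otimes A_i^{\otimes k+1}\colon\cC_i\to\Mod_{A_i^{\otimes k+1}}(\cC_i)$ is symmetric monoidal, colimit preserving, and carries compact objects to compact objects and invertibles to invertibles (so it is filtration preserving with $\beta=\Id$ for the $\mathrm{PicCell}$-filtrations); \cref{cor:symmonfun} thus produces colimit-preserving symmetric monoidal functors $F_k\colon\Prod{\cF}^{\Pic}\cC_i\to\cD^k:=\Prod{\cF}^{\Pic}\Mod_{A_i^{\otimes k+1}}(\cC_i)$ assembling to $F$. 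Since the cosimplicial transition functors of $\cD^\bullet$ are base-change functors, hence colimit preserving, $\Tot(\cD^\bullet)$ is a limit in $\mathrm{Pr}^L$, colimits in it are levelwise, and $F$ preserves colimits; and by the remark following \cref{defn:picfiltration} and \cref{prop:protoproductgenerators}, $\Prod{\cF}^{\Pic}\cC_i$ is compactly generated by its invertible objects, while \cref{lem:picprotogp} shows that an invertible object is of the form $[c_i]$ with each $c_i$ invertible on a set in $\cF$. Using the identification $\Hom_{\Tot(\cD^\bullet)}(Fc,Fd)\simeq\Tot_k\Hom_{\cD^k}(F_kc,F_kd)$ of mapping spectra in a limit of stable $\infty$-categories, it then suffices to show that the canonical map $\Hom_{\Prod{\cF}^{\Pic}\cC_i}(c,d)\to\Tot_k\Hom_{\cD^k}(F_kc,F_kd)$ is an equivalence for $c$ invertible and $d$ arbitrary.

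The key step is the compact case together with a uniform vanishing line. For $c=[c_i]$, $d=[d_i]$ with $c_i,d_i$ compact in $\cC_i$, I would argue as follows: since $A_i$ has fast-degree $\le r$ in $\cC_i$, the $\Tot$-tower of its Amitsur complex is strongly pro-constant of nilpotence degree $\le r$ with limit $\mathbf{1}_i$, and applying $\Hom_{\cC_i}(c_i,d_i\otimes-)$ (which preserves finite totalizations and kills zero maps through the $d_i\otimes-$ factor) shows the $\Tot$-tower of $\Hom_{\cC_i}(c_i,d_i\otimes A_i^{\otimes\bullet+1})$ is strongly pro-constant of nilpotence degree $\le r$ with limit $\Hom_{\cC_i}(c_i,d_i)$, uniformly in $i$ and in $c_i,d_i$; by \cref{bounds} this is a uniform horizontal vanishing line for the associated Bousfield--Kan spectral sequences. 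Since $F_kc=[c_i\otimes A_i^{\otimes k+1}]$ is invertible, hence compact, in $\cD^k$, \cref{lem:htpyultracompact} and the free--forgetful adjunction give $\Hom_{\cD^k}(F_kc,F_kd)\simeq\Prod{\cF}\Hom_{\cC_i}(c_i,d_i\otimes A_i^{\otimes k+1})$ naturally in $[k]$, and then \cref{mitchell} (whose hypothesis is exactly this uniform vanishing line) together with \cref{rem:desc} yields
\[
\Tot_k\Hom_{\cD^k}(F_kc,F_kd)\simeq\Prod{\cF}\Tot_k\Hom_{\cC_i}(c_i,d_i\otimes A_i^{\otimes k+1})\simeq\Prod{\cF}\Hom_{\cC_i}(c_i,d_i)\simeq\Hom_{\Prod{\cF}^{\Pic}\cC_i}(c,d),
\]
compatibly with the canonical map; so $F$ is fully faithful on compact objects.

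To pass to general $d$, I would observe that $\Hom_{\Prod{\cF}^{\Pic}\cC_i}(c,-)$ preserves all colimits and that $\Hom_{\Tot(\cD^\bullet)}(Fc,-)=\Tot_k\Hom_{\cD^k}(F_kc,F_k(-))$ is exact and preserves filtered colimits: each $\Hom_{\cD^k}(F_kc,F_k(-))$ preserves colimits, and the uniform nilpotence degree $\le r$ established above makes the totalizations in play effectively finite, so $\Tot_k$ commutes with the filtered colimits involved (again via the mechanism underlying \cref{mitchell}). The full subcategory of objects $d$ on which $\Hom(c,d)\to\Hom(Fc,Fd)$ is an equivalence is then closed under colimits and contains the invertible objects, which generate $\Prod{\cF}^{\Pic}\cC_i$; hence it is everything, and letting $c$ range over the invertible objects finishes the argument.

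The main obstacle is exactly this interchange of totalizations with ultraproducts and with filtered colimits: a priori $\Prod{\cF}$ respects neither infinite limits (\cref{ex:ultraproductswithoutlimits}) nor infinite totalizations, and the whole argument stands or falls on upgrading ``fast-degree $\le r$ for every $i$'' to a Bousfield--Kan vanishing line that is uniform in both $i$ and the pair of objects $(c_i,d_i)$, so that \cref{mitchell} applies. The bookkeeping that converts fast-degree bounds into such uniform vanishing lines — via \cref{bounds} and the functoriality recorded in \cref{degrees} — is the technical heart, and I expect it to be the place where the most care is needed.
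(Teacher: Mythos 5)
Your proposal follows essentially the same route as the paper's proof: identify the map on mapping spectra between compact objects as the interchange map $\Prod{\cF}\Tot(\Hom_{\cC_i}(X_i,A_i^{\otimes\bullet+1}\otimes Y_i))\to\Tot(\Prod{\cF}\Hom_{\cC_i}(X_i,A_i^{\otimes\bullet+1}\otimes Y_i))$ via the free--forgetful adjunction and \cref{lem:htpyultracompact}, convert the uniform fast-degree bound into a uniform horizontal vanishing line via \cref{bounds}, and then invoke \cref{mitchell}. The only differences are cosmetic: the paper reduces directly to arbitrary compact $[X_i],[Y_i]$, while you first restrict to invertible $c$ and then bootstrap to all objects by a colimit/thickness argument, which is a longer but equivalent packaging of the same dévissage.
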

\begin{proof}
By restricting to a smaller set in the ultrafilter, we may assume that conditions of the statement hold for all $i \in I$.

It suffices to check this on the mapping spectrum $\Hom([X_i],[Y_i])$ between compact objects $[X_i]$ and $[Y_i]$. We must show that the map
\[
\Hom_{\Prod{\cF}^{\Pic}\cC_i}([X_i],[Y_i]) \rightarrow \Hom_{\Tot(\Prod{\cF}^{\Pic}\Mod_{A_{i}^{\otimes\bullet +1}}(\cC_i))}(F[X_i],F[Y_i])
\]
is an equivalence. We will identify this with the natural map
\[
\Prod{\cF}\Tot(\Hom_{\cC_i}(X_i,A_{i}^{\otimes \bullet + 1} \otimes Y_i)) \lra{} \Tot(\Prod{\cF}\Hom_{\cC_i}(X_i,A_{i}^{\otimes \bullet+1} \otimes Y_i)),
\]
which is given by the composition
\begin{align*}
\Prod{\cF}\Tot(\Hom_{\cC_i}(X_i,A_{i}^{\otimes \bullet + 1} \otimes Y_i)) \simeq & \Prod{\cF}\Hom_{\cC_i}(X_i,Y_i) \\ 
\simeq & \Hom_{\Prod{\cF}^{\Pic}\cC_i}([X_i],[Y_i])\\ 
\rightarrow &\Hom_{\Tot(\Prod{\cF}^{\Pic}\Mod_{A_{i}^{\otimes\bullet +1}}(\cC_i))}(F[X_i],F[Y_i]) \\ 
\simeq & \Hom_{\Tot(\Prod{\cF}^{\Pic}\Mod_{A_{i}^{\otimes\bullet +1}}(\cC_i))}([A_{i}^{\otimes \bullet + 1} \otimes X_i],[A_{i}^{\otimes \bullet + 1} \otimes Y_i])\\
\simeq & \Tot \Hom_{\Prod{\cF}^{\Pic}\Mod_{A_{i}^{\otimes\bullet +1}}(\cC_i)}([A_{i}^{\otimes \bullet + 1} \otimes X_i],[A_{i}^{\otimes \bullet + 1} \otimes Y_i])\\
\simeq & \Tot \Prod{\cF}\Hom_{\Mod_{A_{i}^{\otimes\bullet +1}}(\cC_i)}(A_{i}^{\otimes \bullet + 1} \otimes X_i,A_{i}^{\otimes \bullet + 1} \otimes Y_i)\\
\simeq & \Tot \Prod{\cF} \Hom_{\cC_i}(X_i,A_{i}^{\otimes \bullet + 1} \otimes Y_i).
\end{align*}
The first equivalence follows from \cref{rem:desc}. The second equivalence follows by the compactness of $[X_i]$ and $[Y_i]$ and \Cref{lem:rozenblyum}. The third equivalence is the definition of $F$. The fourth equivalence follows from general facts regarding mapping spectra in limits of $\infty$-categories. The fifth equivalence follows from the compactness of $[A_{i}^{\otimes k} \otimes X_i]$ and $[A_{i}^{\otimes k} \otimes Y_i]$ and \Cref{lem:rozenblyum}. The last equivalence follows from the free-forgetful adjunction.

We apply \cref{mitchell} to the collection of cosimplicial spectra
\[
(\Hom_{\cC_i}(X_i,A_{i}^{\otimes \bullet + 1} \otimes Y_i))_{i \in I}.
\]
By our assumption on the collection $(A_i)_{i \in I}$ and \cref{bounds}, $(A_i)_{i \in I}$ has bounded vanishing-degree. \cref{mitchell} implies the map is an equivalence.
\end{proof}

\begin{rem}
The previous result holds for the protoproduct, bounded protoproduct, and $\Pic$-generated protoproduct.
\end{rem}

Recall that the Picard spectrum of a symmetric monoidal $\infty$-category $\cC$, $\pic(\cC)$, is the spectrum associated to the symmetric monoidal $\infty$-groupoid of invertible objects. As a functor to connective spectra, $\pic$ preserves all limits and filtered colimits of symmetric monoidal $\infty$-categories by \cite[Proposition 2.2.3]{ms_picard}.

The Picard space functor is denoted by $\Pic$; it is related to the spectrum-valued functor $\pic$ by a canonical equivalence $\Pic \simeq \Omega^{\infty}\pic$. Given a symmetric monoidal $\infty$-category $\cC$, we will write $\Loc \Pic(\cC)$ for the localizing subcategory generated by $\Pic(\cC)$.

Let $\iota_0 \colon \Sp_{\geq 0} \leftrightarrows \Sp \noloc \tau_{\geq 0}$ be the inclusion/truncation adjunction between the $\infty$-categories of spectra and connective spectra. When we want to consider the Picard spectrum functor as taking values in the category of spectra, we will write $\iota_0 \pic$.

The goal of the next proposition is to identify the essential image of the functor $F$ constructed in \Cref{prop:ultraenlocalcomparisonfunctor} as the localizing subcategory on $\Pic(\cC)$. 

\begin{lem} \label{piccommutes}
Let $(\cC_i)_{i \in I}$ be a collection of $\Pic$-compactly generated symmetric monoidal $\infty$-categories. Then there is a canonical equivalence of connective spectra
\[
\pic(\Prod{\cF}^{\Pic}\cC_i) \simeq \Prod{\cF} \pic \cC_i.
\]
\end{lem}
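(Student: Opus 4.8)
The plan is to prove the equivalence directly at the level of spectra, by exhibiting $\pic(\Prod{\cF}^{\Pic}\cC_i)$ as the Picard spectrum of the compactly generated ultraproduct $\Prod{\cF}^{\omega}\cC_i$ and then using that $\pic$ preserves the limits and filtered colimits out of which ultraproducts are built. Two elementary facts about $\pic$ are needed at the outset. First, if $F\colon \cC \to \cD$ is a fully faithful symmetric monoidal functor such that every invertible object of $\cD$ is equivalent to one in the essential image of $F$, then $F$ induces an equivalence on Picard $\infty$-groupoids compatibly with the monoidal structures, hence an equivalence $\pic(\cC) \xrightarrow{\simeq} \pic(\cD)$ of connective spectra. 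I will apply this in two guises: (i) to the inclusion $\Loc_{\cD}(\Pic(\cD)) \hookrightarrow \cD$ — which is fully faithful and symmetric monoidal since the generating invertibles are closed under $\otimes$ and a localizing subcategory is closed under colimits, and whose source has the same invertible objects as $\cD$ — yielding $\pic(\Loc_{\cD}\Pic(\cD)) \simeq \pic(\cD)$; and (ii) to the inclusion $\cE \hookrightarrow \Ind(\cE)$ of a small idempotent-complete symmetric monoidal $\infty$-category with compact unit, where any invertible object $X$ of $\Ind(\cE)$ is compact because $-\otimes X$ is an equivalence and therefore preserves compact objects, so that $X\simeq \mathbf{1} \otimes X$ lies in $\cE$; this gives $\pic(\Ind\cE) \simeq \pic(\cE)$.

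Next I would identify the $\Pic$-generated protoproduct. Running the thick-subcategory induction in the proof of \cref{prop:protoproductgenerators} — which only uses that invertible objects are compact (true when the unit is compact), that $\mathrm{PicCell}_{\cC_i,0}\cC_i\supseteq\Pic(\cC_i)$, that the unit of the protoproduct is compact, and that $\pic$ preserves ultraproducts (\cref{lem:picultra}) — together with \cref{lem:picprotogp} and the equivalence $\Prod{\cF}^{\omega}\cC_i \simeq \Ind\Prod{\cF}\cC_i^{\omega}$, one obtains an equivalence of symmetric monoidal $\infty$-categories
\[
\Prod{\cF}^{\Pic}\cC_i \;\simeq\; \Loc_{\Prod{\cF}^{\omega}\cC_i}\bigl(\Pic(\Prod{\cF}^{\omega}\cC_i)\bigr).
\]
Applying $\pic$ and observation (i), and then observation (ii) to $\cE = \Prod{\cF}\cC_i^{\omega}$, gives canonical equivalences of spectra
\[
\pic\bigl(\Prod{\cF}^{\Pic}\cC_i\bigr) \;\simeq\; \pic\bigl(\Prod{\cF}^{\omega}\cC_i\bigr) \;\simeq\; \pic\bigl(\Prod{\cF}\cC_i^{\omega}\bigr).
\]
Finally, since the unit of each $\cC_i$ is compact its invertible objects are compact, so $\Pic(\cC_i^{\omega}) = \Pic(\cC_i)$ and hence $\pic(\cC_i^{\omega})\simeq\pic(\cC_i)$; and $\pic$ preserves limits and filtered colimits by \cite[Proposition 2.2.3]{ms_picard}, hence preserves ultraproducts (as in \cref{lem:picultra}), so $\pic(\Prod{\cF}\cC_i^{\omega}) \simeq \Prod{\cF}\pic(\cC_i^{\omega}) \simeq \Prod{\cF}\pic(\cC_i)$, the ultraproduct on the right being formed in $\Sp$ (equivalently in connective spectra, which are closed under products and filtered colimits). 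Composing these equivalences proves the lemma.

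The main obstacle is the identification of $\Prod{\cF}^{\Pic}\cC_i$ with $\Loc_{\Prod{\cF}^{\omega}\cC_i}\Pic(\Prod{\cF}^{\omega}\cC_i)$ for a general symmetric monoidal compactly generated $\cC_i$ with compact unit — which amounts to carrying the argument of \cref{prop:protoproductgenerators} and \cref{lem:picprotogp} out beyond the case of module categories — together with observation (i), i.e. checking that $\Loc_{\cD}(\Pic(\cD))$ is a symmetric monoidal subcategory of $\cD$ whose invertible objects are precisely those of $\cD$. Everything else is a bookkeeping of limits and filtered colimits. An alternative, if one prefers to avoid the $\Loc\Pic$ reformulation, is to prove the equivalence $\Pic(\Prod{\cF}^{\Pic}\cC_i) \simeq \Prod{\cF}\Pic(\cC_i)$ of grouplike $\bE_\infty$-spaces by the sandwich argument in the proof of \cref{lem:picprotogp}, and then invoke the equivalence between connective spectra and grouplike $\bE_\infty$-spaces, using that $\Omega^{\infty}$ commutes with products and filtered colimits of connective spectra and hence with ultraproducts.
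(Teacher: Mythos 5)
Your proposal is correct, and your ``alternative'' paragraph at the end is in fact exactly what the paper does: the paper's proof of this lemma is the single sentence ``this follows from the proof of \cref{lem:picprotogp},'' i.e.\ rerun the sandwich argument $\Prod{\cF}\Pic\cC_i \subseteq \Pic(\Prod{\cF}^{\Pic}\cC_i) \subseteq \Pic(\Prod{\cF}\cC_i^\omega) \simeq \Prod{\cF}\Pic\cC_i$ (which only uses that the unit is compact, not that $\cC_i$ is a module category) and lift across the equivalence between connective spectra and grouplike $\bE_\infty$-spaces, using that $\pic$ preserves limits and filtered colimits by \cite[Proposition 2.2.3]{ms_picard}. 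Your primary route --- identify $\Prod{\cF}^{\Pic}\cC_i$ with $\Loc_{\Prod{\cF}^{\omega}\cC_i}\Pic(\Prod{\cF}^{\omega}\cC_i)$, note that $\Loc_\cD(\Pic\cD)\hookrightarrow\cD$ and $\cE\hookrightarrow\Ind\cE$ (compact unit) both induce equivalences on $\pic$, and then use that $\pic$ preserves ultraproducts --- is a modest but genuine repackaging: it trades the space-level sandwich for two reusable observations about when a symmetric monoidal inclusion is $\pic$-invariant, which makes the ``$\pic$ sees only the $\Pic$-generated part'' content explicit and does not need to pass through grouplike $\bE_\infty$-spaces at the end. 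You are also right that the only real work, in either version, is carrying the identification underlying \cref{prop:protoproductgenerators}/\cref{lem:picprotogp} beyond module categories; since that argument only uses compactness of the unit (hence of invertibles) and the fact that the $\Pic$-filtration starts at $\Pic(\cC_i)$, it goes through unchanged, so there is no gap.
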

\begin{proof}
This follows from the proof of \cref{lem:picprotogp}.
\end{proof}

\begin{prop} \label{picequiv}
Let $(\cC_i)_{i \in I}$ and $(A_i)_{i \in I}$ be as in \cref{prop:ultraenlocalcomparisonfunctor} and also assume that the $A_i$-based Adams spectral sequence for $\End(1_{\cC_i}) = \Hom(1_{\cC_i},1_{\cC_i})$ collapses at the $E_2$-page on some set in the ultrafilter $\cF$. The canonical functor
\[
\xymatrix{\Prod{\cF}^{\Pic}\cC_i \ar[r]^-F & \Tot(\Prod{\cF}^{\Pic}\Mod_{A_{i}^{\otimes\bullet +1}}(\cC_i))}
\]
induces an equivalence of spaces
\[
\xymatrix{\Pic(\Prod{\cF}^{\Pic}\cC_i) \ar[r]^-{\simeq} & \Pic(\Tot(\Prod{\cF}^{\Pic}\Mod_{A_{i}^{\otimes\bullet +1}}(\cC_i))).} 
\]
\end{prop}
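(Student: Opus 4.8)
The plan is to reduce, using that the Picard spectrum functor $\pic$ preserves limits and filtered colimits \cite[Proposition 2.2.3]{ms_picard}, to a comparison between an ultraproduct of totalizations and a totalization of ultraproducts, and then to invoke \cref{mitchell}. Since $\pic$ preserves limits, the target of the map rewrites as $\Tot\bigl(\pic(\Prod{\cF}^{\Pic}\Mod_{A_i^{\otimes\bullet+1}}(\cC_i))\bigr)$, which by the preceding lemma and \cref{lem:picprotogp} is $\Tot\bigl(\Prod{\cF}\pic(\Mod_{A_i^{\otimes\bullet+1}}(\cC_i))\bigr)$; here one uses that each $\Mod_{A_i^{\otimes k+1}}(\cC_i)$ is $\Pic$-compactly generated, being generated by its invertible unit. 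For the source, $\pic$ preserving limits together with \cref{prop:abstractdescent} applied to the descendable $A_i\in\cC_i$ gives $\pic(\Prod{\cF}^{\Pic}\cC_i)\simeq\Prod{\cF}\pic(\cC_i)\simeq\Prod{\cF}\Tot\bigl(\pic(\Mod_{A_i^{\otimes\bullet+1}}(\cC_i))\bigr)$. Under these identifications $\pic(F)$ becomes the canonical comparison map
\[
\Prod{\cF}\Tot\bigl(\pic(\Mod_{A_i^{\otimes\bullet+1}}(\cC_i))\bigr)\longrightarrow\Tot\bigl(\Prod{\cF}\pic(\Mod_{A_i^{\otimes\bullet+1}}(\cC_i))\bigr),
\]
so it suffices to show this map is an equivalence.

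I would deduce this from \cref{mitchell} applied to the collection of cosimplicial spectra $\bigl(\pic(\Mod_{A_i^{\otimes\bullet+1}}(\cC_i))\bigr)_{i\in I}$. What has to be checked is that the associated Bousfield--Kan spectral sequences admit a horizontal vanishing line at a finite page, with both the page and the intercept bounded independently of $i$. In the region of filtration $t\ge 2$ this is governed by the $\mathrm{gl}_1$-part: one has $\pic(\Mod_R)\simeq\Sigma\,\mathrm{gl}_1(R)$ on $\tau_{\ge 2}$ and $\pi_t\,\mathrm{gl}_1(R)\cong\pi_t R$ for $t\ge 1$, so levelwise this part of the $E_2$-page is a one-line shift of the $E_2$-page of the $A_i$-based Adams spectral sequence computing $\End_{\cC_i}(\mathbf{1})\simeq\Tot\bigl(\Hom_{\cC_i}(\mathbf{1},A_i^{\otimes\bullet+1})\bigr)$. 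By hypothesis that Adams spectral sequence collapses at $E_2$, and by \cref{bounds} together with the uniform bound on the fast-degree of the $A_i$ its vanishing line has intercept bounded independently of $i$; hence $E_2^{s,t}=0$ for $t\ge 2$ and $s$ above a constant not depending on $i$.

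The hard part is the two remaining lines $t=0$ (the Picard groups of the categories $\Mod_{A_i^{\otimes\bullet+1}}(\cC_i)$) and $t=1$ (the units of their endomorphism rings): these need not collapse at $E_2$, so one must follow Mathew--Stojanoska in pushing to a later page, tracking the differentials coming off the $\mathrm{gl}_1$-part, and then verify that the resulting page and intercept are uniform in $i$. In the cases where the proposition is used, namely $\cC_i=\Sp_{n,p}$ and $A_i=\E$, this uniformity is precisely what the generic uniform bound on the cohomological dimension of the Morava stabilizer group supplies: after discarding the finitely many exceptional small primes from $\cF$ it forces the relevant continuous cohomology groups (with coefficients in the units and in $\Z/2$) to vanish in a range independent of $p$. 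I would also remark that, since $F$ is fully faithful and symmetric monoidal by \cref{prop:ultraenlocalcomparisonfunctor}, the induced map of $\bE_\infty$-rings on endomorphisms of the units is already an equivalence, so $\pi_k\pic(F)$ is automatically an isomorphism for $k\ge 1$; thus the essential content is surjectivity of $\pic(F)$ on $\pi_0$, i.e.\ that every invertible object of the totalization descends, which is exactly what the vanishing-line argument delivers.
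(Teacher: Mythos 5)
Your argument follows the paper's proof of this proposition essentially step for step: rewrite $\pic$ of both sides using the fact that $\pic$ preserves limits and filtered colimits, reduce to the comparison map $\Prod{\cF}\Tot(\pic(\Mod_{A_i^{\otimes\bullet+1}}(\cC_i))) \to \Tot(\Prod{\cF}\pic(\Mod_{A_i^{\otimes\bullet+1}}(\cC_i)))$, and invoke \cref{mitchell} with the uniform vanishing line supplied by comparing the Picard spectral sequence above the $1$-line to the $A_i$-based Adams spectral sequence for $\End(\mathbf{1}_{\cC_i})$. The paper's own proof is just as terse about the $t=0,1$ lines as your paragraph flagging them as ``the hard part'' — so you have not introduced a gap the paper doesn't already share — and your closing observation that fully faithfulness of $F$ (from \cref{prop:ultraenlocalcomparisonfunctor}) already forces $\pi_k\pic(F)$ to be an isomorphism for $k \ge 1$, reducing the whole statement to surjectivity on $\pi_0\pic$, is a genuine streamlining not present in the paper's argument.
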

\begin{proof}
 Applying $\pic$ to the symmetric monoidal functor $F$ gives a map of connective spectra
\[
\xymatrix{\pic(\Prod{\cF}^{\Pic}\cC_i) \ar[r] & \pic(\Tot(\Prod{\cF}^{\Pic}\Mod_{A_{i}^{\otimes\bullet +1}}(\cC_i))).} 
\] 
As a functor to connective spectra, $\pic$ commutes with totalizations of symmetric monoidal $\infty$-categories by \cite[Proposition 2.2.3]{ms_picard}. Under the assumption that all of the $\cC_i$ are compactly generated with compact unit, $\pic$ commutes with totalizations of compactly generated symmetric monoidal $\infty$-categories with compact unit, because invertible objects in these $\infty$-categories are compact. Now \cref{piccommutes} implies that the map above is equivalent to the map
\[
\Prod{\cF}\pic(\cC_i) \simeq \tau_{\geq 0}\Prod{\cF} \Tot (\iota_0 \pic(\Mod_{A_{i}^{\otimes\bullet +1}}(\cC_i)))  \rightarrow \tau_{\geq 0}\Tot(\Prod{\cF}\iota_0\pic(\Mod_{A_{i}^{\otimes\bullet +1}}(\cC_i))), 
\]
where the totalizations are taking place in the $\infty$-category of spectra. Thus it suffices to prove that the map
\[
\Prod{\cF} \Tot (\iota_0 \pic(\Mod_{A_{i}^{\otimes\bullet +1}}(\cC_i)))  \rightarrow \Tot(\Prod{\cF}\iota_0\pic(\Mod_{A_{i}^{\otimes\bullet +1}}(\cC_i)))
\]
is an equivalence of spectra for which we would like to apply \cref{mitchell}.

For this we would like to show that the Bousfield--Kan spectral sequence associated to the cosimplicial spectrum 
\[
\iota_0 \pic(\Mod_{A_{i}^{\otimes\bullet +1}}(\cC_i))
\]
satisfies the conditions of \cref{mitchell}. We will compare this spectral sequence to the spectral sequence associated to the cosimplicial spectrum
\[
\Hom(1_{\cC_i},A_{i}^{\otimes\bullet +1}).
\]
We will use Adams grading, so in particular a column means the collection of groups $E_{2}^{s,t}$, where $t-s$ is fixed. For each $k \in \Z$, the two spectral sequences agree up to a shift at all but finitely many places in the $k$th column independent of $i \in U$. This follows from the fact that the homotopy groups of the two cosimplicial spectra that we are considering agree up to a shift after taking the $2$-connected cover. Thus for each column, the discrepency between the two spectral sequences appears in at most two entries. Finally, by assumption, the spectral sequence associated to the cosimplicial spectrum
\[
\Hom(1_{\cC_i},A_{i}^{\otimes\bullet +1})
\]
satisfies the condition of \cref{mitchell}, and thus the spectral sequence associated to the cosimplicial spectrum 
\[
\iota_0 \pic(\Mod_{A_{i}^{\otimes\bullet +1}}(\cC_i))
\]
satisfies the condition of \cref{mitchell} as well.
\end{proof}

\begin{rem}
It is possible to prove \cref{picequiv} under the weaker assumption that the spectral sequence collapses at the $E_r$-page on some set in the ultrafilter. However, the proof requires a more careful comparison between the $A_i$-based spectral sequence of $\pic(\cC_i)$ and $\End(1_{\cC_i})$. A careful comparison between the differentials in these two spectral sequences appears in \cite{ms_picard}. However, the proof of this stronger version of \cref{picequiv} would require a lengthy and ultimately unnecessary digression.
\end{rem}

\begin{thm}
Let $(\cC_i)_{i \in I}$ and $(A_i)_{i \in I}$ be as in the statement of \cref{picequiv}. There is a canonical equivalence of symmetric monoidal stable $\infty$-categories 
\[
\Prod{\cF}^{\Pic} \cC_i \simeq \Loc \Pic(\Tot(\Prod{\cF}^{\Pic}\Mod_{A_{i}^{\otimes\bullet +1}}(\cC_i))).
\]
\end{thm}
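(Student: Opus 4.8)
The plan is to show that the canonical symmetric monoidal functor $F$ of \cref{prop:ultraenlocalcomparisonfunctor} corestricts to the asserted equivalence. Write $\mathcal{T} := \Tot(\Prod{\cF}^{\Pic}\Mod_{A_{i}^{\otimes\bullet+1}}(\cC_i))$ for the target. By \cref{prop:ultraenlocalcomparisonfunctor} the functor $F\colon \Prod{\cF}^{\Pic}\cC_i \to \mathcal{T}$ is symmetric monoidal and fully faithful, so the only thing to prove is that its essential image is the localizing subcategory $\Loc\Pic(\mathcal{T})$. I would carry this out in three steps: first, that $F$ preserves colimits, so that its essential image is a localizing subcategory of $\mathcal{T}$; second, that this subcategory is exactly $\Loc\Pic(\mathcal{T})$, using the behaviour of $F$ on Picard objects from \cref{picequiv}; and third, that the corestriction is symmetric monoidal.

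For the first step, note that at each cosimplicial level the structure functor of the diagram $\Prod{\cF}^{\Pic}\Mod_{A_{i}^{\otimes\bullet+1}}(\cC_i)$ is an extension-of-scalars functor, hence a left adjoint; viewing this cosimplicial object as a diagram in the $\infty$-category $\mathrm{Pr}^{L}$ of presentable $\infty$-categories and colimit-preserving functors, its totalization computed in $\mathrm{Pr}^{L}$ agrees with $\mathcal{T}$ (the inclusion $\mathrm{Pr}^{L}\hookrightarrow\Cat$ preserves limits), and the canonical functor out of the presentable $\infty$-category $\Prod{\cF}^{\Pic}\cC_i$ (presentable by \cref{lem:protoproductcompgen}) into the totalization is a morphism of $\mathrm{Pr}^{L}$. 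Hence $F$ preserves colimits, and since it is fully faithful, colimits computed in its essential image agree with those in the source; the essential image is therefore closed under all small colimits, i.e.\ is a localizing subcategory $\mathcal{I}\subseteq\mathcal{T}$.

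For the second step I would argue by double inclusion. Since $F$ is symmetric monoidal it sends invertible objects to invertible objects, and since $\Prod{\cF}^{\Pic}\cC_i$ is generated under colimits by its Picard objects (\cref{prop:protoproductgenerators}, in the form valid for a symmetric monoidal compactly generated collection), colimit-preservation of $F$ gives $\mathcal{I} = \Loc_{\mathcal{T}}(F(\Pic(\Prod{\cF}^{\Pic}\cC_i))) \subseteq \Loc\Pic(\mathcal{T})$. Conversely, \cref{picequiv} shows that $\pic(F)$ is an equivalence of spectra, so $\Pic(F) = \Omega^{\infty}\pic(F)$ is an equivalence of $\infty$-groupoids; hence every invertible object of $\mathcal{T}$ lies, up to equivalence, in the essential image of $F$, so $\Pic(\mathcal{T})\subseteq\mathcal{I}$, and since $\mathcal{I}$ is localizing, $\Loc\Pic(\mathcal{T})\subseteq\mathcal{I}$. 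Thus $\mathcal{I} = \Loc\Pic(\mathcal{T})$, and $F$ corestricts to an equivalence $\Prod{\cF}^{\Pic}\cC_i \lra{\simeq} \Loc\Pic(\mathcal{T})$. Finally, $\Loc\Pic(\mathcal{T})$ is a symmetric monoidal subcategory of $\mathcal{T}$ — it contains the unit, which is invertible, and is closed under $\otimes$ because $\otimes$ preserves colimits in each variable and tensor products of invertibles are invertible — so, $F$ being symmetric monoidal, the corestriction is an equivalence of symmetric monoidal (automatically stable) $\infty$-categories. The step I expect to require the most care is the first one: making precise that the totalization defining $\mathcal{T}$ is compatible with the presentable structure and that $F$ is genuinely colimit-preserving in the protoproduct setting; the remaining bookkeeping with localizing subcategories and essential images is routine once \cref{prop:ultraenlocalcomparisonfunctor} and \cref{picequiv} are in hand.
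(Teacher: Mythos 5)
Your proposal is correct and follows essentially the same route as the paper's own (very terse) proof: the paper likewise combines fully-faithfulness and colimit-preservation of $F$ from \cref{prop:ultraenlocalcomparisonfunctor}, the equivalence on Picard spectra from \cref{picequiv}, and $\Pic$-generation of the source from \cref{prop:protoproductgenerators}. You merely spell out the double inclusion of localizing subcategories and the $\mathrm{Pr}^L$ bookkeeping that the paper leaves implicit.
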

\begin{proof}
There is a canonical colimit preserving functor
\[
\Prod{\cF}^{\Pic} \cC_i \rightarrow \Tot(\Prod{\cF}^{\Pic}\Mod_{A_{i}^{\otimes\bullet +1}}(\cC_i)),
\]
which is fully faithful by \cref{prop:ultraenlocalcomparisonfunctor}. 

By \cref{picequiv}, the map is an equivalence after applying $\Pic(-)$. \cref{thisguy} implies that
\[
\Prod{\cF}^{\Pic} \cC_i \simeq \Loc \Pic(\Prod{\cF}^{\Pic} \cC_i).
\]
Therefore, we get the desired equivalence.
\end{proof}

\begin{lem} \label{lem:abstractcosimp}
Let $\cC^{\bullet}$ and $\cD^{\bullet}$ be cosimplicial $\infty$-categories and let $f^{\bullet} \colon \cC^{\bullet} \rightarrow \cD^{\bullet}$ be a map that is degree-wise fully faithful. If $f^0 \colon \cC^0 \rightarrow \cD^0$ is an equivalence, then 
\[
\Tot f^{\bullet} \colon \Tot \cC^{\bullet} \rightarrow \Tot \cD^{\bullet}
\]
is an equivalence.
\end{lem}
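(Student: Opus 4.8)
The plan is to verify that $\Tot f^{\bullet}$ is \emph{fully faithful} and \emph{essentially surjective}. For full faithfulness the hypothesis on $f^0$ plays no role; only the degree-wise full faithfulness is needed. The key input is that mapping spaces in a limit of $\infty$-categories are computed as a limit of mapping spaces: for objects $x = (x_n)_n$ and $y = (y_n)_n$ of $\Tot\cC^{\bullet} = \lim_{[n]\in\Delta}\cC^n$ one has
\[
\Map_{\Tot\cC^{\bullet}}(x,y) \simeq \lim_{[n]\in\Delta}\Map_{\cC^n}(x_n,y_n),
\]
since $\Map_{(-)}(-,-)$ is assembled from cotensors with $\Delta^1$ and fibre products, all of which commute with limits in $\Cat$; the same description holds for $\cD^{\bullet}$. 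Under these identifications $\Tot f^{\bullet}$ is, levelwise, the map $\Map_{\cC^n}(x_n,y_n)\to\Map_{\cD^n}(f^n x_n, f^n y_n)$, which is an equivalence because $f^n$ is fully faithful; passing to the limit over $\Delta$ shows $\Tot f^{\bullet}$ is fully faithful.

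For essential surjectivity I would first reduce to the case in which each $f^n$ is the inclusion of a full subcategory, by replacing $\cC^{\bullet}$ by its degree-wise essential image inside $\cD^{\bullet}$. This is legitimate: since $f^{\bullet}$ is a natural transformation, the structure maps $\theta_*\colon \cD^n \to \cD^m$ of $\cD^{\bullet}$ carry $\mathrm{im}(f^n)$ into $\mathrm{im}(f^m)$, so the essential images form a sub-cosimplicial $\infty$-category; the replacement is a degree-wise equivalence, hence induces an equivalence on $\Tot$; and the hypothesis that $f^0$ is an equivalence becomes the identity $\cC^0 = \cD^0$. Now an object of $\Tot\cD^{\bullet}$ is a coherent family $(d_{[n]})_{[n]\in\Delta}$ with $d_{[n]}\in\cD^n$, equipped with compatible equivalences $\theta_*(d_{[n]})\simeq d_{[m]}$ for every $\theta\colon[n]\to[m]$ in $\Delta$. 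Applying this to an injection $[0]\to[n]$ (a composite of coface maps) shows $d_{[n]}$ is equivalent to the image of $d_{[0]}$ under the corresponding operator $\cD^0\to\cD^n$; since $d_{[0]}\in\cD^0=\cC^0$ and that operator carries $\cC^0$ into $\cC^n$, we get $d_{[n]}\in\cC^n$ for all $n$. Because $\cC^m\subseteq\cD^m$ is full, the structure equivalences then already live in the $\cC^m$, so $(d_{[n]})$ is an object of $\Tot\cC^{\bullet}$ mapping to the given object of $\Tot\cD^{\bullet}$. Hence $\Tot f^{\bullet}$ is essentially surjective, and combined with the previous paragraph it is an equivalence.

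I expect the only real delicacy to be the bookkeeping in the essential-surjectivity step: making precise that the degree-wise essential image is genuinely a cosimplicial subobject, and that a section of the Grothendieck construction $\int_{\Delta}\cD^{\bullet}\to\Delta$ all of whose vertices lie in the full subcategory $\int_{\Delta}\cC^{\bullet}$ is again a section of $\int_{\Delta}\cC^{\bullet}$. These are soft statements about limits of $\infty$-categories and full subcategories, so I do not anticipate a genuine obstacle; in particular no Reedy or coskeletality input, no finiteness hypothesis, and no analysis of the $\Tot$-tower is required.
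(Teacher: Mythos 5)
Your proof is correct, but it takes a genuinely different route from the paper's. The paper reduces to the case of cosimplicial $\infty$-groupoids: it probes by $\Map_{\Cat}(T,-)\simeq\Fun(T,-)^{\simeq}$ over all small $T$, noting that $\Fun(T,-)$ preserves fully faithful functors and equivalences while $(-)^{\simeq}$ turns fully faithful functors into $(-1)$-truncated maps of spaces, and then asserts that the resulting statement for cosimplicial spaces is ``classical.'' You instead verify fully faithfulness and essential surjectivity directly in $\Cat$: fully faithfulness from the compatibility of mapping spaces (built out of $\Fun(\Delta^1,-)$ and fibre products) with limits, and essential surjectivity by replacing $\cC^{\bullet}$ by its levelwise essential image and checking that a coCartesian section of $\int_{\Delta}\cD^{\bullet}$ whose vertex at $[0]$ lies in $\cC^0=\cD^0$ automatically has all vertices in the $\cC^n$, hence factors through the full subobject $\int_{\Delta}\cC^{\bullet}$. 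The two approaches are closely related --- your essential surjectivity step is in substance the argument that the paper leaves implicit when it cites the ``classical statement about spaces'' --- but yours makes it explicit and self-contained, at the cost of having to manage the bookkeeping of the levelwise essential image forming a sub-cosimplicial $\infty$-category, which you correctly flag and which is indeed unproblematic. The paper's version is shorter if one is content to black-box the groupoid case.
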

\begin{proof}
We will begin by proving the result in the case that $\cC^{\bullet}$ and $\cD^{\bullet}$ are cosimplicial $\infty$-groupoids. Let $F^{\bullet}$ be a fiber of the map $f^{\bullet}$. Since $f^{[k]}$ is fully faithful, the $\infty$-groupoid $F^{[k]}$ is either contractible or empty. Since $f^0$ is an equivalence $F^{[0]}$ is contractible. This immediately implies that $F^{[k]}$ must be non-empty and contractible for all $k \geq 0$. Since totalization commutes with taking the fiber and since the totalization of a cosimplicial diagram of contractible spaces is contractible, we have that every fiber of $\Tot f^{\bullet}$ is contractible. This implies that $\Tot f^{\bullet}$ is an equivalence.

For the general case, it is enough to show that for every $\infty$-category $T$ the map
\[
\Tot (\Map_{\Cat}(T,\cC^{\bullet})) \to \Tot (\Map_{\Cat}(T,\cD^{\bullet}))
\]
is an equivalence. Since 
\[
\Map_{\Cat}(\cC,\cD) \simeq \Fun(\cC,\cD)^{\simeq}
\]
and since $\Fun(T,\bullet)$ preserves fully faithful maps and categorical equivalences, the lemma is now reduced to the $\infty$-groupoid case.
\end{proof}

Recall that limits in the $\infty$-category of presentable $\infty$-categories and in the $\infty$-category of presentably symmetric monoidal $\infty$-categories may be computed in $\Cat$. \cref{lem:abstractcosimp} makes it clear that a further constraint on the collection $(A_i)_{i \in I}$ leads to a close relationship between the $\Pic$-generated protoproduct and the protoproduct.

\begin{cor} \label{picandcell}
Let $(\cC_i)_{i \in I}$ and $(A_i)_{i \in I}$ be as in the statement of \cref{picequiv} and also assume that the canonical map
\[
\Prod{\cF}^{\flat}\Mod_{A_i}(\cC_i) \lra{\simeq} \Prod{\cF}^{\Pic}\Mod_{A_i}(\cC_i)
\]
is an equivalence. Then there is an equivalence of symmetric monoidal $\infty$-categories
\[
\Tot(\Prod{\cF}^{\Pic}\Mod_{A_{i}^{\otimes\bullet +1}}(\cC_i)) \simeq \Tot(\Prod{\cF}^{\flat}\Mod_{A_{i}^{\otimes\bullet +1}}(\cC_i))
\]
and thus an equivalence of symmetric monoidal $\infty$-categories
\[
\Prod{\cF}^{\Pic} \cC_i \simeq \Loc \Pic(\Tot(\Prod{\cF}^{\flat}\Mod_{A_{i}^{\otimes\bullet +1}}(\cC_i))).
\]
\end{cor}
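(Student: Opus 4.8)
The plan is to apply \cref{lem:abstractcosimp} to the canonical map of cosimplicial $\infty$-categories
\[
\Prod{\cF}^{\flat}\Mod_{A_i^{\otimes\bullet+1}}(\cC_i) \longrightarrow \Prod{\cF}^{\Pic}\Mod_{A_i^{\otimes\bullet+1}}(\cC_i),
\]
so the first task is to build this map as a map of cosimplicial symmetric monoidal $\infty$-categories. For each $[k]\in\Delta$, the identity on $\Mod_{A_i^{\otimes k+1}}(\cC_i)$ is a filtration preserving symmetric monoidal functor from the cell filtration to the Pic-filtration (take $\beta=\Id$, since $\Cell_{R,m}\subseteq \mathrm{PicCell}_{R,m}$ by definition), and \cref{cor:symmonfun} turns this into a symmetric monoidal functor $\Prod{\cF}^{\flat}\Mod_{A_i^{\otimes k+1}}(\cC_i)\to\Prod{\cF}^{\Pic}\Mod_{A_i^{\otimes k+1}}(\cC_i)$. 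The coface and codegeneracy maps of the Amitsur complex $A_i^{\otimes\bullet+1}$ are induced by base change along ring maps which evidently preserve both the cell and the Pic-filtrations, so these levelwise functors assemble into a map of $\Delta$-indexed diagrams in the $\infty$-category of symmetric monoidal $\infty$-categories.

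Next I would verify the two hypotheses of \cref{lem:abstractcosimp}. Degree-wise full faithfulness follows from the lemma immediately after \cref{lem:fullyfaithfultocompact}: a collection of fully faithful filtration preserving functors induces a fully faithful functor on protoproducts, and here the underlying functors are identities, hence fully faithful, so the map is fully faithful in every cosimplicial degree. In cosimplicial degree $0$ the map is exactly $\Prod{\cF}^{\flat}\Mod_{A_i}(\cC_i)\to\Prod{\cF}^{\Pic}\Mod_{A_i}(\cC_i)$, which is an equivalence by the standing hypothesis. Hence \cref{lem:abstractcosimp} gives an equivalence of underlying $\infty$-categories
\[
\Tot\bigl(\Prod{\cF}^{\flat}\Mod_{A_i^{\otimes\bullet+1}}(\cC_i)\bigr)\;\simeq\;\Tot\bigl(\Prod{\cF}^{\Pic}\Mod_{A_i^{\otimes\bullet+1}}(\cC_i)\bigr).
\]
To promote this to a symmetric monoidal equivalence I would use that the forgetful functor from symmetric monoidal $\infty$-categories to $\Cat$ is conservative and preserves limits, so $\Tot$ of the cosimplicial symmetric monoidal $\infty$-categories constructed above is computed on underlying $\infty$-categories; the induced map on totalizations is therefore a symmetric monoidal functor which is an equivalence on underlying $\infty$-categories, hence a symmetric monoidal equivalence.

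For the final displayed equivalence, I would combine this with the preceding theorem, which identifies $\Prod{\cF}^{\Pic}\cC_i$ with $\Loc\Pic\bigl(\Tot(\Prod{\cF}^{\Pic}\Mod_{A_i^{\otimes\bullet+1}}(\cC_i))\bigr)$. Since $\Loc\Pic$ is a functor on stable symmetric monoidal compactly generated $\infty$-categories (as recalled before \cref{picandcell}) it sends the symmetric monoidal equivalence just obtained to an equivalence $\Loc\Pic(\Tot(\Prod{\cF}^{\Pic}\Mod_{A_i^{\otimes\bullet+1}}(\cC_i)))\simeq\Loc\Pic(\Tot(\Prod{\cF}^{\flat}\Mod_{A_i^{\otimes\bullet+1}}(\cC_i)))$, and composing the two yields the claim. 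The only real work here is the coherent construction of the cosimplicial map together with the check that both filtrations are respected by every structure map in the Amitsur complex; once that bookkeeping is done, \cref{lem:abstractcosimp} and functoriality of $\Loc\Pic$ do the rest, and I do not anticipate a genuine obstacle beyond this organizational step.
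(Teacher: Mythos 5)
Your proposal is correct and matches the paper's intended argument: the corollary is left without an explicit proof precisely because it follows from \cref{lem:abstractcosimp}, the theorem immediately preceding it, and the preceding remark that totalizations of cosimplicial presentably symmetric monoidal $\infty$-categories are computed in $\Cat$, which are exactly the ingredients you assemble. The supporting checks you supply (that $\Cell_{R,k}\subseteq\mathrm{PicCell}_{R,k}$ gives a filtration-preserving identity functor, that the Amitsur structure maps respect both filtrations, and that degree-wise full faithfulness comes from the lemma following \cref{lem:fullyfaithfultocompact}) are the right bookkeeping details and introduce no gap.
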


We say that a symmetric monoidal compactly generated $\infty$-category $\cC$ is monogenic if $\mathbf{1}$ is a compact generator of $\cC$. 

\begin{cor} \label{maincor}
Let $(\cC_i)_{i \in I}$ and $(A_i)_{i \in I}$ be as in the statements of \cref{picandcell} and assume that $\cC_i$ is monogenic for all $i \in I$. Then there is a canonical equivalence of symmetric monoidal $\infty$-categories
\[
\Prod{\cF}^{\Pic} \cC_i \lra{\simeq} \Loc \Pic \Tot(\Prod{\cF}^{\flat}\Mod_{\Hom(1_{\cC_i},A_{i}^{\otimes\bullet +1})}).
\]
\end{cor}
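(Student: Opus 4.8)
The plan is to deduce this from \cref{picandcell} by applying Morita theory levelwise to the cosimplicial diagram $\Mod_{A_i^{\otimes\bullet+1}}(\cC_i)$. Under the hypotheses of \cref{picequiv} and \cref{picandcell} we already have a canonical equivalence of symmetric monoidal stable $\infty$-categories
\[
\Prod{\cF}^{\Pic}\cC_i \simeq \Loc\Pic\big(\Tot(\Prod{\cF}^{\flat}\Mod_{A_i^{\otimes\bullet+1}}(\cC_i))\big),
\]
so it suffices to produce an equivalence of cosimplicial symmetric monoidal $\infty$-categories $\Prod{\cF}^{\flat}\Mod_{A_i^{\otimes\bullet+1}}(\cC_i)\simeq\Prod{\cF}^{\flat}\Mod_{\Hom(1_{\cC_i},A_i^{\otimes\bullet+1})}$ and then apply $\Tot$ and $\Loc\Pic$.

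For the Morita step I would fix $i$ and invoke \cref{thm:moritatheory}: since $\cC_i$ is monogenic with compact unit, there is a symmetric monoidal equivalence $\cC_i\simeq\Mod_{\End(1_{\cC_i})}$ under which the commutative algebra $A_i^{\otimes k}$ corresponds to the $\bE_\infty$-ring $\Hom_{\cC_i}(1_{\cC_i},A_i^{\otimes k})$ together with its canonical map from $\End(1_{\cC_i})$. Base change along this map identifies $\Mod_{A_i^{\otimes k}}(\cC_i)\simeq\Mod_{A_i^{\otimes k}}(\Mod_{\End(1_{\cC_i})})\simeq\Mod_{\Hom_{\cC_i}(1_{\cC_i},A_i^{\otimes k})}$ as symmetric monoidal $\infty$-categories. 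Because the coface and codegeneracy maps of the Amitsur complex $A_i^{\otimes\bullet+1}$ are maps of commutative algebras, these identifications are natural in $[k]\in\Delta$ and assemble into an equivalence of cosimplicial objects in $\CAlg(\Catomega)$. Each such equivalence preserves colimits, compact objects, and the unit, hence carries the cell filtration $\Cell_{A_i^{\otimes k}}$ of \cref{defn:cellfiltation} onto $\Cell_{\Hom(1_{\cC_i},A_i^{\otimes k})}$; thus the identity functors give a collection of filtration-preserving (in fact filtration-matching) functors in the sense of \cref{lem:protoadjunction}, and applying the protoproduct functoriality of \cref{cor:symmonfun} levelwise in $\Delta$ yields the desired equivalence $\Prod{\cF}^{\flat}\Mod_{A_i^{\otimes\bullet+1}}(\cC_i)\simeq\Prod{\cF}^{\flat}\Mod_{\Hom(1_{\cC_i},A_i^{\otimes\bullet+1})}$ of cosimplicial symmetric monoidal $\infty$-categories. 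Composing with \cref{picandcell} gives the statement.

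The step I expect to be the main obstacle is the coherent naturality in the cosimplicial direction: one must ensure that the levelwise Morita equivalences are simultaneously compatible with every coface and codegeneracy and with the symmetric monoidal structure, so that they glue to a genuine map of cosimplicial objects rather than just a degreewise equivalence. I would handle this by phrasing Schwede--Shipley Morita theory functorially --- sending a monogenic symmetric monoidal compactly generated stable $\infty$-category $\cC$ with compact unit to $\End_\cC(1_\cC)$ and a commutative algebra $A$ in $\cC$ to $\Hom_\cC(1_\cC,A)$, and noting that $\Mod_{(-)}$ of the resulting ring spectrum recovers $\Mod_{(-)}(\cC)$ --- and then evaluating this functorial statement on the cosimplicial $\bE_\infty$-algebra $A_i^{\otimes\bullet+1}$ in $\cC_i$. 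Since the cell filtrations are defined purely in terms of the monoidal structure and the compact unit, which are preserved on the nose, no further compatibility check is needed there.
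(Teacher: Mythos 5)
Your proposal is correct and matches the paper's intended (and extremely terse) argument, which consists of the single line ``By Morita theory'' appealing to \cref{picandcell} together with \cref{thm:moritatheory}. The details you supply --- the levelwise identification $\Mod_{A_i^{\otimes k}}(\cC_i)\simeq\Mod_{\Hom(1_{\cC_i},A_i^{\otimes k})}$ via a functorial form of Schwede--Shipley, the observation that this identification is exact, symmetric monoidal, and unit-preserving so that it carries the cell filtration to the cell filtration, and the passage through protoproduct functoriality before applying $\Tot$ and $\Loc\Pic$ --- are exactly the content the paper leaves implicit.
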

\begin{proof}
Let $i \in I$ and $k\ge 0$. The monogenicity assumption on $\cC_i$ implies that the symmetric monoidal $\infty$-category $\Mod_{A_{i}^{\otimes k +1}}(\cC_i)$ is compactly generated by its unit $A_{i}^{\otimes k +1}$, whose endomorphism ring spectrum is readily computed as 
\[
\End_{\Mod_{A_{i}^{\otimes k +1}}(\cC_i)}(A_{i}^{\otimes k +1}) \simeq \Hom_{\cC_i}(1_{\cC_i},A_{i}^{\otimes k +1}).
\]
It thus follows from Schwede--Shipley's version of Morita theory~\Cref{thm:moritatheory} that there is a symmetric monoidal equivalence 
\[
\Mod_{A_{i}^{\otimes \bullet +1}}(\cC_i) \simeq \Mod_{\Hom(1_{\cC_i},A_{i}^{\otimes\bullet +1})},
\]
where the right hand side refers to modules in the $\infty$-category of spectra. For varying $k$, these equivalences are compatible with the cosimplicial structure maps, thus inducing a symmetric monoidal equivalence
\[
\Mod_{A_{i}^{\otimes\bullet +1}}(\cC_i) \simeq \Mod_{\Hom(1_{\cC_i},A_{i}^{\otimes\bullet +1})}.
\]
Combining this with \Cref{picandcell}, we obtain the desired equivalence. 
\end{proof}

\subsection{Descent for the $E$-local categories}

The goal of this subsection is to show that $(\cC_p)_{p \in \cP} = (\Sp_{n,p})_{p \in \cP}$ and $(A_p)_{p \in \cP} = (\E)_{p \in \cP}$ satisfy the conditions of \cref{maincor}. It is clear that $\Sp_{n,p}$ is a symmetric monoidal monogenic stable $\infty$-category. It remains to show that $\E$ has finite vanishing-degree independent of $p$.

Let $E = \E$ and let $\Mfgn(p)$ be the $p$-local moduli stack of formal groups of height less than or equal to $n$. In \cite{moravastack}, Morava shows that the stabilizer group has finite cohomological dimension $n^2$ for large enough primes. Via the chromatic spectral sequence, this implies the following result, a proof of which can be found for example in \cite[Theorem 3.4.9]{Franke_exotic}.

\begin{prop}(Franke, Morava)\label{prop:cohomdim}
The cohomological dimension of the stack $\Mfgn(p)$ is $n^2+n$ for all $p> n+1$. 
\end{prop}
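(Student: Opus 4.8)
The plan is to recover this from the height stratification of $\Mfgn(p)$ together with Morava's computation of the cohomological dimension of the stabilizer group, following \cite[Theorem 3.4.9]{Franke_exotic}. First I would recall that $\Mfgn(p)$ carries the open filtration
\[
\Mfg^{\leq 0}(p) \subset \Mfg^{\leq 1}(p) \subset \cdots \subset \Mfg^{\leq n}(p) = \Mfgn(p),
\]
whose successive locally closed strata are the height-exactly-$m$ loci $\Mfg^{=m}(p)$, and that the quasi-coherent cohomology of $\Mfg^{=m}(p)$ is computed by the continuous cohomology of the extended Morava stabilizer group $\mathbb{S}_m \rtimes \mathrm{Gal}(\mathbb{F}_{p^m}/\mathbb{F}_p)$, via the usual change-of-rings isomorphism relating comodules over the height-$m$ Hopf algebroid to continuous representations of this group. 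Since this filtration has $n+1$ steps and the $m$-th stratum contributes to the associated (chromatic) spectral sequence with a degree shift of exactly $m$, one obtains
\[
\mathrm{cd}\,\Mfgn(p) \;\leq\; \max_{0 \leq m \leq n}\bigl(m + \mathrm{cd}_p(\mathbb{S}_m)\bigr).
\]

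Second, I would invoke Morava's theorem \cite{moravastack}: the group $\mathbb{S}_m$ is a compact $p$-adic analytic group of dimension $m^2$, and it is $p$-torsion-free precisely when $p - 1 > m$ --- which holds for every $m \leq n$ once $p > n+1$ --- so Lazard's theorem gives $\mathrm{cd}_p(\mathbb{S}_m) = m^2$ in this range (the prime-to-$p$ group $\mathrm{Gal}(\mathbb{F}_{p^m}/\mathbb{F}_p)$ playing no role). Substituting into the displayed inequality and using that $m \mapsto m^2 + m$ is increasing yields $\mathrm{cd}\,\Mfgn(p) \leq n^2 + n$. For the reverse inequality one exhibits a quasi-coherent sheaf whose cohomology is nonzero in degree $n^2 + n$; concretely, a generator of the top continuous cohomology $H^{n^2}$ of the closed height-$n$ stratum, sitting in chromatic column $n$, survives to the abutment.

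The step I expect to be the main obstacle is the precise bookkeeping underlying the displayed inequality and its sharpness: one must check that the $m$-th stratum shifts degrees by exactly $m$ (rather than $m \pm 1$) and that the surviving top class at $m = n$ is not killed by a differential into or out of it. This is exactly the content of Franke's analysis of the chromatic spectral sequence for height-$\leq n$ comodules in \cite[Theorem 3.4.9]{Franke_exotic}, which I would cite rather than reproduce, remarking only that the hypothesis $p > n+1$ is what simultaneously makes every $\mathbb{S}_m$ with $m \leq n$ torsion-free and keeps the Galois parts invisible to $p$-local cohomology.
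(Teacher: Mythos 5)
Your proposal is correct and follows essentially the same route as the paper, which likewise reduces the claim to Morava's computation that $\mathrm{cd}_p(\mathbb{S}_m) = m^2$ for $p$ large and then passes through the chromatic spectral sequence, citing \cite[Theorem 3.4.9]{Franke_exotic} for the full argument. One small caveat: the precise torsion-free criterion for $\mathbb{S}_m$ is $p-1 \nmid m$ rather than ``precisely when $p-1 > m$,'' but since $p > n+1$ gives $p-1 > m$ for all $m \leq n$, hence $p-1 \nmid m$, the implication you actually use is correct.
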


The $E$-based Adams spectral sequence is a spectral sequence of the form
\[
\Ext_{E_*E}^{*}(E_*(X),E_*(Y)) \implies \pi_*\Hom_{\Sp_{E}}(X,Y).
\]
The finite cohomological dimension of $\Mfgn(p)$ implies that, for any $X$ and $Y$, there is a horizontal vanishing on the $E_2$-page of the spectral sequence with prime-independent intercept (for large enough primes) of the vanishing line. For this conclusion, see \cite[Proof of Theorem 5.4]{hoveysadofskyinv}. 

\begin{prop}\label{thm:hopkinsravenelsmash}
For any height $n$ and for $p$ large enough with respect to $n$, there exists a constant $s_0$, independent of $p$, such that the $\E$-based Adams spectral sequence for any spectrum $Y$ has a horizontal vanishing line of intercept $s_0$ at the $E_2$-page.
\end{prop}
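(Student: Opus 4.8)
The plan is to identify the $E_2$-page of the $E$-based Adams spectral sequence for $\pi_*(Y)$ with the cohomology of a quasi-coherent sheaf on the stack $\Mfgn(p)$, and then to read off the vanishing line from the uniform bound on its cohomological dimension supplied by \cref{prop:cohomdim}. Recall that the $E_2$-term is
\[
E_2^{s,t} = \Ext_{E_*E}^{s,t}(E_*, E_*Y),
\]
the $\Ext$ being computed in the category of (appropriately complete) $E_*E$-comodules. As is standard in the chromatic literature (see \cite{coctalos} and the discussion in \cite[\S 5]{hoveysadofskyinv}), the comodule $E_*Y$ corresponds to a quasi-coherent sheaf $\mathcal{F}_Y$ on $\Mfgn(p)$, and there is a natural isomorphism $\Ext_{E_*E}^{s,t}(E_*, E_*Y) \cong H^s(\Mfgn(p), \mathcal{F}_Y)[t]$; equivalently, the $E_2$-page is the continuous cohomology of the Morava stabilizer group with coefficients in the associated Morava module.

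First I would fix a prime $p > n+1$ and invoke \cref{prop:cohomdim}: the stack $\Mfgn(p)$ has cohomological dimension $n^2+n$, so $H^s(\Mfgn(p), \mathcal{F}) = 0$ for every quasi-coherent sheaf $\mathcal{F}$ and every $s > n^2+n$. Consequently $E_2^{s,t} = 0$ whenever $s > n^2+n$. Crucially, this argument uses nothing about $Y$ beyond the fact that $E_*Y$ is an $E_*E$-comodule, and the bound $n^2+n$ from \cref{prop:cohomdim} is independent of $p$ in the range $p > n+1$. Taking $s_0 = n^2 + n + 1$, which depends only on the height $n$, we conclude that the $E$-based Adams spectral sequence for $\pi_*(Y)$ satisfies $E_2^{s,t}=0$ for all $s \ge s_0$; this is exactly a horizontal vanishing line of intercept $s_0$ at the $E_2$-page. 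No convergence statement is needed, only this vanishing of the $E_2$-term above a fixed filtration.

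The one point requiring care is the identification of the $E_2$-page with sheaf cohomology on $\Mfgn(p)$: since $E_*E$ is pro-free rather than flat over $E_*$, one must work with $L$-complete comodules and with the formal structure on the stack so that $H^s(\Mfgn(p),-)$ is the correct derived functor. This is routine in the subject, and in any case \cref{prop:cohomdim} is insensitive to these completion issues; indeed, the statement can be extracted directly from the proof of \cite[Theorem 5.4]{hoveysadofskyinv}, where precisely this prime-independent horizontal vanishing line is produced. I do not anticipate any essential difficulty beyond this bookkeeping.
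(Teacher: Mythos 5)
Your argument is essentially the one the paper gives: identify the $E_2$-page with cohomology of quasi-coherent sheaves on $\Mfgn(p)$, invoke \cref{prop:cohomdim} for the prime-independent bound $n^2+n$ on cohomological dimension in the range $p>n+1$, and refer to \cite{hoveysadofskyinv} for the details. Two of your parenthetical remarks are inaccurate but tangential: for Landweber exact $E$ the Hopf algebroid $E_*E$ is genuinely flat over $E_*$ (the pro-freeness/$L$-completion issue you flag belongs to the $K(n)$-local Hopf algebroid $E^\vee_*E$, not the $E(n)$-local one), and continuous cohomology of the Morava stabilizer group computes only the $K(n)$-local $E_2$-term rather than $\Ext$ over the full open substack $\Mfgn(p)$ --- neither affects the core of your argument.
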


\begin{rem} \label{rem:hoprav}
Hopkins and Ravenel (\cite[Chapter 8]{ravbook2}) proved that there is an $r \in \N$ so that a horizontal vanishing line occurs on the $E_r$-page independent of the choice of prime.
\end{rem}

\begin{lem} \label{lem:br}
For any ultrafilter $\cF$ on $\cP$, there is an equivalence of symmetric monoidal $\infty$-categories
\[
\Prod{\cF}^{\Pic}\Mod_{\E} \simeq \Prod{\cF}^{\flat}\Mod_{\E}.
\]
\end{lem}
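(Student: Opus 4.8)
The plan is to show that the two compact filtrations on $\Mod_{\E}$ underlying the protoproducts on either side in fact coincide, so that the canonical symmetric monoidal comparison functor $\Prod{\cF}^{\flat}\Mod_{\E}\to\Prod{\cF}^{\Pic}\Mod_{\E}$ coming from Diagram~\ref{eqn:adjunctions} is an equivalence on the nose. Unwinding \Cref{defn:cellfiltation} and \Cref{defn:picfiltration}, the step $\Cell_{\E,k}\Mod_{\E}$ is the full subcategory of $\Mod_{\E}^{\omega}$ (closed under equivalences and retracts) on retracts of objects built from at most $k$ suspensions $\Sigma^{m}\E$, whereas $\mathrm{PicCell}_{\E,k}\Mod_{\E}$ allows the cells to be arbitrary objects of $\Pic(\E)$. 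Since every suspension $\Sigma^{m}\E$ is invertible, one always has $\Cell_{\E,k}\Mod_{\E}\subseteq\mathrm{PicCell}_{\E,k}\Mod_{\E}$, and because these subcategories are closed under equivalences the reverse inclusion will follow once I check that \emph{every invertible $\E$-module is equivalent to a suspension of $\E$}.

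The key input is thus the Picard group $\Pic(\Mod_{\E})$. Here $\E$ is an even periodic $\bE_{\infty}$-ring with $\pi_{0}\E$ a complete regular Noetherian local ring, and I would argue in the standard way: an invertible $\E$-module $M$ is dualizable, hence compact, and the Künneth spectral sequence identifies $\pi_{*}M$ with an invertible graded $\pi_{*}\E$-module; since $\pi_{*}\E\cong\pi_{0}\E[u^{\pm1}]$ with $\pi_{0}\E$ local, the only invertible graded modules up to shift are $\Sigma^{\epsilon}\pi_{*}\E$ with $\epsilon\in\{0,1\}$, and a choice of generator of $\pi_{\epsilon}M$ gives a $\pi_{*}$-isomorphism $\Sigma^{\epsilon}\E\to M$, hence an equivalence. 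This yields $\Pic(\Mod_{\E})\cong\Z/2$, generated by $\Sigma\E$, for every prime $p$, with no largeness hypothesis. In particular every object of $\Pic(\E)$ is equivalent to $\E$ or $\Sigma\E$, so $\mathrm{PicCell}_{\E,k}\Mod_{\E}=\Cell_{\E,k}\Mod_{\E}$ for all $k$.

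With the filtered collections $(\Mod_{\E},\Cell_{\E})$ and $(\Mod_{\E},\mathrm{PicCell}_{\E})$ identified, the two protoproducts agree by construction, symmetric monoidal structures included, which is the assertion of the lemma. (If one prefers a generators-and-colocalization argument: by \Cref{prop:protoproductgenerators} both sides are colocalizations of $\Prod{\cF}^{\omega}\Mod_{\E}$, and the Picard computation together with \cref{lem:picultra} shows that $\Pic(\Prod{\cF}^{\omega}\Mod_{\E})\simeq\Prod{\cF}\Pic(\Mod_{\E})$ has exactly the two equivalence classes $[\E]$ and $[\Sigma\E]$, so the classes of $\pi_{[*]}$-equivalences and $\pi_{\Pic}$-equivalences coincide; since $\E$ is $2$-periodic one could equally appeal to \Cref{cor:evenrings}.) The only non-formal step is the identification of $\Pic(\Mod_{\E})$, which is classical, so I do not expect a genuine obstacle; the one point to be careful about is citing the algebraicity of invertible modules over even periodic rings with local $\pi_{0}$.
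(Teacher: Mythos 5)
Your proof is correct and matches the paper's approach: the paper's argument is precisely that $\Pic(\Mod_{\E})\cong\Z/2$, generated by $\E$ and $\Sigma\E$, so the $\Pic$-filtration and the cell filtration on $\Mod_{\E}$ coincide and hence the protoproducts agree. The only difference is that the paper simply cites Baker--Richter for the Picard group computation, whereas you sketch the standard argument yourself (flatness of invertible modules, graded Picard group of $\pi_0\E[u^{\pm1}]$ with $\pi_0\E$ local), which is essentially the Baker--Richter proof.
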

\begin{proof}
Baker--Richter \cite{bakerrichter_inv} prove that the Picard group of $\Mod_{\E}$ is $\Z/2$ (represented by $\E$ and $\Sigma \E$). Thus the $\Pic$-filtration and the cell filtration on $\Mod_{\E}$ agree and the protoproducts are equivalent.
\end{proof}

Thus we may take $(\cC_p)_{p \in \cP} = (\Sp_{n,p})_{p \in \cP}$ and $(A_p)_{p \in \cP} = (\E)_{p \in \cP}$ in \cref{maincor} and conclude the following: 
\begin{cor} \label{topequiv}
For any ultrafilter $\cF$ on $\cP$, there is a canonical symmetric monoidal equivalence of symmetric monoidal $\infty$-categories
\[
\Prod{\cF}^{\Pic} \Sp_{n,p} \simeq \Loc \Pic(\Tot(\Prod{\cF}^{\flat}\Mod_{\E^{\otimes\bullet +1}})).
\]
\end{cor}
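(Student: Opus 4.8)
The plan is to deduce the statement from the abstract descent result \cref{maincor}, applied to the collection $(\cC_p)_{p \in \cP} = (\Sp_{n,p})_{p \in \cP}$ together with $(A_p)_{p \in \cP} = (\E)_{p \in \cP}$; all the work then lies in checking the hypotheses, which are those of \cref{picequiv}, those of \cref{picandcell}, and monogenicity of each $\cC_p$. First I would dispose of the principal ultrafilters: if $\cF = \cF_p$ then $\Prod{\cF}^{\Pic}\Sp_{n,p} \simeq \Sp_{n,p}$ and $\Prod{\cF}^{\flat}\Mod_{\E^{\otimes \bullet +1}} \simeq \Mod_{\E^{\otimes \bullet +1}}$, so the right-hand side is $\Loc\Pic\Tot(\Mod_{\E^{\otimes \bullet +1}})$; since $\E$ admits descent in $\Sp_{n,p}$ for every prime (a horizontal vanishing line on some finite page, \cref{rem:hoprav}), \cref{prop:abstractdescent} together with Morita theory (\cref{thm:moritatheory}) identifies this with $\Sp_{n,p}$, the localizing subcategory on $\Pic$ being everything because the unit is invertible and compactly generates. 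So from now on $\cF$ is non-principal, and by \cref{finiteprincipal} we may discard any finite set of "bad" primes.

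Next I would verify the hypotheses at the remaining (large) primes. That $\Sp_{n,p}$ is a presentably symmetric monoidal, compactly generated, stable $\infty$-category with compact unit the $E(n,p)$-local sphere, and that it is monogenic, is standard. That $\E$ is an $\bE_\infty$-algebra in $\Sp_{n,p}$ with $-\otimes\E$ conservative follows from $\langle E_{n,p}\rangle = \langle E(n,p)\rangle$. The essential point is \emph{uniform} descendability: by the Franke--Morava bound \cref{prop:cohomdim} on the cohomological dimension of $\Mfgn(p)$ and the discussion preceding \cref{thm:hopkinsravenelsmash}, for every $X,Y \in \Sp_{n,p}$ and every $p > n+1$ the $\E$-based Adams spectral sequence for $\Hom(X,Y)$ has a horizontal vanishing line at the $E_2$-page with intercept independent of $p$; hence the vanishing-degree of $\E$ in $\Sp_{n,p}$ is bounded uniformly in $p$, and by \cref{bounds} so is its fast-degree. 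This gives a uniform bound $r$ and verifies the main hypothesis of \cref{picequiv}. The remaining hypothesis of \cref{picequiv} --- collapse at $E_2$ of the $\E$-Adams spectral sequence for $\End(1_{\Sp_{n,p}}) = L_{n,p}S^0$ --- holds once $2(p-1) > n^2+n$, since the $E_2$-page is then concentrated in even internal degrees and in filtrations below $2(p-1)$, forcing all differentials to vanish by the checkerboard pattern; shrinking the good primes once more (again legitimate by \cref{finiteprincipal}) we may assume this. Finally, the extra hypothesis of \cref{picandcell} is supplied, after Morita theory, by \cref{lem:br}: Baker--Richter's computation $\Pic(\Mod_{\E}) \cong \Z/2$ makes the cell filtration and the $\Pic$-filtration on $\Mod_{\E}$ coincide, so $\Prod{\cF}^{\flat}\Mod_{\E} \simeq \Prod{\cF}^{\Pic}\Mod_{\E}$.

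With the hypotheses in hand, \cref{maincor} furnishes a canonical symmetric monoidal equivalence
\[
\Prod{\cF}^{\Pic}\Sp_{n,p} \simeq \Loc\Pic\Tot\bigl(\Prod{\cF}^{\flat}\Mod_{\Hom(1_{\Sp_{n,p}},\, \E^{\otimes \bullet +1})}\bigr),
\]
and the free--forgetful adjunction identifies $\Hom_{\Sp_{n,p}}(1_{\Sp_{n,p}}, \E^{\otimes k})$ with the $\bE_\infty$-ring spectrum $\E^{\otimes k}$ appearing in the statement, which is the claimed equivalence. The main obstacle in this argument is the prime-independent descendability of $\E$ --- equivalently, the uniform horizontal vanishing line --- but this has already been extracted from \cref{prop:cohomdim} and \cref{thm:hopkinsravenelsmash}, so the remaining effort is bookkeeping: aligning the hypotheses of \cref{picequiv}, \cref{picandcell} and \cref{maincor}, and the mild reduction to large primes.
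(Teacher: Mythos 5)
Your proposal follows the paper's route exactly: apply \cref{maincor} to $(\Sp_{n,p},\E)_{p\in\cP}$, using \cref{prop:cohomdim} and \cref{thm:hopkinsravenelsmash} (via \cref{bounds}) for the uniform fast-degree bound, \cref{lem:br} for the $\Pic$-filtration/cell-filtration comparison, and monogenicity of $\Sp_{n,p}$. The text preceding \cref{topequiv} invokes \cref{maincor} with exactly this input, so the skeleton of your argument is the intended one. You are in fact more careful than the paper on two points: you dispose of the principal ultrafilters separately (via \cref{prop:abstractdescent} and \cref{rem:hoprav}), and you attempt to verify the $E_2$-collapse hypothesis of \cref{picequiv}, which the paper applies without explicit comment.

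However, your verification of the $E_2$-collapse has a real defect. Concentration of the $E_2$-page in even internal degrees only kills $d_r$ for $r$ even (the ``checkerboard'' parity argument), and that leaves $d_3,d_5,\dots,d_{n^2+n}$ potentially nonzero; combined with $s\le n^2+n<2(p-1)$ this gives no collapse. The sparseness you actually need is mod $2(p-1)$: since $C_{p-1}\subset\Z_p^\times$ is central in the Morava stabilizer group and acts on $\pi_{2j}\E$ with weight $j$ (\cref{splitit}), the $E_2$-page $H^s(\mathbb{G}_n,\pi_t\E)$ vanishes unless $2(p-1)\mid t$. Then a nonzero $d_r$ would force $r\equiv 1\pmod{2(p-1)}$, hence $r\ge 2p-1$, while the target filtration $s+r$ exceeds $n^2+n$ once $p$ is large, giving collapse. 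That said, this hypothesis is not actually used in the proof of \cref{picequiv} --- as the remark following it notes, collapse at some uniform $E_r$-page suffices, which you have already from the uniform fast-degree bound --- so the slip does not affect the soundness of the overall deduction; it is merely the stated verification that needs repair.
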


\subsection{The algebraic model} \label{ssec:algebraicmodel}
Let $E = \E$. We produce an $\infty$-category that is an algebraic analogue of $\Sp_{n,p}$ out of a model category of quasi-periodic complexes of $E_0E$-comodules first described in \cite{Franke_exotic}.

Let $(A, \Gamma)$ be an Adams Hopf algebroid in the sense of \cite[Section 1.4]{hoveyhcomodules}, i.e., a cogroupoid object in the category of commutative rings such that $\Gamma$ is flat over $A$ and satisfies a certain technical condition. Let $(A, \Gamma)-\comod$ be the 1-category of $(A,\Gamma)$-comodules. An introduction to this category is given in \cite{hoveyhcomodules, ravbook1}. In particular $(A, \Gamma)-\comod$ is a Grothendieck abelian category. Furthermore, $(A,\Gamma)-\Comod$ has a natural symmetric monoidal structure $\otimes_A = \otimes$ with unit $A$, which is compatible with the usual symmetric monoidal structure on $\Mod_A$. Therefore, we get a symmetric monoidal adjunction
\[
\xymatrix{\text{forget}\colon (A,\Gamma)-\Comod \ar@<0.5ex>[r] & A-\Mod \noloc -\otimes_A \Gamma. \ar@<0.5ex>[l]}
\]
An $(A, \Gamma)$-comodule $L$ is invertible if the underlying $A$-module is invertible as an $A$-module.  

Let $\cA$ be a symmetric monoidal Grothendieck abelian category and let $L$ be an invertible object in $\cA$. Associated to this data, Barnes and Roitzheim~\cite{barnesroitzheim_monoidality} construct a category of quasi-periodic chain complexes of objects in $\cA$. We recall their construction in the case $\cA = (A, \Gamma)-\Comod$. Given an invertible $(A, \Gamma)$-comodule $L$, let 
\[
C^{(L,2)}((A,\Gamma)-\comod)
\]
be the category of quasi-periodic chain complexes of $(A, \Gamma)$-comodules. The objects of this category are pairs $(X,q)$, where $X$ is an unbounded complex of $(A,\Gamma)$-comodules and $q$ is an isomorphism
\[
q \colon X[2] \cong X \otimes_{A} L.
\]
Morphisms in $C^{(L,2)}((A,\Gamma)-\comod)$ are maps of complexes that respect the fixed isomorphism.

Let $\Ch((A,\Gamma)-\comod)$ be the category of complexes of $(A,\Gamma)$-comodules. By \cite[Lemma 1.2]{barnesroitzheim_monoidality}, there is an adjunction
\[
\xymatrix{P\colon \Ch((A,\Gamma)-\comod) \ar@<0.5ex>[r] & C^{(L,2)}((A,\Gamma)-\comod) \noloc U, \ar@<0.5ex>[l]}
\]
where $U$ is the forgetful functor and $P$ is the ``periodization" functor
\[
P(Y) = \Oplus{k \in \Z} (Y\otimes L^{\otimes k}[2k]). 
\]
There is a natural symmetric monoidal structure on $C^{(L,2)}((A,\Gamma)-\comod)$, the periodized tensor product, such that $P$ is symmetric monoidal. In particular, the unit of this symmetric monoidal structure is $P(A)$. Moreover, we obtain a projection formula in this context:

\begin{lem}\label{lem:projformula}
Let $\cA$ be a symmetric monoidal Grothendieck abelian category. In the situation of the above adjunction, for any $X \in C^{(L,2)}(\cA)$ and $Y \in \Ch(\cA)$, the natural morphism
\begin{equation}\label{eq:projformula}
\xymatrix{UX \otimes Y \ar[r] &U(X \otimes P(Y)),}
\end{equation}
is an equivalence. Here, the tensor product $\otimes$ has to be interpreted in the corresponding categories.
\end{lem}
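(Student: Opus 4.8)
The plan is to reduce the statement to the standard projection formula for modules over a commutative monoid. The key structural input is that the adjunction $P \dashv U$ exhibits $C^{(L,2)}(\cA)$, together with its periodized tensor product, as the category $\Mod_{\mathcal{L}}\big(\Ch(\cA)\big)$ of modules over the commutative monoid
\[
\mathcal{L} := P(\mathbf{1}) = \Oplus{k \in \Z} L^{\otimes k}[2k]
\]
in $\Ch(\cA)$ (equipped with its componentwise tensor product): a quasi-periodicity isomorphism $q \colon X[2] \xrightarrow{\cong} X \otimes_{\cA} L$ is the same datum as a unital, associative action $\mathcal{L} \otimes X \to X$ — the $k$-th component of the action is forced to be $q^{k}$, so associativity is automatic and no further coherence is needed — under which $P$ becomes the free-module functor $Y \mapsto \mathcal{L} \otimes Y$, $U$ the forgetful functor, and $\mathcal{L} = P(\mathbf{1})$ the tensor unit, matching the excerpt and \cite{barnesroitzheim_monoidality}. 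First I would make this identification precise.

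With it in hand the argument is formal. Since $\cA$ is a symmetric monoidal Grothendieck abelian category, $- \otimes Y$ is right exact and preserves coproducts, hence preserves all colimits in $\Ch(\cA)$; consequently colimits of $\mathcal{L}$-modules are created by $U$, and the relative tensor product $\otimes_{\mathcal{L}}$ is computed by the usual reflexive coequalizer in $\Ch(\cA)$. For $X \in C^{(L,2)}(\cA)$ and $Y \in \Ch(\cA)$, writing $P(Y) = \mathcal{L} \otimes Y$ for the free $\mathcal{L}$-module on $Y$, one then has natural isomorphisms
\[
X \otimes P(Y) \;=\; X \otimes_{\mathcal{L}} (\mathcal{L} \otimes Y) \;\cong\; X \otimes Y
\]
in $C^{(L,2)}(\cA) = \Mod_{\mathcal{L}}(\Ch(\cA))$, where the right-hand $X \otimes Y$ carries the $\mathcal{L}$-action induced from $X$; the displayed isomorphism is the collapse of the defining coequalizer for $\otimes_{\mathcal{L}}$ against the free factor, combined with the unit axiom $X \otimes_{\mathcal{L}} \mathcal{L} \cong X$. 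Applying $U$ and using that the underlying complex of the $\mathcal{L}$-module $X \otimes Y$ is $UX \otimes Y$ yields $U\big(X \otimes P(Y)\big) \cong UX \otimes Y$. It then remains to identify this composite with the natural morphism \eqref{eq:projformula}, which is $UX \otimes Y \xrightarrow{\mathrm{id} \otimes \eta_Y} UX \otimes UP(Y) \to U\big(X \otimes P(Y)\big)$ with $\eta$ the unit of $P \dashv U$ and the second arrow the lax structure on $U$; unwinding the isomorphisms above shows it is exactly the inclusion of, and projection onto, the $k=0$ summand.

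If one prefers to avoid the module-theoretic reformulation, the same computation is available directly: present $U\big(X \otimes P(Y)\big)$ as the coequalizer in $\Ch(\cA)$ of the two maps $f,g \colon UX \otimes L[2] \otimes UP(Y) \to UX \otimes UP(Y)$ induced respectively by $q_X$ and by the quasi-periodicity of $P(Y)$ (here $L[2]$ is the degree-one piece of $\mathcal{L}$), substitute $UP(Y) = \bigoplus_{k} Y \otimes L^{\otimes k}[2k]$, and observe that the coequalizer identifies every summand with the $k=0$ summand $UX \otimes Y$. Either way, the only real work is this structural point — pinning down the periodized tensor product concretely (as $\otimes_{\mathcal{L}}$, or as the coequalizer just described) and verifying that $U$ interacts with it as claimed; once that is set up, the projection formula is immediate. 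I should emphasize that everything here takes place in $1$-categories of chain complexes, so "equivalence" means an isomorphism of complexes and no derived functors intervene.
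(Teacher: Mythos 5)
Your proof is correct, but it follows a genuinely different route from the paper's. The paper constructs the map in \eqref{eq:projformula} as the adjoint of $P(UX \otimes Y) \simeq PU(X) \otimes P(Y) \to X \otimes P(Y)$ (counit of $(P,U)$ plus the monoidality of $P$) and then, since $P$ and $U$ both preserve all colimits, reduces to checking the claim when $Y$ is a compact generator of $\Ch(\cA)$, taking such $Y$ to be a comodule that is finite free over $E_0$, for which the claim is verified directly. Note that this last step specializes implicitly to $\cA = (E_0,E_0E)-\comod$, even though the lemma is stated for a general symmetric monoidal Grothendieck abelian $\cA$. You instead identify $C^{(L,2)}(\cA)\simeq\Mod_{\mathcal L}(\Ch(\cA))$ for the commutative monoid $\mathcal L = P(\mathbf 1)$, exhibiting $P$ as the free-module functor, $U$ as the forgetful functor, and the periodized tensor product as $\otimes_{\mathcal L}$; the lemma then becomes the formal identity $X \otimes_{\mathcal L}(\mathcal L\otimes Y)\cong X\otimes Y$, read off from the defining coequalizer. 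This route is more structural, avoids any compactness or generator argument, and applies verbatim to arbitrary $\cA$, so it actually delivers the full stated generality. As you acknowledge, the only real work is pinning down the module-theoretic identification, and this does deserve care: you should make explicit that $\mathcal{L}$ is commutative (which uses that $\beta_{L,L}=\mathrm{id}$ for the invertible object $L$ of $\cA$ and that $L[2]$ lives in even homological degree, so Koszul signs do not intervene), and you should check directions carefully --- with the paper's convention $q\colon X[2]\cong X\otimes L$, the quasi-periodicity of $P(Y)$ forces $P(\mathbf 1)=\bigoplus_k L^{\otimes(-k)}[2k]$ rather than $\bigoplus_k L^{\otimes k}[2k]$, a harmless discrepancy since $L^{-1}$ is also invertible, but one that matters if you want the identification $C^{(L,2)}(\cA)\simeq\Mod_{\mathcal L}$ to be literal.
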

\begin{proof}
This is a general categorical fact. The canonical map is constructed as the adjoint of the composite
\[
\xymatrix{P(U(X) \otimes Y) \simeq PU(X) \otimes P(Y)  \ar[r] & X \otimes P(Y),}
\]
using the counit of the adjunction $(P,U)$ and the fact that $P$ is symmetric monoidal. To check it is an equivalence, it suffices to consider $Y$ a compact generator of $\Ch(\cA)$, as both $P$ and $U$ preserve arbitrary colimits. Such $Y$ can be taken to be comodules which are finite free $E_0$-modules, from which the claim follows immediately.
\end{proof}

Following Barnes and Roitzheim~\cite{barnesroitzheim_monoidality}, who were building on work of Franke~\cite{Franke_exotic} and Hovey~\cite{hoveyhcomodules}, we are now ready to construct the model category which gives rise to our algebraic model for the $E$-local category.

\begin{thm}\label{thm:quasiproj}
Let $L$ be an invertible object in $(E_0,E_0E)-\Comod$. There is a model structure on the category of quasi-periodic chain complexes of $E_0E$-comodules,
\[
C^{(L,2)}((E_0,E_0E)-\comod),
\]
whose weak equivalences are quasi-isomorphisms of the underlying maps of chain complexes, and which satisfies the following properties:
\begin{enumerate}
	\item The resulting model category is cofibrantly generated, proper, stable, and symmetric monoidal.
	\item There is a Quillen adjunction
	\[
	U \colon C^{(L,2)}((E_0,E_0E)-\comod) \leftrightarrows \Ch((E_0,E_0E)-\comod) \noloc P
	\]
	with symmetric monoidal left adjoint.
	\item For primes $p>n+1$, this model category is monogenic, i.e., compactly generated by its tensor unit $P(E_0)$. 
	\item For primes $p>n+1$, the object $P(E_0E)$ is faithfully flat. 
	\item For primes $p>n+1$ and for all objects $X$, $P(E_0E) \otimes X$ is fibrant.
\end{enumerate}
We will refer to this model structure as the quasi-projective model structure.
\end{thm}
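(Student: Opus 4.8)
The plan is to build this model structure by lifting a suitable model structure on $\Ch((E_0,E_0E)-\comod)$ along the adjunction $(P,U)$ of \cite[Lemma~1.2]{barnesroitzheim_monoidality}, following the construction of Barnes--Roitzheim~\cite{barnesroitzheim_monoidality} for a symmetric monoidal Grothendieck abelian category $\cA$ equipped with an invertible object $L$, here specialised to $\cA=(E_0,E_0E)-\Comod$. Since $(E_0,E_0E)$ is an Adams Hopf algebroid, Hovey~\cite{hoveyhcomodules} equips $\Ch((E_0,E_0E)-\comod)$ with a cofibrantly generated, proper, stable, symmetric monoidal model structure whose weak equivalences are the quasi-isomorphisms of underlying complexes of $E_0$-modules and whose generating (trivial) cofibrations may be chosen with dualizable entries. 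First I would transfer this structure along $U$: since $P$ is a left adjoint, $U$ detects quasi-isomorphisms and preserves filtered colimits, and $C^{(L,2)}((E_0,E_0E)-\comod)$ is locally presentable, the transferred model structure exists by the standard recognition criterion for cofibrantly generated model categories (this is the construction carried out in \cite{barnesroitzheim_monoidality}), is cofibrantly generated with generating (trivial) cofibrations the images under $P$ of those on $\cA$, and has the underlying quasi-isomorphisms as its weak equivalences; propriety and stability are inherited, and $(P,U)$ becomes a Quillen adjunction with symmetric monoidal left adjoint, which is part (2). The monoidality asserted in (1), namely the pushout--product and unit axioms, then reduces via the projection formula \Cref{lem:projformula} to the corresponding axioms already known in $\Ch(\cA)$.

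For (3)--(5) I would use the hypothesis $p>n+1$ through \Cref{prop:cohomdim}, which makes $\Mfgn(p)$ --- equivalently the category of $(E_0,E_0E)$-comodules --- have finite cohomological dimension $n^{2}+n$. For monogenicity (3), the unit $P(E_0)$ is $\otimes$-dualizable and hence compact in the homotopy category, so the remaining point is generation: one must show that a fibrant quasi-periodic complex $X$ with $[\Sigma^{k}P(E_0),X]=0$ for all $k\in\Z$ is weakly trivial. By the adjunction $P\dashv U$ these groups are the hypercohomology groups of the complex $U(X)$ of comodules, and the twists by powers of $L$ built into the quasi-periodic structure supply exactly the line-bundle twists one needs; finite cohomological dimension collapses the associated hypercohomology spectral sequence with a uniform horizontal vanishing line, so simultaneous vanishing forces $U(X)$ to be acyclic, i.e.\ $X$ weakly trivial. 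This is the extension to general $n$, using \Cref{prop:cohomdim}, of the $n=1$ argument of Barnes--Roitzheim~\cite{barnesroitzheim_monoidality} and Franke~\cite{Franke_exotic}.

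For faithful flatness (4), $E_0E$ is flat over $E_0$ by the Adams Hopf algebroid axioms and faithfully flat because $\eta_L\colon E_0\to E_0E$ is; the projection formula \Cref{lem:projformula} gives a natural isomorphism $U(X\otimes P(E_0E))\simeq U(X)\otimes_{E_0}E_0E$, so $-\otimes P(E_0E)$ is exact and conservative on $C^{(L,2)}((E_0,E_0E)-\comod)$. For fibrancy (5), the same identification exhibits $U(X\otimes P(E_0E))$ as a complex of extended (``cofree'') comodules $U(X)\otimes_{E_0}E_0E$; since the cofree functor $-\otimes_{E_0}\Gamma$ is right adjoint to the left Quillen forgetful functor it is itself right Quillen and hence preserves fibrant objects, and the role of $p>n+1$ is, via the finite cohomological dimension of $\Mfgn(p)$, to guarantee that the ambient complexes of $E_0$-modules are fibrant and that this fibrancy passes to the quasi-periodic object, the quasi-periodic structure being handled formally since $P$ is the free quasi-periodic functor.

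I expect the main obstacle to be part (3): converting the finite cohomological dimension of $\Mfgn(p)$ into the statement that $P(E_0)$ compactly generates $C^{(L,2)}((E_0,E_0E)-\comod)$, with correct bookkeeping of the $L$-twists and the quasi-periodic structure. The remaining parts are either formal outputs of the transfer machinery, as in (1)--(2), or direct applications of the projection formula together with standard homological algebra of comodules, as in (4)--(5).
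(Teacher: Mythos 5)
Your plan for parts (1)--(2) differs mildly from the paper's --- you propose a right transfer along $(P,U)$, whereas the paper simply cites Barnes--Roitzheim's construction, which builds the quasi-projective model structure on $C^{(L,2)}(\cA)$ as a left Bousfield localization of the $\cP$-model structure along the quasi-isomorphisms, and then records \cite[Theorems~6.5 and~6.9]{barnesroitzheim_monoidality} for (1) and (2). Part (4) is essentially the paper's argument: reduce via the projection formula to faithful flatness of $E_0E$ over $E_0$ in $\Ch(E_0-\Mod)$.

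The genuine gap is in (3) and (5), and it comes from a misidentification of Hovey's model structure. You describe Hovey's model structure on $\Ch((E_0,E_0E)-\comod)$ as having quasi-isomorphisms of the underlying $E_0$-complexes as its weak equivalences, but this is the quasi-projective model structure, not Hovey's \emph{homotopy} model structure. In general one only has proper inclusions
\[
(\cP\text{-equivalences}) \subseteq (\text{homotopy equivalences}) \subseteq (\text{quasi-isomorphisms}),
\]
and the entire mechanism by which the hypothesis $p>n+1$ enters the proof of (3) and (5) is the comparison result (from the proof of Theorem 4.11 in \cite{bhacht}, using \Cref{prop:cohomdim}) that the homotopy and quasi-projective model structures on $\Ch(\cA)$ \emph{coincide} when $p>n+1$. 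This identification is what licenses importing Hovey's theorems --- \cite[Corollary~6.7]{hoveyhcomodules} for monogenicity and \cite[Theorem~5.2.3]{hoveyhcomodules} for fibrancy of complexes of relative injectives (with $E_0E$ relative injective by \cite[Lemma~3.1.3]{hoveyhcomodules}) --- into the quasi-projective setting. Your proposal instead invokes the finite cohomological dimension directly to run a hypercohomology spectral-sequence argument for (3) and a ``right adjoint preserves fibrant objects'' argument for (5); besides being substantially sketchier than the paper's reductions, both of these are trying to recreate by hand facts that are already available in Hovey's paper, and neither explains why those facts hold for the quasi-projective rather than the homotopy model structure. The missing lemma --- that the two model structures on $\Ch(\cA)$ agree for $p>n+1$ --- is exactly what the argument needs to close that gap.

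Minor further points: the right transfer along $(P,U)$ requires verification of an acyclicity condition and, even if it exists, would need a separate check that it produces the Barnes--Roitzheim model structure rather than a different one; the paper avoids this by using the published construction. Also, the ``$-\otimes_{E_0}\Gamma$ is right Quillen'' argument for (5) is not obviously about the right adjunction: the forgetful functor to $E_0$-modules is left adjoint to the cofree functor at the level of comodule categories, but the relevant Quillen adjunction for fibrancy detection in $C^{(L,2)}(\cA)$ is the one with $\Ch(\cA)$ (the paper tests fibrancy by applying $U$), and the fibrancy criterion needed there is Hovey's relative-injectives statement, not a formal consequence of $P\dashv U$.
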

\begin{proof}
To ease notation, for the remainder of this proof we will write $\cA$ for the Grothendieck abelian category $(E_0,E_0E)-\comod$ of $(E_0,E_0E)$-comodules. The required model structure was constructed by Barnes and Roitzheim \cite{barnesroitzheim_monoidality}, building on earlier work of Hovey \cite{hoveyhcomodules}. We start by recalling the data of the quasi-projective model structure. To this end, we need to introduce an auxiliary class of morphisms. 

Let $\cP$ be the set of (representatives of) comodules $M \in  \cA$ such that the underlying $E_0$-module is finitely presented and projective; note that these are precisely the compact objects of the abelian category $\cA$. A map $f\colon X \to Y$ in $C^{(L,2)}(\cA)$ is called a $\cP$-fibration or a $\cP$-equivalence if, for all $P \in \cP$,  $\cA(P,f)$ (using the notation of \cite{barnesroitzheim_monoidality}) is a degree-wise surjection or quasi-isomorphism, respectively. The class of $\cP$-cofibrations is defined to be the collection of those morphisms in $C^{(L,2)}(\cA)$ that have the left lifting property with respect to all $\cP$-fibrations which are also $\cP$-equivalences. In fact, these classes form the $\cP$-model structure. The weak equivalences, cofibrations, and fibrations of the quasi-projective model structures are obtained from the $\cP$-model structure via left Bousfield localization along the quasi-isomorphisms; explicitly:
\begin{itemize}
	\item A morphism $f\colon X \to Y$ in $C^{(L,2)}(\cA)$ is a weak equivalence if and only if the underlying morphism $U(f)$ of chain complexes is a quasi-isomorphism
	\item The cofibrations are the $\cP$-cofibrations.
	\item The fibrations are then determined by the previous two classes. 
\end{itemize}
The analogous construction can be carried out in $\Ch(\cA)$ and this gives rise to the quasi-projective model structure. In particular, the forgetful functor $U\colon C^{(L,2)}(\cA) \to \Ch(\cA)$ preserves fibrations and weak equivalences. Claim (1) is then precisely the content of \cite[Theorem 6.9]{barnesroitzheim_monoidality}. The free-forgetful adjunction 
\[
\xymatrix{P\colon \Ch(\cA) \ar@<0.5ex>[r] & C^{(L,2)}(\cA) \ar@<0.5ex>[l]\noloc U}
\]
can be promoted to a Quillen adjunction by \cite[Theorem 6.5]{barnesroitzheim_monoidality}.

In order to prove Claims (2), (3), and (4), we compare the quasi-projective model structure to Hovey's homotopy model structure on $\Ch(\cA)$. In general, this model structure sits between the $\cP$-model structure and the quasi-projective model structure on $\Ch(\cA)$, in the sense that we have inclusions
\[
(\cP-\text{equivalences}) \subseteq (\text{homotopy equivalences}) \subseteq (\text{quasi-isomorphisms}).
\]
For the details of its construction, we refer to \cite{hoveyhcomodules}. Using the finite cohomological dimension of $\Mfgn(p)$ for $p>n+1$, \Cref{prop:cohomdim}, it can be shown that the homotopy model structure and the quasi-projective model structure on $\Ch(\cA)$ coincide for $p>n+1$, as shown in the proof of Theorem 4.11 in \cite{bhacht}. This allows us to import the key properties of the homotopy model structure proven by Hovey to the quasi-projective model structure. 

From now on, assume that $p>n+1$. Since $E$ is a Landweber exact ring spectrum of finite height $n$, \cite[Corollary 6.7]{hoveyhcomodules} implies that the homotopy model category and hence the quasi-projective model category $\Ch(\cA)$ is compactly generated by its unit $E_0$. By adjunction, it follows that the same is true for the quasi-projective model structure on $C^{(L,2)}(\cA)$, with the compact generator given by the tensor unit $P(E_0)$. This proves Claim (2).

Recall that an object $M\in C^{(L,2)}(\cA)$ is faithfully flat (with respect to the quasi-projective model structure) whenever the underived endofunctor $M\otimes -$ preserves and reflects weak equivalences. Note that the tensor product $\otimes$ is balanced, so it does indeed suffice to work with the functor $M\otimes -$. Now suppose $f\colon X \to Y$ is a weak equivalence in $C^{(L,2)}(\cA)$ and consider the induced morphism
\[
\xymatrix{P(E_0E)\otimes f\colon P(E_0E) \otimes X \ar[r] & P(E_0E) \otimes Y.}
\]
Using the projection formula \eqref{eq:projformula}, this map is a weak equivalence if and only if $E_0E\otimes Uf$ is a weak equivalence in $\Ch(\cA)$. This is the case if $E_0E\otimes Uf$ is a quasi-isomorphism and this can be detected in $\Ch(E_0-\Mod)$ (with the projective model structure). Therefore, the flatness claim reduces to the analogous statement in $\Ch(E_0-\Mod)$, where it is clear. 

By a long exact sequence argument, we can reduce to showing that, for any $X \in C^{(L,2)}(\cA)$, $H_*X=0$ if $H_*(P(E_0E) \otimes X)=0$. Applying the projection formula \eqref{eq:projformula} to the assumption yields $H_*(E_0E \otimes UX) =0$. Therefore, 
\[
0 \simeq \mathrm{Rhom}_{\Ch(\cA)}(E_0, E_0E \otimes UX) \simeq \mathrm{Rhom}_{\Ch(E_0-\Mod)}(E_0, UX),
\]
thus $H_*(X) = 0$ as claimed. Taken together, this implies that $P(E_0E)$ is faithfully flat.

Finally, to show that $P(E_0E)\otimes X$ is fibrant for all $X$ (Claim (5)), we use a special case of Hovey's fibrancy criterion \cite[Theorem 5.2.3]{hoveyhcomodules}: Any complex of relative injective comodules is fibrant in the homotopy model structure on $\Ch(\cA)$. Indeed, $P(E_0E)\otimes X$ is fibrant in the quasi-projective model structure on $C^{(L,2)}(\cA)$ if $U(P(E_0E) \otimes X) = E_0E \otimes UX$ is fibrant in the homotopy model structure on $\Ch(\cA)$. But $E_0E$ is a relative injective comodule by \cite[Lemma 3.1.3]{hoveyhcomodules}.
\end{proof}

Our algebraic analogue of the $\infty$-category $\Sp_{n,p}$ is the underlying $\infty$-category of such a model category. 

\begin{defn}
Let $L = \pi_2E$. We define $\Frnp = C^{(L,2)}((E_0,E_0E)-\comod)^{c}[W^{-1}]$ to be the underlying symmetric monoidal $\infty$-category of $C^{(L,2)}((E_0,E_0E)-\comod)$ in the sense of \cite[Example 4.1.3.6]{ha}.
\end{defn}

We will also make use of 
\[
\Comod_{n,p} = \Ch((E_0,E_0E)-\comod)^{c}[W^{-1}],
\]
the $\infty$-category of $E_0E$-comodules. Let $\underline{E_0E}$ be the object in $\Comod_{n,p}$ which is the image of $E_0E \in \Ch((E_0,E_0E)-\comod)$ under the localization functor. Since the localization is lax and $E_0E$ is a commutative algebra in $\Ch((E_0,E_0E)-\comod)$, $\underline{E_0E}$ is a commutative algebra in $\Comod_{n,p}$. 

Applying these definitions, we get the following $\infty$-categorical corollary of \cref{thm:quasiproj}.

\begin{cor}\label{cor:franke}
Assume that $p> n+1$. The $\infty$-categories $\Frnp$ and $\Comod_{n,p}$ are monogenic presentably symmetric monoidal stable $\infty$-categories. Moreover, there is an induced adjunction of $\infty$-categories
\[
\xymatrix{\underline{P} \colon \Comod_{n,p} \ar@<0.5ex>[r] & \Frnp \noloc \underline{U}, \ar@<0.5ex>[l]}
\]
in which $\underline{P}$ is symmetric monoidal. Finally, the spectrum of maps between $\underline{E_0}$ and $\underline{Y} \otimes \underline{E_0E}^{\otimes k}$ for any $Y \in \Ch((E_0,E_0E)-\comod)$ can be computed as 
\[
\hom_{\Ch((E_0,E_0E)-\comod)}(E_0,Y\otimes {E_0E}^{\otimes k})
\] 
the chain complex of maps in the model category $\Ch((E_0,E_0E)-\comod)$.
\end{cor}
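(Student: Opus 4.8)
The plan is to transport each assertion of \Cref{thm:quasiproj} across the passage from a (symmetric monoidal, stable, combinatorial) model category to its underlying $\infty$-category, using the comparison results of \cite[\S 1.3.4]{ha} and \cite[\S 4.1.3]{ha}. First I would observe that both $\Ch((E_0,E_0E)-\comod)$ and $C^{(L,2)}((E_0,E_0E)-\comod)$ are combinatorial: they are cofibrantly generated by \Cref{thm:quasiproj}(1), and their underlying $1$-categories are locally presentable, since $(E_0,E_0E)-\comod$ is a Grothendieck abelian category and the category of quasi-periodic complexes is obtained from $\Ch((E_0,E_0E)-\comod)$ by an accessible $2$-limit (iso-inserter) construction, which preserves local presentability. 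Consequently $\Comod_{n,p}$ and $\Frnp$ are presentable; they are stable because the model categories are stable by \Cref{thm:quasiproj}(1); and by \cite[Example 4.1.3.6]{ha} they carry symmetric monoidal structures, which are presentably symmetric monoidal because the monoidal product of the model category is a left Quillen bifunctor, so the derived tensor product preserves colimits separately in each variable. Monogenicity, for $p>n+1$, is \Cref{thm:quasiproj}(3): the model category $C^{(L,2)}((E_0,E_0E)-\comod)$ is compactly generated by its unit $P(E_0)$, and the corresponding statement for $\Ch((E_0,E_0E)-\comod)$ with generator $E_0$ is established inside the proof of \Cref{thm:quasiproj} via \cite[Corollary 6.7]{hoveyhcomodules}; since compact generation by a cofibrant--fibrant unit passes to the underlying $\infty$-category, $\mathbf{1}_{\Frnp}$ and $\mathbf{1}_{\Comod_{n,p}}$ are compact generators.

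Next I would produce the adjunction. The Quillen adjunction $(P,U)$ of \Cref{thm:quasiproj}(2) induces an adjunction $\underline{P}\dashv\underline{U}$ of underlying $\infty$-categories by \cite[\S 1.3.4]{ha}, and because $P$ is a symmetric monoidal left Quillen functor, the induced left adjoint $\underline{P}$ is symmetric monoidal by \cite[\S 4.1.3]{ha}. This is the asserted adjunction, and it sends $\underline{E_0}=\mathbf{1}_{\Comod_{n,p}}$ to $\mathbf{1}_{\Frnp}$, consistently with the monogenicity claim.

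For the computation of mapping objects, I would use that $\Ch((E_0,E_0E)-\comod)$ with the quasi-projective model structure is a stable model category enriched in chain complexes, so the derived mapping spectrum between two objects is modeled by the enriched hom-complex of a cofibrant replacement of the source and a fibrant replacement of the target (and such a chain complex computes a mapping spectrum via the canonical $\mathcal{D}(\mathrm{Ab})$-linear structure on $\Comod_{n,p}$). For $p>n+1$ the unit $E_0$ is cofibrant, so no replacement is needed on the source; for the target, $E_0E$ is flat (\Cref{thm:quasiproj}(4)) so $-\otimes E_0E^{\otimes k}$ already computes the derived tensor product, and $Y\otimes E_0E^{\otimes k}$ is fibrant by \Cref{thm:quasiproj}(5). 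Hence
\[
\Hom_{\Comod_{n,p}}(\underline{E_0},\,\underline{Y}\otimes\underline{E_0E}^{\otimes k})\simeq \hom_{\Ch((E_0,E_0E)-\comod)}(E_0,\,Y\otimes E_0E^{\otimes k}),
\]
as claimed.

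The step I expect to be the main obstacle is purely bookkeeping but genuinely delicate: ensuring that the cofibrancy of $E_0$ and the fibrancy of the objects $Y\otimes E_0E^{\otimes k}$ are statements about the \emph{quasi-projective} model structure rather than the auxiliary $\cP$-model structure or Hovey's homotopy model structure. This is exactly where the hypothesis $p>n+1$ reappears, since it is needed to identify the homotopy and quasi-projective model structures, as in the proof of \Cref{thm:quasiproj}. Once these fibrancy and cofibrancy facts are in hand, all remaining steps are formal consequences of the model-categorical comparison theorems in \cite{ha}.
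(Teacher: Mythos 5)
Your proof is correct and takes essentially the same route as the paper's: everything reduces to \Cref{thm:quasiproj} plus the standard machinery for passing from (symmetric monoidal, combinatorial, stable) model categories to their underlying $\infty$-categories, with the mapping-spectrum computation coming from cofibrancy of $E_0$ and fibrancy of $Y\otimes E_0E^{\otimes k}$. The only difference is one of references: where you invoke \cite[\S 1.3.4]{ha} and \cite[\S 4.1.3]{ha} to transport the Quillen adjunction and its symmetric monoidal structure, the paper cites \cite{mazelgee_adj} and \cite[Proposition 3.2.2]{Hinich} for the same facts.
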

\begin{proof}
Presentability follows from \cite[Proposition 1.3.4.22]{ha}. \cref{thm:quasiproj} implies that the $\infty$-category $\Frnp$ is symmetric monoidal, stable, and monogenic (see \cite[Section 4.1.3]{ha}). \cite{mazelgee_adj} implies that Quillen adjunctions induce adjunctions of $\infty$-categories. \cite[Proposition 3.2.2]{Hinich} shows that since $P$ is symmetric monoidal, the induced $\underline{P}$ is symmetric monoidal. The last sentence of the statement follows from the fact that $E_0$ is cofibrant and $Y \otimes E_0E^{\otimes k}$ is fibrant.
\end{proof}

\begin{rem}
It is worth noting that Franke's category $\Frnp$ is not equivalent to $\Sp_{n,p}$ as a symmetric monoidal $\infty$-category for $n\ge 1$ and any prime $p$. One way to see this is based on the following observation: On the one hand, since $\Frnp$ is constructed from a category of chain complexes, it is an $H\Z$-linear category. On the other hand, the internal mapping objects of $\Sp_{n,p}$ are $E_{n,p}$-local, so if they were also $H\Z$-linear, then they would have to be rational, forcing $n=0$.
\end{rem}

\subsection{Descent for Franke's categories}
The goal of this subsection is to show that $(\cC_p)_{p \in \cP} = (\Frnp)_{p \in \cP}$ and 
\[
(A_p)_{p \in \cP} = (\underline{P}(\underline{\pi_0(\E \otimes \E)}))_{p \in \cP}
\]
satisfy the conditions of \cref{maincor}.

Let $(B_p, \Sigma_p) = (\pi_0(\E), \pi_0(\E \otimes \E))$ and let 
\[
A_p = \underline{P}(\underline{\pi_0(\E \otimes \E)}) = \underline{P}(\underline{\Sigma_p}).
\] 
Since $\underline{P}$ is symmetric monoidal, $A_p$ is a commutative algebra in $\Frnp$. By \cref{thm:quasiproj}, since $\underline{P}(\underline{E_0E})$ is faithfully flat, $-\otimes A_p$ is conservative.

\begin{lem}
The collection $(A_p)_{p \in \cP}$ satisfies the conditions of \cref{prop:ultraenlocalcomparisonfunctor}.
\end{lem}
\begin{proof}
By \cref{degrees}, it is enough to show that the collection $(\underline{\Sigma_p})_{p \in \cP}$, where $\underline{\Sigma_p}$ is an object in $\Comod_{n,p}$, satisfies the conditions of \cref{prop:ultraenlocalcomparisonfunctor}. By \cref{bounds}, it suffices to show that the vanishing-degree of $\underline{\Sigma_p}$ is bounded for large enough $p$. For $X$ and $Y$ objects in $\Comod_{n,p}$, consider the spectral sequence associated to the cosimplicial spectrum
\[
\Hom_{\Comod_{n,p}}(X,Y \otimes (\underline{\Sigma_p})^{\otimes \bullet +1}).
\] 
But as the cohomological dimension of the stack represented by $(B_p, \Sigma_p)$ is $n^2+n$ for large enough primes (\cref{prop:cohomdim}) there is a horizontal vanishing line at the $E_2$-page with intercept $n^2+n$ (which does not depend on $p$).
\end{proof}

\begin{lem} \label{formalcollapse}
Let $A^{\bullet}$ be a formal cosimplicial spectrum. Then the associated Bousfield--Kan spectral sequence collapses at the $E_2$-page.
\end{lem}
\begin{proof}
Formality implies that $A^{\bullet} \simeq \prod_{n \in \Z} A_{n}^{\bullet}$, where the homotopy groups of the spectra in $A_{n}^{\bullet}$ are concentrated in degree $n$. Since the spectral sequence associated to $A_{n}^{\bullet}$ collapses at the $E_2$-page, the result follows for $A^{\bullet}$.
\end{proof}

\begin{lem}
There is an equivalence of cosimplicial $\bE_{\infty}$-ring spectra
\[
\Hom_{\Fr}(1_{\Fr}, A_{p}^{\otimes \bullet +1}) \simeq (\E^{\otimes \bullet +1})_{\star}.
\]
In particular, \cref{formalcollapse} implies that the associated spectral sequence collapses at the $E_2$-page.
\end{lem}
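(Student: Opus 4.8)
The plan is to transport the computation to the $\infty$-category $\Comod_{n,p}$ of $E_0E$-comodules, where it becomes the acyclicity of extended (relative injective) comodules. Write $E = \E$, $E_0 = \pi_0 E$, and $L = \pi_2 E$. Recall from \Cref{cor:franke} that the unit of $\Fr$ is $1_{\Fr} = \underline{P}(\underline{E_0})$, that $\underline{P}\colon \Comod_{n,p}\to\Fr$ is symmetric monoidal with lax symmetric monoidal right adjoint $\underline{U}$, and that $A_p = \underline{P}(\underline{\Sigma_p})$ where $\underline{\Sigma_p} = \underline{E_0E}$ is the canonical comodule algebra (with $\pi_0(\E\otimes\E) = E_0E$ regarded as the object $\Gamma$ of the Adams Hopf algebroid $(E_0, E_0E)$). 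First I would use that $\underline{P}$ is symmetric monoidal to identify the Amitsur complex $A_p^{\otimes\bullet+1}$ in $\Fr$ with $\underline{P}$ applied degreewise — compatibly with the cosimplicial structure maps and the multiplication — to the Amitsur complex $\underline{E_0E}^{\otimes\bullet+1}$ of $\underline{E_0E}$ over $\underline{E_0}$ in $\Comod_{n,p}$. Since $\Hom_{\Fr}(1_{\Fr},-) = \Hom_{\Fr}(\underline{P}(\underline{E_0}),-)$ is, by the $(\underline{P},\underline{U})$-adjunction, naturally equivalent to $\Hom_{\Comod_{n,p}}(\underline{E_0}, \underline{U}(-))$ as lax symmetric monoidal functors ($\underline{U}$ is lax monoidal as the right adjoint of a symmetric monoidal functor, and $\Hom$ out of the unit is lax monoidal), applying this to the cosimplicial $\bE_\infty$-ring $A_p^{\otimes\bullet+1}$ yields an equivalence of cosimplicial $\bE_\infty$-rings
\[
\Hom_{\Fr}(1_{\Fr}, A_p^{\otimes\bullet+1}) \;\simeq\; \Hom_{\Comod_{n,p}}\!\big(\underline{E_0},\, \underline{U}\,\underline{P}(\underline{E_0E}^{\otimes\bullet+1})\big).
\]

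Next I would unwind the right-hand side using the defining formula for $\underline{P}$ together with the projection formula of \Cref{lem:projformula} (applied with $X = 1_{\Fr}$), giving $\underline{U}\,\underline{P}(N) \simeq \underline{U}(1_{\Fr})\otimes N \simeq \bigoplus_{j\in\Z} N\otimes L^{\otimes j}[2j]$ in $\Comod_{n,p}$, naturally in $N$ and compatibly with algebra structures. In cosimplicial degree $k$ the comodule $\underline{E_0E}^{\otimes(k+1)}$ is $\underline{\pi_0(E^{\otimes(k+2)})}$, which as a left comodule is the \emph{extended} comodule $E_0E\otimes_{E_0}M_k$ on the flat $E_0$-module $M_k = \pi_0(E^{\otimes(k+1)})$ (by flatness of $E_*E$ over $E_*$); tensoring with the invertible comodule $L^{\otimes j}$ keeps it extended, on $M_k\otimes_{E_0}\pi_{2j}E \cong \pi_{2j}(E^{\otimes(k+1)})$.

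The key input is then that extended comodules are acyclic: using \Cref{cor:franke} to compute mapping spectra inside the model category, $\Hom_{\Comod_{n,p}}(\underline{E_0},\, E_0E\otimes_{E_0}(-))$ is computed by the cobar/relative-$\Ext$ spectral sequence, whose $E_2$-term $\Ext^s_{E_0E}(E_0,\, E_0E\otimes_{E_0}(-))$ vanishes for $s>0$ and equals $(-)$ for $s=0$; hence it collapses at $E_2$ and $\Hom_{\Comod_{n,p}}(\underline{E_0},\, E_0E\otimes_{E_0}(-)) \simeq H(-)$. Summing over $j$ and absorbing the shifts gives, in each cosimplicial degree,
\[
\Hom_{\Fr}(1_{\Fr}, A_p^{\otimes k+1}) \;\simeq\; \bigoplus_{j\in\Z} H\pi_{2j}(E^{\otimes k+1})[2j] \;\simeq\; H\pi_*(E^{\otimes k+1}) \;=\; (\E^{\otimes k+1})_\star,
\]
with the odd homotopy groups vanishing because $E^{\otimes k+1}$ is even; the displayed $\Ext^{>0}=0$ is precisely the collapse claim of the lemma. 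Since every step is natural in the comodule argument and compatible with the lax monoidal structures, the degreewise equivalences organize into an equivalence of cosimplicial $\bE_\infty$-rings matching $\Hom_{\Fr}(1_{\Fr}, A_p^{\otimes\bullet+1})$ with $H\pi_*(\E^{\otimes\bullet+1})$, the latter carrying the cosimplicial $\bE_\infty$-structure obtained by applying the lax monoidal functor $H\pi_* = (-)_\star$ to the Amitsur complex $\E^{\otimes\bullet+1}$.

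I expect the main obstacle to be the coherence bookkeeping in the middle step: making the identification of $\underline{U}\,\underline{P}$ of the cobar complex with the ``periodized complex of extended comodules'' strictly compatible with both the cosimplicial structure and the multiplication (in particular the $L$-twist), and then matching the resulting cosimplicial $\bE_\infty$-ring with the standard structure on $(\E^{\otimes\bullet+1})_\star$. The algebraic inputs — acyclicity of extended comodules and the finite cohomological dimension of $\Mfgn(p)$ underlying \Cref{thm:quasiproj} and \Cref{cor:franke} — are classical.
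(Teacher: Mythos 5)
Your proposal follows essentially the same route as the paper: transport along the $(\underline{P},\underline{U})$-adjunction to $\Comod_{n,p}$, apply the projection formula, identify $\underline{E_0E}^{\otimes \bullet+1}$ (after twisting by $L$) with a cosimplicial extended comodule, and use that $R\Hom_{\Comod}(E_0, E_0E\otimes_{E_0}-)$ is underived, yielding $(\E^{\otimes\bullet+1})_\star$. The only presentational difference is that you state the last step as collapse of the cobar/$\Ext$ spectral sequence, whereas the paper computes the mapping object strictly at the chain level, noting $E_0$ is cofibrant and $UP(\Sigma_p^{\otimes\bullet+1})$ is a cosimplicial fibrant object, which simultaneously resolves the coherence concern you flag at the end.
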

\begin{proof}
Because $\underline{P}$ is symmetric monoidal, 
\[
1_{\Fr} \simeq \underline{P}(1_{\Comod_{n,p}}) \simeq \underline{P}(\underline{B_p}).
\] 
Thus
\begin{align*}
\Hom_{\Fr}(1_{\Fr}, A_{p}^{\otimes \bullet +1}) &\simeq \Hom_{\Fr}(\underline{P}(\underline{B_p}), \underline{P}(\underline{\Sigma_p})^{\otimes \bullet +1}) \\
&\simeq \Hom_{\Comod_{n,p}}(\underline{B_p}, \underline{U}(\underline{P}(\underline{\Sigma_p}^{\otimes \bullet +1}))) \\
&\simeq \Hom_{\Comod_{n,p}}(\underline{B_p}, \underline{UP({\Sigma_p}^{\otimes \bullet +1})}) \\
&\simeq \hom_{\Ch((B_p,\Sigma_p)-\comod)}(B_p,UP(\Sigma_{p}^{\otimes \bullet +1})) \\
&\simeq \hom_{\Ch((B_p,\Sigma_p)-\comod)}(B_p,UP(B_p) \otimes (\Sigma_{p}^{\otimes \bullet +1})) \\
&\simeq \hom_{\Ch(B_p - \mod)}(B_p,UP(B_p) \otimes (\Sigma_{p}^{\otimes \bullet})) \\
&\simeq UP(B_p) \otimes_{B_p} (\Sigma_{p}^{\otimes \bullet})\\
&\simeq UP(B_p) \otimes_{B_p} (\E^{\otimes \bullet +1})_0 \\
&\simeq (\E)_{\star} \otimes_{B_p} (\E^{\otimes \bullet +1})_0 \\
&\simeq (\E^{\otimes \bullet+1})_{\star}.
\end{align*}
From the first to the second line we use the $(\underline{U},\underline{P})$-adjunction and that $\underline{P}$ is monoidal (\cref{cor:franke}). From the second to the third line we use that $\Sigma_{p}$ is flat and that $UP$ preserves weak equivalences. From the third to fourth line we use that $UP(\Sigma_{p}^{\otimes \bullet +1})$ is a cosimplicial fibrant object and $E_0$ is cofibrant. From the fourth to fifth line we use \cref{lem:projformula}. From the fifth to the sixth line we use the free-forgetful adjunction between $(B_p,\Sigma_{p})$-comodules and $B_p$-modules. From the sixth to the seventh line we use the enrichment of $\Ch(B_p - \mod)$ in itself. From the seventh to the eighth line we use the flatness of $\Sigma_p$ over $B_p$. From the eighth to the ninth line we use the definition of $(-)_{\star}$ from \cref{defn:star} and the fact that $L = \pi_2(\E)$.
\end{proof}

\begin{lem}
The Picard group of $\Mod_{E_{\star}}$ is $\Z/2$ (independent of $p$).
\end{lem}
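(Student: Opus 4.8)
The plan is to translate the statement into algebra via the Schwede--Shipley description of $\Mod_{E_\star}$ quoted above, and then to run the standard Picard computation for the derived category of an even-periodic Noetherian ring with regular local degree-zero part. Applying that theorem to the $\bE_\infty$-ring $E_\star$ itself (note $(E_\star)_\star\simeq E_\star$, since $\pi_*\circ H=\mathrm{id}$) gives a symmetric monoidal equivalence $\Mod_{E_\star}\simeq \Mod_{C(E_\star)}(\mathcal{D}(\mathrm{Ab}))$, where $C(E_\star)$ is the differential graded algebra with chain groups $\pi_*\E$ and zero differential. In other words $\Mod_{E_\star}$ is the even-periodicization of the derived $\infty$-category $\mathcal{D}(\pi_0\E)$: here $\pi_*\E\cong \pi_0(\E)[u^{\pm1}]$ with $\lvert u\rvert=2$, concentrated in even degrees, Noetherian, and $\pi_0\E\cong\bW(\F_{p^n})\powser{u_1,\dots,u_{n-1}}$ is a complete regular local ring, so $\mathrm{Pic}(\pi_0\E)=0$. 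Because $u$ is a unit, multiplication by $u$ exhibits $\Sigma^2E_\star\simeq E_\star$ in $\Mod_{E_\star}$, while $\Sigma E_\star\not\simeq E_\star$ as $\pi_*\E$ is even; hence $\Sigma E_\star$ generates a copy of $\mathbb{Z}/2$ inside $\Pic(\Mod_{E_\star})$.

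The remaining task is the reverse inclusion: every invertible object $M$ of $\Mod_{E_\star}$ is a suspension of the unit. Such an $M$ is dualizable, hence a perfect $E_\star$-module. Smashing with the residue field spectrum $\kappa=H((\pi_0\E/\mathfrak m)[u^{\pm1}])$ yields an invertible module over the graded field $\pi_*\kappa$, which is necessarily $\Sigma^{\varepsilon}\kappa$ for some $\varepsilon\in\{0,1\}$; replacing $M$ by $\Sigma^{-\varepsilon}M$ we may assume $M\otimes_{E_\star}\kappa\simeq\kappa$, so $M\otimes\kappa$ is concentrated in even degrees. Since $M$ is perfect it is represented by a minimal finite complex of free $\pi_0\E[u^{\pm1}]$-modules over the local ring $\pi_0\E$, and minimality together with the computation of $M\otimes\kappa$ forces this complex to be concentrated in a single degree; that is, $M\simeq \Sigma^{\,d}P$ for a finitely generated projective rank-one $\pi_0\E[u^{\pm1}]$-module $P$. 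As $M$ is invertible, so is $P$, and $\mathrm{Pic}(\pi_0\E)=0$ forces $P\cong E_\star$, hence $M\simeq\Sigma^{\,d}E_\star$. Using $\Sigma^2\simeq\mathrm{id}$ we conclude $\Pic(\Mod_{E_\star})\cong\mathbb{Z}/2$; since the answer does not involve $p$, independence of $p$ is automatic.

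\textbf{Main obstacle.} The only genuine content is the last paragraph, namely ruling out ``exotic'' invertible objects and thereby identifying $\Pic(\Mod_{E_\star})$ with the algebraic Picard group of the graded ring $\pi_*\E$ modulo periodicity. What makes this tractable is precisely the \emph{formality} of $E_\star$: under Schwede--Shipley the problem becomes the Picard group of the derived category of a regular local (graded) ring, where there are no higher $k$-invariants to kill, so no obstruction theory is needed --- in contrast to the topological computation of $\Pic(\Mod_{\E})$, which relies on a descent/Adams spectral sequence argument (compare the proof of \Cref{lem:br} and \cite{bakerrichter_inv}). One may alternatively phrase the whole lemma as a direct instance of the computation of the Picard group of an even-periodic $\bE_\infty$-ring with regular local $\pi_0$.
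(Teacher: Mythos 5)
Your argument is correct and recovers exactly what the paper outsources to Baker--Richter: the paper's proof is a one-line citation to \cite{bakerrichter_inv}, which establishes $\Pic(\Mod_R)\cong\Z/2$ for any even-periodic $\bE_\infty$-ring $R$ with regular local Noetherian $\pi_0$, and you have essentially reproved the relevant case. The one genuine difference is in emphasis: Baker--Richter work topologically with the ring spectrum directly (via a finite-cell/minimal-resolution argument over $\pi_*R$), whereas you first pass through Schwede--Shipley to replace $\Mod_{E_\star}$ by $\Mod_{C(E_\star)}(\mathcal{D}(\mathrm{Ab}))$ and then run the same residue-field/minimal-complex computation purely algebraically, exploiting the formality of $E_\star$ to make the reduction to $\mathcal{D}(\pi_0\E)$ immediate. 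Either way the content is the same; your route is slightly more self-contained given that the Schwede--Shipley equivalence and the regular-local structure of $\pi_0\E$ are already in play elsewhere in the paper, while the paper's citation to Baker--Richter makes the parallel with the topological \cref{lem:br} (Picard group of $\Mod_{\E}$) more visible.

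One small point worth making explicit, which you gesture at but do not fully spell out: the passage from ``$M$ is a perfect $\pi_*\E$-module with $M\otimes\kappa$ concentrated in one degree'' to ``$M$ is a shift of a free rank-one module'' uses the fact that a \emph{minimal} finite complex of finite free modules over the local ring $\pi_0\E$ has all differentials landing in $\mathfrak{m}$, so that tensoring down to $\kappa$ kills the differentials entirely; then the hypothesis forces a single nonzero chain group, of rank one. This is exactly the Nakayama-type step that Baker--Richter also carry out, so your proof matches theirs essentially line for line once the Schwede--Shipley translation is made.
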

\begin{proof}
This follows from Baker--Richter \cite{bakerrichter_inv}. Their result makes use of the fact that $E_{\star}$ is an even periodic $\bE_{\infty}$-ring spectrum with regular local Noetherian $\pi_0$.
\end{proof}

\begin{cor} \label{algequiv}
There is a canonical symmetric monoidal equivalence of $\infty$-categories
\[
\Prod{\cF}^{\doubleflat} \Mod_{(\E)_{\star}} \lra{\simeq} \Prod{\cF}^{\Pic} \Mod_{(\E)_{\star}}.
\]
\end{cor}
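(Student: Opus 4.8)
The plan is to factor the comparison functor through the intermediate (unbounded) cell protoproduct $\Prod{\cF}^{\flat}\Mod_{(\E)_{\star}}$ of Diagram~\eqref{eqn:adjunctions} and show that each of the two resulting maps is an equivalence. First I would observe that $(\E)_{\star} = H\pi_*(\E)$ is an even $2$-periodic $\bE_{\infty}$-ring spectrum, since $\pi_*\E$ is $2$-periodic and $H$ preserves this structure on homotopy. Hence \Cref{cor:evenrings} applies to the constant collection $(R_p)_{p \in \cP} = ((\E)_{\star})_{p \in \cP}$ and yields a symmetric monoidal equivalence
\[
\Prod{\cF}^{\flat}\Mod_{(\E)_{\star}} \lra{\simeq} \Prod{\cF}^{\doubleflat}\Mod_{(\E)_{\star}}
\]
induced by the left adjoint in Diagram~\eqref{eqn:adjunctions}; concretely, every desuspension $\Sigma^{[n_i]}(\E)_{\star}$ is, coordinatewise, a suspension $\Sigma^{k_i}(\E)_{\star}$ with $0 \le k_i < 2$, and \Cref{lem:finitepartition} shows exactly one such value is supported on $\cF$, so the generators of the two protoproducts coincide up to the $\pi_*$-equivalences of \Cref{prop:protoproductgenerators}.

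Second, I would identify $\Prod{\cF}^{\flat}\Mod_{(\E)_{\star}}$ with $\Prod{\cF}^{\Pic}\Mod_{(\E)_{\star}}$ by the argument of \Cref{lem:br}. By the preceding lemma, the Picard group of $\Mod_{(\E)_{\star}}$ is $\Z/2$, represented by $(\E)_{\star}$ and $\Sigma (\E)_{\star}$ (this uses that $(\E)_{\star}$ is an even periodic $\bE_{\infty}$-ring with regular Noetherian local $\pi_0$, by Baker--Richter). Therefore every invertible $(\E)_{\star}$-module is already a cell, i.e., a suspension of the unit, so the $\Pic$-filtration $\mathrm{PicCell}_{(\E)_{\star}}$ and the cell filtration $\Cell_{(\E)_{\star}}$ on $\Mod_{(\E)_{\star}}$ agree. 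Consequently the filtration-preserving functor of Diagram~\eqref{eqn:adjunctions} induces a symmetric monoidal equivalence
\[
\Prod{\cF}^{\flat}\Mod_{(\E)_{\star}} \lra{\simeq} \Prod{\cF}^{\Pic}\Mod_{(\E)_{\star}}.
\]
Composing this with the equivalence from the first step gives the desired canonical symmetric monoidal equivalence, and symmetric monoidality is automatic since all comparison functors in Diagram~\eqref{eqn:adjunctions} are symmetric monoidal by \Cref{cor:symmonfun}.

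The content here is genuinely modest: the whole statement is a combination of \Cref{cor:evenrings} and the Picard-group computation, run exactly as in \Cref{lem:br} but for the algebraic ring $(\E)_{\star}$ in place of $\E$. The only point requiring a moment of care is the verification that the two filtrations literally coincide on $\Mod_{(\E)_{\star}}$, which I expect to be the ``main obstacle'' only in the sense of bookkeeping: it reduces immediately to the observation that $\Pic((\E)_{\star}) \subseteq \{\Sigma^k (\E)_{\star}\}$, so that the $k$-th $\Pic$-filtration step is contained in the $k$-th cell-filtration step (the reverse containment being formal), and hence the protoproducts built from them agree.
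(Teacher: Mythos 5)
Your proof is correct and takes essentially the same approach the paper leaves implicit: factor through the unbounded cell protoproduct $\Prod{\cF}^{\flat}\Mod_{(\E)_{\star}}$, use even $2$-periodicity with \Cref{cor:evenrings} to collapse the $\doubleflat$/$\flat$ distinction, and use the Baker--Richter computation $\Pic(\Mod_{(\E)_{\star}}) \cong \Z/2$ as in \Cref{lem:br} to collapse the $\flat$/$\Pic$ distinction. One small slip: the collection $((\E)_{\star})_{p \in \cP}$ is not constant --- the rings genuinely depend on $p$ --- but what \Cref{cor:evenrings} actually requires, and what holds, is that all members of the collection are periodic of the same period $2$; the argument is unaffected.
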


Now \cref{maincor} applies to the collections $(\cC_p)_{p \in P} = (\Frnp)_{p \in P}$ and $(A_p)_{p \in P}$: 
\begin{cor} \label{cor:frequiv}
There is a canonical symmetric monoidal equivalence of symmetric monoidal $\infty$-categories
\[
\Prod{\cF}^{\Pic} \Frnp \lra{\simeq} \Loc \Pic \Tot (\Prod{\cF}^{\flat} \Mod_{(\E^{\otimes \bullet +1})_{\star}}).
\]
\end{cor}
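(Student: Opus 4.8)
The plan is to deduce this directly from \cref{maincor}, applied to the collections $(\cC_p)_{p\in\cP}=(\Frnp)_{p\in\cP}$ and $(A_p)_{p\in\cP}$, where $A_p=\underline P(\underline{\Sigma_p})$ is the commutative algebra in $\Frnp$ introduced above; once the hypotheses of \cref{picequiv} and \cref{picandcell} are verified, \cref{maincor} yields
\[
\Prod{\cF}^{\Pic}\Frnp \lra{\simeq} \Loc\Pic\Tot\big(\Prod{\cF}^{\flat}\Mod_{\Hom_{\Fr}(1_{\Fr},A_p^{\otimes\bullet+1})}\big),
\]
and the identification $\Hom_{\Fr}(1_{\Fr},A_p^{\otimes\bullet+1})\simeq(\E^{\otimes\bullet+1})_{\star}$ of cosimplicial $\bE_{\infty}$-rings established above rewrites the right-hand side as the asserted target. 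Since the structural inputs invoked below require $p>n+1$, I would work throughout with the cofinite collection of primes $p>n+1$, which belongs to $\cF$ by \cref{finiteprincipal}.

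First I would record that $(\Frnp)_{p\in\cP}$ and $(A_p)_{p\in\cP}$ satisfy the standing hypotheses of \cref{picequiv}. By \cref{cor:franke} each $\Frnp$ with $p>n+1$ is a monogenic presentably symmetric monoidal stable $\infty$-category, hence compactly generated with compact unit. Descendability of $A_p$ together with a uniform bound on its fast-degree is exactly the uniform-descendability lemma above: conservativity of $A_p\otimes-$ follows from the faithful flatness of $\underline P(\underline{E_0E})$ in \cref{thm:quasiproj}, and by \cref{degrees} the fast-degree of $A_p$ equals that of $\underline{\Sigma_p}$ in $\Comod_{n,p}$, whose vanishing-degree is at most $n^2+n$ for all these primes by \cref{prop:cohomdim} and \cref{bounds}. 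Finally, the $A_p$-based Adams spectral sequence for $\End(1_{\Frnp})$ collapses at $E_2$ because its input cosimplicial spectrum $\Hom_{\Fr}(1_{\Fr},A_p^{\otimes\bullet+1})$ is equivalent to $(\E^{\otimes\bullet+1})_{\star}$, which is formal, so the associated Bousfield--Kan spectral sequence degenerates at the $E_2$-page.

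It then remains to check the extra hypothesis of \cref{picandcell}, that $\Prod{\cF}^{\flat}\Mod_{A_p}(\Frnp)\to\Prod{\cF}^{\Pic}\Mod_{A_p}(\Frnp)$ is an equivalence. Here I would use Morita theory (\cref{thm:moritatheory}): since $\Frnp$ is monogenic and symmetric monoidal, $\Mod_{A_p}(\Frnp)\simeq\Mod_{\Hom_{\Fr}(1_{\Fr},A_p)}\simeq\Mod_{(\E)_{\star}}$ as symmetric monoidal $\infty$-categories, and this equivalence carries the cell and $\Pic$ filtrations on the two sides to one another. As $(\E)_{\star}$ is $2$-periodic, \cref{cor:evenrings} gives $\Prod{\cF}^{\flat}\Mod_{(\E)_{\star}}\simeq\Prod{\cF}^{\doubleflat}\Mod_{(\E)_{\star}}$, and \cref{algequiv} — which rests on the Baker--Richter computation $\Pic(\Mod_{E_{\star}})\cong\Z/2$ — identifies the latter with $\Prod{\cF}^{\Pic}\Mod_{(\E)_{\star}}$; transporting back along the Morita equivalence gives the claim, and \cref{maincor} applies. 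The hard part will be this last step: carefully matching the Morita equivalence with both the cell and $\Pic$ filtrations, so that all three flavors of protoproduct on the module categories agree, and tracking the cosimplicial $\bE_{\infty}$-structure on $\Hom_{\Fr}(1_{\Fr},A_p^{\otimes\bullet+1})$ through to $(\E^{\otimes\bullet+1})_{\star}$; the purely descent-theoretic inputs are already packaged in the lemmas of this subsection.
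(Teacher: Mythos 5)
Your proof is correct and follows essentially the same route as the paper: apply \cref{maincor} to $(\Frnp, A_p)$, cite the preceding lemmas on uniform descendability, $E_2$-collapse via formality of $\Hom_{\Fr}(1_{\Fr},A_p^{\otimes\bullet+1})\simeq(\E^{\otimes\bullet+1})_\star$, and the Baker--Richter computation of $\Pic(\Mod_{(\E)_\star})$ to feed \cref{picandcell}, then rewrite the endomorphism cosimplicial ring. The only cosmetic difference is that your verification of the $\flat\simeq\Pic$ hypothesis detours through $\doubleflat$ via \cref{cor:evenrings}, whereas the Picard group being $\Z/2$ already forces the cell and $\Pic$ filtrations to coincide directly, as in \cref{lem:br}.
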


\subsection{The proof of the main result}
We promote \cref{thm:sec4thm}, the main result of \cref{sec:formality}, to an equivalence of $\infty$-categories: 

\begin{thm} \label{cosimplicialcor}
Let $\cF$ be a non-principal ultrafilter on $\cP$. There is a  symmetric monoidal equivalence of cosimplicial compactly generated $\Q$-linear stable $\infty$-categories
\[
\Prod{\cF}^{\flat} \Mod_{\E^{\otimes \bullet+1}} \simeq \Prod{\cF}^{\flat} \Mod_{(\E^{\otimes \bullet+1})_{\star}}.
\]
\end{thm}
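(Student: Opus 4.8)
The plan is to deduce this from the formality theorem \cref{thm:sec4thm} by transporting it through the Morita-theoretic identification of protoproducts of module categories with module categories over ultraproducts. I would regard the Amitsur complexes as functors $\E^{\otimes\bullet+1},\,(\E^{\otimes\bullet+1})_{\star}\colon \Delta \to \CAlg(\Sp)$, also varying over $p \in \cP$, so that $\Mod_{\E^{\otimes\bullet+1}}$ and $\Mod_{(\E^{\otimes\bullet+1})_{\star}}$ are cosimplicial objects in the $\infty$-category $\mathrm{Cat}_{\infty}^{\otimes,\omega,st}$ of stable symmetric monoidal compactly generated $\infty$-categories. Since base change along a map $R \to S$ of $\bE_{\infty}$-rings is symmetric monoidal, preserves colimits and compact objects, and preserves each of the cell, cell-dimension and $\Pic$-filtrations (base change of a $k$-cell module is again a $k$-cell module of the same dimensions, and of an invertible module an invertible module), the assignments $[k] \mapsto \Prod{\cF}^{\flat}\Mod_{\E^{\otimes k+1}}$ and $[k]\mapsto \Prod{\cF}^{\doubleflat}\Mod_{\E^{\otimes k+1}}$, together with the versions using $(\E^{\otimes k+1})_{\star}$, are again cosimplicial objects of $\mathrm{Cat}_{\infty}^{\otimes,\omega,st}$ by the functoriality of the protoproduct in filtration preserving functors.

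First I would observe that in every cosimplicial degree the ring spectra in play are even periodic of period $2$: $\pi_*(\E^{\otimes k})$ is flat over the even periodic ring $\pi_*\E$, hence itself even $2$-periodic, and so is $(\E^{\otimes k})_{\star}=H\pi_*(\E^{\otimes k})$. Thus \cref{cor:evenrings} applies in each degree, and because the equivalence there is induced by the identity functors of Diagram \eqref{eqn:adjunctions}, which are filtration preserving and natural in $R$, \cref{cor:squares} shows it assembles into a natural equivalence of cosimplicial $\infty$-categories $\Prod{\cF}^{\flat}\Mod_{\E^{\otimes\bullet+1}} \simeq \Prod{\cF}^{\doubleflat}\Mod_{\E^{\otimes\bullet+1}}$, and similarly for $(\E^{\otimes\bullet+1})_{\star}$. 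Next, \cref{prop:protomodules} provides a symmetric monoidal equivalence $\Prod{\cF}^{\doubleflat}\Mod_{R_i}\simeq \Mod_{\Prod{\cF}R_i}$; the comparison functor $\Phi$ built in its proof comes from the lax symmetric monoidal ultraproduct functor and is compatible with base change, using \cref{rem:ultramodules} together with the fact that $\Prod{\cF}$ commutes with finite colimits (\cref{prop:ultracolims}) and that base change of a finite cell module along $R_i \to S_i$ is again a finite cell module of the same dimensions. Hence it upgrades to natural equivalences
\[
\Prod{\cF}^{\doubleflat}\Mod_{\E^{\otimes\bullet+1}} \simeq \Mod_{\Prod{\cF}\E^{\otimes\bullet+1}}, \qquad \Prod{\cF}^{\doubleflat}\Mod_{(\E^{\otimes\bullet+1})_{\star}} \simeq \Mod_{\Prod{\cF}(\E^{\otimes\bullet+1})_{\star}}
\]
of cosimplicial symmetric monoidal $\infty$-categories. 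Finally, applying the functor $\Mod_{(-)}$ to the equivalence of cosimplicial $\bE_{\infty}$-ring spectra $\Prod{\cF}\E^{\otimes\bullet+1}\simeq \Prod{\cF}(\E^{\otimes\bullet+1})_{\star}$ of \cref{thm:sec4thm} gives $\Mod_{\Prod{\cF}\E^{\otimes\bullet+1}}\simeq \Mod_{\Prod{\cF}(\E^{\otimes\bullet+1})_{\star}}$, and concatenating these five equivalences yields the theorem. That the resulting cosimplicial $\infty$-category is $\Q$-linear follows because $\Prod{\cF}\Z_p$ is a $\Q$-algebra, so each $\Prod{\cF}\E^{\otimes k+1}$ is rational and hence an $H\Q$-algebra, as in \cref{ex:fadics}.

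The only real obstacle is the bookkeeping of naturality: \cref{thm:sec4thm}, \cref{cor:evenrings} and \cref{prop:protomodules} were each proved objectwise in the cosimplicial (and prime) variable, so one must verify that the comparison functors assemble into natural transformations of the diagram-valued functors above — equivalently, that they arise by applying the manifestly functorial operations $\Mod_{(-)}$, $\Ind(-)$ and the protoproduct constructions to data that is natural in the input ring spectra. For \cref{cor:evenrings} this is immediate from \cref{cor:squares}, and for \cref{prop:protomodules} it reduces to the compatibility of the ultraproduct functor on finite cell modules with relative tensor products, which is exactly the point at which one must work with the bounded protoproduct $\Prod{\cF}^{\doubleflat}$ rather than $\Prod{\cF}^{\omega}$. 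No input beyond \cref{thm:sec4thm} and the formal properties of protoproducts established earlier is required.
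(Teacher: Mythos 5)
Your proof is correct and follows the same chain of equivalences as the paper's proof — \Cref{thm:sec4thm}, $\Mod_{(-)}$, the Morita identification of \Cref{prop:protomodules}, and \Cref{cor:evenrings} — merely written from the opposite end. The attention paid to naturality in the cosimplicial direction and to the $\Q$-linearity claim is sound extra detail that the paper leaves implicit.
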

\begin{proof}
\cref{thm:sec4thm} produces a cosimplicial $\bE_{\infty}$-ring spectrum
\[
\Prod{\cF}\E^{\otimes \bullet +1} \simeq \Prod{\cF}(\E^{\otimes \bullet +1})_{\star}.
\] 
Applying modules $\Mod_{(-)}$ to this gives a symmetric monoidal equivalence of cosimplicial $\infty$-categories
\[
\Mod_{\Prod{\cF}\E^{\otimes \bullet +1}} \simeq \Mod_{\Prod{\cF}(\E^{\otimes \bullet +1})_{\star}}.
\]
\cref{prop:protomodules} applies to this to give a symmetric monoidal equivalence
\[
\Prod{\cF}^{\doubleflat} \Mod_{\E^{\otimes \bullet +1}} \simeq \Prod{\cF}^{\doubleflat} \Mod_{(\E^{\otimes \bullet +1})_{\star}}.
\]
Since the $\bE_{\infty}$-ring spectra are even periodic, \cref{cor:evenrings} applies to give a symmetric monoidal equivalence
\[
\Prod{\cF}^{\flat} \Mod_{\E^{\otimes \bullet +1}} \simeq \Prod{\cF}^{\flat} \Mod_{(\E^{\otimes \bullet +1})_{\star}}. \qedhere
\] \end{proof}

Finally, we may prove the main result of the paper:

\begin{thm} \label{superduper}
There is a symmetric monoidal equivalence of compactly generated $\Q$-linear stable $\infty$-categories
\[
\Prod{\cF}^{\Pic} \Sp_{n,p} \simeq \Prod{\cF}^{\Pic} \Frnp,
\]
for any non-principal ultrafilter $\cF$ on the prime numbers. 
\end{thm}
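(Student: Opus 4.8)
The plan is to glue together the two descent identifications with the formality theorem of Section~\ref{sec:formality}. On the topological side, \Cref{topequiv} gives a symmetric monoidal equivalence
\[
\Prod{\cF}^{\Pic}\Sp_{n,p}\;\simeq\;\Loc\Pic\big(\Tot(\Prod{\cF}^{\flat}\Mod_{\E^{\otimes\bullet+1}})\big),
\]
valid because $(\Sp_{n,p},\E)$ is a monogenic symmetric monoidal stable $\infty$-category equipped with a descendable algebra whose $\E$-based Adams spectral sequence has a horizontal vanishing line of prime-independent intercept (\Cref{thm:hopkinsravenelsmash}, via \Cref{prop:cohomdim}), and because the $\Pic$-filtration coincides with the cell filtration on $\Mod_{\E}$ by \Cref{lem:br}. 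On the algebraic side, \Cref{cor:frequiv} gives
\[
\Prod{\cF}^{\Pic}\Frnp\;\simeq\;\Loc\Pic\big(\Tot(\Prod{\cF}^{\flat}\Mod_{(\E^{\otimes\bullet+1})_{\star}})\big),
\]
using the analogous facts for $(\Frnp,A_p)$ established in Section~\ref{ssec:algebraicmodel} together with \Cref{algequiv}. It therefore suffices to match the two right-hand sides.

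The matching is supplied by \Cref{cosimplicialcor}, which produces a symmetric monoidal equivalence of cosimplicial compactly generated $\Q$-linear stable $\infty$-categories
\[
\Prod{\cF}^{\flat}\Mod_{\E^{\otimes\bullet+1}}\;\simeq\;\Prod{\cF}^{\flat}\Mod_{(\E^{\otimes\bullet+1})_{\star}};
\]
recall this is obtained by applying $\Mod_{(-)}$ to the formality equivalence $\Prod{\cF}\E^{\otimes\bullet+1}\simeq\Prod{\cF}(\E^{\otimes\bullet+1})_{\star}$ of \Cref{thm:sec4thm}, rewriting $\Mod_{\Prod{\cF}(-)}$ as the bounded protoproduct via \Cref{prop:protomodules}, and passing to the cell protoproduct by even periodicity using \Cref{cor:evenrings}. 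Applying the functor $\Tot$, which for cosimplicial symmetric monoidal $\infty$-categories may be computed in $\Cat$, and then the colocalization $\Loc\Pic$ on stable symmetric monoidal compactly generated $\infty$-categories — both of which preserve symmetric monoidal equivalences — yields
\[
\Loc\Pic\Tot(\Prod{\cF}^{\flat}\Mod_{\E^{\otimes\bullet+1}})\;\simeq\;\Loc\Pic\Tot(\Prod{\cF}^{\flat}\Mod_{(\E^{\otimes\bullet+1})_{\star}}).
\]
Composing this with \Cref{topequiv} and \Cref{cor:frequiv} gives the asserted symmetric monoidal equivalence; $\Q$-linearity and compact generation are inherited from \Cref{cosimplicialcor} since $\Tot$ and $\Loc\Pic$ preserve both.

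The main obstacle is coherence bookkeeping: one must ensure that the string of equivalences behind \Cref{cosimplicialcor} is an equivalence of diagrams valued in symmetric monoidal $\infty$-categories indexed by $\Delta$, not merely a levelwise one, so that $\Tot$ is applied to a well-defined object in the relevant functor $\infty$-category. This requires that \Cref{prop:protomodules} and \Cref{cor:evenrings} be invoked naturally in the cosimplicial variable, and that the comparison functor $\Phi$ built in the proof of \Cref{prop:protomodules} be compatible with the cosimplicial structure maps. A secondary point is to confirm that the descent objects appearing in \Cref{topequiv} and \Cref{cor:frequiv} literally agree after the formality equivalence — that is, that $\Hom_{\Sp_{n,p}}(\mathbf{1},\E^{\otimes\bullet+1})\simeq(\E^{\otimes\bullet+1})_{\star}\simeq\Hom_{\Frnp}(\mathbf{1},A_p^{\otimes\bullet+1})$ as cosimplicial $\bE_\infty$-rings; this is essentially forced by \Cref{maincor}, but it is the identification that ties the two sides together and should be recorded explicitly.
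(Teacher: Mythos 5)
Your proof is correct and takes essentially the same route as the paper: combine \Cref{topequiv} and \Cref{cor:frequiv} with \Cref{cosimplicialcor}, then apply $\Tot$ and $\Loc\Pic$ to match the two sides. The additional remarks you make about cosimplicial coherence and the identification of the descent objects are the right things to be careful about and are implicitly handled in the cited lemmas.
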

\begin{proof}
By \cref{cosimplicialcor}, there is a cosimplicial equivalence
\[
\Prod{\cF}^{\flat} \Mod_{\E^{\otimes \bullet+1}} \simeq \Prod{\cF}^{\flat} \Mod_{(\E^{\otimes \bullet+1})_{\star}}.
\]
This induces a symmetric monoidal equivalence of $\infty$-categories
\[
\Loc \Pic \Tot \Prod{\cF}^{\flat} \Mod_{\E^{\otimes \bullet+1}} \simeq \Loc \Pic \Tot  \Prod{\cF}^{\flat} \Mod_{(\E^{\otimes \bullet+1})_{\star}}.
\]
Now applying \cref{topequiv} and \cref{cor:frequiv} gives the desired equivalence
\[
\Prod{\cF}^{\Pic} \Sp_{n,p} \simeq \Prod{\cF}^{\Pic} \Frnp. \qedhere
\]
\end{proof}

\section{Applications}
In this section, we give an application of the equivalence of \cref{superduper} to local generalized Moore spectra. We show that, for large enough primes, such spectra exist and admit coherent multiplicative structure. 

\subsection{Compact $\infty$-operads}

Let $\Op$ be the $\infty$-category of $\infty$-operads, introduced in \cite[Section 2]{ha}. Overloading notation, we will write $\otimes$ for the Boardman--Vogt tensor product of $\infty$-operads constructed in \cite[Section 2.2.5]{ha}.  Let $\sCat= \CAlg(\Cat)$ be the $\infty$-category of symmetric monoidal $\infty$-categories.

The next result summarizes the salient features of $\Op$ needed in this section.

\begin{prop}\label{prop:opprop} The $\infty$-category $\Op$ has the following features:
	\begin{enumerate}
		\item $\Op$ is a presentable $\infty$-category. 
		\item	The forgetful functor $U\colon \sCat \to \Op$ admits a left adjoint, the monoidal envelope functor.
		\item The functor $U$ preserves filtered colimits.
	\end{enumerate}
\end{prop}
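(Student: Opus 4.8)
\textbf{Proof strategy for Proposition \ref{prop:opprop}.}

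The three assertions are essentially a matter of collating results of Lurie from \cite{ha}, and the plan is simply to cite them in turn, checking that the hypotheses apply in our situation.

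For (1), I would invoke \cite[Corollary 2.1.4.15 and Remark 2.1.4.16]{ha}, where it is shown that $\Op$ arises as an accessible localization of a presentable $\infty$-category of preoperads (or, equivalently, as $\infty$-operads sitting inside the $\infty$-category of marked simplicial sets over $N(\mathrm{Fin}_*)$ with a suitable model structure); in particular $\Op$ is presentable. For (2), the monoidal envelope is constructed in \cite[Section 2.2.4]{ha}: given an $\infty$-operad $\cO^{\otimes}$, its monoidal envelope $\mathrm{Env}(\cO)^{\otimes}$ is a symmetric monoidal $\infty$-category, and \cite[Proposition 2.2.4.9]{ha} establishes that $\mathrm{Env}(-)$ is left adjoint to the forgetful functor $U \colon \sCat \to \Op$ (using that a symmetric monoidal $\infty$-category, viewed as an $\infty$-operad via its coCartesian fibration to $N(\mathrm{Fin}_*)$, is the same datum as a commutative algebra object of $\Cat$, so that $\sCat = \CAlg(\Cat)$ as asserted in the text).

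For (3), the point is that filtered colimits in $\Op$ and in $\sCat$ can both be computed at the level of the underlying marked simplicial sets, and the conditions defining an $\infty$-operad — existence of coCartesian lifts over inert morphisms and the Segal-type conditions identifying multimapping spaces — only involve finite diagrams, hence are stable under filtered colimits. Concretely I would cite \cite[Corollary 2.1.4.14]{ha} (filtered colimits of $\infty$-operads are computed in the ambient category of preoperads) and compare with the analogous fact for symmetric monoidal $\infty$-categories, which is exactly \cite[Corollary 3.2.3.2]{ha} already used in the proof of \cref{lem:lsymmmon}; since the forgetful functor $\CAlg(\Cat) \to \Cat$ preserves filtered colimits and the inclusion of $\sCat$ into $\Op$ is compatible with the forgetful functors to underlying (marked simplicial) objects, the claim follows.

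No step here is a genuine obstacle; the only care needed is to be sure the filtered-colimit computation is really happening in the same ambient category on both sides, so that $U$ visibly commutes with it rather than merely both source and target having filtered colimits. Once that bookkeeping is in place, (3) is immediate from the finitary nature of the $\infty$-operad axioms.
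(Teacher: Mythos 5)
Parts (1) and (2) are fine and follow the same references the paper uses (the model structure on $\POp$ from \cite[Section 2.1.4]{ha} for presentability, and \cite[Section 2.2.4]{ha} for the monoidal envelope adjunction), so there is nothing to say there.

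For part (3) the argument as written has a gap precisely at the place you flag as needing "care." The citation \cite[Corollary 3.2.3.2]{ha} says that the forgetful functor $\CAlg(\Cat) \to \Cat$, which sends $\cC^{\otimes}$ to its underlying $\infty$-category $\cC = \cC^{\otimes}_{\langle 1 \rangle}$, preserves filtered colimits. That is a statement about the \emph{fiber} over $\langle 1\rangle$, not about the \emph{total space} $\cC^{\otimes}$ viewed as a preoperad. So it does not by itself tell you that the filtered colimit in $\sCat$, when regarded as a marked simplicial set over $\mathrm{Fin}_*$, agrees with the filtered colimit of the preoperads $\cC_i^{\otimes}$ — which is what you would need in order to compare with the filtered colimits in $\Op$ computed via \cite[Corollary 2.1.4.14]{ha}. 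You acknowledge the need to verify that both computations happen in the same ambient category, but the sentence "the inclusion of $\sCat$ into $\Op$ is compatible with the forgetful functors to underlying (marked simplicial) objects" asserts the compatibility rather than proving it, and 3.2.3.2 is not the right lemma to invoke for it.

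The paper's proof instead runs through the commutative triangle
\[
\xymatrix{\sCat \ar[r]^-{U} \ar[dr]_{U_1} & \Op \ar[d]^{U_2} \\ & \Cat,}
\]
where $U_1$ is the forgetful functor of \cite[Proposition 3.2.3.1]{ha} and $U_2$ sends an $\infty$-operad to its underlying $\infty$-category. The extra ingredient, missing from your sketch, is that $U_2$ \emph{creates} filtered colimits — this is \cite[Theorem 3.3.1]{gh_enrichedoperads}, not a result of \cite{ha}. Once you know $U_1$ preserves filtered colimits and $U_2$ creates them, $U$ preserves them by the triangle. You can either quote this Gepner--Haugseng result, or replace it with a direct argument that $U_2$ is conservative and preserves filtered colimits (the latter following, e.g., from the closure of fibrant objects under filtered colimits in $\POp$ and the fact that taking the fiber over $\langle 1\rangle$ commutes with filtered colimits); either way, what is genuinely needed is a statement about $\Op \to \Cat$, not merely the HA result about $\sCat \to \Cat$.
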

\begin{proof}
The first item is a consequence of \cite[Proposition 2.1.4.6]{ha}, while the second one is proven in  \cite[Section 2.2.4]{ha}. 

Recall now that a functor $F \colon \cC \to \cD$ is said to detect (filtered) colimits if for every (filtered) $\infty$-category $I$ a diagram $\rho \colon I^{\rhd} \to \cC$ is colimit cone if and only if $F\circ\rho$ is a colimit cone.
Note that if $G\colon \cD \to \cE$ detects (filtered) colimits   then $G\circ F$ detects (filtered) colimits if and only if $F$ detects (filtered) colimits. Also, if $F$ detects (filtered) colimits it preserves  (filtered) colimits. The converse is true assuming $F$ is conservative.

In order to see that $U$ preserves filtered colimits, consider the sequence of functors
\[
\xymatrix{\sCat \ar[r]^-{U} &   \Op \ar[r]^-{U_1} & \left(\Cat\right) _{/\Fins} \ar[r]^-{U_2} & \Cat.}
\]
Here $U_1$ and $U_2$ are the canonical forgetful functors. Note that $U$, $U_1$, and $U_2$ are all conservative and that $U_2$ admits a right adjoint (given by $\cC \mapsto (\cC \times \Fins \to \Fins$). We thus get that $U_2$ detects all colimits. In  \cite[Theorem 3.3.1]{gh_enrichedoperads}, it is shown that the non-symmetric analogue of the composite $U_2 \circ U_1$ detects filtered colimits and thus that $U_1$ detects filtered colimits. Their proof is quite general; replacing $\Delta$ by $\Fins$ gives a proof that the composite $U_2 \circ U_1$ detects filtered colimits and thus that $U_1$ detects filtered colimits. By \cite[Corollary 3.3.3]{gh_enrichedoperads} $U_1 \circ U$ detects  filtered colimits and thus $U$ detects filtered colimits.\end{proof}

As an immediate consequence of \Cref{prop:opprop} (2) and (3), we obtain:

\begin{cor}\label{cor:operadicultraproducts}
The forgetful functor $U\colon \sCat \to \Op$ preserves ultraproducts. 
\end{cor}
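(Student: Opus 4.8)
The plan is to deduce \Cref{cor:operadicultraproducts} directly from the two relevant clauses of \Cref{prop:opprop}, namely that the forgetful functor $U\colon \sCat \to \Op$ admits a left adjoint (clause (2)) and that $U$ preserves filtered colimits (clause (3)). Recall that the ultraproduct $\Prod{\cF}(-)$ in any $\infty$-category admitting products and filtered colimits is, by definition, the composite of an infinite product followed by a filtered colimit along reverse inclusions: $\Prod{\cF}c_i = \Colim{U\in\cF}\Prod{i\in U}c_i$. So to show $U$ preserves ultraproducts it suffices to check that $U$ preserves (arbitrary, or at least set-indexed) products and filtered colimits.

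First I would observe that $U$ preserves filtered colimits — this is exactly clause (3) of \Cref{prop:opprop}. Next I would argue that $U$ preserves products: since $U$ admits a left adjoint by clause (2), it is a right adjoint, and right adjoints preserve all limits, in particular arbitrary products. Combining these two facts, for any collection of symmetric monoidal $\infty$-categories $(\cC_i)_{i\in I}$ and any ultrafilter $\cF$ on $I$ we get a chain of natural equivalences
\[
U\big(\Prod{\cF}\cC_i\big) = U\big(\Colim{V\in\cF}\Prod{i\in V}\cC_i\big) \simeq \Colim{V\in\cF} U\big(\Prod{i\in V}\cC_i\big) \simeq \Colim{V\in\cF}\Prod{i\in V}U(\cC_i) = \Prod{\cF}U(\cC_i),
\]
where the first equivalence uses that $U$ commutes with the filtered colimit along reverse inclusions and the second uses that $U$ commutes with the products indexed by the sets $V\in\cF$. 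Since all the equivalences are natural and compatible with the structure maps of the filtered diagram, this gives the desired equivalence $U(\Prod{\cF}\cC_i)\simeq\Prod{\cF}U(\cC_i)$ in $\Op$.

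There is really no serious obstacle here; the statement is a formal consequence of the structural properties of $\Op$ already established. The only point requiring a moment of care is that the ultraproduct construction genuinely only involves products and filtered colimits — which is immediate from the definition in \Cref{sec:ultraproducts} — and that one should phrase the argument so that the product being preserved is a set-indexed product (here indexed by the elements of some $V\in\cF$, hence by a subset of $I$), which poses no difficulty since right adjoints preserve all small limits. I would also note in passing that the same reasoning shows $U$ preserves the compactly generated ultraproduct and the protoproduct variants, though for the application in this section only the plain ultraproduct $\Prod{\cF}$ is needed, so I would not belabor that point.
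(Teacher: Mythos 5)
Your proof is correct and is exactly the argument the paper intends: the corollary is stated as an immediate consequence of clauses (2) and (3) of the proposition, which is precisely the observation that $U$ is a right adjoint (hence preserves products) and preserves filtered colimits, so it preserves the composite construction $\Prod{\cF} = \Colim{V\in\cF}\Prod{i\in V}$.
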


An $\infty$-operad $\cO$ is called compact if it is a compact object in $\Op$. We would like to have a recognition principle for compact $\infty$-operads. To this end we shall recall the description of  
$\Op$ as the underlying $\infty$-category of a model structure on the category of $\infty$-preoperads.
 
Let $\MS$ be the $1$-category of marked simplicial sets and let $F=(\Fins,M) \in \MS$ be the nerve of $\mathrm{Fin}_{*}$ with the inert edges marked.  
Recall from \cite[Section 2.1.4]{ha} that the category of $\infty$-preoperads $\POp$ is defined to be $\MS_{/F}$. For an object $\bar{X}= (X \to F)$ in $\POp$, we denote by $X$ the underlying simplicial set and by $U(\bar{X})$ the underlying object in $\sSet_{/\mathrm{Fin}_{*}}$ given by forgetting the marked simplices. The category $\POp$ is simplicially enriched and, given objects $\bar{X}$ and $\bar{Y}$, we will write $\map(\bar{X},\bar{Y})$ for the simplicial mapping space. 

\begin{prop} \label{popprops}
The category $\POp$ admits a left proper combinatorial simplicial model structure with the following properties:  
\begin{enumerate}
\item A morphism $f\colon \bar{X} \to \bar{Y}$ in $\POp$ is a cofibration if and only if it induces a monomorphism between the underlying simplicial sets $X$ and $Y$.
\item $\bar{X}= (X \to  F)$ is fibrant if and only if $U(\bar{X})$ is an $\infty$-operad and the marked edges in $\bar{X}$ are the inert edges.
\item Fibrant objects in $\POp$ are closed under filtered colimits. 
\item The localization functor $$R\colon \POp \to \Op $$ is essentially surjective, commutes with filtered colimits, and preserves compact objects.
\item $\POp^{\omega}$ consists of the objects $\bar{X}$ such that $X$ has finitely many non-degenerate  simplices.  
\item $\Op$ is compactly generated by the objects in $R(\POp^{\omega})$.
\end{enumerate}
\end{prop}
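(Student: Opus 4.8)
The plan is to construct the desired model structure on $\POp$ by applying the theory of Bousfield localization to a known model structure, following Lurie's treatment in \cite[Section 2.1.4]{ha}. Indeed, Lurie constructs precisely such a model structure on the category of $\infty$-preoperads: it is obtained as a localization of a model structure on $\MS_{/F}$ for which the cofibrations are the monomorphisms and the fibrant objects are the $\infty$-operads with inert edges marked. Thus items (1) and (2) are exactly \cite[Proposition 2.1.4.6]{ha} together with its accompanying description of the fibrant objects, and the fact that $R\colon \POp \to \Op$ is essentially surjective is part of the statement that this model structure presents $\Op$. The remaining points (3), (4), (5), (6) are the content-bearing claims and require separate arguments.

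For item (3), the strategy is to verify that the two conditions characterizing fibrancy in (2) are each closed under filtered colimits in $\MS_{/F}$. A filtered colimit of $\infty$-operads is again an $\infty$-operad --- this follows from the characterization of $\infty$-operads via finitely many inner-horn and lifting conditions (or directly from \cite[Theorem 3.3.1]{gh_enrichedoperads} as invoked in \Cref{prop:opprop}), all of which involve only finite simplicial sets and hence commute with filtered colimits. Similarly, the property that the marked edges are exactly the inert edges is detected on finite subcomplexes. Item (4) then follows: $R$ is a left adjoint (localization functors are), hence commutes with all colimits, and since fibrant objects are closed under filtered colimits by (3), $R$ commutes with filtered colimits computed in $\POp$; preservation of compact objects is the formal consequence that a left adjoint whose right adjoint (the inclusion of fibrant objects, or rather the derived functor) preserves filtered colimits must preserve compactness.

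For item (5), I would identify the compact objects of $\POp$ with respect to its model structure. Since cofibrations are monomorphisms of underlying simplicial sets and every object is cofibrant, the compact objects in the model-categorical sense are the homotopically finitely presentable ones; these are precisely (up to weak equivalence) the $\bar{X}$ whose underlying simplicial set $X$ has finitely many nondegenerate simplices, because such $\bar X$ are built from finitely many representable cells and mapping out of them commutes with filtered colimits of simplicial sets. Finally, item (6) combines (4) and (5): the objects $R(\POp^\omega)$ are compact in $\Op$ by (4), and they generate because every $\infty$-operad is a filtered colimit of its ``finite subpreoperads'' --- any object of $\POp$ is the filtered colimit of its subobjects with finitely many nondegenerate simplices, and $R$ preserves this colimit, so the essential image of $R(\POp^\omega)$ generates $\Op$ under filtered colimits. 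The main obstacle I anticipate is item (3): carefully checking that fibrancy is preserved under filtered colimits requires unwinding the precise (finite) conditions defining an $\infty$-operad and confirming that the marking condition is likewise of finite character; once this is in place, (4) and (6) are essentially formal, and (5) is a standard compactness argument in simplicial sets.
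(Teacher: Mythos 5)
Your handling of items (1) and (2) matches the paper, and the overall scaffold (localize at a combinatorial model structure, then analyze compacts and filtered colimits) is the same. But there are some real slips in (3), (4) and (5).

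For (3), you argue directly that "a filtered colimit of $\infty$-operads is again an $\infty$-operad" because the defining conditions are "finitely many inner-horn and lifting conditions." That is not the right way to phrase it: the collection of lifting conditions (the generating $\mathfrak{P}$-anodyne maps) is infinite, and being an $\infty$-operad also involves Segal-type conditions rather than pure lifting. What actually saves the argument, and what the paper uses, is that the model structure comes from a \emph{categorical pattern}, so \cite[Proposition B.1.6]{ha} applies, and all generating $\mathfrak{P}$-anodyne maps have \emph{finite source and target}. It is the finiteness of the objects in the generating set, not the finiteness of the set, that yields closure of fibrant objects under filtered colimits. So your route is not wrong in spirit, but the justification needs to be replaced by this sharper statement.

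For (4), the opening claim that "$R$ is a left adjoint (localization functors are), hence commutes with all colimits" is not correct as written. The functor $R\colon \POp \to \Op$ is a Dwyer--Kan localization from an ordinary category to an $\infty$-category; it preserves homotopy colimits, but there is no a priori reason for it to send a $1$-categorical colimit in $\POp$ to a colimit in $\Op$. The actual argument is: the model structure is constructed via \cite[A.2.6.15]{htt}, so by \cite[Remark A.2.6.16]{htt} filtered colimits of weak equivalences are weak equivalences; this is what makes $1$-categorical filtered colimits in $\POp$ compute homotopy filtered colimits and lets $R$ preserve them. For preservation of compacts, the paper lifts a filtered diagram in $\Op$ to a diagram of \emph{fibrant} objects in $\POp$ (using (3)) and then compares simplicially enriched mapping spaces; this is a more direct argument than the formal "right adjoint preserves filtered colimits" statement you gesture at, which would require identifying that right adjoint and checking it preserves filtered colimits.

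For (5), you reinterpret $\POp^\omega$ as the class of "compact objects in the model-categorical sense" (homotopically finitely presentable objects). That is not what the notation means: $\POp^\omega$ is the subcategory of compact objects in the ordinary ($1$-categorical) sense of the combinatorial category $\POp = \MS_{/F}$, and the assertion that these are exactly the $\bar{X}$ with finitely many non-degenerate simplices is a routine fact about marked simplicial sets (over-categories of compactly generated categories), needing no model-theoretic input at all. The paper simply notes it "follows from the definition." Once (5) is read this way, (6) combines (4), (5), and the compact generation of $\POp$ as you intended, so that part is fine.
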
 
\begin{proof}
The model structure on $\POp$ is constructed in \cite[Proposition 2.1.4.6]{ha}, where properties (1) and (2) are explicitly stated. Part (3) follows from \cite[Proposition B.1.6]{ha} and the observation that all generating $\mathfrak{P}$-anodyne maps have finite source and target. Since $R$ is a localization functor, it is essentially surjective. 

The model structure on $\POp$ is defined by applying \cite[A.2.6.15]{htt}. Thus by \cite[Remark A.2.6.16]{htt}, weak equivalences are preserved under filtered colimits and so the functor $R$ preserves filtered colimits. Next we show that $R$ preserves compact objects. Let $\bar{X} \in  \POp^{\omega}$ and let $F \colon I \to \Op$ be a filtered diagram together with a choice of lift $\hat{F} \colon I \to \POp$ such that $\hat{F}(i)$ is fibrant for all $i \in I$. Then we have 
\begin{align*}
\Map(R(\bar{X}),\colim_I F(i)) &\simeq \map(\bar{X}, \colim_I \hat{F}(i)) \\  
&\simeq \colim_I \map(\bar{X}, \hat{F}(i)) \\ 
&\simeq \colim_I \Map(R(\bar{X}), F(i)).
\end{align*}
Part (5) follows from the definition of $\POp$. To get Part (6) we combine the earlier parts together with the fact that $\POp$ is compactly generated as it is an over category over $\MS$.
\end{proof}

In light of Part (5) of \cref{popprops}, we call an $\infty$-operad $\mathcal{O}$ finite if it is equivalent to $R(\bar{X})$ for some $\bar{X} \in \POp^{\omega}$.
\begin{cor}\label{l:2}
An $\infty$-operad $\mathcal{O}$ is compact if and only if it is a retract of a finite $\infty$-operad.
\end{cor}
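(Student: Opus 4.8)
The plan is to deduce \cref{l:2} from the model-categorical presentation of $\Op$ recorded in \cref{popprops}, together with the standard fact that in a compactly generated $\infty$-category the compact objects form the idempotent completion of the closure under finite colimits of any chosen set of compact generators. The easy half is the ``if'' direction: writing a finite $\infty$-operad as $R(\bar X)$ with $\bar X\in\POp^{\omega}$, \cref{popprops}(4) shows that $R(\bar X)$ is compact in $\Op$, and since $\Op$ is presentable (\cref{prop:opprop}(1)) it is idempotent complete, so any retract of $R(\bar X)$ is again compact.

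For the ``only if'' direction the key lemma I would establish is that the class $\mathcal G$ of finite $\infty$-operads is closed under finite colimits in $\Op$. By \cref{popprops}(5), $\POp^{\omega}$ consists of the $\infty$-preoperads with finitely many nondegenerate simplices, and this class is closed under finite colimits in $\POp$, since colimits there are computed, up to the marking, as colimits of simplicial sets. Because every object of $\POp$ is cofibrant (cofibrations being the monomorphisms on underlying simplicial sets, by \cref{popprops}(1)), a finite homotopy colimit of a diagram valued in $\POp^{\omega}$ may be computed as the colimit of a projectively cofibrant replacement built from mapping cylinders of the finitely many structure maps; these constructions preserve the property of having finitely many nondegenerate simplices, so the homotopy colimit again lies in $\POp^{\omega}$. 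Since $R$ carries homotopy colimits in $\POp$ to colimits in $\Op$, a finite colimit in $\Op$ of objects of $\mathcal G = R(\POp^{\omega})$ again lies in $\mathcal G$.

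To conclude, \cref{popprops}(6) says that $\Op$ is compactly generated by the (essentially small) class $\mathcal G$, which we have just shown is closed under finite colimits; hence $\Op \simeq \Ind(\mathcal G)$, and by the standard description of the compact objects of an $\Ind$-category (see \cite{htt}) every compact $\infty$-operad is a retract of an object of $\mathcal G$, i.e., of a finite $\infty$-operad. The only substantive point in the argument is the closedness of finite $\infty$-operads under finite colimits; once that is in hand the remainder is formal, and that is precisely the step where one has to be slightly careful to keep the cofibrant models finite.
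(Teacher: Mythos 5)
Your argument is correct, but it takes a slightly longer route than the one suggested by the setup in \cref{popprops}, and your key lemma, while true, is more than is needed. The proposition already supplies the two facts that do all the work: $R$ is essentially surjective and preserves filtered colimits (Part~(4)), and $\POp$ is compactly generated with compact objects the preoperads with finitely many nondegenerate simplices (Parts~(5), (6)). Given a compact $\cO\in\Op$, choose $\bar X\in\POp$ with $R(\bar X)\simeq\cO$ and write $\bar X=\colim_i\bar X_i$ as a filtered colimit of its compact subobjects; applying $R$ exhibits $\cO$ as a filtered colimit of finite $\infty$-operads, and compactness of $\cO$ forces the identity on $\cO$ to factor through some $R(\bar X_i)$, which is the retract. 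This avoids the lemma that finite $\infty$-operads are closed under finite colimits and, in particular, the delicate point in your argument about building finite projectively cofibrant replacements of finite diagrams by mapping cylinders.

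That said, the finite-colimits lemma is itself correct and worth having if one wants the stronger statement $\Op\simeq\Ind(\cG)$ with $\cG$ the finite $\infty$-operads viewed as a small $\infty$-category closed under finite colimits, which is what your invocation of the $\Ind$-category description of compact objects actually requires. Two small points of care: first, the reduction to $\Ind(\cG)$ needs $\cG$ to be essentially small (it is, by Part~(5)) and to admit and be closed under finite colimits, which is exactly what your lemma supplies; without it, the standard statement only gives that compact objects are retracts of \emph{finite colimits} of objects of $\cG$. Second, in your cofibrant-replacement step, the claim that the replacement preserves finiteness should be checked against the Bousfield--Kan formula for homotopy colimits in the simplicial model category $\POp$ rather than an ad hoc mapping-cylinder construction, since for diagram shapes beyond spans and coequalizers the latter is not obviously finite; the Bousfield--Kan coend over a finite shape manifestly preserves finiteness and gives a cleaner proof of your lemma. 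With either of those repairs your argument is sound.
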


\begin{cor}\label{lem:compactoptensor}
Let $\cO$ and $\cO'$ be  compact $\infty$-operads. Then $\cO \otimes \cO'$ is a compact $\infty$-operad. 
\end{cor}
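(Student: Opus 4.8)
The plan is to reduce to a finiteness statement at the level of $\infty$-preoperads. First I would apply \Cref{l:2} to write $\cO$ as a retract of a finite $\infty$-operad $\cP = R(\bar{X})$ with $\bar{X} \in \POp^{\omega}$, and likewise $\cO'$ as a retract of $\cP' = R(\bar{Y})$ with $\bar{Y} \in \POp^{\omega}$. Since the Boardman--Vogt tensor product is functorial, the external product of these two retract diagrams exhibits $\cO \otimes \cO'$ as a retract of $\cP \otimes \cP'$; as compact objects of any $\infty$-category are closed under retracts, it is enough to prove that $\cP \otimes \cP'$ is compact. In other words, we may assume $\cO$ and $\cO'$ are finite.

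For finite operads I would use the construction of the Boardman--Vogt tensor product in \cite[Section 2.2.5]{ha}: it is obtained by localizing a monoidal structure $\odot$ on $\POp$ along which the localization functor $R \colon \POp \to \Op$ is monoidal, so that $\cP \otimes \cP' \simeq R(\bar{X} \odot \bar{Y})$. The underlying marked simplicial set of $\bar{X} \odot \bar{Y}$ is the product of those of $\bar{X}$ and $\bar{Y}$, so in particular its underlying simplicial set is $X \times Y$. Because $X$ and $Y$ have only finitely many non-degenerate simplices (\Cref{popprops}(5)), so does $X \times Y$: it is covered by finitely many subcomplexes of the form $\Delta^a \times \Delta^b$, each of which is a finite simplicial set via the shuffle decomposition. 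Hence $\bar{X} \odot \bar{Y} \in \POp^{\omega}$, and since $R$ preserves compact objects (\Cref{popprops}(4)) we conclude that $\cP \otimes \cP' = R(\bar{X} \odot \bar{Y})$ is a compact $\infty$-operad.

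The only genuinely non-formal ingredient, and hence the step I expect to require the most care, is the identification $\cP \otimes \cP' \simeq R(\bar{X} \odot \bar{Y})$ together with the description of the underlying simplicial set of $\bar{X} \odot \bar{Y}$ as $X \times Y$; this is a matter of unwinding the definition of the Boardman--Vogt tensor product and of the monoidal structure on $\POp$ from \cite[Section 2.2.5]{ha}, and needs no additional argument. The reduction via retracts, the finiteness of products of finite simplicial sets, and the behaviour of $R$ on compact objects are all routine given \Cref{l:2} and \Cref{popprops}.
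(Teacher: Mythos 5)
Your argument is correct and takes essentially the same route as the paper: reduce via \Cref{l:2} to finite $\infty$-operads, identify the Boardman--Vogt tensor product at the preoperad level as having underlying simplicial set the product $X \times Y$, and observe that a product of simplicial sets with finitely many nondegenerate simplices again has finitely many nondegenerate simplices. The paper's proof is terser, simply citing \cite[Notation 2.2.5.5 and Proposition 2.2.5.13]{ha} for the facts you unpack explicitly (the description of $\odot$ on $\POp$ and the compatibility of $R$ with the monoidal structures), but the content is the same.
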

\begin{proof}
\cite[Notation 2.2.5.5 and Proposition 2.2.5.13]{ha} show that the tensor product of finite $\infty$-preoperads is finite, so the claim follows from \cref{l:2}.
\end{proof}

In \cite[Remark 4.1.4.8]{ha}, Lurie constructs an $\infty$-operad $\cO(m)$, which is a non-unital version of the associative operad. It is constructed by applying $R$ to an object in $\POp$ with finitely many non-degenerate simplices. Thus $\cO(m)$ is compact by Parts (4) and (5) of \cref{popprops}. In \cite[Corollary 2.3.1.8, Proposition 2.3.1.9]{ha}, Lurie produces a unitalization functor for $\infty$-operads and proves that it is given by $- \otimes \bE_0$. Define $\bA_m$ to be the $\infty$-operad $\cO(m) \otimes \bE_0$.

\begin{ex}\label{ex:anoperad}
For any $m$, the operad $\bA_{m}$ is compact. 
\end{ex}
\begin{proof}
By \cite[Example 2.1.4.9]{ha}, $\mathbb{E}_0$ is compact. Now $\bA_{m}$ is compact by \cref{lem:compactoptensor}.
\end{proof}

\subsection{Existence of multiplicative local generalized Moore spectra}

In \Cref{cosimplicialcor} we established a symmetric monoidal equivalence of cosimplicial compactly generated $\Q$-linear stable $\infty$-categories
\[
\xymatrix{\Phi_{\bullet+1}\colon \Prod{\cF}^{\flat}\Mod_{\E^{\otimes \bullet +1}} \ar[r]^-{\simeq} & \Prod{\cF}^{\flat}\Mod_{(\E^{\otimes \bullet +1})_{\star}}.}
\]
In this subsection, we describe the compatibility of this equivalence with the homotopy groups functor and use this to study the existence of $\E$-local generalized Moore spectra and their multiplicative structures. 

For any $p$ and $n$, there are canonical isomorphisms $\varphi_p\colon \pi_0\E \cong \pi_0(\E)_{\star}$. If $\cF$ is any ultrafilter on $\cP$, these isomorphisms assemble into an isomorphism $\varphi = \Prod{\cF}\varphi_p \colon \Prod{\cF}\pi_0\E \cong \Prod{\cF}\pi_0(\E)_{\star}$ and we denote by 
\[
\xymatrix{\Phi_1'\colon (\Prod{\cF}\pi_0\E)-\Mod^{\gr} \ar[r] & (\Prod{\cF}\pi_0(\E)_{\star})-\Mod^{\gr}}
\]
the induced equivalence between the associated (1-)categories of graded modules.

\begin{prop}\label{prop:goodformality}
For any non-principal ultrafilter $\cF$ on $\cP$, there is a commutative diagram
\begin{equation}\label{eq:twosquares}
\xymatrix{\Prod{\cF}^{\flat}\Mod_{\E} \ar[r]^-{\simeq} \ar[d]_{\Phi_1}^{\simeq} & \Mod_{\Prod{\cF}\E} \ar[r]^-{\pi_*} \ar[d]^{\simeq} & (\Prod{\cF}\pi_0\E)-\Mod^{\gr} \ar[d]_{\Phi_1'}^{\simeq} \\
\Prod{\cF}^{\flat}\Mod_{(\E)_{\star}} \ar[r]_{\simeq} & \Mod_{\Prod{\cF}(\E)_{\star}} \ar[r]_-{\pi_*} & (\Prod{\cF}\pi_0(\E)_{\star})-\Mod^{\gr}.}
\end{equation}
\end{prop}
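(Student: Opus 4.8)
The statement asserts the commutativity of two squares. The left square is the content of \cref{cosimplicialcor} evaluated in cosimplicial degree $0$ (i.e., at $[0] \in \Delta$), combined with \cref{prop:protomodules}, which identifies $\Prod{\cF}^{\flat}\Mod_{\E}$ with $\Mod_{\Prod{\cF}\E}$ and $\Prod{\cF}^{\flat}\Mod_{(\E)_{\star}}$ with $\Mod_{\Prod{\cF}(\E)_{\star}}$ as symmetric monoidal $\infty$-categories; so the genuine new content is the commutativity of the right square, which compares the two descriptions of the homotopy groups functor. First I would recall that the middle vertical equivalence $\Mod_{\Prod{\cF}\E} \simeq \Mod_{\Prod{\cF}(\E)_{\star}}$ is, by construction in \cref{thm:sec4thm} and \cref{prop:protomodules}, induced by restriction of scalars along the equivalence of $\bE_\infty$-ring spectra $\Prod{\cF}\E \simeq \Prod{\cF}(\E)_{\star}$ produced from the formality statement. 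Under this identification, the right square becomes the assertion that the $\pi_*$ functor is natural with respect to base change along an equivalence of ring spectra, which is automatic \emph{once we know} that the formality equivalence $\Prod{\cF}\E \simeq \Prod{\cF}(\E)_{\star}$ induces on $\pi_0$ the isomorphism $\varphi = \Prod{\cF}\varphi_p$.

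The key point, therefore, is to pin down what the formality equivalence does on homotopy groups. I would argue as follows. By \cref{prop:Hcommutes}, $H$ commutes with ultraproducts, so $\Prod{\cF}(\E)_{\star} = \Prod{\cF}H\pi_*\E \simeq H(\Prod{\cF}\pi_*\E)$, and the canonical map $\pi_*$ of this spectrum recovers $\Prod{\cF}\pi_*\E$ on the nose, with the $\pi_0$ part being $\Prod{\cF}\pi_0\E$. On the other side, the formality equivalence of \cref{thm:sec4thm} in cosimplicial degree $0$ is the equivalence $E_{n,\cF} \simeq \Prod{\cF}(\E)_{\star}$; tracing back through the proof (the arithmetic fracture square of \cref{cor:Squere} together with the weight-decomposition argument of \cref{ss:formal_infty} and the rational argument of \cref{ss:ratio_formal}), this equivalence is built so as to be compatible with the Postnikov/homotopy-group data, i.e., applying $\pi_*$ to it yields the identity on $\Prod{\cF}\pi_*\E$ up to the canonical identifications. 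In particular its effect on $\pi_0$ is exactly $\varphi$. Consequently the outer rectangle of \eqref{eq:twosquares}, read as a single square from $\Prod{\cF}^{\flat}\Mod_{\E}$ to $(\Prod{\cF}\pi_0(\E)_\star)\text{-}\Mod^{\gr}$, commutes because both composites send a module $M$ to $\pi_*M$ regarded as a module over the target ring via $\varphi$.

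It then remains to observe that the left square commutes on its own. This is because the identification $\Prod{\cF}^{\flat}\Mod_{\E} \simeq \Mod_{\Prod{\cF}\E}$ of \cref{prop:protomodules} is defined via the Morita-theoretic comparison functor $\Phi$ constructed in the proof of \cref{prop:protomodules}, and the equivalence $\Phi_1$ of \cref{cosimplicialcor} is, by the proof of \cref{superduper} and \cref{cosimplicialcor}, induced by precisely $\Phi$ applied to the formality equivalence of ring spectra; so the left square is a naturality square for $\Phi$ with respect to a map of $\bE_\infty$-rings, which commutes by construction. Splicing the commuting left square with the commuting outer rectangle gives the commutativity of the right square, hence of the whole diagram \eqref{eq:twosquares}.

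\textbf{Main obstacle.} The subtle step is the second paragraph: verifying that the formality equivalence of \cref{thm:sec4thm} really does induce $\varphi$ on $\pi_0$, rather than some automorphism of $\Prod{\cF}\pi_0\E$ twisted by an element of the Morava stabilizer group or by a unit. This requires unwinding the construction of the equivalence through the fracture square and the weight decomposition and checking at each stage that the chosen identifications of homotopy groups are the canonical ones; the rational piece (via \cref{l:Lur}, \cref{cor:formalcomposite}) and the weight-zero piece $Y^{\bullet}_\cF(*)$ of \cref{prop:cosimplicialformalobj} must be seen to glue to give the canonical $\pi_0$. I expect this to be the bulk of the work, while the categorical manipulations of the surrounding squares are formal.
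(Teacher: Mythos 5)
Your proposal is correct and follows essentially the same approach as the paper's proof: reduce the left square to the construction of $\Phi_1$ (the paper takes this as the definition of $\Phi_1$, you phrase it as naturality of the Morita comparison, which is the same thing), and reduce the right square to the claim that the formality equivalence $\phi$ induces the canonical isomorphism $\varphi$ on $\pi_0$, then verify that claim by tracing through the construction of $\phi$. The paper's proof is more condensed—it simply observes that $\phi$ is induced from a canonical map $\Prod{\cF}H\pi_0\E \to \Prod{\cF}\E$ and that this map fits into the evident commutative square with the identifications witnessing that $\pi_0$ commutes with ultraproducts—but the underlying argument, including the identification of the key verification as the compatibility of $\phi$ with the canonical $\pi_0$-data, is the same as yours.
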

\begin{proof}
The result follows by unwinding the proof of \Cref{cosimplicialcor}. Indeed, the middle vertical equivalence arises from an equivalence of $\mathbb{E}_{\infty}$-ring spectra
\[
\xymatrix{\phi \colon\Prod{\cF}(\E)_{\star} \ar[r]^-{\simeq} & \Prod{\cF}\E.}
\]
The equivalence $\Phi_1$ is then defined to make the left square commute, so it remains to consider the right square. By construction, $\phi$ is induced from a canonical map $\Prod{\cF}H\pi_0\E \to \Prod{\cF}\E$, which fits into a commutative diagram
\[
\xymatrix{\Prod{\cF}\pi_0\E \ar[r]^-{\varphi}_-{\cong} \ar[d]_{\cong} & \Prod{\cF}\pi_0(\E)_{\star} \ar[d]^{\cong} \\
\pi_0\Prod{\cF}\E \ar[r]^-{\pi_0\phi}_-{\cong} & \pi_0\Prod{\cF}(\E)_{\star},}
\]
where the vertical isomorphisms are witnessing the fact that $\pi_0$ commutes with ultraproducts. It follows that $\Phi_1'$ is canonically equivalent to the equivalence induced by $\pi_0\phi$, which in turn makes the right square in  \eqref{eq:twosquares} commute.
\end{proof}

Informally speaking, the above proposition shows that the effect of $\Phi_1$ on the homotopy groups of the unit is equivalent to the effect of the ultraproduct of the coordinatewise isomorphisms $\varphi_p\colon \pi_0\E \cong \pi_0(\E)_{\star}$. This is in sharp contrast to the construction of the equivalence $\phi$, which only exists after applying the ultraproduct at a non-principal ultrafilter. 

\begin{defn}
Let $I = (i_0,\ldots, i_{k-1})$ be a sequence of non-negative integers and fix a height $n$ and prime $p$. An $\E$-local generalized Moore spectrum of type $I$ is a compact $\E$-local spectrum $\LM_{n,p}(I)$ with $(\E)_*\LM_{n,p}(I) \cong (\E)_*/(p^{i_0},u_1^{i_1},\ldots,u_{k-1}^{i_{k-1}})$.
\end{defn}

\begin{thm}
Let $\cO$ be a compact $\infty$-operad and $n \ge 0$ an integer. Given a sequence $I = (i_0,\ldots, i_{k-1})$ of non-negative integers, there exists an integer $N = N(\cO,n,I)$ such that, for all primes $p \ge N$, an $\E$-local generalized Moore spectrum $\LM_{n,p}(I)$ of type $I$ exists and admits the structure of an $\cO$-algebra. 
\end{thm}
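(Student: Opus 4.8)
The plan is to run an ultraproduct/transfer argument across the main equivalence of \Cref{superduper} (in its cosimplicial form \Cref{cosimplicialcor}), combined with the compactness of $\cO$ as an $\infty$-operad (\Cref{ex:anoperad} and \Cref{l:2}). First I would observe that on the algebraic side the generalized Moore spectrum always exists with a canonical $\bE_\infty$-structure: the quotient $(\E)_\star/(p^{i_0},u_1^{i_1},\ldots,u_{k-1}^{i_{k-1}})$ is a Koszul-type quotient of the $\bE_\infty$-ring $(\E)_\star$ by a regular sequence, and since $(\E)_\star$ is formal (an $H\Z$-algebra), this quotient, formed as an iterated (derived) cofiber, is an $\bE_\infty$-$(\E)_\star$-algebra — hence certainly an $\cO$-algebra for every $\cO$ — in $\Mod_{(\E)_\star}$, and a fortiori in $\Frnp$ via $\underline{P}$. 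So the algebraic analogue $\widetilde{\LM}_{n,p}(I) \in \Frnp$ of the local generalized Moore spectrum carries $\cO$-algebra structure for all $p > n+1$.

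Next I would pass to the ultraproduct. The compact objects $\widetilde{\LM}_{n,p}(I)$ assemble into a compact object $[\widetilde{\LM}_{n,p}(I)]$ of $\Prod{\cF}^{\Pic}\Frnp$, and the $\cO$-algebra structures assemble into an $\cO$-algebra structure on it: here is where compactness of $\cO$ enters, since $\CAlg_{\cO}(-)$ commutes with ultraproducts of symmetric monoidal $\infty$-categories by \Cref{cor:operadicultraproducts} together with \cref{lem:rozenblyum}-type mapping space computations, so $[\widetilde{\LM}_{n,p}(I)] \in \Alg_{\cO}(\Prod{\cF}^{\Pic}\Frnp)$. Transporting across the symmetric monoidal equivalence $\Prod{\cF}^{\Pic}\Sp_{n,p} \simeq \Prod{\cF}^{\Pic}\Frnp$ of \Cref{superduper}, we obtain a compact $\cO$-algebra object $M \in \Alg_{\cO}(\Prod{\cF}^{\Pic}\Sp_{n,p})$ whose underlying object has the property, via \Cref{prop:goodformality} (in its appropriate cosimplicial/multicategorical extension), that $(\Prod{\cF}\E)_* M \cong \Prod{\cF}\big((\E)_*/(p^{i_0},\ldots,u_{k-1}^{i_{k-1}})\big)$ as a $\Prod{\cF}\pi_0\E$-module; concretely, $M$ is represented by a collection $(M_p)_{p}$ of compact $\E$-local spectra with the correct homotopy on a set in $\cF$.

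The final and most delicate step is descent of the $\cO$-algebra structure from the ultraproduct back to individual primes. Since $M$ is a compact object of $\Prod{\cF}^{\Pic}\Sp_{n,p}$ and $\cO$ is compact, its $\cO$-algebra structure — a map from the (small) $\infty$-operad $\cO$ into the endomorphism operad of $M$ — is detected at a finite stage of the filtered colimit $\colim_{U \in \cF}\Prod{i \in U}\Sp_{n,p}^\omega$: both the object $M$ and the coherence data realizing the $\cO$-action (a finite diagram's worth, by \Cref{l:2}, since $\cO$ is a retract of a finite $\infty$-operad) live in $\Prod{U}\Sp_{n,p}^\omega$ for some $U \in \cF$. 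This produces, for $p \in U$, an $\cO$-algebra $\LM_{n,p}(I)$ in $\Sp_{n,p}$ on a set $U$ in the ultrafilter, together with the required homotopy-group identification (possibly after intersecting $U$ with the set where $(\E)_*\LM_{n,p}(I)$ has the prescribed form). Taking $N$ large enough that $\{p : p \ge N\}$ is contained in such a $U$ — here one must also be careful that $U$ contains all large primes, which follows from running the argument over an arbitrary non-principal $\cF$ and invoking \cref{lem:ultrafiltersupply}: if the conclusion failed for infinitely many $p$, one could build $\cF$ concentrated on that infinite set of bad primes and derive a contradiction with the existence of the $\cO$-algebra in $\Prod{\cF}^{\Pic}\Sp_{n,p}$. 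I expect the bookkeeping around "finitely presented $\cO$-algebra structure descends through a filtered colimit of symmetric monoidal $\infty$-categories" — making precise that the $\infty$-operad map together with all higher coherences is captured at a finite stage — to be the main obstacle, and the ultrafilter-swapping contrapositive to be the clean way to extract the uniform bound $N(\cO,n,I)$.
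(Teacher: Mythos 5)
Your proposal follows essentially the same route as the paper: construct the algebraic Moore spectra with $\bE_\infty$-structure in $\Frnp$, assemble over the ultraproduct, transport through the equivalence of \Cref{superduper} (with \Cref{prop:goodformality} identifying homotopy), and use compactness of $\cO$ plus the ``all non-principal ultrafilters'' contrapositive to extract a uniform bound $N$. Two small corrections to your mental model are in order. First, you misplace where compactness of $\cO$ is needed: the assembly of the $\cO$-algebra structure into the ultraproduct does \emph{not} require $\cO$ compact. The paper's route is cleaner --- the $\bE_\infty$-structures on the $\LM_{n,p}(I)^{\alg}$ assemble directly because the composite $\Prod{I}\to\Prod{\cF}^{\flat}$ is lax symmetric monoidal and hence sends commutative algebras to commutative algebras (\Cref{cor:compositesymmmon}); one then restricts along $\cO\to\bE_{\infty}$. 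Compactness of $\cO$ in $\Op$ (together with \Cref{cor:operadicultraproducts}, which identifies the ultraproduct of symmetric monoidal $\infty$-categories with the ultraproduct of the underlying $\infty$-operads) enters only in the \emph{descent} step, where the map $\cO\to\Prod{\cF}\Sp_{n,p}^{\omega}=\colim_{U\in\cF}\Prod{p\in U}\Sp_{n,p}^{\omega}$ must factor through a finite stage; your ``endomorphism operad'' phrasing gestures at this but the paper just factors the operad map itself. Second, the justification for the $\bE_\infty$-structure on the algebraic quotient is not really formality or being an $H\Z$-algebra: the essential point is that the sequence is regular, so the quotient is \emph{discrete}, namely it is the strict commutative ring $(\E)_0/(p^{i_0},\dots,u_{k-1}^{i_{k-1}})$ in the abelian category of comodules, and discrete strict commutative rings become $\bE_\infty$-algebras in the derived $\infty$-category. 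Finally, the paper also makes explicit (and you implicitly assume) that $\LM_{n,p}(I)^{\alg}$ has a prime-independent cell bound $2^k$, which is needed to land the assembled object in the $\Pic$-generated protoproduct $\Prod{\cF}^{\Pic}\Frnp$ rather than merely in $\Prod{\cF}^{\omega}\Frnp$.
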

\begin{proof}
For every height $n$, prime $p$, and sequence $I$ as above, the strict commutative ring $(\E)_0/(p^{i_0},u_1^{i_1},\ldots,u_{k-1}^{i_{k-1}})$ in $((\E)_0,(\E)_0(\E))-\Comod$ lifts to an object
\[
\LM_{n,p}(I)^{\alg} = (\E)_{\star}/(p^{i_0},u_1^{i_1},\ldots,u_{k-1}^{i_{k-1}}) \in \CAlg(\Frnp).
\]
Since the objects $\LM_{n,p}(I)^{\alg}$ are compact and have the structure of $\bE_{\infty}$-algebras, they give rise to a map of $\infty$-operads 
\[
\xymatrix{[\LM_{n,p}(I)^{\alg}]_{\cF}\colon \bE_{\infty} \ar[r] & \Prod{p \in \cP}\Frnp^{\omega} \ar[r] & \Prod{\cF}\Frnp^{\omega} \ar[r] & (\Prod{\cF}^{\omega}\Frnp)^{\omega}.}
\]
Since $\LM_{n,p}(I)^{\alg}$ can be built out of $2^k$ many cells, independently of $p$, $[\LM_{n,p}(I)^{\alg}]_{\cF}$ factors through the symmetric monoidal inclusion (see \Cref{lem:nsymmmon})
\[
\xymatrix{(\Prod{\cF}^{\Pic}\Frnp)^{\omega} \ar[r] & (\Prod{\cF}^{\omega}\Frnp)^{\omega}.}
\]
Restricting along the unique map $\cO \to \bE_{\infty}$ gives rise to an $\cO$-algebra in $(\Prod{\cF}^{\Pic}\Frnp)^{\omega}$, which we will also denote by $[\LM_{n,p}(I)^{\alg}]_{\cF}$. Let $\Phi$ be the equivalence of \Cref{superduper} and consider the diagram
\begin{equation}\label{eq:oalgdiagram}
\xymatrix{\cO \ar[r] \ar@{-->}[dd] & (\Prod{\cF}^{\Pic}\Frnp)^{\omega} \ar[r] \ar[d]_{\Phi^{-1}}^{\simeq} & \Prod{\cF}^{\flat}\Mod_{(\E)_{\star}}^{\omega} \ar[r]^-{\pi_*} \ar[d]_{(\Phi_1)^{-1}}^{\simeq} &  (\Prod{\cF}\pi_0(\E)_{\star})-\Mod^{\gr} \ar[d]_{(\Phi_1')^{-1}}^{\simeq} \\
& (\Prod{\cF}^{\Pic}\Sp_{n,p})^{\omega} \ar[r] \ar@{^{(}->}[d] & \Prod{\cF}^{\flat}\Mod_{\E}^{\omega} \ar[r]^-{\pi_*} \ar@{^{(}->}[d] & (\Prod{\cF}\pi_0\E)-\Mod^{\gr} \\
\Prod{p \in U}\Sp_{n,p}^{\omega} \ar[r] & \Prod{\cF}\Sp_{n,p}^{\omega} \ar[r] & \Prod{\cF}\Mod_{\E}^{\omega}.}
\end{equation}
The top central square commutes by the construction of the symmetric monoidal equivalence $\Phi$, while the commutativity of the top right square is the content of \Cref{prop:goodformality}. The indicated arrows are symmetric monoidal inclusions by \Cref{lem:nsymmmon}, and it follows that the bottom square commutes as well. 

The ultraproduct $\Prod{\cF}\Sp_{n,p}^{\omega}$ can be computed in $\sCat$, the $\infty$-category of symmetric monoidal $\infty$-categories, and hence in $\Op$ by \Cref{cor:operadicultraproducts}, so we obtain a map of $\infty$-operads
\[
\cO \xrightarrow{[\LM_{n,p}(I)^{\alg}]_{\cF}} (\Prod{\cF}^{\Pic}\Frnp)^{\omega} \xrightarrow{\Phi^{-1}} (\Prod{\cF}^{\Pic}\Sp_{n,p})^{\omega} \longrightarrow  \Prod{\cF}\Sp_{n,p}^{\omega},
\]
where we suppress the superscript $\otimes$ for ease of readability. Since $\cO$ is a compact $\infty$-operad, there exists $U \in \cF$ and the dashed factorization in Diagram \eqref{eq:oalgdiagram}. That is, there is an $\cO$-algebra in $\Prod{p \in U}\Sp_{n,p}^{\omega}$ corresponding to $[\LM_{n,p}(I)^{\alg}]_{\cF}$. For $p \in U$, we will denote the corresponding $\cO$-algebra in $\Sp_{n,p}^{\omega}$ by $\LM_{n,p}(I)$.

It remains to identify the $\E$-homology of the underlying object of $\LM_{n,p}(I)$. To this end, it suffices to compute the composite
\[
\xymatrix{\cO \ar[r] & (\Prod{\cF}^{\Pic}\Sp_{n,p})^{\omega} \ar[r] & \Prod{\cF}^{\flat}\Mod_{(\E)}^{\omega} \ar[r]^-{\pi_*} & (\Prod{\cF}\pi_0(\E))-\Mod^{\gr}}
\]
From the commutativity of the top part of Diagram \eqref{eq:oalgdiagram}, we deduce that this is isomorphic to $\Prod{\cF}(\E)_*/(p^{i_0},u_1^{i_1},\ldots,u_{k-1}^{i_{k-1}})$. Thus there exists $V \subseteq U$ with $V \in \cF$ such that, for all $p \in V$, $\LM_{n,p}(I)$ has the desired $\E$-homology. 

Applying the above procedure to every non-principal ultrafilter, the claim follows from the fact that the intersection of all non-principal ultrafilters on $\cP$ is the Frechet filter on $\cP$. 
\end{proof}

Specializing to the associative $\infty$-operad $\bA_m$, which is compact by \cref{ex:anoperad}, we obtain the existence of associative multiplicative structure on the local generalized Moore spectra for large enough primes. 

\begin{cor}
Let $n, m \ge 0$ and let $I = (i_0,\ldots, i_{k-1})$ be a $k$-tuple of natural numbers, there is an integer $N = N(n,I,m)$ such that for all primes $p \ge N$ an $\E$-local generalized Moore spectrum $\LM_{n,p}(I)$ exists and admits the structure of an $\bA_m$-algebra spectrum. 
\end{cor}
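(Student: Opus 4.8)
The plan is to specialize the theorem immediately preceding this corollary to the $\infty$-operad $\cO = \bA_m$. The only input needed beyond that theorem is that $\bA_m$ is a compact object of the $\infty$-category $\Op$ of $\infty$-operads, and this is exactly the content of \cref{ex:anoperad}: by construction $\bA_m = \cO(m) \otimes \bE_0$, where $\cO(m)$ is the finite (hence compact) non-unital associative $\infty$-operad of \cite[Remark 4.1.4.8]{ha} and $\bE_0$ is the unitalization operad, which is compact by \cite[Example 2.1.4.9]{ha}; compactness of the tensor product then follows from \cref{lem:compactoptensor}, which rests on the fact that the Boardman--Vogt tensor product of finite $\infty$-preoperads is finite.

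Granting this, I would simply invoke the preceding theorem with $\cO = \bA_m$ to produce, for each height $n$ and each tuple $I = (i_0,\ldots,i_{k-1})$, an integer $N(n,I,m) := N(\bA_m, n, I)$ such that for every prime $p \ge N(n,I,m)$ there is a compact $\E$-local spectrum $\LM_{n,p}(I)$ with $(\E)_*\LM_{n,p}(I) \cong (\E)_*/(p^{i_0}, u_1^{i_1}, \ldots, u_{k-1}^{i_{k-1}})$ carrying an $\bA_m$-algebra structure, i.e.\ an algebra structure over $\bA_m$ in the symmetric monoidal $\infty$-category $\Sp_{n,p}$ equipped with its localized smash product. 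Since an $\bA_m$-algebra spectrum is by definition precisely such an object, the statement follows verbatim.

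There is no genuine obstacle at this stage; all of the mathematical content is already contained in the theorem above and, upstream of it, in the equivalence of \cref{superduper} together with the fact that maps out of a compact $\infty$-operad into an ultraproduct of (symmetric monoidal) $\infty$-categories factor through a finite stage. The one point worth emphasizing in the write-up is that the bound $N$ genuinely depends on $m$: demanding a higher level of coherence of the multiplication forces one to pass to a possibly smaller set in each relevant non-principal ultrafilter, and hence to larger primes, exactly as in the proof of the theorem.
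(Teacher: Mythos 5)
Your proposal is correct and follows exactly the paper's intended route: the corollary is an immediate specialization of the preceding theorem to $\cO = \bA_m$, using \cref{ex:anoperad} to verify compactness of $\bA_m$. Nothing further is needed.
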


\bibliographystyle{amsalpha}
\bibliography{bibliography}

\end{document}